\definecolor{dark-gray}{gray}{0.3}
\definecolor{dkgray}{rgb}{.4,.4,.4}
\definecolor{dkblue}{rgb}{0,0,.5}
\definecolor{medblue}{rgb}{0,0,.75}
\definecolor{paleblue}{rgb}{0.85,1,1}
\definecolor{rust}{rgb}{0.5,0.1,0.1}
\setlist[enumerate]{leftmargin=.5in}
\setlist[itemize]{leftmargin=.5in}
\algrenewcommand\alglinenumber[1]{\sf\scriptsize\color{black}{#1}}
\algrenewcommand\algorithmicrequire{\textbf{Input:}}
\algrenewcommand\algorithmicensure{\textbf{Output:}}
\crefname{claim}{Claim}{Claims}
\crefname{fact}{Fact}{Facts}
\crefname{remark}{Remark}{Remarks}
\crefname{warning}{Warning}{Warnings}
\crefname{example}{Example}{Examples}
\newcommand{\R}{\mathbb{R}}
\newcommand{\C}{\mathbb{C}}
\newcommand{\F}{\mathbb{F}}
\newcommand{\eps}{\varepsilon}
\newcommand{\econst}{\mathrm{e}}
\newcommand{\vct}[1]{\bm{#1}}
\newcommand{\mtx}[1]{\bm{#1}}
\newcommand{\Id}{\mathbf{I}}
\newcommand{\trace}{\operatorname{tr}}
\newcommand{\rank}{\operatorname{rank}}
\newcommand{\range}{\operatorname{range}}
\newcommand{\diag}{\operatorname{diag}}
\newcommand{\abs}[1]{\vert #1 \vert}
\newcommand{\norm}[1]{\Vert #1 \Vert}
\newcommand{\fnorm}[1]{\norm{#1}_{2}}
\newcommand{\fnormsq}[1]{\fnorm{#1}^2}
\newcommand{\lowrank}[2]{\llbracket {#1} \rrbracket_{#2}}
\newcommand{\Var}{\operatorname{Var}}
\newcommand{\Expect}{\operatorname{\mathbb{E}}}
\newcommand{\Prob}[1]{\mathbb{P}\left\{ #1 \right\}}
\newcommand{\err}{\mathrm{err}}
\title{Streaming Low-Rank Matrix Approximation \\ with an Application to Scientific Simulation%
\thanks{Date: 22 March 2017. Revised: 16 July 2018 and 22 February 2019.}
\funding{JAT was supported in part by ONR Awards N00014-11-1002, N00014-17-1-214, N00014-17-1-2146, and the Gordon \& Betty Moore Foundation.
MU was supported in part by DARPA Award FA8750-17-2-0101.
VC has received funding from the European Research Council (ERC) under the European Union's Horizon 2020 research and innovation programme under the grant agreement number 725594 (time-data) and the Swiss National Science Foundation (SNSF) under the grant number 200021\_178865.}}
\author{Joel A.~Tropp%
\thanks{California Institute of Technology, Pasadena, CA (\email{jtropp@cms.caltech.edu}).}%
\and Alp Yurtsever%
\thanks{{\'E}cole Polytechnique F{\'e}d{\'e}rale de Lausanne, Lausanne, Switzerland (\email{alp.yurtsever@epfl.ch}, \email{volkan.cevher@epfl.ch}).}%
\and Madeleine Udell%
\thanks{Cornell University, Ithaca, NY (\email{udell@cornell.edu}).}
\and Volkan Cevher\footnotemark[3]}
\begin{document}

\maketitle

\begin{abstract}
This paper argues that randomized linear sketching is a natural
tool for on-the-fly compression of data matrices that arise from
large-scale scientific simulations and data collection.
The technical contribution consists in a new algorithm for
constructing an accurate low-rank approximation of
a matrix from streaming data.  This method is
accompanied by an \emph{a priori} analysis that allows the
user to set algorithm parameters with confidence
and an \emph{a posteriori} error estimator that allows the
user to validate the quality of the reconstructed matrix.
In comparison to previous techniques, the new method
achieves smaller relative approximation errors
and is less sensitive to parameter choices.
As concrete applications, the paper outlines how
the algorithm can be used to compress a
Navier--Stokes simulation and a sea surface temperature dataset.
\end{abstract}

\begin{keywords}
Dimension reduction; matrix approximation;
numerical linear algebra; principal component analysis;
randomized algorithm; single-pass algorithm;
truncated singular value decomposition; sketching;
streaming algorithm; subspace embedding.
\end{keywords}

\begin{AMS}
Primary, 65F30; Secondary, 68W20.
\end{AMS}

\section{Motivation}

Computer simulations of scientific models often generate data matrices that
are too large to store, process, or transmit in full.  This challenge arises
in a huge number of fields, including weather and climate
forecasting~\cite{woodring2011revisiting,drake2014climate,baker2014methodology},
heat transfer and fluid flow~\cite{patankar1980numerical, bejan2013convection},
computational fluid dynamics %
\cite{baurle2004modeling,garnier2009large},
and aircraft design~\cite{menter2003ten,sagaut2006large}.
Similar exigencies can arise with automated methods
for acquiring large volumes of scientific data~\cite{CERN19:What-Record}.

In these settings, the data matrix often has a decaying singular value spectrum,
so it admits an accurate low-rank approximation.
For some downstream applications,
the approximation serves as well as---or even better than---the full matrix~\cite{son2014data,calhoun2018exploring}.
Indeed, the approximation is easier to manipulate,
and it can expose latent structure.
This observation raises the question of how best to compute
a low-rank approximation of a matrix of scientific data
with limited storage, arithmetic, and communication.

The main purpose of this paper is to argue that sketching methods
from the field of randomized linear algebra~\cite{WLRT08:Fast-Randomized,CW09:Numerical-Linear,
HMT11:Finding-Structure,Mah11:Randomized-Algorithms,Woo14:Sketching-Tool,BWZ16:Optimal-Principal,
GLPW16:Frequent-Directions,TYUC17:Practical-Sketching,TYUC17:Fixed-Rank-Approximation}
have tremendous potential in this context.
As we will explain, these algorithms can inexpensively
maintain a summary, or \emph{sketch}, of the data as it is being generated.
After the data collection process terminates, we can extract
a near-optimal low-rank approximation from the sketch.
This approximation is accompanied by an \emph{a posteriori}
error estimate.

The second purpose of this paper is to design, analyze, and test
a new sketching algorithm that is suitable for handling scientific data.
We will build out the theoretical infrastructure needed for
practitioners to deploy this algorithm with confidence.
We will also demonstrate that the method is effective for
some small- and medium-scale examples, including a computer
simulation of the Navier--Stokes equations and a
high-resolution sea surface temperature dataset~\cite{NOAA19:Sea-Surface-Temperature}.

\subsection{Streaming, Sketching, and Matrix Approximation}

Let us begin with a brief introduction to streaming data and sketching,
as they apply to the problem of low-rank matrix approximation.
This abstract presentation will solidify into a concrete algorithm
in \cref{sec:sketching,sec:aposterior}.  The explanation borrows heavily
from our previous paper~\cite{TYUC17:Practical-Sketching},
which contains more details and context.

\subsubsection{Streaming}

We are interested in acquiring a compressed representation %
of an enormous matrix $\mtx{A} \in \F^{m \times n}$ where $\F = \R$ or $\F = \C$.
This work focuses on a setting where the matrix
is presented as a long sequence of ``simple'' linear updates:
\begin{equation} \label{eqn:data-stream}
\mtx{A} = \mtx{H}_1 + \mtx{H}_2 + \mtx{H}_3 + \cdots.
\end{equation}
In applications, each innovation $\mtx{H}_i$ is sparse,
low-rank, or enjoys another favorable structure.  
The challenge arises because we do not wish to store the full matrix $\mtx{A}$,
and we %
cannot revisit the innovation $\mtx{H}_i$ after processing it.
The formula~\cref{eqn:data-stream} describes a particular type of
\emph{streaming data model}~\cite{Mut08:Data-Streams,CW09:Numerical-Linear,Woo14:Sketching-Tool}.

\subsubsection{Sketching}

To manage the data stream~\cref{eqn:data-stream},
we can use a \emph{randomized linear sketch}~\cite{AMS96:Space-Complexity,AGMS02:Tracking-Join}.
Before any data arrives, we draw and fix a random linear map
$\mathscr{S} : \F^{m \times n} \to \F^d$, called a \emph{sketching operator}.
Instead of keeping $\mtx{A}$ in working memory,
we retain only the image $\mathscr{S}(\mtx{A})$.
This image is called a \emph{sketch} of the matrix.
The dimension $d$ of the sketch is much smaller
than the dimension $mn$ of the matrix space,
so the sketching operator compresses the data matrix. 
Nonetheless, because of the randomness, a well-designed
sketching operator is likely to yield useful
information about any matrix $\mtx{A}$ that is statistically independent
from $\mathscr{S}$.

Sketches and data streams enjoy a natural synergy.
If the matrix $\mtx{A}$ is presented via the data stream~\cref{eqn:data-stream},
the linearity of the sketching operator ensures that
$$
\mathscr{S}(\mtx{A}) = \mathscr{S}(\mtx{H}_1) + \mathscr{S}(\mtx{H}_2) + \mathscr{S}(\mtx{H}_3) + \cdots.
$$
In other words, we can process an innovation $\mtx{H}_i$ by forming
$\mathscr{S}(\mtx{H}_i)$ and adding it to the current value of the sketch.
This update can be performed efficiently when $\mtx{H}_i$ is structured.
It is a striking fact~\cite{LNW14:Turnstile-Streaming} that
randomized linear sketches are essentially the only mechanism
for tracking a general data stream of the form~\eqref{eqn:data-stream}.

\subsubsection{Matrix Approximation}

After the updating process terminates, we need to extract a low-rank approximation
of the data matrix $\mtx{A}$ from the sketch $\mathscr{S}(\mtx{A})$.
More precisely, we report a rank-$r$ matrix $\hat{\mtx{A}}_r$, in factored
form,
that satisfies
\begin{equation} \label{eqn:approx-err-intro}
\fnorm{ \mtx{A} - \hat{\mtx{A}}_r }
	\approx \min_{\rank \mtx{B} \leq r}
	\fnorm{ \mtx{A} - \mtx{B} },
\end{equation}
where $\fnorm{\cdot}$ is the Schatten 2-norm (also known as the Frobenius norm).
We can also exploit the sketch to compute an \emph{a posteriori} estimate
of the error:
$$
\err_2( \hat{\mtx{A}}_r ) \approx \fnorm{ \mtx{A} - \hat{\mtx{A}}_r }.
$$
This estimator helps us to select the precise rank $r$
of the approximation.

\subsubsection{Goals}

Our objective is to design a sketch rich enough
to support these operations.
Since a rank-$r$ matrix has about $2r(m+n)$ degrees
of freedom, the sketch ideally should have size $d = \Theta( r(m+n) )$.
We want to compute the approximation using $\mathcal{O}( r^2 (m + n))$
floating-point operations, the cost of orthogonalizing $r$ vectors.

Existing one-pass SVD algorithms
(e.g.,~\cite{WLRT08:Fast-Randomized,CW09:Numerical-Linear,HMT11:Finding-Structure,Woo14:Sketching-Tool,
BWZ16:Optimal-Principal-STOC,Upa16:Fast-Space-Optimal,TYUC17:Practical-Sketching})
already meet these desiderata.  Nevertheless, there remains room for
improvement~\cite[Sec.~7.6]{TYUC17:Practical-Sketching}.

\subsubsection{Contributions}

The main technical contribution of this paper is a new sketch-based
 algorithm
for computing a low-rank approximation of a matrix from streaming data.
The new algorithm is a hybrid of the methods from~\cite[Thm.~12]{Upa16:Fast-Space-Optimal}
and~\cite[Alg.~7]{TYUC17:Practical-Sketching} that improves on the performance
of its predecessors.  Here are the key features of our work: %

\vspace{0.5pc}

\begin{itemize} \setlength{\itemsep}{0.5pc}
\item The new method can achieve a near-optimal relative approximation~\cref{eqn:approx-err-intro}
when the input matrix has a decaying singular value spectrum.  In particular,
our approach is more accurate than existing methods, especially when the storage budget is small.
As a consequence, the new method delivers higher-quality estimates of leading singular vectors.
(\Cref{sec:numerics})

\item	The algorithm is accompanied by \emph{a priori} error bounds that help
us set the parameters of the sketch reliably.  The new method is less sensitive
to the choice of sketch parameters and to the truncation rank, as compared with existing methods.
(\Cref{sec:theory,sec:numerics})

\item	Our toolkit includes an \emph{a posteriori} error estimator
for validating the quality of the approximation. %
This estimator also provides a principled mechanism
for selecting the precise rank of the final approximation.
(\Cref{sec:aposterior})

\item The method treats the two matrix dimensions symmetrically.  As a consequence,
we can extend it to obtain an algorithm for low-rank Tucker approximation of a
tensor from streaming data.  See our follow-up paper~\cite{SGTU19:Low-Rank-Tucker}.
\end{itemize}

\vspace{0.5pc}

For scientific simulation and data analysis,
these advances are significant because they
allow us to approximate the truncated singular value
decomposition of a huge matrix accurately and with
minimal resource usage.

\subsection{Application to Scientific Simulation}
\label{sec:vignette}

As we have mentioned, it is often desirable to reduce scientific data
before we submit it to further processing.
This section outlines some of the techniques that are commonly used for this purpose,
and it argues that randomized linear sketching may offer a better solution.

\subsubsection{Dynamical Model for a Simulation}
\label{sec:dynamical-model}

In many cases, we can model a simulation as a process that computes
the state $\vct{a}_{t+1} \in \F^m$ of a system at time $t + 1$ from
the state $\vct{a}_t \in \F^m$ of the system at time $t$.
We may collect the data generated by the simulation into a matrix
$\mtx{A} = [ \vct{a}_1, \dots, \vct{a}_n ] \in \F^{m \times n}$.
In scientific applications, it is common that this matrix has a
decaying singular value spectrum.

The dimension $m$ of the state typically increases with the resolution of the simulation,
and it can be very big.
The time horizon $n$ can also be large, especially for problems involving
multiple time scales and for ``stiff'' equations that have high sensitivity
to numerical errors.  In some settings, we may not even know the time horizon $n$
or the dimension $m$ of the state variable in advance.

\subsubsection{On-the-Fly Compression via Sketching}

Let us explain how sketching interacts with the dynamical model
from~\cref{sec:dynamical-model}
For simplicity, assume that the dimensions $m$ and $n$ of
the data matrix $\mtx{A} \in \F^{m \times n}$ are known.
Draw and fix a randomized linear sketching operator
$\mathscr{S} : \F^{m \times n} \to \F^d$.

We can view the dynamical model for the simulation
as an instance of the data stream~\cref{eqn:data-stream}:
$$
\mtx{A} = \vct{a}_1 \mathbf{e}_1^* + \vct{a}_2 \mathbf{e}_2^* + \vct{a}_3 \mathbf{e}_3^* + \cdots.
$$
Here, $\mathbf{e}_i$ is the $i$th standard basis vector in $\F^n$.
The sketch $\vct{x} = \mathscr{S}(\mtx{A}) \in \F^d$ evolves as
$$
\vct{x} = \mathscr{S}(\vct{a}_1 \mathbf{e}_1^*) + 
	\mathscr{S}(\vct{a}_2 \mathbf{e}_2^*) + \mathscr{S}(\vct{a}_3 \mathbf{e}_3^*) + \cdots.
$$
Each time the simulation generates a new state $\vct{a}_t$,
we update the sketch $\vct{x}$ to reflect the innovation
$\vct{a}_t\mathbf{e}_t^*$ to the data matrix $\mtx{A}$.
We can exploit the fact that the
innovation is a rank-one matrix to ensure that this computation has negligible
incremental cost.  After sketching the new state, we write it to
external memory or simply discard it. %
Once the simulation is complete, we can extract a provably good low-rank
approximation from the sketch, along with an error estimate.

\subsubsection{Compression of Scientific Data: Current Art}

At present, computational scientists rely on several other strategies
for data reduction.  %
One standard practice is to collect the full data matrix %
and then to compress it.  Methods include direct computation of a low-rank
matrix or tensor approximation~\cite{zhou2014decomposition, austin2016parallel}
or fitting a statistical model~\cite{castruccio2016compressing, guinness2017compression, malik2018principal}.
These approaches have high storage costs, %
and they entail communication of large volumes of data. %

There are also some techniques for compressing simulation output %
as it is generated.  One approach is to store only a subset of the columns of
the data matrix (``snapshots'' or ``checkpointing''),
instead of keeping the full trajectory~\cite{GW00:Algorithm799-Revolve,HWS06:Optimal-Memory-Reduced}.
Another approach is to maintain a truncated singular value decomposition (SVD)
using a rank-one updating method~\cite{Bra06:Fast-Low-Rank,RPW18:Geometric-Subspace}.
Both techniques have the disadvantage that they do not preserve
a complete, consistent view of the data matrix.  The rank-one updating
method also incurs a substantial computational cost at each step.

\subsubsection{Contributions}

We believe that randomized linear sketching resolves many of the shortcomings
of earlier data reduction methods for scientific applications.
We will show by example (\cref{sec:numerics}) that our new sketching algorithm
can be used to compress scientific data drawn from several applications:

\vspace{0.5pc}

\begin{itemize} \setlength{\itemsep}{0.5pc}
\item	We apply the method to a 430 Megabyte (MB) data matrix from a direct numerical simulation,
via the Navier--Stokes equations, of vortex shedding
from a cylinder in a two-dimensional channel flow. %

\item	The method is used to approximate a 1.1 Gigabyte (GB) temperature dataset
collected at a network of weather stations in the northeastern United States.

\item	We can invoke the sketching algorithm as a module in an optimization algorithm
for solving a large-scale phase retrieval problem that arises in microscopic imaging
via Fourier ptychography.  The full matrix would require over 5 GB of storage.

\item	As a larger-scale example, we show that our methodology allows us to compute
an accurate truncated SVD of a sea surface temperature dataset, which requires over 75 GB
in double precision.
This experiment is performed without any adjustment of parameters or other retrospection.
\end{itemize}

\vspace{0.5pc}

These demonstrations support our assertion that sketching is
a powerful tool for managing large-scale data from scientific simulations and measurement
processes.  We have written this paper to motivate computational scientists
to consider sketching in their own applications.

\subsection{Roadmap}

In \cref{sec:sketching}, we give a detailed presentation of
the proposed method and its relationship to earlier work.
We provide an informative mathematical analysis that explains the behavior
of our algorithm (\cref{sec:theory}), and we describe how to construct
\emph{a posteriori} error estimates (\cref{sec:aposterior}).
We also discuss implementation issues (\cref{sec:implementation}),
and we present extensive numerical experiments on real and simulated data (\cref{sec:numerics}).

\subsection{Notation}

We use $\F$ for the scalar field, which is real $\R$ or complex $\C$.
The symbol ${}^*$ refers to the (conjugate) transpose of a matrix or vector.
The dagger ${}^\dagger$ denotes the Moore--Penrose pseudoinverse.
We write $\norm{\cdot}_p$ for the Schatten $p$-norm for $p \in [1, \infty]$.
The map $\lowrank{\cdot}{r}$ returns any (simultaneous)
best rank-$r$ approximation of its argument with respect
to the Schatten $p$-norms~\cite[Sec.~6]{Hig89:Matrix-Nearness}.

\section{Sketching and Low-Rank Approximation of a Matrix}
\label{sec:sketching}

Let us describe the basic procedure for sketching a matrix
and for computing a low-rank approximation from the sketch.
We discuss prior work in \cref{sec:provenance}.
See \cref{sec:implementation} for implementation,
\cref{sec:theory} for parameter selection, and
\cref{sec:aposterior} for error estimation.

\subsection{Dimension Reduction Maps}

We will use dimension reduction to collect information about an input matrix.
Assume that $d \leq N$.  A \emph{randomized linear dimension reduction map}
is a random matrix $\mtx{\Xi} \in \F^{d \times N}$ with the property that
\begin{equation} \label{eqn:embedding}
\Expect \fnormsq{ \mtx{\Xi} \vct{u} } = \mathrm{const} \cdot \fnormsq{\vct{u}}
\quad\text{for all $\vct{u} \in \F^{N}$.}
\end{equation}
In other words, the map reduces a vector of dimension $N$ to dimension $d$,
but it still preserves Euclidean distances on average.
It is also desirable that we can store the map $\mtx{\Xi}$
and apply it to vectors efficiently.
See \cref{sec:dim-red-maps} for concrete examples.

\subsection{The Input Matrix}

Let $\mtx{A} \in \F^{m \times n}$ be an arbitrary matrix
that we wish to approximate.  In many applications where
sketching is appropriate, the matrix is presented implicitly
as a sequence of linear updates; see \cref{sec:updates}.

To apply sketching methods for low-rank matrix approximation,
the user needs to specify a target rank $r_0$. %
The target rank $r_0$ is a rough estimate for the final rank of
the approximation, and it influences the choice of the sketch size.
We can exploit \emph{a posteriori} information to select the
final rank; see~\cref{sec:diagnose-spectrum}.

\begin{remark}[Unknown Dimensions]
For simplicity, we assume the matrix dimensions are known in advance.
The framework can be modified to handle matrices with growing dimensions,
such as a simulation with an unspecified time horizon.
\end{remark}

\subsection{The Sketch}

Let us describe the sketching operators we use
to acquire data about the input matrix.
The sketching operators are parameterized by
a ``range'' parameter $k$ and a ``core'' parameter $s$
that satisfy
$$
r_0 \leq k \leq s \leq \min\{m, n\},
$$
where $r_0$ is the target rank.
The parameter $k$ determines the maximum rank of an
approximation.
For now, be aware that the approximation scheme is more sensitive
to the choice of $k$ than to the choice of $s$.
In \cref{sec:sketch-size-theory},
we offer specific parameter recommendations that are
supported by theoretical analysis.  In \cref{sec:oracle-performance},
we demonstrate that these parameter choices are effective in practice.

Independently, draw and fix four randomized linear dimension reduction maps:
\begin{equation} \label{eqn:test-matrices}
\begin{aligned}
\mtx{\Upsilon} &\in \F^{k \times m} \quad\text{and}\quad
\mtx{\Omega} \in \F^{k \times n}; \\ %
\mtx{\Phi} &\in \F^{s \times m} \quad\text{and}\quad
\mtx{\Psi} \in \F^{s \times n}. %
\end{aligned}
\end{equation}
These dimension reduction maps are often called \emph{test matrices}.
The sketch itself consists of three matrices:
\begin{gather} \label{eqn:range-sketch}
\mtx{X} := \mtx{\Upsilon} \mtx{A} \in \F^{k \times n} \quad\text{and}\quad
\mtx{Y} := \mtx{A} \mtx{\Omega}^* \in \F^{m \times k}; \\ %
\label{eqn:core-sketch}
\mtx{Z} := \mtx{\Phi} \mtx{A} \mtx{\Psi}^* \in \F^{s \times s}.
\end{gather}
The first two matrices $(\mtx{X}, \mtx{Y})$ capture %
the co-range and the range of $\mtx{A}$.  The core sketch $(\mtx{Z})$ contains fresh information
that improves our estimates of the singular values and singular vectors of $\mtx{A}$;
it is responsible for the superior performance of the new method.

\begin{remark}[Prior Work]
The paper~\cite[Sec.~3]{Upa16:Fast-Space-Optimal} contains the insight that a
sketch of the form~\cref{eqn:range-sketch,eqn:core-sketch} can support better
low-rank matrix approximations, but it proposes a reconstruction algorithm
that is less effective.
Related (but distinct) sketches appear in the
papers~\cite{WLRT08:Fast-Randomized,CW09:Numerical-Linear,HMT11:Finding-Structure,
Woo14:Sketching-Tool,CEM+15:Dimensionality-Reduction,BWZ16:Optimal-Principal-STOC,TYUC17:Randomized-Single-View-TR,
TYUC17:Practical-Sketching}.
\end{remark}

\subsection{Linear Updates}
\label{sec:updates}

In streaming data applications, the input matrix
$\mtx{A} \in \F^{m \times n}$ is presented as a sequence of linear updates of the form
\begin{equation} \label{eqn:linear-update}
\mtx{A} \gets \eta \mtx{A} + \nu \mtx{H}
\end{equation}
where $\eta, \nu \in \F$ and the matrix $\mtx{H} \in \F^{m \times n}$.

In view of the construction~\cref{eqn:range-sketch,eqn:core-sketch},
we can update the sketch $(\mtx{X}, \mtx{Y}, \mtx{Z})$ of the matrix
$\mtx{A}$ to reflect the innovation~\cref{eqn:linear-update}
by means of the formulae
\begin{equation} \label{eqn:sketch-update}
\begin{aligned}
\mtx{X} &\gets \eta \mtx{X} + \nu \mtx{\Upsilon} \mtx{H} \\
\mtx{Y} &\gets \eta \mtx{Y} + \nu \mtx{H} \mtx{\Omega}^* \\
\mtx{Z} &\gets \eta \mtx{Z} + \nu \mtx{\Phi} \mtx{H} \mtx{\Psi}^*.
\end{aligned}
\end{equation}
When implementing these updates, it is worthwhile to exploit favorable
structure in the matrix $\mtx{H}$, such as sparsity or low rank.

\begin{remark}[Streaming Model]
For the linear update model~\cref{eqn:linear-update}, randomized linear
sketches are more or less the only way to track the input matrix~\cite{LNW14:Turnstile-Streaming}.
There are more restrictive streaming models (e.g., when the columns of the matrix are presented
in sequence) where it is possible to design other types of
algorithms~\cite{FVR16:Dimensionality-Reduction,GLPW16:Frequent-Directions}.
\end{remark}

\begin{remark}[Linearly Transformed Data] \label{rem:linear-transforms}
We can use sketching to track any matrix that depends
linearly on a data stream.
Suppose that the input data $\vct{a} \in \R^d$,
and we want to maintain the matrix $\mathscr{L}(\vct{a})$
induced by a fixed linear map $\mathscr{L} : \F^{d} \to \F^{m \times n}$.
If we receive an update $\vct{a} \gets \eta\vct{a} + \nu\vct{h}$,
then the linear image evolves as
$\mathscr{L}(\vct{a}) \gets \eta\mathscr{L}(\vct{a}) + \nu\mathscr{L}(\vct{h})$.
This update has the form~\eqref{eqn:linear-update},
so we can apply the matrix sketch~\cref{eqn:range-sketch,eqn:core-sketch}
to track $\mathscr{L}(\vct{a})$ directly.
This idea has applications to physical simulations where
a known transform $\mathscr{L}$ exposes structure in the data
\cite{MSTM13:Model-Based-Scaling}.
\end{remark}

\subsection{Optional Step: Centering}
\label{sec:centering}

Many applications require us to center the data matrix to remove a trend,
such as the temporal average.  Principal component analysis (PCA)
also involves a centering step~\cite{Jol02:Principal-Component}.
For superior accuracy, it is wise to perform this operation \emph{before}
sketching the matrix.

As an example, let us explain how to compute and remove the mean value of each
row of the data matrix in the streaming setting.
We can maintain an extra vector $\vct{\mu} \in \F^{m}$ that tracks the mean value of each row.
To process an update of the form~\cref{eqn:linear-update}, we first apply the steps
\begin{equation*} \label{eqn:detrend}
\vct{h} \gets n^{-1} \mtx{H} \mathbf{e}
\quad\text{and}\quad
\mtx{H} \gets \mtx{H} - \vct{h} \mathbf{e}^*
\quad\text{and}\quad
\vct{\mu} \gets \eta \vct{\mu} + \nu \vct{h}.
\end{equation*}
Here, $\mathbf{e} \in \F^n$ is the vector of ones.  Afterward, we update the sketches
using~\cref{eqn:sketch-update}.  The sketch now contains the centered data matrix, where
each row has zero mean.

\subsection{Computing Truncated Low-Rank Approximations}

Once we have acquired a sketch $(\mtx{X}, \mtx{Y}, \mtx{Z})$ of
the input matrix $\mtx{A}$, we must produce a good low-rank approximation.
Let us outline the computations we propose.
The intuition appears below in \cref{sec:intuition},
and \Cref{sec:theory} presents a theoretical analysis.

The first two components $(\mtx{X}, \mtx{Y})$ of the sketch are used
to estimate the co-range and the range of the matrix $\mtx{A}$.
Compute thin \textsf{QR} factorizations:
\begin{equation} \label{eqn:range-corange}
\begin{aligned}
\mtx{X}^* =: \mtx{P} \mtx{R}_1
	\quad\text{where}\quad \mtx{P} \in \F^{n \times k}; \\
\mtx{Y} =: \mtx{Q} \mtx{R}_2
	\quad\text{where}\quad \mtx{Q} \in \F^{m \times k}.
\end{aligned}
\end{equation}
Both $\mtx{P}$ and $\mtx{Q}$ have orthonormal columns; discard the triangular parts $\mtx{R}_1$ and $\mtx{R}_2$.

The third sketch $\mtx{Z}$ is used to compute the core approximation $\mtx{C}$, which describes
how $\mtx{A}$ acts between $\range(\mtx{P})$ and $\range(\mtx{Q})$:
\begin{equation} \label{eqn:linkage-matrix}
\mtx{C} := (\mtx{\Phi Q})^\dagger \mtx{Z} ((\mtx{\Psi} \mtx{P})^\dagger)^* \in \F^{k \times k}.
\end{equation}
This step is implemented by solving a family of least-squares problems.

Next, form a rank-$k$ approximation $\hat{\mtx{A}}$ of the input matrix $\mtx{A}$ via
\begin{equation} \label{eqn:Ahat}
\hat{\mtx{A}} := \mtx{Q} \mtx{C} \mtx{P}^*
\end{equation}
We refer to $\hat{\mtx{A}}$ as the ``initial'' approximation.
It is important to be aware that the initial approximation can contain spurious
information (in its smaller singular values and the associated singular vectors).

To produce an approximation that is fully reliable, we must truncate the rank
of the initial approximation~\cref{eqn:Ahat}.
For a truncation parameter $r$, we construct a rank-$r$ approximation
by replacing $\hat{\mtx{A}}$ with its best rank-$r$ approximation%
\footnote{The formula~\cref{eqn:Ahat-fixed} is an easy consequence
of the Eckart--Young Theorem~\cite[Sec.~6]{Hig89:Matrix-Nearness} and the fact
that $\mtx{Q}, \mtx{P}$ have orthonormal columns.}
in Frobenius norm:
\begin{equation} \label{eqn:Ahat-fixed}
\lowrank{\hat{\mtx{A}}}{r} = \mtx{Q} \lowrank{\mtx{C}}{r} \mtx{P}^*.
\end{equation}
We refer to $\lowrank{\hat{\mtx{A}}}{r}$ as a ``truncated''
approximation.  \Cref{sec:diagnose-spectrum} outlines some ways to
use \emph{a posteriori} information to select the truncation rank $r$.

The truncation~\cref{eqn:Ahat-fixed} has an appealing permanence property:
$\lowrank{ \lowrank{\hat{\mtx{A}}}{\varrho} }{r} = \lowrank{ \hat{\mtx{A}} }{r}$
for all $\varrho \geq r$.  In other words, the rank-$r$ approximation persists
as part of all higher-rank approximations. 
In contrast, some earlier reconstruction methods are unstable
in the sense that the rank-$r$ approximation varies wildly with $r$;
see~\cref{app:flow-field-reconstruction}.

\begin{remark}[Extensions]
We can form other structured approximations of $\mtx{A}$ by
projecting $\hat{\mtx{A}}$ onto a set of structured matrices.
See~\cite[Secs.~5--6]{TYUC17:Practical-Sketching}
for a discussion of this idea in the context of another sketching technique.
For brevity, we do not develop this point further.
See our paper~\cite{TYUC17:Fixed-Rank-Approximation} for a sketching
and reconstruction method designed specifically for positive-semidefinite matrices.  
\end{remark}

\begin{remark}[Prior Work]
The truncated approximation~\cref{eqn:Ahat-fixed} is new,
but it depends on insights from our previous
work~\cite{TYUC17:Randomized-Single-View-TR,TYUC17:Practical-Sketching}.
Upadhyay~\cite[Thm.~12]{Upa16:Fast-Space-Optimal} proposes
a different reconstruction formula for the same kind of sketch.
The papers~\cite{WLRT08:Fast-Randomized,CW09:Numerical-Linear,HMT11:Finding-Structure,
Woo14:Sketching-Tool,CEM+15:Dimensionality-Reduction,BWZ16:Optimal-Principal-STOC,Upa16:Fast-Space-Optimal}
describe other methods for low-rank matrix approximation from a randomized linear sketch.
The numerical work in \cref{sec:numerics}
demonstrates that~\cref{eqn:Ahat-fixed} matches or improves on
earlier techniques.
\end{remark}

\subsection{Intuition}
\label{sec:intuition}

The approximations \cref{eqn:Ahat,eqn:Ahat-fixed} are based on some well-known insights
from randomized linear algebra~\cite[Sec.~1]{HMT11:Finding-Structure}.
Since $\mtx{P}$ and $\mtx{Q}$ capture the co-range and range of the
input matrix, we expect that
\begin{equation} \label{eqn:rdm-range-finder}
\mtx{A} \approx \mtx{Q} (\mtx{Q}^* \mtx{A} \mtx{P}) \mtx{P}^*
\end{equation}
(See \cref{lem:compression-error} for justification.)
We cannot compute the core matrix $\mtx{Q}^*\mtx{A}\mtx{P}$ directly from
a linear sketch because $\mtx{P}$ and $\mtx{Q}$ are functions of $\mtx{A}$.
Instead, we estimate the core matrix using the core sketch $\mtx{Z}$.
Owing to the approximation~\cref{eqn:rdm-range-finder},
$$
\mtx{Z} = \mtx{\Phi} \mtx{A} \mtx{\Psi}^*
	\approx (\mtx{\Phi Q}) (\mtx{Q}^* \mtx{A} \mtx{P}) (\mtx{P}^* \mtx{\Psi}^*).
$$
Transfer the outer matrices to the left-hand side to discover that
the core approximation $\mtx{C}$, defined in~\eqref{eqn:linkage-matrix}, satisfies
\begin{equation} \label{eqn:linkage-intuition}
\mtx{C} = (\mtx{\Phi Q})^\dagger \mtx{Z} ((\mtx{\Psi} \mtx{P})^\dagger)^*
	\approx \mtx{Q}^* \mtx{A} \mtx{P}.
\end{equation}
In view of \cref{eqn:rdm-range-finder,eqn:linkage-intuition}, we arrive at the relations
$$
\mtx{A} \approx \mtx{Q} (\mtx{Q}^* \mtx{A} \mtx{P}) \mtx{P}^*
	\approx \mtx{Q} \mtx{C} \mtx{P}^*
	= \hat{\mtx{A}}.
$$
The error in the last relation depends on the error in the best rank-$k$
approximation of $\mtx{A}$.  When $\mtx{A}$ has a decaying spectrum,
the rank-$r$ truncation of $\mtx{A}$ for $r \ll k$ agrees closely with
the rank-$r$ truncation of the initial approximation $\hat{\mtx{A}}$.
That is,
$$
	\lowrank{\mtx{A}}{r}
	\approx \lowrank{\hat{\mtx{A}}}{r}
	= \mtx{Q} \lowrank{\mtx{C}}{r} \mtx{P}^*.
$$
\Cref{thm:low-rank-error-bound,cor:fixed-rank-error-bound} justify these heuristics
completely for Gaussian dimension reduction maps.
\Cref{sec:diagnose-spectrum} discusses \emph{a posteriori} selection of $r$.

\subsection{Discussion of Related Work}
\label{sec:provenance}

Sketching algorithms are specifically designed for the streaming model;
that is, for data that is presented as a sequence of updates.
The sketching paradigm is attributed to~\cite{AMS96:Space-Complexity,AGMS02:Tracking-Join}; see
the survey~\cite{Mut08:Data-Streams} for an introduction
and overview of early work.

Randomized algorithms for low-rank matrix approximation were proposed
in the theoretical computer science (TCS) literature in the late
1990s~\cite{PRTV00:Latent-Semantic,FKV04:Fast-Monte-Carlo}.
Soon after, numerical analysts developed practical versions of these
algorithms~\cite{MRT11:Randomized-Algorithm,WLRT08:Fast-Randomized,RST09:Randomized-Algorithm,
HMT11:Finding-Structure,HMST11:Algorithm-Principal}.  For more background
on the history of randomized linear algebra,
see~\cite{HMT11:Finding-Structure,Mah11:Randomized-Algorithms,Woo14:Sketching-Tool}.

The paper~\cite{WLRT08:Fast-Randomized} contains the first
one-pass algorithm for low-rank matrix approximation; it was
designed to control communication and arithmetic costs,
rather than to handle streaming data.
The first general treatment of numerical linear algebra in the
streaming model appears in~\cite{CW09:Numerical-Linear}.
Recent papers on low-rank matrix approximation in the streaming model
include~\cite{BWZ16:Optimal-Principal-STOC,Upa16:Fast-Space-Optimal,
FVR16:Dimensionality-Reduction,GLPW16:Frequent-Directions,
TYUC17:Practical-Sketching,TYUC17:Fixed-Rank-Approximation}.

\subsubsection{Approaches from NLA}

The NLA literature contains a number of
papers~\cite{WLRT08:Fast-Randomized,HMT11:Finding-Structure,TYUC17:Practical-Sketching}
on low-rank approximation from a randomized linear sketch.
These methods all compute the range matrix
$\mtx{Q}$ and the co-range matrix $\mtx{P}$
using the randomized range finder~\cite[Alg.~4.1]{HMT11:Finding-Structure},
encapsulated in~\cref{eqn:range-sketch,eqn:range-corange}.

The methods differ in how they construct a core matrix
$\tilde{\mtx{C}}$ so that $\mtx{A} \approx \mtx{Q}\tilde{\mtx{C}}\mtx{P}$.
Earlier papers reuse the range and co-range sketches ($\mtx{X}$, $\mtx{Y}$)
and the associated test matrices ($\mtx{\Upsilon}$, $\mtx{\Omega}$)
to form $\tilde{\mtx{C}}$.  Our new algorithm is based on an insight
from~\cite{BWZ16:Optimal-Principal-STOC,Upa16:Fast-Space-Optimal}
that the estimate $\mtx{C}$ from~\cref{eqn:linkage-matrix} is more reliable because it
uses a random sketch $\mtx{Z}$ that is statistically independent from ($\mtx{X}$, $\mtx{Y}$).
The storage cost of the additional sketch is negligible
when $s^2 \ll k (m + n)$. %

The methods also truncate the rank of the approximation
at different steps.  The older papers~\cite{WLRT08:Fast-Randomized,HMT11:Finding-Structure}
perform the truncation \emph{before} estimating the core matrix (cf.~\cref{app:hmt}).
One insight from~\cite{TYUC17:Practical-Sketching} is
that it is beneficial to perform the truncation \emph{after} estimating
the core matrix.  Furthermore, an effective truncation mechanism %
is to report a best rank-$r$ approximation of the initial estimate.
We have adopted the latter approach.

\subsubsection{Approaches from TCS}
\label{sec:tcs-discuss}

Most of the algorithms in the TCS literature~\cite{CW09:Numerical-Linear,Woo14:Sketching-Tool,CEM+15:Dimensionality-Reduction,BWZ16:Optimal-Principal-STOC,Upa16:Fast-Space-Optimal}
are based on a framework called ``sketch-and-solve'' that is attributed
to Sarl{\'o}s~\cite{Sar06:Improved-Approximation}.
The basic idea is that the solution to a constrained
least-squares problem (e.g., low-rank matrix approximation in Frobenius norm)
is roughly preserved when we solve the problem after randomized linear dimension reduction.

The sketch-and-solve framework sometimes leads to the same algorithms
as the NLA point of view; other times, it leads to different approaches.
It would take us too far afield to detail these derivations,
but we give a summary of one such method~\cite[Thm.~12]{Upa16:Fast-Space-Optimal}
in \cref{app:upa}.
Unfortunately, sketch-and-solve algorithms are often unsuitable for high-accuracy computations;
see~\cref{sec:numerics} and~\cite[Sec.~7]{TYUC17:Practical-Sketching} for evidence.

A more salient criticism is that the TCS literature does not attend to
the issues that arise if we want to use sketching algorithms in practice.
We have expended a large amount of effort to address these challenges,
which range from parameter selection to numerically sound implementation.
See~\cite[Sec.~1.7.4]{TYUC17:Practical-Sketching} for more discussion.

\section{Randomized Linear Dimension Reduction Maps}
\label{sec:dim-red-maps}

In this section, we describe several randomized linear dimension reduction maps
that are suitable for implementing sketching algorithms for low-rank matrix approximation.
See~\cite{Lib09:Accelerated-Dense,HMT11:Finding-Structure,Woo14:Sketching-Tool,
TYUC17:Practical-Sketching,sun2018tensor}
for additional discussion and examples.  The class template for a dimension
reduction map appears as \cref{alg:dim-redux}; the algorithms for specific
dimension reduction techniques are postponed to the supplement.

\begin{algorithm}[t]
  \caption{\textsl{Dimension Reduction Map Class.}  %
  \label{alg:dim-redux}}
  \begin{algorithmic}[1]
  \vspace{0.5pc}

  \State \textbf{class} \textsc{DimRedux} ($\mathbb{F}$)
  	\Comment Dimension reduction map over field $\F$

  \Indent

  \Function{\textsc{DimRedux}}{$d, N$}
  	\Comment Construct map $\mtx{\Xi} : \F^N \to \F^d$
  \EndFunction

  \Function{\textsc{DimRedux.mtimes}}{\texttt{DRmap}, $\mtx{M}$}
  		\Comment Left action of map
  \EndFunction

  \Function{\textsc{DimRedux.mtimes}}{$\mtx{M}, \texttt{DRmap}^*$}
  		\Comment Right action of adjoint
		\State \Return $(\textsc{DimRedux.mtimes}(\texttt{DRmap}, \mtx{M}^*))^*$
		\Comment Default behavior
  \EndFunction

  \EndIndent

  \vspace{0.25pc}

\end{algorithmic}
\end{algorithm}

\subsection{Gaussian Maps}
\label{sec:gauss}

The most basic dimension reduction map is simply a Gaussian matrix.
That is, $\mtx{\Xi} \in \F^{d \times N}$ is a $d \times N$
matrix with independent standard normal entries.%
\footnote{A real standard normal variable follows the Gaussian distribution
with mean zero and variance one.  A complex standard normal variable
takes the form $g_1 + \mathrm{i} g_2$,
where $g_i$ are independent real standard normal variables.}

\Cref{alg:dim-redux-gauss} describes an implementation
of Gaussian dimension reduction.
The map $\mtx{\Xi}$ requires storage of $dN$ floating-point numbers
in the field $\F$.  The cost of applying the map to a vector is
$\mathcal{O}(dN)$ arithmetic operations.

Gaussian dimension reduction maps are simple, and they are effective
in randomized algorithms for low-rank matrix approximation~\cite{HMT11:Finding-Structure}.
We can also analyze their behavior in full detail; see \cref{sec:theory,sec:aposterior}.
On the other hand, it is expensive to draw a large number of
Gaussian random variables, and the cost of storage and arithmetic renders
these maps less appealing when the output dimension $d$ is large.

\begin{remark}[Unknown Dimension] \label{rem:unknown-dim}
Since the columns of a Gaussian map $\mtx{\Xi}$ are statistically independent,
we can instantiate more columns if we need to apply $\mtx{\Xi}$ to a longer vector.
Sparse maps (\cref{sec:sparse}) share this feature.
This observation is valuable in the streaming setting,
where a linear update might involve coordinates heretofore unseen,
forcing us to enlarge the domain of the dimension reduction map.
\end{remark}

\begin{remark}[History]
Gaussian dimension reduction has been used as an algorithmic tool
since the paper~\cite{IM98:Approximate-Nearest} of Indyk \& Motwani.
In spirit, this approach is quite similar to the earlier theoretical
work of Johnson \& Lindenstrauss~\cite{JL84:Extensions-Lipschitz},
which performs dimension reduction by projection onto a random subspace.
\end{remark}

\subsection{Scrambled SRFT Maps}
\label{sec:ssrft}

Next, we describe a structured dimension reduction map, called a
\emph{scrambled subsampled randomized Fourier transform} (SSRFT).
We recommend this approach for practical implementations.

An SSRFT map takes the form%
\footnote{Empirical work suggests that it is not necessary to iterate
the permutation and trigonometric transform twice, but this duplication can increase reliability.}
$$
\mtx{\Xi} = \mtx{RF\Pi F\Pi}' \in \F^{d \times N}.
$$
The matrices $\mtx{\Pi}, \mtx{\Pi}' \in \F^{N \times N}$ are signed permutations,%
\footnote{A signed permutation matrix has precisely one nonzero entry in each row and column,
and each nonzero entry of the matrix has modulus one.}
drawn independently and uniformly at random.
The matrix $\mtx{F} \in \F^{N \times N}$ denotes a discrete cosine transform $(\F = \R)$
or a discrete Fourier transform $(\F = \C)$.
The matrix $\mtx{R} \in \F^{d \times N}$ is a restriction to $d$ coordinates,
chosen uniformly at random.

\Cref{alg:dim-redux-ssrft} presents an implementation of an SSRFT.
The cost of storing $\mtx{\Xi}$ is just $\mathcal{O}(N)$ numbers.
The cost of applying $\mtx{\Xi}$ to a vector is $\mathcal{O}(N \log N)$
arithmetic operations, using the Fast Fourier Transform (FFT)
or the Fast Cosine Transform (FCT).  According to~\cite{WLRT08:Fast-Randomized},
this cost can be reduced to $\mathcal{O}(N \log d)$, but the improvement is
rarely worth the implementation effort.

In practice, SSRFTs behave slightly better than Gaussian matrices,
even though their storage cost does not scale with the output dimension $d$.
On the other hand, the analysis~\cite{AC09:Fast-Johnson-Lindenstrauss,Tro11:Improved-Analysis,BG13:Improved-Matrix}
is less complete than in the Gaussian case~\cite{HMT11:Finding-Structure}.
A proper implementation requires fast trigonometric transforms.
Last, the random permutations and FFTs require data movement,
which could be a challenge in the distributed setting.

\begin{remark}[History]
SSRFTs are inspired by the work of Ailon \& Chazelle~\cite{AC09:Fast-Johnson-Lindenstrauss}
on fast Johnson--Lindstrauss transforms.  For applications in randomized linear algebra,
see the papers~\cite{WLRT08:Fast-Randomized,Lib09:Accelerated-Dense,
HMT11:Finding-Structure,Tro11:Improved-Analysis,BG13:Improved-Matrix}.
\end{remark}

\subsection{Sparse Sign Matrices}
\label{sec:sparse}

Last, we describe another type of randomized dimension reduction
map, called a \emph{sparse sign matrix}.  We recommend
these maps for practical implementations where data movement is a concern.

To construct a sparse sign matrix $\mtx{\Xi} \in \F^{d \times N}$,
we fix a sparsity parameter $\zeta$ in the range $2 \leq \zeta \leq d$.
The columns of the matrix are drawn independently at random.
To construct each column, we take $\zeta$ iid draws from the
$\textsc{uniform}\{ z\in \F : \abs{z} = 1\}$ distribution,
and we place these random variables in $p$ coordinates,
chosen uniformly at random.
Empirically,
we have found that $\zeta = \min\{ d, 8 \}$
is a very reliable parameter selection
in the context of low-rank matrix approximation.%
\footnote{Empirical testing supports more aggressive choices, say, $\zeta = 4$
or even $\zeta = 2$ for very large problems.
On the other hand, the extreme $\zeta = 1$ is disastrous, so we have excluded it.}

\Cref{alg:dim-redux-sparse} describes an implementation of sparse
dimension reduction.  Since the matrix $\mtx{\Xi} \in \F^{d \times N}$
has $\zeta$ nonzeros per column,
we can store the matrix with $\mathcal{O}(\zeta N \log(1 + d/\zeta))$ numbers
via run-length coding.  The cost of applying the map to a vector is
$\mathcal{O}(\zeta N)$ arithmetic operations.

Sparse sign matrices can reduce data movement because
the columns are generated independently and the matrices can be applied
using (blocked) matrix multiplication.  They can also
adapt to input vectors whose maximum dimension $N$ may be unknown,
as discussed in \cref{rem:unknown-dim}.
One weakness is that we must use sparse data structures
and arithmetic to enjoy the benefit of these maps.

\begin{remark}[History]
Sparse dimension reduction maps are inspired
by the work of Achlioptas~\cite{Ach03:Database-Friendly}
on database-friendly random projections.
For applications in randomized linear algebra,
see~\cite{CW13:Low-Rank-Approximation,MM13:Low-Distortion-Subspace,
NN13:OSNAP-Faster,NN14:Lower-Bounds,BDN15:Toward-Unified}.
See~\cite{Coh16:Nearly-Tight} for a theoretical analysis
in the context of matrix approximation.
\end{remark}

\section{Implementation and Costs}
\label{sec:implementation}

This section contains further details about the implementation
of the sketching and reconstruction methods from \cref{sec:sketching},
including an account of storage and arithmetic costs.
We combine the mathematical notation from the text
with \textsc{Matlab R2018b} commands (typewriter font).
The electronic materials include a \textsc{Matlab}
implementation of these methods.

\subsection{Sketching and Updates}

\Cref{alg:sketch,alg:linear-update} contain the pseudocode for initializing
the sketch and for performing the linear update \cref{eqn:linear-update}.
It also includes optional code %
for maintaining an error sketch (\cref{sec:aposterior}).

The sketch requires storage of four dimension reduction maps
with size $k \times m$, $k \times n$, $s \times m$, $s \times n$.
We recommend using SSRFTs or sparse sign matrices to minimize the
storage costs associated with the dimension reduction maps.

The sketch itself consists of three matrices with dimensions
$k \times n$, $m \times k$, and $s \times s$.  In general, the
sketch matrices are dense, so they require $k (m+n) + s^2$
floating-point numbers in the field $\F$.

The arithmetic cost of the linear update $\mtx{A} \gets \eta \mtx{A} + \tau \mtx{H}$
is dominated by the cost of computing $\mtx{\Phi H}$ and $\mtx{H \Psi}$.
In practice, the innovation $\mtx{H}$ is low-rank, sparse, or structured.
The precise cost of the update depends on how we exploit the structure of $\mtx{H}$
and the dimension reduction map.

\begin{algorithm}[t] 
  \caption{\textsl{Sketch Constructor.}
  Implements~\cref{eqn:test-matrices,eqn:range-sketch,eqn:core-sketch,eqn:error-sketch}
  \label{alg:sketch}}
  \begin{algorithmic}[1]

  \Require{Field $\F$; input matrix dimensions $m \times n$;
  approximation sketch size parameters $k \leq s \leq \min\{m, n\}$;
  error sketch size parameter $q$}
  \Ensure{Draw test matrices for the approximation~\cref{eqn:test-matrices} and the error estimate~\cref{eqn:error-test};
  form the sketch~\cref{eqn:range-sketch,eqn:core-sketch,eqn:error-sketch} %
  of the zero matrix $\mtx{A} = \mtx{0}$}

\vspace{0.5pc}

	\State	\textbf{class} \textsc{Sketch}
  \Indent
	\State \textbf{local variables} $\mtx{\Upsilon}, \mtx{\Omega}, \mtx{\Phi}, \mtx{\Psi}$ (\textsc{DimRedux})
    \State \textbf{local variables} $\mtx{X}, \mtx{Y}, \mtx{Z}$ (matrices)	

	\State \textbf{local variables} $\mtx{\Theta}$ (\textsc{GaussDR}), $\mtx{W}$ (matrix)
		\Comment{[opt] For error estimation} %
  \vspace{0.5pc}
	\Function{Sketch}{$m, n, k, s, q$; \textsc{DR}}
		\Comment{Constructor; \textsc{DR} is a \textsc{DimRedux}}
    \State	$\mtx{\Upsilon} \gets \textsc{DR}(k, m)$
    	\Comment{Construct test matrix for range}
    \State	$\mtx{\Omega} \gets \textsc{DR}(k, n)$
    	\Comment{Test matrix for co-range}
    \State	$\mtx{\Phi} \gets \textsc{DR}(s, m)$
    	\Comment{Test matrices for core}
    \State	$\mtx{\Psi} \gets \textsc{DR}(s, n)$
    \State	$\mtx{\Theta} \gets \textsc{GaussDR}(q, m)$
    	\Comment{[opt] Gaussian test matrix for error}
	\State	$\mtx{X} \gets \texttt{zeros}(k, n)$ %
		\Comment{Approximation sketch of zero matrix}
	\State	$\mtx{Y} \gets \texttt{zeros}(m, k)$ %
	\State	$\mtx{Z} \gets \texttt{zeros}(s, s)$ %
	\State	$\mtx{W} \gets \texttt{zeros}(q, n)$ %
		\Comment{[opt] Error sketch of zero matrix}

	\EndFunction
	
	  \EndIndent

	\vspace{0.25pc}

\end{algorithmic}
\end{algorithm}

\begin{algorithm}[t] %
  \caption{\textsl{Linear Update to Sketch.}
  Implements~\cref{eqn:linear-update,eqn:error-sketch-update}. %
  \label{alg:linear-update}}
  \begin{algorithmic}[1]

  \Require{Innovation $\mtx{H} \in \F^{m \times n}$; scalars $\eta, \nu \in \F$}
  \Ensure{Modifies sketch %
  to reflect linear update $\mtx{A} \gets \eta \mtx{A} + \nu \mtx{H}$}
  \vspace{0.5pc}

  \Function{Sketch.LinearUpdate}{$\mtx{H}; \eta, \nu$}
	\State	$\mtx{X} \gets \eta \mtx{X} + \nu \mtx{\Upsilon H}$
		\Comment{Update range sketch}
	\State	$\mtx{Y} \gets \eta \mtx{Y} + \nu \mtx{H \Omega}^*$
		\Comment{Update co-range sketch}
	\State	$\mtx{Z} \gets \eta \mtx{Z} + \nu (\mtx{\Phi H}) \mtx{\Psi}^*$
		\Comment{Update core sketch}
	\State	$\mtx{W} \gets \eta \mtx{W} + \nu \mtx{\Theta H}$
		\Comment{[opt] Update error sketch}
  \EndFunction

	\vspace{0.25pc}

\end{algorithmic}
\end{algorithm}

\subsection{The Initial Approximation}

\Cref{alg:low-rank-approx} lists the pseudocode for computing a rank-$k$
approximation $\hat{\mtx{A}}$ of the matrix $\mtx{A}$ contained in the sketch;
see~\cref{eqn:Ahat}.

The method requires additional storage of $k (m+n)$ numbers for the
orthonormal matrices $\mtx{P}$ and $\mtx{Q}$, as well as $\mathcal{O}(sk)$ numbers
to form the core matrix $\mtx{C}$.  The arithmetic cost is usually dominated
by the computation of the \textsf{QR} factorizations of
$\mtx{X}^*$ and $\mtx{Y}$, which require $\mathcal{O}(k^2(m+n))$
operations.  When the parameters satisfy $s \gg k$, it is possible that
the cost $\mathcal{O}(k s^2)$ of forming the core matrix
$\mtx{C}$ will dominate; bear this in mind when setting the parameter $s$.

\subsection{The Truncated Approximation}

\Cref{alg:fixed-rank-approx} presents the pseudocode for computing a rank-$r$
approximation $\lowrank{\hat{\mtx{A}}}{r}$ of the matrix $\mtx{A}$ contained
in the sketch; see~\cref{eqn:Ahat-fixed}.  The parameter $r$ is an input;
the output is presented as a truncated SVD.

The working storage cost $\mathcal{O}(k(m+n))$ is dominated by the call
to \cref{alg:low-rank-approx}.  Typically, the arithmetic cost
is also dominated by the $\mathcal{O}(k^2(m+n))$ cost of the call to \cref{alg:low-rank-approx}.
When $s \gg k$, we need to invoke a randomized SVD algorithm~\cite{HMT11:Finding-Structure}
to achieve this arithmetic cost, but an ordinary dense SVD sometimes serves.

\begin{algorithm}[t] %
  \caption{\textsl{Initial Approximation.}  Implements~\cref{eqn:Ahat}. %
  \label{alg:low-rank-approx}}
  \begin{algorithmic}[1]
    \Ensure{Rank-$k$ approximation of sketched matrix in the form $\hat{\mtx{A}} = \mtx{QCP}^*$ with orthonormal $\mtx{Q} \in \F^{m \times k}$ and $\mtx{P} \in \F^{n \times k}$ and $\mtx{C} \in \F^{k \times k}$}
\vspace{0.5pc}

	\Function{Sketch.InitialApprox}{\,}

	\State	$(\mtx{Q}, \sim) \gets \texttt{qr}(\mtx{Y}, \texttt{0})$
		\Comment Compute orthogonal part of thin \textsf{QR}
	\State	$(\mtx{P}, \sim) \gets \texttt{qr}(\mtx{X}^*, \texttt{0})$
	\State	$\mtx{C} \gets ((\mtx{\Phi Q}) \backslash \mtx{Z}) / ((\mtx{\Psi P})^*)$
		\Comment	Solve two least-squares problems %
	\State	\Return $(\mtx{Q}, \mtx{C}, \mtx{P})$
	\EndFunction

	\vspace{0.25pc}

\end{algorithmic}
\end{algorithm}

\begin{algorithm}[t]
  \caption{\textsl{Truncated Approximation.}  Implements~\cref{eqn:Ahat-fixed}. %
  \label{alg:fixed-rank-approx}}
  \begin{algorithmic}[1]
    \Require{Final rank $r$ of the approximation}
    \Ensure{Rank-$r$ approximation of sketched matrix in the form $\hat{\mtx{A}}_r = \mtx{U\Sigma V}^*$
    with orthonormal $\mtx{U} \in \F^{m \times r}$ and $\mtx{V} \in \F^{n \times r}$
    and nonnegative diagonal $\mtx{\Sigma} \in \R^{r \times r}$}
\vspace{0.5pc}

	\Function{Sketch.TruncateApprox}{$r$}

	\State	$(\mtx{Q}, \mtx{C}, \mtx{P}) \gets \textsc{Sketch.InitialApprox}(\,)$
	\State	$(\mtx{U}, \mtx{\Sigma}, \mtx{V}) \gets \texttt{svd}(\mtx{C})$
		\Comment Dense or randomized SVD
	\State 	$\mtx{\Sigma} \gets \mtx{\Sigma}( \texttt{1:r}, \texttt{1:r})$
			\Comment Truncate SVD to rank $r$
	\State	$\mtx{U} \gets \mtx{U}(\texttt{:}, \texttt{1:r})$
	\State	$\mtx{V} \gets \mtx{V}(\texttt{:}, \texttt{1:r})$
	\State	$\mtx{U} \gets \mtx{Q} \mtx{U}$
		\Comment Consolidate unitary factors
	\State	$\mtx{V} \gets \mtx{P} \mtx{V}$
	\State	\Return $(\mtx{U}, \mtx{\Sigma}, \mtx{V})$
	\EndFunction

	\vspace{0.25pc}

\end{algorithmic}
\end{algorithm}

\section{\emph{A Priori} Error Bounds}
\label{sec:theory}

It is always important to characterize the behavior of numerical algorithms, but
the challenge is more acute for sketching methods.  Indeed, we cannot store the stream
of updates, so we cannot repeat the computation with new parameters if it is unsuccessful.
As a consequence, we must perform \emph{a priori} theoretical analysis to be able to implement
sketching algorithms with confidence.

In this section, we analyze the low-rank reconstruction algorithms
in the ideal case where all of the dimension reduction maps are standard normal.
These results allow us to make concrete recommendations for the sketch size parameters.
Empirically, other types of dimension reduction exhibit almost identical performance (\cref{sec:universality}),
so our analysis also supports more practical implementations based on SSRFTs
or sparse sign matrices.  The numerical work in \cref{sec:numerics} confirms
the value of this analysis.

\subsection{Notation}

For each integer $r \geq 0$, the \emph{tail energy} of the input matrix is
$$
\tau_{r+1}^2(\mtx{A}) := \min_{\rank( \mtx{B} ) \leq r} \fnormsq{ \mtx{A} - \mtx{B} }
	= \fnormsq{ \mtx{A} - \lowrank{\mtx{A}}{r}}
	= \sum\nolimits_{j > r} \sigma_j^2(\mtx{A}),
$$
where $\sigma_j$ returns the $j$th largest singular value of a matrix.  The second
identity follows from the Eckart--Young Theorem~\cite[Sec.~6]{Hig89:Matrix-Nearness}.

We also introduce parameters that reflect the field over which we are working:
\begin{equation} \label{eqn:alpha-beta}
\alpha := \alpha(\F) := \begin{cases} 1, & \F = \R \\ 0, & \F = \C \end{cases}
\quad\text{and}\quad
\beta := \beta(\F) := \begin{cases} 1, & \F = \R \\ 2, & \F = \C. \end{cases}
\end{equation}
These quantities let us present real and complex results in a single formula.

\subsection{Analysis of Initial Approximation}

The first result gives a bound for the expected error in the initial rank-$k$
approximation $\hat{\mtx{A}}$ of the input matrix $\mtx{A}$.

\begin{theorem}[Initial Approximation: Error Bound]
\label{thm:low-rank-error-bound}
Let $\mtx{A} \in \F^{m \times n}$ be an arbitrary input matrix.
Assume the sketch size parameters satisfy $s \geq 2k + \alpha$.
Draw independent Gaussian dimension reduction maps
$(\mtx{\Upsilon}, \mtx{\Omega}, \mtx{\Phi}, \mtx{\Psi})$,
as in~\cref{eqn:test-matrices}.  Extract a sketch~\cref{eqn:range-sketch,eqn:core-sketch}
of the input matrix.  Then
the rank-$k$ approximation $\hat{\mtx{A}}$,
constructed in~\cref{eqn:Ahat},
satisfies the error bound
\begin{equation} \label{eqn:low-rank-error-bound}
\Expect \fnormsq{ \mtx{A} - \hat{\mtx{A}} }
	\leq \frac{s - \alpha}{s - k - \alpha} \cdot \min_{\varrho < k - \alpha} \frac{k + \varrho - \alpha}{k - \varrho - \alpha} \cdot \tau_{\varrho+1}^2(\mtx{A}).
\end{equation}
\end{theorem}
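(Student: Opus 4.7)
The plan is to decompose $\mtx{A} - \hat{\mtx{A}}$ into orthogonal pieces that can be analyzed independently, exploiting the statistical independence of the range maps $(\mtx{\Upsilon},\mtx{\Omega})$ from the core maps $(\mtx{\Phi},\mtx{\Psi})$. I would write
\[
\mtx{A} - \hat{\mtx{A}} \;=\; (\mtx{I} - \mtx{Q}\mtx{Q}^*)\mtx{A} \;+\; \mtx{Q}\mtx{Q}^*\mtx{A}(\mtx{I} - \mtx{P}\mtx{P}^*) \;+\; \mtx{Q}(\mtx{Q}^*\mtx{A}\mtx{P} - \mtx{C})\mtx{P}^*.
\]
A short calculation verifies that these three summands are pairwise orthogonal in the Frobenius inner product (each lies in the image of a different one-sided projector, and those projectors mutually annihilate), so Pythagoras decouples the squared error into three contributions that may be bounded separately.

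For the first two contributions — the compression errors — I would invoke the standard oversampling analysis of the randomized range finder, applied respectively to the Gaussian sketches $\mtx{Y} = \mtx{A}\mtx{\Omega}^*$ and $\mtx{X} = \mtx{\Upsilon}\mtx{A}$. Each such bound takes a form like $\frac{k-\alpha}{k-\varrho-\alpha}\,\tau_{\varrho+1}^2(\mtx{A})$ for any admissible $\varrho$; after a careful symmetric accounting of the two sides, they consolidate into the factor $\frac{k+\varrho-\alpha}{k-\varrho-\alpha}\,\tau_{\varrho+1}^2(\mtx{A})$ visible in the stated bound.

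For the third contribution — the core-estimation error — I would substitute $\mtx{Z} = \mtx{\Phi}\mtx{A}\mtx{\Psi}^*$ into $\mtx{C}$ and use the identity $(\mtx{\Phi}\mtx{Q})^\dagger(\mtx{\Phi}\mtx{Q}) = \mtx{I}_k$, which holds almost surely since $s \geq k$, to obtain
\[
\mtx{Q}^*\mtx{A}\mtx{P} - \mtx{C} \;=\; -(\mtx{\Phi}\mtx{Q})^\dagger\,\mtx{\Phi}\mtx{E}\mtx{\Psi}^*\,((\mtx{\Psi}\mtx{P})^\dagger)^*, \qquad \mtx{E} := \mtx{A} - \mtx{Q}\mtx{Q}^*\mtx{A}\mtx{P}\mtx{P}^*.
\]
Next I would condition on $(\mtx{Q},\mtx{P})$, which are functions of the independent maps $(\mtx{\Upsilon},\mtx{\Omega})$. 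Rotational invariance of the Gaussian law implies that $\mtx{\Phi}\mtx{Q}$ is a standard $s\times k$ Gaussian statistically independent of $\mtx{\Phi}(\mtx{I} - \mtx{Q}\mtx{Q}^*)$, and symmetrically for $\mtx{\Psi}$. Applying the trace identity for Gaussian contractions together with the pseudoinverse moment formula $\Expect\fnormsq{\mtx{G}^\dagger} = \frac{k}{s - k - \alpha}$ for a standard Gaussian $\mtx{G} \in \F^{s\times k}$ — first to the $\mtx{\Psi}$-expectation and then to the $\mtx{\Phi}$-expectation — should produce a conditional bound of the form $\bigl(\frac{s-\alpha}{s-k-\alpha} - 1\bigr)\fnormsq{\mtx{E}}$. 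Combined with the Pythagoras identity, this promotes the leading coefficient on $\Expect\fnormsq{\mtx{E}}$ to $\frac{s-\alpha}{s-k-\alpha}$, and a final application of the range-finder bound on $\Expect\fnormsq{\mtx{E}}$ yields the theorem.

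The main obstacle is the last step. The product $(\mtx{\Phi}\mtx{Q})^\dagger\,\mtx{\Phi}\mtx{E}\mtx{\Psi}^*\,((\mtx{\Psi}\mtx{P})^\dagger)^*$ does not factor transparently, so its Gaussian analysis requires splitting $\mtx{E}$ into blocks aligned with the orthogonal decompositions induced by $\mtx{Q}$ and $\mtx{P}$, after which each block is contracted against statistically independent Gaussian sub-blocks of $\mtx{\Phi}$ and $\mtx{\Psi}$. The bookkeeping must also track the real-vs-complex constants $\alpha, \beta$ carefully — they are a notorious source of off-by-one errors — and verify that the pseudoinverse factors are uncorrelated with the ``free'' Gaussian blocks they multiply.
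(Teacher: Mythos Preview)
Your proposal is essentially the paper's argument: the same Pythagorean split into the compression error $\fnormsq{\mtx{A}-\mtx{QQ}^*\mtx{A}\mtx{PP}^*}$ and the core error $\fnormsq{\mtx{C}-\mtx{Q}^*\mtx{A}\mtx{P}}$, the same block decomposition of $\mtx{E}$ against $(\mtx{Q},\mtx{Q}_\perp)$ and $(\mtx{P},\mtx{P}_\perp)$ for the latter, and the same Gaussian pseudoinverse moment formula applied to each block. The one point your outline leaves implicit is where the hypothesis $s\geq 2k+\alpha$ enters: the block calculation gives the \emph{exact} identity $\Expect_{\mtx{\Phi},\mtx{\Psi}}\fnormsq{\mtx{C}-\mtx{Q}^*\mtx{A}\mtx{P}} = \tfrac{k}{s-k-\alpha}\fnormsq{\mtx{E}} + \tfrac{k(2k+\alpha-s)}{(s-k-\alpha)^2}\fnormsq{\mtx{Q}_\perp^*\mtx{A}\mtx{P}_\perp}$, and the hypothesis is precisely what makes the correction term nonpositive and hence discardable.
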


\noindent
We postpone the proof to \cref{app:error-bound-proof}.  The analysis
is similar in spirit to the proof of~\cite[Thm.~4.3]{TYUC17:Practical-Sketching},
but it is somewhat more challenging.

\Cref{thm:low-rank-error-bound} contains explicit and reasonable constants,
so we can use it to design algorithms that achieve a specific error tolerance.
For example, suppose that $r_0$ is the target rank of the approximation.
Then the choice
\begin{equation} \label{eqn:my-params}
k = 4 r_0 + \alpha
\quad\text{and}\quad
s = 2k + \alpha
\end{equation}
ensures that the expected error in the rank-$k$ approximation $\hat{\mtx{A}}$
is within a constant factor $10/3$ of the optimal rank-$r_0$ approximation:
$$
\Expect \fnormsq{ \mtx{A} - \hat{\mtx{A}} }
	\leq \tfrac{10}{3} \cdot \tau_{r_0 + 1}^2( \mtx{A} ).
$$
In practice, we have found the parameter selection \cref{eqn:my-params}
can be effective for matrices with a rapidly decaying spectrum.
Note that, by taking $k/r_0 \to \infty$ and $s/k \to \infty$, we can drive the
leading constants in \cref{eqn:low-rank-error-bound} to one.

The true meaning of \cref{thm:low-rank-error-bound} is more subtle.
The minimum over $\varrho$ indicates that we can exploit decay
in the spectrum of the input matrix by increasing the parameter $k$.
This effect is more significant than the improvement we get
from adjusting the parameter $s$ to reduce the first constant.
In \cref{sec:sketch-size-theory}, we
use this insight to recommend sketch size parameters for a given storage budget.

\begin{remark}[Parameter Values]
In \cref{thm:low-rank-error-bound},
we have imposed the condition $s \geq 2k + \alpha$ because theoretical
analysis and empirical work both suggest that the restriction
is useful in practice.  The approximation~\cref{eqn:Ahat}
only requires that $k \leq s$.
\end{remark}

\begin{remark}[Failure Probability]
Because of measure concentration effects, there is a negligible
probability that the error in the initial approximation is
significantly larger than the bound~\cref{eqn:low-rank-error-bound}
on the expected error.  This claim can be established with techniques
from~\cite[Sec.~10]{HMT11:Finding-Structure}.  See
\cref{sec:apost-experiments} for numerical evidence.
\end{remark}

\begin{remark}[Singular Values and Vectors]
The error bound~\cref{eqn:low-rank-error-bound} indicates
that we can approximate singular values of $\mtx{A}$ by
singular values of $\hat{\mtx{A}}$.  In particular,
an application~\cite[Prob.~III.6.13]{Bha97:Matrix-Analysis}
of Lidskii's theorem implies that
$$
\sum\nolimits_{j=1}^{\min\{m,n\}} \big[ \sigma_j(\mtx{A}) - \sigma_j(\hat{\mtx{A}}) \big]^2
	\leq \fnormsq{ \mtx{A} - \hat{\mtx{A}} }.
$$
We can also approximate the leading singular vectors of $\mtx{A}$ by the leading singular vectors
of $\hat{\mtx{A}}$.  Precise statements are slightly complicated, so we
refer the reader to~\cite[Thm.~VII.5.9]{Bha97:Matrix-Analysis} for a
typical result on the perturbation theory of singular subspaces.
\end{remark}

\subsection{Analysis of Truncated Approximation}

Our second result provides a bound on the error in the truncated
approximation $\lowrank{\hat{\mtx{A}}}{r}$ of the input matrix $\mtx{A}$.

\begin{corollary}[Truncated Approximation: Error Bound]
\label{cor:fixed-rank-error-bound}
Instate the assumptions of \cref{thm:low-rank-error-bound}.
Then the rank-$r$ approximation $\lowrank{\hat{\mtx{A}}}{r}$
satisfies the error bound
$$
\Expect \fnorm{ \mtx{A} - \lowrank{\hat{\mtx{A}}}{r} }
	\leq \tau_{r + 1}(\mtx{A})
	+ 2 \left[ \frac{s - \alpha}{s - k - \alpha} \cdot \min_{\varrho < k - \alpha} \frac{k + \varrho - \alpha}{k - \varrho - \alpha} \cdot \tau_{\varrho+1}^2(\mtx{A}) \right]^{1/2}.
$$
\end{corollary}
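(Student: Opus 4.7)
The plan is to reduce the corollary to Theorem 4.1 via a routine triangle-inequality argument followed by Jensen's inequality. There is no real difficulty here; the proof is essentially a three-line calculation, with the only content being the correct identification of which best-approximation property and which norm inequality to invoke.

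First I would bound the error in the truncated approximation by comparing to the initial approximation. By the triangle inequality in the Frobenius norm,
\begin{equation*}
\fnorm{\mtx{A} - \lowrank{\hat{\mtx{A}}}{r}}
\leq \fnorm{\mtx{A} - \hat{\mtx{A}}} + \fnorm{\hat{\mtx{A}} - \lowrank{\hat{\mtx{A}}}{r}}.
\end{equation*}
Since $\lowrank{\hat{\mtx{A}}}{r}$ is a best rank-$r$ Frobenius approximation of $\hat{\mtx{A}}$ (by Eckart--Young) and $\lowrank{\mtx{A}}{r}$ has rank at most $r$, the second term is at most $\fnorm{\hat{\mtx{A}} - \lowrank{\mtx{A}}{r}}$. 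Applying the triangle inequality once more to that quantity and using $\fnorm{\mtx{A} - \lowrank{\mtx{A}}{r}} = \tau_{r+1}(\mtx{A})$ yields the deterministic bound
\begin{equation*}
\fnorm{\mtx{A} - \lowrank{\hat{\mtx{A}}}{r}}
\leq 2 \fnorm{\mtx{A} - \hat{\mtx{A}}} + \tau_{r+1}(\mtx{A}).
\end{equation*}

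Next I would take expectations over the Gaussian dimension reduction maps. By linearity and Jensen's inequality applied to the concave square-root function,
\begin{equation*}
\Expect \fnorm{\mtx{A} - \hat{\mtx{A}}}
\leq \bigl( \Expect \fnormsq{\mtx{A} - \hat{\mtx{A}}} \bigr)^{1/2}.
\end{equation*}
Inserting the bound from Theorem~\ref{thm:low-rank-error-bound} into the right-hand side completes the proof.

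There is no real obstacle: the only substantive estimate, namely the control of $\Expect \fnormsq{\mtx{A} - \hat{\mtx{A}}}$, is exactly the conclusion of Theorem~\ref{thm:low-rank-error-bound}, and the remaining steps are elementary. The one thing to be careful about is that the triangle inequality must be applied in the $\fnorm{\cdot}$ (not $\fnormsq{\cdot}$) norm, which is why Jensen's inequality is needed to reconcile the squared-norm bound of Theorem~\ref{thm:low-rank-error-bound} with the unsquared bound in the corollary; this accounts for the square root on the right-hand side of the stated inequality.
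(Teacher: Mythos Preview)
Your proposal is correct and takes essentially the same approach as the paper: the paper simply cites \cite[Prop.~6.1]{TYUC17:Practical-Sketching}, which is precisely the deterministic inequality $\fnorm{\mtx{A} - \lowrank{\hat{\mtx{A}}}{r}} \leq \tau_{r+1}(\mtx{A}) + 2\fnorm{\mtx{A} - \hat{\mtx{A}}}$ that you derive via Eckart--Young and two triangle inequalities, followed by Jensen and Theorem~\ref{thm:low-rank-error-bound}.
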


\noindent
This statement is an immediate consequence of \cref{thm:low-rank-error-bound}
and a general bound~\cite[Prop.~6.1]{TYUC17:Practical-Sketching} for fixed-rank approximation.
We omit the details.

Let us elaborate on \cref{cor:fixed-rank-error-bound}.
If the initial approximation $\hat{\mtx{A}}$ is accurate, then the
truncated approximation $\lowrank{\hat{\mtx{A}}}{r}$ attains similar accuracy.
In particular, the rank-$r$ approximation can achieve a very small relative error when the input
matrix has a decaying spectrum.  The empirical work in \cref{sec:numerics} highlights
the practical importance of this phenomenon.

\subsection{Theoretical Guidance for Sketch Size Parameters}
\label{sec:sketch-size-theory}

If we allocate a fixed amount of storage, how can we select
the sketch size parameters $(k, s)$ to achieve superior approximations
of the input matrix?
Using the error bound from~\cref{thm:low-rank-error-bound} and prior knowledge
about the spectrum of the matrix, we can make some practical recommendations.
\Cref{sec:oracle-performance} offers numerical support for this analysis.

\subsubsection{The Storage Budget}

We have recommended using structured dimension reduction maps
$(\mtx{\Upsilon}, \mtx{\Omega}, \mtx{\Phi}, \mtx{\Psi})$
so the storage cost for the dimension reduction maps is a fixed cost
that does not increase with the sketch size parameters $(k, s)$.
Therefore, we may focus on the cost of maintaining
the sketch $(\mtx{X}, \mtx{Y}, \mtx{Z})$ itself.

Counting dimensions, via~\cref{eqn:range-sketch,eqn:core-sketch},
we see that the three approximation sketch matrices
require a total storage budget of
\begin{equation} \label{eqn:storage-budget}
T := k (m + n) + s^2
\end{equation}
floating-point numbers in the field $\F$.
How do we best expend this budget?

\subsubsection{General Spectrum}

The theoretical bound~\cref{thm:low-rank-error-bound} on the approximation error
suggests that, lacking further information, we should make the parameter $k$ as large as possible.  Indeed,
the approximation error reflects the decay in the spectrum up to the index $k$.
Meanwhile, the condition $s \geq 2k + \alpha$ in \cref{thm:low-rank-error-bound}
ensures that the first fraction in the error bound cannot exceed $2$.

Therefore, for fixed storage budget $T$, we pose the optimization problem
\begin{equation} \label{eqn:ks-natural-opt}
\text{maximize}\quad k
\quad\text{subject to}\quad
s \geq 2k + \alpha
\quad\text{and}\quad
k (m + n) + s^2 = T.
\end{equation}
Up to rounding, the solution is
\begin{equation} \label{eqn:ks-natural}
\begin{aligned}
k_{\natural} &:= \left\lfloor \frac{1}{8} \left(\sqrt{(m+n+4\alpha)^2 + 16 (T - \alpha^2)} - (m+n+4\alpha)\right) \right\rfloor; \\ %
s_{\natural} &:= \left\lfloor \sqrt{T - k_{\natural} (m+n)} \right\rfloor. %
\end{aligned}
\end{equation}
The parameter choice $(k_{\natural}, s_{\natural})$ is suitable
for a wide range of examples.

\subsubsection{Flat Spectrum}

Suppose we know that the spectrum of the input matrix does not decay
past a certain point: $\sigma_j(\mtx{A}) \approx \sigma_{\hat{\varrho}}(\mtx{A})$ for $j > \hat{\varrho}$.
In this case, the minimum value of the error~\cref{eqn:low-rank-error-bound} tends
to occur when $\varrho = \hat{\varrho}$.  %

In this case, we can obtain a theoretically supported parameter choice
$(k_{\flat}, s_{\flat})$ by numerical solution of the optimization problem
\begin{equation} \label{eqn:ks-flat}
\begin{aligned}
\text{minimize}\quad \frac{s - \alpha}{s-k-\alpha} \cdot \frac{k + \hat{\varrho} - \alpha}{k - \hat{\varrho} - \alpha}
\quad\text{subject to}\quad
&s \geq 2k + \alpha, \quad
k \geq \hat{\varrho} + \alpha + 1, \\
& %
k(m+n) + s^2 = T.
\end{aligned}
\end{equation}
This problem admits a messy closed-form solution, or it can be solved numerically.

\section{\emph{A Posteriori} Error Estimation}
\label{sec:aposterior}

The \emph{a priori} error bounds from
\cref{thm:low-rank-error-bound,cor:fixed-rank-error-bound}
are essential for setting the sketch size parameters
to make the reconstruction algorithm reliable.
To evaluate whether the approximation was actually successful,
we need \emph{a posteriori} error estimators.

For this purpose,
Martinsson~\cite[Sec.~14]{Mar19:Randomized-Methods} has proposed to extract
a very small Gaussian sketch of the input matrix, independent from the
approximation sketch.  Our deep understanding of the Gaussian
distribution allows for a refined analysis 
of error estimators computed from this sketch.

We adopt Martinsson's idea to compute a simple estimate
for the Frobenius norm of the approximation error.
\Cref{sec:diagnose-spectrum} explains how this estimator helps us select
the precise rank $r$ for the truncated approximation~\cref{eqn:Ahat-fixed}.

\subsection{The Error Sketch}

For a parameter $q$, draw and fix a standard Gaussian dimension reduction map:
\begin{equation} \label{eqn:error-test}
\mtx{\Theta} \in \F^{q \times m}.
\end{equation}
Along with the approximation sketch~\cref{eqn:range-sketch,eqn:core-sketch}, %
we also maintain an error sketch:
\begin{equation} \label{eqn:error-sketch}
\mtx{W} := \mtx{\Theta} \mtx{A} \in \F^{q \times n}.
\end{equation}
We can track the error sketch along a sequence~\cref{eqn:linear-update}
of linear updates:
\begin{equation} \label{eqn:error-sketch-update}
\mtx{W} \gets \eta \mtx{W} + \nu \mtx{\Theta} \mtx{H}.
\end{equation}
The cost of storing the test matrix and sketch is $q(m+n)$ floating-point numbers.

\begin{algorithm}[t]
  \caption{\textsl{Randomized Error Estimator.}  Implements~\cref{eqn:error-estimate}.
  \label{alg:err-est}}
  \begin{algorithmic}[1]
    \Require{Matrix approximation $\hat{\mtx{A}}_{\rm out}$}
    \Ensure{Randomized error estimate $\err_2^2(\hat{\mtx{A}}_{\rm out})$ that
    satisfies~\cref{eqn:apost-expect-2,eqn:apost-prob-lower,eqn:apost-prob-upper}}
\vspace{0.5pc}

	\Function{Sketch.ErrorEstimate}{$\hat{\mtx{A}}_{\rm out}$}

	\State	$\beta \gets 1$ for $\F = \R$ or $\beta \gets 2$ for $\F = \C$
	\State	$\err_2^2 \gets (\beta q)^{-1} \, \fnormsq{ \mtx{W} - \mtx{\Theta} \hat{\mtx{A}}_{\rm out} }$
	\State	\Return $\err_2^2$
	\EndFunction

	\vspace{0.25pc}

\end{algorithmic}
\end{algorithm}

\subsection{A Randomized Error Estimator}
\label{sec:error-estimator}

Suppose that we have computed an approximation $\hat{\mtx{A}}_{\rm out}$
of the input $\mtx{A}$ via any method.%
\footnote{We assume only that the approximation $\hat{\mtx{A}}_{\rm out}$
does not depend on the matrices $\mtx{\Theta}, \mtx{W}$.}
We can obtain a probabilistic estimate for the squared Schatten 2-norm error
in this approximation:
\begin{equation} \label{eqn:error-estimate}
\begin{aligned}
\err_{2}^2(\hat{\mtx{A}}_{\rm out})
	&:= \frac{1}{\beta q} \cdot \fnormsq{ \mtx{W} - \mtx{\Theta} \hat{\mtx{A}}_{\rm out} }
	= \frac{1}{\beta q} \cdot \fnormsq{ \mtx{\Theta}( \mtx{A} - \hat{\mtx{A}}_{\rm out}) }.
\end{aligned}
\end{equation}
Recall that $\beta = 1$ for $\F = \R$ and $\beta = 2$ for $\F = \C$.

The error estimator can be computed efficiently when the approximation
is presented in factored form.  To assess a rank-$r$ approximation $\hat{\mtx{A}}_{\rm out}$,
the cost is typically $\mathcal{O}( qr (m + n) )$ arithmetic operations.
See~\cref{alg:err-est} for pseudocode.

\begin{remark}[Prior Work]
The formula~\cref{eqn:error-estimate} is essentially a randomized trace estimator;
for example, see~\cite{Hut89:Stochastic-Estimator,AT11:Randomized-Algorithms,
RA15:Improved-Bounds,GT18:Improved-Bounds}.
Our analysis is similar to the work in these papers.
Methods for spectral norm estimation are discussed in~\cite[Sec.~3.4]{WLRT08:Fast-Randomized}
and in~\cite[Secs.~4.3--4.4]{HMT11:Finding-Structure}; these results trace their
lineage to an early paper of Dixon~\cite{Dix83:Estimating-Extremal}.
The paper~\cite{LWM18:Error-Estimation} discusses bootstrap methods
for randomized linear algebra applications.
\end{remark}

\subsection{The Error Estimator: Mean and Variance}

The error estimator delivers reliable information about
the squared Schatten 2-norm approximation error:
\begin{equation} \label{eqn:apost-expect-2}
\begin{aligned}
\Expect\big[ \err_2^2(\hat{\mtx{A}}_{\rm out}) \big]
	&= \fnormsq{ \mtx{A} - \hat{\mtx{A}}_{\rm out} }; \\
\Var\big[ \err_2^2( \hat{\mtx{A}}_{\rm out} ) \big]
	&= \frac{2}{\beta q} \, \norm{ \mtx{A} - \hat{\mtx{A}}_{\rm out} }_4^4.
\end{aligned}
\end{equation}
These results follow directly from the rotational invariance of the Schatten norms
and of the standard normal distribution.  See~\cref{app:err2-mean}.

\subsection{The Error Estimator, in Probability}

We can also obtain bounds on the probability that the error estimator
returns an extreme value.  These results justify setting the size $q$
of the error sketch to a constant.
They are also useful for placing
confidence bands on the approximation error.  
See \cref{app:aposteriori} for the proofs.

First, let us state a bound on the probability that the estimator
reports a value that is much too small.  We have
\begin{equation} \label{eqn:apost-prob-lower}
\mathbb{P}_{\mtx{\Theta}}\left\{ \err_{2}^2(\hat{\mtx{A}}_{\rm out})
	\leq (1 - \eps) \fnormsq{ \mtx{A} - \hat{\mtx{A}}_{\rm out} } \right\}
	\leq \left[ \econst^{\eps} (1 - \eps) \right]^{\beta q/2}
	\quad\text{for $\eps \in (0, 1)$.}
\end{equation}
For example, the error estimate is smaller than
$0.1 \times$ the true error value with probability less than $2^{-\beta q}$.

Next, we provide a bound on the probability that the estimator
reports a value that is much too large.  We have
\begin{equation} \label{eqn:apost-prob-upper}
\mathbb{P}_{\mtx{\Theta}}\left\{ \err_{2}^2(\hat{\mtx{A}}_{\rm out})
	\geq (1 + \eps) \fnormsq{ \mtx{A} - \hat{\mtx{A}}_{\rm out} } \right\}
	\leq \left[ \frac{\econst^{\eps}}{1 + \eps} \right]^{-\beta q/2}
	\quad\text{for $\eps > 0$.}
\end{equation}
For example, the error estimate exceeds $4 \times$
the true error value with probability less than $2^{-\beta q}$.

\begin{remark}[Estimating Normalized Errors]
We may wish to compute the error of an approximation $\hat{\mtx{A}}_{\rm out}$
on the scale of the energy $\fnormsq{\mtx{A}}$ in the input matrix.
To that end, observe that $\err_2^2( \mtx{0} )$ is an estimate for $\fnormsq{\mtx{A}}$.
Therefore, the ratio $\err_2^2(\hat{\mtx{A}}_{\rm out}) / \err_2^2(\mtx{0})$
gives a good estimate for the normalized error. %
\end{remark}

\subsection{Diagnosing Spectral Decay}
\label{sec:diagnose-spectrum}

In many applications, our goal is to estimate a rank-$r$
truncated SVD of the input matrix that captures most of its spectral energy.
It is rare, however, that we can prophesy the precise value $r$ of the rank.
A natural solution is to use the spectral characteristics of the
initial approximation $\hat{\mtx{A}}$, defined in~\cref{eqn:Ahat},
to decide where to truncate.  We can deploy the error estimator
$\err_2^2$ to implement this strategy in a principled way and to validate the results.
See~\cref{sec:apost-experiments,sec:sst-data} for numerics.

If we had access to the full input matrix $\mtx{A}$, we would compute
the proportion of tail energy remaining after a rank-$r$ approximation:
\begin{equation} \label{eqn:scree}
\mathrm{scree}(r) := \left[ \frac{\tau_{r+1}(\mtx{A})}{ \fnorm{\mtx{A}} } \right]^2
	= \left[ \frac{\fnorm{ \mtx{A} - \lowrank{\mtx{A}}{r}}}{ \fnorm{\mtx{A}} } \right]^2.
\end{equation}
A visualization of the function~\eqref{eqn:scree} is called a \emph{scree plot}.
A standard technique for rank selection is to identify a ``knee''
in the scree plot.  It is also possible to apply quantitative
model selection criteria to the function~\cref{eqn:scree}.
See~\cite[Chap.~6]{Jol02:Principal-Component} for an extensive discussion.

We cannot compute~\cref{eqn:scree} without access to the input matrix,
but we can use the initial approximation and the error estimator creatively.
For $r \ll k$, the tail energy $\tau_{r+1}(\hat{\mtx{A}})$
of the initial approximation is a proxy for the tail energy $\tau_{r+1}(\mtx{A})$
of the input matrix.  This observation suggests that we consider the (lower) estimate
\begin{equation} \label{eqn:scree-lower}
\underline{\mathrm{scree}}(r) := \left[ \frac{ \tau_{r+1}(\hat{\mtx{A}}) }{ \err_2(\mtx{0}) } \right]^2
	= \left[ \frac{ \fnorm{\hat{\mtx{A}} - \lowrank{\hat{\mtx{A}}}{r}} }{ \err_2(\mtx{0}) } \right]^2.
\end{equation}
This function tracks the actual scree curve~\cref{eqn:scree} when $r \ll k$.
It typically underestimates the scree curve, and the underestimate is severe for large $r$.

To design a more rigorous approach, notice that
$$
\abs{ \tau_{r+1}(\mtx{A}) - \tau_{r+1}(\hat{\mtx{A}}) }
	\leq \fnorm{ \mtx{A} - \hat{\mtx{A}} } \approx \err_2(\hat{\mtx{A}}).
$$
The inequality requires a short justification; see~\cref{app:diagnose-spectrum}.
This bound suggests that we consider the (upper) estimator
\begin{equation} \label{eqn:scree-upper}
\overline{\mathrm{scree}}(r) := \left[ \frac{ \tau_{r+1}(\hat{\mtx{A}}) + \err_2(\hat{\mtx{A}}) }{ \err_2(\mtx{0}) } \right]^2.
\end{equation}
This function also tracks the actual scree curve~\cref{eqn:scree} when $r \ll k$.
It reliably overestimates the scree curve by a modest amount.

\section{Numerical Experiments}
\label{sec:numerics}

This section presents computer experiments that are
designed to evaluate the performance of the proposed
sketching algorithms for low-rank matrix approximation.
We include comparisons with alternative methods from the literature
to argue that the proposed approach produces superior results.
We also explore some applications to scientific simulation
and data analysis.

\subsection{Alternative Sketching and Reconstruction Methods}
\label{sec:other-algs}

We compare the proposed method~\cref{eqn:Ahat-fixed}
with three other algorithms
that construct a fixed-rank approximation
of a matrix from a random linear sketch:

\vspace{0.5pc}

\begin{enumerate} \setlength{\itemsep}{0.5pc}

\item	The [HMT11] method~\cite[Sec.~5.5, Rem.~5.4]{HMT11:Finding-Structure}
is a simplification of the method from Woolfe et al.~\cite[Sec.~5.2]{WLRT08:Fast-Randomized},
and they perform similarly.  There are two sketches, and the sketch size depends on one parameter $k$.
The total storage cost $T = k(m+n)$.

\item	The [TYUC17] method~\cite[Alg.~7]{TYUC17:Practical-Sketching} is a numerically stable
and more fully realized implementation of a proposal due to Clarkson \& Woodruff~\cite[Thm.~4.9]{CW09:Numerical-Linear}.
It involves two sketches, controlled by two parameters $k, \ell$.
The total storage cost $T = km + \ell n$.

\item	The [Upa16] method~\cite[Sec.~3.3]{Upa16:Fast-Space-Optimal}
simplifies a complicated approach from Boutsidis et al.~\cite[Thm.~12]{BWZ16:Optimal-Principal-STOC}. %
This algorithm involves three sketches, controlled by two parameters $k, s$.
The total storage cost $T = k(m+n) + s^2$.

\item	Our new method~\cref{eqn:Ahat-fixed} simultaneously extends
[Upa16] and [TYUC17].  It uses three sketches,
controlled by two parameters $k, s$.  The total storage cost $T = k(m+n) + s^2$.
\end{enumerate}

\vspace{0.5pc}

See~\cref{app:alt-sketch} for a more detailed description of these methods.
In each case, the storage budget neglects the cost of storing the dimension reduction maps
because this cost has lower order than the sketch when we use structured dimension reduction maps.
These methods have similar arithmetic costs, so we will not make a comparison
of runtimes.  Storage is the more significant issue for sketching algorithms.
We do not include storage costs for an error estimator in the comparisons.

Our recent paper~\cite{TYUC17:Practical-Sketching} demonstrates that
several other methods (\cite[Thm.~4.3, display 2]{Woo14:Sketching-Tool}
and \cite[Sec.~10.1]{CEM+15:Dimensionality-Reduction}) are uncompetitive,
so we omit them.

\subsection{Experimental Setup}

Our experimental design is quite similar to our previous
papers~\cite{TYUC17:Practical-Sketching,TYUC17:Fixed-Rank-Approximation}
on sketching algorithms for low-rank matrix approximation.

\subsubsection{Procedure}

Fix an input matrix $\mtx{A} \in \F^{n \times n}$ and a truncation rank $r$.
Select sketch size parameters. %
For each trial, draw dimension reduction maps %
from a specified distribution and form the sketch %
of the input matrix.  Compute a rank-$r$ approximation $\hat{\mtx{A}}_{\mathrm{out}}$ using
a specified reconstruction algorithm.  The approximation error is calculated
relative to the best rank-$r$ approximation error in Schatten $p$-norm:
\begin{equation} \label{eqn:relative-error}
\text{$S_p$ relative error} \quad = \quad
\frac{\norm{ \mtx{A} - \hat{\mtx{A}}_{\mathrm{out}}}_{p}}{\norm{ \mtx{A} - \lowrank{\mtx{A}}{r}}_{p}} - 1.
\end{equation}
We perform 20 independent trials and report the average error.  Owing to measure concentration effects,
the average error is also the typical error; see~\cref{sec:apost-experiments}.

In all experiments, we work in double-precision arithmetic
(i.e., 8 bytes per real floating-point number).
The body of this paper presents a limited selection of results.
\Cref{app:numerics} contains additional numerical evidence.
The supplementary materials also include \textsc{Matlab} code
that can reproduce these experiments.

\subsubsection{The Oracle Error}
\label{sec:oracle-error}

To make fair comparisons among algorithms, we can fix the storage budget %
and identify the parameter choices that minimize the (average) relative error~\cref{eqn:relative-error}
incurred over the repeated trials.  We refer to the minimum as the \emph{oracle error} for an algorithm.
The oracle error is not attainable in practice.

\subsection{Classes of Input Matrices}
\label{sec:input-matrix-examples}

As in our previous papers~\cite{TYUC17:Fixed-Rank-Approximation,TYUC17:Practical-Sketching},
we consider several different types of synthetic and real input matrices.
See \cref{fig:spectra} for a plot of the spectra of these input matrices.

\subsubsection{Synthetic Examples}

We work over the complex field $\C$. %
The matrix dimensions $m = n = 10^3$, and we introduce an effective rank parameter $R \in \{5, 10, 20\}$.
In each case, we compute an approximation with truncation rank $r = 10$.

\vspace{0.5pc}

\begin{enumerate} \setlength{\itemsep}{0.5pc}
\item	\textbf{Low-rank + noise:}  Let $\xi \geq 0$ be a signal-to-noise parameter.
These matrices take the form
$$
\mtx{A} = \diag(\underbrace{1,\dots,1}_R, 0, \dots,0) %
	+ \xi n^{-1} \mtx{C} \in \C^{n \times n},
$$
where $\mtx{C} = \mtx{GG}^*$ for a standard normal matrix $\mtx{G} \in \F^{n \times n}$.
We consider several parameter values: \texttt{LowRankLowNoise} ($\xi = 10^{-4}$),
\texttt{LowRankMedNoise} ($\xi = 10^{-2}$), \texttt{LowRankHiNoise} ($\xi = 10^{-1}$).

\item	\textbf{Polynomial decay:}  For a decay parameter $p > 0$, consider matrices %
$$
\mtx{A} = \diag(\underbrace{1, \dots, 1}_R, 2^{-p}, 3^{-p}, \dots, (n-R+1)^{-p}) \in \C^{n \times n}.
$$
We study three examples: \texttt{PolyDecaySlow} ($p = 0.5$),
\texttt{PolyDecayMed} ($p = 1$), \texttt{PolyDecayFast} ($p = 2$).

\item	\textbf{Exponential decay:} For a decay parameter $q > 0$, consider matrices
$$
\mtx{A} = \diag(\underbrace{1, \dots, 1}_R, 10^{-q}, 10^{-2q}, \dots, 10^{-(n-R)q}) \in \C^{n \times n}.
$$
We consider the cases \texttt{ExpDecaySlow} ($q = 0.01$),
\texttt{ExpDecayMed} ($q = 0.1$), \texttt{ExpDecayFast} ($q = 0.5$).

\end{enumerate}

\vspace{0.5pc}

\begin{remark}[Non-Diagonal Matrices]
We have also performed experiments using non-diagonal matrices with the same spectra.
The results were essentially identical.
\end{remark}

\subsubsection{Application Examples}

Next, we present some low-rank data matrices that arise in applications.
The truncation rank $r$ varies, depending on the matrix.

\vspace{0.5pc}

\begin{enumerate} \setlength{\itemsep}{0.5pc}
\item	\textbf{Navier--Stokes:}  We test the hypothesis,
discussed in \cref{sec:vignette}, that sketching methods
can be used to perform on-the-fly compression of the output of a PDE simulation.
We have obtained a direct numerical simulation (DNS) on a coarse mesh
of the 2D Navier--Stokes equations for a low-Reynolds number flow around a cylinder.
The simulation is started impulsively from a rest state.
Transient dynamics emerge in the first third of the simulation, while the remaining
time steps capture the limit cycle.  Each of the velocity and pressure fields is centered around its temporal mean.
This data is courtesy of Beverley McKeon and Sean Symon.

\vspace{0.5pc}

The real $m \times n$ matrix \texttt{StreamVel} contains streamwise velocities
at $m = 10,738$ points for each of $n = 5,001$ time instants.  The first 20 singular
values of the matrix decay by two orders of magnitude, and the rest of the spectrum
exhibits slow exponential decay. %

\item	\textbf{Weather:}  We test the hypothesis that sketching methods
can be used to perform on-the-fly compression of temporal data as it is collected.
We have obtained a matrix that tabulates meteorological variables at weather
stations across the northeastern United States on days during the years 1981--2016.
This data is courtesy of William North.

\vspace{0.5pc}

The real $m \times n$ matrix \texttt{MinTemp} contains the minimum temperature
recorded at each of $m = 19,264$ stations on each of $n = 7,305$ days.
The first 10 singular values decay by two orders of magnitude,
while the rest of the spectrum has medium polynomial decay. %

\item	\textbf{Sketchy Decisions:}  We also consider matrices that arise
from an optimization algorithm for solving large-scale semidefinite programs~\cite{YUTC17:Sketchy-Decisions}.
In this application, the data matrices are presented as a long series of rank-one updates,
and sketching is a key element of the algorithm.

\vspace{0.5pc}

\begin{enumerate}
\item	\texttt{MaxCut}: This is a real psd matrix with $m = n = 2,000$
that gives a high-accuracy solution to the \textsc{MaxCut} SDP for a sparse graph~\cite{GW95:Improved-Approximation}.
This matrix is effectively rank deficient with $R = 14$, and the spectrum
has fast exponential decay after this point.

\vspace{0.5pc}

\item	\texttt{PhaseRetrieval}: This is a complex psd matrix with $m = n = 25,921$
that gives a low-accuracy solution to a phase retrieval SDP~\cite{HCO+15:Solving-Ptychography}.
This matrix is effectively rank deficient with $R = 5$, and the spectrum has fast exponential
decay after this point.
\end{enumerate}
\vspace{0.5pc}

\item	\textbf{Sea Surface Temperature Data}: Last, we use a moderately large
climate dataset to showcase our overall methodology.
This data is provided by the National Oceanic and Atmospheric Administration (NOAA);
see~\cite{RRS+02:Improved-In-Situ,RSL+07:Daily-High-Resolution}
for details about the data preparation methodology.

\vspace{0.5pc}

The real $m \times n$ matrix \texttt{SeaSurfaceTemp}
consists of daily temperature estimates at $m = 691,150$ regularly spaced points
in the ocean for each of $n = 13,670$ days between 1981 and 2018.
\end{enumerate}

\subsection{Insensitivity to Dimension Reduction Map}
\label{sec:universality}

The proposed reconstruction method~\cref{eqn:Ahat-fixed} is insensitive
to the choice of dimension reduction map at the oracle parameter values
(\cref{sec:oracle-error}).  As a consequence, we can transfer
theoretical and empirical results for Gaussians to SSRFT
and sparse dimension reduction maps.  See~\cref{app:universality}
for numerical evidence.

\subsection{Approaching the Oracle Performance}
\label{sec:oracle-performance}

We can almost achieve the oracle error by implementing the
reconstruction method~\cref{eqn:Ahat-fixed} with sketch
size parameters chosen using the theory in \cref{sec:sketch-size-theory}.
This observation justifies the use of the theoretical parameters
when we apply the algorithm.  See~\cref{app:oracle-performance}
for numerical evidence.

\begin{figure}[t]
\begin{center}
\begin{subfigure}{.325\textwidth}
\begin{center}
\includegraphics[height=1.5in]{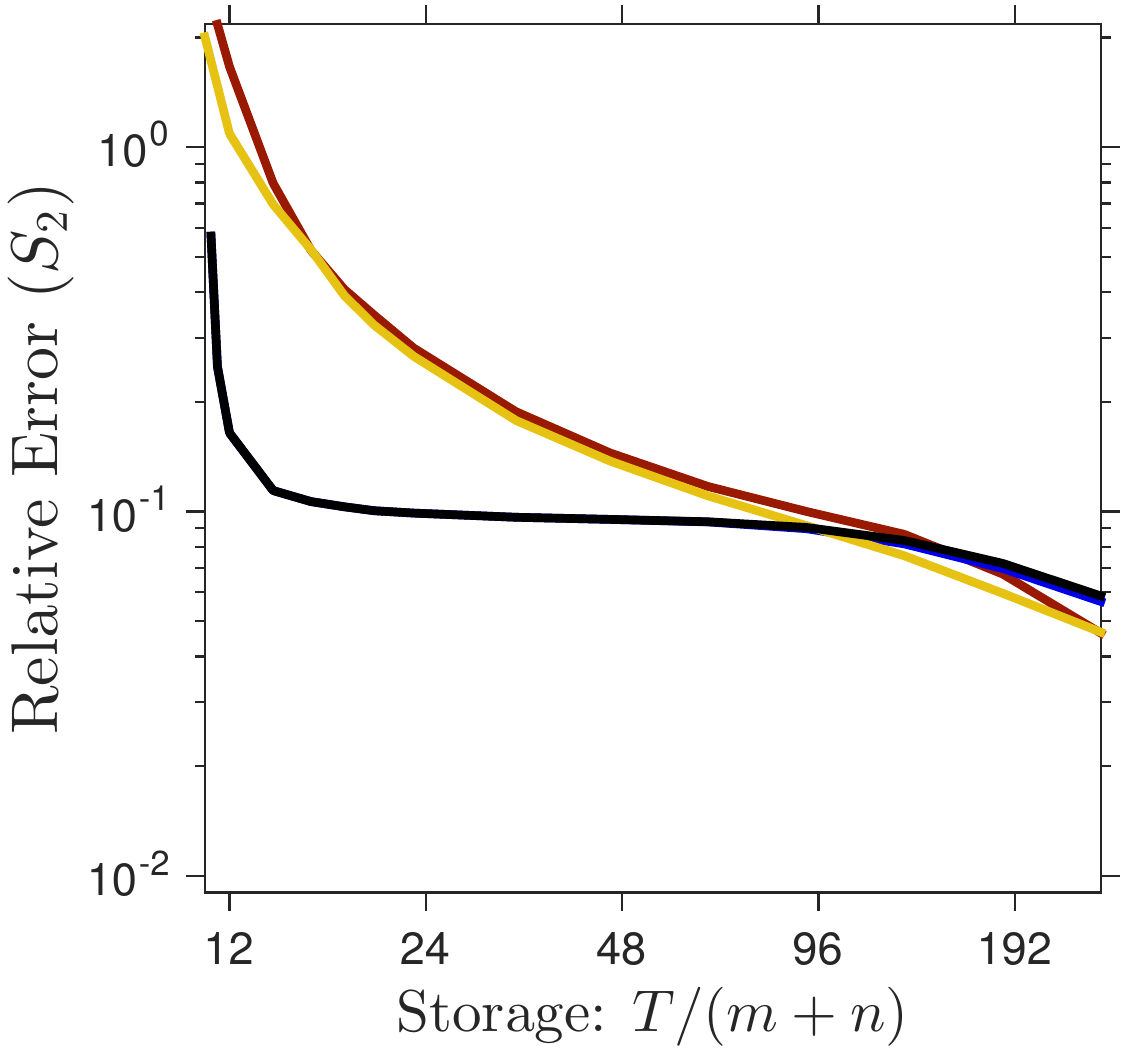}
\caption{\texttt{LowRankHiNoise}}
\end{center}
\end{subfigure}
\begin{subfigure}{.325\textwidth}
\begin{center}
\includegraphics[height=1.5in]{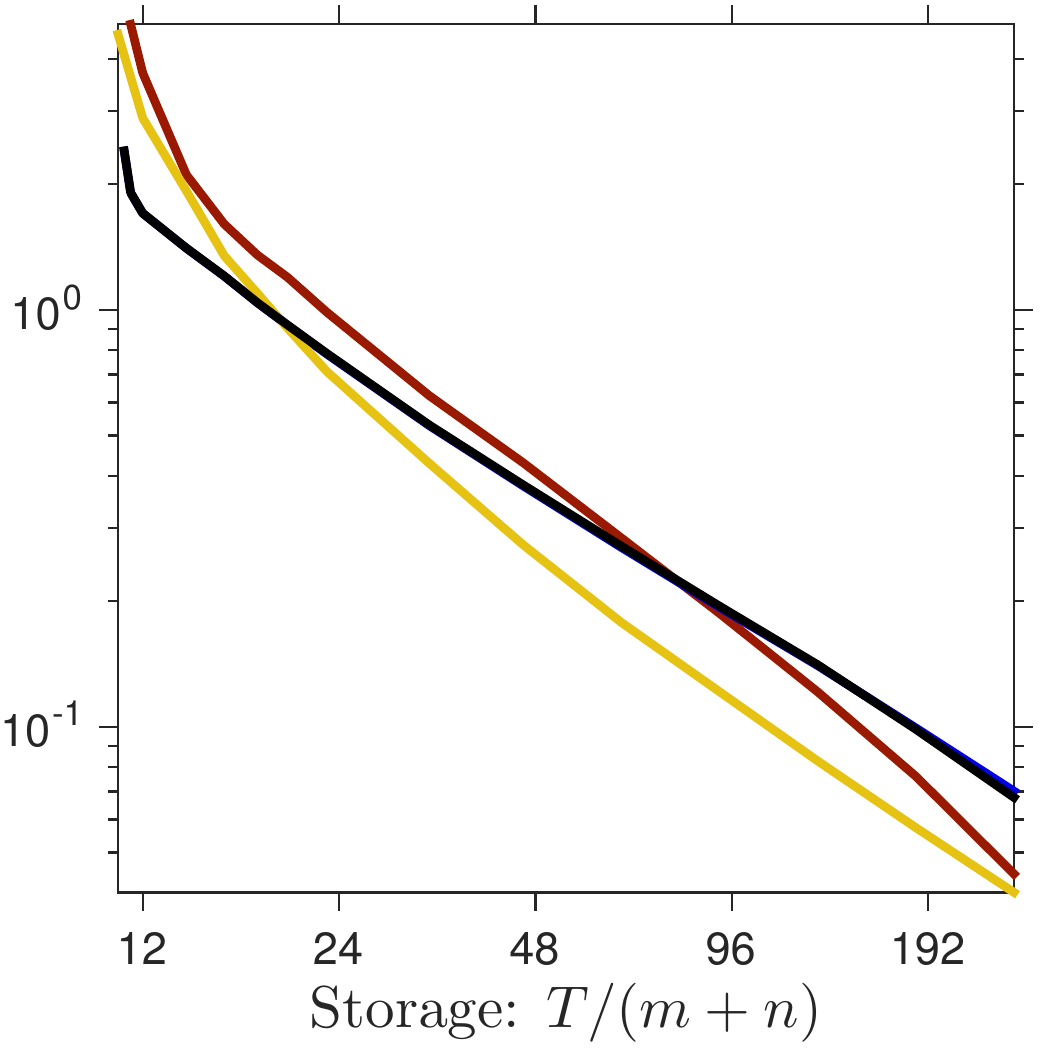}
\caption{\texttt{LowRankMedNoise}}
\end{center}
\end{subfigure}
\begin{subfigure}{.325\textwidth}
\begin{center}
\includegraphics[height=1.5in]{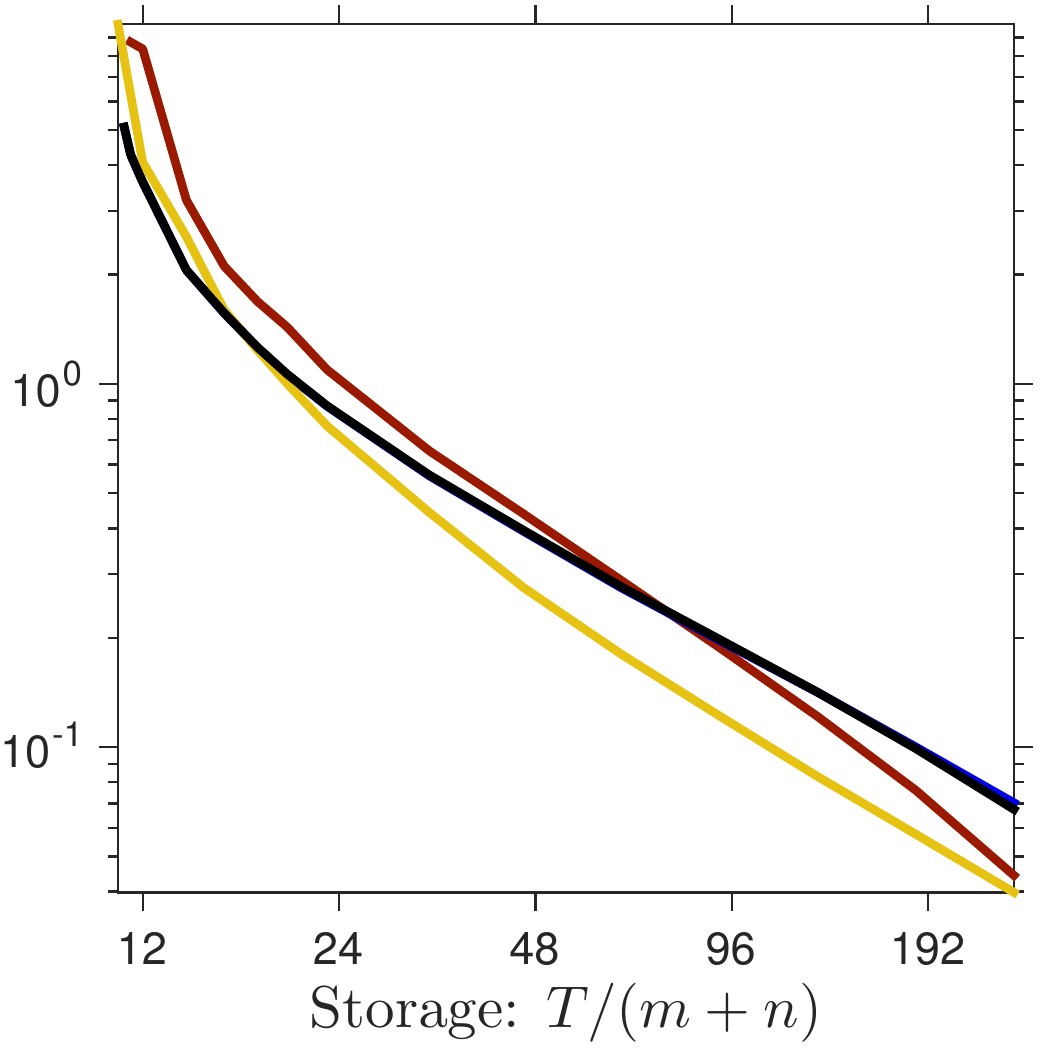}
\caption{\texttt{LowRankLowNoise}}
\end{center}
\end{subfigure}
\end{center}

\vspace{.5em}

\begin{center}
\begin{subfigure}{.325\textwidth}
\begin{center}
\includegraphics[height=1.5in]{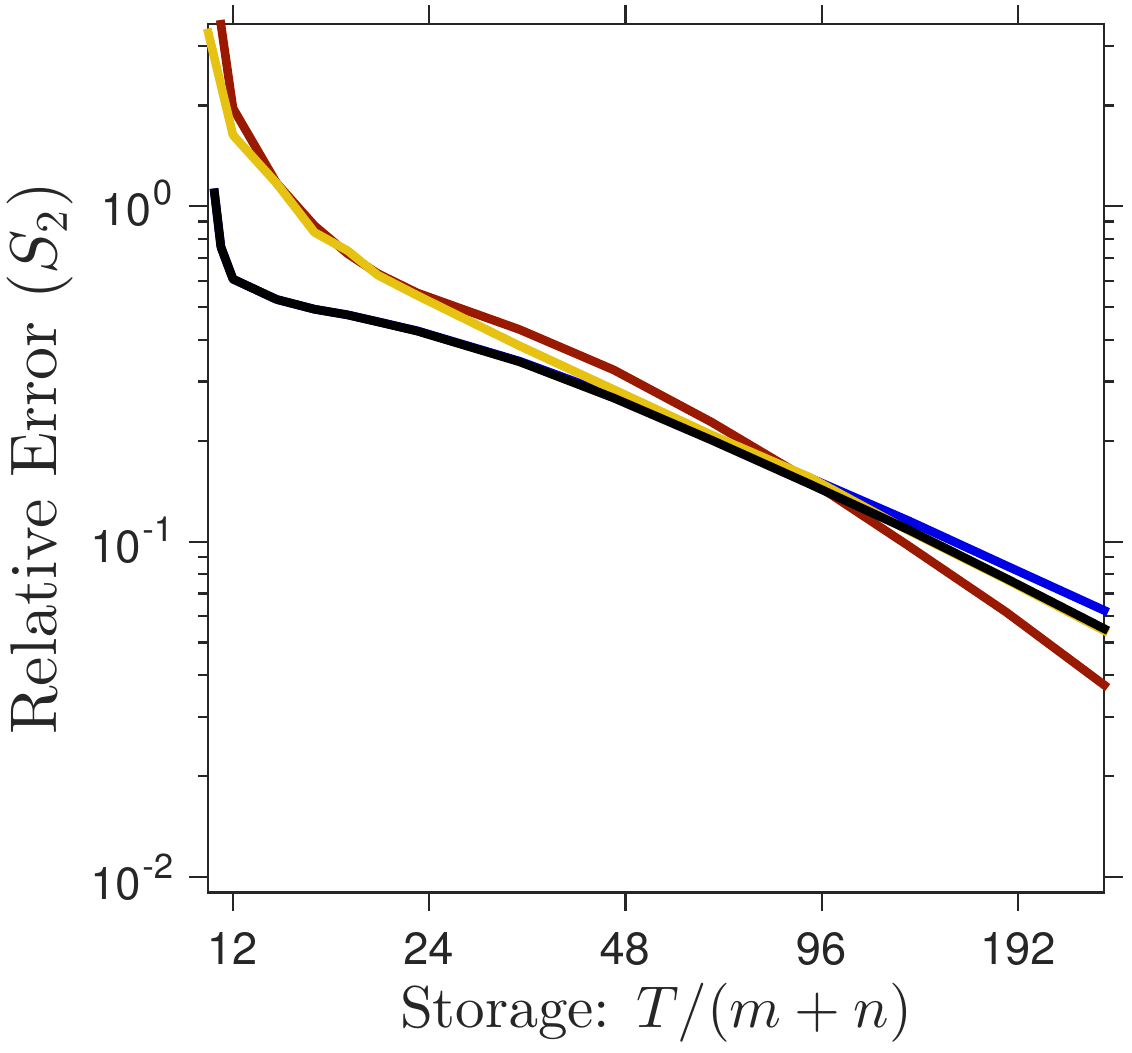}
\caption{\texttt{PolyDecaySlow}}
\end{center}
\end{subfigure}
\begin{subfigure}{.325\textwidth}
\begin{center}
\includegraphics[height=1.5in]{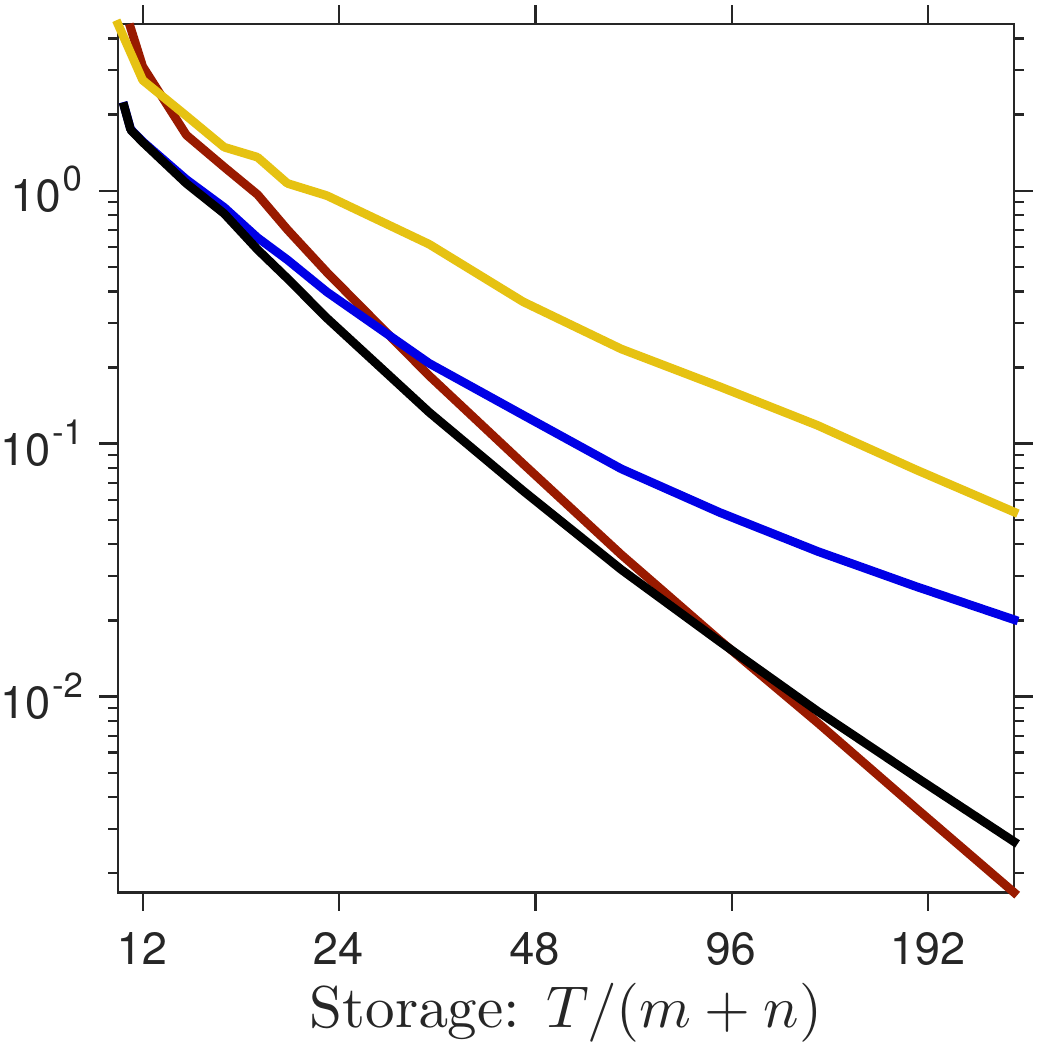}
\caption{\texttt{PolyDecayMed}}
\end{center}
\end{subfigure}
\begin{subfigure}{.325\textwidth}
\begin{center}
\includegraphics[height=1.5in]{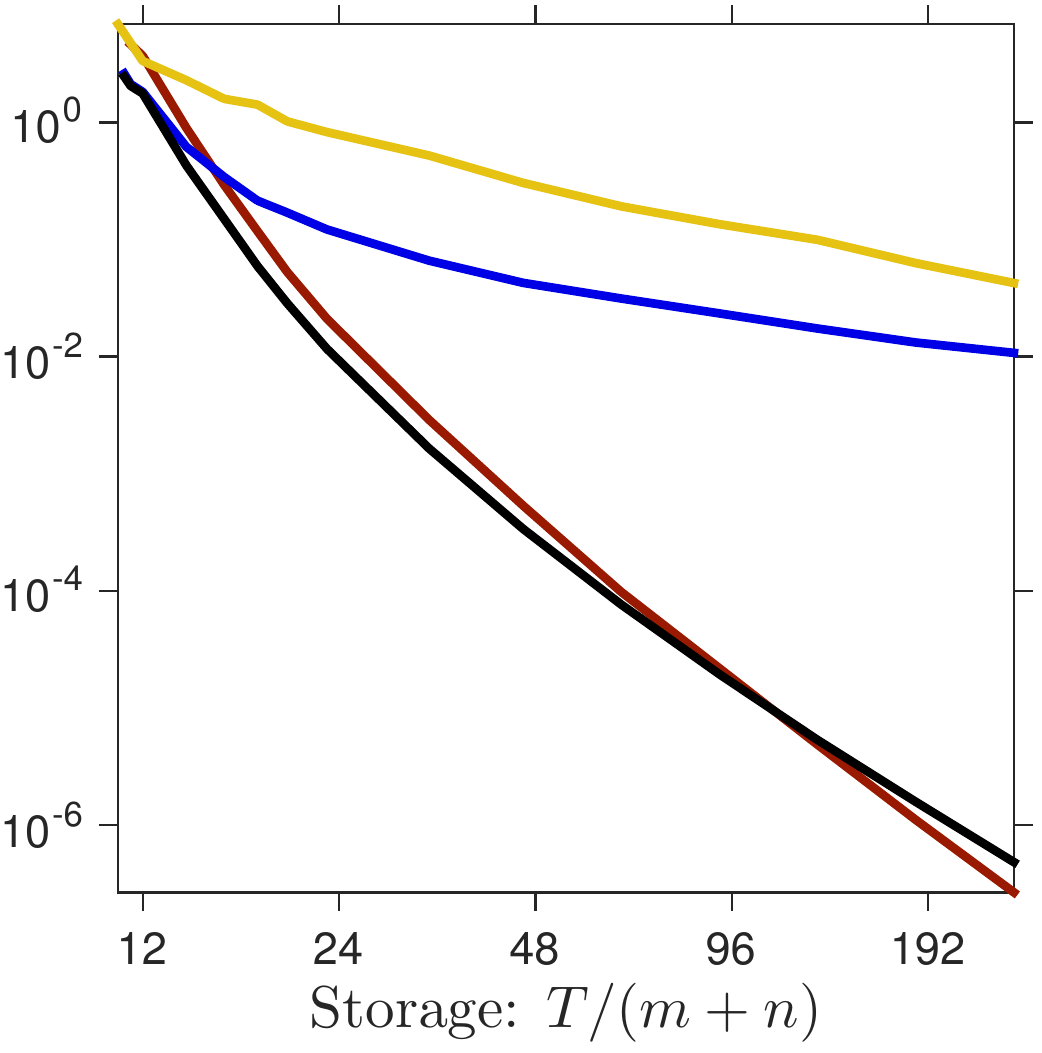}
\caption{\texttt{PolyDecayFast}}
\end{center}
\end{subfigure}
\end{center}

\vspace{0.5em}

\begin{center}
\begin{subfigure}{.325\textwidth}
\begin{center}
\includegraphics[height=1.5in]{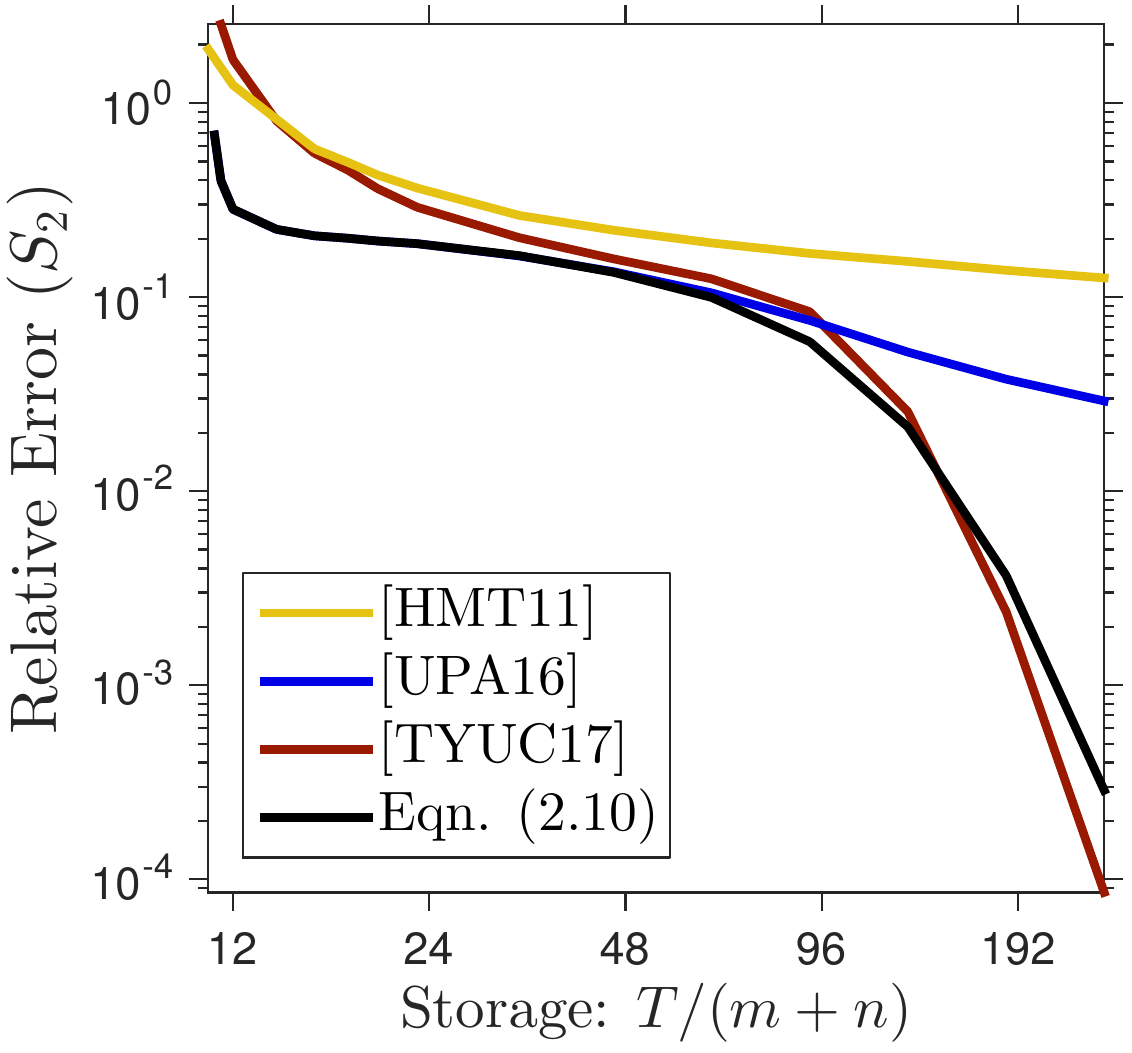}
\caption{\texttt{ExpDecaySlow}}
\end{center}
\end{subfigure}
\begin{subfigure}{.325\textwidth}
\begin{center}
\includegraphics[height=1.5in]{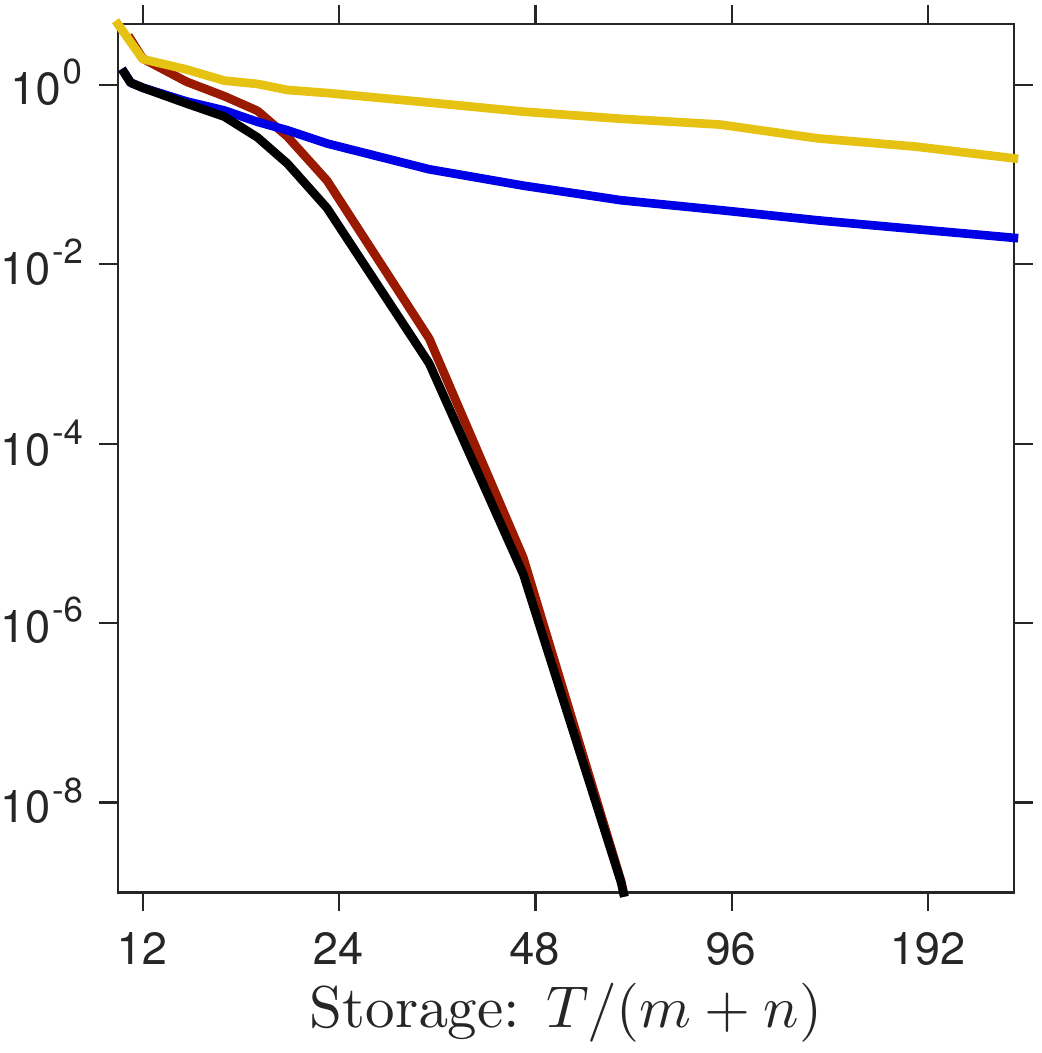}
\caption{\texttt{ExpDecayMed}}
\end{center}
\end{subfigure}
\begin{subfigure}{.325\textwidth}
\begin{center}
\includegraphics[height=1.5in]{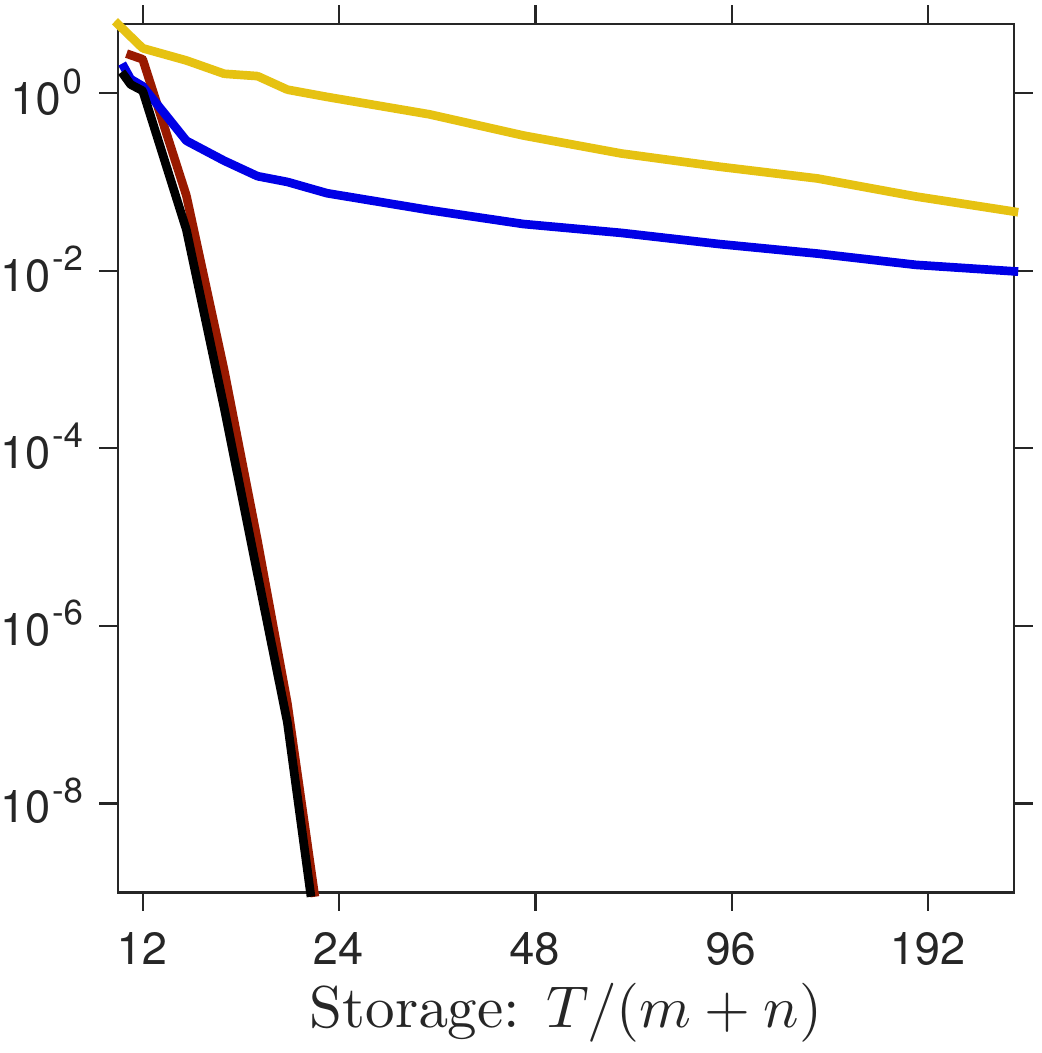}
\caption{\texttt{ExpDecayFast}}
\end{center}
\end{subfigure}
\end{center}

\vspace{0.5em}

\caption{\textbf{Comparison of reconstruction formulas: Synthetic examples.}
(Gaussian maps, effective rank $R = 10$, approximation rank $r = 10$, Schatten 2-norm.)
We compare the oracle error achieved by the proposed fixed-rank
approximation~\cref{eqn:Ahat-fixed} against methods [HMT11], [Upa16], and [TYUC17]
from the literature.
See \cref{sec:oracle-error} for details.}
\label{fig:oracle-comparison-R10-S2}
\end{figure}

\subsection{Comparison of Reconstruction Formulas: Synthetic Examples}
\label{sec:alg-comparison}

Let us now compare the proposed rank-$r$ reconstruction formula~\cref{eqn:Ahat-fixed}
with [HMT11], [Upa16], and [TYUC17] on synthetic data.

\Cref{fig:oracle-comparison-R10-S2} %
present the results of the following experiment.
For synthetic matrices with effective rank $R = 10$ and truncation rank $r = 10$,
we compare the relative error~\cref{eqn:relative-error} achieved by each of the four
algorithms as a function of storage. %
We use Gaussian dimension reduction maps in these experiments; similar results are
evident for other types of maps.  Results for effective rank $R \in \{ 5, 20 \}$
and Schatten $\infty$-norm appear in \cref{app:numerics-comparison-synth}.
Let us make some remarks:

\vspace{0.5pc}

\begin{itemize}
\item	This experiment demonstrates clearly that the proposed approximation~\cref{eqn:Ahat-fixed}
improves over the earlier methods for most of the synthetic input matrices, almost uniformly
and sometimes by orders of magnitude.

\item	For input matrices where the spectral tail decays slowly (\texttt{PolyDecaySlow},
\texttt{LowRankLowNoise}, \texttt{LowRankMedNoise}, \texttt{LowRankHiNoise}),
the newly proposed method~\cref{eqn:Ahat-fixed} has identical behavior to [Upa16].
The new method is slightly worse than [HMT11] in several of these cases.

\item	For input matrices whose spectral tail decays more quickly (\texttt{ExpDecaySlow},
\texttt{ExpDecayMed}, \texttt{ExpDecayFast}, \texttt{PolyDecayMed}, \texttt{PolyDecayFast}),
the proposed method improves dramatically over [HMT11] and [Upa16].

\item	The new method~\cref{eqn:Ahat-fixed} shows its strength over [TYUC17]
when the storage budget is small.  It also yields superior
performance in Schatten $\infty$-norm.  These differences are most evident for matrices
with slow spectral decay.

\end{itemize}

\vspace{0.5pc}

\begin{figure}[htp!]

\begin{center}
\begin{subfigure}{.45\textwidth}
\begin{center}
\includegraphics[height=2in]{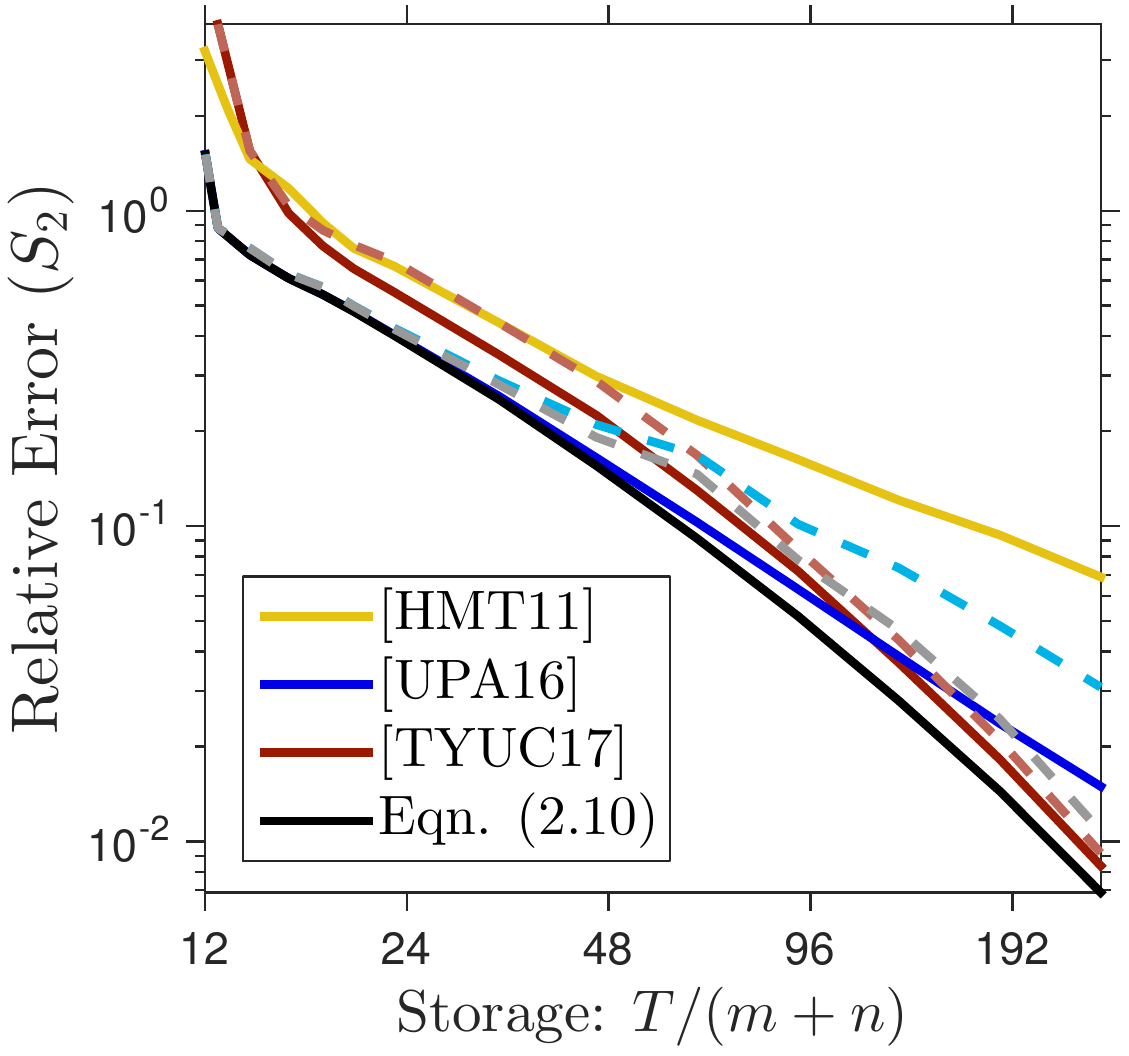}
\caption{\texttt{MinTemp} ($r = 10$)}
\end{center}
\end{subfigure}
\begin{subfigure}{.45\textwidth}
\begin{center}
\includegraphics[height=2in]{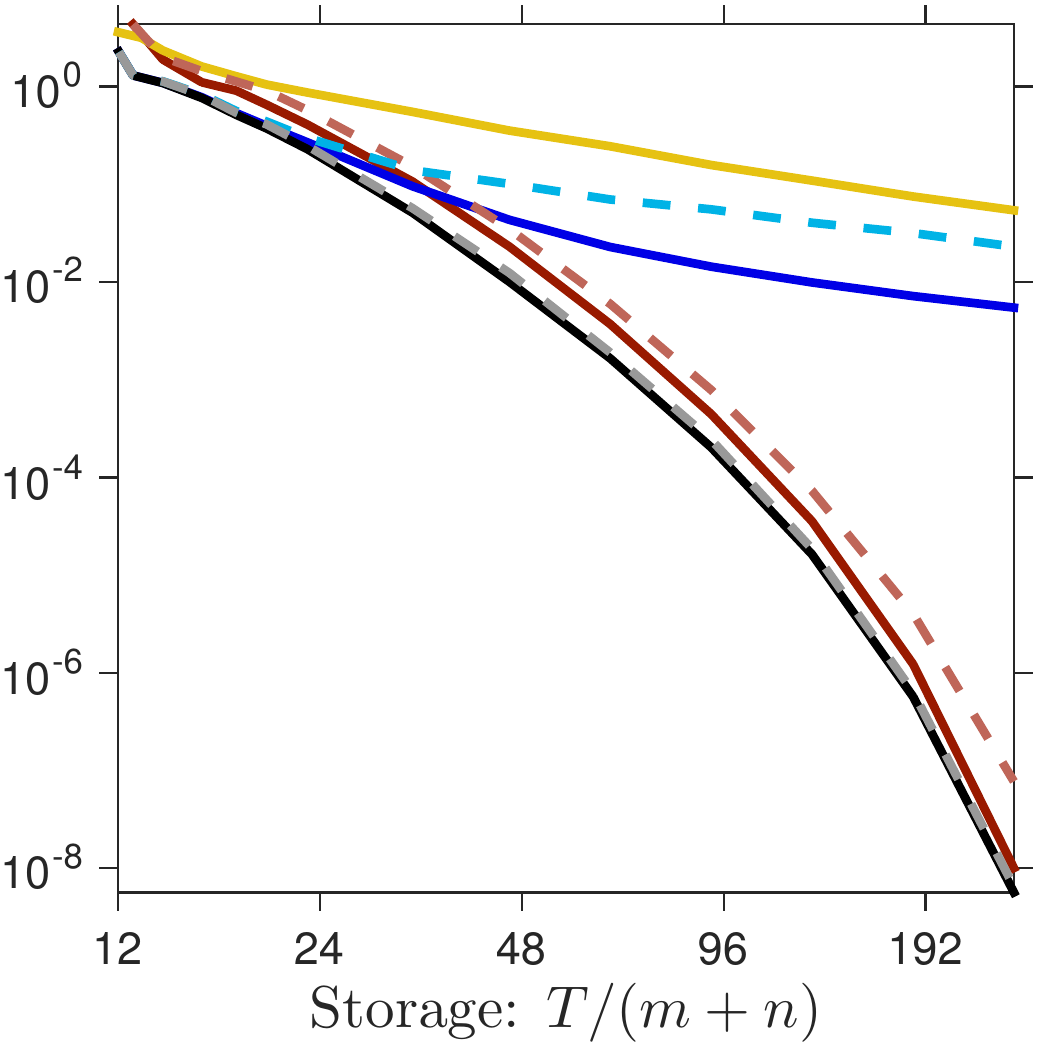}
\caption{\texttt{StreamVel} ($r = 10$)}
\end{center}
\end{subfigure}
\end{center}

\vspace{0.5em}

\begin{center}
\begin{subfigure}{.45\textwidth}
\begin{center}
\includegraphics[height=2in]{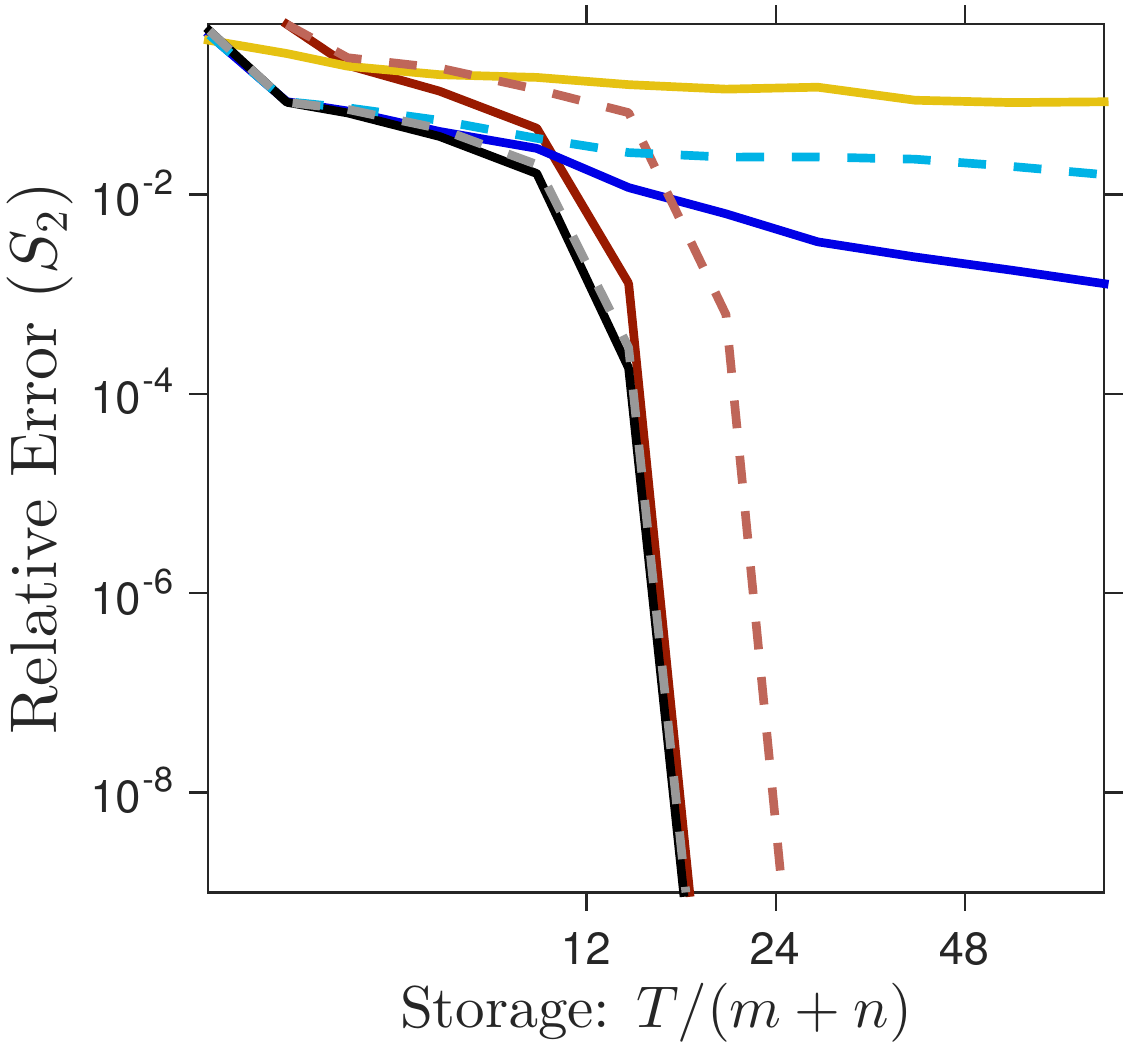}
\caption{\texttt{MaxCut} ($r = 1$)}
\end{center}
\end{subfigure}
\begin{subfigure}{.45\textwidth}
\begin{center}
\includegraphics[height=2in]{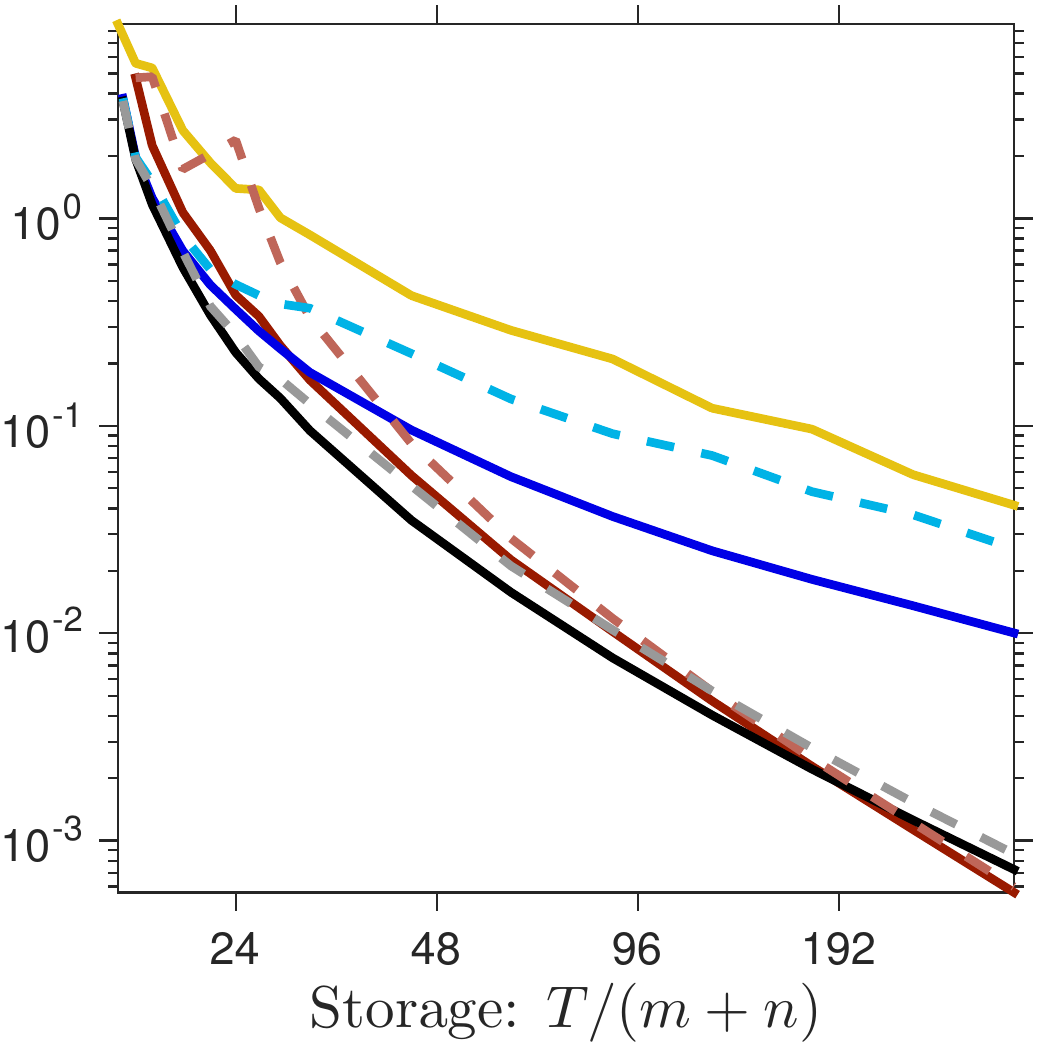}
\caption{\texttt{MaxCut} ($r = 14$)}
\end{center}
\end{subfigure}
\end{center}

\vspace{.5em}

\begin{center}
\begin{subfigure}{.45\textwidth}
\begin{center}
\includegraphics[height=2in]{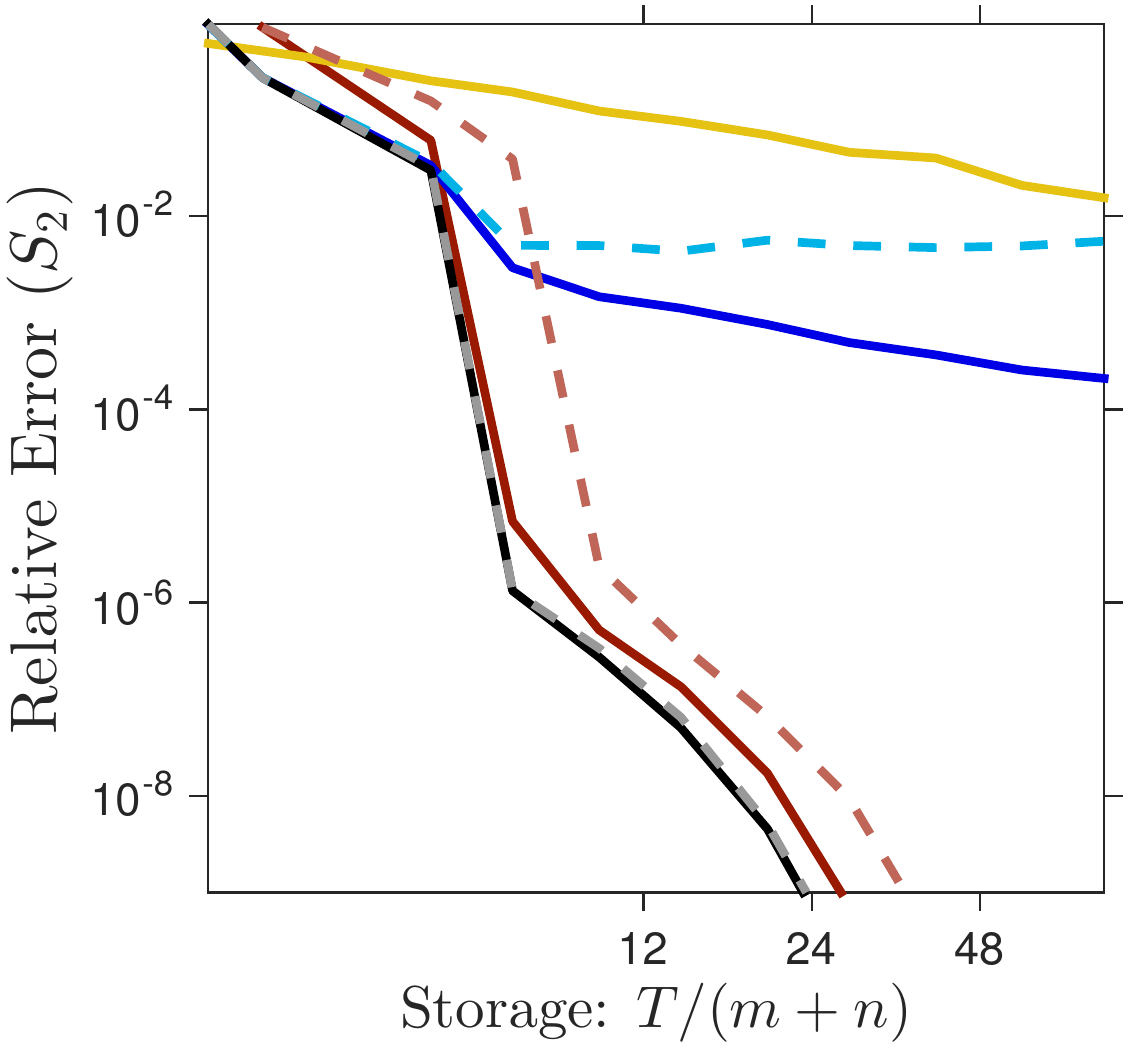}
\caption{\texttt{PhaseRetrieval} ($r = 1$)}
\end{center}
\end{subfigure}
\begin{subfigure}{.45\textwidth}
\begin{center}
\includegraphics[height=2in]{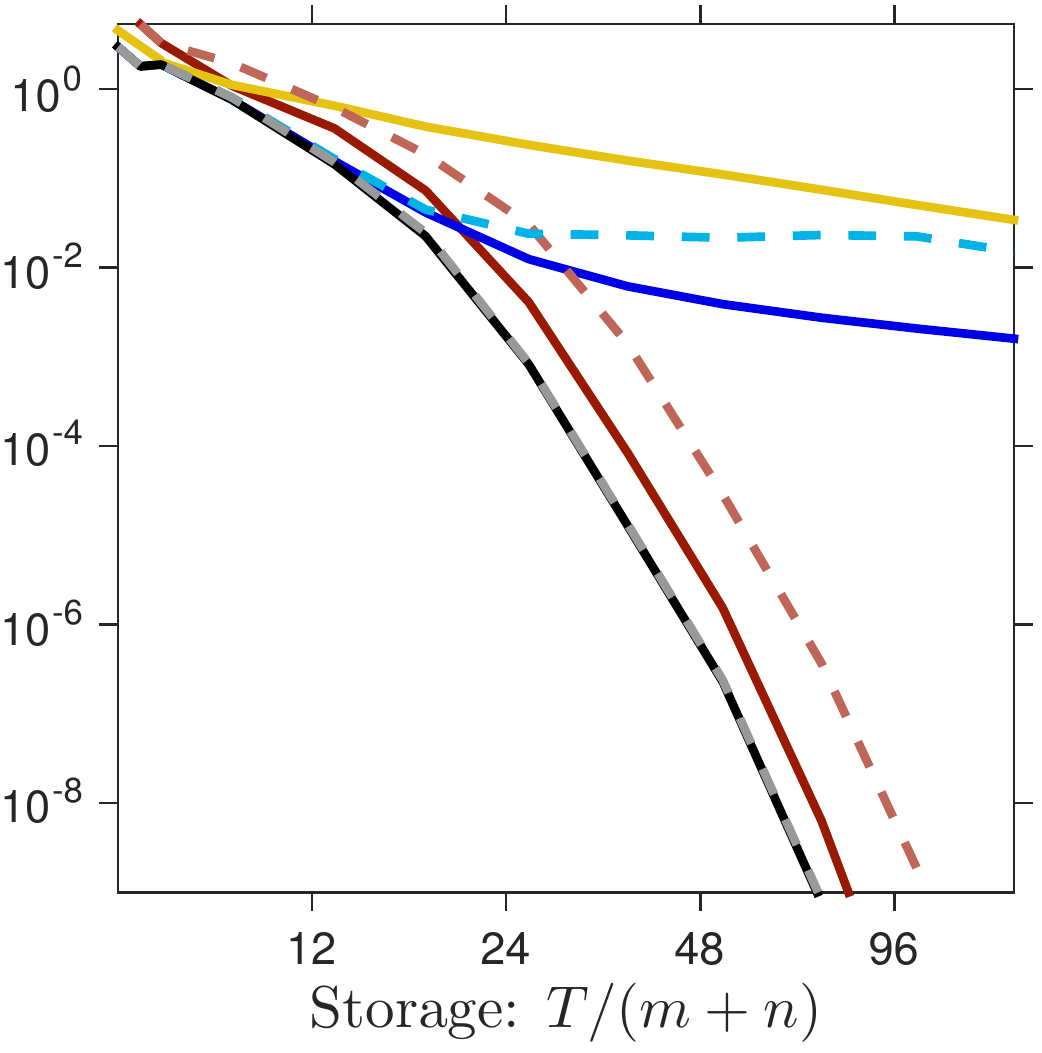}
\caption{\texttt{PhaseRetrieval} ($r = 5$)}
\end{center}
\end{subfigure}
\end{center}

\vspace{0.5em}

\caption{\textbf{Comparison of reconstruction formulas: Real data examples.}
(Sparse maps, Schatten $2$-norm.)
We compare the relative error achieved by the proposed fixed-rank
approximation~\cref{eqn:Ahat-fixed} against methods [HMT11], [Upa16], and [TYUC17]
from the literature.
\textbf{Solid lines} are oracle errors; \textbf{dashed lines} are errors with ``natural''
parameter choices.  (There is no dashed line for [HMT11].)  See \cref{sec:real-data} for details.}
\label{fig:data-comparison-S2}
\end{figure}

\begin{figure}[htp!]

\begin{center}
\begin{subfigure}{.48\textwidth}
\begin{center}
\includegraphics[width=2.4in]{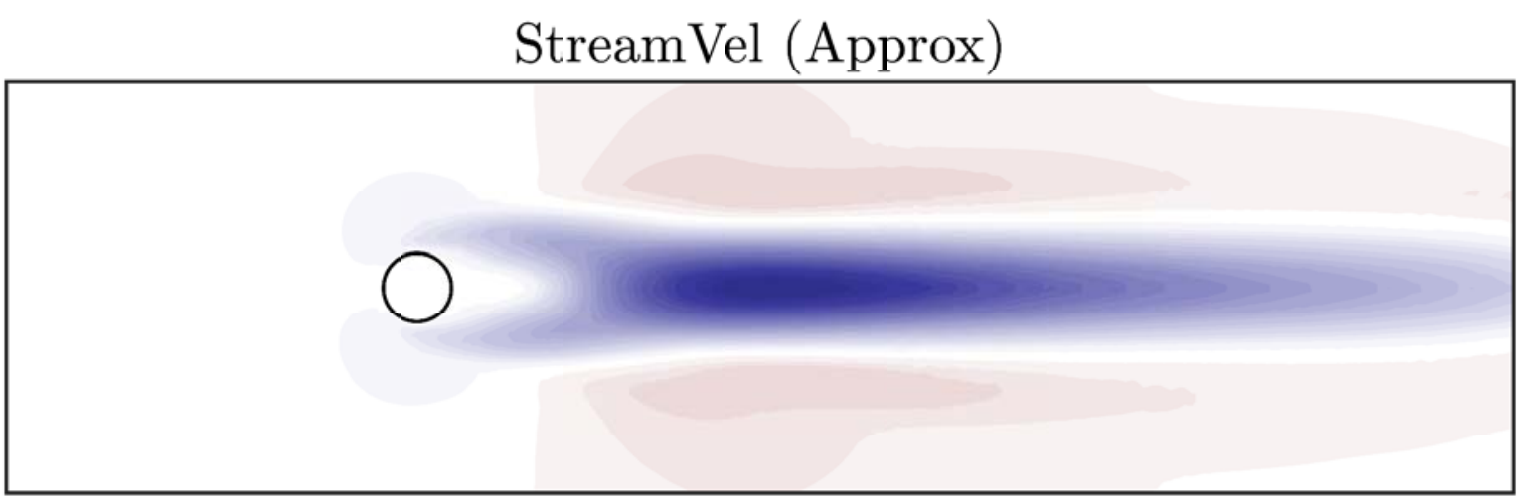}
\end{center}
\end{subfigure}
\begin{subfigure}{.48\textwidth}
\begin{center}
\includegraphics[width=2.4in]{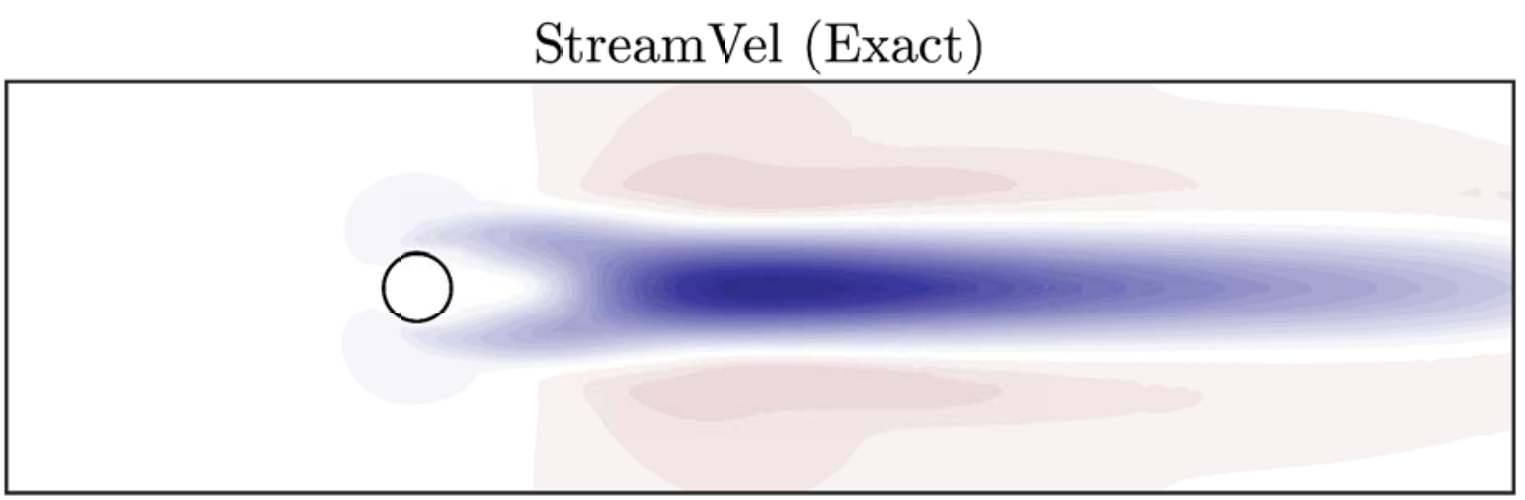}
\end{center}
\end{subfigure}

\vspace{1mm}

\begin{subfigure}{.48\textwidth}
\begin{center}
\includegraphics[width=2.4in]{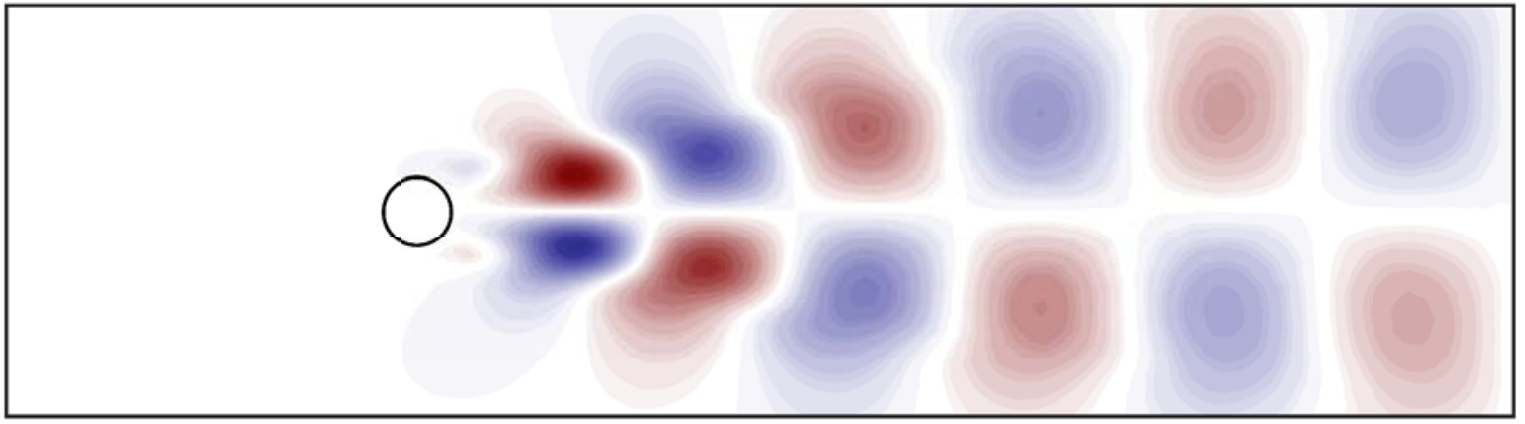}
\end{center}
\end{subfigure}
\begin{subfigure}{.48\textwidth}
\begin{center}
\includegraphics[width=2.4in]{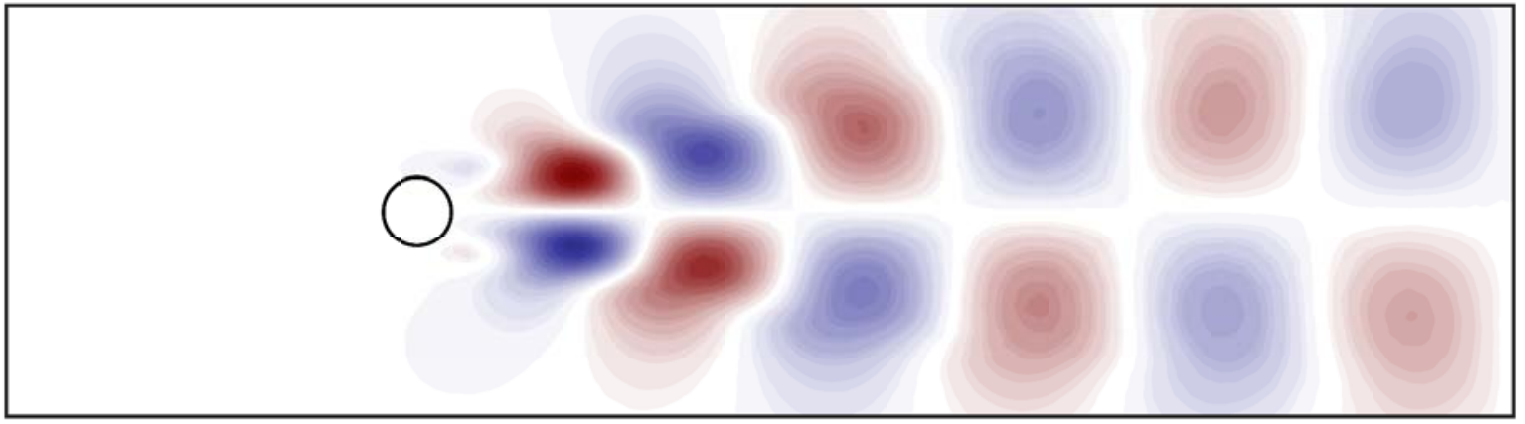}
\end{center}
\end{subfigure}

\vspace{1mm}

\begin{subfigure}{.48\textwidth}
\begin{center}
\includegraphics[width=2.4in]{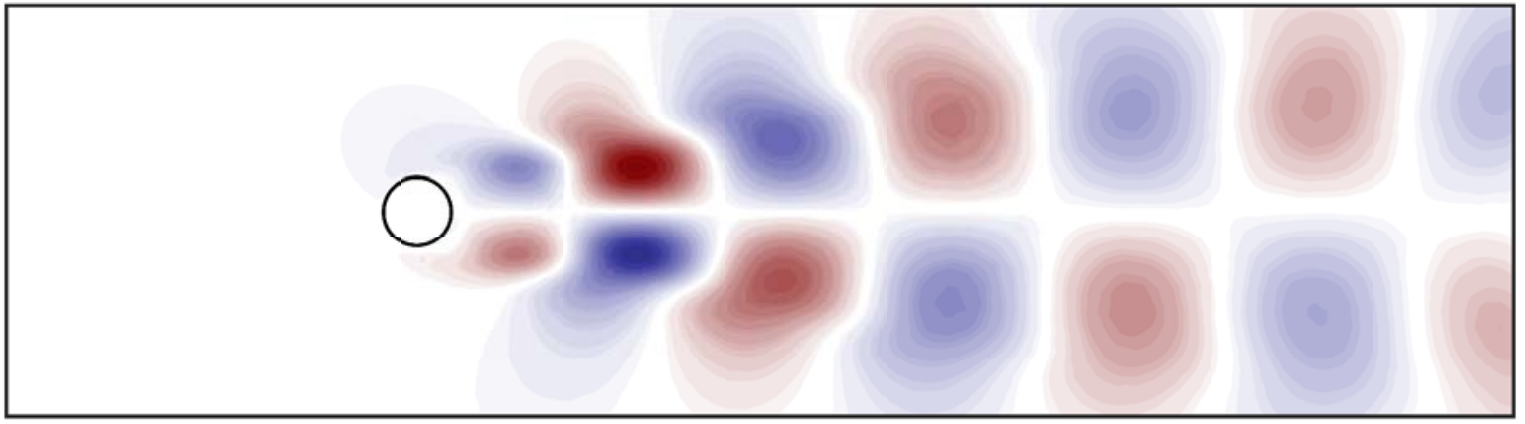}
\end{center}
\end{subfigure}
\begin{subfigure}{.48\textwidth}
\begin{center}
\includegraphics[width=2.4in]{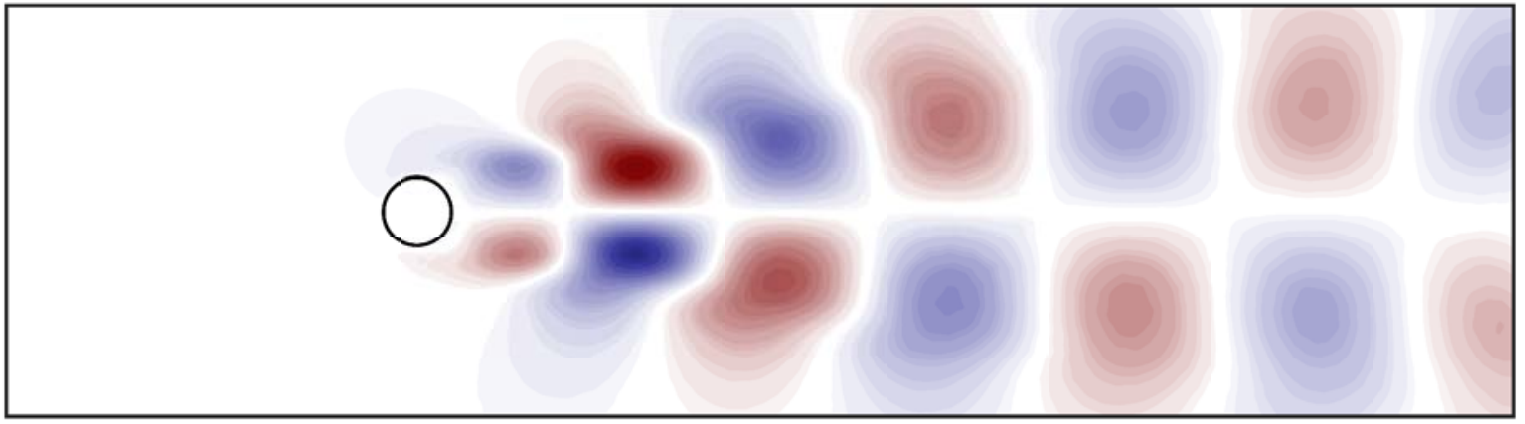}
\end{center}
\end{subfigure}

\vspace{1mm}

\begin{subfigure}{.48\textwidth}
\begin{center}
\includegraphics[width=2.4in]{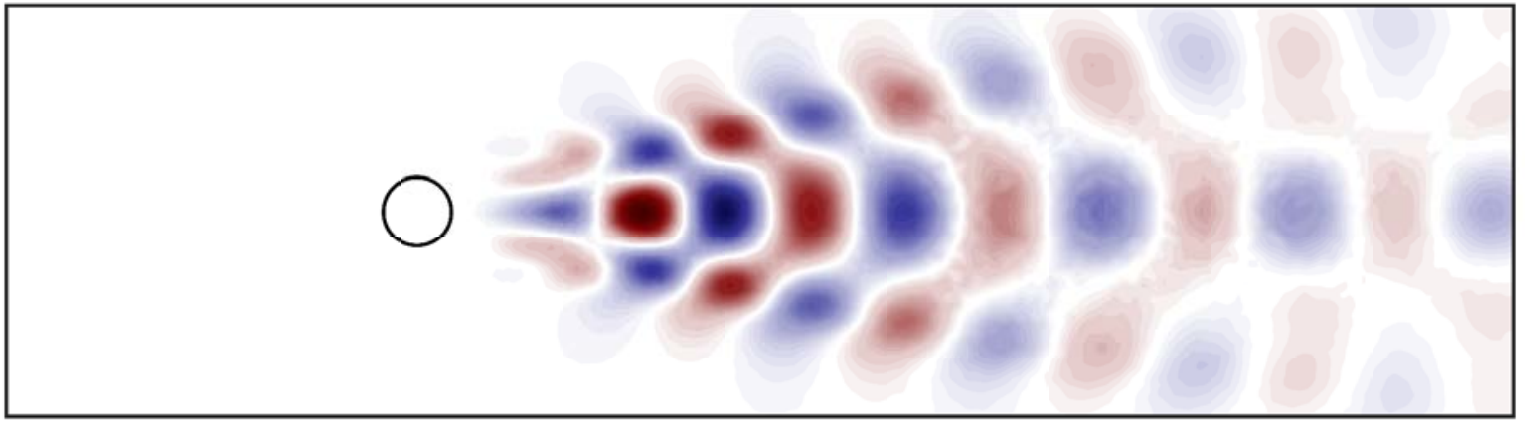}
\end{center}
\end{subfigure}
\begin{subfigure}{.48\textwidth}
\begin{center}
\includegraphics[width=2.4in]{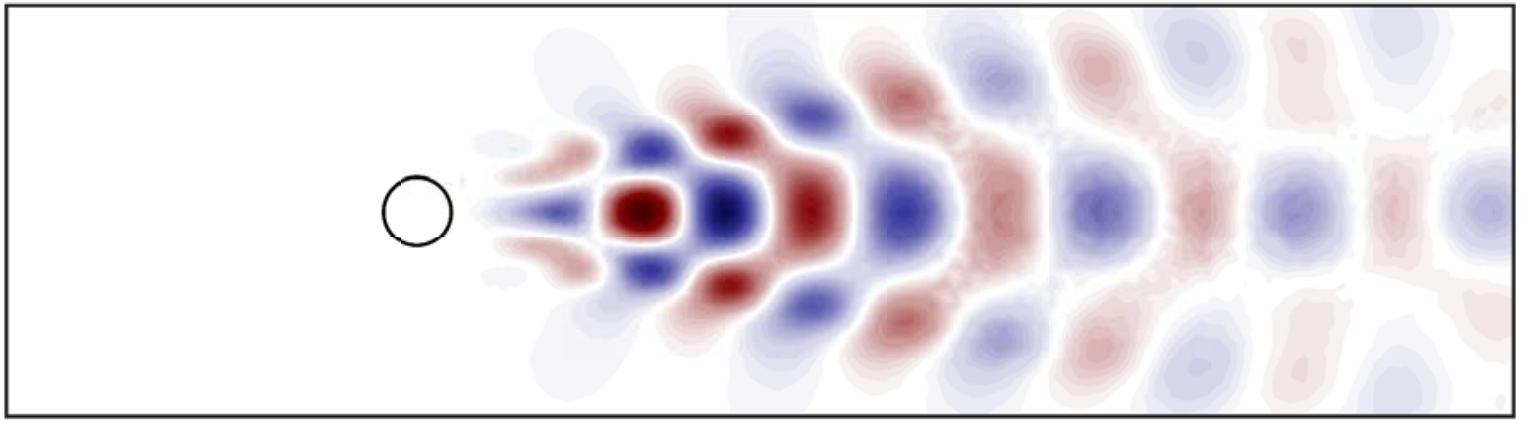}
\end{center}
\end{subfigure}

\vspace{1mm}

\begin{subfigure}{.48\textwidth}
\begin{center}
\includegraphics[width=2.4in]{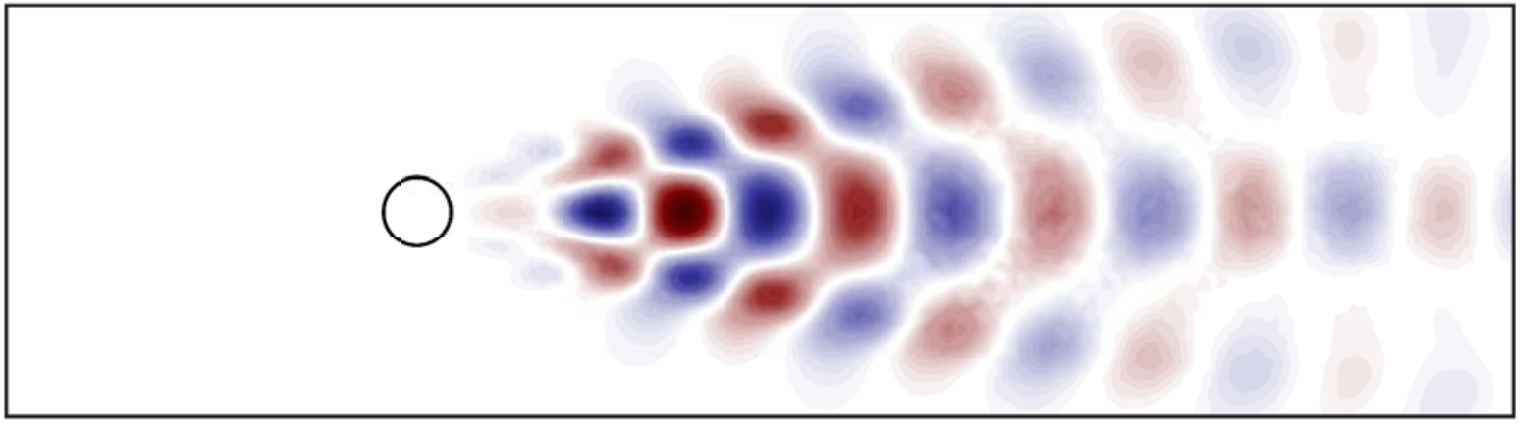}
\end{center}
\end{subfigure}
\begin{subfigure}{.48\textwidth}
\begin{center}
\includegraphics[width=2.4in]{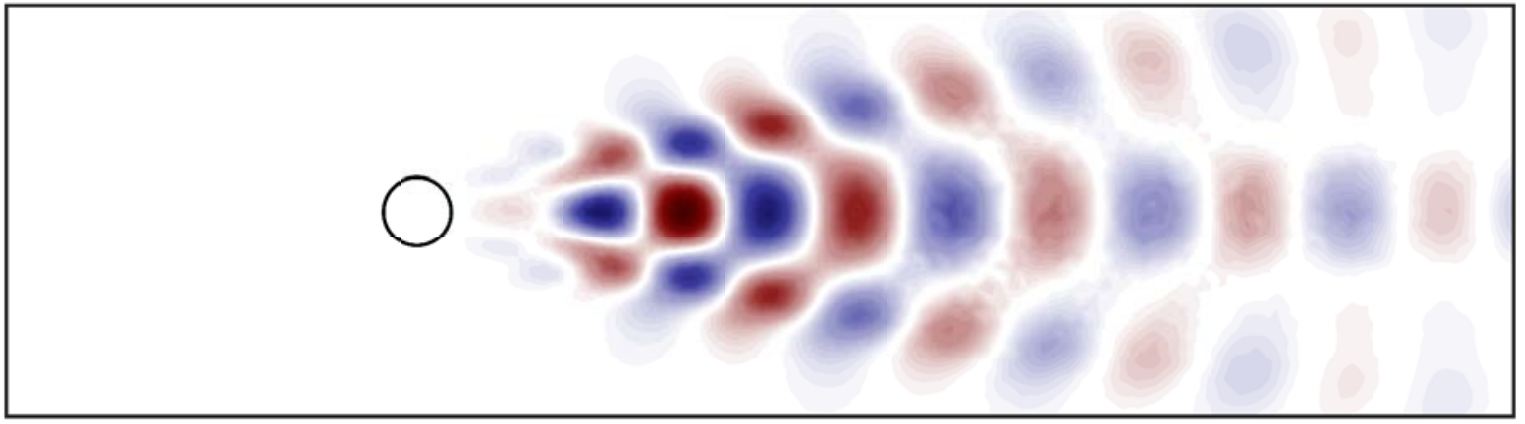}
\end{center}
\end{subfigure}

\vspace{1mm}

\begin{subfigure}{.48\textwidth}
\begin{center}
\includegraphics[width=2.4in]{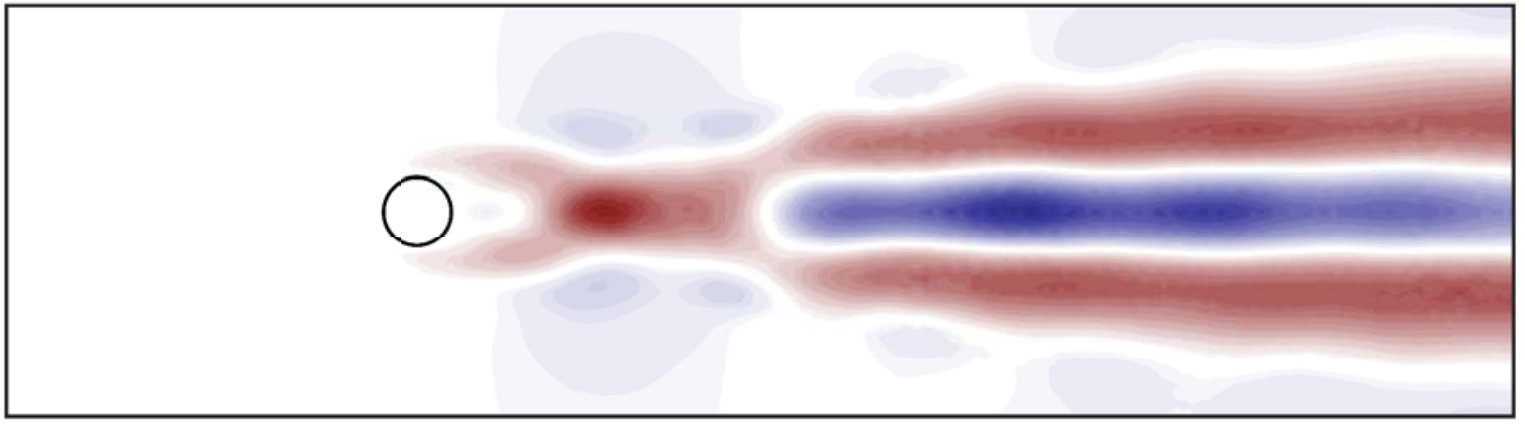}
\end{center}
\end{subfigure}
\begin{subfigure}{.48\textwidth}
\begin{center}
\includegraphics[width=2.4in]{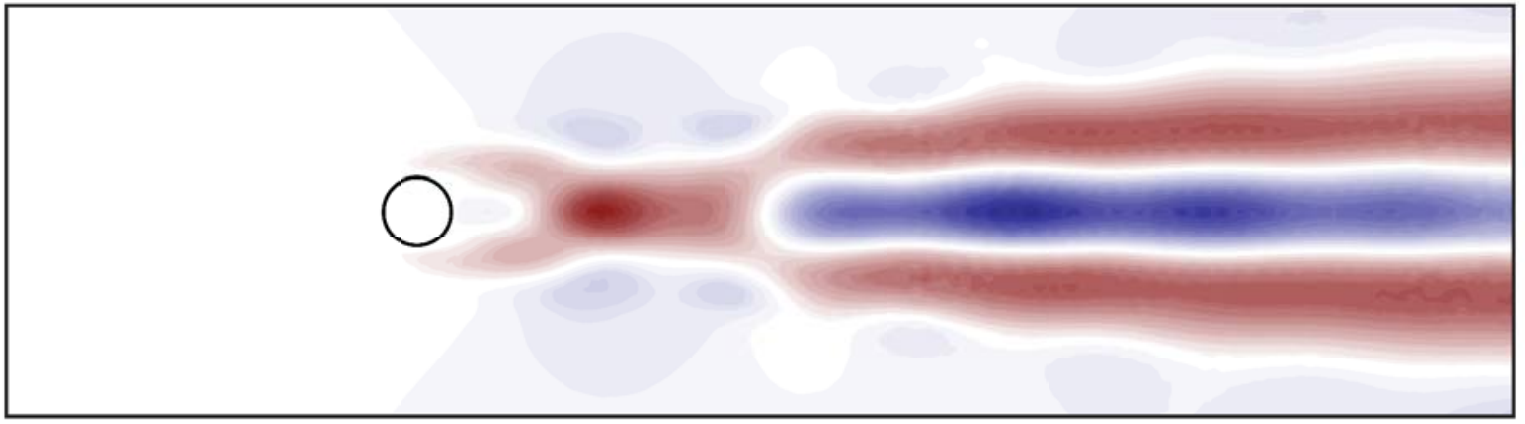}
\end{center}
\end{subfigure}

\vspace{1mm}

\begin{subfigure}{.48\textwidth}
\begin{center}
\includegraphics[width=2.4in]{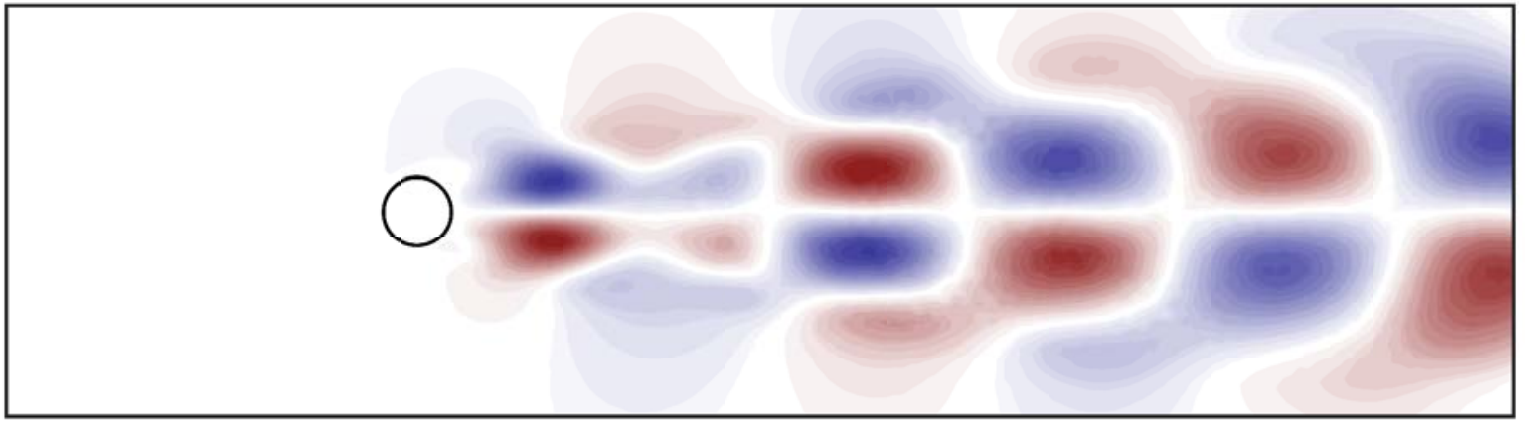}
\end{center}
\end{subfigure}
\begin{subfigure}{.48\textwidth}
\begin{center}
\includegraphics[width=2.4in]{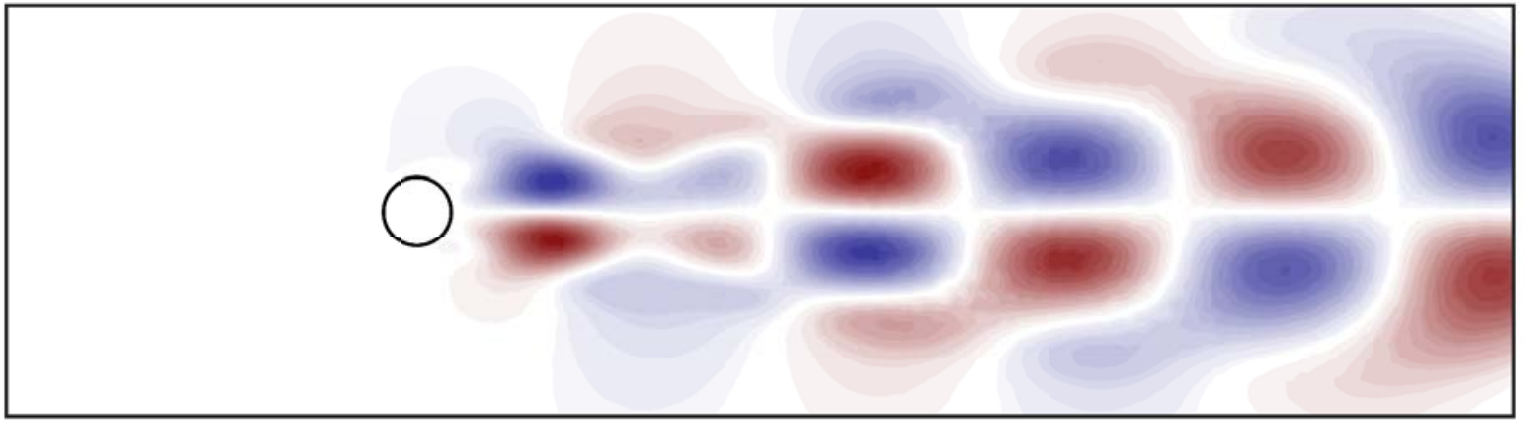}
\end{center}
\end{subfigure}

\vspace{1mm}

\begin{subfigure}{.48\textwidth}
\begin{center}
\includegraphics[width=2.4in]{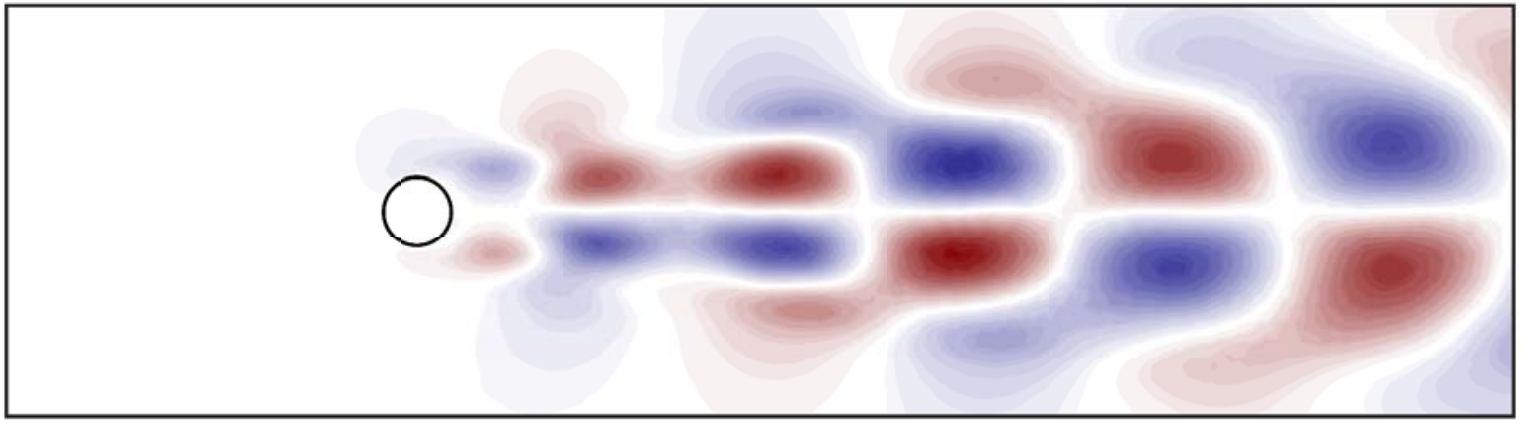}
\end{center}
\end{subfigure}
\begin{subfigure}{.48\textwidth}
\begin{center}
\includegraphics[width=2.4in]{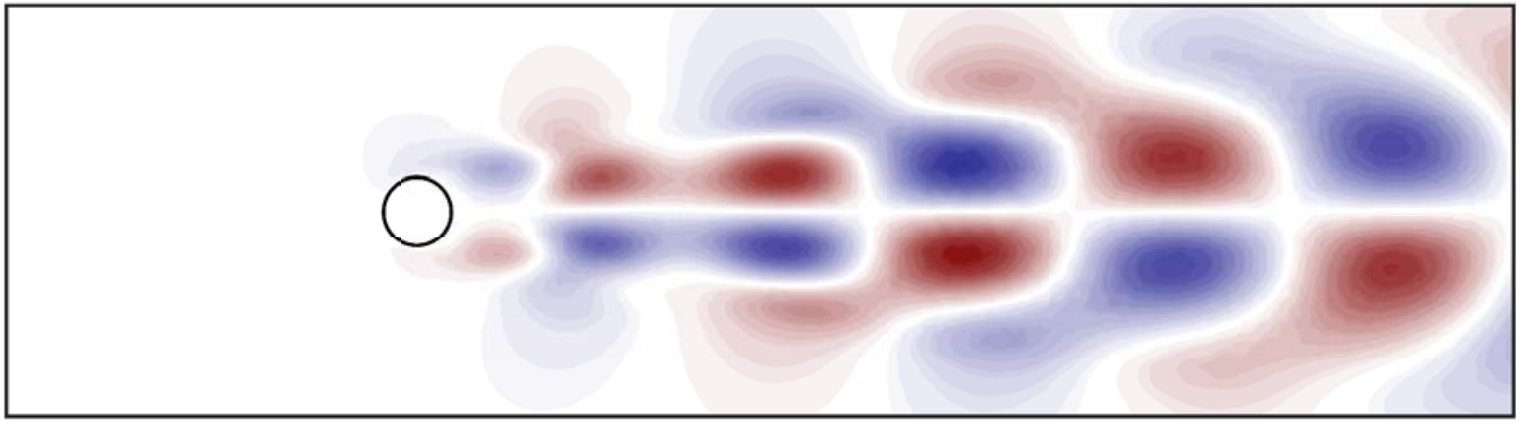}
\end{center}
\end{subfigure}

\vspace{1mm}

\begin{subfigure}{.48\textwidth}
\begin{center}
\includegraphics[width=2.4in]{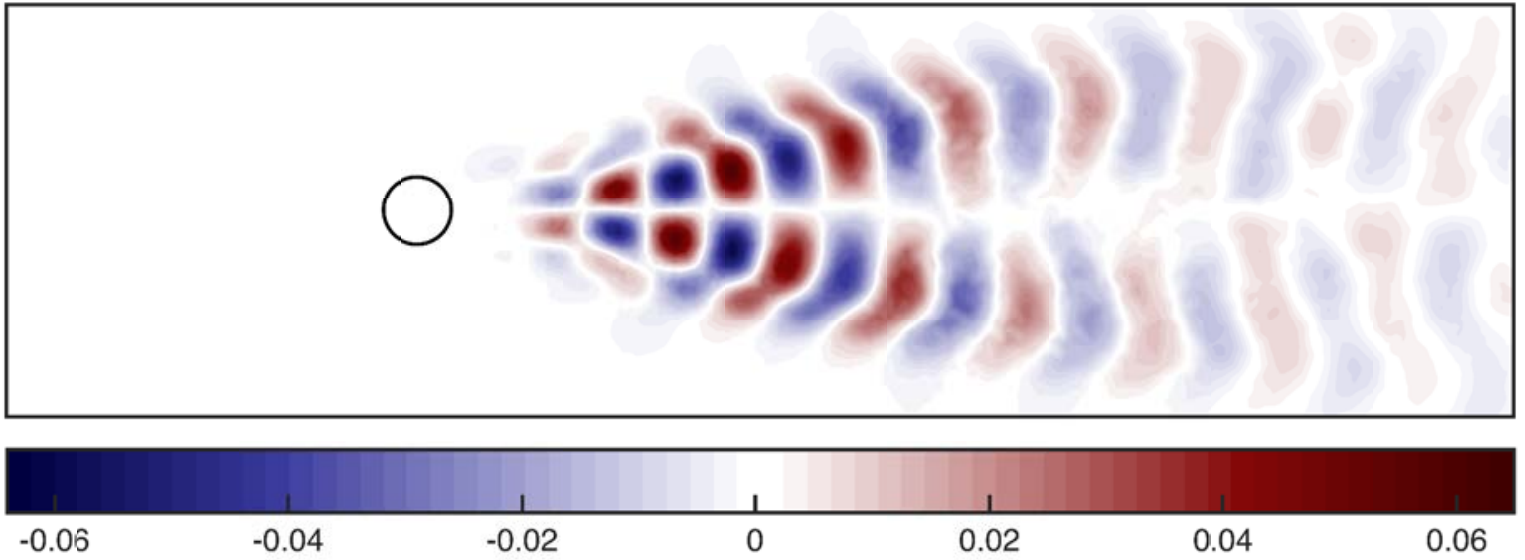}
\end{center}
\end{subfigure}
\begin{subfigure}{.48\textwidth}
\begin{center}
\includegraphics[width=2.4in]{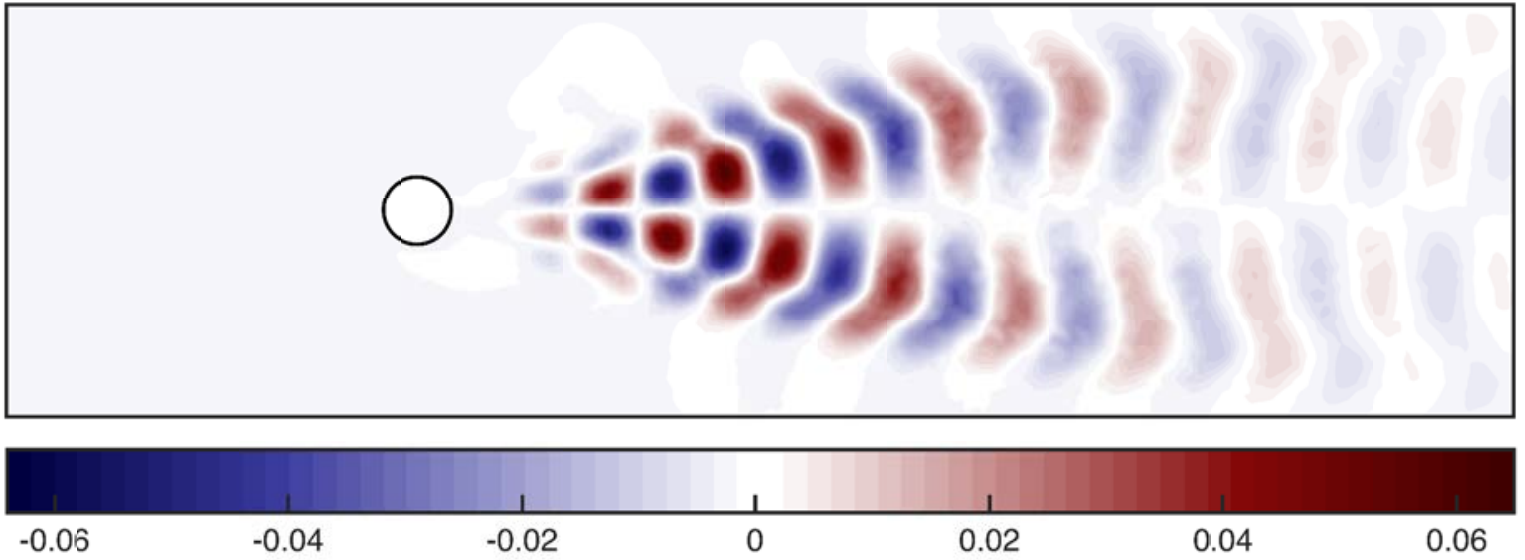}
\end{center}
\end{subfigure}

\vspace{1pc}

\caption{\textbf{Left singular vectors of \texttt{StreamVel}.}
(Sparse maps, approximation rank $r = 10$, storage budget $T = 48 (m+n)$.)
The columns of the matrix \texttt{StreamVel} describe the fluctuations
of the streamwise velocity field about its mean value as a function of time.
From top to bottom, the panels show the first nine computed left singular
vectors of the matrix.
The \textbf{left-hand side} is computed from the sketch,
while the \textbf{right-hand side} is computed from the exact flow field.
The heatmap indicates the magnitude of the fluctuation.
See \cref{sec:flow-field} for details.}
\label{fig:flow-field-svec}
\end{center}
\end{figure}

\subsection{Comparison of Reconstruction Formulas: Real Data Examples}
\label{sec:real-data}

The next set of experiments compares the behavior of the algorithms
for matrices drawn from applications.

\Cref{fig:data-comparison-S2} %
contains the results of the following
experiment.  For each of the four algorithms,
we display the relative error~\cref{eqn:relative-error} as a function of storage.
We use sparse dimension reduction maps, which is justified by
the experiments in \cref{sec:universality}.

We plot the oracle error (\cref{sec:oracle-error}) attained by each method.
Since the oracle error is not achievable in practice,
we also chart the performance of each method
at an \emph{a priori} parameter selection;
see~\cref{app:numerics-comparison-data} for details.

As with the synthetic examples, the proposed method~\cref{eqn:Ahat-fixed}
improves over the competing methods for all the examples we considered.  This
is true when we compare oracle errors or when we compare the errors using
theoretical parameter choices.  The benefits of the new method are least
pronounced for the matrix \texttt{MinTemp}, %
whose spectrum has medium polynomial decay.  The benefits of the new method
are quite clear for the matrix \texttt{StreamVel}, which %
has an exponentially decaying spectrum.  The advantages are
even more striking for the two matrices \texttt{MaxCut} and \texttt{PhaseRetrieval},
which are effectively rank deficient.

In summary, we believe that the numerical work here supports the use of
our new method~\cref{eqn:Ahat-fixed}.  The methods [HMT11] and [Upa16]
cannot achieve a small relative error~\cref{eqn:relative-error}, even with a large amount
of storage.  The method [TYUC17] can yield small relative
error, but it often requires more storage to achieve this
goal---especially at the \emph{a priori} parameter choices.

\subsection{Example: Flow-Field Reconstruction}
\label{sec:flow-field}

Next, we elaborate on using sketching to compress the Navier--Stokes data matrix
\texttt{StreamVel}.   We compute the best rank-$10$ approximation
of the matrix via~\cref{eqn:Ahat-fixed} using storage $T = 48 (m+n)$
and the ``natural'' parameter choices~\cref{eqn:ks-natural}.  For this example,
we can use plots of the flow field to make visual comparisons.

\cref{fig:flow-field-svec} illustrates the leading left singular vectors
of the streamwise velocity field \texttt{StreamVel}, as computed from the
sketch and the full matrix.  We see that the approximate left singular vectors
closely match the actual left singular vectors, although some small discrepancies
appear in the higher singular vectors.
See~\cref{app:flow-field-reconstruction} for additional numerics.
In particular, we find that the output from the algorithms
[HMT11] and [Upa16] changes violently when we adjust
the truncation rank $r$.

We see that our sketching method leads to an excellent rank-$10$ approximation of the matrix.
In fact, the relative error~\eqref{eqn:relative-error} in Frobenius norm is under $9.2 \cdot 10^{-3}$.
While the sketch uses $5.8 \, \mathrm{MB}$ of storage in double precision,
the full matrix requires $409.7 \, \mathrm{MB}$.  The compression rate is $70.6 \times$.
Therefore, it is possible to compress the output of the Navier--Stokes simulation automatically
using sketching.

\subsection{Rank Truncation and \emph{A Posteriori} Error Estimation}
\label{sec:apost-experiments}

\begin{figure}[t!]
\begin{center}
\begin{subfigure}{.45\textwidth}
\begin{center}
\includegraphics[height=1.75in]{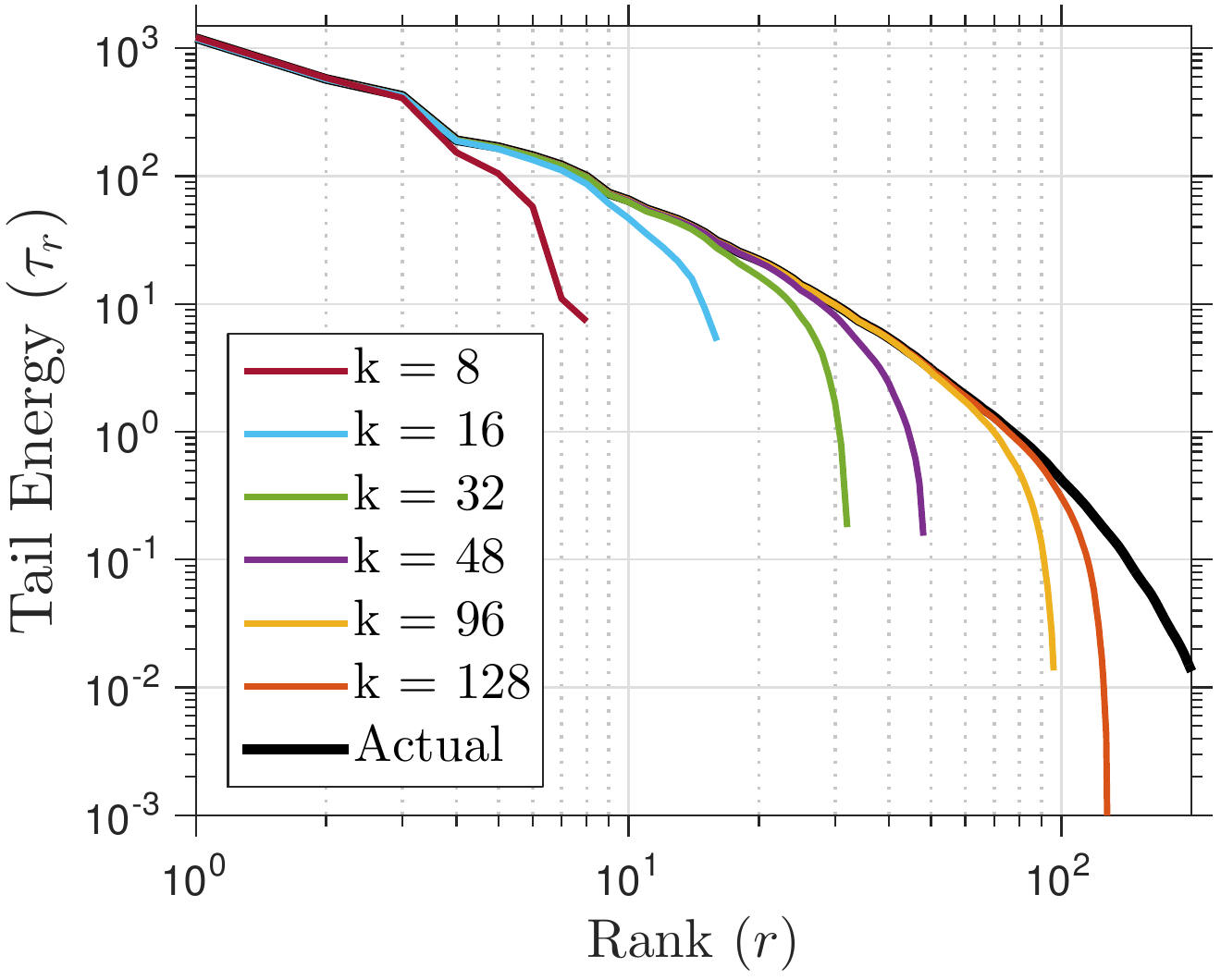}
\caption{Tail Energy $\tau_{r}(\hat{\mtx{A}})$} \label{subfig:tailenergy-Ahat}
\end{center}
\end{subfigure}
\begin{subfigure}{.45\textwidth}
\begin{center}
\includegraphics[height=1.75in]{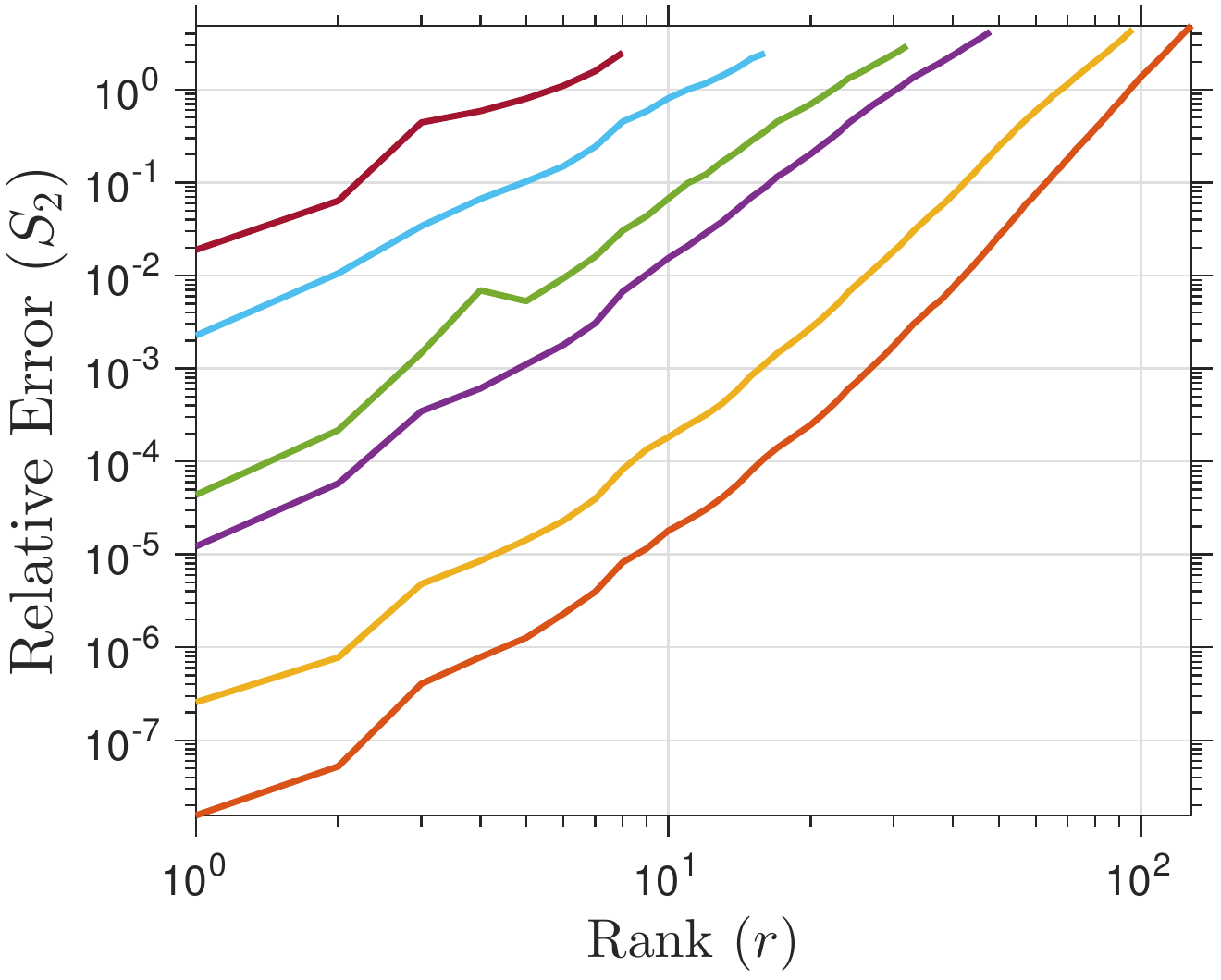}
\caption{Rank-$r$ Truncations} \label{subfig:error-Ahat-fixed}
\end{center}
\end{subfigure}

\caption{\textbf{Why Truncate?}  (\texttt{StreamVel}, sparse maps, $s = 2k+1$.)
\Cref{subfig:tailenergy-Ahat}
compares the tail energy $\tau_{r}(\hat{\mtx{A}})$ of the rank-$k$ approximation $\hat{\mtx{A}}$
against the tail energy $\tau_{r}(\mtx{A})$ of the actual matrix $\mtx{A}$.
\Cref{subfig:error-Ahat-fixed} shows the relative error~\cref{eqn:relative-error}
in the truncated approximation $\lowrank{\hat{\mtx{A}}}{r}$ as a function of rank $r$.
The relative error in the rank-$k$
approximation (\textbf{right endpoints} of series) \emph{increases} with $k$.
See~\cref{sec:apost-experiments}.}
\label{fig:why-truncate}
\end{center}
\end{figure}

\begin{figure}[t!]

\begin{center}
\begin{subfigure}{.45\textwidth}
\begin{center}
\includegraphics[height=1.75in]{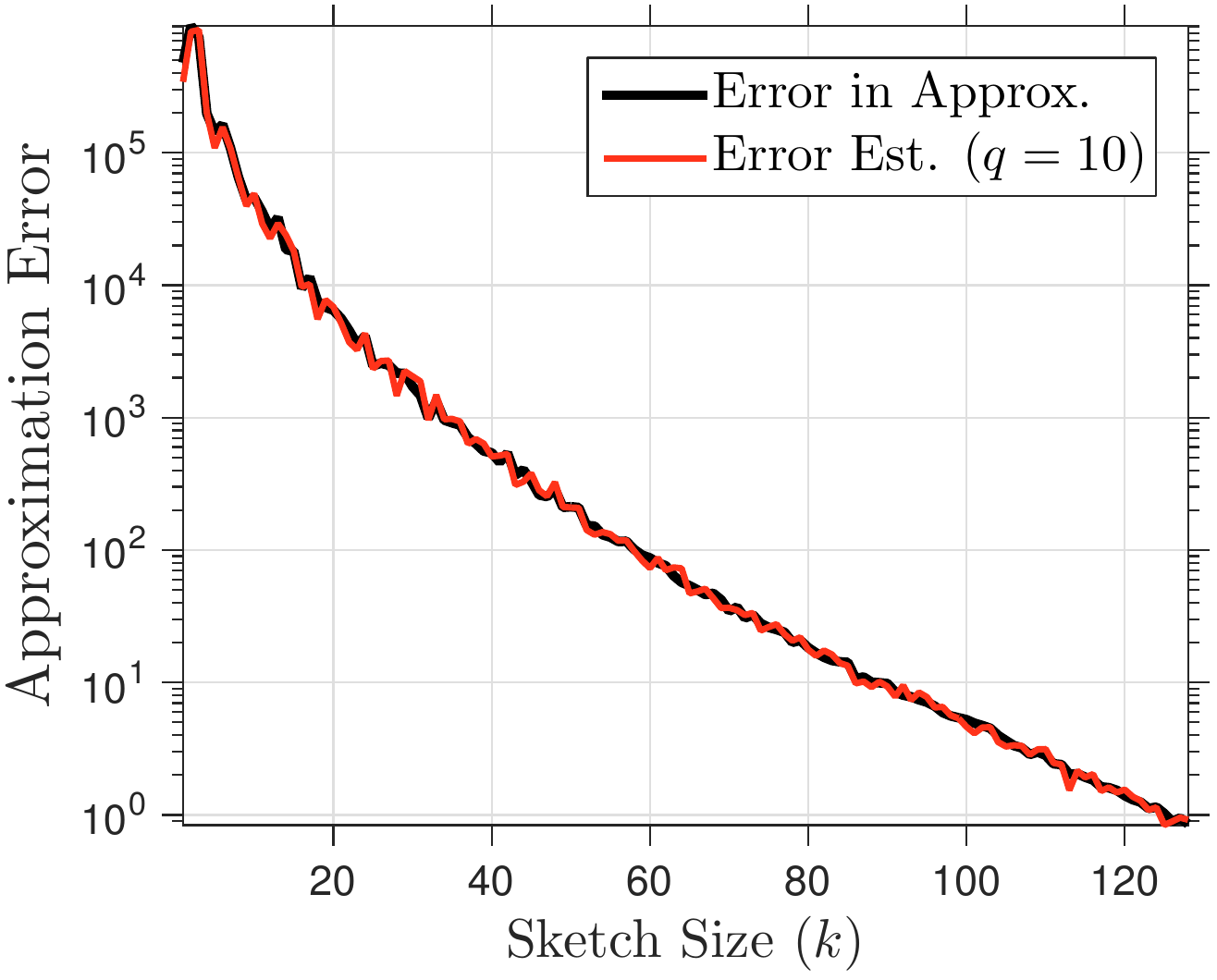}
\caption{Error Estimates for $\hat{\mtx{A}}$} \label{subfig:err2-Ahat}
\end{center}
\end{subfigure}
\begin{subfigure}{.45\textwidth}
\begin{center}
\includegraphics[height=1.75in]{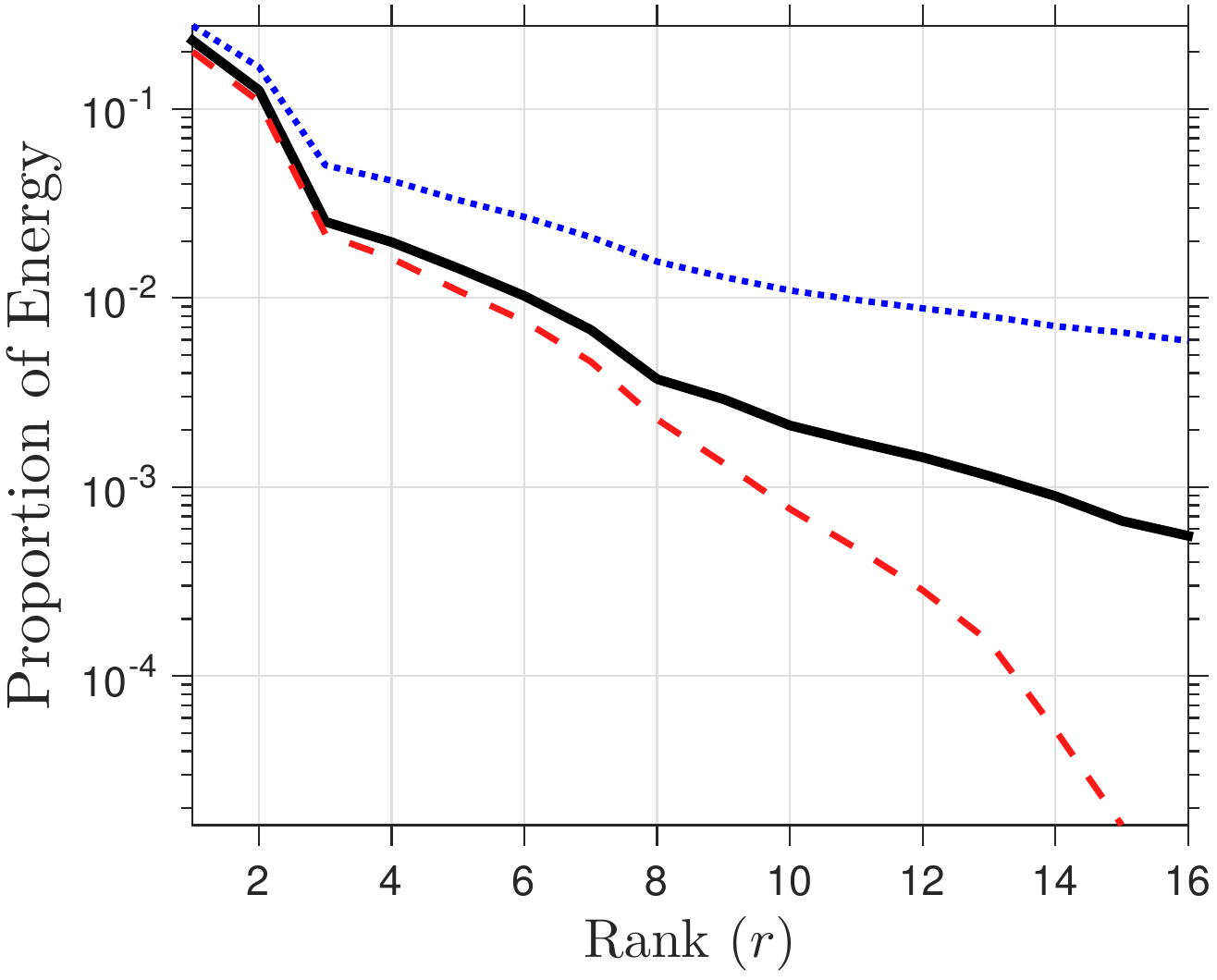}
\caption{Scree Plot ($k = 16$)}
\end{center}
\end{subfigure}
\end{center}

\vspace{0.5em}

\begin{center}
\begin{subfigure}{.45\textwidth}
\begin{center}
\includegraphics[height=1.75in]{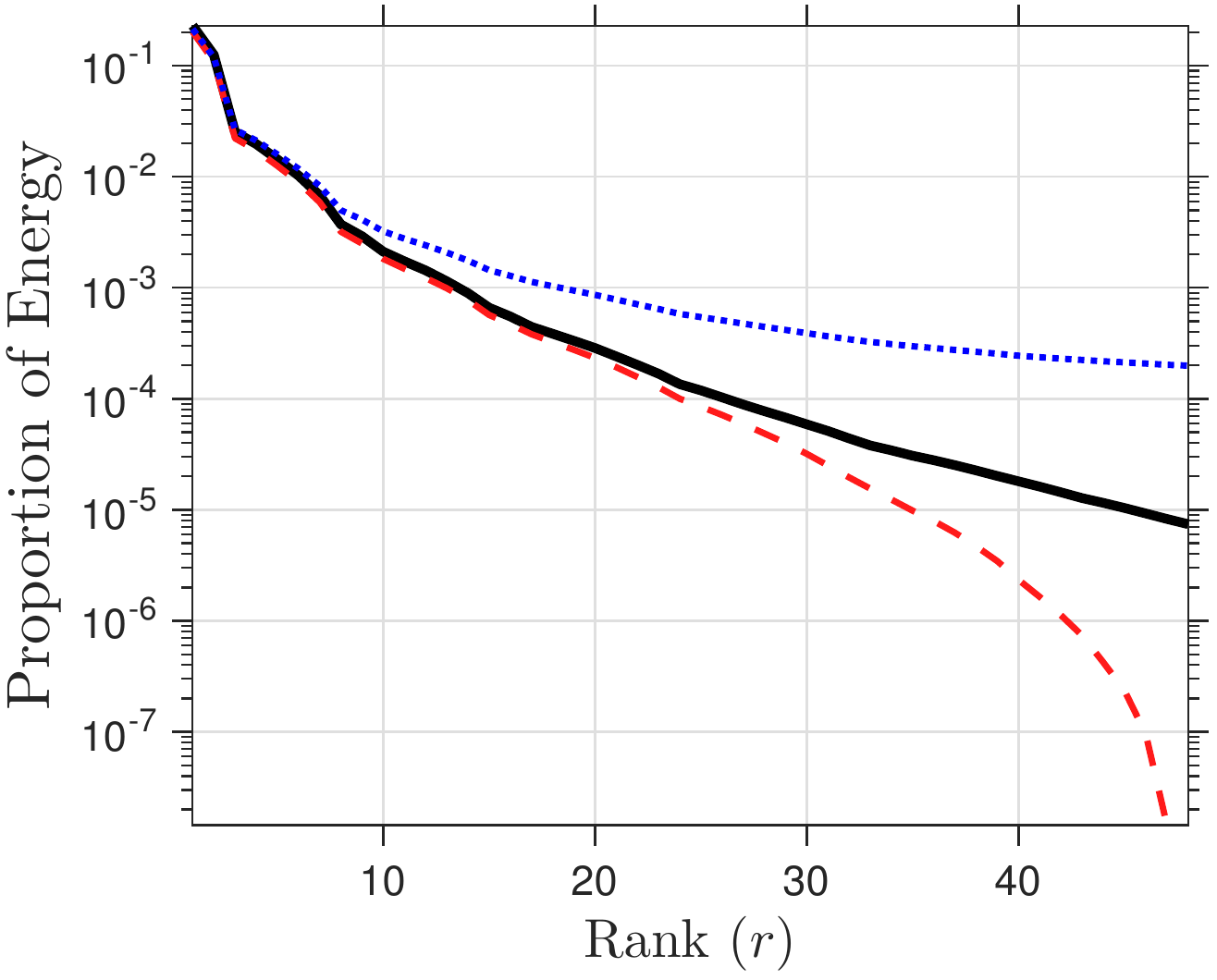}
\caption{Scree Plot ($k = 48$)}
\end{center}
\end{subfigure}
\begin{subfigure}{.45\textwidth}
\begin{center}
\includegraphics[height=1.75in]{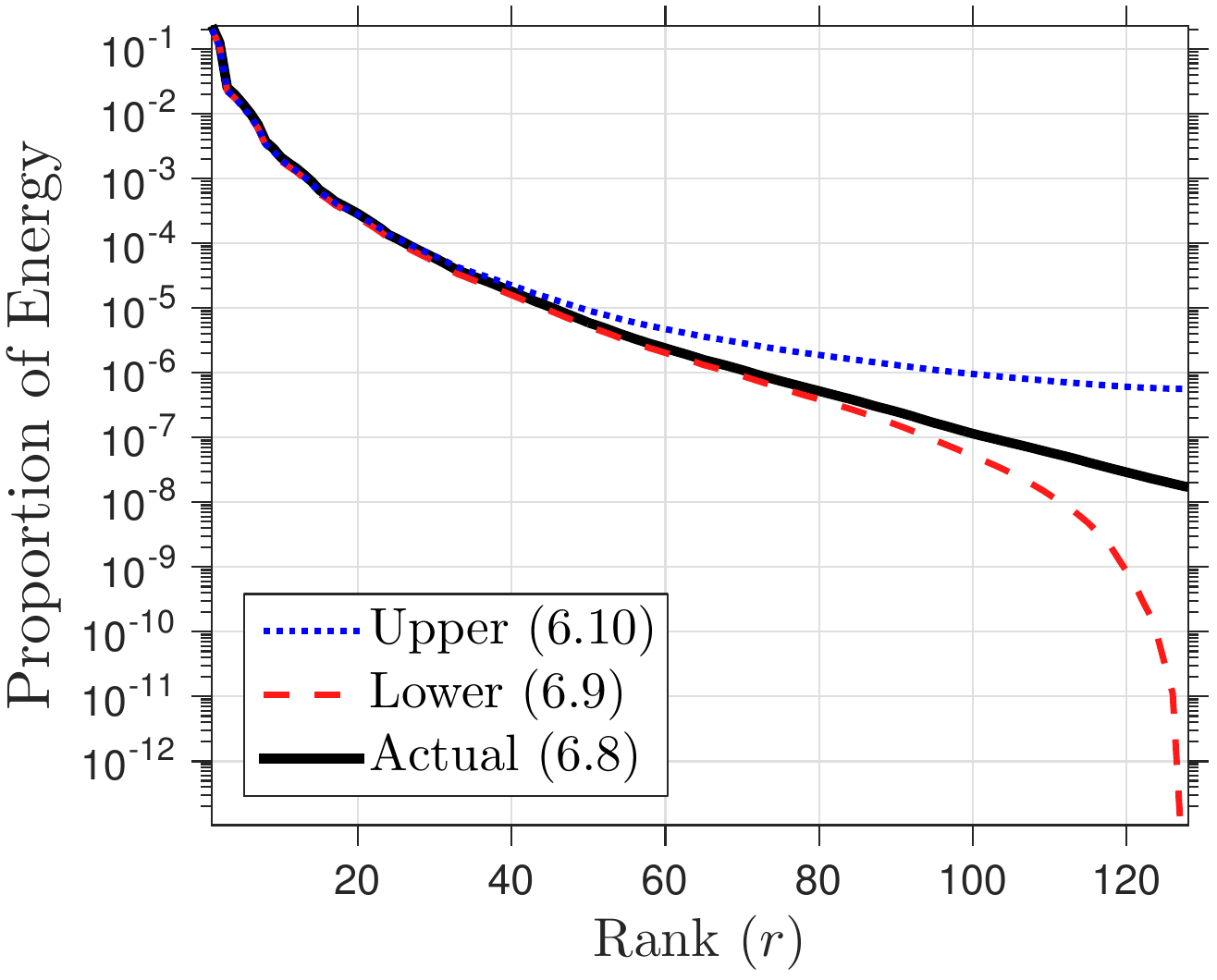}
\caption{Scree Plot ($k = 128$)}
\end{center}
\end{subfigure}
\end{center}

\vspace{0.5em}

\caption{\textbf{Error Estimation and Scree Plots.}
(\texttt{StreamVel}, sparse maps, $s = 2k+1$.)
For an error sketch with size $q = 10$, %
\cref{subfig:err2-Ahat} compares the absolute error
$\fnormsq{ \mtx{A} - \hat{\mtx{A}} }$
in the rank-$k$ approximation versus
the estimate $\err_2^2(\hat{\mtx{A}})$.
The other panels are scree plots of
the actual proportion of energy remaining~\cref{eqn:scree} %
versus a computable lower estimate~\cref{eqn:scree-lower} %
and upper estimate~\cref{eqn:scree-upper}. %
See \cref{sec:apost-experiments}.}
\label{fig:scree-plots}
\end{figure}

\begin{figure}[htp!]

\begin{center}
\begin{subfigure}{.45\textwidth}
\begin{center}
\includegraphics[height=2in]{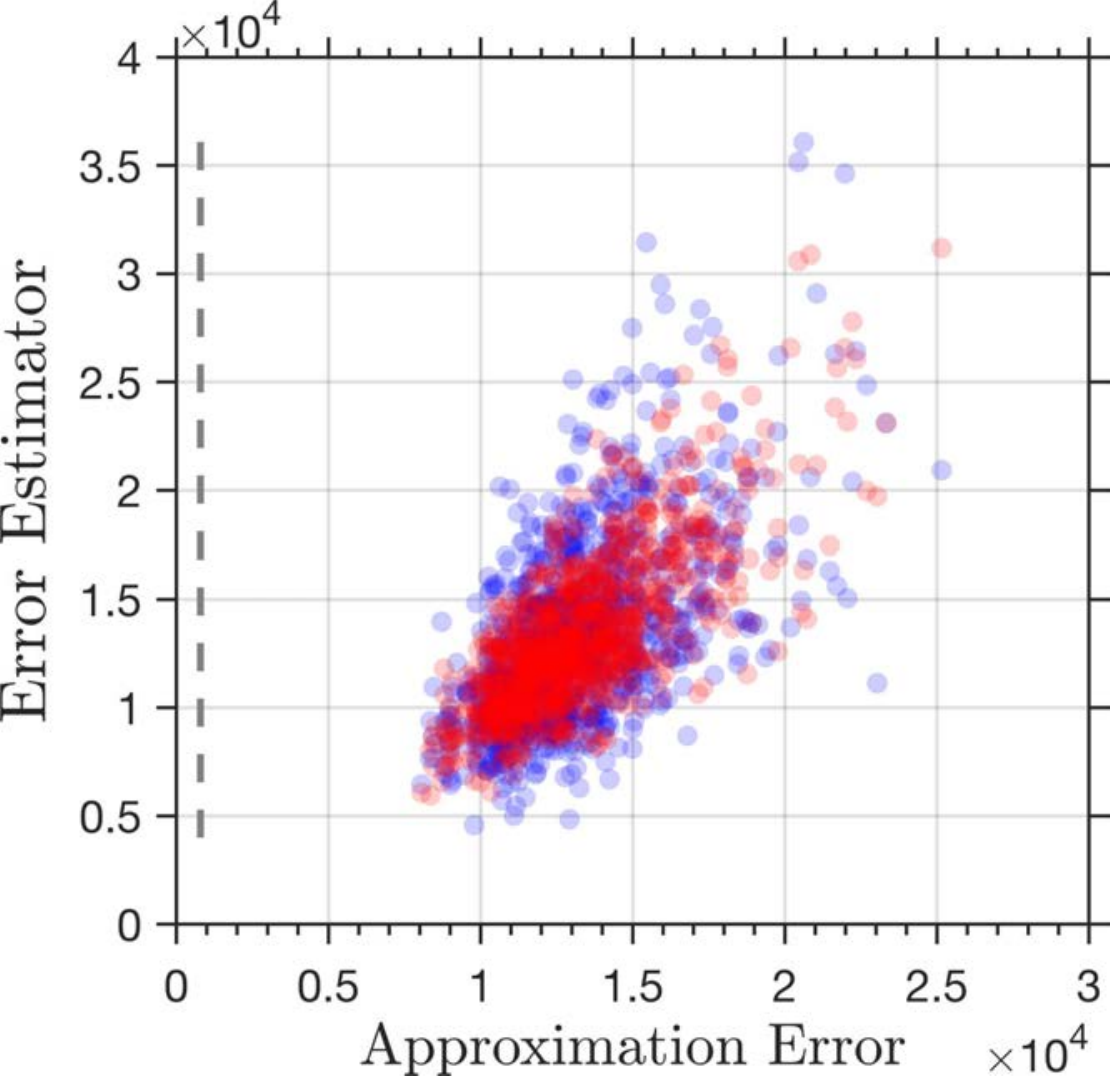}
\caption{Rank-$k$ Approximation ($k = 16$)}
\end{center}
\end{subfigure}
\begin{subfigure}{.45\textwidth}
\begin{center}
\includegraphics[height=2in]{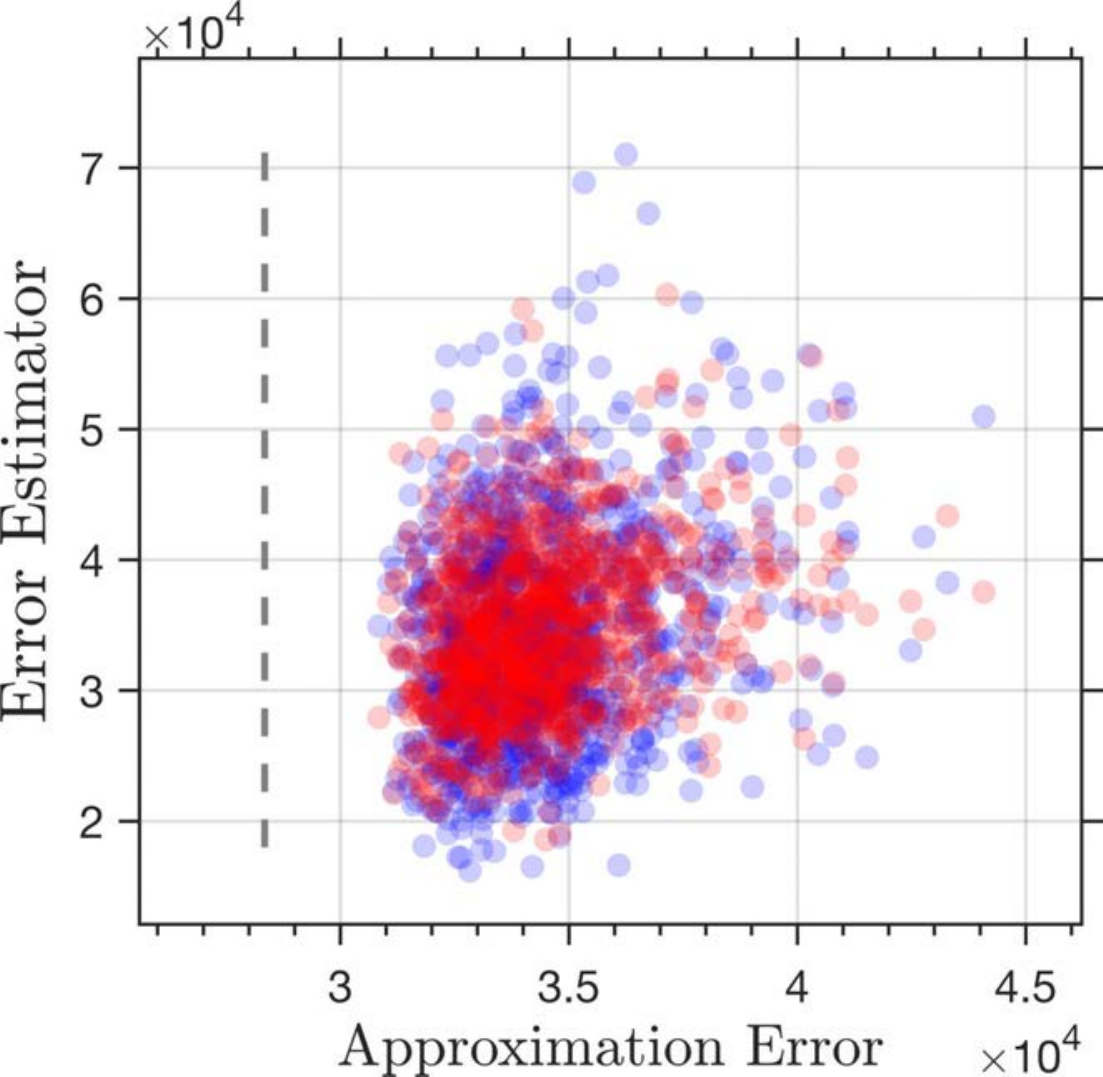}
\caption{Rank-$r$ Truncation ($k = 16$, $r = 4$)}
\end{center}
\end{subfigure}
\end{center}

\vspace{0.5em}

\begin{center}
\begin{subfigure}{.45\textwidth}
\begin{center}
\includegraphics[height=2in]{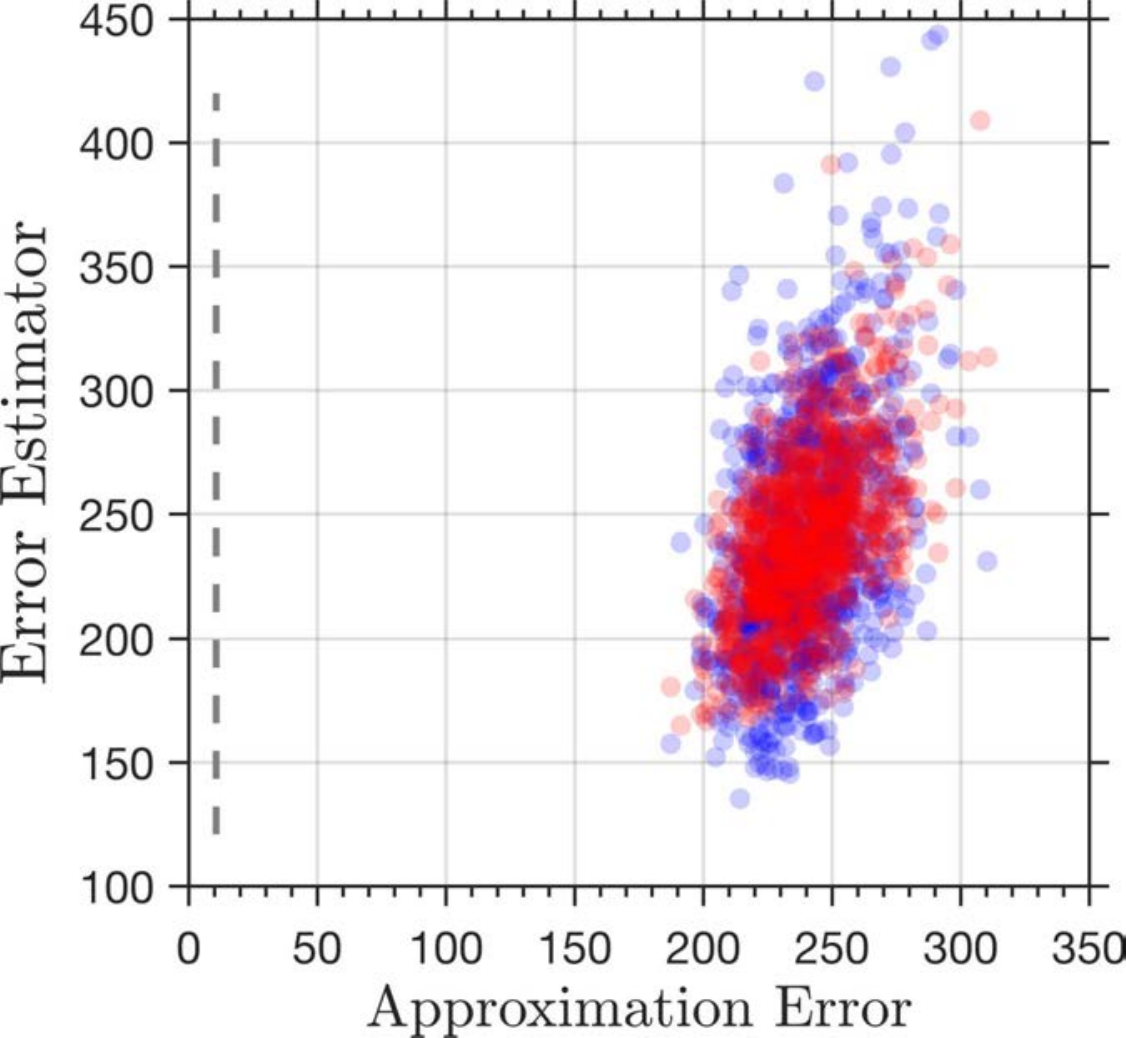}
\caption{Rank-$k$ Approximation ($k = 48$)}
\end{center}
\end{subfigure}
\begin{subfigure}{.45\textwidth}
\begin{center}
\includegraphics[height=2in]{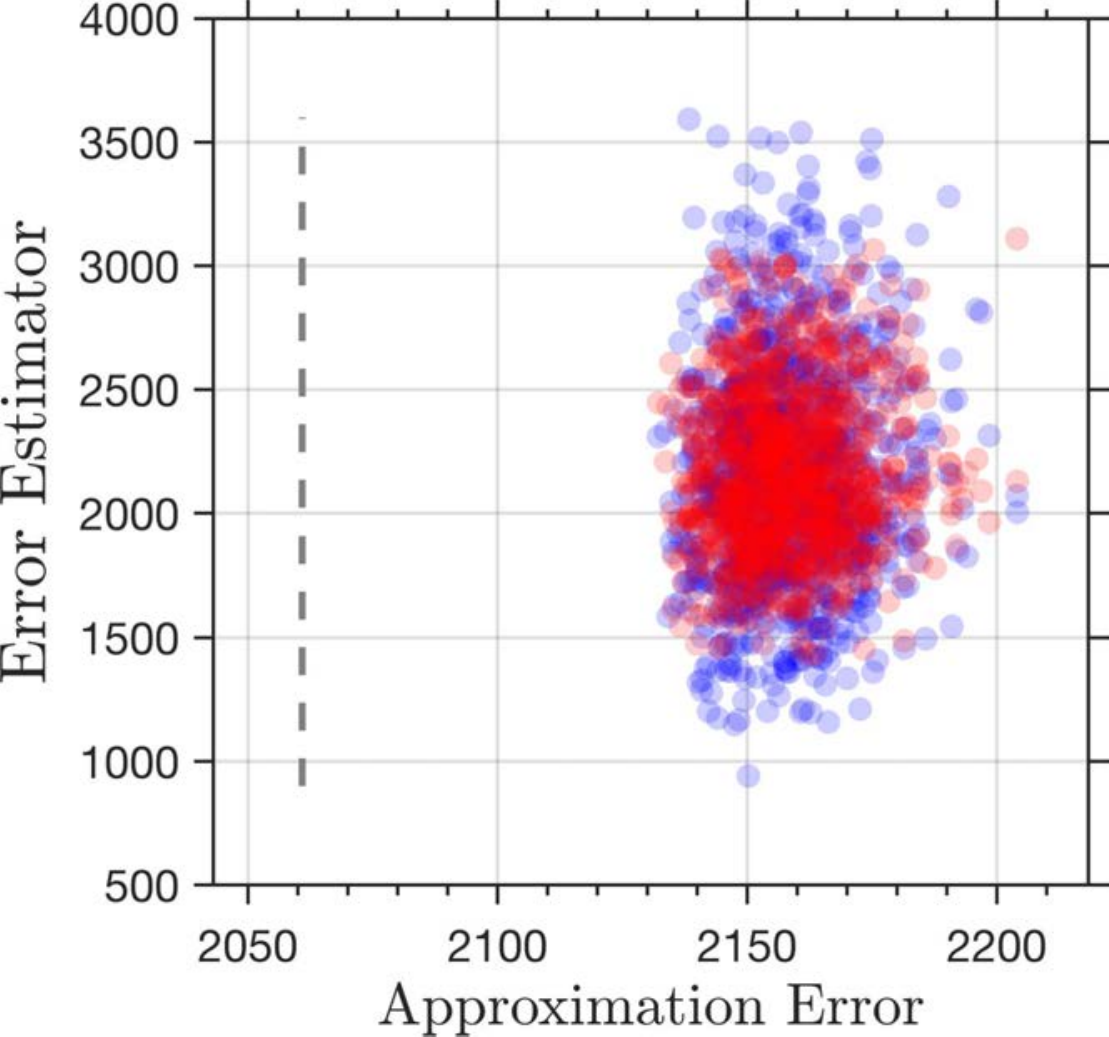}
\caption{Rank-$r$ Truncation ($k = 48$, $r = 12$)}
\end{center}
\end{subfigure}
\end{center}

\vspace{.5em}

\begin{center}
\begin{subfigure}{.45\textwidth}
\begin{center}
\includegraphics[height=2in]{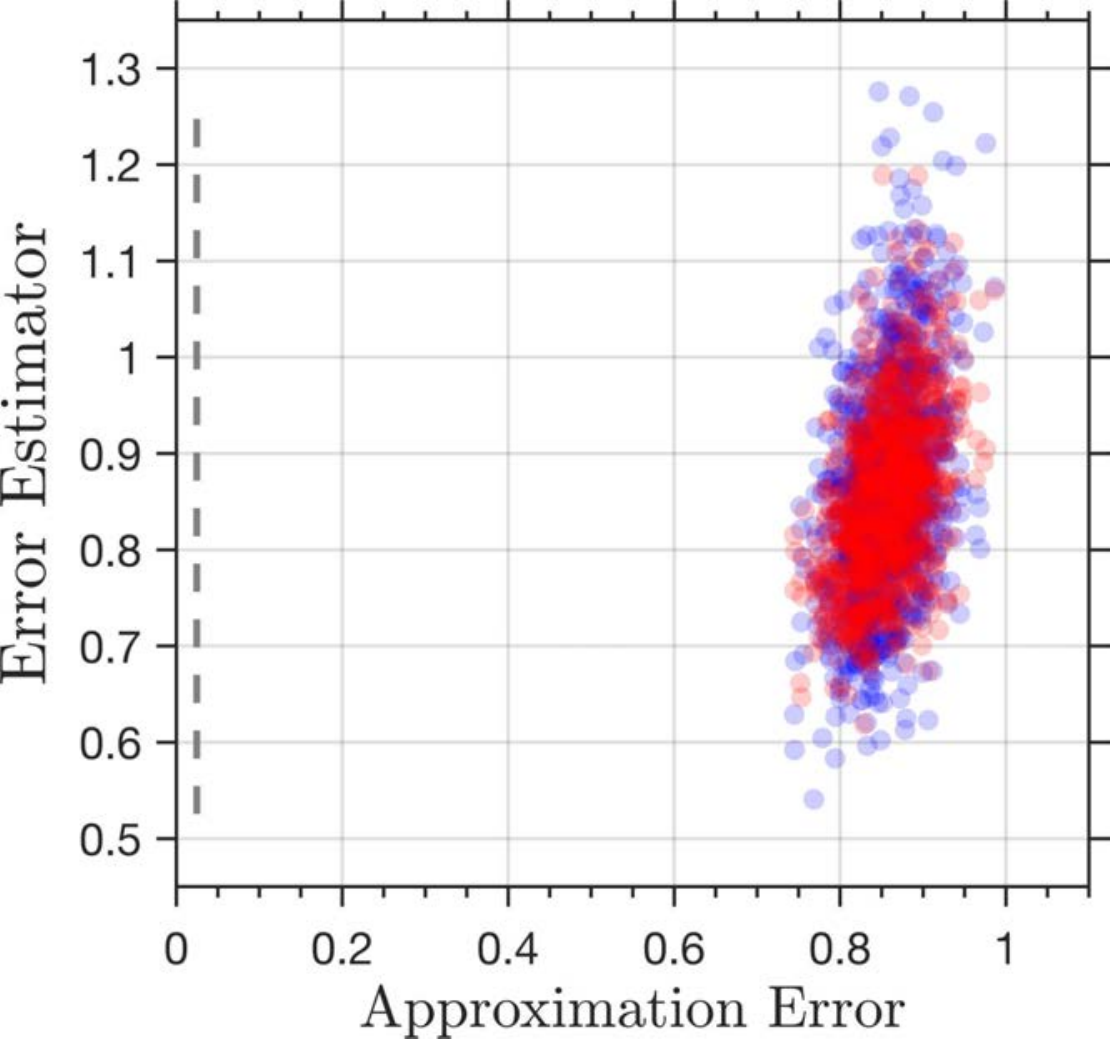}
\caption{Rank-$k$ Approximation ($k = 128$)}
\end{center}
\end{subfigure}
\begin{subfigure}{.45\textwidth}
\begin{center}
\includegraphics[height=2in]{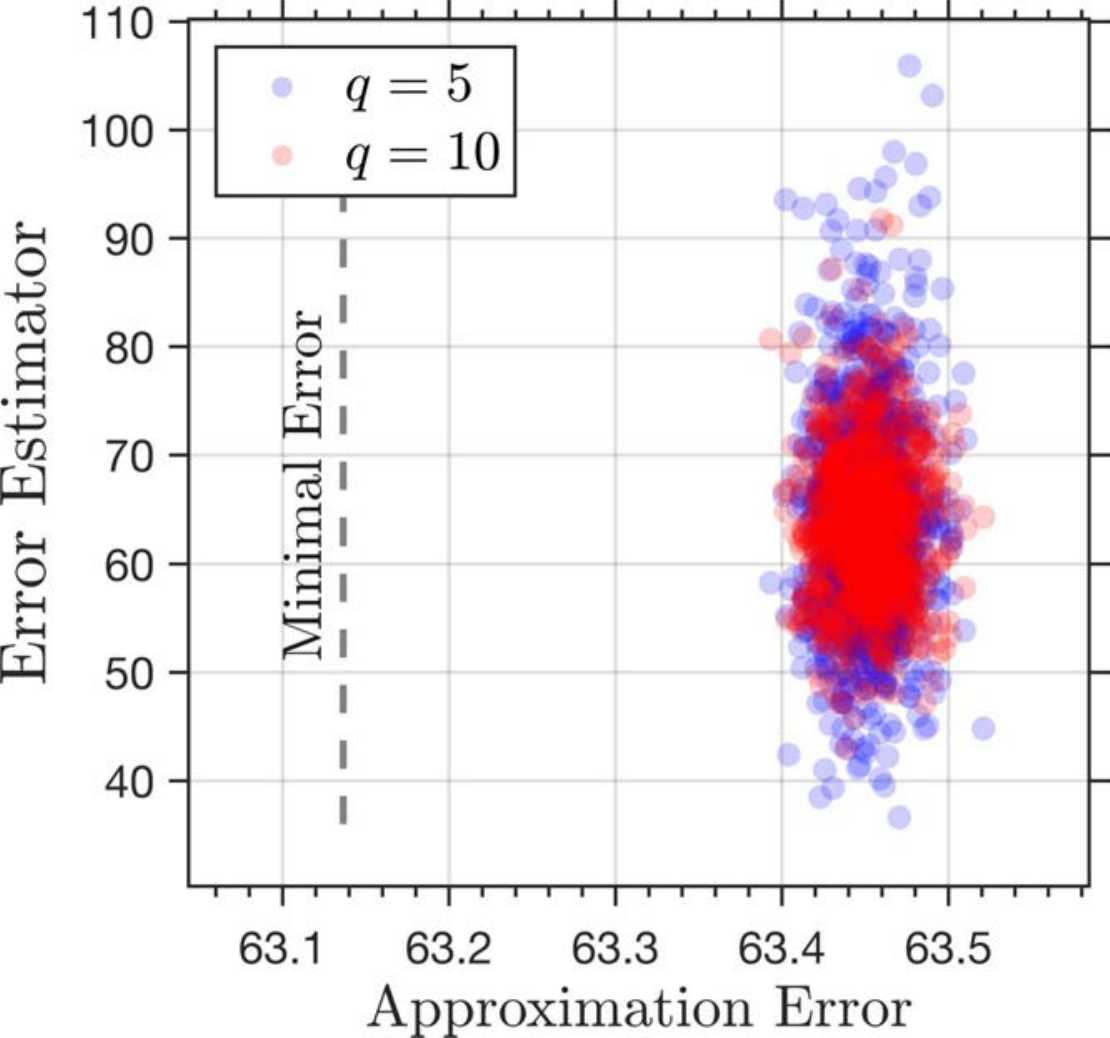}
\caption{Rank-$r$ Truncation ($k = 128$, $r = 32$)}
\end{center}
\end{subfigure}
\end{center}

\vspace{0.5em}

\caption{\textbf{Sampling Distributions of the Approximation Error and the Error Estimator.}
(\texttt{StreamVel}, sparse maps, $s = 2k+1$, Schatten $2$-norm.)
For error sketches with size $q \in \{5, 10\}$,
the \textbf{left-hand side} shows the sampling distribution of the error $\fnormsq{\mtx{A} - \hat{\mtx{A}}}$
in the rank-$k$ approximation versus the sampling distribution of the error
estimator $\err_2^2(\hat{\mtx{A}})$ for several values of $k$.  The \textbf{dashed line}
marks the error in the best rank-$k$ approximation of $\mtx{A}$.
The \textbf{right-hand side} contains similar plots with $\hat{\mtx{A}}$ replaced by
the rank-$r$ truncation $\lowrank{\hat{\mtx{A}}}{r}$.
See \cref{sec:apost-experiments}.}
\label{fig:sampling-distribution}
\end{figure}

This section uses the Navier--Stokes data
to explore the behavior of the error estimator \cref{sec:error-estimator}.
We also demonstrate that it is important to truncate the rank of
the approximation, and we show that the error estimator can assist us.

Let us undertake a single trial of the following experiment with the matrix \texttt{StreamVel}.
For each sketch size parameter $k \in \{ 1, 2, \dots, 128 \}$,
set the other sketch size parameter $s = 2k + 1$.
Extract an error sketch with size $q = 10$.
In each instance, we use the formula~\cref{eqn:Ahat}
to construct an initial rank-$k$ approximation $\hat{\mtx{A}}$ of the data matrix $\mtx{A}$
and the formula~\cref{eqn:Ahat-fixed} to construct a truncated rank-$r$ approximation
$\lowrank{\hat{\mtx{A}}}{r}$.
The plots will be indexed with the sketch size parameter $k$
or the rank truncation parameter $r$, rather than the storage budget.

\Cref{fig:why-truncate} illustrates the need to truncate the rank
of the approximation.  Observe that the tail energy $\tau_r(\hat{\mtx{A}})$
of the rank-$k$ approximation significantly underestimates the tail energy
$\tau_r(\mtx{A})$ of the matrix when $r \approx k$.
As a consequence, the error~\cref{eqn:relative-error}
in the rank-$k$ approximation $\hat{\mtx{A}}$, relative
to the best rank-$k$ approximation of $\mtx{A}$,
actually \emph{increases} with $k$.
In contrast, when $r \ll k$,
the rank-$r$ truncation $\lowrank{\hat{\mtx{A}}}{r}$
can attain very small error, relative to the best
rank-$r$ approximation of $\mtx{A}$.
In this instance, %
we achieve relative error below $10^{-2}$ across
a range of parameters $k$ by selecting $r \leq k/4$.
Therefore, we can be confident about the quality
of our estimates for the first $r$ singular vectors of $\mtx{A}$.

Next, let us study the behavior of the error estimator~\cref{eqn:error-estimate}. %
\Cref{fig:scree-plots} compares the actual approximation error
$\fnormsq{ \mtx{A} - \hat{\mtx{A}} }$ and the empirical error estimate
$\err_2^2( \hat{\mtx{A}} )$ as a function of the sketch size $k$.
The other panels are scree plots.  The baseline is the actual
scree function~\cref{eqn:scree} computed from the input matrix.
The remaining curves are the lower~\cref{eqn:scree-lower} and
upper~\cref{eqn:scree-upper} %
estimates for this curve.  %
We see that the scree estimators give good lower and upper bounds
for the energy missed, while tracking the shape of the baseline curve.
As a consequence, we can use these empirical estimates
to select the truncation rank.

\begin{figure}[t]
\begin{center}
\includegraphics[width=0.7\textwidth]{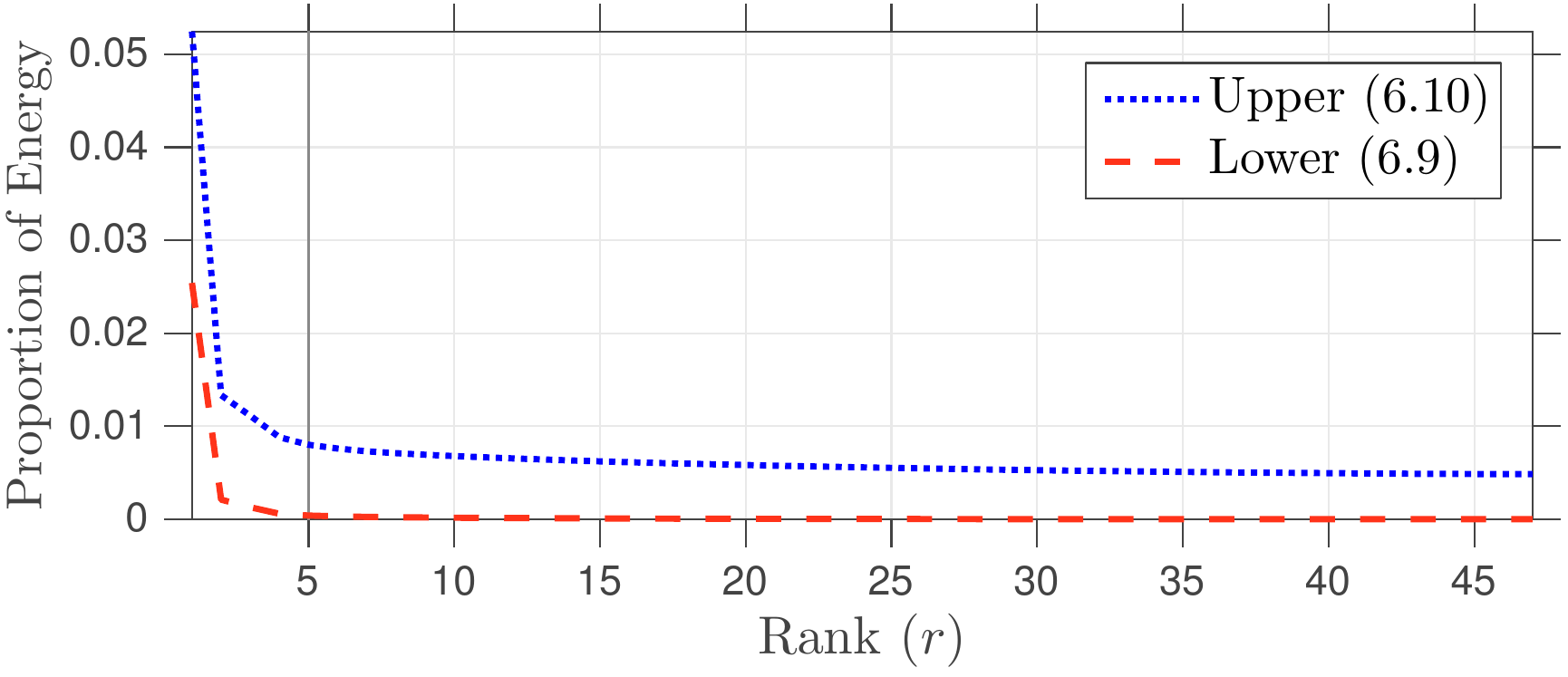}
\end{center}
\caption{\textbf{Empirical Scree Plot for \texttt{SeaSurfaceTemp} Approximation.} (Sparse maps, $k = 48$, $s = 839$, $q = 10$.)
The lower~\cref{eqn:scree-lower} and upper~\cref{eqn:scree-upper} approximations
of the scree curve~\cref{eqn:scree}.
The \textbf{vertical line} marks the truncation rank $r = 5$.
See~\cref{sec:sst-data,tab:sst-scree} for details.}
\label{fig:sst-scree}
\end{figure}

\begin{table}[t]
\begin{center}
\caption{\textbf{\emph{A Posteriori} Error Evaluation of Sea Surface Temperature Approximations.}
This table lists the lower and upper estimates for the true scree curve~\cref{eqn:scree} of the matrix
\texttt{SeaSurfaceTemp}.  We truncate at rank $r = 5$ (\textbf{blue}).}
\label{tab:sst-scree}
\begin{tabular}{|r|r|r|}
\hline
\textbf{Rank} & \textbf{Lower Estimate~\cref{eqn:scree-lower}} & \textbf{Upper Estimate~\cref{eqn:scree-upper}} \\
($r$) & $\underline{\mathrm{scree}}(r)$ & $\overline{\mathrm{scree}}(r)$ \\
\hline \hline
 1 &   $2.5415 \cdot 10^{-2}$   &	$5.2454 \cdot 10^{-2}$ \\
 2 &   $2.1068 \cdot 10^{-3}$   &	$1.3342 \cdot 10^{-2}$ \\
 3 &   $1.2867 \cdot 10^{-3}$	&   $1.1126 \cdot 10^{-2}$ \\
 4 &   $5.8939 \cdot 10^{-4}$	& 	$8.8143 \cdot 10^{-3}$ \\
\rowcolor{paleblue}
 5 &   $3.9590 \cdot 10^{-4}$	& 	$8.0110 \cdot 10^{-3}$ \\
 6 &   $3.0878 \cdot 10^{-4}$	& 	$7.6002 \cdot 10^{-3}$ \\
 7 &   $2.5140 \cdot 10^{-4}$	& 	$7.3039 \cdot 10^{-3}$ \\
 8 &   $2.1541 \cdot 10^{-4}$	& 	$7.1038 \cdot 10^{-3}$ \\
 9 &   $1.8673 \cdot 10^{-4}$	& 	$6.9342 \cdot 10^{-3}$ \\
10 &   $1.6410 \cdot 10^{-4}$	& 	$6.7926 \cdot 10^{-3}$ \\
\hline
\end{tabular}
\end{center}
\end{table}

Last, we investigate the sampling distribution of the error in
the randomized matrix approximation and the sampling distribution of
the error estimator.  To do so, we perform 1000 independent
trials of the same experiment for select values of $k$
and with error sketch sizes $q \in \{5, 10\}$.

\Cref{fig:sampling-distribution} contains scatter plots
of the actual approximation error $\fnormsq{ \mtx{A} - \hat{\mtx{A}}_{\rm out} }$
versus the estimated approximation error $\err_2^2(\hat{\mtx{A}}_{\rm out})$
for $\hat{\mtx{A}}_{\rm out} = \hat{\mtx{A}}$ and
$\hat{\mtx{A}}_{\rm out} = \lowrank{\hat{\mtx{A}}}{r}$.
The error estimators are unbiased, but they exhibit
a lot of variability.  Doubling the error sketch
size $q$ reduces the spread of the error estimate by a factor of two.
The approximation errors cluster tightly, as we
expect from concentration of measure.
The plots also highlight that the initial rank-$k$ approximations
are far from attaining the minimal rank-$k$ error,
while the truncated rank-$r$ approximations are more successful.

\begin{figure}[t!]
\vspace{3pc}
\begin{center}
\begin{subfigure}{.48\textwidth}
\begin{center}
\includegraphics[width=0.9\textwidth]{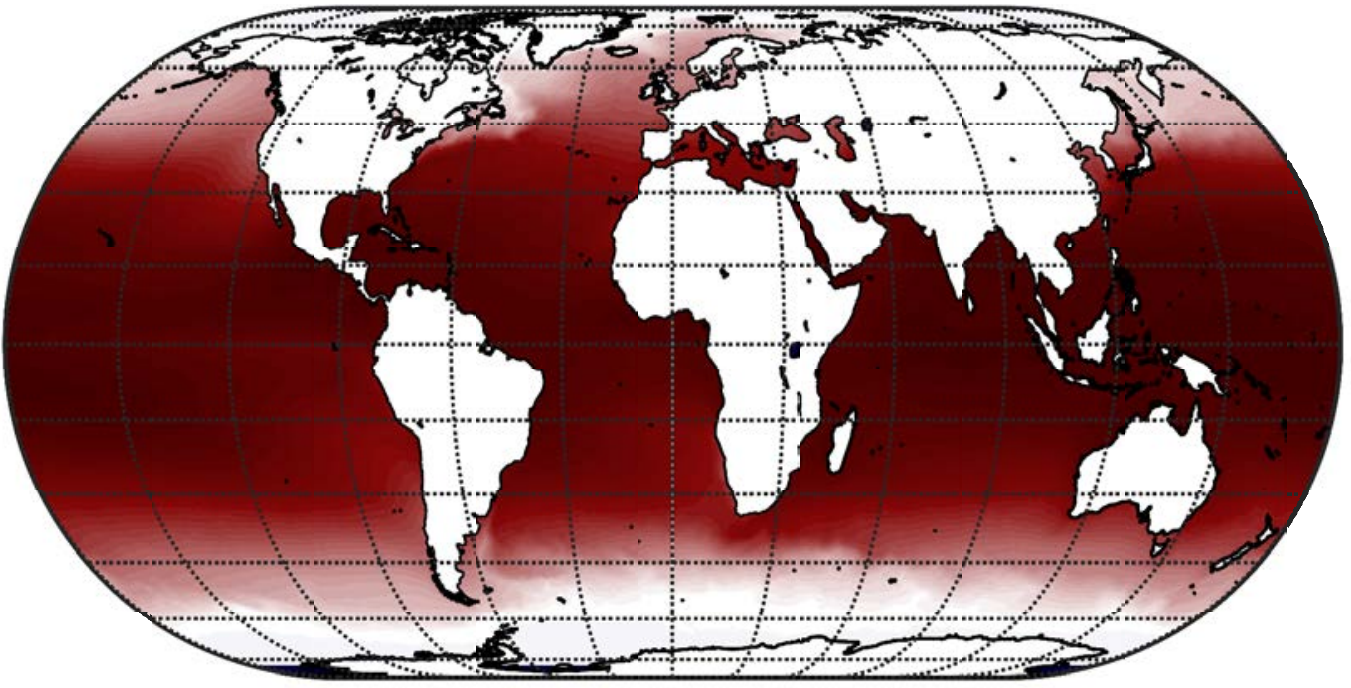}
\end{center}
\end{subfigure}
\begin{subfigure}{.48\textwidth}
\begin{center}
\includegraphics[width=\textwidth]{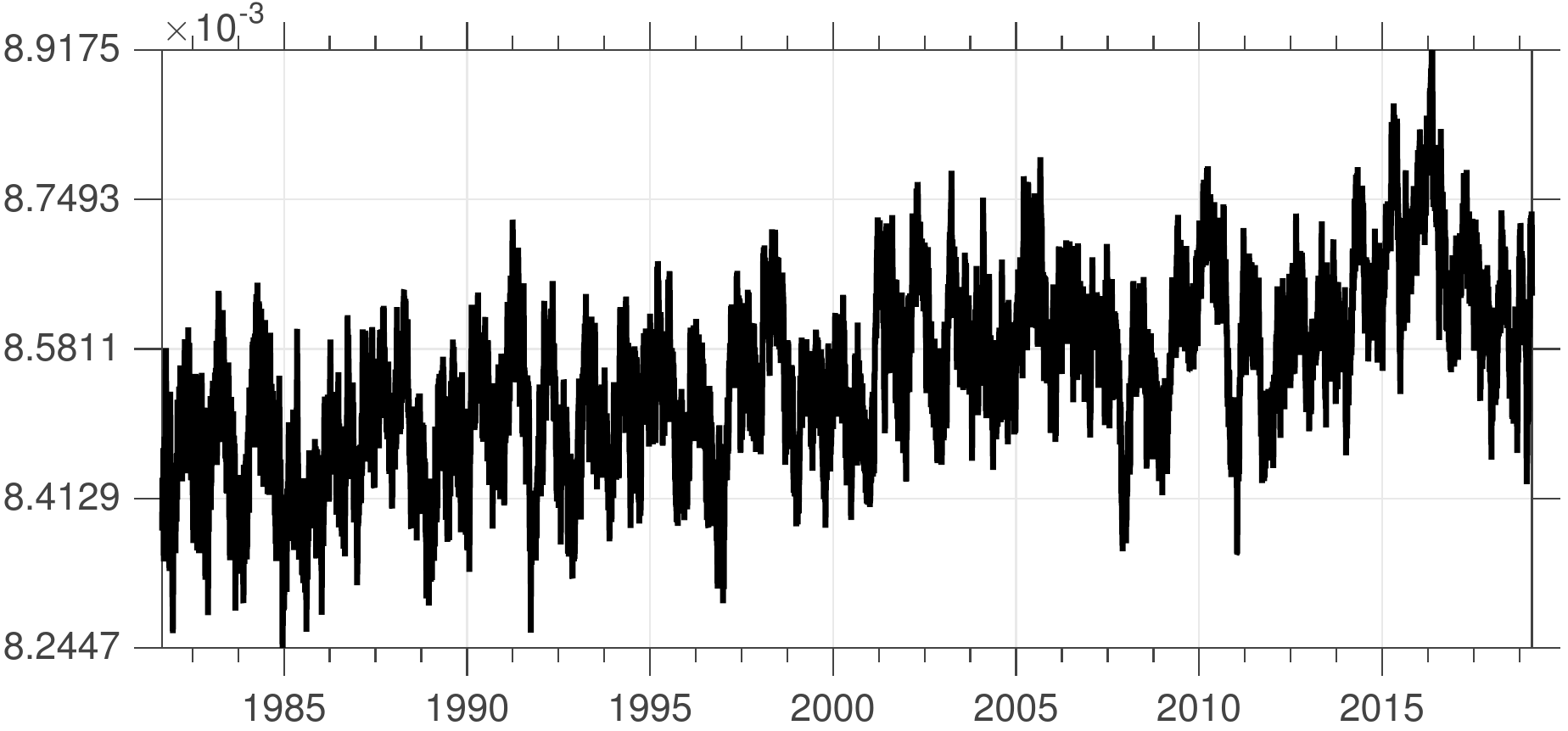}
\end{center}
\end{subfigure}
\end{center}

\vspace{1em}

\begin{center}
\begin{subfigure}{.48\textwidth}
\begin{center}
\includegraphics[width=0.9\textwidth]{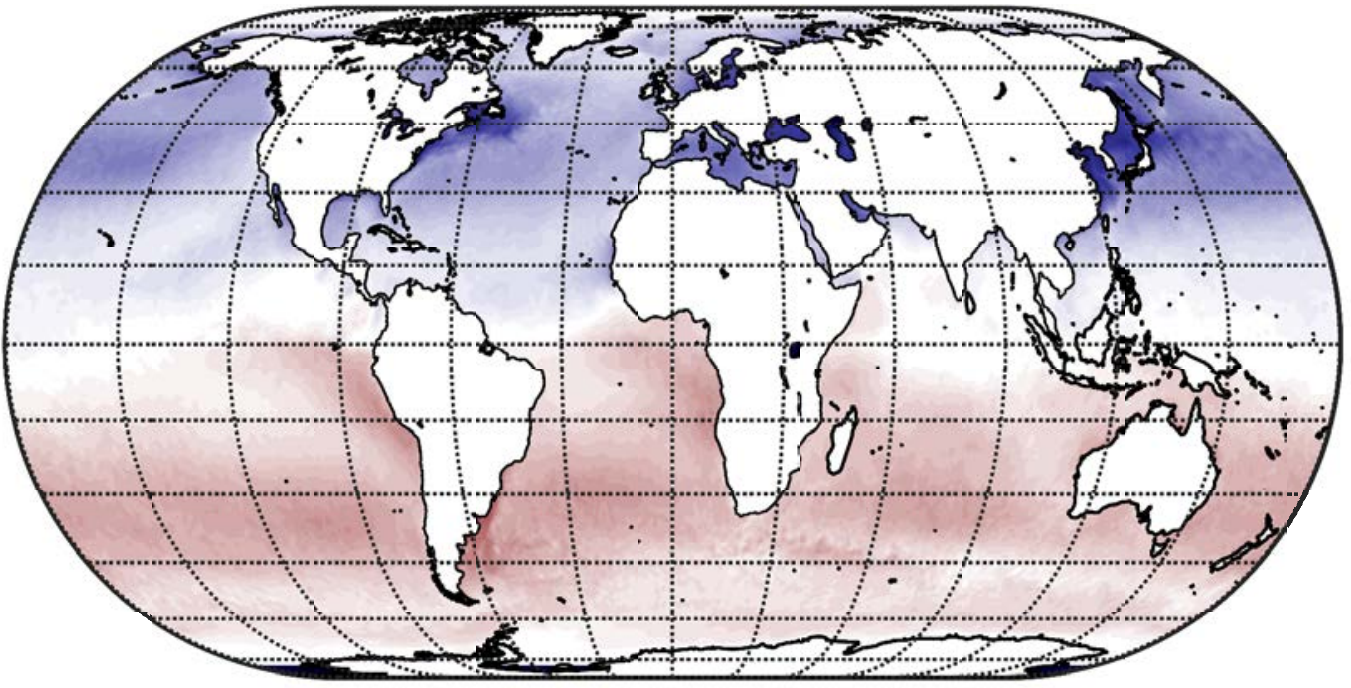}
\end{center}
\end{subfigure}
\begin{subfigure}{.48\textwidth}
\begin{center}
\includegraphics[width=\textwidth]{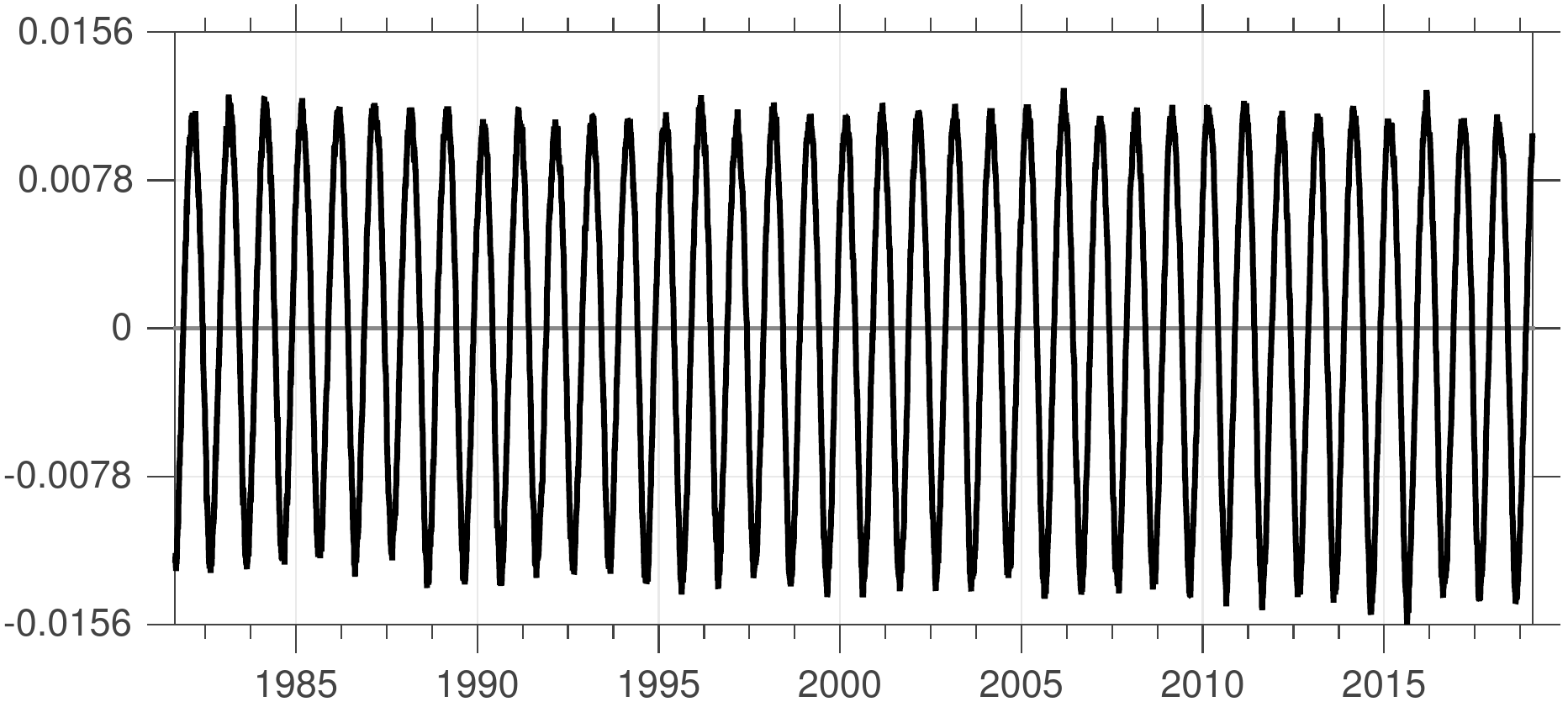}
\end{center}
\end{subfigure}
\end{center}

\vspace{1em}

\begin{center}
\begin{subfigure}{.48\textwidth}
\begin{center}
\includegraphics[width=0.9\textwidth]{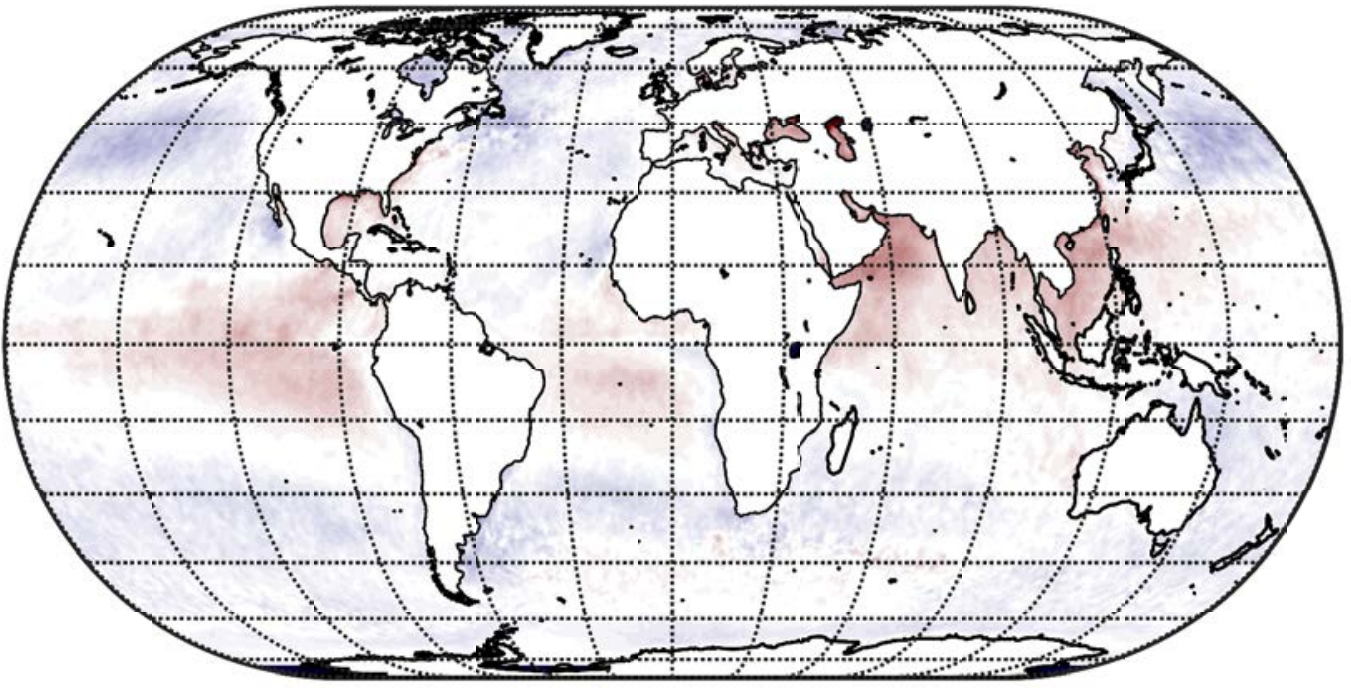}
\end{center}
\end{subfigure}
\begin{subfigure}{.48\textwidth}
\begin{center}
\includegraphics[width=\textwidth]{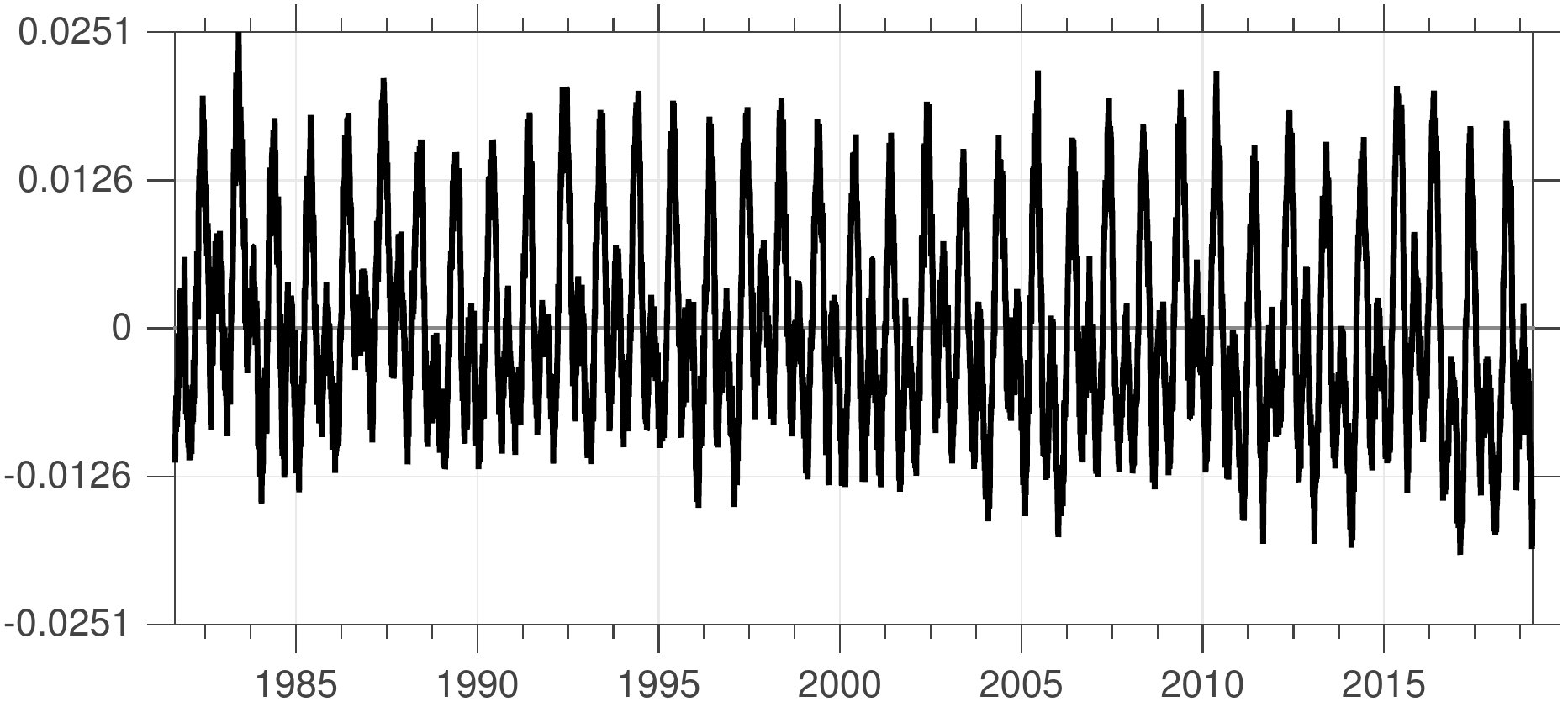}
\end{center}
\end{subfigure}
\end{center}

\vspace{1em}

\begin{center}
\begin{subfigure}{.45\textwidth}
\begin{center}
\includegraphics[width=0.9\textwidth]{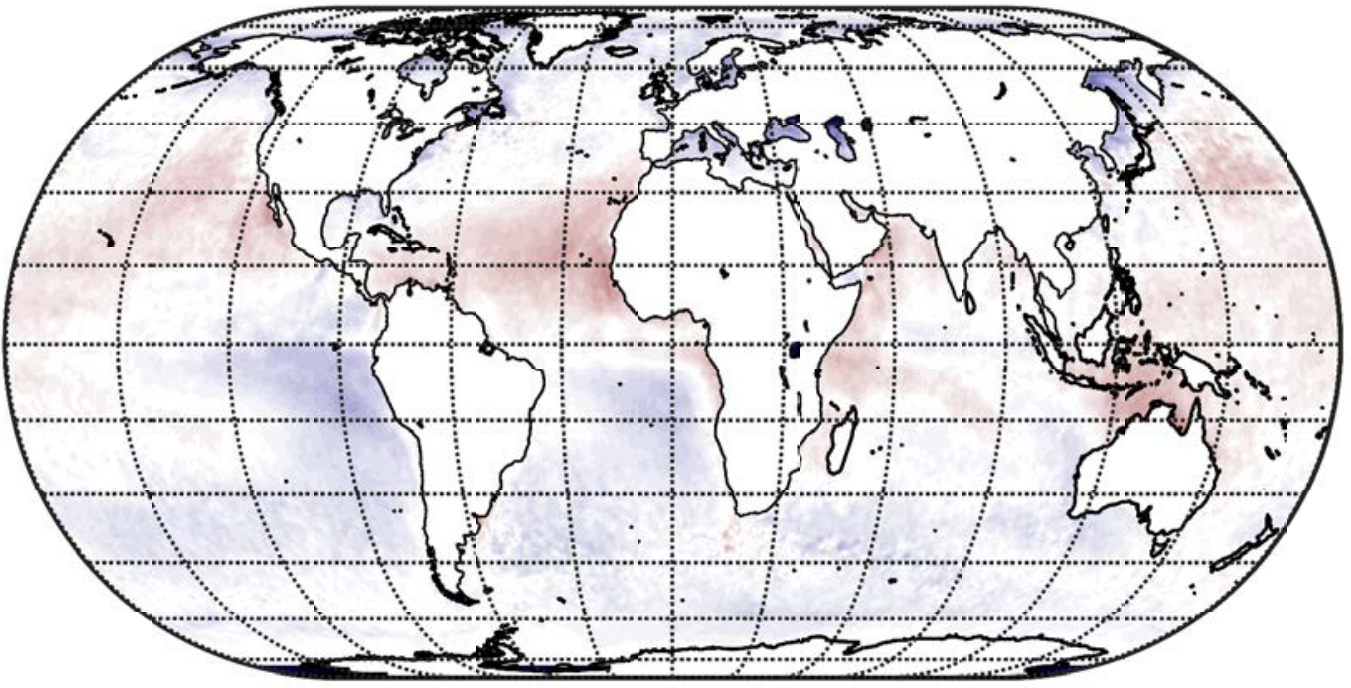}
\end{center}
\end{subfigure}
\begin{subfigure}{.48\textwidth}
\begin{center}
\includegraphics[width=\textwidth]{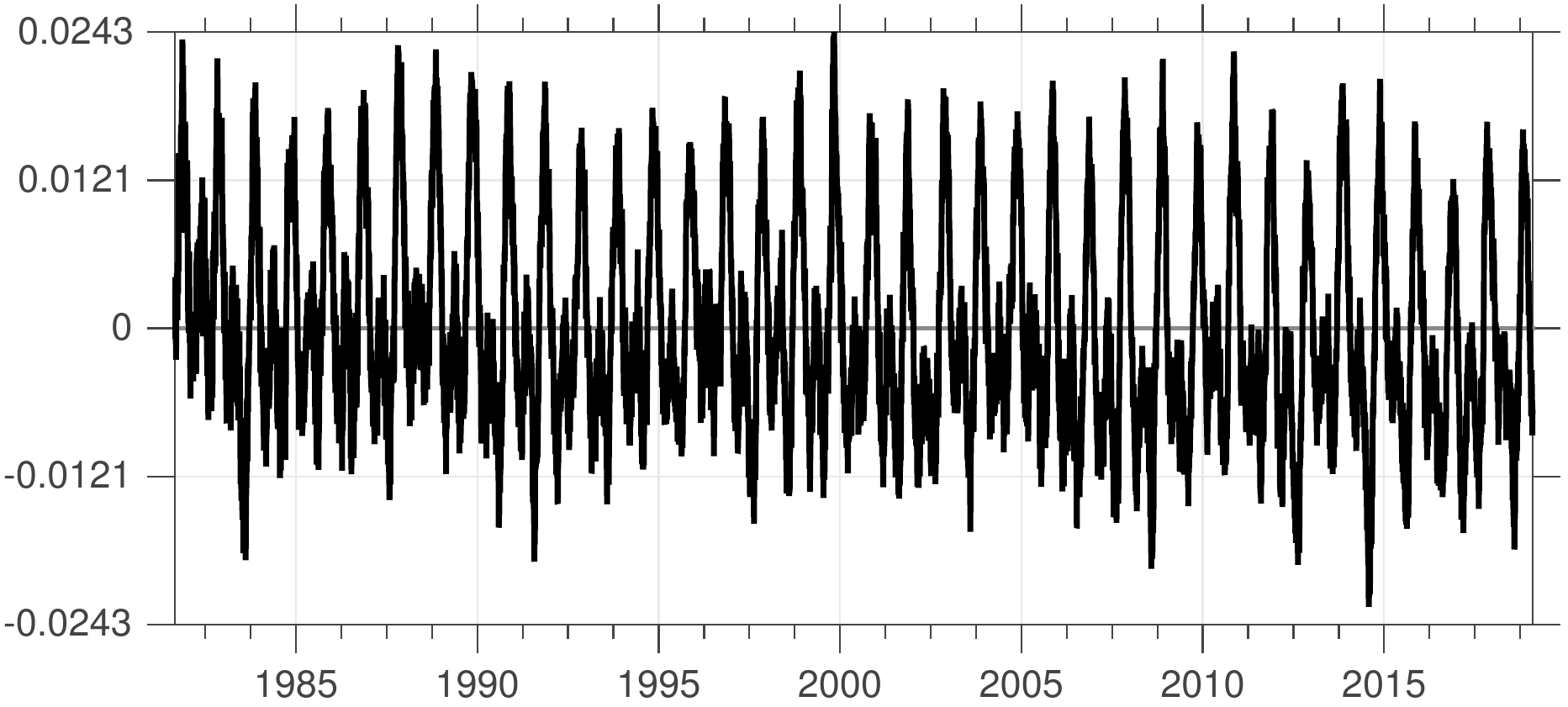}
\end{center}
\end{subfigure}
\end{center}

\vspace{1em}

\begin{center}
\begin{subfigure}{.48\textwidth}
\begin{center}
\includegraphics[width=0.9\textwidth]{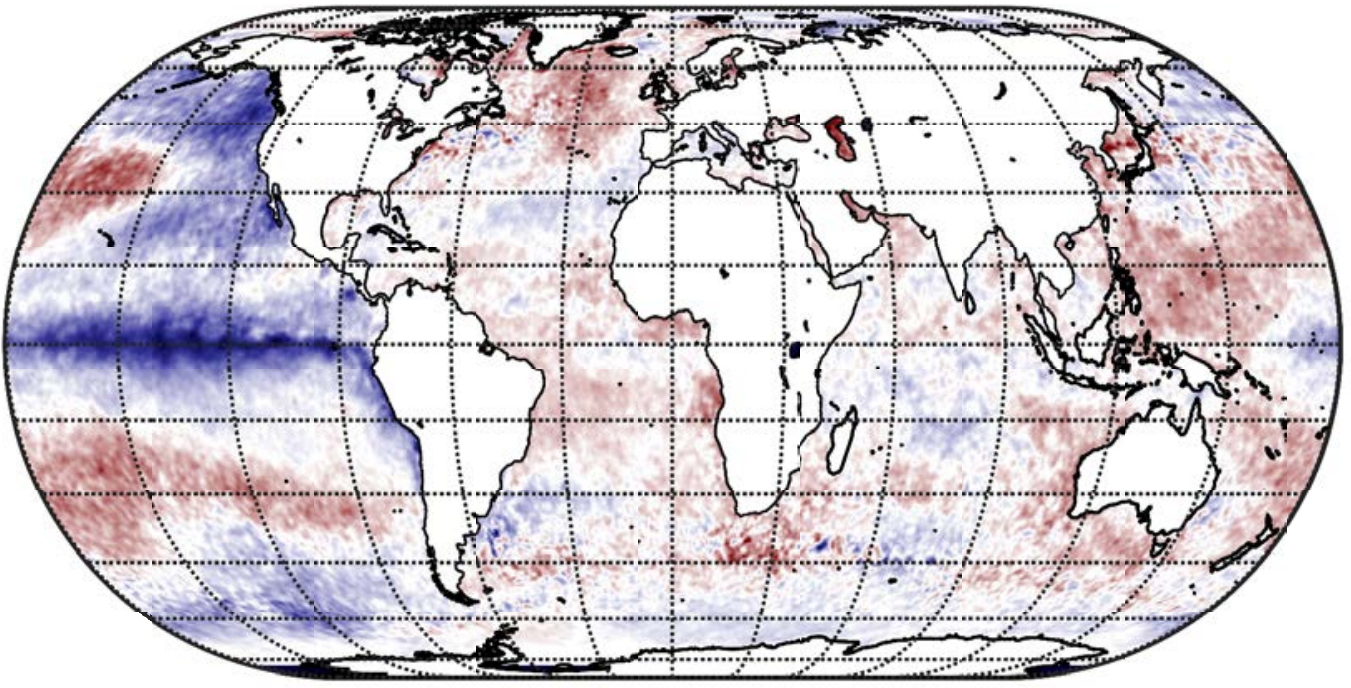}
\end{center}
\end{subfigure}
\begin{subfigure}{.48\textwidth}
\begin{center}
\includegraphics[width=\textwidth]{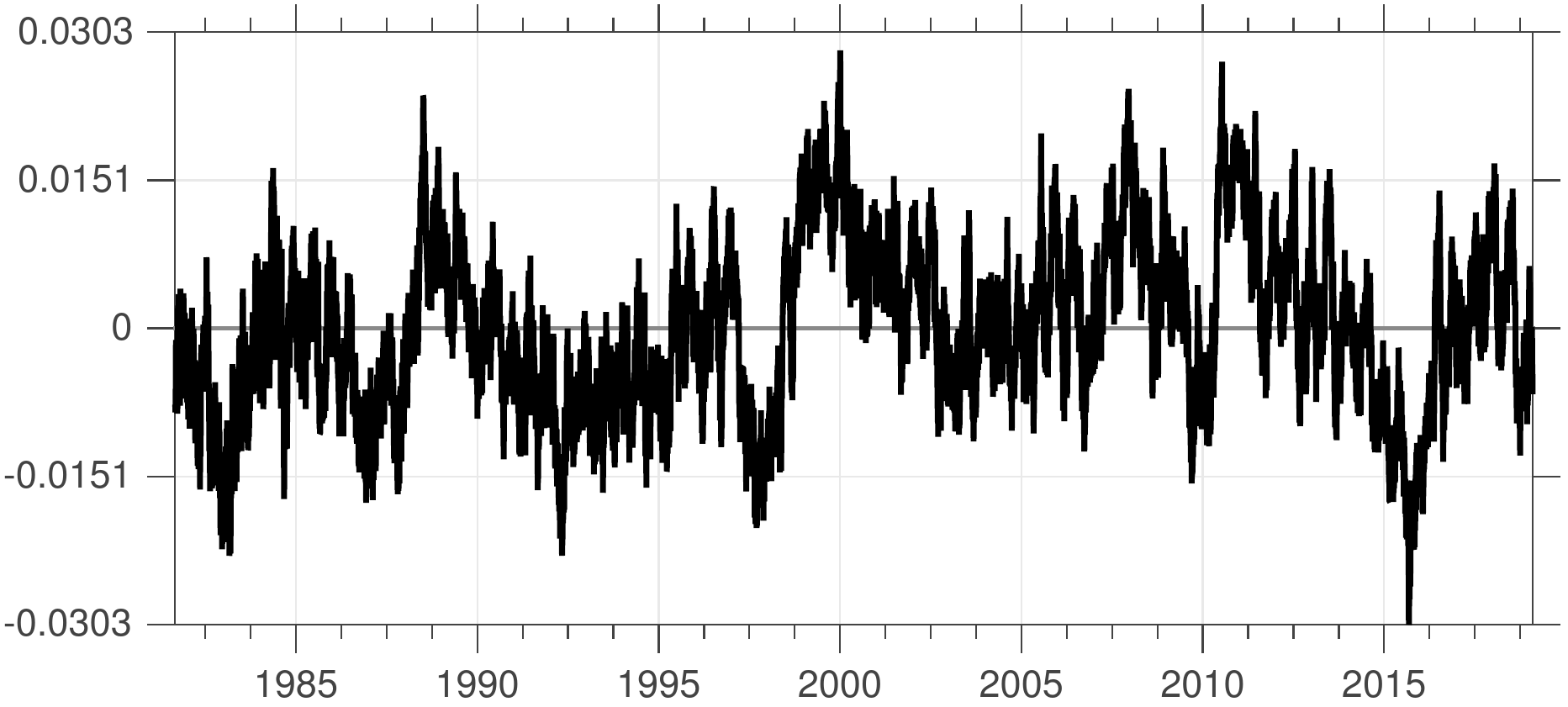}
\end{center}
\end{subfigure}
\end{center}

\vspace{1em}

\caption{\textbf{Singular Vectors of \texttt{SeaSurfaceTemp}.} (Sparse maps, $k = 48$, $s = 839$, $q = 10$.)
The \textbf{left column} displays the first five left singular vectors.
The heatmaps use \textbf{white} to represent zero Celsius degrees; each image is scaled independently.
The \textbf{right column} displays the first five right singular vectors.
The horizontal axis marks the year (common era); the vertical axis is unitless.
See \cref{sec:sst-data}.}
\label{fig:sst-svecs}
\end{figure}

\subsection{Example: Sea Surface Temperature Data}
\label{sec:sst-data}

Finally, we give a complete demonstration of the overall methodology
for the matrix \texttt{SeaSurfaceTemp}.  Like the matrix \texttt{MinTemp},
we expect that the sea surface temperature data has medium polynomial
decay, so it should be well approximated by a low-rank matrix.

\vspace{0.5pc}

\begin{enumerate} \setlength{\itemsep}{0.5pc}
\item	\textbf{Parameter selection.}
We fix the storage budget $T = 48 (m + n)$. The natural
parameter selection~\cref{eqn:ks-natural} yields
$k = 47$ and $s = 839$.  We use sparse dimension
reduction maps.  The error sketch has size $q = 10$.

\item	\textbf{Data collection.}
We ``stream'' the data one year at a time to construct the
approximation and error sketches.

\item	\textbf{Error estimates and rank truncation.}
Once the data is collected, we compute the rank-$k$ approximation
$\hat{\mtx{A}}$ using the formula~\cref{eqn:Ahat}.
We present the empirical scree estimates~\cref{eqn:scree-lower,eqn:scree-upper}
in \cref{fig:sst-scree,tab:sst-scree}.  These values should bracket
the unknown scree curve~\cref{eqn:scree}, while mimicking its shape.
By visual inspection, we set the truncation rank $r = 5$.
We expect that the rank-$5$ approximation captures all but $0.04\%$ to $0.8\%$
of the energy. %

\item	\textbf{Visualization.}
\Cref{fig:sst-svecs} illustrates the first five singular vector pairs of the rank-$r$
approximation of the matrix \texttt{SeaSurfaceTemp}.  The first left singular vector
can be interpreted as the mean temperature profile; a warming trend is visible
in the first right singular vector.
The second pair reflects the austral/boreal divide.
The remaining singular vectors capture long-term climatological features.
\end{enumerate}

\vspace{0.5pc}

The total storage required for the approximation sketch and the error sketch
is $4.09 \cdot 10^7$ numbers.  
This stands in contrast to the $mn = 9.09 \cdot 10^9$ numbers %
appearing in the matrix itself.  The compression ratio is $222 \times$. 
Moreover, the computational time required to obtain the approximation is modest
because we are working with substantially smaller matrices.

\section{Conclusions}

This paper exhibits a sketching method and a new reconstruction algorithm
for low-rank approximation of matrices that are presented as a sequence
of linear updates (\cref{sec:sketching}).  The algorithm is accompanied
by \emph{a priori} error bounds that allow us to set algorithm parameters
reliably (\cref{sec:theory}), as well as an \emph{a posteriori} error estimator
that allows us to validate its performance and to select the final rank of
the approximation (\cref{sec:aposterior}).  We discuss implementation
issues (\cref{sec:dim-red-maps,sec:implementation}), and we present
numerical experiments to show that the new method improves
over existing techniques (\cref{sec:alg-comparison,sec:real-data}).

A potential application of these techniques is for on-the-fly-compression
of large-scale scientific simulations and data collection.
Our experiments with a Navier--Stokes simulation (\cref{sec:flow-field})
and with sea surface temperature data (\cref{sec:sst-data}) both support this
hypothesis.  We hope that this work motivates researchers to investigate
the use of sketching in new applications.

\newpage
\appendix

\section{Analysis of the Low-Rank Approximation}
\label{app:error-bound-proof}
This section contains the proof of \cref{thm:low-rank-error-bound},
the theoretical result on the behavior of the basic low-rank
approximation~\cref{eqn:Ahat}.  We maintain the notation
from \cref{sec:sketching}.

\subsection{Facts about Random Matrices}

First, let us state a useful formula that allows us to compute
some expectations involving a Gaussian random matrix.  This
identity is drawn from \cite[Prop.~A.1 and A.6]{HMT11:Finding-Structure}.
See also~\cite[Fact~A.1]{TYUC17:Practical-Sketching}.

\begin{fact} \label{fact:twisted-expectation}
Assume that $t > q + \alpha$.
Let $\mtx{G}_1 \in \F^{t \times q}$ and $\mtx{G}_2 \in \F^{t \times p}$
be independent standard normal matrices.  For any matrix $\mtx{B}$
with conforming dimensions,
$$
\Expect \fnormsq{ \mtx{G}_1^\dagger \mtx{G}_2 \mtx{B}  }
	= \frac{q}{t - q - \alpha} \cdot \fnormsq{\mtx{B}}.
$$
The number $\alpha = 1$ when $\F = \R$, while $\alpha = 0$ when $\F = \C$. 
\end{fact}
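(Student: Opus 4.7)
My plan is to compute the expectation by conditioning on $\mtx{G}_1$, using rotational invariance to decouple the two Gaussian matrices, and then invoking an inverse Wishart moment identity.

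First, I would form a thin SVD $\mtx{G}_1 = \mtx{U}_1 \mtx{\Sigma}_1 \mtx{V}_1^*$. The hypothesis $t > q + \alpha$ guarantees that $\mtx{G}_1$ has full column rank almost surely, so $\mtx{\Sigma}_1$ is invertible and $\mtx{G}_1^\dagger = \mtx{V}_1 \mtx{\Sigma}_1^{-1} \mtx{U}_1^*$. Substituting and using the unitary invariance of the Frobenius norm,
$$
\fnormsq{\mtx{G}_1^\dagger \mtx{G}_2 \mtx{B}}
 = \fnormsq{\mtx{\Sigma}_1^{-1} (\mtx{U}_1^* \mtx{G}_2) \mtx{B}}.
$$
Because $\mtx{U}_1$ has orthonormal columns and is independent of $\mtx{G}_2$, the matrix $\widetilde{\mtx{G}} := \mtx{U}_1^* \mtx{G}_2 \in \F^{q \times p}$ is, conditionally on $\mtx{G}_1$, a standard normal matrix over $\F$.

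Second, I would evaluate the conditional second moment by a direct entrywise computation. Since $\widetilde{\mtx{G}}$ has independent isotropic entries of variance $\beta$,
$$
\Expect\bigl[ \fnormsq{\mtx{\Sigma}_1^{-1} \widetilde{\mtx{G}} \mtx{B}} \,\big|\, \mtx{G}_1 \bigr]
= \beta \cdot \trace(\mtx{\Sigma}_1^{-2}) \cdot \fnormsq{\mtx{B}}
= \beta \cdot \fnormsq{\mtx{G}_1^\dagger} \cdot \fnormsq{\mtx{B}}.
$$

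Third, I would take expectation over $\mtx{G}_1$. The Gram matrix $\mtx{G}_1^* \mtx{G}_1$ follows a (real or complex) Wishart distribution, and the classical inverse Wishart moment gives
$$
\Expect \fnormsq{\mtx{G}_1^\dagger}
 = \Expect \trace\bigl((\mtx{G}_1^* \mtx{G}_1)^{-1}\bigr)
 = \frac{q}{\beta \, (t - q - \alpha)}.
$$
Combining the two steps, the factor $\beta$ cancels and produces the claimed identity.

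The main obstacle is reconciling the real and complex normalizations: in the convention $z = g_1 + \mathrm{i} g_2$ a complex standard normal has variance two per entry, so the per-entry scale in step two and the Wishart constant in step three are both field-dependent. However, $\beta$ appears as a multiplier in step two and as a divisor in step three, so the two factors cancel to give the unified expression indexed by $\alpha(\F)$. Apart from this bookkeeping, the argument is a routine second-moment calculation, and the result can also be quoted from~\cite[Prop.~A.1 and A.6]{HMT11:Finding-Structure}.
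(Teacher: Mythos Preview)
Your proposal is correct, and it is essentially how the cited propositions combine: the paper does not prove this fact in-line but quotes it from \cite[Prop.~A.1 and A.6]{HMT11:Finding-Structure}, where A.1 gives the conditional second moment $\Expect\bigl[\fnormsq{\mtx{S}\mtx{G}\mtx{T}}\bigr]=\fnormsq{\mtx{S}}\fnormsq{\mtx{T}}$ and A.6 supplies the inverse-Wishart expectation $\Expect\fnormsq{\mtx{G}_1^\dagger}$. Your conditioning-then-Wishart argument is precisely the unpacking of that citation, and your bookkeeping for the $\beta$ cancellation under the paper's complex normalization is accurate.
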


\subsection{Results from Randomized Linear Algebra}

Our argument also depends on the analysis of randomized
low-rank approximation developed in~\cite[Sec.~10]{HMT11:Finding-Structure}.

\begin{fact}[Halko et al.~2011] \label{fact:hmt-err}
Fix $\mtx{A} \in \F^{m \times n}$.  Let $\varrho$ be a natural number
such that $\varrho < k - \alpha$.
Draw the random test matrix $\mtx{\Omega} \in \F^{k \times n}$
from the standard normal distribution.
Then the matrix $\mtx{Q} \in \F^{m \times k}$ computed by~\cref{eqn:range-corange} satisfies
$$
\Expect_{\mtx{\Omega}} \fnormsq{ \mtx{A} - \mtx{QQ}^* \mtx{A} }
	\leq \left( 1 + \frac{\varrho}{k - \varrho - \alpha} \right) \cdot \tau_{\varrho+1}^2(\mtx{A}).
$$
An analogous result holds for the matrix $\mtx{P} \in \F^{n \times k}$ computed by~\cref{eqn:range-corange}:
$$
\Expect_{\mtx{\Upsilon}} \fnormsq{ \mtx{A} - \mtx{A}\mtx{PP}^*  }
	\leq \left( 1 + \frac{\varrho}{k - \varrho - \alpha} \right) \cdot \tau_{\varrho+1}^2(\mtx{A}).
$$
The number $\alpha = 1$ when $\F = \R$, while $\alpha = 0$ when $\F = \C$. 
\end{fact}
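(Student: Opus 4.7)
The plan is to reproduce the standard randomized rangefinder analysis. First I would apply rotational invariance of the Gaussian distribution to reduce to a canonical form, then invoke a deterministic rangefinder error bound, and finally take expectation against independent Gaussian blocks using \cref{fact:twisted-expectation}.

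Concretely, let $\mtx{A} = \mtx{U\Sigma V}^*$ be a full SVD, and partition $\mtx{\Sigma} = \diag(\mtx{\Sigma}_1, \mtx{\Sigma}_2)$ so that $\mtx{\Sigma}_1$ holds the top $\varrho$ singular values; partition $\mtx{V} = [\mtx{V}_1, \mtx{V}_2]$ conformably. Because $\mtx{Y} = \mtx{A\Omega}^* = \mtx{U\Sigma}(\mtx{\Omega V})^*$ and $\mtx{U}$ is unitary, the range of $\mtx{Y}$, and hence of $\mtx{Q}$, equals $\mtx{U}$ applied to the range of $\mtx{\Sigma}\tilde{\mtx{\Omega}}^*$, where $\tilde{\mtx{\Omega}} := \mtx{\Omega V}$. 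By rotational invariance, $\tilde{\mtx{\Omega}}$ is again standard Gaussian, and its column blocks $\tilde{\mtx{\Omega}}_1 := \mtx{\Omega V}_1 \in \F^{k \times \varrho}$ and $\tilde{\mtx{\Omega}}_2 := \mtx{\Omega V}_2 \in \F^{k \times (n-\varrho)}$ are independent. The deterministic rangefinder bound of \cite[Thm.~9.1]{HMT11:Finding-Structure}, transcribed into the present conventions, then gives almost surely
\[
\fnormsq{\mtx{A} - \mtx{QQ}^*\mtx{A}} \leq \fnormsq{\mtx{\Sigma}_2} + \fnormsq{\mtx{\Sigma}_2\,\tilde{\mtx{\Omega}}_2^*\,(\tilde{\mtx{\Omega}}_1^*)^\dagger}.
\]

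Taking expectations, the first summand equals $\tau_{\varrho+1}^2(\mtx{A})$. For the second, pass to adjoints inside the Frobenius norm to rewrite it as $\fnormsq{\tilde{\mtx{\Omega}}_1^\dagger \tilde{\mtx{\Omega}}_2 \mtx{\Sigma}_2}$, which matches the template of \cref{fact:twisted-expectation} with $\mtx{G}_1 \leftrightarrow \tilde{\mtx{\Omega}}_1$, $\mtx{G}_2 \leftrightarrow \tilde{\mtx{\Omega}}_2$, $\mtx{B} \leftrightarrow \mtx{\Sigma}_2$, and parameters $t = k$, $q = \varrho$. The hypothesis $t > q + \alpha$ is exactly the assumed $\varrho < k - \alpha$, so the fact yields
\[
\Expect \fnormsq{\mtx{\Sigma}_2\,\tilde{\mtx{\Omega}}_2^*\,(\tilde{\mtx{\Omega}}_1^*)^\dagger} = \frac{\varrho}{k - \varrho - \alpha}\,\fnormsq{\mtx{\Sigma}_2} = \frac{\varrho}{k - \varrho - \alpha}\,\tau_{\varrho+1}^2(\mtx{A}).
\]
Summing the two contributions delivers the first inequality. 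The second inequality follows by symmetry: $\mtx{P}$ is the orthogonal factor in the thin QR of $\mtx{X}^* = \mtx{A}^*\mtx{\Upsilon}^*$, so applying the same argument to $\mtx{A}^*$ with $\mtx{\Upsilon}$ in the role of $\mtx{\Omega}$ bounds $\Expect\fnormsq{\mtx{A}^* - \mtx{PP}^*\mtx{A}^*}$, and this quantity equals $\Expect\fnormsq{\mtx{A} - \mtx{APP}^*}$ by Frobenius invariance under adjoints.

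The only real subtlety is lining up the deterministic rangefinder bound with the present transpose conventions: here $\mtx{\Omega}$ is $k \times n$, whereas \cite{HMT11:Finding-Structure} writes the test matrix as $n \times \ell$. Once this bookkeeping is done, the expectation evaluation is a single invocation of \cref{fact:twisted-expectation} after an adjoint rearrangement, and the rest of the argument is mechanical.
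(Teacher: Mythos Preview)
Your proposal is correct and follows essentially the same route the paper indicates: it reproduces the proof of \cite[Thm.~10.5]{HMT11:Finding-Structure} via the deterministic rangefinder bound \cite[Thm.~9.1]{HMT11:Finding-Structure} and then evaluates the Gaussian expectation with \cref{fact:twisted-expectation}, handling the transpose bookkeeping explicitly. The symmetry argument for $\mtx{P}$ is also exactly what is intended.
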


\noindent
This result follows immediately from the
proof of~\cite[Thm.~10.5]{HMT11:Finding-Structure}
using \cref{fact:twisted-expectation} to handle both
the real and complex case simultaneously.
See also~\cite[Sec.~8.2]{TYUC17:Randomized-Single-View-TR}.

\subsection{Decomposition of the Core Matrix Approximation Error}

The first step in the argument is to obtain a formula
for the error in the approximation
$\mtx{C} - \mtx{Q}^* \mtx{A} \mtx{P}$.
The core matrix $\mtx{C} \in \F^{s \times s}$
is defined in~\eqref{eqn:linkage-matrix}.
We constructed the orthonormal matrices
$\mtx{P} \in \F^{n \times k}$ and $\mtx{Q} \in \F^{m \times k}$
in~\eqref{eqn:range-corange}.

Let us introduce matrices whose ranges are complementary
to those of $\mtx{P}$ and $\mtx{Q}$:
$$
\begin{aligned}
\mtx{P}_\perp \mtx{P}_{\perp}^* := \Id - \mtx{PP}^*
&\quad\text{where}\quad \mtx{P}_{\perp} \in \F^{n \times (n - k)}; \\
\mtx{Q}_\perp \mtx{Q}_{\perp}^* := \Id - \mtx{QQ}^*
&\quad\text{where}\quad \mtx{Q}_{\perp} \in \F^{m \times (m - k)}.
\end{aligned}
$$
The columns of $\mtx{P}_{\perp}$ are orthonormal, and the columns of $\mtx{Q}_{\perp}$ are orthonormal.
Next, introduce the submatrices
\begin{equation} \label{eqn:test-decomp}
\begin{aligned}
\mtx{\Phi}_1 = \mtx{\Phi} \mtx{Q} \in \F^{s \times k}
&\quad\text{and}\quad
\mtx{\Phi}_2 = \mtx{\Phi} \mtx{Q}_\perp \in \F^{s \times (m - k)}; \\
\mtx{\Psi}_1^* = \mtx{P}^* \mtx{\Psi}^* \in \F^{k \times s}
&\quad\text{and}\quad
\mtx{\Psi}_2^* = \mtx{P}_\perp^* \mtx{\Psi}^* \in \F^{(n - k) \times s}.
\end{aligned}
\end{equation}
With this notation at hand, we can state and prove the first result.

\begin{lemma}[Decomposition of the Core Matrix Approximation]
\label{lem:link-approx-decomp}
Assume that the matrices $\mtx{\Phi}_1$ and $\mtx{\Psi}_1$ have full column rank.  Then
\begin{align*}
\mtx{C} - \mtx{Q}^* \mtx{A} \mtx{P} &=
	\mtx{\Phi}_1^\dagger \mtx{\Phi}_2 (\mtx{Q}_{\perp}^* \mtx{A} \mtx{P})
		+ (\mtx{Q}^* \mtx{A} \mtx{P}_{\perp}) \mtx{\Psi}_2^* (\mtx{\Psi}_1^\dagger)^* \\
		&+ \mtx{\Phi}_1^\dagger \mtx{\Phi}_2 (\mtx{Q}_{\perp}^* \mtx{A} \mtx{P}_{\perp}) \mtx{\Psi}_2^* (\mtx{\Psi}_1^\dagger)^*.
\end{align*}
\end{lemma}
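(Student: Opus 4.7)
The plan is to expand the core matrix $\mtx{C} = \mtx{\Phi}_1^\dagger \mtx{Z} (\mtx{\Psi}_1^\dagger)^*$ by inserting resolutions of the identity adapted to the two orthogonal splittings $\Id_m = \mtx{QQ}^* + \mtx{Q}_{\perp}\mtx{Q}_{\perp}^*$ and $\Id_n = \mtx{PP}^* + \mtx{P}_{\perp}\mtx{P}_{\perp}^*$. Substituting these into $\mtx{Z} = \mtx{\Phi A \Psi}^*$ gives the four-term expansion
$$
\mtx{Z} = \mtx{\Phi}_1 (\mtx{Q}^*\mtx{A}\mtx{P}) \mtx{\Psi}_1^*
     + \mtx{\Phi}_1 (\mtx{Q}^*\mtx{A}\mtx{P}_{\perp}) \mtx{\Psi}_2^*
     + \mtx{\Phi}_2 (\mtx{Q}_{\perp}^*\mtx{A}\mtx{P}) \mtx{\Psi}_1^*
     + \mtx{\Phi}_2 (\mtx{Q}_{\perp}^*\mtx{A}\mtx{P}_{\perp}) \mtx{\Psi}_2^*,
$$
where I have regrouped factors using the notation in~\cref{eqn:test-decomp}.

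Next, I would left-multiply by $\mtx{\Phi}_1^\dagger$ and right-multiply by $(\mtx{\Psi}_1^\dagger)^*$. Because $\mtx{\Phi}_1$ and $\mtx{\Psi}_1$ have full column rank (by hypothesis), the identities $\mtx{\Phi}_1^\dagger \mtx{\Phi}_1 = \Id_k$ and $\mtx{\Psi}_1^*(\mtx{\Psi}_1^\dagger)^* = (\mtx{\Psi}_1^\dagger \mtx{\Psi}_1)^* = \Id_k$ hold. Applying these to the four-term expansion collapses the $\mtx{\Phi}_1, \mtx{\Psi}_1^*$ factors wherever they appear and leaves the three ``cross'' factors $\mtx{\Phi}_1^\dagger \mtx{\Phi}_2$ and $\mtx{\Psi}_2^*(\mtx{\Psi}_1^\dagger)^*$ intact. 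The result is
$$
\mtx{C} = \mtx{Q}^*\mtx{A}\mtx{P}
  + \mtx{\Phi}_1^\dagger \mtx{\Phi}_2 (\mtx{Q}_{\perp}^*\mtx{A}\mtx{P})
  + (\mtx{Q}^*\mtx{A}\mtx{P}_{\perp}) \mtx{\Psi}_2^* (\mtx{\Psi}_1^\dagger)^*
  + \mtx{\Phi}_1^\dagger \mtx{\Phi}_2 (\mtx{Q}_{\perp}^*\mtx{A}\mtx{P}_{\perp}) \mtx{\Psi}_2^* (\mtx{\Psi}_1^\dagger)^*,
$$
and subtracting $\mtx{Q}^*\mtx{A}\mtx{P}$ yields the claimed identity.

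There is no real obstacle here: the argument is an exercise in block algebra, and the only nontrivial ingredient is the pseudoinverse identity $\mtx{M}^\dagger \mtx{M} = \Id$ for a matrix $\mtx{M}$ with full column rank. The only point to keep in mind is that we must invoke the full column rank hypothesis on both $\mtx{\Phi}_1$ and $\mtx{\Psi}_1$ to kill the four factors and obtain the pristine $\mtx{Q}^*\mtx{A}\mtx{P}$ leading term. The result is essentially a ``generalized Nystr\"om''-style decomposition, and it will serve as the basis for bounding $\Expect \fnormsq{\mtx{C} - \mtx{Q}^*\mtx{A}\mtx{P}}$ by handling the three error terms separately via \cref{fact:twisted-expectation} applied to the independent Gaussian factors $\mtx{\Phi}_1^\dagger \mtx{\Phi}_2$ and $\mtx{\Psi}_2^*(\mtx{\Psi}_1^\dagger)^*$.
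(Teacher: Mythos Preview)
Your proof is correct and follows essentially the same block-algebra approach as the paper. The only organizational difference is that you insert both resolutions of the identity into $\mtx{Z}$ at the outset and expand directly into four terms, whereas the paper first adds and subtracts $\mtx{\Phi}\mtx{QQ}^*\mtx{A}\mtx{PP}^*\mtx{\Psi}^*$, applies the pseudoinverses, and then expands $\mtx{\Phi}_1^\dagger\mtx{\Phi}$ and $\mtx{\Psi}^*(\mtx{\Psi}_1^\dagger)^*$ separately; your route is a bit more direct but the substance is identical.
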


\begin{proof}
Adding and subtracting terms, we write the core sketch $\mtx{Z}$ as
$$
\mtx{Z} = \mtx{\Phi} \mtx{A} \mtx{\Psi}^*
	= \mtx{\Phi}( \mtx{A} - \mtx{QQ}^* \mtx{A} \mtx{PP}^* ) \mtx{\Psi}^*
	+ (\mtx{\Phi Q})(\mtx{Q}^* \mtx{A} \mtx{P}) (\mtx{P}^* \mtx{\Psi}^*).
$$
Using~\eqref{eqn:test-decomp}, we identify the matrices $\mtx{\Phi}_1$ and $\mtx{\Psi}_1$.
Then left-multiply by $\mtx{\Phi}_1^\dagger$ and right-multiply by $(\mtx{\Psi}_1^\dagger)^*$
to arrive at
$$
\mtx{C} = \mtx{\Phi}_1^\dagger \mtx{Z} (\mtx{\Psi}_1^\dagger)^*
	= \mtx{\Phi}_1^\dagger \mtx{\Phi} ( \mtx{A} - \mtx{QQ}^* \mtx{A} \mtx{PP}^* ) \mtx{\Psi}^* (\mtx{\Psi}_1^\dagger)^*
	+ \mtx{Q}^* \mtx{A} \mtx{P}.
$$
We have identified the core matrix $\mtx{C}$, defined in \cref{eqn:linkage-matrix}.
Move the term $\mtx{Q}^* \mtx{A} \mtx{P}$ to the left-hand side to isolate
the approximation error.

To continue, notice that
$$
\mtx{\Phi}_1^\dagger \mtx{\Phi}
	= \mtx{\Phi}_1^\dagger \mtx{\Phi} \mtx{QQ}^* + \mtx{\Phi}_1^\dagger \mtx{\Phi} \mtx{Q}_{\perp} \mtx{Q}_{\perp}^*
	= \mtx{Q}^* + \mtx{\Phi}_1^\dagger \mtx{\Phi}_2 \mtx{Q}_{\perp}^*.
$$
Likewise,
$$
\mtx{\Psi}^* (\mtx{\Psi}_1^\dagger)^*
	= \mtx{PP}^* \mtx{\Psi}^* (\mtx{\Psi}_1^\dagger)^* + \mtx{P}_{\perp} \mtx{P}_{\perp}^* \mtx{\Psi}^* (\mtx{\Psi}_1^\dagger)^*
	= \mtx{P} + \mtx{P}_{\perp} \mtx{\Psi}_2^* (\mtx{\Psi}_1^\dagger)^*.
$$
Combine the last three displays to arrive at
$$
\mtx{C} - \mtx{Q}^* \mtx{A} \mtx{P}
	= (\mtx{Q}^* + \mtx{\Phi}_1^\dagger \mtx{\Phi}_2 \mtx{Q}_{\perp}^*)( \mtx{A} - \mtx{QQ}^* \mtx{A} \mtx{PP}^* )
	( \mtx{P} + \mtx{P}_{\perp} \mtx{\Psi}_2^* (\mtx{\Psi}_1^\dagger)^* ).
$$
Expand the expression and use the orthogonality relations $\mtx{Q}^*\mtx{Q} = \Id$ and $\mtx{Q}_{\perp}^* \mtx{Q} = \mtx{0}$
and $\mtx{P}^*\mtx{P} = \Id$ and $\mtx{P}^* \mtx{P}_{\perp} = \mtx{0}$ to arrive at the desired representation.
\end{proof}

\subsection{Probabilistic Analysis of the Core Matrix}

Next, we make distributional assumptions on the dimension reduction
maps $\mtx{\Phi}$ and $\mtx{\Psi}$.  We can then study the probabilistic
behavior of the error $\mtx{C} - \mtx{Q}^* \mtx{A} \mtx{P}$,
conditional on $\mtx{Q}$ and $\mtx{P}$.

\begin{lemma}[Probabilistic Analysis of the Core Matrix]
\label{lem:link-approx-stat}
Assume that the dimension reduction matrices $\mtx{\Phi}$ and $\mtx{\Psi}$
are drawn independently from the standard normal distribution.
When $s \geq k$, it holds that
\begin{equation} \label{eqn:W-unbiased}
\Expect_{\mtx{\Phi}, \mtx{\Psi}}[ \mtx{C} - \mtx{Q}^*\mtx{A} \mtx{P} ] = \mtx{0}.
\end{equation}
When $s > k + \alpha$, we can express the error as
$$
\begin{aligned}
\Expect_{\mtx{\Phi}, \mtx{\Psi}} \fnormsq{ \mtx{C} - \mtx{Q}^* \mtx{A} \mtx{P} }
	&= \frac{k}{s - k - \alpha} \cdot \fnormsq{ \mtx{A} - \mtx{QQ}^* \mtx{A} \mtx{PP}^* } \\
	&\qquad + \frac{k(2k + \alpha - s)}{(s - k - \alpha)^2} \cdot  \fnormsq{\mtx{Q}_\perp^* \mtx{A} \mtx{P}_{\perp}}.
\end{aligned}
$$
When $s < 2k + \alpha$, the last term is nonnegative;
when $s \geq 2k + \alpha$, the last term is nonpositive.
\end{lemma}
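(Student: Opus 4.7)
My strategy is to combine the algebraic decomposition of Lemma \ref{lem:link-approx-decomp} with the rotational invariance of Gaussian matrices, and then evaluate the pieces using Fact \ref{fact:twisted-expectation}. First, I would observe that because $\mtx{Q}$ and $\mtx{P}$ are functions only of $(\mtx{\Upsilon},\mtx{\Omega},\mtx{A})$, the independent Gaussian matrices $\mtx{\Phi}$ and $\mtx{\Psi}$ are independent of $\mtx{Q}, \mtx{P}$. Since the augmented matrices $[\mtx{Q}\ \mtx{Q}_\perp]$ and $[\mtx{P}\ \mtx{P}_\perp]$ are unitary, rotational invariance of the Gaussian distribution implies that $\mtx{\Phi}_1 = \mtx{\Phi Q}$ and $\mtx{\Phi}_2 = \mtx{\Phi Q}_\perp$ are independent standard Gaussian matrices (of dimensions $s\times k$ and $s\times(m-k)$ respectively), and likewise $\mtx{\Psi}_1$ and $\mtx{\Psi}_2$ are independent. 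Moreover, the $\mtx{\Phi}$-block is independent of the $\mtx{\Psi}$-block.

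\textbf{Unbiasedness.} For~\cref{eqn:W-unbiased}, condition on $\mtx{\Phi}_1$: since $\mtx{\Phi}_2$ is a zero-mean Gaussian independent of $\mtx{\Phi}_1$, we get $\Expect[\mtx{\Phi}_1^\dagger \mtx{\Phi}_2 \mid \mtx{\Phi}_1]=\mtx{0}$, so the first term of Lemma \ref{lem:link-approx-decomp} vanishes in expectation. The second term vanishes by the symmetric argument on $\mtx{\Psi}$. The third term vanishes because $\mtx{\Phi}$ and $\mtx{\Psi}$ are independent, so the expectation factors and we can apply either argument. This requires $s\geq k$ so that $\mtx{\Phi}_1,\mtx{\Psi}_1$ almost surely have full column rank (making the pseudoinverses well-defined).

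\textbf{Variance.} Write $\mtx{C}-\mtx{Q}^*\mtx{A}\mtx{P}=T_1+T_2+T_3$ for the three terms in Lemma \ref{lem:link-approx-decomp}. Expanding $\fnormsq{T_1+T_2+T_3}$, I expect every cross-term expectation $\Expect\ip{T_i}{T_j}$ to vanish: $T_1$ contains $\mtx{\Phi}_1^\dagger \mtx{\Phi}_2$ only, $T_2$ contains $\mtx{\Psi}_2^*(\mtx{\Psi}_1^\dagger)^*$ only, and $T_3$ contains both; in each pair, one can condition on one of the blocks and use independence and zero mean to kill the cross term. For the diagonal terms, Fact \ref{fact:twisted-expectation} gives $\Expect\fnormsq{T_1}=\frac{k}{s-k-\alpha}\fnormsq{\mtx{Q}_\perp^*\mtx{A}\mtx{P}}$ and $\Expect\fnormsq{T_2}=\frac{k}{s-k-\alpha}\fnormsq{\mtx{Q}^*\mtx{A}\mtx{P}_\perp}$ (after transposing the latter to match the hypothesis of the fact). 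For $T_3$, I would first take expectation over $\mtx{\Phi}$ with $\mtx{\Psi}$ fixed, then over $\mtx{\Psi}$, yielding $\Expect\fnormsq{T_3}=\bigl(\tfrac{k}{s-k-\alpha}\bigr)^2\fnormsq{\mtx{Q}_\perp^*\mtx{A}\mtx{P}_\perp}$.

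\textbf{Consolidation.} Finally I would use the Pythagorean identity arising from the orthogonal decomposition
$$
\mtx{A}-\mtx{QQ}^*\mtx{A}\mtx{PP}^* = \mtx{QQ}^*\mtx{A}\mtx{P}_\perp\mtx{P}_\perp^* + \mtx{Q}_\perp\mtx{Q}_\perp^*\mtx{A}\mtx{PP}^* + \mtx{Q}_\perp\mtx{Q}_\perp^*\mtx{A}\mtx{P}_\perp\mtx{P}_\perp^*,
$$
whose three summands have pairwise-orthogonal row and column spaces, to rewrite $\fnormsq{\mtx{Q}_\perp^*\mtx{A}\mtx{P}}+\fnormsq{\mtx{Q}^*\mtx{A}\mtx{P}_\perp}=\fnormsq{\mtx{A}-\mtx{QQ}^*\mtx{A}\mtx{PP}^*}-\fnormsq{\mtx{Q}_\perp^*\mtx{A}\mtx{P}_\perp}$. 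Substituting this into the sum $\Expect\fnormsq{T_1}+\Expect\fnormsq{T_2}+\Expect\fnormsq{T_3}$ and simplifying the coefficient of $\fnormsq{\mtx{Q}_\perp^*\mtx{A}\mtx{P}_\perp}$ produces exactly $\tfrac{k(2k+\alpha-s)}{(s-k-\alpha)^2}$, which yields the stated formula. The sign claim is immediate by inspection of the factor $2k+\alpha-s$. I do not anticipate a serious obstacle here; the only delicate point is carefully justifying that cross terms vanish, which requires identifying precisely which randomness each factor depends on before conditioning.
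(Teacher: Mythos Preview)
Your proposal is correct and follows essentially the same approach as the paper: both use the decomposition of \cref{lem:link-approx-decomp}, invoke rotational invariance to make $\mtx{\Phi}_1,\mtx{\Phi}_2$ (and $\mtx{\Psi}_1,\mtx{\Psi}_2$) independent Gaussians, kill the cross terms via the zero-mean property of $\mtx{\Phi}_2,\mtx{\Psi}_2$, evaluate the three diagonal terms with \cref{fact:twisted-expectation}, and then consolidate via the Pythagorean identity. Your write-up is slightly more explicit about the cross-term vanishing and the Pythagorean decomposition than the paper's, but the argument is the same.
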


\begin{proof}
Since $\mtx{\Phi}$ is standard normal, the orthogonal submatrices
$\mtx{\Phi}_1$ and $\mtx{\Phi}_2$ are statistically independent
standard normal matrices because of the marginal property of the
normal distribution.  Likewise, $\mtx{\Psi}_1$ and $\mtx{\Psi}_2$
are statistically independent standard normal matrices.  Provided
that $s \geq k$, both matrices have full column rank with probability
one.

To establish the formula~\eqref{eqn:W-unbiased}, notice that
\begin{align*}
\Expect_{\mtx{\Phi}, \mtx{\Psi}} [ \mtx{C} - \mtx{Q}^* \mtx{A} \mtx{P} ]
	&= \Expect_{\mtx{\Phi}_1}\Expect_{\mtx{\Phi}_2} [ \mtx{\Phi}_1^\dagger \mtx{\Phi}_2 (\mtx{Q}_{\perp}^* \mtx{A} \mtx{P}) ]
		+ \Expect_{\mtx{\Psi}_1}\Expect_{\mtx{\Psi}_2} [(\mtx{Q}^* \mtx{A} \mtx{P}_{\perp}) \mtx{\Psi}_2^* (\mtx{\Psi}_1^\dagger)^* ] \\
		&+ \Expect \Expect_{\mtx{\Phi}_2} [ \mtx{\Phi}_1^\dagger \mtx{\Phi}_2 (\mtx{Q}_{\perp}^* \mtx{A} \mtx{P}_{\perp}) \mtx{\Psi}_2^* (\mtx{\Psi}_1^\dagger)^* ].
\end{align*}
We have used the decomposition of the approximation error from
\cref{lem:link-approx-decomp}.  Then we invoke independence to write the expectations
as iterated expectations.  Since $\mtx{\Phi}_2$ and $\mtx{\Psi}_2$ have mean zero,
this formula makes it clear that the approximation error has mean zero.

To study the fluctuations, apply the independence and zero-mean property of $\mtx{\Phi}_2$
and $\mtx{\Psi}_2$ to decompose
\begin{align*}
\Expect_{\mtx{\Phi}, \mtx{\Psi}} \fnormsq{ \mtx{C} - \mtx{Q}^* \mtx{A} \mtx{P} }
	&= \Expect_{\mtx{\Phi}} \fnormsq{ \mtx{\Phi}_1^\dagger \mtx{\Phi}_2 (\mtx{Q}_{\perp}^* \mtx{A} \mtx{P}) }
		+ \Expect_{\mtx{\Psi}} \fnormsq{ (\mtx{Q}^* \mtx{A} \mtx{P}_{\perp}) \mtx{\Psi}_2^* (\mtx{\Psi}_1^\dagger)^* } \\
		&+ \Expect_{\mtx{\Phi}} \Expect_{\mtx{\Psi}} \fnormsq{ \mtx{\Phi}_1^\dagger \mtx{\Phi}_2 (\mtx{Q}_{\perp}^* \mtx{A} \mtx{P}_{\perp}) \mtx{\Psi}_2^* (\mtx{\Psi}_1^\dagger)^* }.
\end{align*}
Continuing, we invoke \cref{fact:twisted-expectation} four times to see that
\begin{multline*}
\Expect_{\mtx{\Phi}, \mtx{\Psi}} \fnormsq{ \mtx{C} - \mtx{Q}^* \mtx{A} \mtx{P} } \\
	= \frac{k}{s - k - \alpha} \cdot \left[ \fnormsq{ \mtx{Q}_\perp^* \mtx{A} \mtx{P} }
	+ \fnormsq{ \mtx{Q}^* \mtx{A} \mtx{P}_{\perp} }
	+ \frac{k}{s - k - \alpha} \cdot \fnormsq{ \mtx{Q}_{\perp}^* \mtx{A} \mtx{P}_{\perp} } \right].
\end{multline*}
Add and subtract $\fnormsq{ \mtx{Q}_{\perp}^* \mtx{A} \mtx{P}_{\perp} }$ in the bracket to arrive at
\begin{align*}
\Expect \fnormsq{ \mtx{C} - \mtx{Q}^* \mtx{A} \mtx{P} }
	&= \frac{k}{s - k - \alpha} \cdot \bigg[ \fnormsq{ \mtx{Q}_\perp^* \mtx{A} \mtx{P} }
	+ \fnormsq{ \mtx{Q}^* \mtx{A} \mtx{P}_{\perp} }
	+ \fnormsq{ \mtx{Q}_{\perp}^* \mtx{A} \mtx{P}_{\perp} } \\
	&\qquad+ \frac{2k + \alpha - s}{s - k - \alpha} \cdot \fnormsq{ \mtx{Q}_{\perp}^* \mtx{A} \mtx{P}_{\perp} } \bigg].
\end{align*}
Use the Pythagorean Theorem to combine the terms on the first line.
\end{proof}

\subsection{Probabilistic Analysis of the Compression Error}

Next, we establish a bound for the expected error in the compression
of the matrix $\mtx{A}$ onto the range of the orthonormal matrices $\mtx{Q}$ and $\mtx{P}$,
computed in~\cref{eqn:range-corange}.
This result is similar in spirit to the analysis in~\cite{HMT11:Finding-Structure},
so we pass lightly over the details.

\begin{lemma}[Probabilistic Analysis of the Compression Error]
\label{lem:compression-error}
For any natural number $\varrho < k - \alpha$, it holds that
$$
\Expect \fnormsq{ \mtx{A} - \mtx{QQ}^* \mtx{A} \mtx{PP}^* }
	\leq \left(1 + \frac{2\varrho}{k - \varrho - \alpha}\right)
	\cdot \tau_{\varrho+1}^2(\mtx{A}).
$$
\end{lemma}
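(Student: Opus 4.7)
The plan is to use the Pythagorean theorem twice to reduce the expected error to quantities already controlled by \cref{fact:hmt-err}. Writing
\[
\mtx{A} - \mtx{QQ}^* \mtx{A} \mtx{PP}^* = (\Id - \mtx{QQ}^*)\mtx{A} + \mtx{QQ}^* \mtx{A} (\Id - \mtx{PP}^*),
\]
the two summands are orthogonal in Frobenius inner product because $\mtx{QQ}^*(\Id - \mtx{QQ}^*) = \mtx{0}$, so
\[
\fnormsq{\mtx{A} - \mtx{QQ}^* \mtx{A} \mtx{PP}^*} = \fnormsq{(\Id - \mtx{QQ}^*)\mtx{A}} + \fnormsq{\mtx{QQ}^* \mtx{A} (\Id - \mtx{PP}^*)}.
\]
A second orthogonal split $\mtx{A}(\Id - \mtx{PP}^*) = (\Id - \mtx{QQ}^*)\mtx{A}(\Id - \mtx{PP}^*) + \mtx{QQ}^* \mtx{A}(\Id - \mtx{PP}^*)$ then eliminates the middle term and yields the key identity
\[
\fnormsq{\mtx{A} - \mtx{QQ}^* \mtx{A} \mtx{PP}^*} = \fnormsq{(\Id - \mtx{QQ}^*)\mtx{A}} + \fnormsq{\mtx{A}(\Id - \mtx{PP}^*)} - \fnormsq{(\Id - \mtx{QQ}^*)\mtx{A}(\Id - \mtx{PP}^*)}.
\]

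Next, I would take expectations.  The first two terms fit the hypotheses of \cref{fact:hmt-err} (applied with the independent standard normal matrices $\mtx{\Omega}$ and $\mtx{\Upsilon}$ respectively), and each is bounded by $(1 + \varrho/(k - \varrho - \alpha))\tau_{\varrho+1}^2(\mtx{A})$.  Their sum equals $2\tau_{\varrho+1}^2 + \tfrac{2\varrho}{k-\varrho-\alpha}\tau_{\varrho+1}^2$, which overshoots the target $(1 + \tfrac{2\varrho}{k-\varrho-\alpha})\tau_{\varrho+1}^2$ by exactly $\tau_{\varrho+1}^2$.  Consequently, the argument reduces to the expected cross-term lower bound
\[
\Expect \fnormsq{(\Id - \mtx{QQ}^*)\mtx{A}(\Id - \mtx{PP}^*)} \geq \tau_{\varrho+1}^2(\mtx{A}).
\]

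The hard part will be proving this cross-term inequality; a naive Eckart--Young argument only yields $\tau_{2k+1}^2(\mtx{A})$, which is too weak when $2k > \varrho$.  The plan is to invoke the rotational invariance of the Gaussian distribution and of the Frobenius norm to reduce to the case $\mtx{A} = \mtx{\Sigma}$ (diagonal), then partition $\mtx{\Omega} = [\mtx{\Omega}_1, \mtx{\Omega}_2]$ and $\mtx{\Upsilon} = [\mtx{\Upsilon}_1, \mtx{\Upsilon}_2]$ into blocks of width $\varrho$ and width $n - \varrho$ (resp.\ $m - \varrho$) aligned with the top-$\varrho$/tail singular split.  The deterministic inequalities underlying \cref{fact:hmt-err} realize $\mtx{QQ}^*$ and $\mtx{PP}^*$ as small perturbations of the oracle projectors $\mtx{U}_1\mtx{U}_1^*$ and $\mtx{V}_1\mtx{V}_1^*$, with perturbation sizes controlled by $\mtx{\Sigma}_2\mtx{\Omega}_2^*(\mtx{\Omega}_1^*)^\dagger$ and $(\mtx{\Upsilon}_1^*)^\dagger\mtx{\Upsilon}_2^*\mtx{\Sigma}_2$, so that $(\Id - \mtx{QQ}^*)\mtx{A}(\Id - \mtx{PP}^*)$ is close to the tail $\mtx{A} - \lowrank{\mtx{A}}{\varrho}$ whose squared Frobenius norm is exactly $\tau_{\varrho+1}^2$.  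An equivalent (and possibly cleaner) route is to bypass the identity altogether and prove the deterministic two-sided refinement
\[
\fnormsq{\mtx{A} - \mtx{QQ}^*\mtx{A}\mtx{PP}^*} \leq \fnormsq{\mtx{\Sigma}_2} + \fnormsq{\mtx{\Sigma}_2\mtx{\Omega}_2^*(\mtx{\Omega}_1^*)^\dagger} + \fnormsq{(\mtx{\Upsilon}_1^*)^\dagger \mtx{\Upsilon}_2^*\mtx{\Sigma}_2},
\]
after which two applications of \cref{fact:twisted-expectation}---one to each independent Gaussian pair $(\mtx{\Omega}_1, \mtx{\Omega}_2)$ and $(\mtx{\Upsilon}_1, \mtx{\Upsilon}_2)$---produce the claimed bound, since $\fnormsq{\mtx{\Sigma}_2} = \tau_{\varrho+1}^2(\mtx{A})$ and each remaining expected squared norm evaluates to $\tfrac{\varrho}{k - \varrho - \alpha}\tau_{\varrho+1}^2(\mtx{A})$.
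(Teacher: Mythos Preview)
Your Pythagorean identity is correct, but the cross-term lower bound $\Expect\fnormsq{(\Id-\mtx{QQ}^*)\mtx{A}(\Id-\mtx{PP}^*)} \geq \tau_{\varrho+1}^2(\mtx{A})$ that your first route requires is \emph{false}. Take $\mtx{A} = \Id_n$: then $\fnormsq{(\Id-\mtx{QQ}^*)(\Id-\mtx{PP}^*)} \leq \fnormsq{\Id-\mtx{QQ}^*} = n-k$ deterministically (since $\Id-\mtx{PP}^*$ has spectral norm one), whereas $\tau_{\varrho+1}^2(\Id) = n-\varrho > n-k$ for every admissible $\varrho$. The intuition that $(\Id-\mtx{QQ}^*)\mtx{A}(\Id-\mtx{PP}^*)$ is ``close to the tail $\mtx{A}-\lowrank{\mtx{A}}{\varrho}$'' does not hold: each complementary projector removes a $k$-dimensional slice, and the doubly-projected residual need not retain the full tail energy. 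So the first route cannot close.

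Your second route---bypass the identity and establish the deterministic bound
\[
\fnormsq{\mtx{A} - \mtx{QQ}^*\mtx{A}\mtx{PP}^*} \;\leq\; \fnormsq{\mtx{\Sigma}_2} + \fnormsq{\mtx{\Sigma}_2\mtx{\Omega}_2^*(\mtx{\Omega}_1^*)^\dagger} + \fnormsq{\mtx{\Upsilon}_1^\dagger \mtx{\Upsilon}_2\mtx{\Sigma}_2},
\]
then apply \cref{fact:twisted-expectation} twice---is exactly the paper's proof. The paper obtains this inequality from the orthogonal split $\fnormsq{\mtx{A}(\Id-\mtx{PP}^*)} + \fnormsq{(\Id-\mtx{QQ}^*)\mtx{A}\mtx{PP}^*}$: the first summand is decomposed along the top-$\varrho$/tail SVD blocks, with the leading block controlled by the deterministic estimate $\fnormsq{\mtx{U}_1\mtx{\Sigma}_1\mtx{V}_1^*(\Id-\mtx{PP}^*)} \leq \fnormsq{\mtx{\Upsilon}_1^\dagger\mtx{\Upsilon}_2\mtx{\Sigma}_2}$; the second summand is bounded by the one-sided HMT inequality $\fnormsq{(\Id-\mtx{QQ}^*)\mtx{AP}} \leq \fnormsq{\mtx{\Sigma}_2\mtx{P}_2} + \fnormsq{\mtx{\Sigma}_2\mtx{\Omega}_2^*(\mtx{\Omega}_1^*)^\dagger\mtx{P}_1}$, after which $\|\mtx{P}_1\|,\|\mtx{P}_2\|\leq 1$ and one more Pythagorean recombination collapses the two $\mtx{\Sigma}_2$-terms into the single $\fnormsq{\mtx{\Sigma}_2}$. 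Promote this ``alternative'' to the main argument and discard the cross-term plan.
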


\begin{proof}[Proof Sketch]
Introduce the partitioned SVD of the matrix $\mtx{A}$:
$$
\mtx{A} = \mtx{U\Sigma V}^*
	= \begin{bmatrix} \mtx{U}_1 & \mtx{U}_2 \end{bmatrix}
	\begin{bmatrix} \mtx{\Sigma}_1 & \\ & \mtx{\Sigma}_2 \end{bmatrix}
	\begin{bmatrix} \mtx{V}_1^* \\ \mtx{V}_2^* \end{bmatrix}
	\quad\text{where}\quad \mtx{\Sigma}_1 \in \F^{\varrho \times \varrho}.
$$
Define the matrices
\begin{gather*}
\mtx{\Upsilon}_1 := \mtx{\Upsilon} \mtx{U}_1 \in \F^{s \times \varrho}
	\quad\text{and}\quad \mtx{\Upsilon}_2 := \mtx{\Upsilon} \mtx{U}_2 \in F^{s \times (m - \varrho)}; \\
\mtx{\Omega}_1^* := \mtx{V}_1^* \mtx{\Omega}^* \in \F^{\varrho \times s}
	\quad\text{and}\quad \mtx{\Omega}_2^* := \mtx{V}_2^* \mtx{\Omega}^* \in \F^{(n - \varrho) \times s}; \\
\mtx{P}_1 := \mtx{V}_1^* \mtx{P} \in \F^{\varrho \times k}
	\quad\text{and}\quad \mtx{P}_2 := \mtx{V}_2^* \mtx{P} \in \F^{(n - \varrho) \times k}.
\end{gather*}
With this notation, we proceed to the proof.

First, add and subtract terms and apply the Pythagorean Theorem to obtain
$$
\fnormsq{ \mtx{A} - \mtx{QQ}^* \mtx{A} \mtx{PP}^* }
	= \fnormsq{ \mtx{A} (\Id - \mtx{PP}^*) } + \fnormsq{ (\Id - \mtx{QQ}^*) \mtx{A} \mtx{PP}^* }.
$$
Use the SVD to decompose the matrix $\mtx{A}$ in the first term,
and apply the Pythagorean Theorem again:
\begin{multline*}
\fnormsq{ \mtx{A} - \mtx{QQ}^* \mtx{A} \mtx{PP}^* }
	= \fnormsq{ (\mtx{U}_2 \mtx{\Sigma}_2 \mtx{V}_2^*) (\Id - \mtx{PP}^*) } \\
	+ \fnormsq{ (\mtx{U}_1 \mtx{\Sigma}_1 \mtx{V}_1^*) (\Id - \mtx{PP}^*) }
	+ \fnormsq{ (\Id - \mtx{QQ}^*) \mtx{A} \mtx{P} }.
\end{multline*}
The result~\cite[Prop.~9.2]{TYUC17:Randomized-Single-View-TR}
implies that the second term satisfies
$$
\fnormsq{ (\mtx{U}_1 \mtx{\Sigma}_1 \mtx{V}_1^*) (\Id - \mtx{PP}^*) }
	\leq \fnormsq{ \mtx{\Upsilon}_1^\dagger \mtx{\Upsilon}_2 \mtx{\Sigma}_2 }.
$$
We can obtain a bound for the third term using the formula~\cite[p.~270, disp.~1]{HMT11:Finding-Structure}.
After a short computation, this result yields
$$
\begin{aligned}
\fnormsq{ (\Id - \mtx{QQ}^*) \mtx{A P} }
	&\leq \fnormsq{ \mtx{\Sigma}_2 \mtx{P}_2 }
	+ \fnormsq{ \mtx{\Sigma}_2 \mtx{\Omega}_2^* (\mtx{\Omega}_1^*)^\dagger \mtx{P}_1 } \\
	&\leq \fnormsq{ \mtx{\Sigma}_2 }
	+\fnormsq{ \mtx{\Sigma}_2 \mtx{\Omega}_2^* (\mtx{\Omega}_1^*)^\dagger }.
\end{aligned}
$$
We can remove $\mtx{P}_1$ and $\mtx{P}_2$ because their spectral norms are bounded by one,
being submatrices of the orthonormal matrix $\mtx{P}$.
Combine the last three displays to obtain
$$
\fnormsq{ \mtx{A} - \mtx{QQ}^* \mtx{A} \mtx{PP}^* }
	\leq \fnormsq{ \mtx{\Sigma}_2 } %
	+ \fnormsq{ \mtx{\Upsilon}_1^\dagger \mtx{\Upsilon}_2 \mtx{\Sigma}_2 }
	+ \fnormsq{ \mtx{\Sigma}_2 \mtx{\Omega}_2^* (\mtx{\Omega}_1^*)^\dagger }.
$$
We have used the Pythagorean Theorem again.

Take the expectation with respect to $\mtx{\Upsilon}$ and $\mtx{\Omega}$
to arrive at
$$
\begin{aligned}
\Expect \fnormsq{ \mtx{A} - \mtx{QQ}^* \mtx{A} \mtx{PP}^* }
	&\leq \fnormsq{ \mtx{\Sigma}_2 }
	+ \Expect \fnormsq{ \mtx{\Upsilon}_1^\dagger \mtx{\Upsilon}_2 \mtx{\Sigma}_2 }
	+ \Expect \fnormsq{ \mtx{\Sigma}_2 \mtx{\Omega}_2^* (\mtx{\Omega}_1^*)^\dagger } \\
	&= \fnormsq{ \mtx{\Sigma}_2 }
	+ \frac{2 \varrho}{k - \varrho - \alpha} \cdot \fnormsq{ \mtx{\Sigma}_2 }.
\end{aligned}
$$
Finally, note that $\fnormsq{\mtx{\Sigma}_2} = \tau_{\varrho+1}^2(\mtx{A})$.
\end{proof}

\subsection{The Endgame}

At last, we are prepared to finish the proof of \cref{thm:low-rank-error-bound}.
Fix a natural number $\varrho < k - \alpha$.
Using the formula~\cref{eqn:Ahat} for the approximation $\hat{\mtx{A}}$,
we see that
$$
\begin{aligned}
\fnormsq{ \mtx{A} - \hat{\mtx{A}} }
	&= \fnormsq{ \mtx{A} - \mtx{Q} \mtx{C} \mtx{P}^* } \\
	&= \fnormsq{ \mtx{A} - \mtx{QQ}^* \mtx{A} \mtx{PP}^* + \mtx{Q} (\mtx{Q}^* \mtx{A} \mtx{P} - \mtx{C} ) \mtx{P}^* } \\
	&= \fnormsq{ \mtx{A} - \mtx{QQ}^* \mtx{A} \mtx{PP}^* }
	+ \fnormsq{ \mtx{Q} (\mtx{Q}^* \mtx{A} \mtx{P} - \mtx{C} ) \mtx{P}^* }.
\end{aligned}
$$
The last identity is the Pythagorean theorem.  Drop the orthonormal matrices in the last term.
Then take the expectation with respect to $\mtx{\Phi}$ and $\mtx{\Psi}$:
$$
\Expect_{\mtx{\Phi}, \mtx{\Psi}} \fnormsq{\mtx{A} - \hat{\mtx{A}}}
	= \fnormsq{ \mtx{A} - \mtx{QQ}^* \mtx{A} \mtx{PP}^* }
	+ \Expect_{\mtx{\Phi}, \mtx{\Psi}} \fnormsq{ \mtx{Q}^* \mtx{A} \mtx{P} - \mtx{C} } \\
$$
We treat the two terms sequentially.

To continue, invoke the expression \cref{lem:link-approx-stat} for the expected error in the core matrix $\mtx{C}$:
$$
\begin{aligned}
\Expect_{\mtx{\Phi}, \mtx{\Psi}} \fnormsq{\mtx{A} - \hat{\mtx{A}}}
	&\leq \left( 1 + \frac{k}{s - k - \alpha} \right) \cdot \fnormsq{ \mtx{A} - \mtx{QQ}^* \mtx{A} \mtx{PP}^* } \\
	&+ \frac{k(2k + \alpha - s)}{(s - k - \alpha)^2} \cdot \fnormsq{ \mtx{Q}_\perp^* \mtx{A} \mtx{P}_{\perp} }.
\end{aligned}
$$
Now, take the expectation with respect to $\mtx{\Upsilon}$ and $\mtx{\Omega}$ to arrive at
\begin{equation} \label{eqn:almost-there}
\begin{aligned}
\Expect \fnormsq{\mtx{A} - \hat{\mtx{A}}}
	&\leq \left( 1 + \frac{k}{s - k - \alpha} \right) \cdot %
	\left( 1 + \frac{2 \varrho}{k - \varrho - \alpha} \right) \cdot \tau_{\varrho+1}^2(\mtx{A}) \\
	&\qquad+ \frac{k(2k + \alpha - s)}{(s - k - \alpha)^2} \cdot \Expect \fnormsq{ \mtx{Q}_\perp^* \mtx{A} \mtx{P}_{\perp} }.
\end{aligned}
\end{equation}
We have invoked \cref{lem:compression-error}.  The last term is nonpositive
because we require $s \geq 2k + \alpha$, so we may drop it from consideration.
Finally, we optimize over eligible choices $\varrho < k - \alpha$
to complete the argument.  The result stated in \cref{thm:low-rank-error-bound}
is algebraically equivalent.

\section{A Posteriori Error Estimation}
\label{app:aposteriori}

This section contains proofs of the bounds on the \emph{a posteriori}
error estimator $\err_2$ computed using a Gaussian error sketch.
It also establishes the linear algebra results that we need to
diagnose spectral decay in the input matrix.

\subsection{The Frobenius Norm Estimator}
\label{app:fnorm-est}

Fix an arbitrary matrix $\mtx{M} \in \F^{m \times n}$,
which plays the role of the discrepancy $\mtx{A} - \hat{\mtx{A}}_{\rm out}$.
For a parameter $q$, draw a standard normal dimension
reduction map $\mtx{\Theta} \in \F^{q \times m}$.
Define the random variable
$$
\varphi_2^2 := \frac{1}{\beta q} \cdot \fnormsq{ \mtx{\Theta} \mtx{M} }.
$$
The field parameter $\beta = 1$ for $\F = \R$ and $\beta = 2$ for $\F = \C$.
This random variable can be regarded as a randomized estimator for the Schatten 2-norm
of the matrix $\mtx{M}$.  The goal of this section is to develop
probabilistic results to support this claim.

\begin{remark}[Prior Work]
The analysis here is similar in spirit to recent
papers on randomized trace
estimators~\cite{AT11:Randomized-Algorithms,RA15:Improved-Bounds,GT18:Improved-Bounds}.
The details here are slightly different, but
we claim no novelty of insight.
\end{remark}

\subsubsection{An Alternative Representation}

By the unitary invariance of the Schatten norm and the standard normal matrix,
we can and will assume that $\mtx{M} = \diag(\sigma_1, \dots, \sigma_m)
\in \R^{m \times m}$ is a real diagonal matrix with (weakly) decreasing entries.

Since $\mtx{M}$ is real and diagonal, the estimator can be written as
\begin{equation} \label{eqn:phi2-chisq}
\varphi_2^2 = \frac{1}{\beta q} \cdot \fnormsq{ \mtx{\Theta} \mtx{M} }
	= \frac{1}{\beta q} \cdot \sum\nolimits_{i=1}^m \sigma_i^2 \cdot \fnormsq{ \vct{\theta}_{:i} }
	\sim \frac{1}{\beta q} \cdot \sum\nolimits_{i=1}^m \sigma_i^2 \chi_i^2.
\end{equation}
Here, $\vct{\theta}_{:i}$ is the $i$th column of $\mtx{\Theta}$.
We have also introduced an independent family $\{ \chi_i^2 : i = 1, \dots, n \}$
of chi-squared random variables, each with $\beta q$ degrees of freedom.
The symbol $\sim$ denotes equality of distribution.

\subsubsection{The Mean and Variance}
\label{app:err2-mean}

Using the representation~\cref{eqn:phi2-chisq}, we quickly compute the mean and variance
of the estimator.  By linearity of expectation,
$$
\Expect \varphi_2^2 = \frac{1}{\beta q} \cdot \sum\nolimits_{i=1}^m \sigma_i^2 \Expect \chi_i^2
	= \sum\nolimits_{i=1}^m \sigma_i^2 = \fnormsq{ \mtx{M} }.
$$
For the second relation, we introduce the mean of a chi-squared variable with $\beta q$
degrees of freedom.  Since the chi-squared variables are independent,
$$
\Var[ \varphi_2^2 ] = \frac{1}{(\beta q)^2} \cdot \sum\nolimits_{i=1}^m \sigma_i^4 \Var[ \chi_i^2 ]
	= \frac{2}{\beta q} \sum\nolimits_{i=1}^m \sigma_i^4
	= \frac{2}{\beta q} \norm{ \mtx{M} }_4^4.
$$
We have also used the fact that the variance is 2-homogeneous,
and we introduced the variance of a chi-squared variable with $\beta q$ degrees of freedom.

\subsubsection{Upper Tail Probabilities}
\label{app:err2-upper}

Our goal is to develop bounds on the probability that the
estimator takes an extreme value.  We begin with the upper tail.

We can use the Laplace transform method.  For $\eps \geq 0$,
by Markov's inequality,
$$
\log \Prob{ \varphi_2^2 \geq (1+\eps) \cdot \fnormsq{\mtx{M}} }
	\leq \inf_{\eta > 0} \left( - \eta (1+\eps) \fnormsq{\mtx{M}} + \log \Expect \econst^{\eta \varphi_2^2} \right).
$$
To compute the moment generating function, we exploit independence of the chi-squared variates
in the representation~\eqref{eqn:phi2-chisq}:
$$
\log \Expect \econst^{\eta \varphi_2^2}
	= \prod\nolimits_{i=1}^m \log \Expect \econst^{(\eta \sigma_i^2/(\beta q)) \cdot \chi_i^2 }
	= \frac{-\beta q}{2} \sum\nolimits_{i=1}^m \log \left[ 1 - \frac{2\eta \sigma_i^2}{\beta q} \right].
$$
The last relation follows when we introduce the moment generating function of a chi-squared
variable with $\beta q$ degrees of freedom.  We tacitly assume that $\eta$ is sufficiently small.
We have the bound
$$
\log \Expect \econst^{\eta \varphi_2^2}
	\leq \frac{-\beta q}{2} \log \left[ 1 - \frac{2\eta \sum_{i=1}^m \sigma_i^2}{\beta q} \right]
	= \frac{-\beta q}{2} \log \left[ 1 - \frac{2\eta \fnormsq{\mtx{M}}}{\beta q} \right].
$$
This point follows by repeated application of the numerical inequality
$(1-a)(1-b) \geq 1 - a - b$, valid when $ab \geq 0$.  In summary,
$$
\begin{aligned}
\log \Prob{ \varphi_2^2 \geq (1+\eps) \cdot \fnormsq{\mtx{M}} }
	&\leq \inf_{\eta > 0} \left( - \eta(1+ \eps) \fnormsq{\mtx{M}} - \frac{\beta q}{2} \log\left[1 - \frac{2\eta \fnormsq{\mtx{M}}}{\beta q} \right] \right) \\
	&= \frac{-\beta q}{2} \left[ \eps - \log(1 + \eps) \right].
\end{aligned}
$$
Exponentiate this expression to reach the required bound.

\begin{remark}[Improvements]
Sharper estimates are possible in the case where
the stable rank of the matrix $\mtx{M}$ is large.
For results of this type, see~\cite{GT18:Improved-Bounds}.
\end{remark}

\subsubsection{Lower Tail Probabilities}
\label{app:err2-lower}

For the lower tail, we use essentially the same argument.
Therefore, we gloss over most of the details.

For $\eps \in (0, 1)$, the Laplace transform method gives
$$
\log \Prob{ \varphi_2^2 \leq (1 - \eps) \cdot \fnormsq{\mtx{M}} }
	\leq \inf_{\eta > 0} \left( \eta (1-\eps) \fnormsq{\mtx{M}} + \log \Expect \econst^{-\eta \varphi_2^2} \right).
$$
We bound the moment generating function as
$$
\log \Expect \econst^{-\eta \varphi_2^2}
	\leq \frac{-\beta q}{2} \log \left[ 1 + \frac{2\eta \fnormsq{\mtx{M}}}{\beta q} \right].
$$
Combine the last two displays:
$$
\begin{aligned}
\log \Prob{ \varphi_2^2 \leq \eps \cdot \fnormsq{\mtx{M}} }
	&\leq \inf_{\eta > 0} \left( \eta (1- \eps) \fnormsq{\mtx{M}} - \frac{\beta q}{2} \log\left[1 + \frac{2\eta \fnormsq{\mtx{M}}}{\beta q} \right] \right) \\
	&= \frac{\beta q}{2} \left[ \eps + \log (1- \eps) \right].
\end{aligned}
$$
Exponentiate this expression to reach the desired bound.

\subsection{Diagnosing Spectral Decay}
\label{app:diagnose-spectrum}

In this section, we explain why the square root of the 
tail energy is a Lipschitz function.
For a matrix $\mtx{A} \in \F^{m \times n}$ and an integer $r \geq 0$,
recall that
$$
\tau_{r+1}^2(\mtx{A}) = \sum\nolimits_{j > r} \sigma_j^2(\mtx{A})
	= \sum\nolimits_{j > r} \lambda_j( \mtx{A}^* \mtx{A} ).
$$
As usual, $\lambda_j$ returns the $j$th largest eigenvalue of an Hermitian matrix.
Ky Fan's minimum principle~\cite[Prob.~I.6.15]{Bha97:Matrix-Analysis} gives a variational
representation for this quantity:
$$
\tau_{r+1}^2(\mtx{A}) = \min_{\mtx{U} \in \F^{n \times (n-r)}} \trace[ \mtx{U}^* (\mtx{A}^* \mtx{A}) \mtx{U} ]
	= \min_{\mtx{U} \in \F^{n \times (n-r)}} \fnormsq{ \mtx{AU} }
$$
where $\mtx{U}$ ranges over matrices with orthonormal columns.
As a consequence, for conformal matrices $\mtx{A}$ and $\mtx{B}$,
we have
$$
\begin{aligned}
\tau_{r+1}(\mtx{A}) - \tau_{r+1}(\mtx{B})
	&= \min\nolimits_{\mtx{U}} \fnorm{ \mtx{AU} } - \min\nolimits_{\mtx{U}} \fnorm{ \mtx{BU} } \\
	&\leq \fnorm{ \mtx{A} \mtx{U}_{\mtx{B}} } - \fnorm{ \mtx{B} \mtx{U}_{\mtx{B}} } \\
	&= \fnorm{ (\mtx{A} - \mtx{B}) \mtx{U}_{\mtx{B}} }
	\leq \fnorm{ \mtx{A} - \mtx{B} }.
\end{aligned}
$$
We have written $\mtx{U}_{\mtx{B}}$ for the orthonormal matrix in $\F^{n \times (n-r)}$
that minimizes
the functional $\mtx{U} \mapsto \fnorm{ \mtx{BU} }$.  The last inequality
follows because $\mtx{U}_{\mtx{B}}$ has spectral norm one.  Reverse the
roles of the two matrices to conclude that
$$
\abs{ \tau_{r+1}(\mtx{A}) - \tau_{r+1}(\mtx{B}) }
	\leq \fnorm{ \mtx{A} - \mtx{B} }.
$$
This is the advertised result.

\section{Code \& Pseudocode}
\label{app:pseudocode}

This section contains pseudocode for the dimension reduction maps
described in \cref{sec:dim-red-maps}.  We use the same mathematical
notation as the rest of the paper.  We also rely on \textsc{Matlab R2018b} commands,
which appear in typewriter font.  The electronic materials include a \textsc{Matlab}
implementation of these methods.

\vspace{0.5pc}

\begin{itemize}

\item	The template for the \textsc{DimRedux} class appears in the body of
the paper as~\cref{alg:dim-redux}.

\item	\cref{alg:dim-redux-gauss} defines a Gaussian dimension reduction class (\textsc{GaussDR}),
which is a subclass of \textsc{DimRedux}.  It describes the constructor and the left and right action
of this dimension reduction map.  See~\cref{sec:gauss} for the explanation.

\item	\cref{alg:dim-redux-ssrft} defines a SSRFT dimension reduction class (\textsc{SSRFT}).
It is a subclass of \textsc{DimRedux}.  It describes the constructor and the left and right action
of this dimension reduction map.  See~\cref{sec:ssrft} for the explanation.

\item	\cref{alg:dim-redux-sparse} defines a sparse dimension reduction class (\textsc{SparseDR}),
which is a subclass of \textsc{DimRedux}.  It describes the constructor and the left and right action
of this dimension reduction map.  See~\cref{sec:sparse} for the explanation.

\end{itemize}

\vspace{1pc}

\begin{algorithm}[t]
  \caption{\textsl{Gaussian Dimension Reduction Map.}
  (\cref{sec:gauss})  %
  \label{alg:dim-redux-gauss}}
  \begin{algorithmic}[1]
\vspace{0.25pc}

	\State \textbf{class} \textsc{GaussDR} (\textsc{DimRedux})
		\Comment Subclass of \textsc{DimRedux}
  \Indent
	\State \textbf{local variable} $\mtx{\Xi}$ (dense matrix)

	\Function{randn}{$d, N; \F$}
		\Comment{Gaussian matrix over field $\F$}
		\If{$\F = \R$} \Return $\texttt{randn}(d, N)$
		\EndIf
		\If{$\F = \C$} \Return $\texttt{randn}(d, N) + \texttt{1i * } \texttt{randn}(d,N)$
		\EndIf
	\EndFunction

	\Function{GaussDR}{$k, N$}
		\Comment{Constructor}
		\State $\mtx{\Xi} \gets \textsc{randn}(d, N; \F)$
			\Comment{Gaussian over $\F$}
	\EndFunction

	\Function{GaussDR.mtimes}{\texttt{DRmap}, $\mtx{M}$}
	\State \Return \texttt{mtimes}($\mtx{\Xi}$, $\mtx{M}$)
	\EndFunction

\EndIndent

	\vspace{0.25pc}

\end{algorithmic}
\end{algorithm}

\begin{algorithm}[t]
  \caption{\textsl{SSRFT Dimension Reduction Map.}
  (\cref{sec:ssrft})  %
  \label{alg:dim-redux-ssrft}}
  \begin{algorithmic}[1]
\vspace{0.25pc}

	\State \textbf{class} \textsc{SSRFT} (\textsc{DimRedux})
		\Comment Subclass of \textsc{DimRedux}

	\State \textbf{local variables}	\texttt{coords}, $\texttt{perm}_j$, $\vct{\eps}_j$ for $j = 1, 2$
	\Function{SSRFT}{$d, N$}
		\Comment{Constructor}
		\State $\texttt{coords} \gets \texttt{randperm}(N, d)$
		\State $\texttt{perm}_j \gets \texttt{randperm}(N)$ for $j = 1, 2$
		\State $\vct{\eps}_j \gets \texttt{sign}(\textsc{randn}(N,1; \F))$ for $j = 1, 2$
	\EndFunction

	\Function{SSRFT.mtimes}{\texttt{DRmap}, $\mtx{M}$}
		\If{$\F = \R$}
		\State	$\mtx{M} \gets \texttt{dct}(\diag(\vct{\eps}_1) \mtx{M}(\texttt{perm}_1, \texttt{:}))$
		\State	$\mtx{M} \gets \texttt{dct}(\diag(\vct{\eps}_2) \mtx{M}(\texttt{perm}_2, \texttt{:}))$
		\EndIf
		\If{$\F = \C$}
		\State	$\mtx{M} \gets \texttt{dft}(\diag(\vct{\eps}_1) \mtx{M}(\texttt{perm}_1, \texttt{:}))$
		\State	$\mtx{M} \gets \texttt{dft}(\diag(\vct{\eps}_2) \mtx{M}(\texttt{perm}_2, \texttt{:}))$
		\EndIf
		\State \Return $\mtx{M}(\texttt{coords}, \texttt{:})$
	\EndFunction

	\vspace{0.25pc}

\end{algorithmic}
\end{algorithm}

\begin{algorithm}[t]
  \caption{\textsl{Sparse Dimension Reduction Map.}
  (\cref{sec:sparse})  %
  \label{alg:dim-redux-sparse}}
  \begin{algorithmic}[1]
\vspace{0.25pc}

	\State \textbf{class} \textsc{SparseDR} (\textsc{DimRedux})
		\Comment Subclass of \textsc{DimRedux}
	\State \textbf{local variable} $\mtx{\Xi}$ (sparse matrix)

	\Function{SparseDR}{$d, N$}
		\Comment{Constructor}
		\State $\zeta \gets \min\{ d, 8 \}$
			\Comment{Sparsity of each column}
			\For{$j = 1, \dots, N$}
			\State	$\mtx{\Xi}(\texttt{randperm}(d,\zeta), j) \gets \texttt{sign}( \textsc{randn}(\zeta,1; \F) )$
		\EndFor
	\EndFunction

	\Function{SparseDR.mtimes}{\texttt{DRmap}, $\mtx{M}$}
	\State \Return \texttt{mtimes}($\mtx{\Xi}$, $\mtx{M}$)
	\EndFunction

	\vspace{0.25pc}

\end{algorithmic}
\end{algorithm}

\section{Supplemental Numerical Results}
\label{app:numerics}

This section summarizes the additional numerical results
that are presented in this supplement.  The \textsc{Matlab}
code in the electronic materials can reproduce these experiments.

\subsection{Alternative Sketching and Reconstruction Methods}
\label{app:alt-sketch}

In this section, we give full mathematical descriptions of
other sketching and reconstruction methods from the literature.
We compare our approach against these algorithms.

\subsubsection{The [HMT11] Method}
\label{app:hmt}

The paper~\cite[Sec.~5.4]{HMT11:Finding-Structure} describes
a one-pass SVD algorithm, which can be reinterpreted as
a sketching algorithm for low-rank matrix approximation.
This method simplifies a more involved approach~\cite[Sec.~5.2]{WLRT08:Fast-Randomized}
due to Woolfe et al.
The two approaches have similar performance in practice.

This method uses two dimension reduction maps,
controlled by one parameter $k$:
$$
\mtx{\Upsilon} \in \F^{k \times m}
\quad\text{and}\quad
\mtx{\Omega} \in \R^{k \times n}.
$$
The sketch takes the form
$$
\mtx{X} = \mtx{\Upsilon} \mtx{A}
\quad\text{and}\quad
\mtx{Y} = \mtx{A} \mtx{\Omega}.
$$
To obtain a rank-$r$ approximation from the sketch,
we first compute $r$ leading singular vectors of
the sketch matrices:
$$
\begin{aligned}
(\mtx{P}, \sim, \sim) &= \texttt{svd}(\mtx{X}^*, \texttt{'econ'})
&&\quad\text{and}\quad &
\mtx{P} &= \mtx{P}( \texttt{:}, \texttt{1:r} ); \\
(\mtx{Q}, \sim, \sim) &= \texttt{svd}(\mtx{Y}, \texttt{'econ'})
&&\quad\text{and}\quad &
\mtx{Q} &= \mtx{Q}( \texttt{:}, \texttt{1:r} ).
\end{aligned}
$$
Next, we compute two separate estimates for the core matrix
by solving two families of least-squares problems:
$$
\mtx{C}_1 = (\mtx{Q}^* \mtx{Y}) (\mtx{P}^* \mtx{\Omega})^\dagger \in \F^{r \times r}
\quad\text{and}\quad
\mtx{C}_2^* = (\mtx{P}^* \mtx{X}) (\mtx{Q}^* \mtx{\Upsilon})^\dagger \in \F^{r \times r}.
$$
Combine these two estimates and compute the SVD:
$$
(\mtx{U}, \mtx{\Sigma}, \mtx{V}) = \texttt{svd}( (\mtx{C}_1 + \mtx{C}_2)/2 ).
$$
Last, we obtain the rank-$r$ approximation in factored form:
$$
\hat{\mtx{A}}_{\mathrm{hmt}}
	:= (\mtx{QU}) \mtx{\Sigma} (\mtx{PV})^*.
$$
This approach is not competitive with more modern techniques.
Some of the deficiencies stem from truncating the singular vectors to rank $r$ at the first
step of the procedure; see~\cref{fig:flow-field-svec-hmt5,fig:flow-field-svec-hmt}.

\subsubsection{The [TYUC17] Method}

In our previous paper, we developed and analyzed a
sketching algorithm~\cite[Alg.~7]{TYUC17:Practical-Sketching}
for low-rank matrix approximation.  Our work contains
a detailed theoretical analysis, prescriptions for choosing
algorithm parameters, and an extensive numerical evaluation.
We later discovered that this method is algebraically (but not numerically)
equivalent to a proposal of Clarkson \& Woodruff~\cite[Thm.~4.9]{CW09:Numerical-Linear}.
The paper~\cite{CW09:Numerical-Linear} also lacks reliable instructions for implementation.

This approach uses two dimension reduction maps that are indexed
by two parameters $k, \ell$:
$$
\mtx{\Upsilon} \in \F^{\ell \times m}
\quad\text{and}\quad
\mtx{\Omega} \in \F^{k \times n}
\quad\text{where $k \leq \ell$.}
$$
The sketch takes the form
$$
\mtx{X} = \mtx{\Upsilon A}
\quad\text{and}\quad
\mtx{Y} = \mtx{A \Omega}^*.
$$
To obtain a rank-$r$ approximation from the sketch, we compute
a thin orthogonal--triangular decomposition:
\begin{equation*} %
\mtx{Y} =: \mtx{QR} \quad\text{where $\mtx{Q} \in \F^{m \times k}$.}
\end{equation*}
Then we form the approximation:
\begin{equation} \label{eqn:tyuc2017}
\hat{\mtx{A}}_{\mathrm{tyuc}} := \mtx{Q} \lowrank{ (\mtx{\Upsilon} \mtx{Q})^{\dagger} \mtx{X} }{r}.
\end{equation}
Of course, we solve the least-squares problems, rather than computing
and applying the pseudoinverse.  We use a dense SVD or a randomized SVD~\cite{HMT11:Finding-Structure}
to calculate the best rank-$r$ approximation.

This method works well, but it uses more storage than necessary
because $\ell$ needs to be somewhat larger than $k$.  The algorithm
can also be sensitive to the relative size of the parameters $k, \ell$.

\subsubsection{The [Upa16] Method}
\label{app:upa}

In a paper on privacy-preserving matrix approximation,
Upadhyay~\cite[Sec.~3]{Upa16:Fast-Space-Optimal} developed
an algorithm that also serves for streaming low-rank matrix approximation.
This method simplifies a far more complicated approach due
to Boutsidis et al.~\cite[Sec.~6]{BWZ16:Optimal-Principal-STOC}.

Upadhyay proposed the sketch~\cref{eqn:test-matrices,eqn:range-sketch,eqn:core-sketch},
which depends on two parameters $k, s$.  We are building on his idea in this paper.
In contrast to our work, Upadhyay designs a rank-$r$ reconstruction algorithm
using the ``sketch-and-solve'' framework; see \cref{sec:provenance}.

His approach leads to the following algorithm.
First, compute orthonormal bases $\mtx{Q}$ and $\mtx{P}$
for the range and co-range:
$$
\begin{aligned}
\mtx{X}^* &=: \mtx{PR}_1 \quad\text{where}\quad \mtx{P} \in \F^{n \times k}; \\
\mtx{Y} &=: \mtx{QR}_2 \quad\text{where}\quad \mtx{Q} \in \F^{m \times k}.
\end{aligned}
$$
Next, form thin singular value decompositions:
$$
\mtx{\Phi Q} = \mtx{U}_1\mtx{\Sigma}_1 \mtx{V}_1^* \in \F^{s \times k}
\quad\text{and}\quad
\mtx{\Psi P} = \mtx{U}_2 \mtx{\Sigma}_2 \mtx{V}_2^* \in \F^{s \times k}.
$$
Construct the rank-$r$ approximation using the formula
\begin{equation} \label{eqn:upa}
\hat{\mtx{A}}_{\mathrm{upa}} := \mtx{Q} \mtx{V}_1 \mtx{\Sigma}_1^{\dagger} \,
	\lowrank{ \mtx{U}_1^* \mtx{Z} \mtx{U}_2 }{r} \,
	\mtx{\Sigma}_2^{\dagger} \mtx{V}_2^* \mtx{P}^*.
\end{equation}
We use a truncated SVD to perform the rank truncation of the central
matrix.  Of course, we should take care in applying the pseudoinverses.

Superficially, the approximation $\hat{\mtx{A}}_{\mathrm{upa}}$
may appear similar to the approximation we developed in~\cref{eqn:Ahat-fixed}.
Nevertheless, they are designed using different principles,
and their performance is quite different in practice.
The [Upa16] method cannot achieve high relative accuracy,
even for matrices with rapid spectral decay.  Furthermore,
it has the bizarre feature that decreasing the rank
parameter $r$ can actually make the approximation less reliable!
See \cref{fig:flow-field-svec-upa5,fig:flow-field-svec-upa}.

\subsection{Spectra of Input Matrices}

\Cref{fig:spectra} plots the spectrum of each of the synthetic
and application matrices that we use in our experiments.

\begin{figure}[t!]
\begin{center}
\begin{subfigure}{.45\textwidth}
\begin{center}
\includegraphics[height=1.5in]{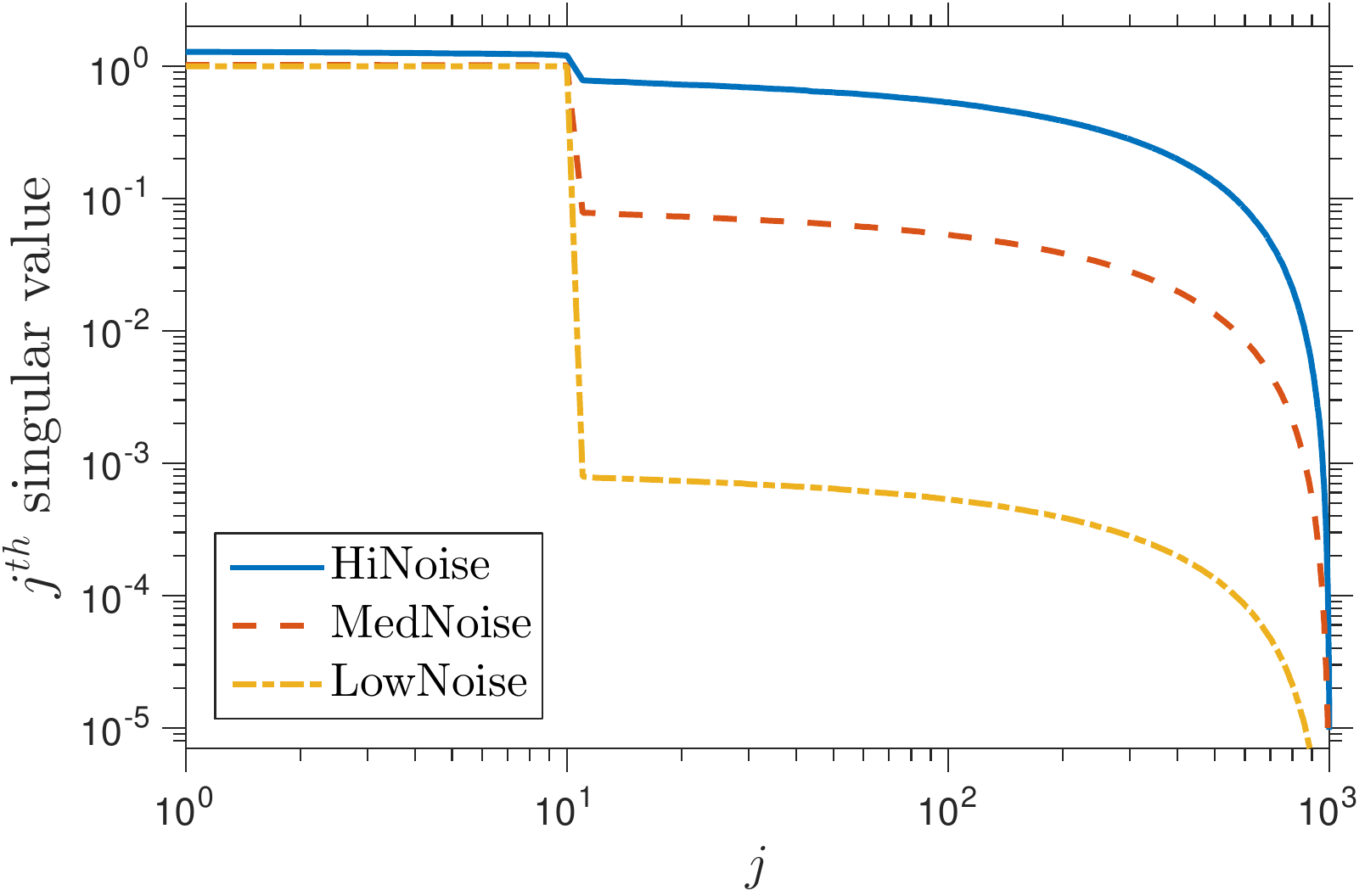}
\caption{Low-Rank + Noise}
\end{center}
\end{subfigure}
\begin{subfigure}{.45\textwidth}
\begin{center}
\includegraphics[height=1.5in]{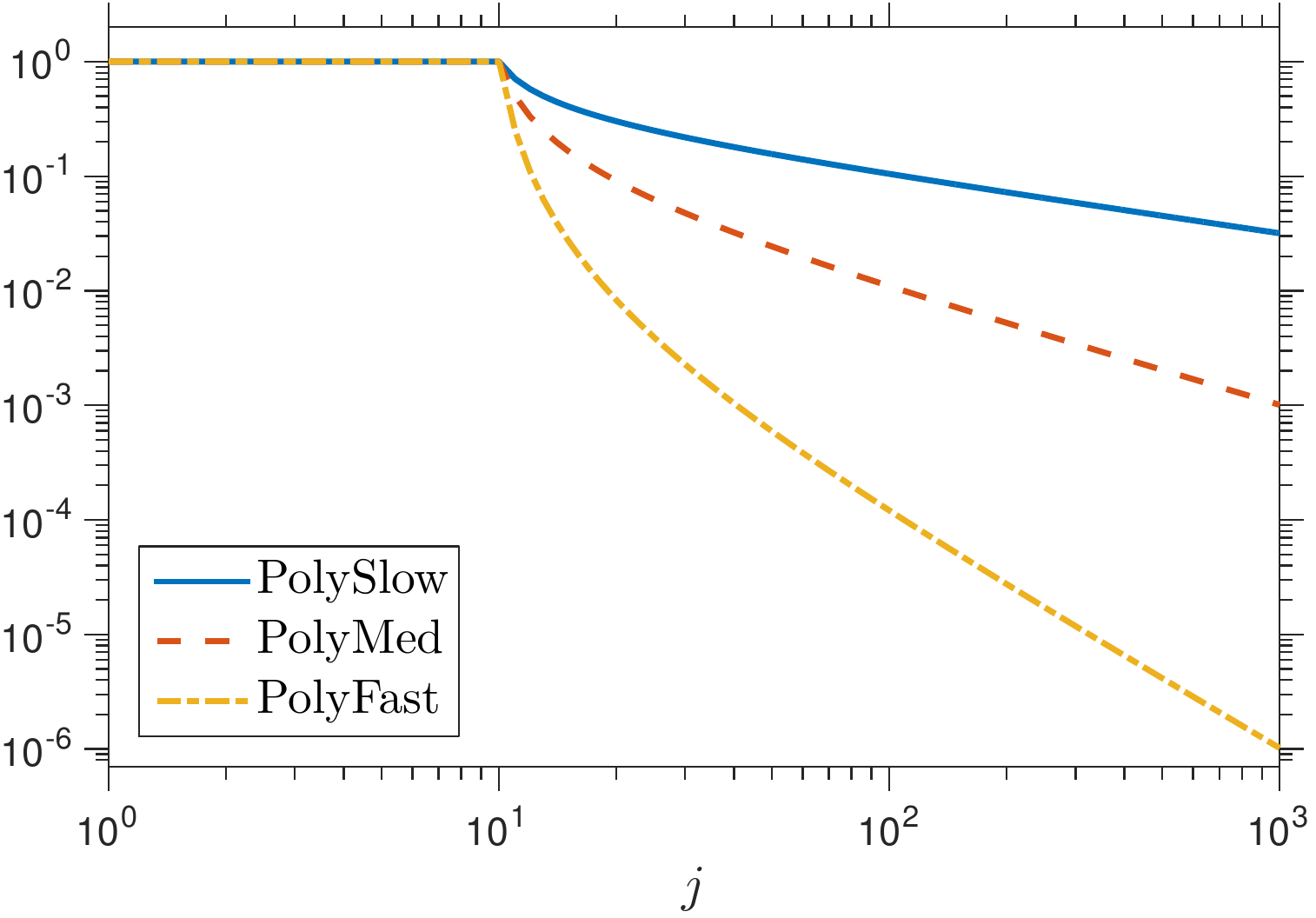}
\caption{Polynomial Decay}
\end{center}
\end{subfigure}

\begin{subfigure}{.45\textwidth}
\begin{center}
\includegraphics[height=1.5in]{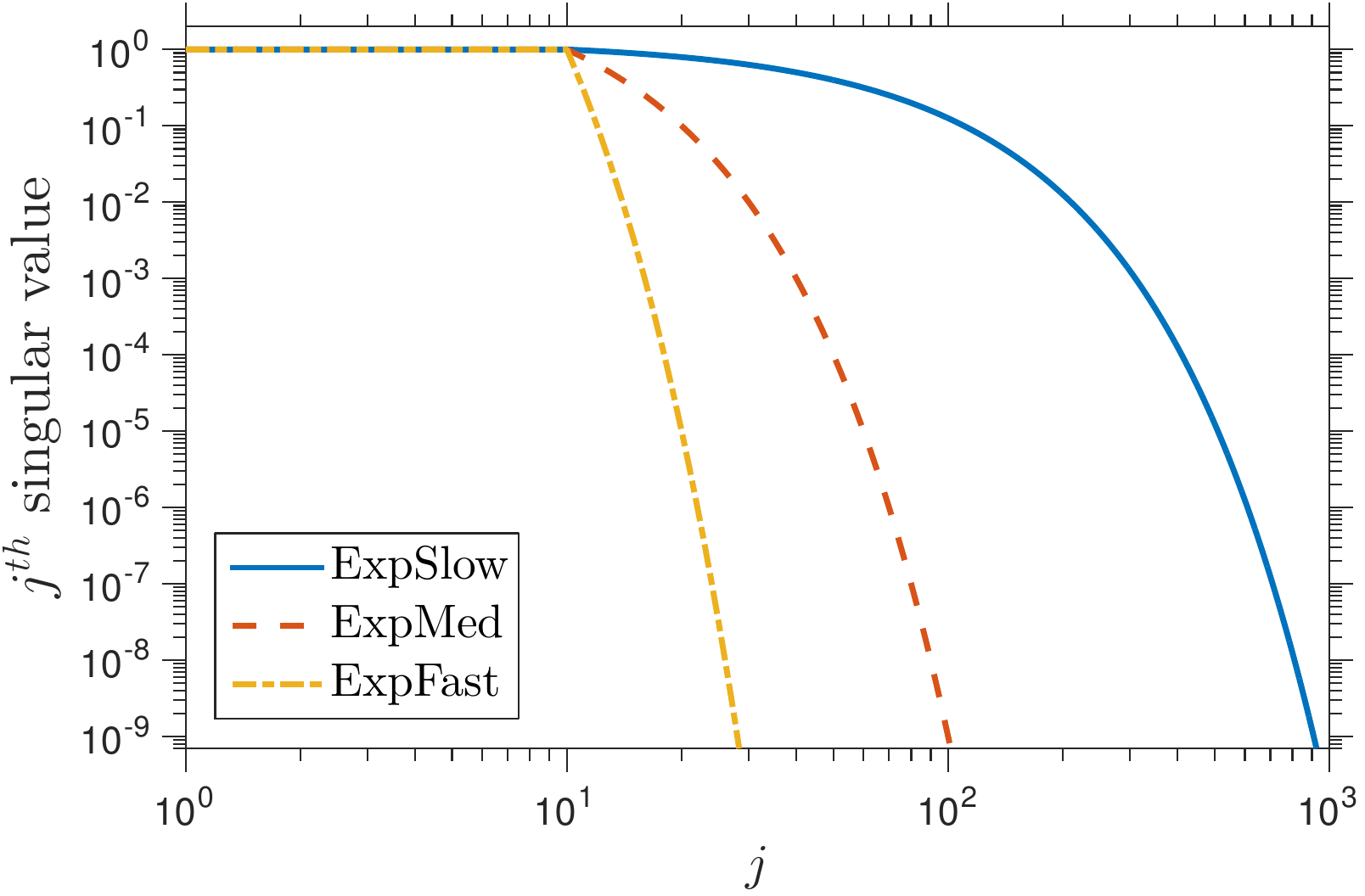}
\caption{Exponential Decay}
\end{center}
\end{subfigure}
\begin{subfigure}{.45\textwidth}
\begin{center}
\includegraphics[height=1.5in]{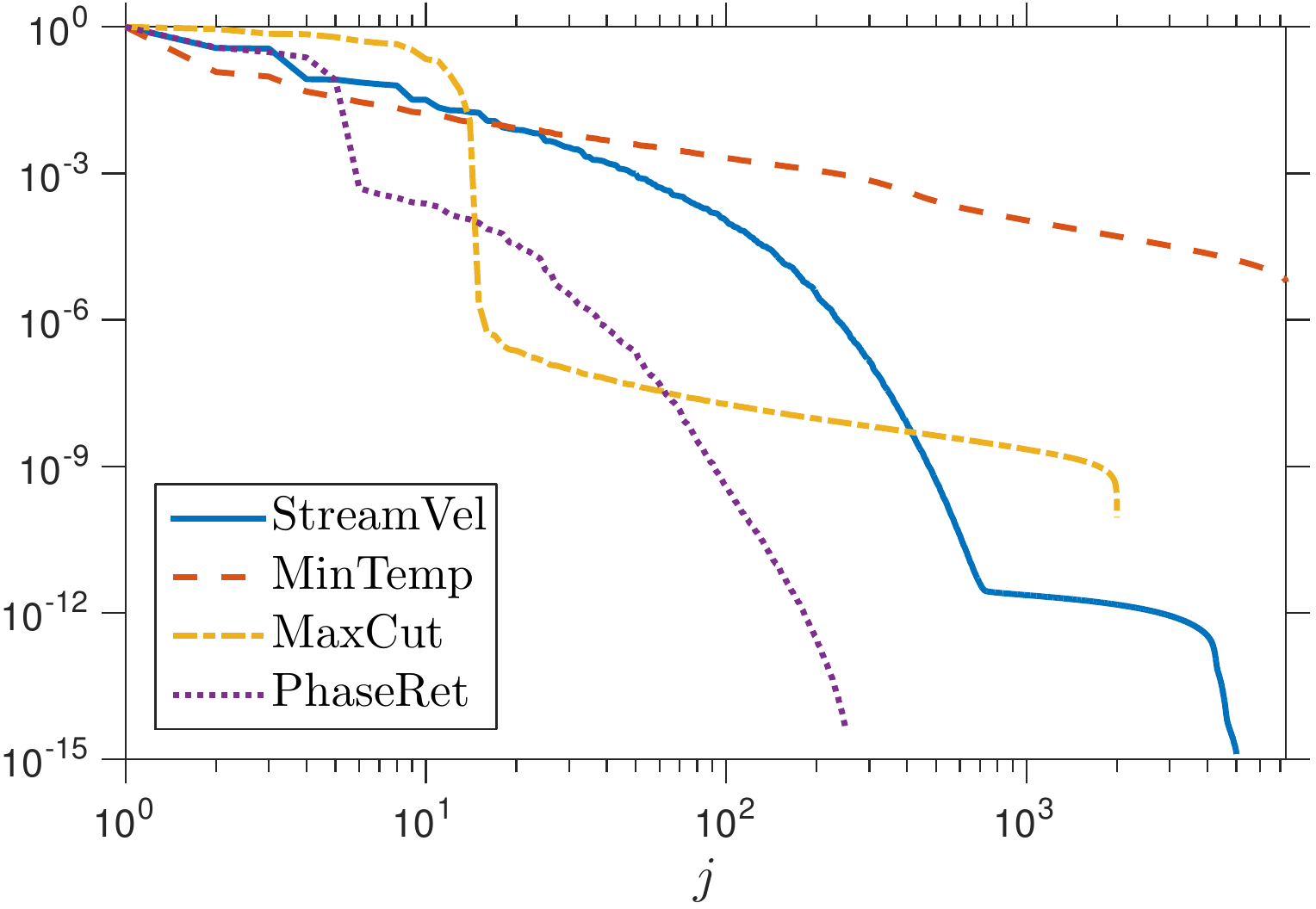}
\caption{Data Matrices}
\end{center}
\end{subfigure}
\caption{\textbf{Spectra of input matrices.} Plots of the singular value spectrum for an input
matrix from each of the synthetic classes (\texttt{LowRank}, \texttt{PolyDecay}, \texttt{ExpDecay} with effective rank $R = 10$) and from each of the real data classes (\texttt{MinTemp}, \texttt{StreamVel}, \texttt{MaxCut}, \texttt{PhaseRetrieval})
described in \cref{sec:input-matrix-examples}.}
\label{fig:spectra}
\end{center}
\end{figure}

\subsection{Insensitivity to the Dimension Reduction Map}
\label{app:universality}

Our first experiment is designed to show that the proposed rank-$r$ reconstruction
formula~\cref{eqn:Ahat-fixed} %
is insensitive to the distribution of the dimension reduction map at the oracle parameter values
(\cref{sec:oracle-error}) for synthetic input matrices.

We plot the oracle error for~\cref{eqn:Ahat-fixed}
as a function of storage budget $T$ for Gaussian, SSRFT,
and sparse dimension reduction maps.
See \cref{fig:universality-R5-S2,fig:universality-R5-Sinf,fig:universality-R10-S2,%
fig:universality-R10-Sinf,fig:universality-R20-S2,fig:universality-R20-Sinf}.
The curves are almost identical, except that the unitary SSRFT map performs slightly
\emph{better} than the others when the storage budget is very large.
Similar results hold for matrices drawn from real applications.

We have also found that the other reconstruction methods
[HMT11], [TYUC17], and [Upa16]
are insensitive to the choice of dimension reduction map.
These observations justify the transfer of theoretical and empirical results
for Gaussians to SSRFT and sparse dimension reduction maps.

\subsection{Achieving the Oracle Performance}
\label{app:oracle-performance}

Next, we show that we can almost achieve the oracle error
by implementing~\cref{eqn:Ahat-fixed} with sketch size
parameters chosen using our theory.

We perform the following experiment.
For synthetic input matrices, we compare the oracle performance (\cref{sec:oracle-error}) of our rank-$r$
approximation~\cref{eqn:Ahat-fixed} %
with its performance at the theoretical parameters proposed in~\cref{sec:sketch-size-theory}.
(In the formula~\cref{eqn:ks-flat} for a flat spectrum, we set the tail location $\hat{\varrho} = r$.)
We use Gaussian dimension reduction maps, but equivalent results hold for other types of dimension reduction maps.
Plots of the results appear in \cref{fig:theory-params-R5-S2,fig:theory-params-R5-Sinf,%
fig:theory-params-R10-S2,fig:theory-params-R10-Sinf,%
fig:theory-params-R20-S2,fig:theory-params-R20-Sinf}.

For most of the examples, the general parameter choice~\cref{eqn:ks-natural}
is able to deliver a relative error that tracks the oracle error closely.
The parameter choice~\cref{eqn:ks-flat} for a flat spectrum works somewhat
better for matrices whose spectral tail exhibits slow decay (\texttt{LowRankLowNoise}, \texttt{LowRankMedNoise}, \texttt{LowRankHiNoise}).  We also learn that the theoretical formulas are not entirely reliable
when the storage budget is very small.
Matrices with a lot of tail energy (\texttt{LowRankHiNoise}, \texttt{PolyDecaySlow})
are very hard to approximate accurately with a sketching algorithm,
so it is not surprising that our theoretical parameter choices fall short
of the oracle parameters in these cases.

\subsection{Algorithm Comparisons for Synthetic Instances}
\label{app:numerics-comparison-synth}

We compared all four of the reconstruction formulas
at the oracle parameters for a wide range of synthetic problem instances.
See~\cref{sec:alg-comparison} for details.

\Cref{fig:oracle-comparison-R5-S2,fig:oracle-comparison-R5-Sinf,%
fig:oracle-comparison-R20-S2,fig:oracle-comparison-R20-Sinf} contain the
results for matrices with effective rank $R = 5$ and $R = 20$
with relative error measured in Schatten $2$-norm and Schatten $\infty$-norm.

\subsection{Algorithm Comparisons for Real Data Instances}
\label{app:numerics-comparison-data}

In this experiment, we compared all four of the reconstruction
formulas at the oracle parameters and at theoretically chosen parameters
for several application examples.

Here are the details of the
\emph{a priori} parameter selections for the several methods.
For the proposed method~\cref{eqn:Ahat-fixed}, we use the ``natural''
parameter choice~\cref{eqn:ks-natural} that follows from our theoretical analysis.
The [Upa16] algorithm uses the same sketch---but lacks a comparable theory---%
so we instantiate it with the parameters~\cref{eqn:ks-natural}.
For [TYUC17], we assume that the
input matrix $\mtx{A} \in \F^{m \times n}$
is tall ($m \geq n$), and we use the theoretically motivated parameter values
$$
k = \max\{ r + \alpha + 1, \lfloor (T - n\alpha) / (m + 2n) \rfloor \}
\quad\text{and}\quad
\ell = \lfloor (T - km) / n\rfloor.
$$
This choice adapts the arguments in~\cite[Sec.~4.5.2]{TYUC17:Practical-Sketching}
to use the current definition of the storage budget $T$.
The [HMT11] algorithm does not have any free parameters.

\subsection{Flow-Field Reconstruction}
\label{app:flow-field-reconstruction}

\cref{fig:flow-field-time} illustrates the streamwise velocity field \texttt{StreamVel}
and its rank-$10$ approximation via~\cref{eqn:Ahat-fixed} using storage budget $T = 48(m+n)$
and the parameter choices~\cref{eqn:ks-natural}.  We see that the approximation
captures the large-scale features of the flow, although there are small errors
visible for the higher-order singular vectors.

We also performed the same experiment with the algorithms [HMT11], [Upa16],
and [TYUC17].  We set the truncation rank $r = 5$ and $r = 10$ to see
whether this change affects the behavior of the methods.
We plot the leading left singular vectors of the flow fields
in \cref{fig:flow-field-svec-hmt5,fig:flow-field-svec-hmt,%
fig:flow-field-svec-upa5,fig:flow-field-svec-upa,fig:flow-field-svec-tyuc}.
For truncation $r = 10$, all of the algorithms produce reasonable results.
Nevertheless, with algorithms [HMT11], [Upa16], and [TYUC17],
the singular vector estimates for rank $6, 7, 8, 9$ start to deviate
from the singular vectors of the original matrix.

When we change the truncation rank to $r = 5$,
our methods [TYUC17] and~\cref{eqn:Ahat-fixed} give exactly the same singular
vector estimates as with $r = 10$, by construction of the algorithm.
On the other hand, the methods [HMT11] and [Upa16] behave far worse when $r = 5$ than when $r = 10$.
This feature is both strange and dissatisfying.  By itself, this lack of stability
is already enough to disqualify the algorithms [HMT11] and [Upa16]
from practical use.

\section*{Acknowledgments}

The authors wish to thank Beverley McKeon and Sean Symon for providing the Navier--Stokes
simulation data and visualization software.  William North contributed the weather data.
The \texttt{NOAA\_OI\_SST\_V2} high-resolution SST data
are provided by NOAA/OAR/ESRL PSD, Boulder, Colorado, USA.
We are also grateful to the anonymous reviewers for a careful reading and thoughtful comments
that helped us to improve the manuscript.

\bibliographystyle{siamplain}

\begin{thebibliography}{10}

\bibitem{NOAA19:Sea-Surface-Temperature}
{\em Physical sciences division, national oceanic and atmospheric
  administration}, Feb. 2019, \url{https://www.esrl.noaa.gov/psd/}.

\bibitem{Ach03:Database-Friendly}
{\sc D.~Achlioptas}, {\em Database-friendly random projections:
  {J}ohnson--{L}indenstrauss with binary coins}, J. Comput. System Sci., 66
  (2003), pp.~671--687.

\bibitem{AC09:Fast-Johnson-Lindenstrauss}
{\sc N.~Ailon and B.~Chazelle}, {\em The fast {J}ohnson-{L}indenstrauss
  transform and approximate nearest neighbors}, SIAM J. Comput., 39 (2009),
  pp.~302--322, \url{https://doi.org/10.1137/060673096},
  \url{http://dx.doi.org/10.1137/060673096}.

\bibitem{AGMS02:Tracking-Join}
{\sc N.~Alon, P.~B. Gibbons, Y.~Matias, and M.~Szegedy}, {\em Tracking join and
  self-join sizes in limited storage}, J. Comput. System Sci., 64 (2002),
  pp.~719--747, \url{https://doi.org/10.1006/jcss.2001.1813},
  \url{https://doi.org/10.1006/jcss.2001.1813}.
\newblock Special issue on PODS 1999 (Philadelphia, PA).

\bibitem{AMS96:Space-Complexity}
{\sc N.~Alon, Y.~Matias, and M.~Szegedy}, {\em The space complexity of
  approximating the frequency moments}, in Proceedings of the {T}wenty-eighth
  {A}nnual {ACM} {S}ymposium on the {T}heory of {C}omputing ({P}hiladelphia,
  {PA}, 1996), ACM, New York, 1996, pp.~20--29,
  \url{https://doi.org/10.1145/237814.237823},
  \url{https://doi.org/10.1145/237814.237823}.

\bibitem{austin2016parallel}
{\sc W.~Austin, G.~Ballard, and T.~G. Kolda}, {\em Parallel tensor compression
  for large-scale scientific data}, in 2016 IEEE Intl. Symp. Parallel and
  Distributed Processing, 2016, pp.~912--922.

\bibitem{AT11:Randomized-Algorithms}
{\sc H.~Avron and S.~Toledo}, {\em Randomized algorithms for estimating the
  trace of an implicit symmetric positive semi-definite matrix}, J. ACM, 58
  (2011), pp.~Art. 8, 17, \url{https://doi.org/10.1145/1944345.1944349},
  \url{https://doi.org/10.1145/1944345.1944349}.

\bibitem{baker2014methodology}
{\sc A.~H. Baker, H.~Xu, J.~M. Dennis, M.~N. Levy, D.~Nychka, S.~A. Mickelson,
  J.~Edwards, M.~Vertenstein, and A.~Wegener}, {\em A methodology for
  evaluating the impact of data compression on climate simulation data}, in
  Proc. 23rd ACM Intl. Symp. High-Performance Parallel and Distributed
  Computing, 2014, pp.~203--214.

\bibitem{baurle2004modeling}
{\sc R.~Baurle}, {\em Modeling of high speed reacting flows: Established
  practices and future challenges}, in 42nd AIAA Aerospace Sciences Meeting and
  Exhibit, 2004, p.~267.

\bibitem{bejan2013convection}
{\sc A.~Bejan}, {\em Convection heat transfer}, John Wiley \& Sons, 2013.

\bibitem{Bha97:Matrix-Analysis}
{\sc R.~Bhatia}, {\em Matrix analysis}, vol.~169 of Graduate Texts in
  Mathematics, Springer-Verlag, New York, 1997,
  \url{https://doi.org/10.1007/978-1-4612-0653-8},
  \url{https://doi.org/10.1007/978-1-4612-0653-8}.

\bibitem{BDN15:Toward-Unified}
{\sc J.~Bourgain, S.~Dirksen, and J.~Nelson}, {\em Toward a unified theory of
  sparse dimensionality reduction in {E}uclidean space}, Geom. Funct. Anal., 25
  (2015), pp.~1009--1088, \url{https://doi.org/10.1007/s00039-015-0332-9},
  \url{http://dx.doi.org/10.1007/s00039-015-0332-9}.

\bibitem{BG13:Improved-Matrix}
{\sc C.~Boutsidis and A.~Gittens}, {\em Improved matrix algorithms via the
  subsampled randomized {H}adamard transform}, SIAM J. Matrix Anal. Appl., 34
  (2013), pp.~1301--1340, \url{https://doi.org/10.1137/120874540},
  \url{http://dx.doi.org/10.1137/120874540}.

\bibitem{BWZ16:Optimal-Principal}
{\sc C.~Boutsidis, D.~Woodruff, and P.~Zhong}, {\em Optimal principal component
  analysis in distributed and streaming models}.
\newblock Available at \url{http://arXiv.org/abs/1504.06729}, July 2016.

\bibitem{BWZ16:Optimal-Principal-STOC}
{\sc C.~Boutsidis, D.~Woodruff, and P.~Zhong}, {\em Optimal principal component
  analysis in distributed and streaming models}, in Proc. 48th ACM Symp. Theory
  of Computing (STOC 2016), Cambridge, MA, 2016.

\bibitem{Bra06:Fast-Low-Rank}
{\sc M.~Brand}, {\em Fast low-rank modifications of the thin singular value
  decomposition}, Linear Algebra and its Applications, 415 (2006), pp.~20 --
  30, \url{https://doi.org/https://doi.org/10.1016/j.laa.2005.07.021},
  \url{http://www.sciencedirect.com/science/article/pii/S0024379505003812}.
\newblock Special Issue on Large Scale Linear and Nonlinear Eigenvalue
  Problems.

\bibitem{calhoun2018exploring}
{\sc J.~Calhoun, F.~Cappello, L.~N. Olson, M.~Snir, and W.~D. Gropp}, {\em
  Exploring the feasibility of lossy compression for {PDE} simulations}, The
  International Journal of High Performance Computing Applications,  (2018),
  p.~1094342018762036, \url{https://doi.org/10.1177/1094342018762036}.

\bibitem{castruccio2016compressing}
{\sc S.~Castruccio and M.~G. Genton}, {\em Compressing an ensemble with
  statistical models: An algorithm for global 3{D} spatio-temporal
  temperature}, Technometrics, 58 (2016), pp.~319--328.

\bibitem{CERN19:What-Record}
{\sc CERN}, {\em Processing: What to record}, Feb. 2019,
  \url{https://home.cern/science/computing/processing-what-record}.

\bibitem{CW09:Numerical-Linear}
{\sc K.~L. Clarkson and D.~P. Woodruff}, {\em Numerical linear algebra in the
  streaming model}, in Proc. 41st ACM Symp. Theory of Computing (STOC),
  Bethesda, 2009.

\bibitem{CW13:Low-Rank-Approximation}
{\sc K.~L. Clarkson and D.~P. Woodruff}, {\em Low rank approximation and
  regression in input sparsity time}, in Proc. 45th {ACM} Symp. Theory of
  Computing (STOC), ACM, New York, 2013, pp.~81--90,
  \url{https://doi.org/10.1145/2488608.2488620},
  \url{http://dx.doi.org/10.1145/2488608.2488620}.

\bibitem{Coh16:Nearly-Tight}
{\sc M.~Cohen}, {\em Nearly tight oblivious subspace embeddings by trace
  inequalities}, in Proc. 27th ACM-SIAM Symp. Discrete Algorithms (SODA),
  Arlington, Jan. 2016, pp.~278--287.

\bibitem{CEM+15:Dimensionality-Reduction}
{\sc M.~B. Cohen, S.~Elder, C.~Musco, C.~Musco, and M.~Persu}, {\em
  Dimensionality reduction for k-means clustering and low rank approximation},
  in Proc. 47th ACM Symp. Theory of Computing (STOC), ACM, 2015, pp.~163--172.

\bibitem{Dix83:Estimating-Extremal}
{\sc J.~D. Dixon}, {\em Estimating extremal eigenvalues and condition numbers
  of matrices}, SIAM J. Numer. Anal., 20 (1983), pp.~812--814,
  \url{https://doi.org/10.1137/0720053}, \url{https://doi.org/10.1137/0720053}.

\bibitem{drake2014climate}
{\sc J.~B. Drake}, {\em Climate modeling for scientists and engineers}, SIAM,
  2014.

\bibitem{FVR16:Dimensionality-Reduction}
{\sc D.~Feldman, M.~Volkov, and D.~Rus}, {\em Dimensionality reduction of
  massive sparse datasets using coresets}, in Adv. Neural Information
  Processing Systems 29 (NIPS), 2016.

\bibitem{FKV04:Fast-Monte-Carlo}
{\sc A.~Frieze, R.~Kannan, and S.~Vempala}, {\em Fast {M}onte-{C}arlo
  algorithms for finding low-rank approximations}, J. Assoc. Comput. Mach., 51
  (2004), pp.~1025--1041, \url{https://doi.org/10.1145/1039488.1039494},
  \url{http://dx.doi.org/10.1145/1039488.1039494}.

\bibitem{garnier2009large}
{\sc E.~Garnier, N.~Adams, and P.~Sagaut}, {\em Large eddy simulation for
  compressible flows}, Springer Science \& Business Media, 2009.

\bibitem{GLPW16:Frequent-Directions}
{\sc M.~Ghashami, E.~Liberty, J.~M. Phillips, and D.~P. Woodruff}, {\em
  Frequent directions: simple and deterministic matrix sketching}, SIAM J.
  Comput., 45 (2016), pp.~1762--1792, \url{https://doi.org/10.1137/15M1009718},
  \url{https://doi-org.clsproxy.library.caltech.edu/10.1137/15M1009718}.

\bibitem{GW95:Improved-Approximation}
{\sc M.~X. Goemans and D.~Williamson}, {\em Improved approximation algorithms
  for maximum cut and satisfiability problems using semidefinite programming},
  J. Assoc. Comput. Mach., 42 (1995), pp.~1115--1145.

\bibitem{GT18:Improved-Bounds}
{\sc S.~Gratton and D.~Titley-Peloquin}, {\em Improved bounds for small-sample
  estimation}, SIAM J. Matrix Anal. Appl., 39 (2018), pp.~922--931,
  \url{https://doi.org/10.1137/17M1137541},
  \url{https://doi.org/10.1137/17M1137541}.

\bibitem{GW00:Algorithm799-Revolve}
{\sc A.~Griewank and A.~Walther}, {\em Algorithm 799: Revolve: An
  implementation of checkpointing for the reverse or adjoint mode of
  computational differentiation}, ACM Trans. Math. Softw., 26 (2000),
  pp.~19--45, \url{https://doi.org/10.1145/347837.347846},
  \url{http://doi.acm.org/10.1145/347837.347846}.

\bibitem{guinness2017compression}
{\sc J.~Guinness and D.~Hammerling}, {\em Compression and conditional emulation
  of climate model output}, J. Amer. Stat. Assoc.,  (2017).

\bibitem{HMST11:Algorithm-Principal}
{\sc N.~Halko, P.-G. Martinsson, Y.~Shkolnisky, and M.~Tygert}, {\em An
  algorithm for the principal component analysis of large data sets}, SIAM J.
  Sci. Comput., 33 (2011), pp.~2580--2594,
  \url{https://doi.org/10.1137/100804139},
  \url{https://doi-org.clsproxy.library.caltech.edu/10.1137/100804139}.

\bibitem{HMT11:Finding-Structure}
{\sc N.~Halko, P.~G. Martinsson, and J.~A. Tropp}, {\em Finding structure with
  randomness: probabilistic algorithms for constructing approximate matrix
  decompositions}, SIAM Rev., 53 (2011), pp.~217--288.

\bibitem{Hig89:Matrix-Nearness}
{\sc N.~J. Higham}, {\em Matrix nearness problems and applications}, in
  Applications of matrix theory ({B}radford, 1988), Oxford Univ. Press, New
  York, 1989, pp.~1--27.

\bibitem{HWS06:Optimal-Memory-Reduced}
{\sc M.~Hinze, A.~Walther, and J.~Sternberg}, {\em An optimal memory-reduced
  procedure for calculating adjoints of the instationary navier-stokes
  equations}, Optimal Control Applications and Methods, 27 (2006), pp.~19--40,
  \url{https://doi.org/10.1002/oca.771},
  \url{https://onlinelibrary.wiley.com/doi/abs/10.1002/oca.771},
  \url{https://arxiv.org/abs/https://onlinelibrary.wiley.com/doi/pdf/10.1002/oca.771}.

\bibitem{HCO+15:Solving-Ptychography}
{\sc R.~Horstmeyer, R.~Y. Chen, X.~Ou, B.~Ames, J.~A. Tropp, and C.~Yang}, {\em
  Solving ptychography with a convex relaxation}, New J. Physics, 17 (2015),
  p.~053044.

\bibitem{Hut89:Stochastic-Estimator}
{\sc M.~F. Hutchinson}, {\em A stochastic estimator of the trace of the
  influence matrix for {L}aplacian smoothing splines}, Comm. Statist.
  Simulation Comput., 18 (1989), pp.~1059--1076,
  \url{https://doi.org/10.1080/03610918908812806},
  \url{https://doi.org/10.1080/03610918908812806}.

\bibitem{IM98:Approximate-Nearest}
{\sc P.~Indyk and R.~Motwani}, {\em Approximate nearest neighbors: Towards
  removing the curse of dimensionality}, in Proc. 30th ACM Symp. Theory of
  Computing (STOC), STOC '98, New York, NY, USA, 1998, ACM, pp.~604--613,
  \url{https://doi.org/10.1145/276698.276876},
  \url{http://doi.acm.org/10.1145/276698.276876}.

\bibitem{JL84:Extensions-Lipschitz}
{\sc W.~B. Johnson and J.~Lindenstrauss}, {\em Extensions of {L}ipszhitz
  mapping into {H}ilbert space}, Contemp. Math., 26 (1984), pp.~189--206.

\bibitem{Jol02:Principal-Component}
{\sc I.~T. Jolliffe}, {\em Principal component analysis}, Springer Series in
  Statistics, Springer-Verlag, New York, second~ed., 2002.

\bibitem{LNW14:Turnstile-Streaming}
{\sc Y.~Li, H.~L. Nguyen, and D.~P. Woodruff}, {\em Turnstile streaming
  algorithms might as well be linear sketches}, in Proc. 46th ACM Symp.
  {T}heory of {C}omputing (STOC), ACM, New York, 2014, pp.~174--183.

\bibitem{Lib09:Accelerated-Dense}
{\sc E.~Liberty}, {\em Accelerated dense random projections}, PhD thesis, Yale
  Univ., New Haven, 2009.

\bibitem{LWM18:Error-Estimation}
{\sc M.~Lopes, S.~Wang, and M.~Mahoney}, {\em Error estimation for randomized
  least-squares algorithms via the bootstrap}, in Proc. 32nd Ann. Conf. Neural
  Information Processing Systems, Vancouver, Dec. 2018.

\bibitem{Mah11:Randomized-Algorithms}
{\sc M.~W. Mahoney}, {\em Randomized algorithms for matrices and data}, Found.
  Trends Mach. Learning, 3 (2011), pp.~123--224.

\bibitem{malik2018principal}
{\sc M.~R. Malik, B.~J. Isaac, A.~Coussement, P.~J. Smith, and A.~Parente},
  {\em Principal component analysis coupled with nonlinear regression for
  chemistry reduction}, Combustion and Flame, 187 (2018), pp.~30--41.

\bibitem{Mar19:Randomized-Methods}
{\sc P.-G. Martinsson}, {\em Randomized methods for matrix computations}.
\newblock Available at \url{http://arXiv.org/abs/1607.01649v3}, Feb. 2019.

\bibitem{MRT11:Randomized-Algorithm}
{\sc P.-G. Martinsson, V.~Rokhlin, and M.~Tygert}, {\em A randomized algorithm
  for the decomposition of matrices}, Appl. Comput. Harmon. Anal., 30 (2011),
  pp.~47--68, \url{https://doi.org/10.1016/j.acha.2010.02.003},
  \url{http://dx.doi.org/10.1016/j.acha.2010.02.003}.

\bibitem{MM13:Low-Distortion-Subspace}
{\sc X.~Meng and M.~W. Mahoney}, {\em Low-distortion subspace embeddings in
  input-sparsity time and applications to robust linear regression}, in Proc.
  45th {ACM} Symp. Theory of Computing (STOC), ACM, New York, 2013,
  pp.~91--100, \url{https://doi.org/10.1145/2488608.2488621},
  \url{http://dx.doi.org/10.1145/2488608.2488621}.

\bibitem{menter2003ten}
{\sc F.~R. Menter, M.~Kuntz, and R.~Langtry}, {\em Ten years of industrial
  experience with the sst turbulence model}, Turbulence, heat and mass
  transfer, 4 (2003), pp.~625--632.

\bibitem{MSTM13:Model-Based-Scaling}
{\sc R.~Moarref, A.~S. Sharma, J.~A. Tropp, and B.~J. McKeon}, {\em Model-based
  scaling and prediction of the streamwise energy intensity in high-{R}eynolds
  number turbulent channels}, J. Fluid Mech., 734 (2013), pp.~275--316.
\newblock Available at \url{http://arXiv.org/abs/1302.1594}.

\bibitem{Mut08:Data-Streams}
{\sc S.~Muthukrishnan}, {\em Data streams: algorithms and applications}, Found.
  Trends Theor. Comput. Sci., 1 (2005), pp.~117--236.

\bibitem{NN13:OSNAP-Faster}
{\sc J.~Nelson and H.~L. Nguyen}, {\em O{SNAP}: faster numerical linear algebra
  algorithms via sparser subspace embeddings}, in 2013 {IEEE} 54th Symp.
  Foundations of {C}omputer {S}cience (FOCS), IEEE Computer Soc., Los Alamitos,
  CA, 2013, pp.~117--126, \url{https://doi.org/10.1109/FOCS.2013.21},
  \url{http://dx.doi.org/10.1109/FOCS.2013.21}.

\bibitem{NN14:Lower-Bounds}
{\sc J.~Nelson and H.~L. Nguyen}, {\em Lower bounds for oblivious subspace
  embeddings}, in Automata, languages, and programming. {P}art {I}, vol.~8572
  of Lecture Notes in Comput. Sci., Springer, Heidelberg, 2014, pp.~883--894,
  \url{https://doi.org/10.1007/978-3-662-43948-7\_73},
  \url{http://dx.doi.org/10.1007/978-3-662-43948-7_73}.

\bibitem{PRTV00:Latent-Semantic}
{\sc C.~H. Papadimitriou, P.~Raghavan, H.~Tamaki, and S.~Vempala}, {\em Latent
  semantic indexing: a probabilistic analysis}, J. Comput. System Sci., 61
  (2000), pp.~217--235, \url{https://doi.org/10.1006/jcss.2000.1711},
  \url{http://dx.doi.org/10.1006/jcss.2000.1711}.
\newblock Special issue on the Seventeenth ACM SIGACT-SIGMOD-SIGART Symposium
  on Principles of Database Systems (Seattle, WA, 1998).

\bibitem{patankar1980numerical}
{\sc S.~Patankar}, {\em Numerical heat transfer and fluid flow}, CRC press,
  1980.

\bibitem{RRS+02:Improved-In-Situ}
{\sc R.~W. Reynolds, N.~A. Rayner, T.~M. Smith, D.~C. Stokes, and W.~Wang},
  {\em An improved in situ and satellite sst analysis for climate}, J. Climate,
  15 (2002), pp.~1609--1625.

\bibitem{RSL+07:Daily-High-Resolution}
{\sc R.~W. Reynolds, T.~M. Smith, C.~Liu, D.~B. Chelton, K.~S. Casey, and M.~G.
  Schlax}, {\em Daily high-resolution-blended analyses for sea surface
  temperature}, J. Climate, 20 (2007), pp.~5473--5496.

\bibitem{RST09:Randomized-Algorithm}
{\sc V.~Rokhlin, A.~Szlam, and M.~Tygert}, {\em A randomized algorithm for
  principal component analysis}, SIAM J. Matrix Anal. Appl., 31 (2009),
  pp.~1100--1124, \url{https://doi.org/10.1137/080736417},
  \url{https://doi-org.clsproxy.library.caltech.edu/10.1137/080736417}.

\bibitem{RA15:Improved-Bounds}
{\sc F.~Roosta-Khorasani and U.~Ascher}, {\em Improved bounds on sample size
  for implicit matrix trace estimators}, Found. Comput. Math., 15 (2015),
  pp.~1187--1212, \url{https://doi.org/10.1007/s10208-014-9220-1},
  \url{https://doi.org/10.1007/s10208-014-9220-1}.

\bibitem{sagaut2006large}
{\sc P.~Sagaut}, {\em Large eddy simulation for incompressible flows: an
  introduction}, Springer Science \& Business Media, 2006.

\bibitem{Sar06:Improved-Approximation}
{\sc T.~Sarl{\'o}s}, {\em Improved approximation algorithms for large matrices
  via random projections}, in Proc. 47th Ann. IEEE Symposium on Foundations of
  Computer Science (FOCS), Berkeley, 2006.

\bibitem{son2014data}
{\sc S.~W. Son, Z.~Chen, W.~Hendrix, A.~Agrawal, W.-k. Liao, and A.~Choudhary},
  {\em Data compression for the exascale computing era-survey}, Supercomputing
  frontiers and innovations, 1 (2014), pp.~76--88.

\bibitem{SGTU19:Low-Rank-Tucker}
{\sc Y.~Sun, Y.~Guo, J.~A. Tropp, and M.~Udell}, {\em Low-rank tucker
  approximation of a tensor from streaming data}.
\newblock Manuscript, Oct. 2018.

\bibitem{sun2018tensor}
{\sc Y.~Sun, Y.~Guo, J.~A. Tropp, and M.~Udell}, {\em Tensor random projection
  for low memory dimension reduction}, in NeurIPS Workshop on Relational
  Representation Learning, 2018,
  \url{https://r2learning.github.io/assets/papers/CameraReadySubmission\%2041.pdf}

\bibitem{Tro11:Improved-Analysis}
{\sc J.~A. Tropp}, {\em Improved analysis of the subsampled randomized
  {H}adamard transform}, Adv. Adapt. Data Anal., 3 (2011), pp.~115--126,
  \url{https://doi.org/10.1142/S1793536911000787},
  \url{http://dx.doi.org/10.1142/S1793536911000787}.

\bibitem{TYUC17:Fixed-Rank-Approximation}
{\sc J.~A. Tropp, A.~Yurtsever, M.~Udell, and V.~Cevher}, {\em Fixed-rank
  approximation of a positive-semidefinite matrix from streaming data}, in Adv.
  Neural Information Processing Systems 30 (NIPS), Long Beach, Dec. 2017.

\bibitem{TYUC17:Practical-Sketching}
{\sc J.~A. Tropp, A.~Yurtsever, M.~Udell, and V.~Cevher}, {\em Practical
  sketching algorithms for low-rank matrix approximation}, SIAM J. Matrix Anal.
  Appl., 38 (2017), pp.~1454--1485.

\bibitem{TYUC17:Randomized-Single-View-TR}
{\sc J.~A. Tropp, A.~Yurtsever, M.~Udell, and V.~Cevher}, {\em Randomized
  single-view algorithms for low-rank matrix approximation}, ACM Report
  2017-01, Caltech, Pasadena, Jan. 2017.
\newblock Available at \url{http://arXiv.org/abs/1609.00048}, v1.

\bibitem{Upa16:Fast-Space-Optimal}
{\sc J.~Upadhyay}, {\em Fast and space-optimal low-rank factorization in the
  streaming model with application in differential privacy}.
\newblock Available at \url{http://arXiv.org/abs/1604.01429}, Apr. 2016.

\bibitem{woodring2011revisiting}
{\sc J.~Woodring, S.~Mniszewski, C.~Brislawn, D.~DeMarle, and J.~Ahrens}, {\em
  Revisiting wavelet compression for large-scale climate data using {JPEG} 2000
  and ensuring data precision}, in 2011 IEEE Symp. Large Data Analysis and
  Visualization (LDAV), 2011, pp.~31--38.

\bibitem{Woo14:Sketching-Tool}
{\sc D.~P. Woodruff}, {\em Sketching as a tool for numerical linear algebra},
  Found. Trends Theor. Comput. Sci., 10 (2014), pp.~iv+157.

\bibitem{WLRT08:Fast-Randomized}
{\sc F.~Woolfe, E.~Liberty, V.~Rokhlin, and M.~Tygert}, {\em A fast randomized
  algorithm for the approximation of matrices}, Appl. Comput. Harmon. Anal., 25
  (2008), pp.~335--366.

\bibitem{YUTC17:Sketchy-Decisions}
{\sc A.~Yurtsever, M.~Udell, J.~A. Tropp, and V.~Cevher}, {\em Sketchy
  decisions: Convex low-rank matrix optimization with optimal storage}, in 2017
  Intl. Conf. Artificial Intelligence and Statistics (AISTATS), 2017.

\bibitem{zhou2014decomposition}
{\sc G.~Zhou, A.~Cichocki, and S.~Xie}, {\em Decomposition of big tensors with
  low multilinear rank}, arXiv preprint arXiv:1412.1885,  (2014).

\bibitem{RPW18:Geometric-Subspace}
{\sc R.~Zimmermann, B.~Pedersdorf, and K.~Willcox}, {\em Geometric subspace
  updates with applications to online adaptive nonlinear model reduction}, SIAM
  J. Matrix Anal. Appl., 39 (2018), pp.~234--261.

\end{thebibliography}

\newpage

\begin{figure}[htp!]
\vspace{0.5in}
\begin{center}
\begin{subfigure}{.325\textwidth}
\begin{center}
\includegraphics[height=1.5in]{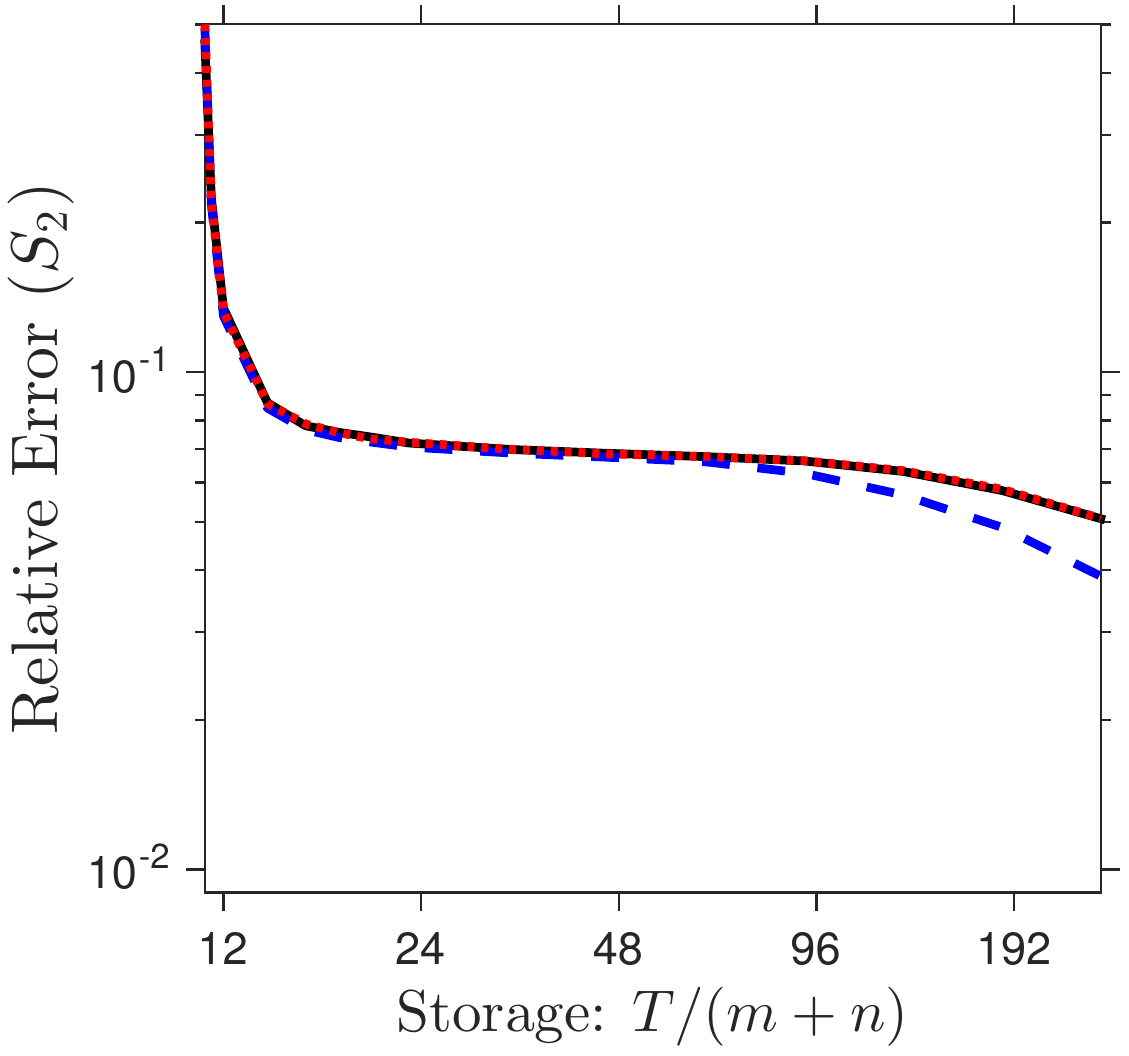}
\caption{\texttt{LowRankHiNoise}}
\end{center}
\end{subfigure}
\begin{subfigure}{.325\textwidth}
\begin{center}
\includegraphics[height=1.5in]{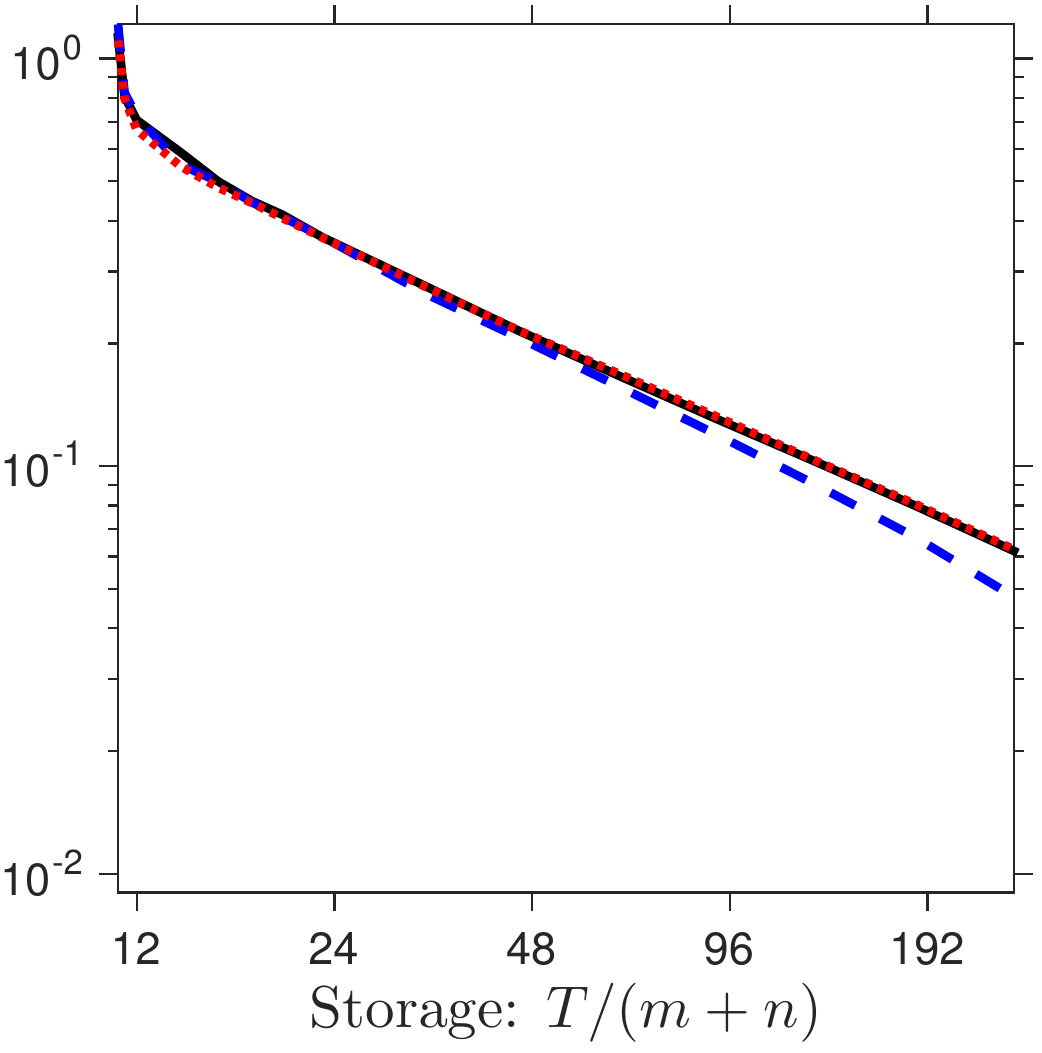}
\caption{\texttt{LowRankMedNoise}}
\end{center}
\end{subfigure}
\begin{subfigure}{.325\textwidth}
\begin{center}
\includegraphics[height=1.5in]{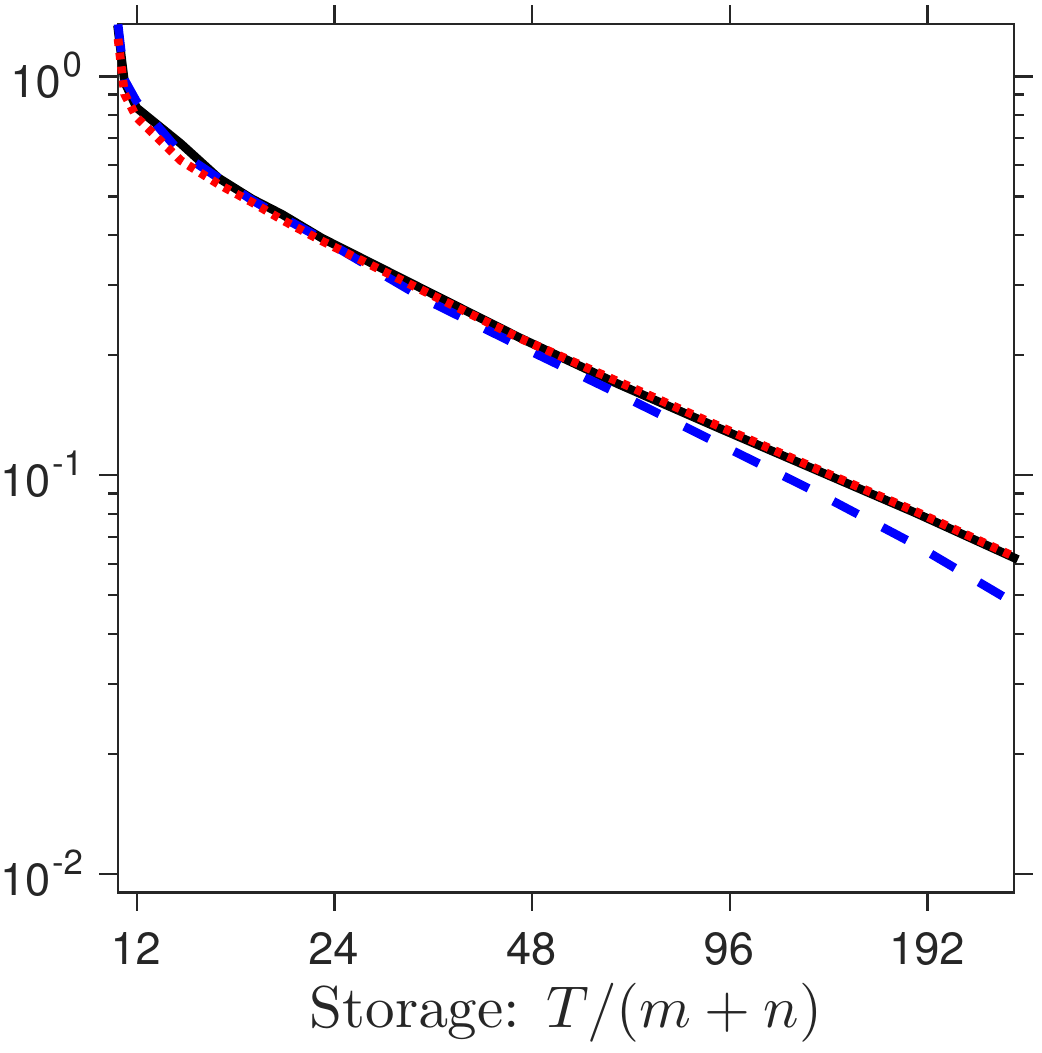}
\caption{\texttt{LowRankLowNoise}}
\end{center}
\end{subfigure}
\end{center}

\vspace{.5em}

\begin{center}
\begin{subfigure}{.325\textwidth}
\begin{center}
\includegraphics[height=1.5in]{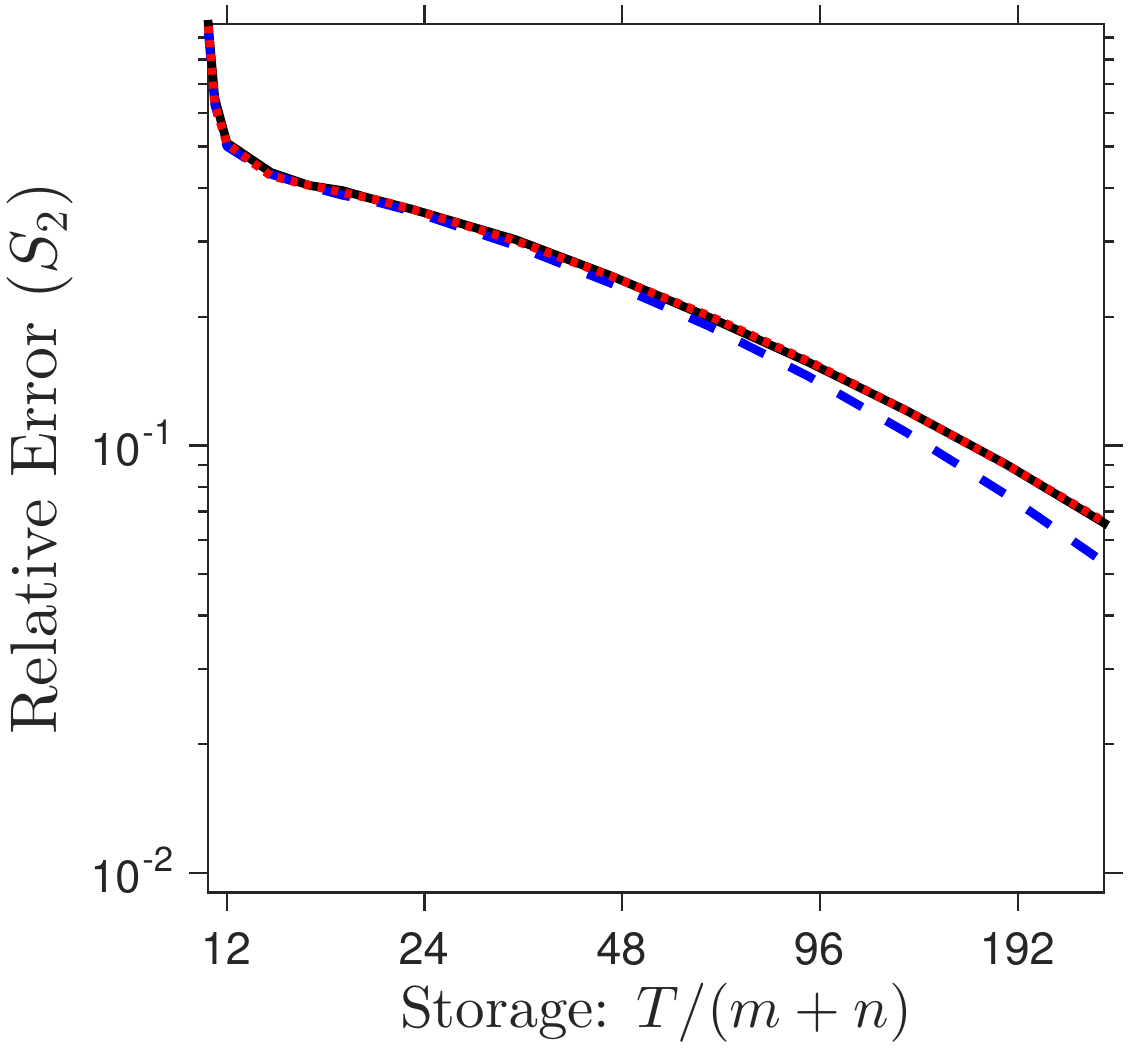}
\caption{\texttt{PolyDecaySlow}}
\end{center}
\end{subfigure}
\begin{subfigure}{.325\textwidth}
\begin{center}
\includegraphics[height=1.5in]{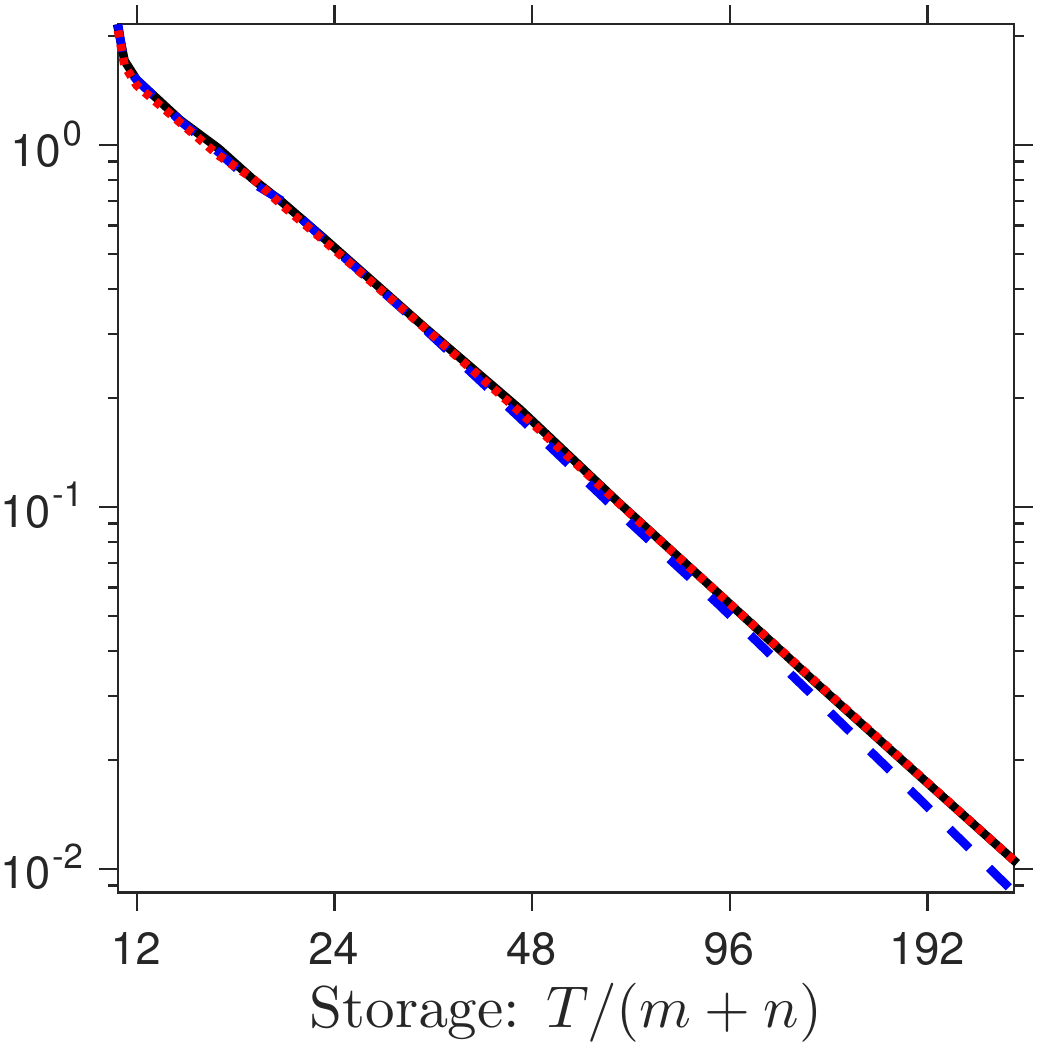}
\caption{\texttt{PolyDecayMed}}
\end{center}
\end{subfigure}
\begin{subfigure}{.325\textwidth}
\begin{center}
\includegraphics[height=1.5in]{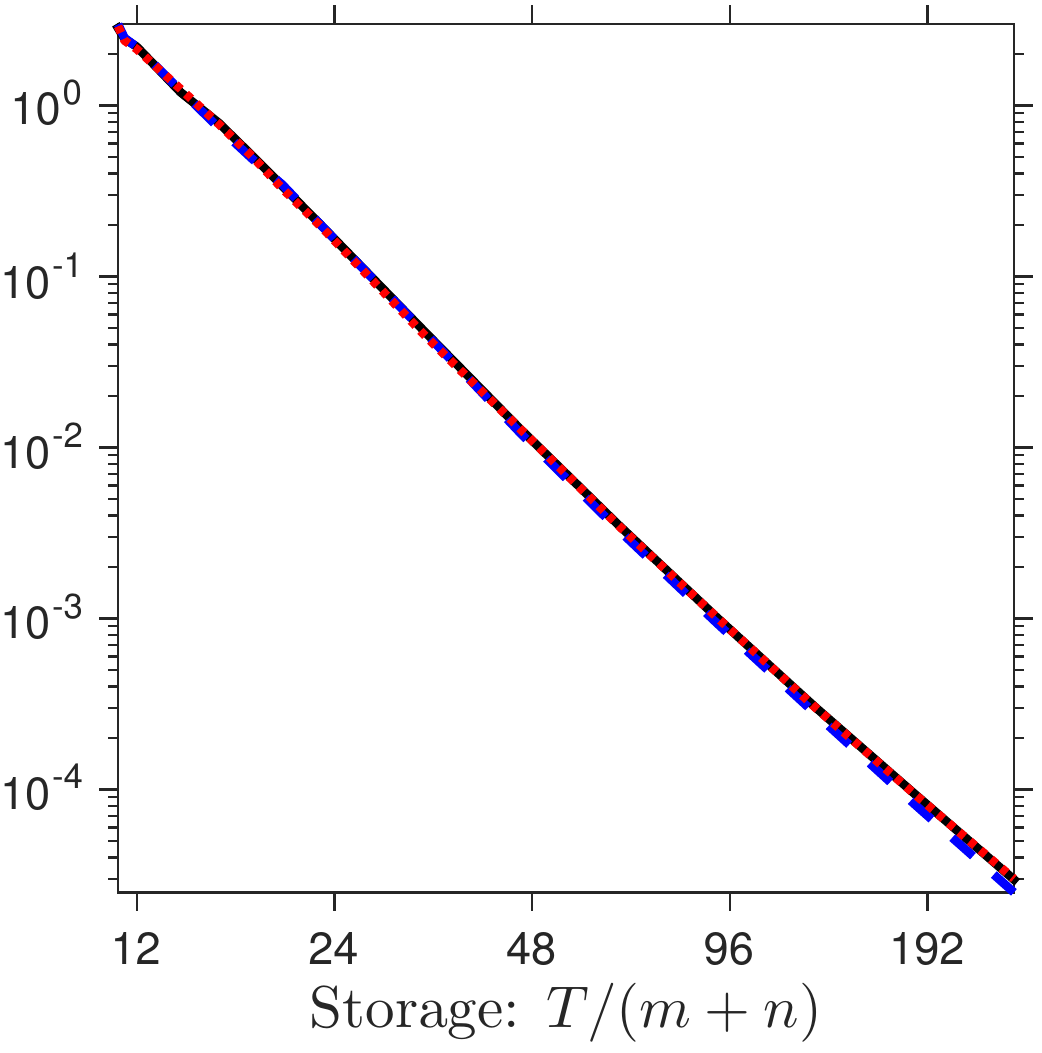}
\caption{\texttt{PolyDecayFast}}
\end{center}
\end{subfigure}
\end{center}

\vspace{0.5em}

\begin{center}
\begin{subfigure}{.325\textwidth}
\begin{center}
\includegraphics[height=1.5in]{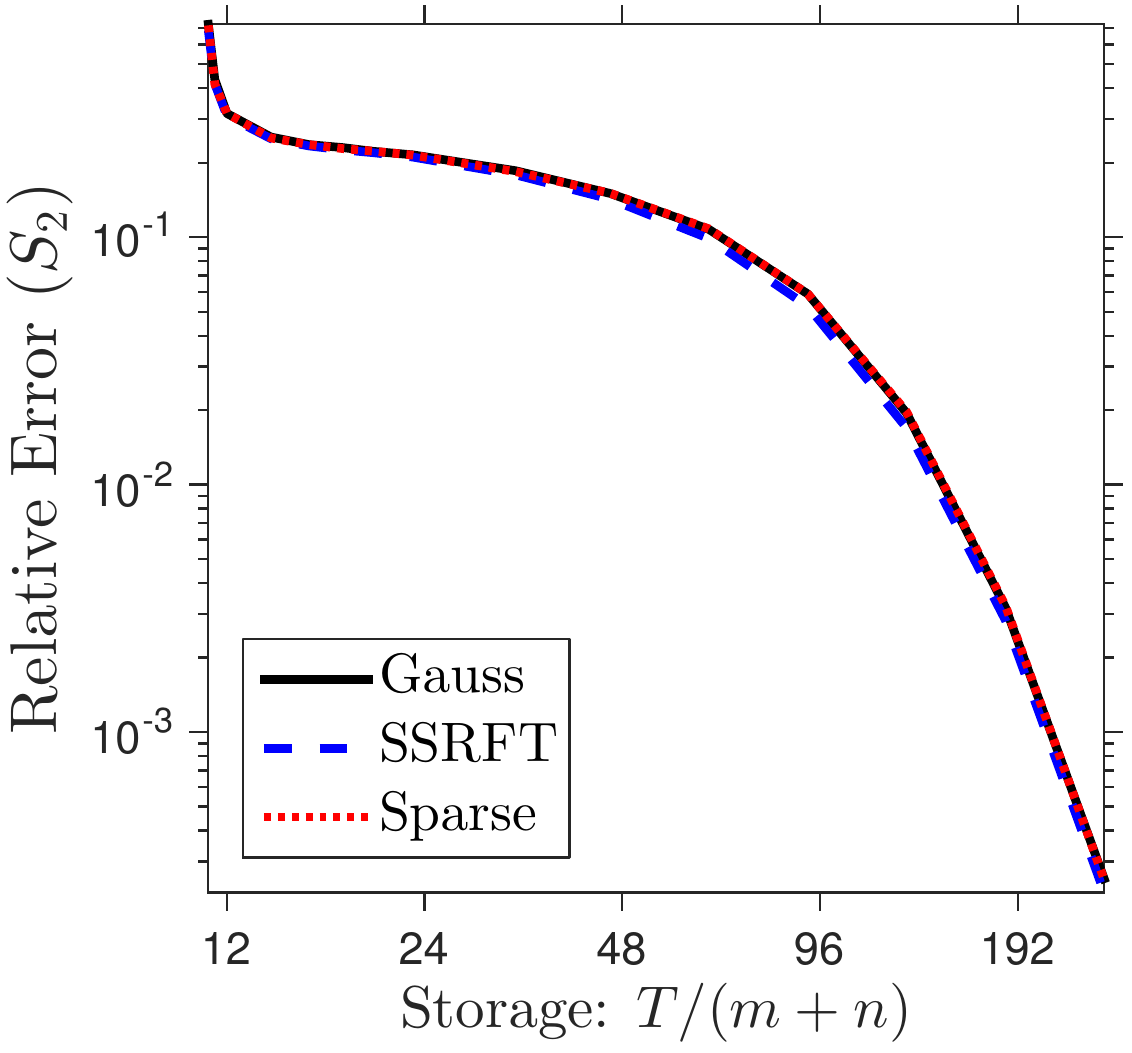}
\caption{\texttt{ExpDecaySlow}}
\end{center}
\end{subfigure}
\begin{subfigure}{.325\textwidth}
\begin{center}
\includegraphics[height=1.5in]{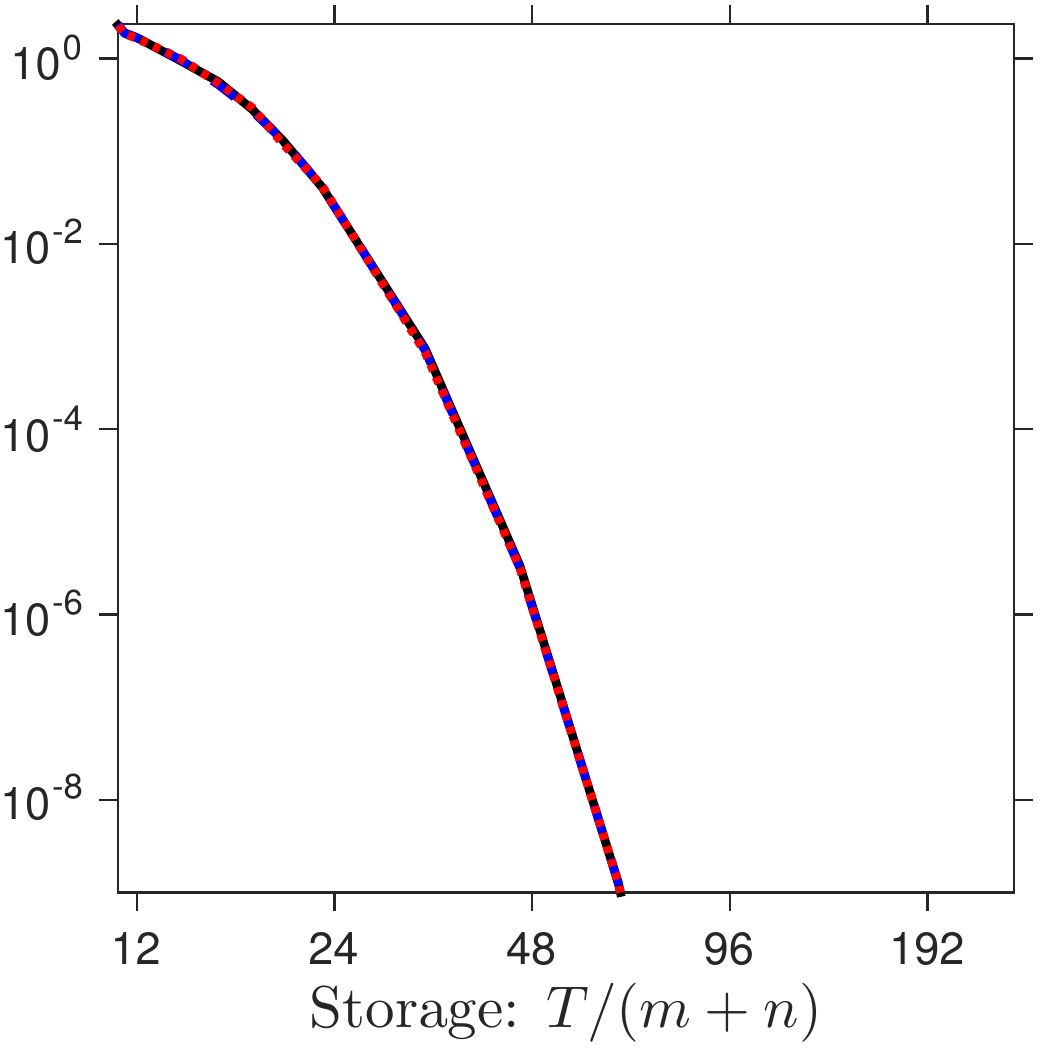}
\caption{\texttt{ExpDecayMed}}
\end{center}
\end{subfigure}
\begin{subfigure}{.325\textwidth}
\begin{center}
\includegraphics[height=1.5in]{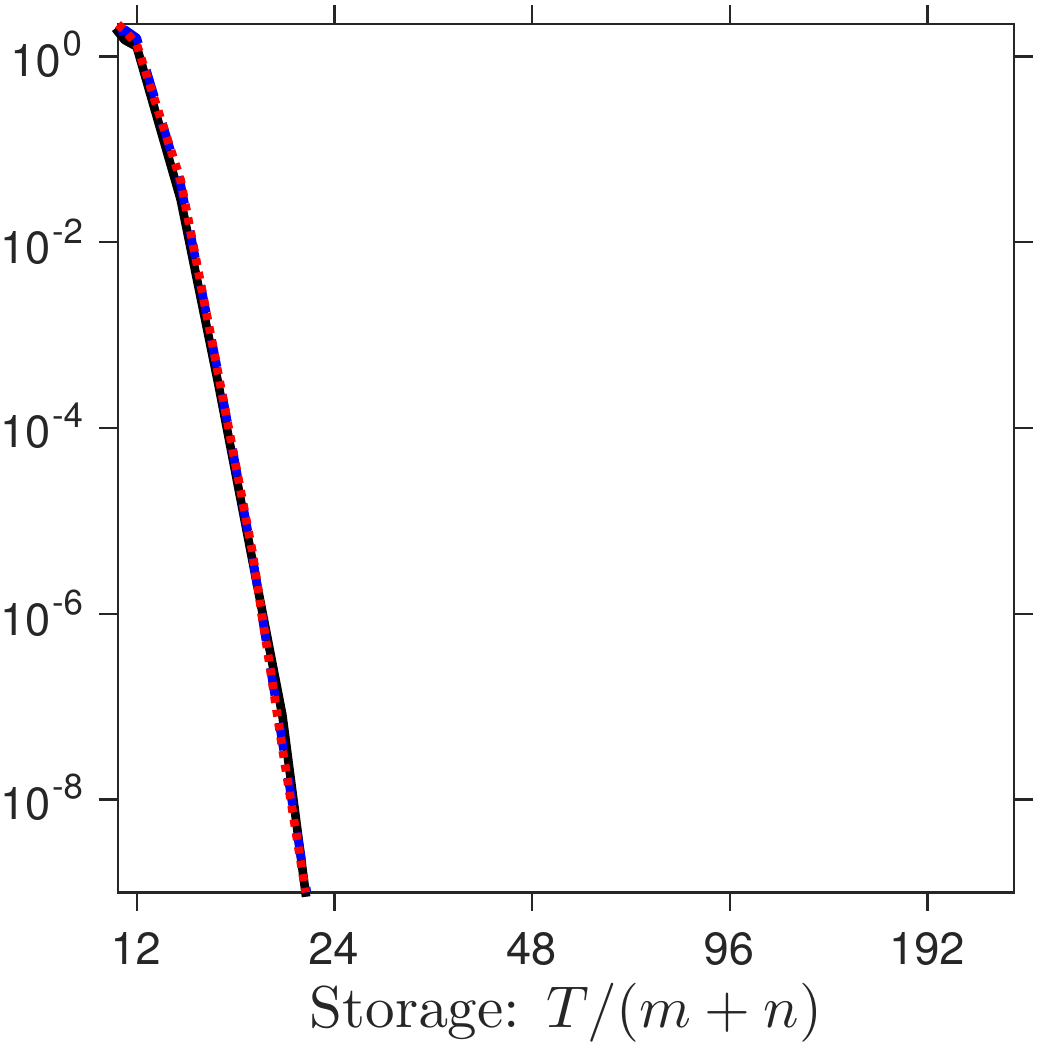}
\caption{\texttt{ExpDecayFast}}
\end{center}
\end{subfigure}
\end{center}

\vspace{0.5em}

\caption{\textbf{Insensitivity of proposed method to the dimension reduction map.}
(Effective rank $R = 5$, approximation rank $r = 10$, Schatten 2-norm.)
We compare the oracle performance of the proposed fixed-rank
approximation~\cref{eqn:Ahat-fixed} implemented with Gaussian, SSRFT, or sparse
dimension reduction maps.  See~\cref{app:universality}
for details.}
\label{fig:universality-R5-S2}
\end{figure}

\begin{figure}[htp!]
\begin{center}
\begin{subfigure}{.325\textwidth}
\begin{center}
\includegraphics[height=1.5in]{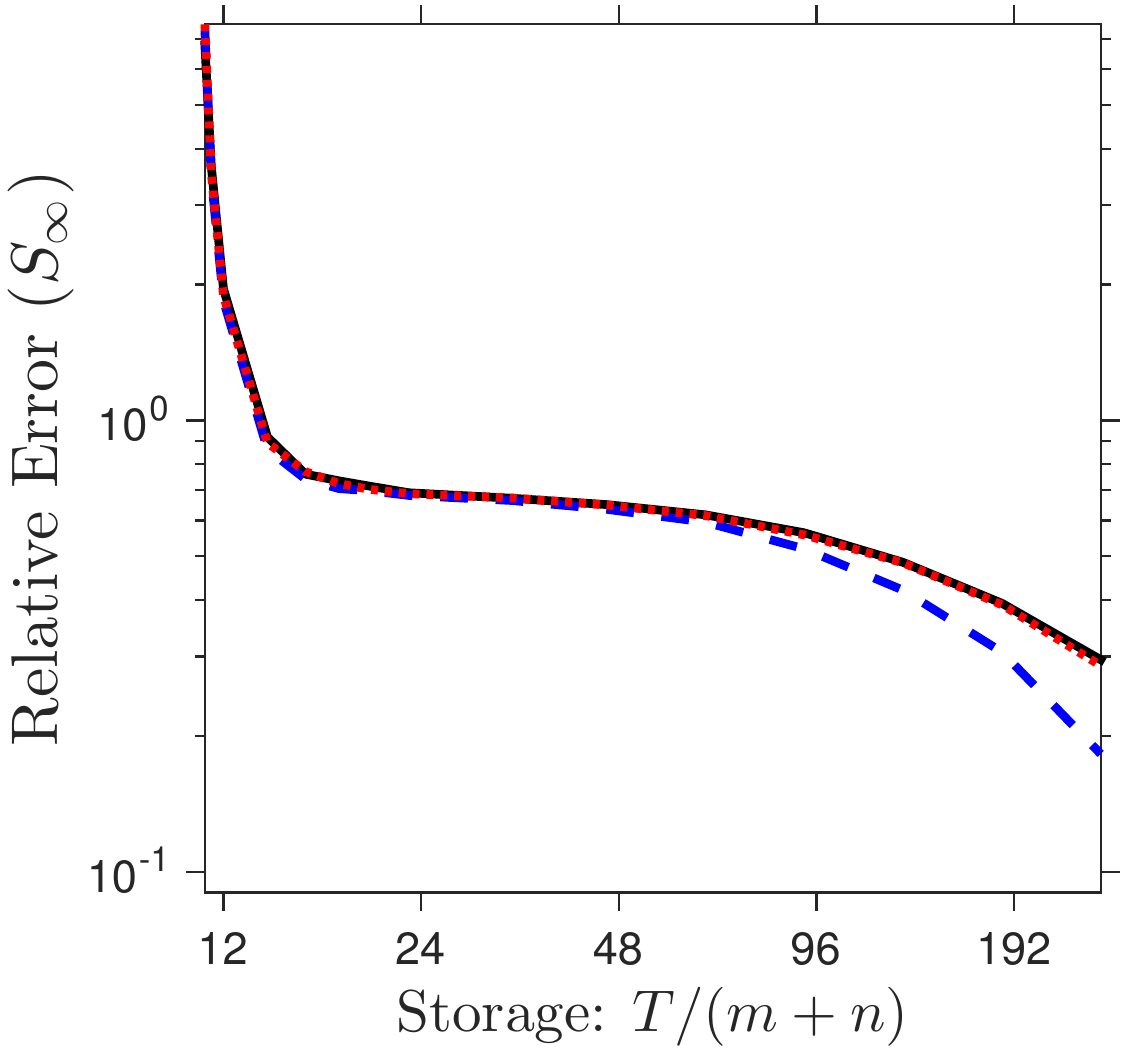}
\caption{\texttt{LowRankHiNoise}}
\end{center}
\end{subfigure}
\begin{subfigure}{.325\textwidth}
\begin{center}
\includegraphics[height=1.5in]{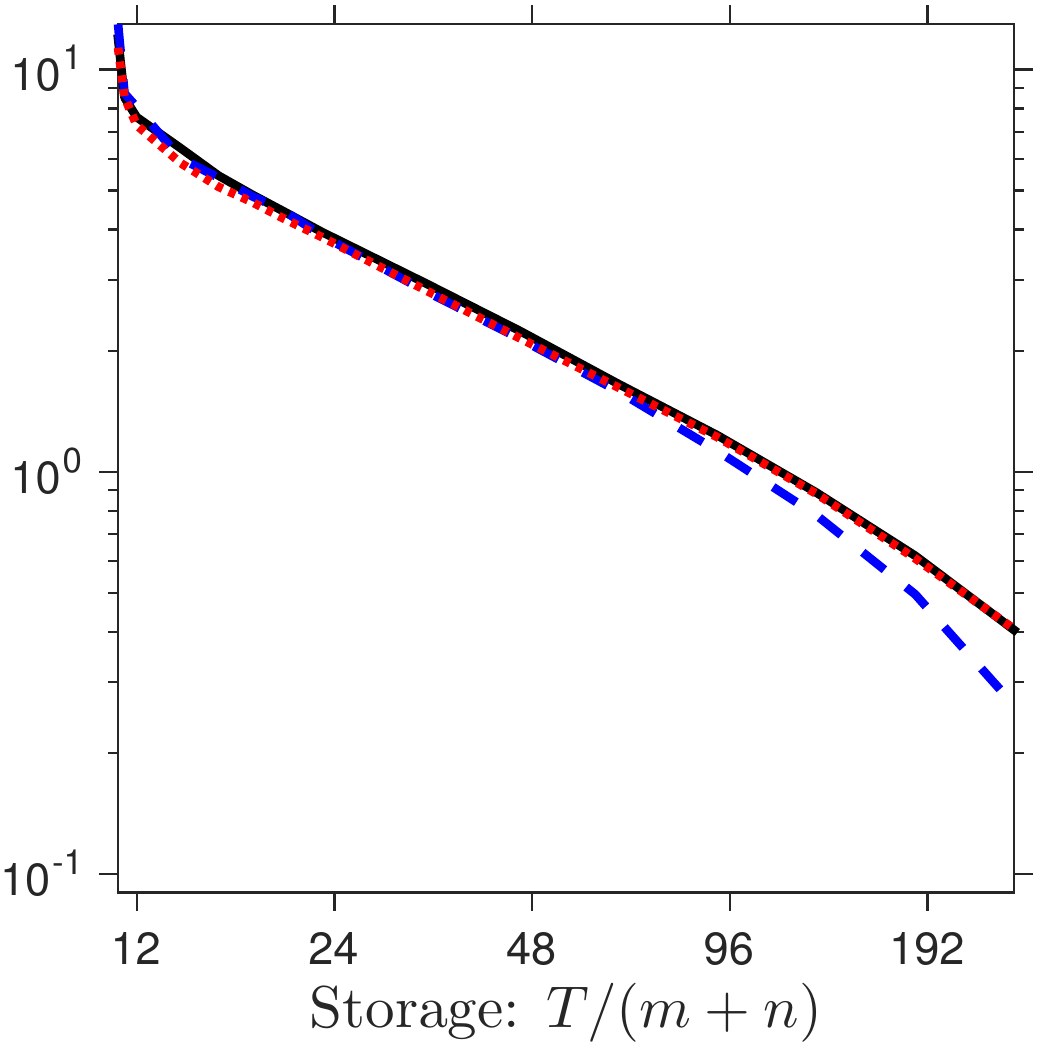}
\caption{\texttt{LowRankMedNoise}}
\end{center}
\end{subfigure}
\begin{subfigure}{.325\textwidth}
\begin{center}
\includegraphics[height=1.5in]{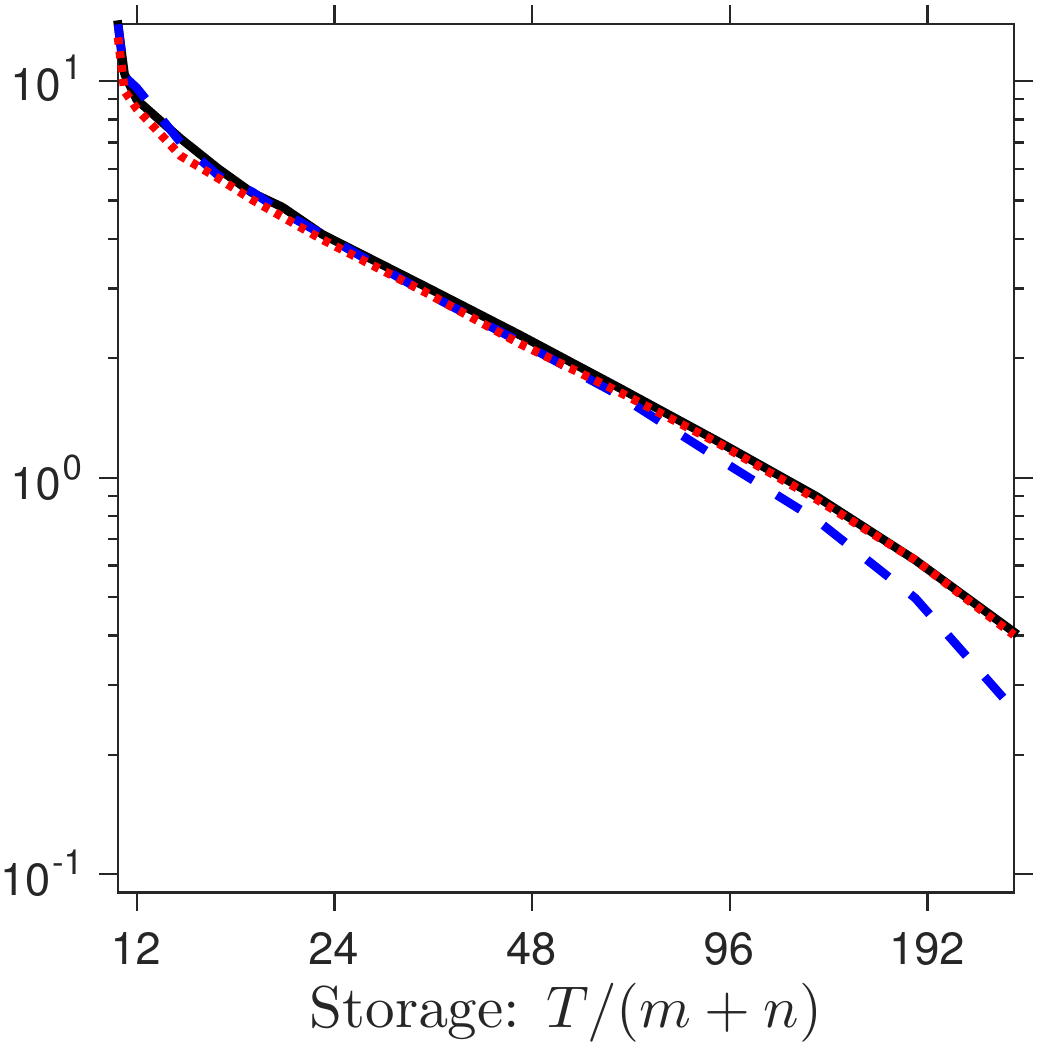}
\caption{\texttt{LowRankLowNoise}}
\end{center}
\end{subfigure}
\end{center}

\vspace{.5em}

\begin{center}
\begin{subfigure}{.325\textwidth}
\begin{center}
\includegraphics[height=1.5in]{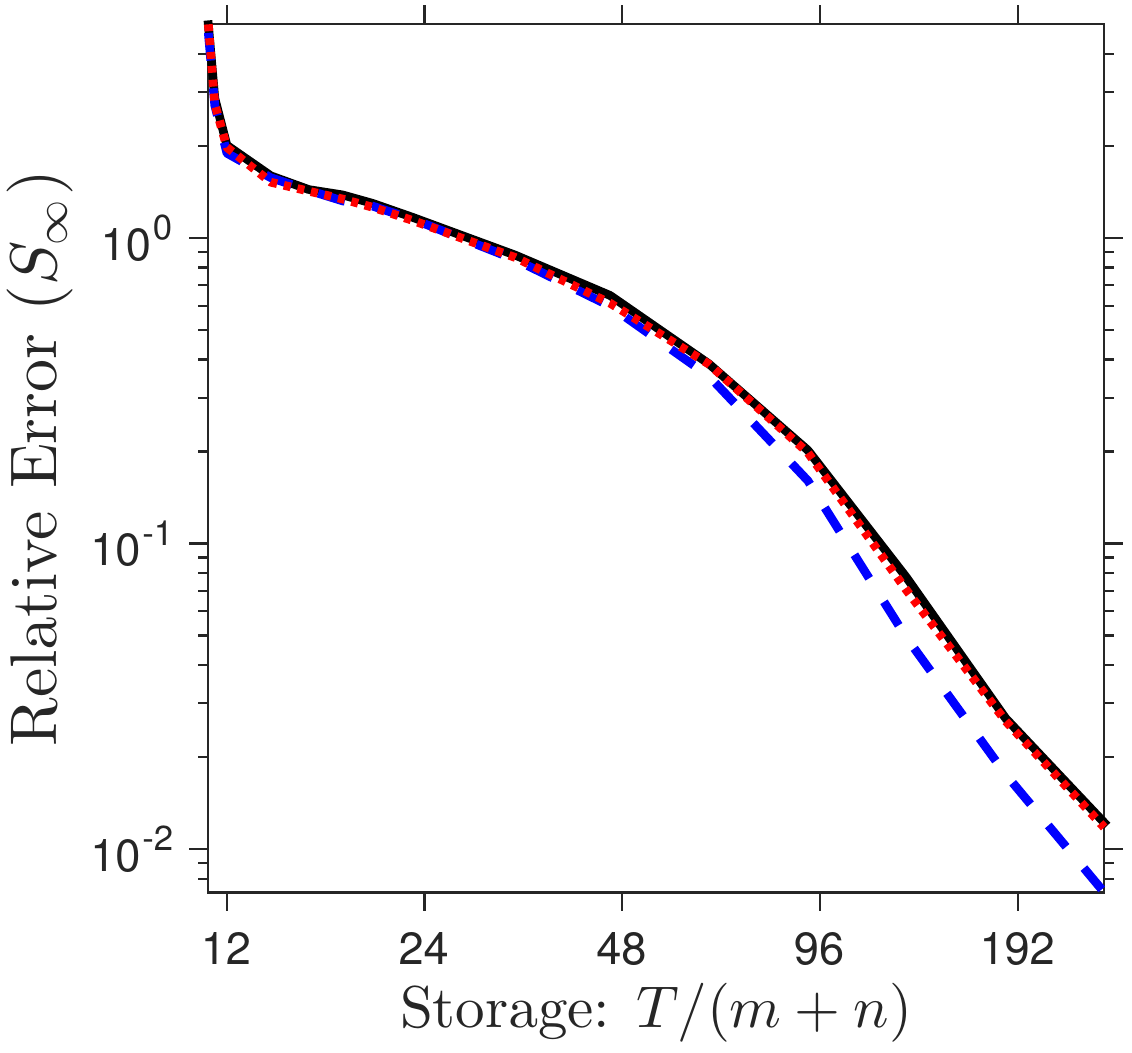}
\caption{\texttt{PolyDecaySlow}}
\end{center}
\end{subfigure}
\begin{subfigure}{.325\textwidth}
\begin{center}
\includegraphics[height=1.5in]{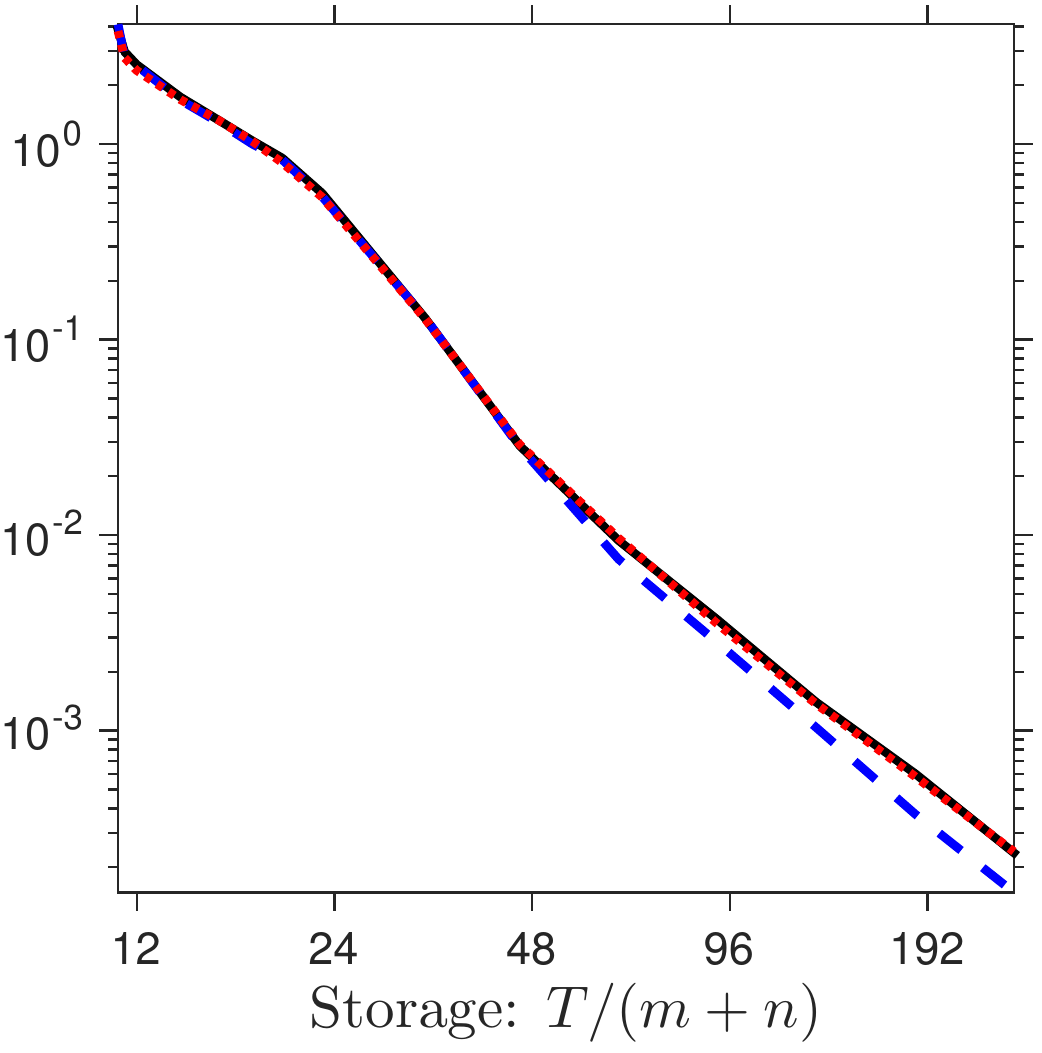}
\caption{\texttt{PolyDecayMed}}
\end{center}
\end{subfigure}
\begin{subfigure}{.325\textwidth}
\begin{center}
\includegraphics[height=1.5in]{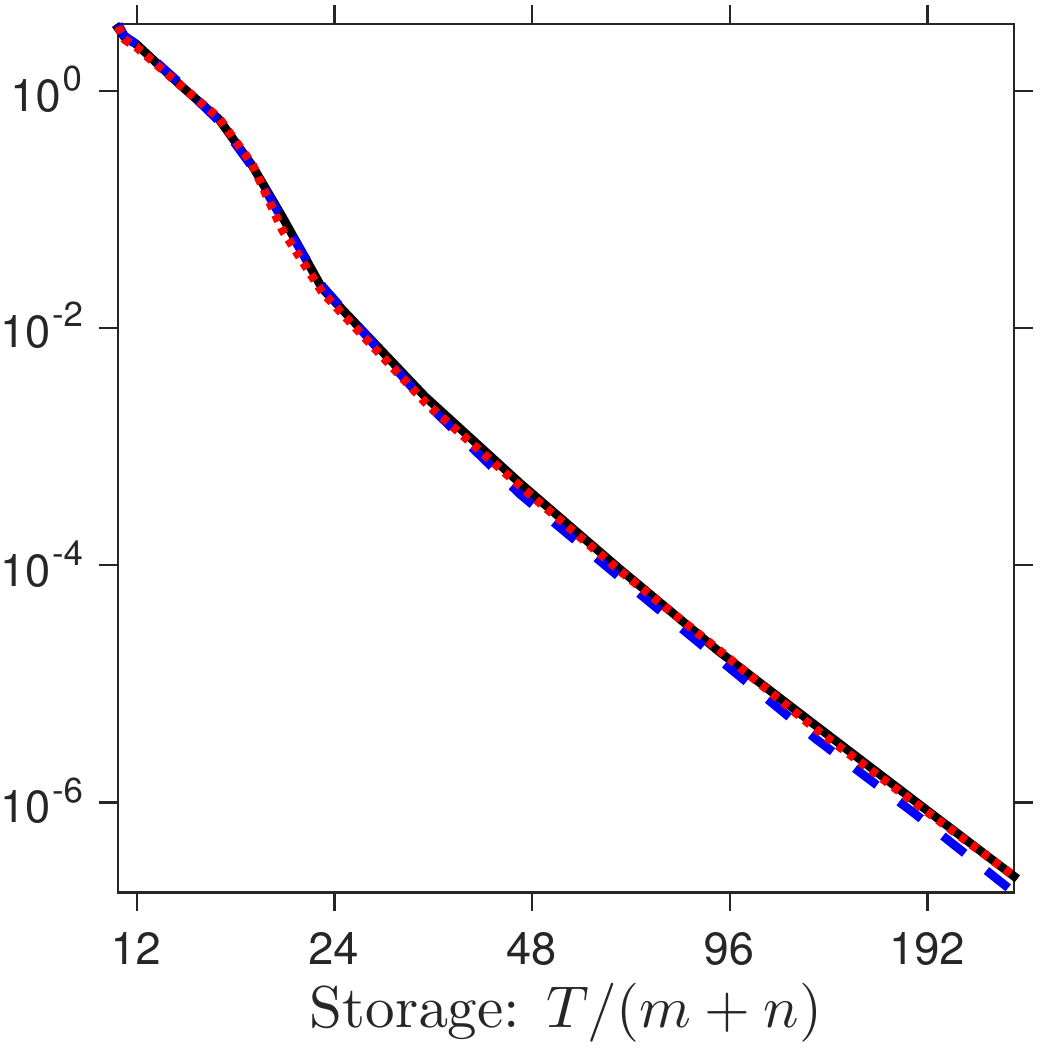}
\caption{\texttt{PolyDecayFast}}
\end{center}
\end{subfigure}
\end{center}

\vspace{0.5em}

\begin{center}
\begin{subfigure}{.325\textwidth}
\begin{center}
\includegraphics[height=1.5in]{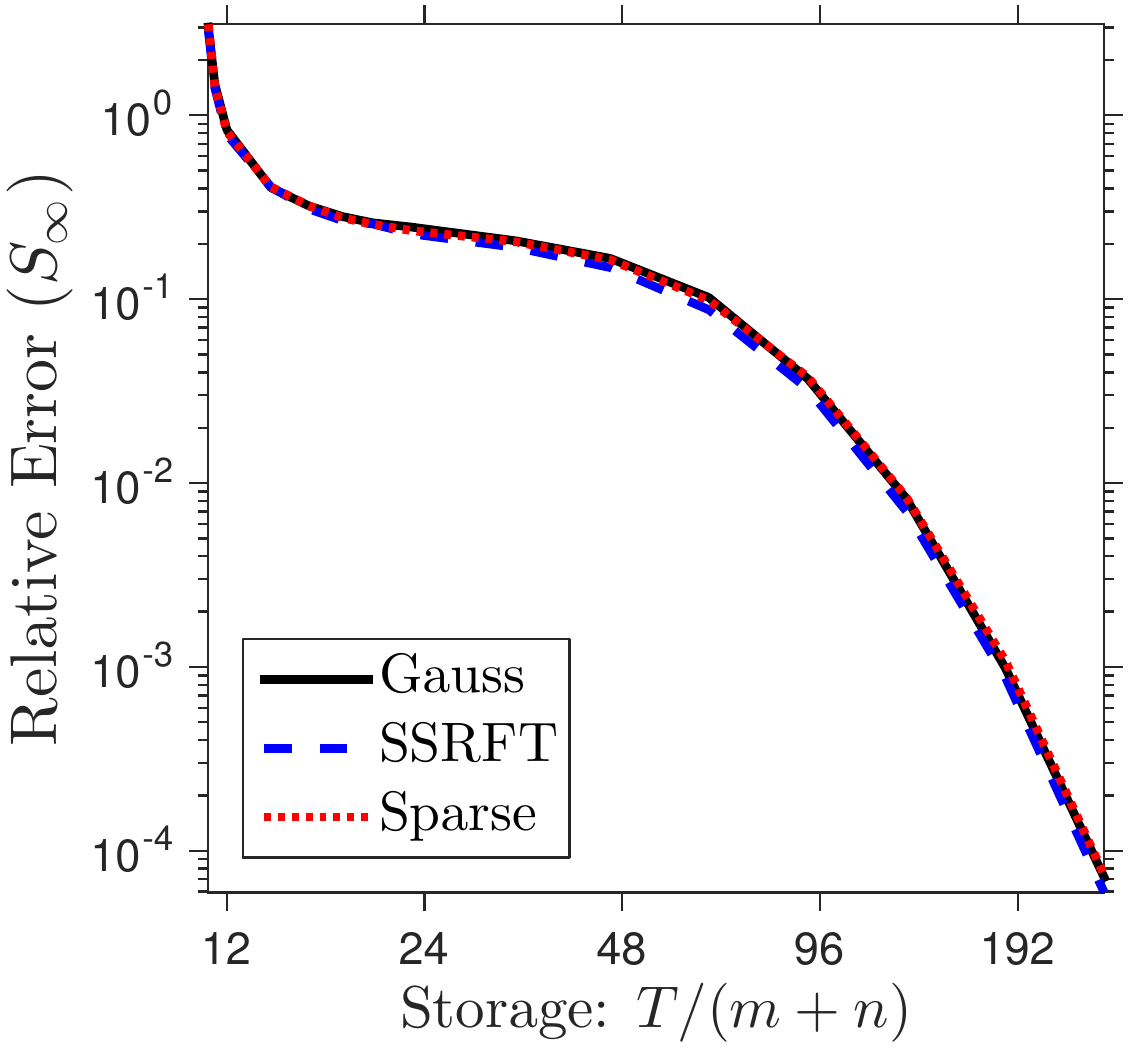}
\caption{\texttt{ExpDecaySlow}}
\end{center}
\end{subfigure}
\begin{subfigure}{.325\textwidth}
\begin{center}
\includegraphics[height=1.5in]{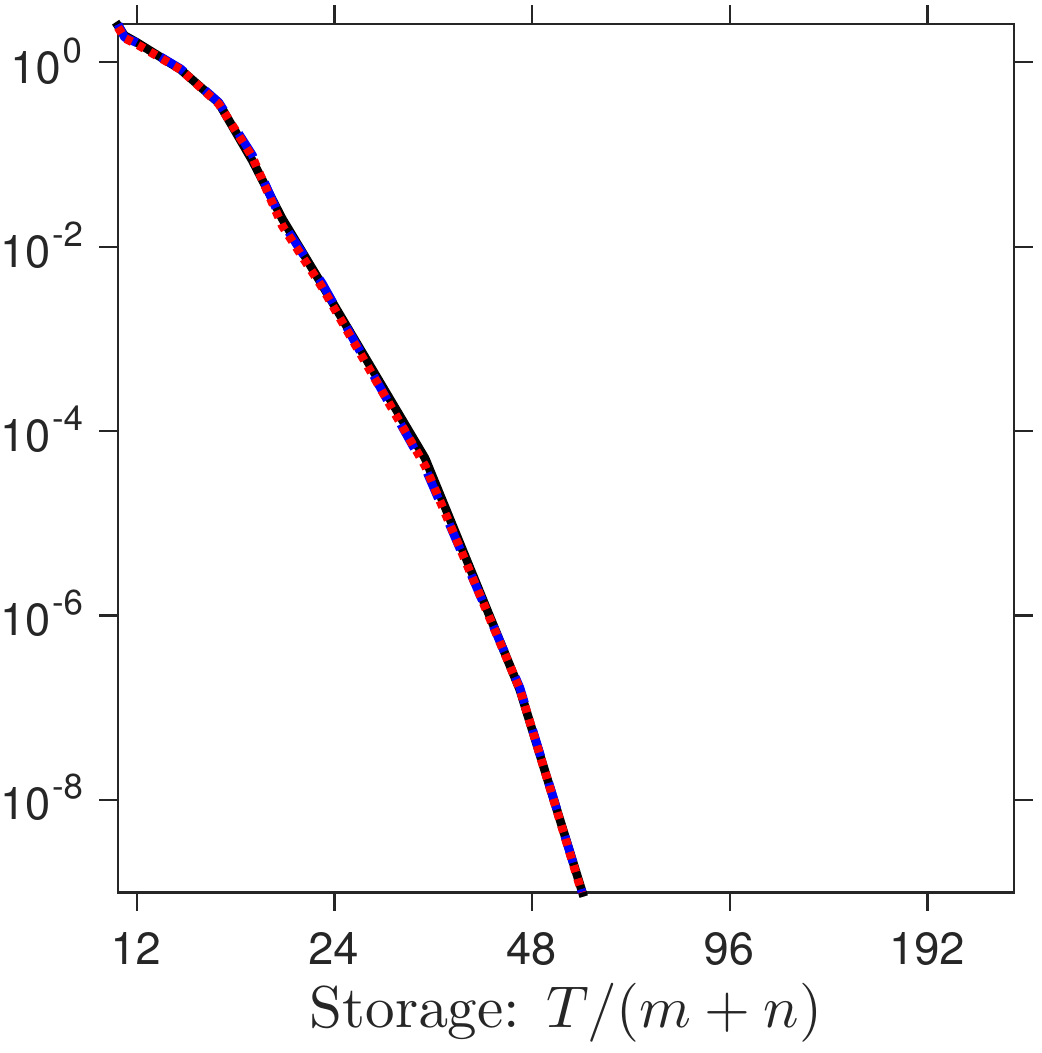}
\caption{\texttt{ExpDecayMed}}
\end{center}
\end{subfigure}
\begin{subfigure}{.325\textwidth}
\begin{center}
\includegraphics[height=1.5in]{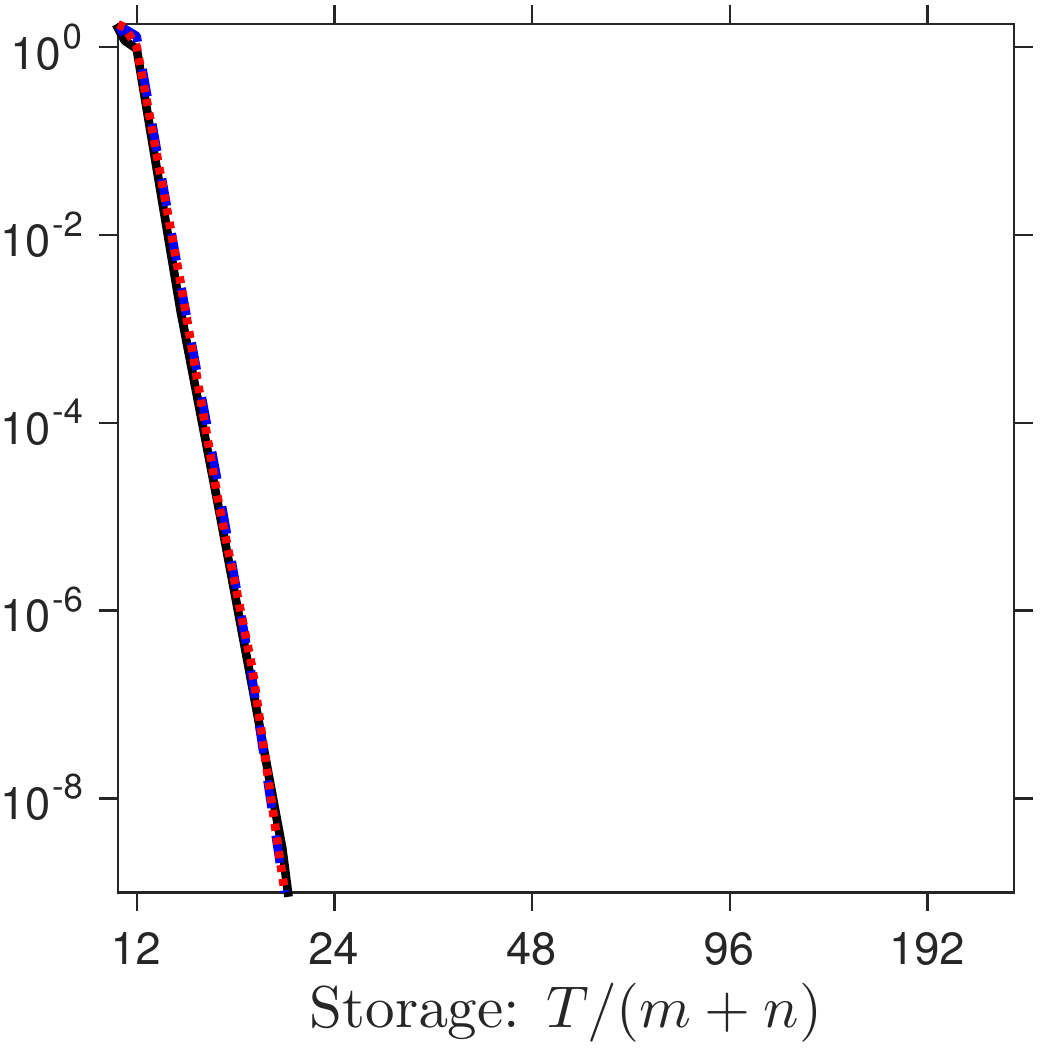}
\caption{\texttt{ExpDecayFast}}
\end{center}
\end{subfigure}
\end{center}

\vspace{0.5em}

\caption{\textbf{Insensitivity of proposed method to the dimension reduction map.}
(Effective rank $R = 5$, approximation rank $r = 10$, Schatten $\infty$-norm.)
We compare the oracle performance of the proposed fixed-rank
approximation~\cref{eqn:Ahat-fixed} implemented with Gaussian, SSRFT, or sparse
dimension reduction maps.  See~\cref{app:universality}
for details.}
\label{fig:universality-R5-Sinf}
\end{figure}

\begin{figure}[htp!]
\begin{center}
\begin{subfigure}{.325\textwidth}
\begin{center}
\includegraphics[height=1.5in]{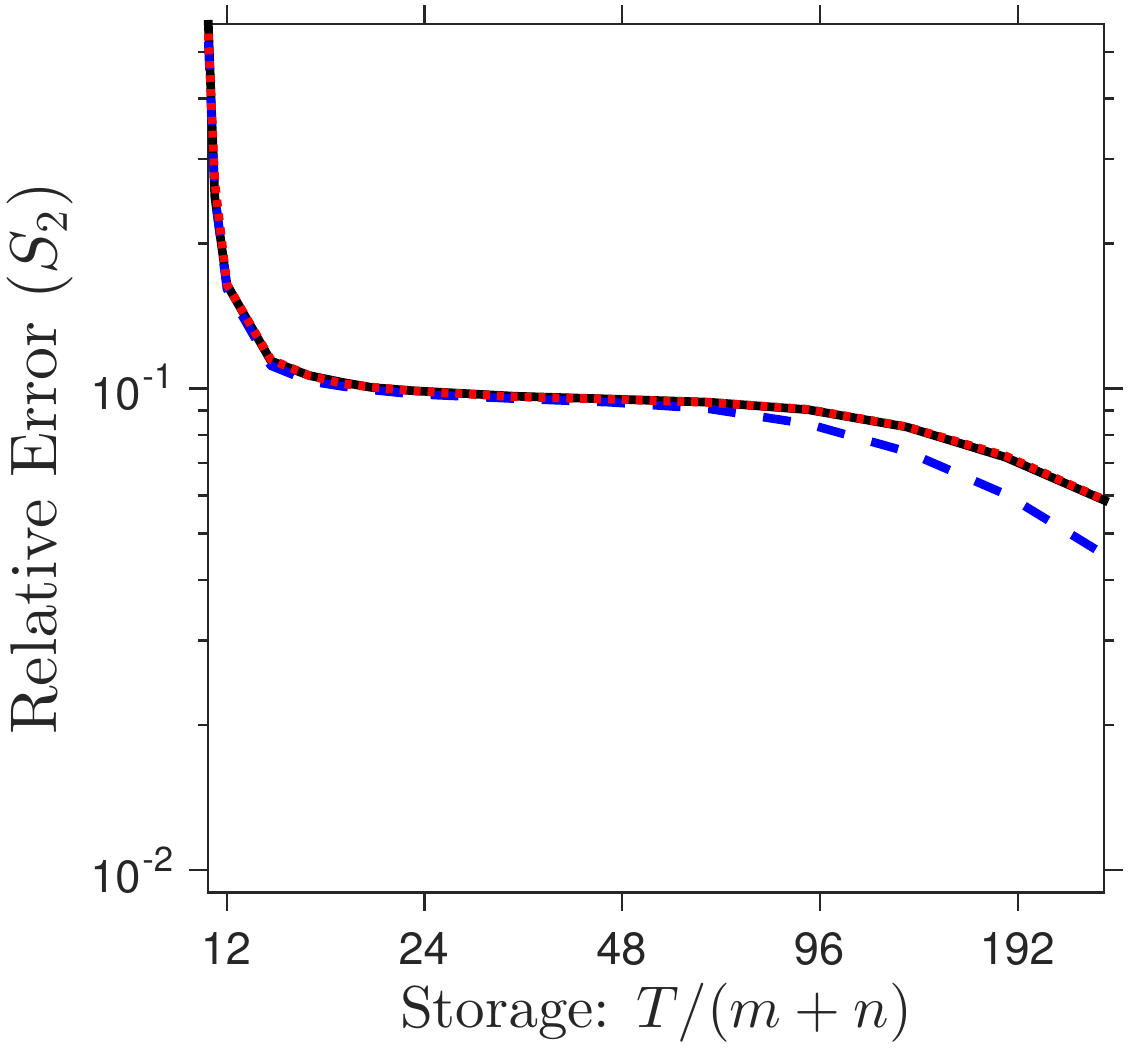}
\caption{\texttt{LowRankHiNoise}}
\end{center}
\end{subfigure}
\begin{subfigure}{.325\textwidth}
\begin{center}
\includegraphics[height=1.5in]{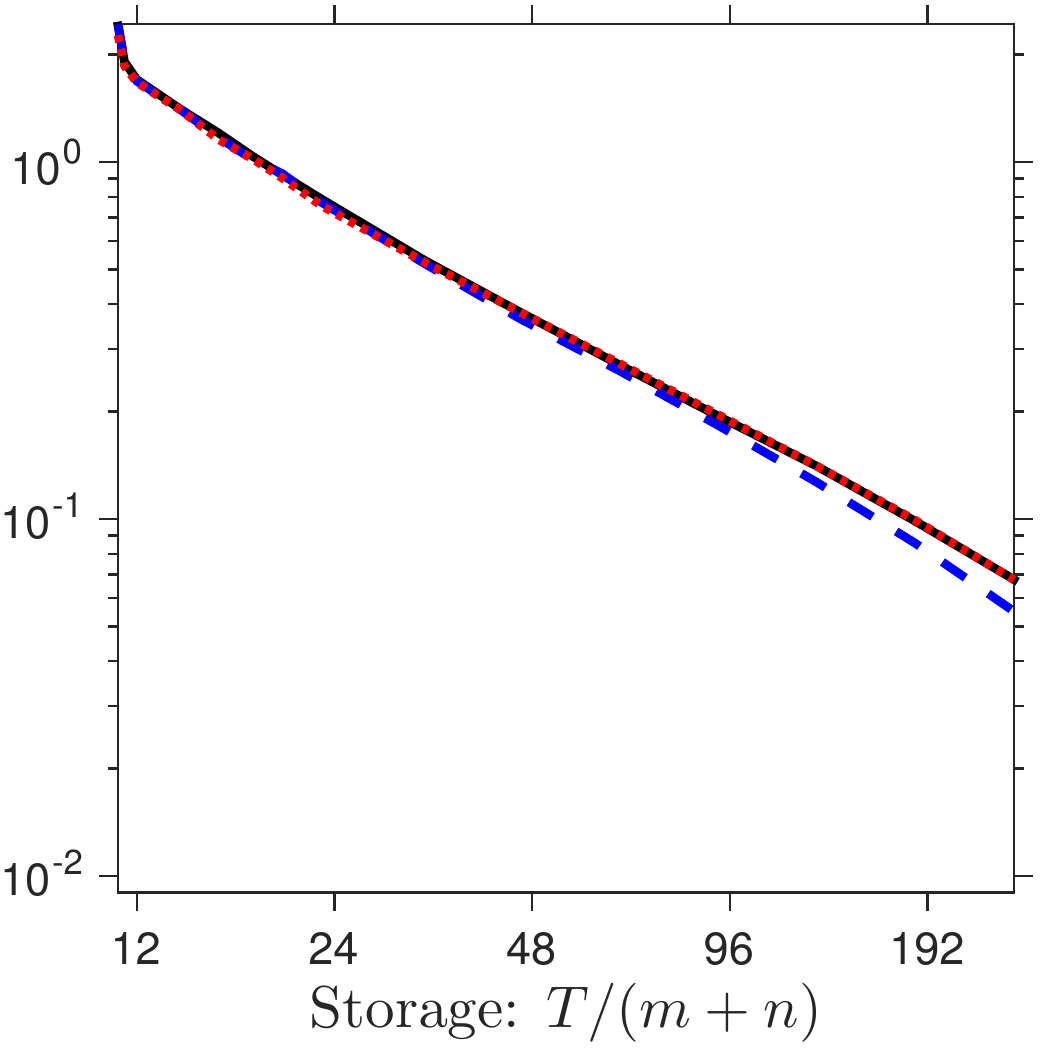}
\caption{\texttt{LowRankMedNoise}}
\end{center}
\end{subfigure}
\begin{subfigure}{.325\textwidth}
\begin{center}
\includegraphics[height=1.5in]{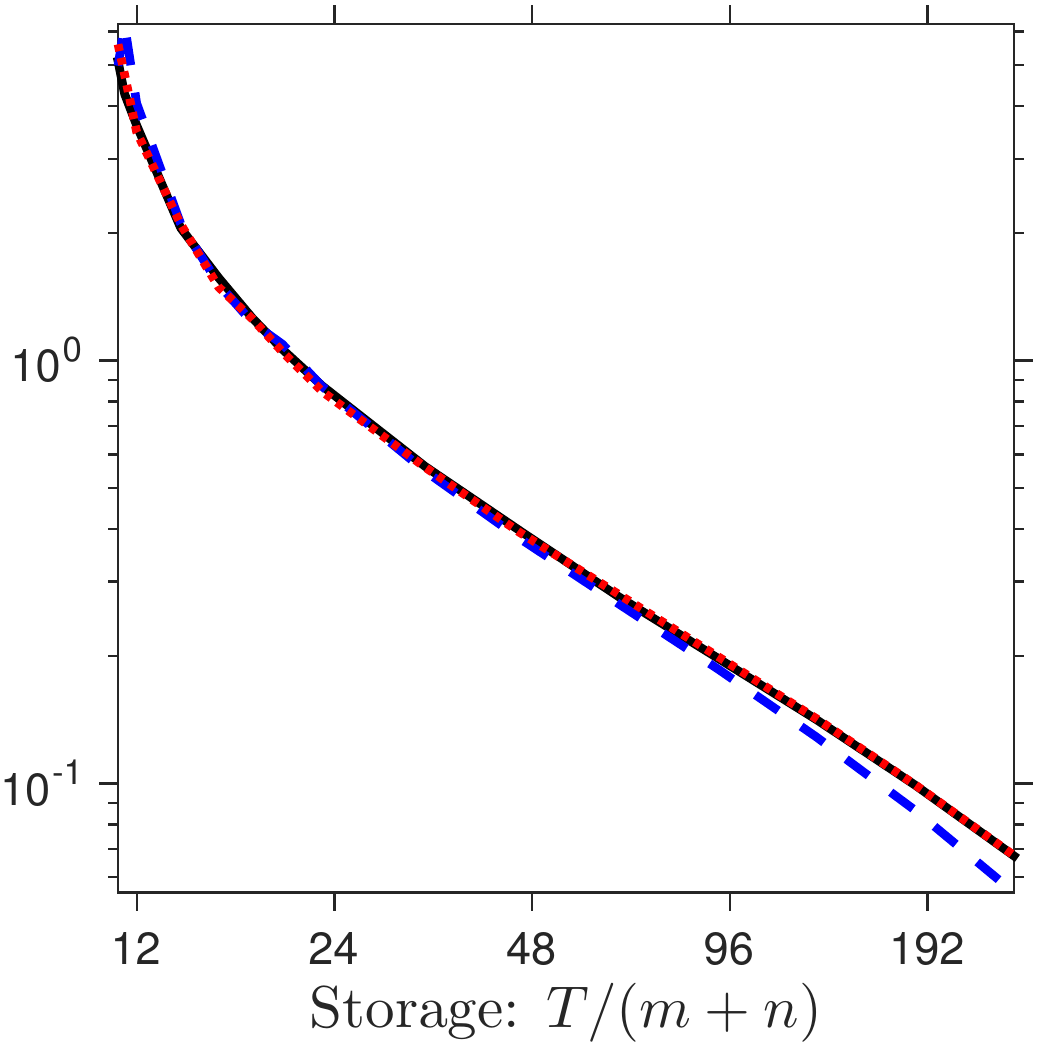}
\caption{\texttt{LowRankLowNoise}}
\end{center}
\end{subfigure}
\end{center}

\vspace{.5em}

\begin{center}
\begin{subfigure}{.325\textwidth}
\begin{center}
\includegraphics[height=1.5in]{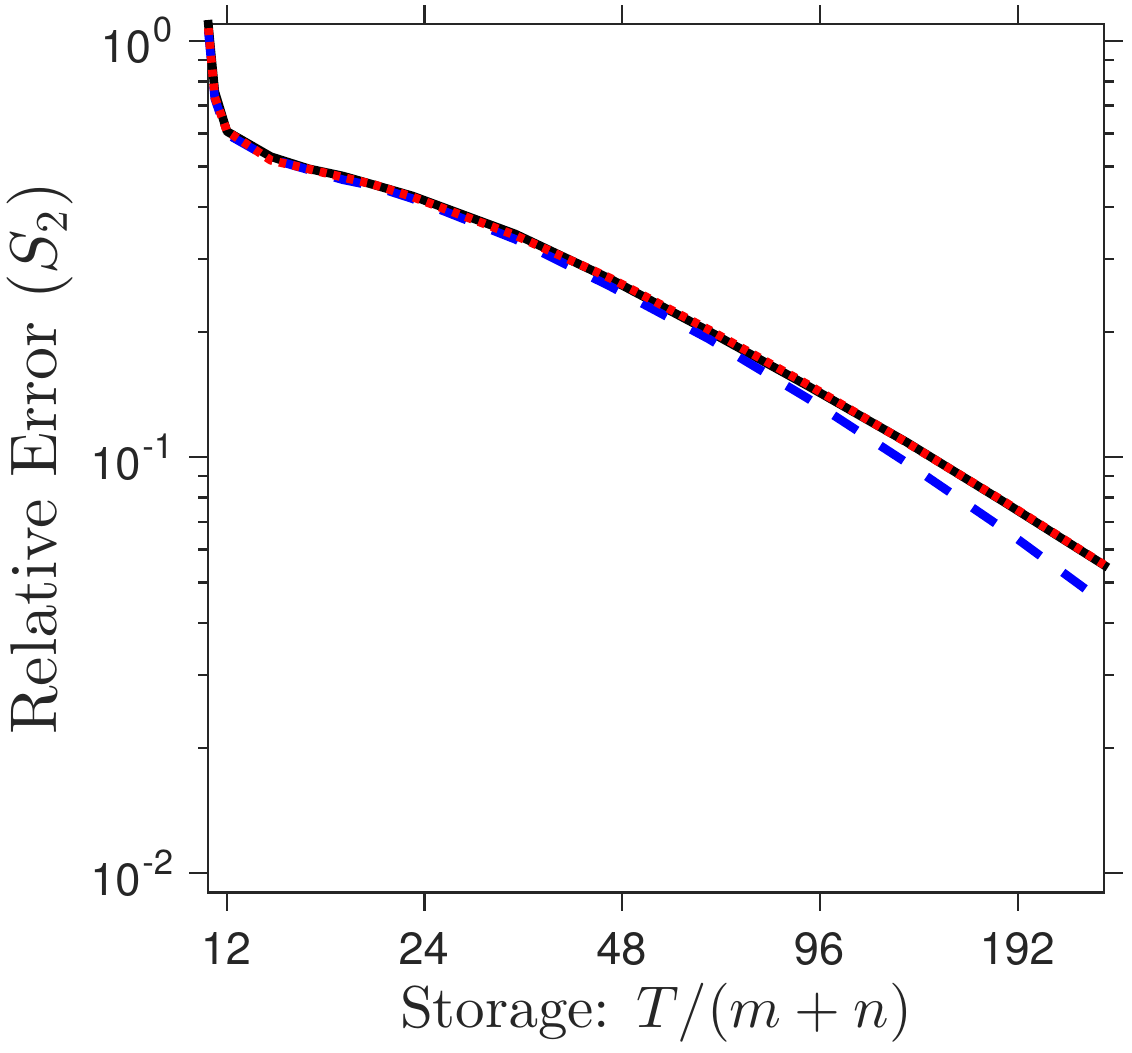}
\caption{\texttt{PolyDecaySlow}}
\end{center}
\end{subfigure}
\begin{subfigure}{.325\textwidth}
\begin{center}
\includegraphics[height=1.5in]{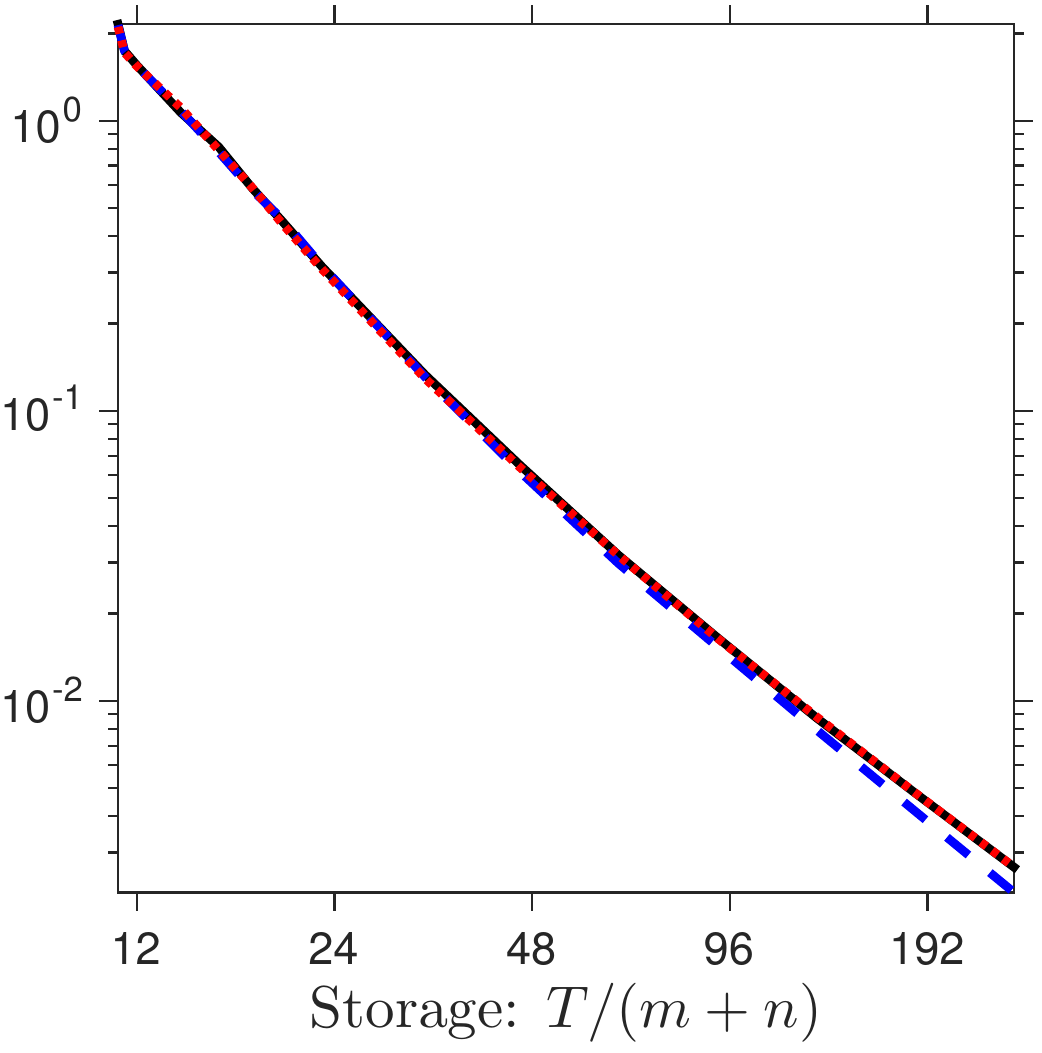}
\caption{\texttt{PolyDecayMed}}
\end{center}
\end{subfigure}
\begin{subfigure}{.325\textwidth}
\begin{center}
\includegraphics[height=1.5in]{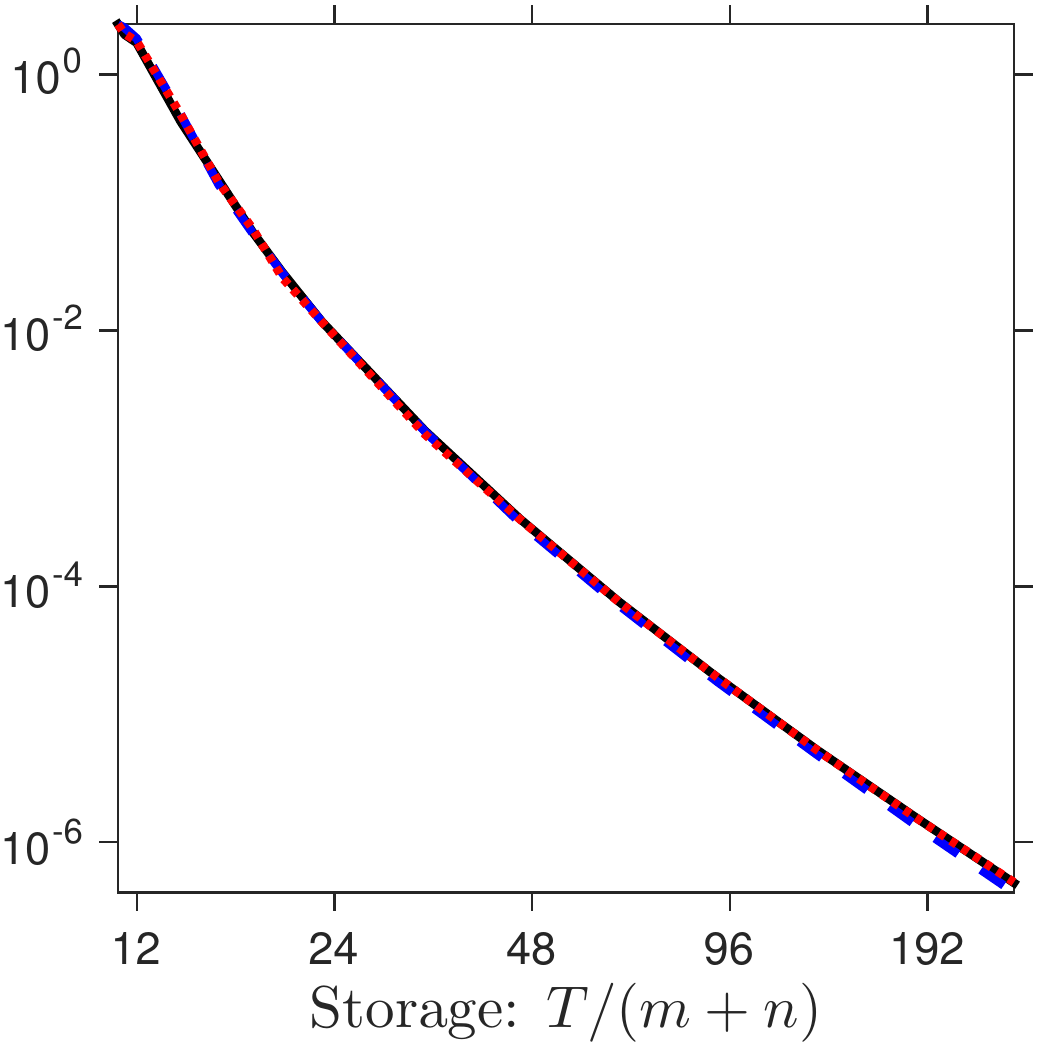}
\caption{\texttt{PolyDecayFast}}
\end{center}
\end{subfigure}
\end{center}

\vspace{0.5em}

\begin{center}
\begin{subfigure}{.325\textwidth}
\begin{center}
\includegraphics[height=1.5in]{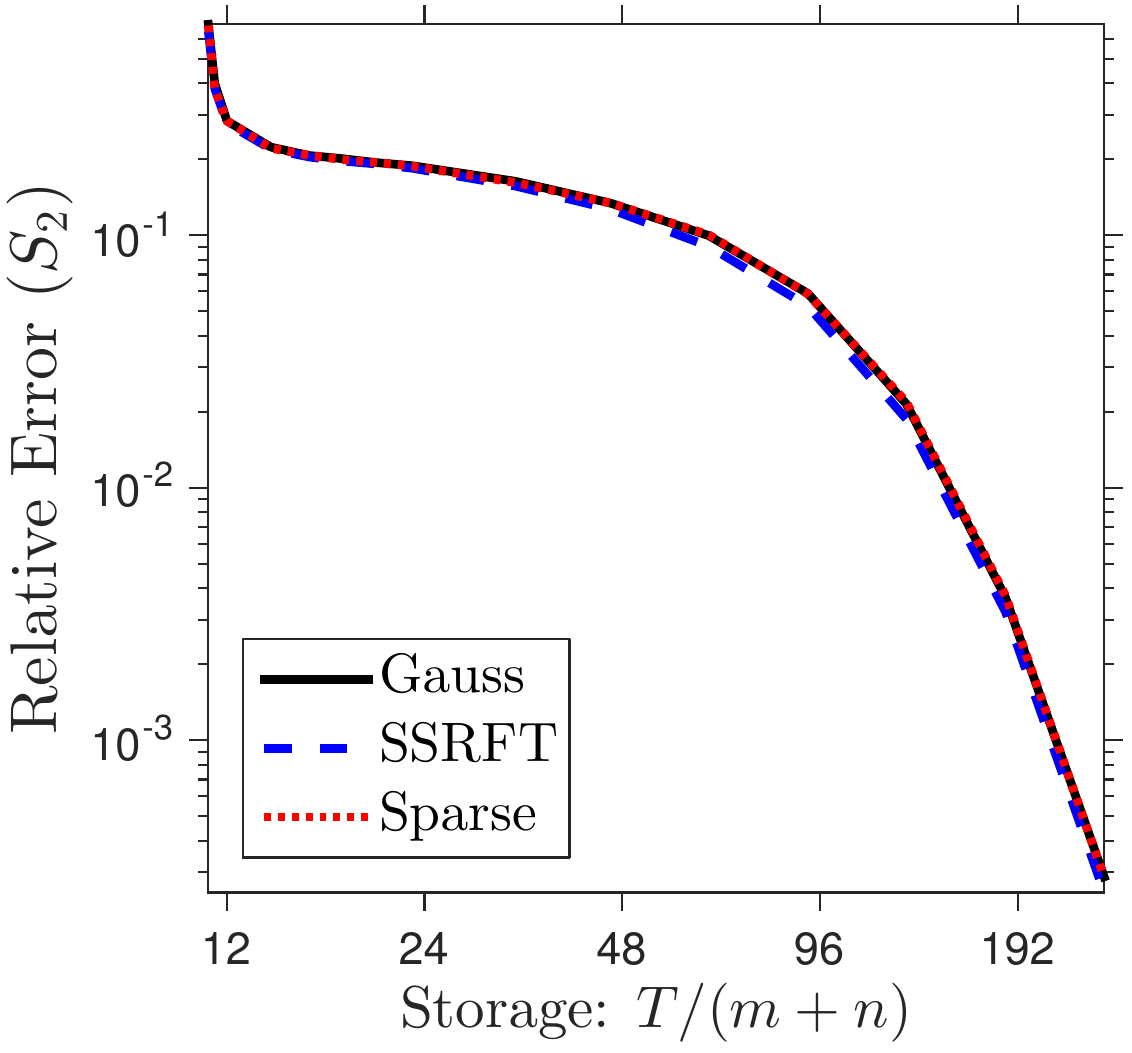}
\caption{\texttt{ExpDecaySlow}}
\end{center}
\end{subfigure}
\begin{subfigure}{.325\textwidth}
\begin{center}
\includegraphics[height=1.5in]{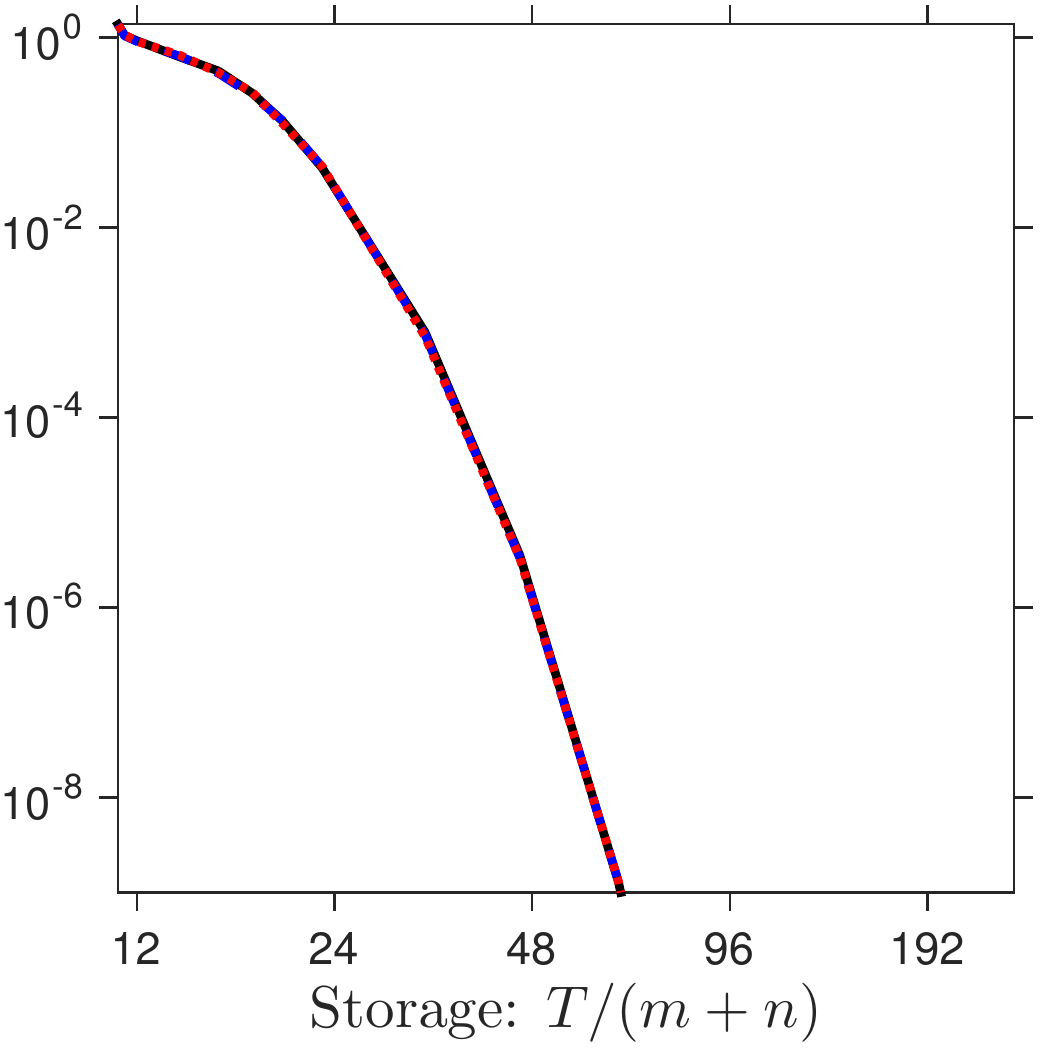}
\caption{\texttt{ExpDecayMed}}
\end{center}
\end{subfigure}
\begin{subfigure}{.325\textwidth}
\begin{center}
\includegraphics[height=1.5in]{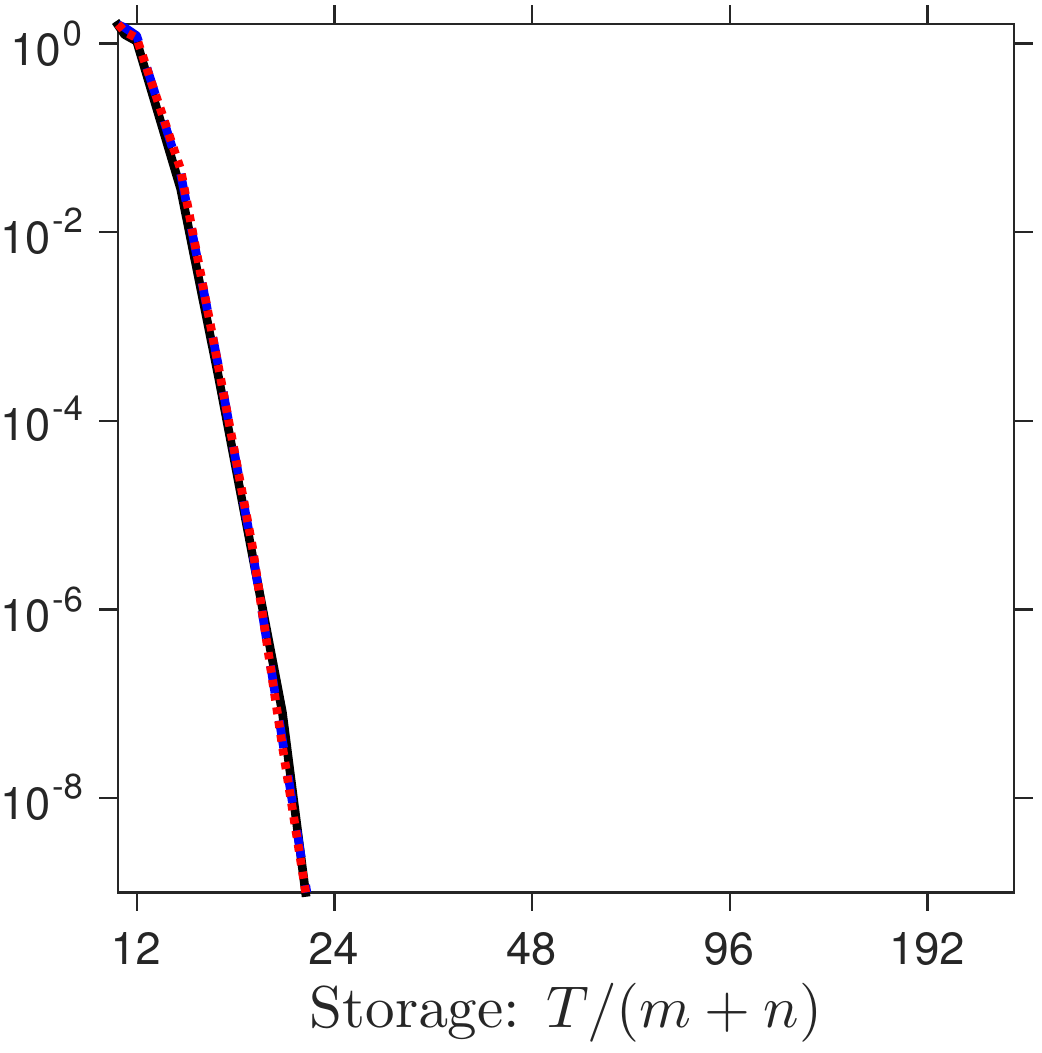}
\caption{\texttt{ExpDecayFast}}
\end{center}
\end{subfigure}
\end{center}

\vspace{0.5em}

\caption{\textbf{Insensitivity of proposed method to the dimension reduction map.}
(Effective rank $R = 10$, approximation rank $r = 10$, Schatten 2-norm.)
We compare the oracle performance of the proposed fixed-rank
approximation~\cref{eqn:Ahat-fixed} implemented with Gaussian, SSRFT, or sparse
dimension reduction maps.  See~\cref{app:universality}
for details.}
\label{fig:universality-R10-S2}
\end{figure}

\begin{figure}[htp!]
\begin{center}
\begin{subfigure}{.325\textwidth}
\begin{center}
\includegraphics[height=1.5in]{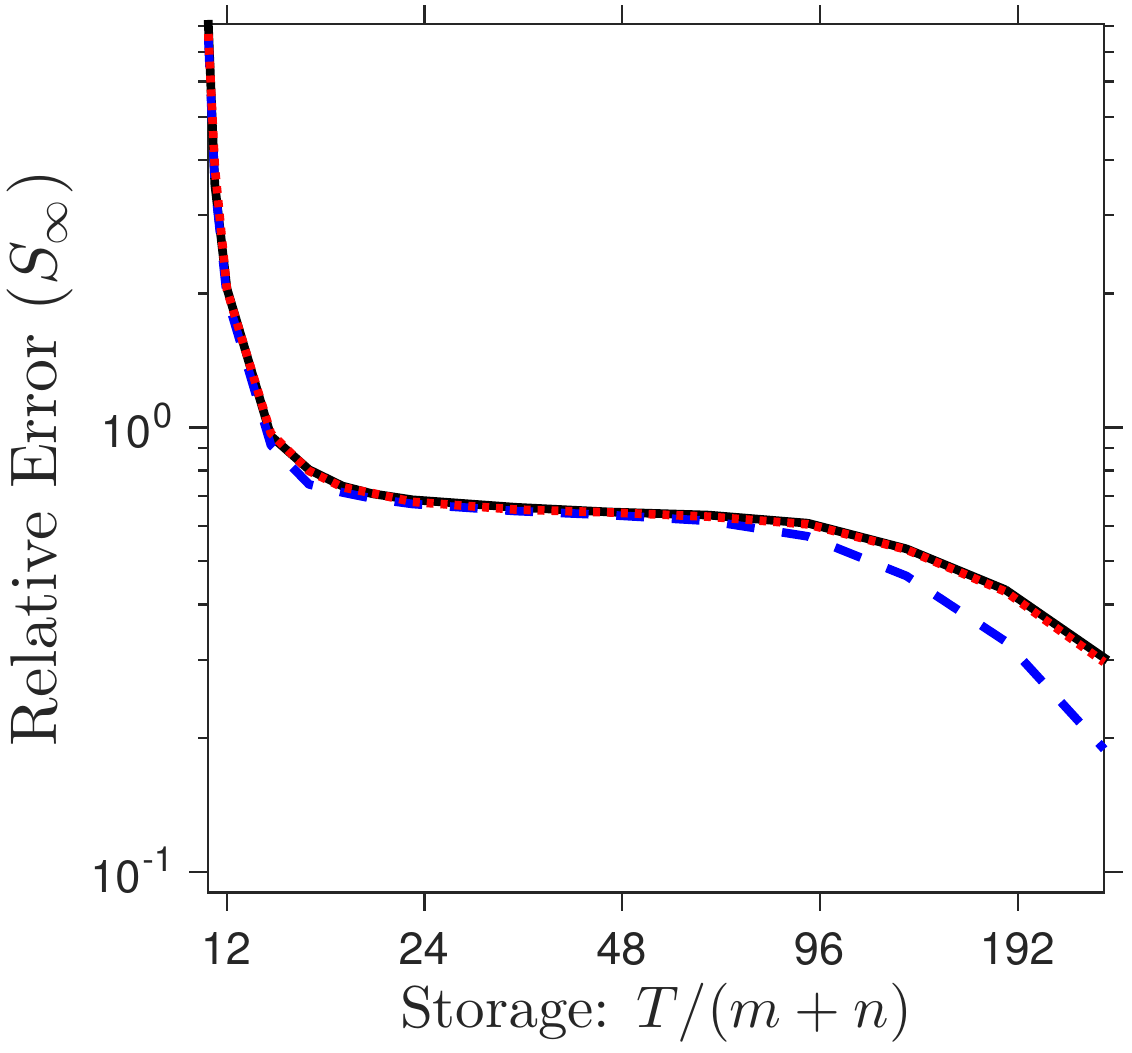}
\caption{\texttt{LowRankHiNoise}}
\end{center}
\end{subfigure}
\begin{subfigure}{.325\textwidth}
\begin{center}
\includegraphics[height=1.5in]{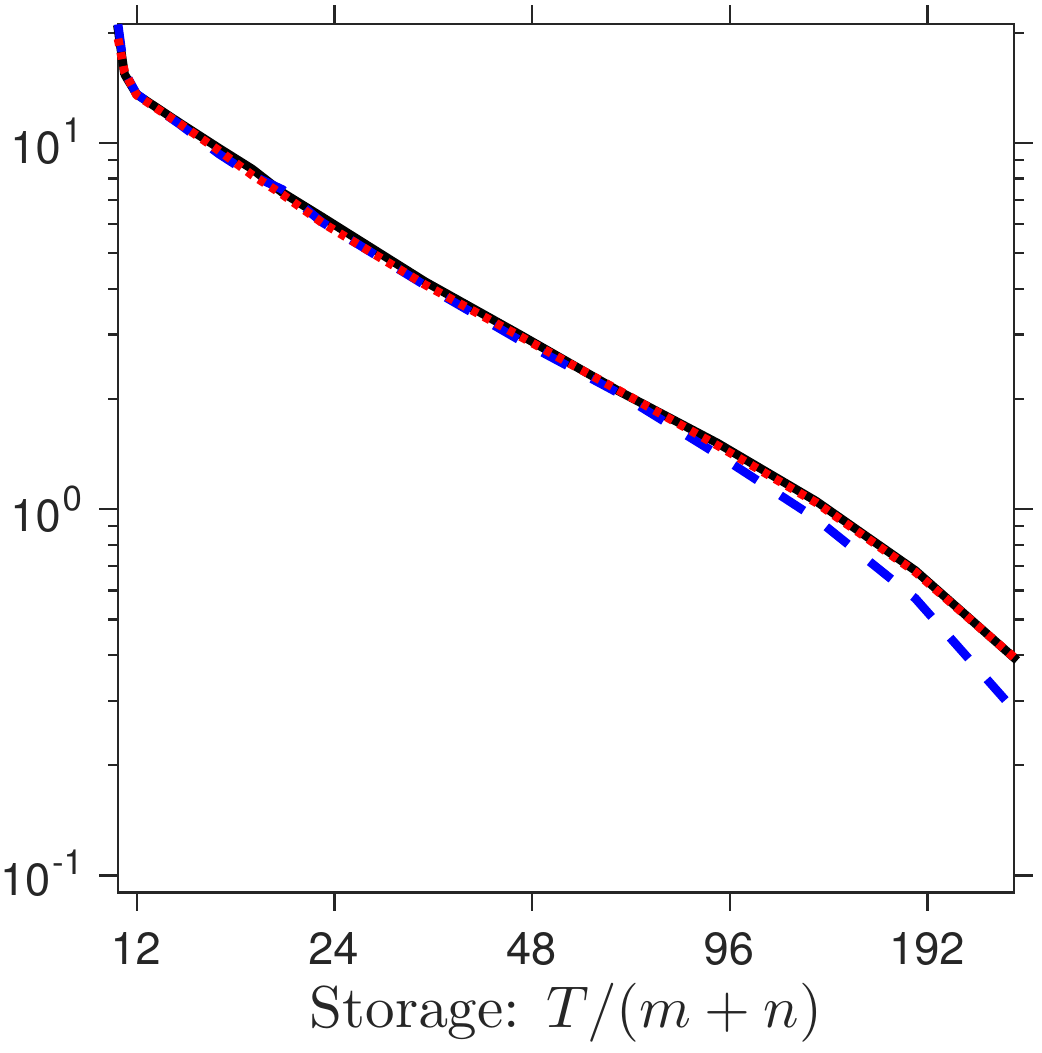}
\caption{\texttt{LowRankMedNoise}}
\end{center}
\end{subfigure}
\begin{subfigure}{.325\textwidth}
\begin{center}
\includegraphics[height=1.5in]{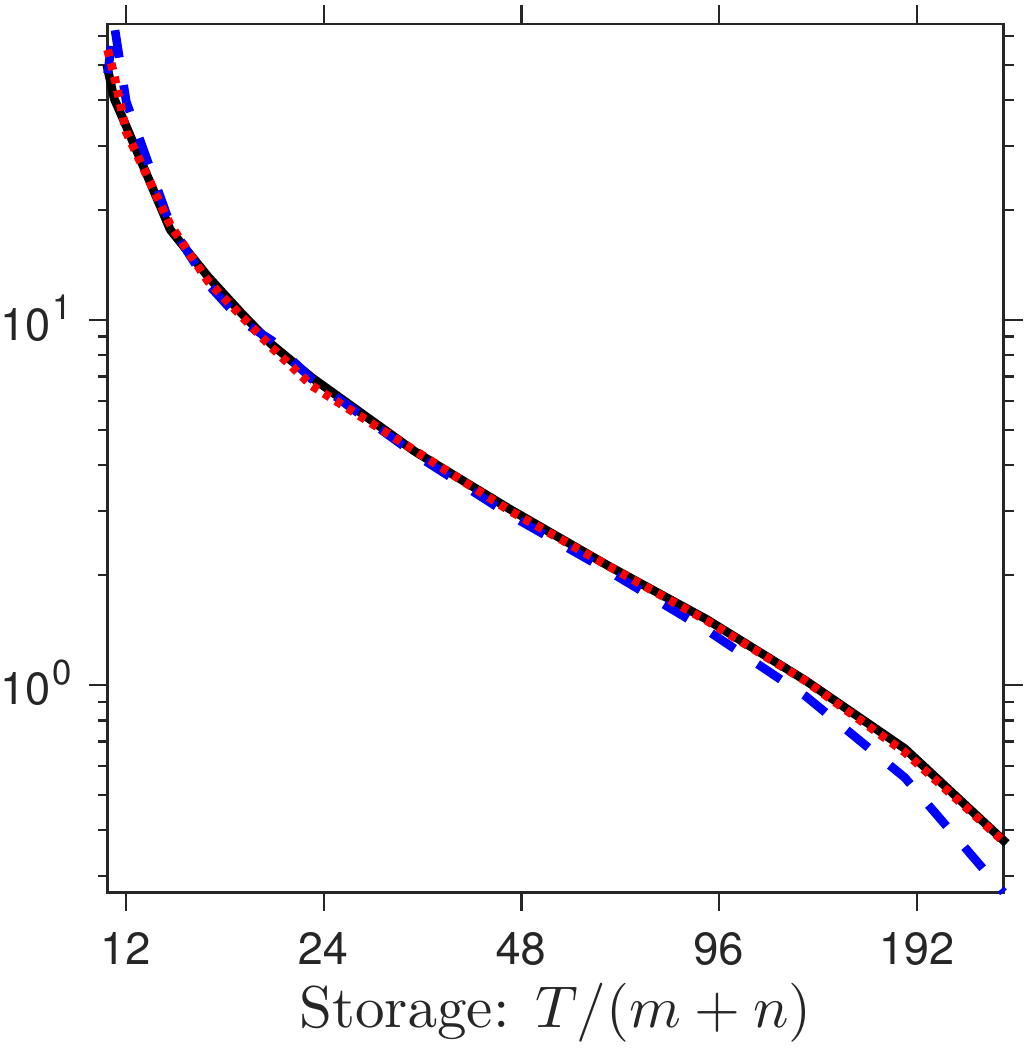}
\caption{\texttt{LowRankLowNoise}}
\end{center}
\end{subfigure}
\end{center}

\vspace{.5em}

\begin{center}
\begin{subfigure}{.325\textwidth}
\begin{center}
\includegraphics[height=1.5in]{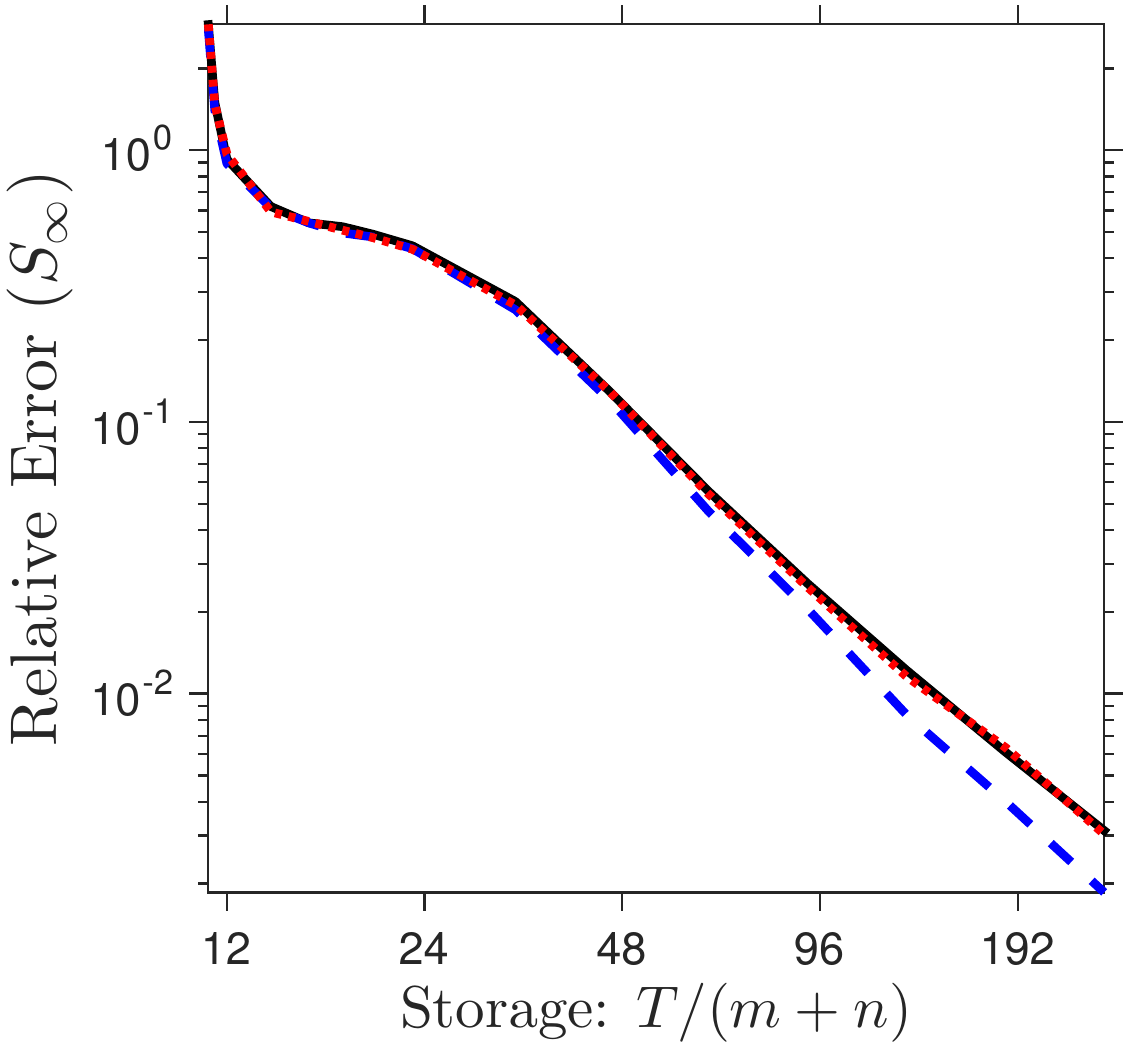}
\caption{\texttt{PolyDecaySlow}}
\end{center}
\end{subfigure}
\begin{subfigure}{.325\textwidth}
\begin{center}
\includegraphics[height=1.5in]{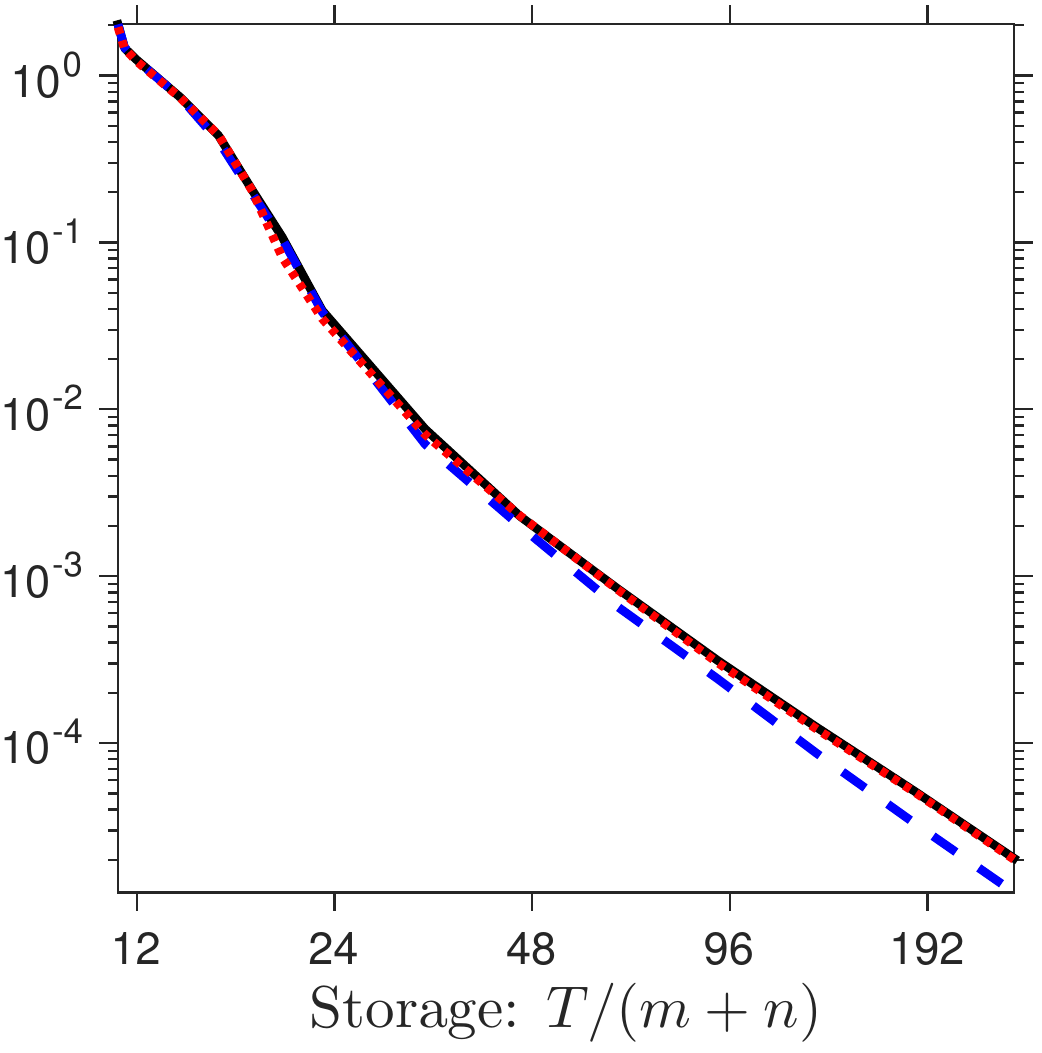}
\caption{\texttt{PolyDecayMed}}
\end{center}
\end{subfigure}
\begin{subfigure}{.325\textwidth}
\begin{center}
\includegraphics[height=1.5in]{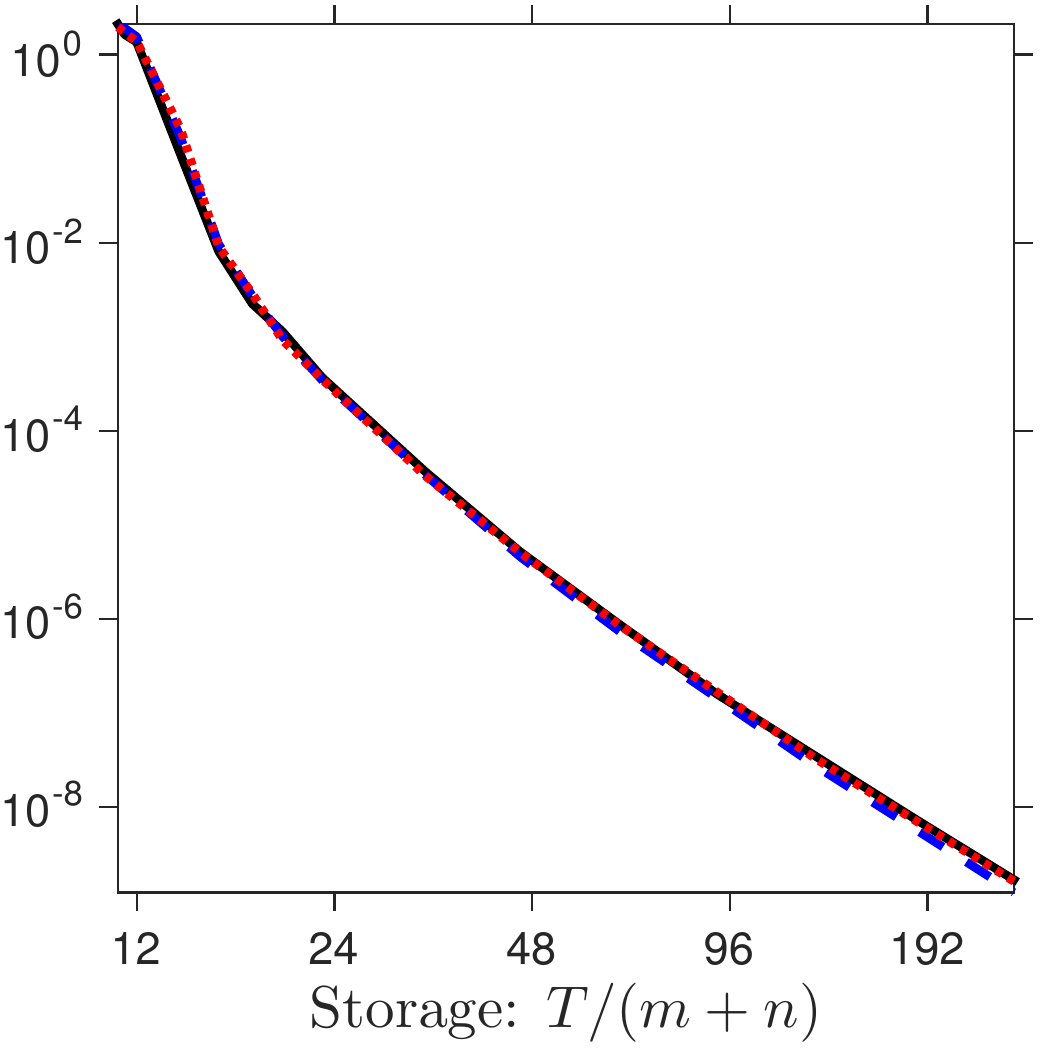}
\caption{\texttt{PolyDecayFast}}
\end{center}
\end{subfigure}
\end{center}

\vspace{0.5em}

\begin{center}
\begin{subfigure}{.325\textwidth}
\begin{center}
\includegraphics[height=1.5in]{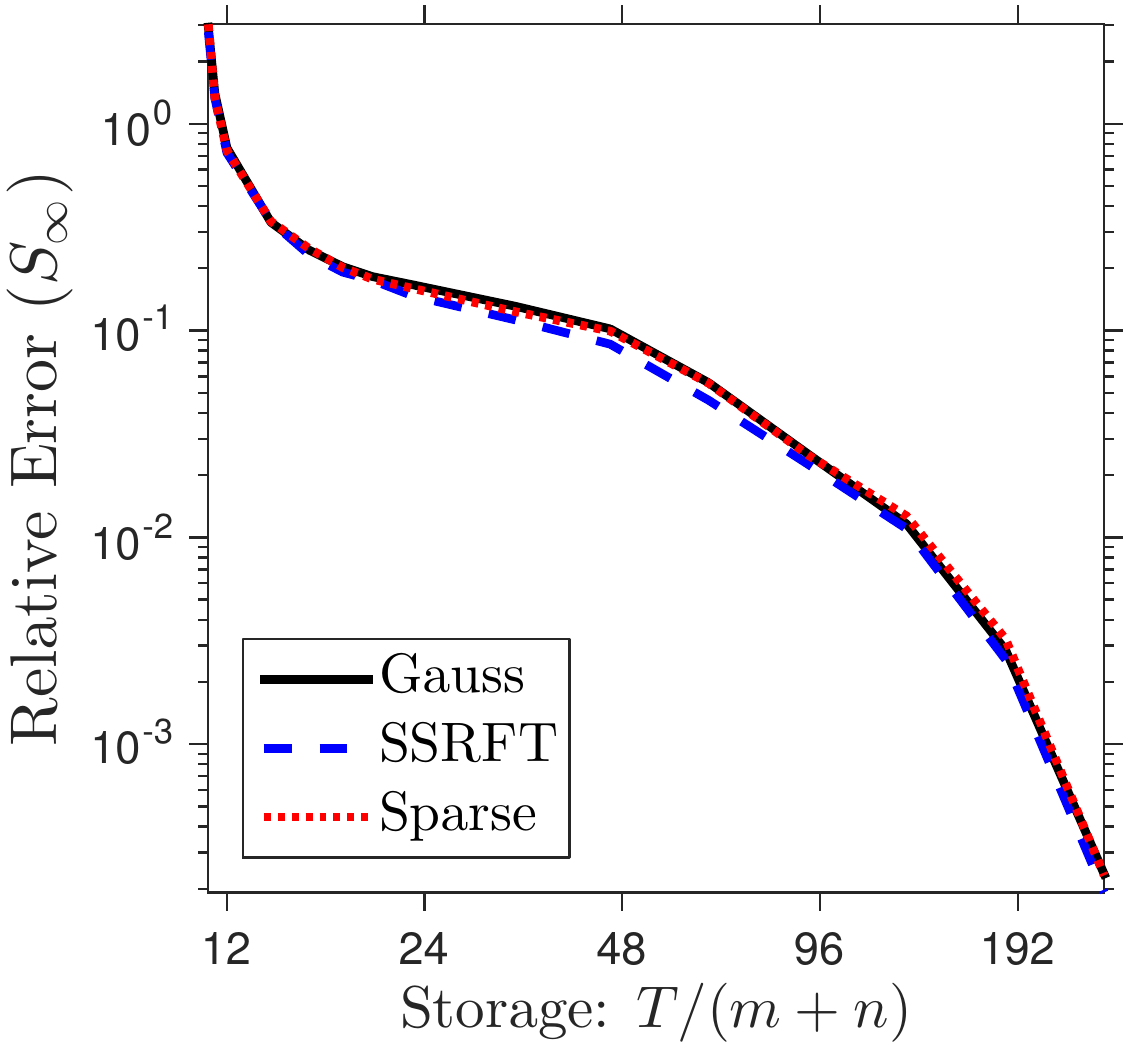}
\caption{\texttt{ExpDecaySlow}}
\end{center}
\end{subfigure}
\begin{subfigure}{.325\textwidth}
\begin{center}
\includegraphics[height=1.5in]{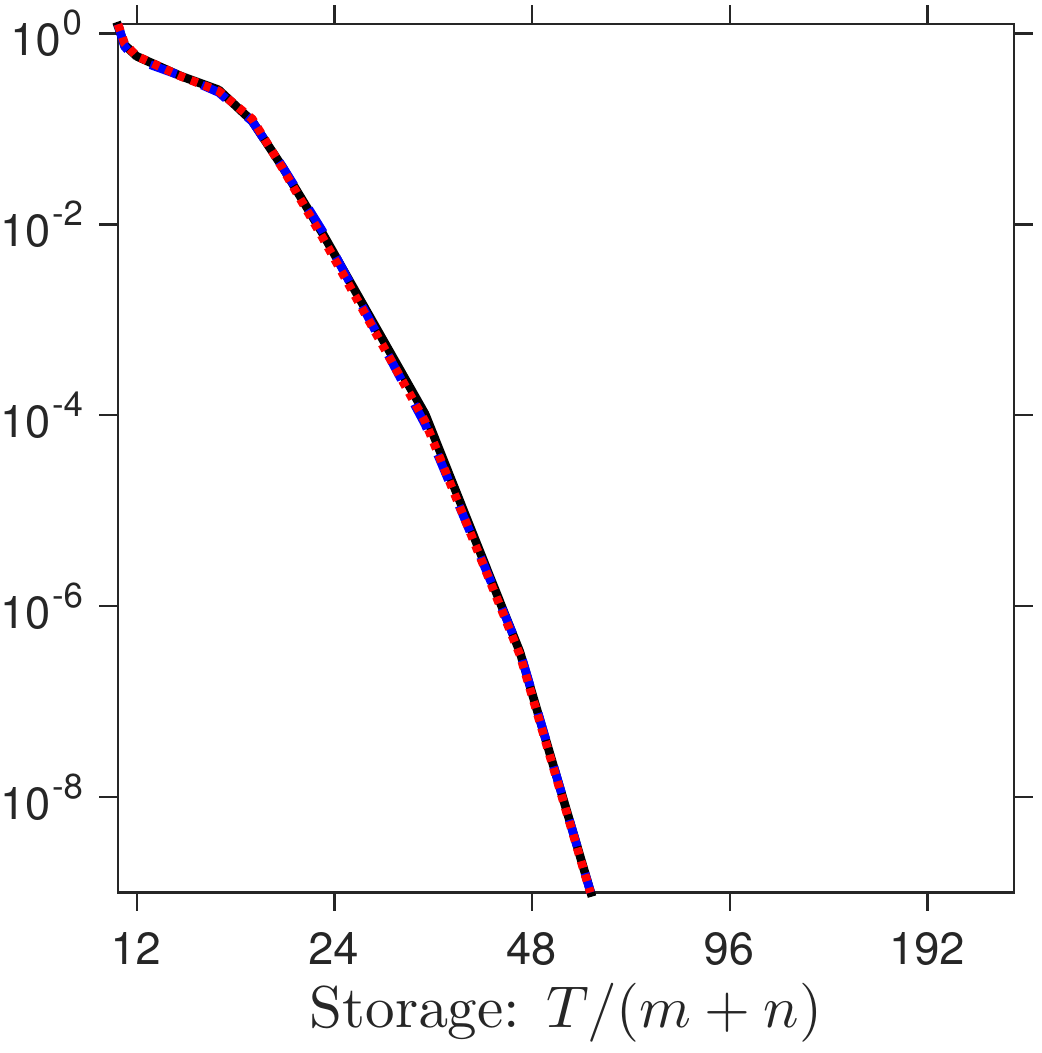}
\caption{\texttt{ExpDecayMed}}
\end{center}
\end{subfigure}
\begin{subfigure}{.325\textwidth}
\begin{center}
\includegraphics[height=1.5in]{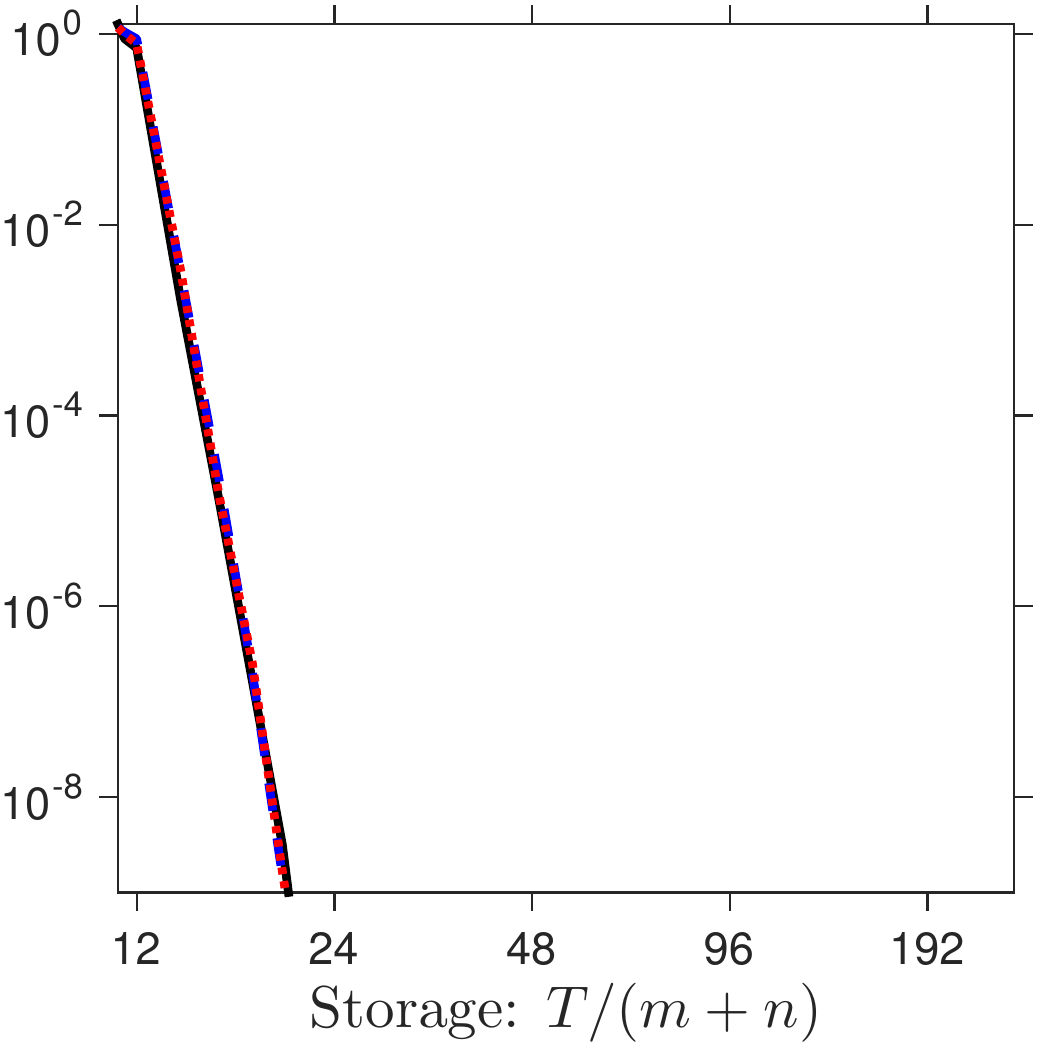}
\caption{\texttt{ExpDecayFast}}
\end{center}
\end{subfigure}
\end{center}

\vspace{0.5em}

\caption{\textbf{Insensitivity of proposed method to the dimension reduction map.}
(Effective rank $R = 10$, approximation rank $r = 10$, Schatten $\infty$-norm.)
We compare the oracle performance of the proposed fixed-rank
approximation~\cref{eqn:Ahat-fixed} implemented with Gaussian, SSRFT, or sparse
dimension reduction maps.  See~\cref{app:universality}
for details.}
\label{fig:universality-R10-Sinf}
\end{figure}

\begin{figure}[htp!]
\begin{center}
\begin{subfigure}{.325\textwidth}
\begin{center}
\includegraphics[height=1.5in]{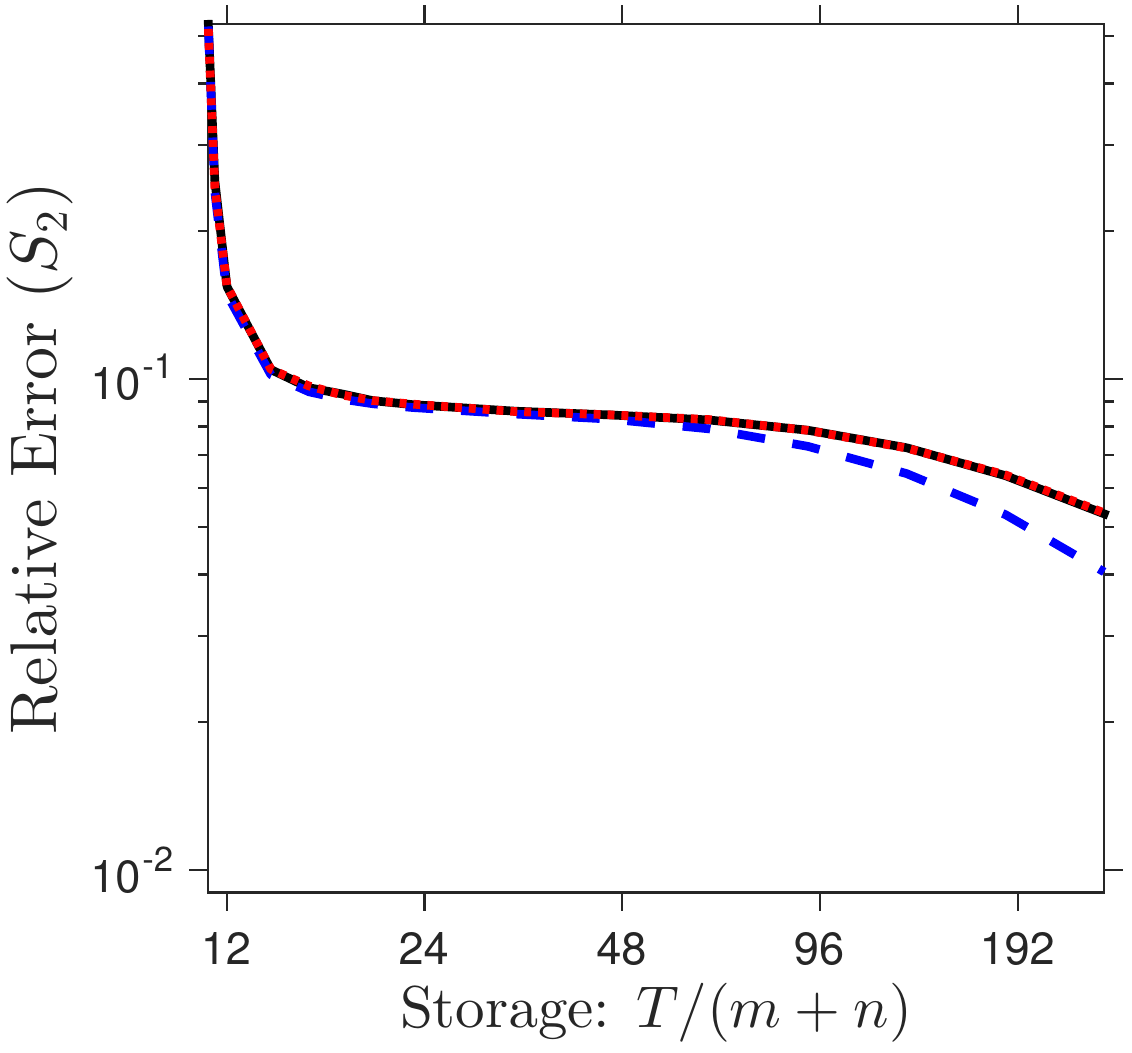}
\caption{\texttt{LowRankHiNoise}}
\end{center}
\end{subfigure}
\begin{subfigure}{.325\textwidth}
\begin{center}
\includegraphics[height=1.5in]{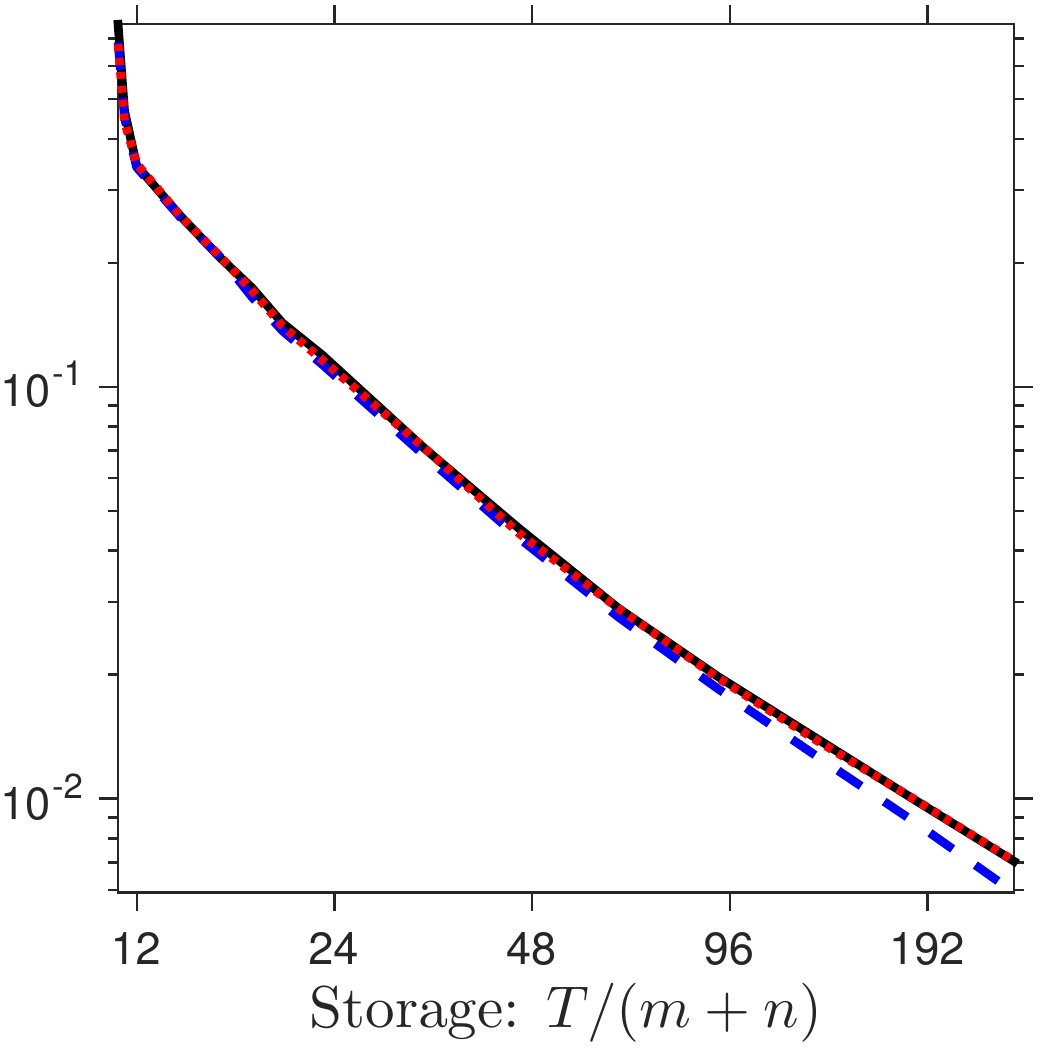}
\caption{\texttt{LowRankMedNoise}}
\end{center}
\end{subfigure}
\begin{subfigure}{.325\textwidth}
\begin{center}
\includegraphics[height=1.5in]{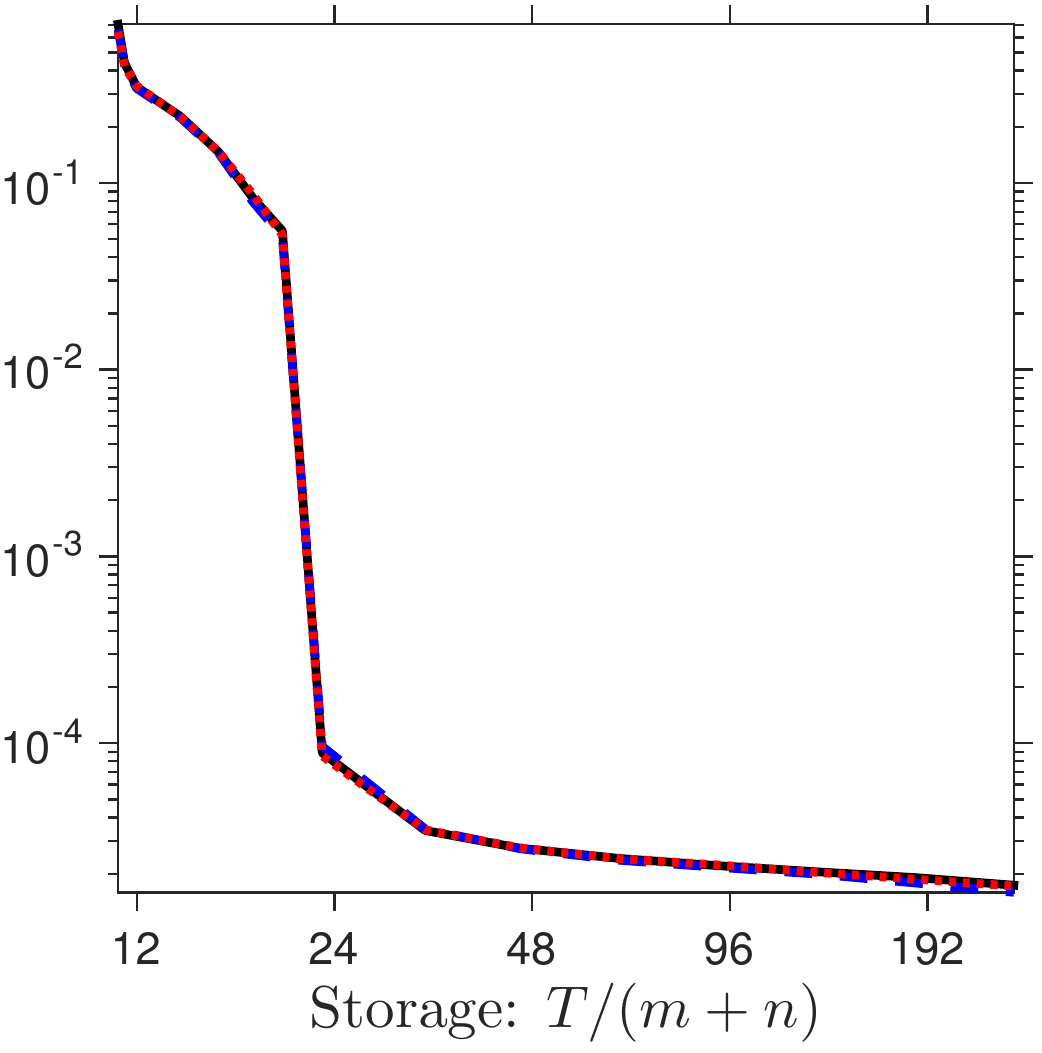}
\caption{\texttt{LowRankLowNoise}}
\end{center}
\end{subfigure}
\end{center}

\vspace{.5em}

\begin{center}
\begin{subfigure}{.325\textwidth}
\begin{center}
\includegraphics[height=1.5in]{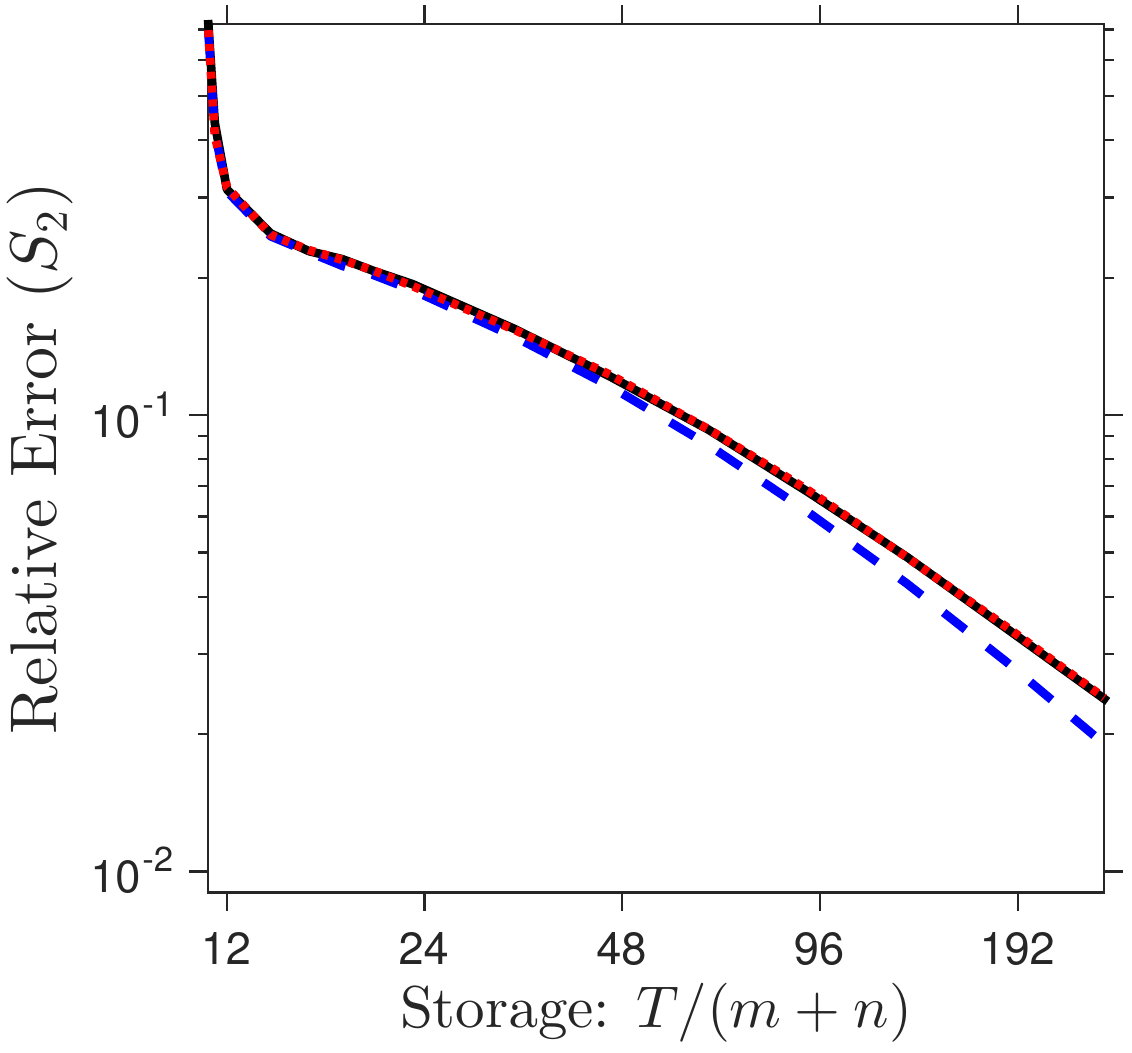}
\caption{\texttt{PolyDecaySlow}}
\end{center}
\end{subfigure}
\begin{subfigure}{.325\textwidth}
\begin{center}
\includegraphics[height=1.5in]{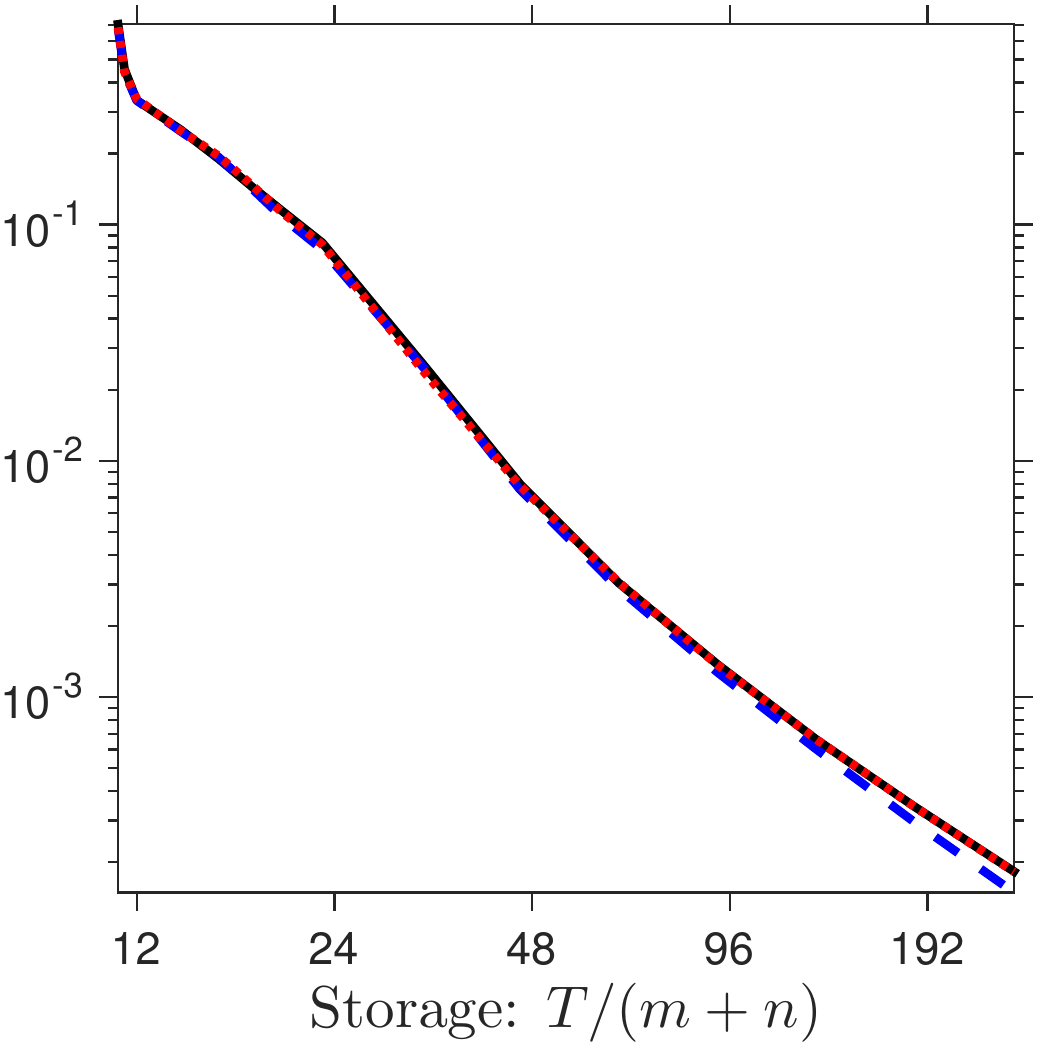}
\caption{\texttt{PolyDecayMed}}
\end{center}
\end{subfigure}
\begin{subfigure}{.325\textwidth}
\begin{center}
\includegraphics[height=1.5in]{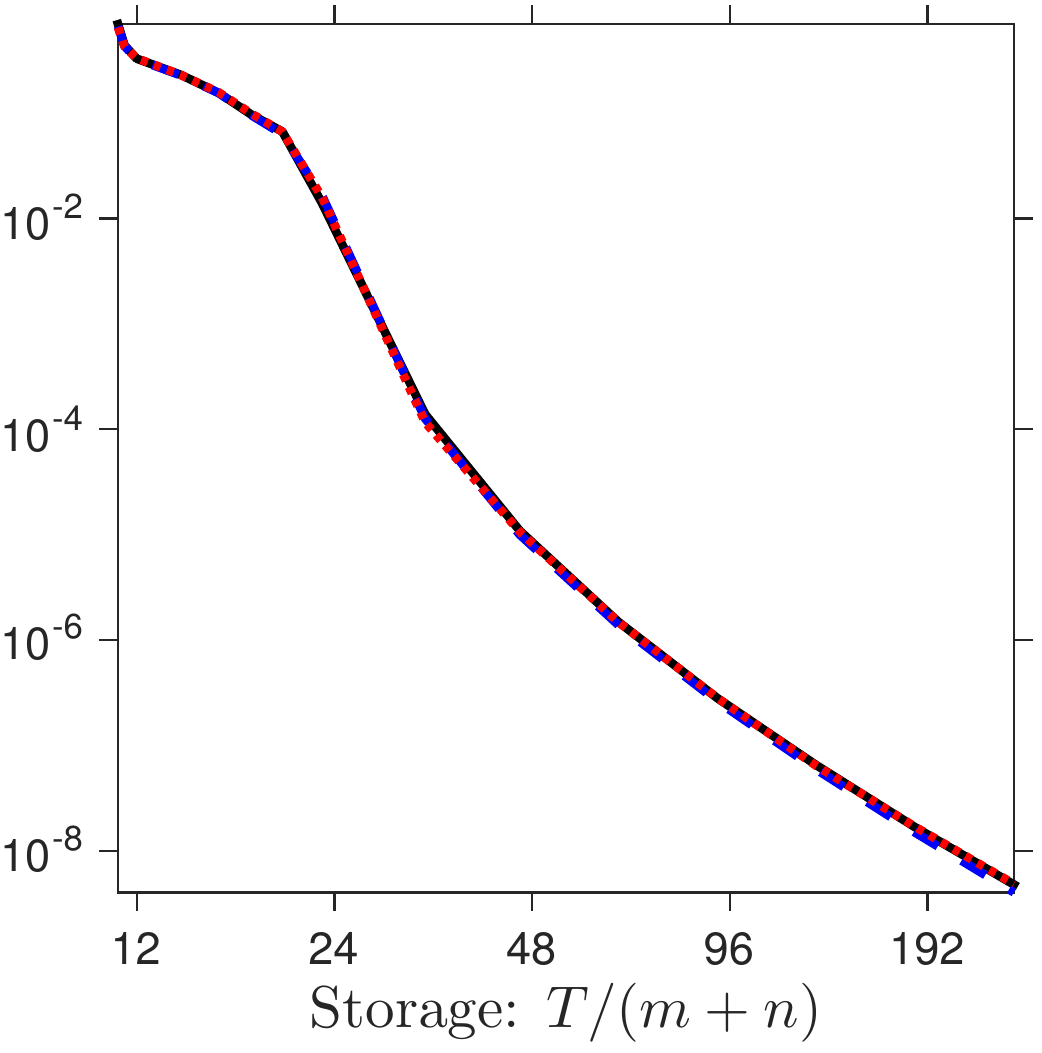}
\caption{\texttt{PolyDecayFast}}
\end{center}
\end{subfigure}
\end{center}

\vspace{0.5em}

\begin{center}
\begin{subfigure}{.325\textwidth}
\begin{center}
\includegraphics[height=1.5in]{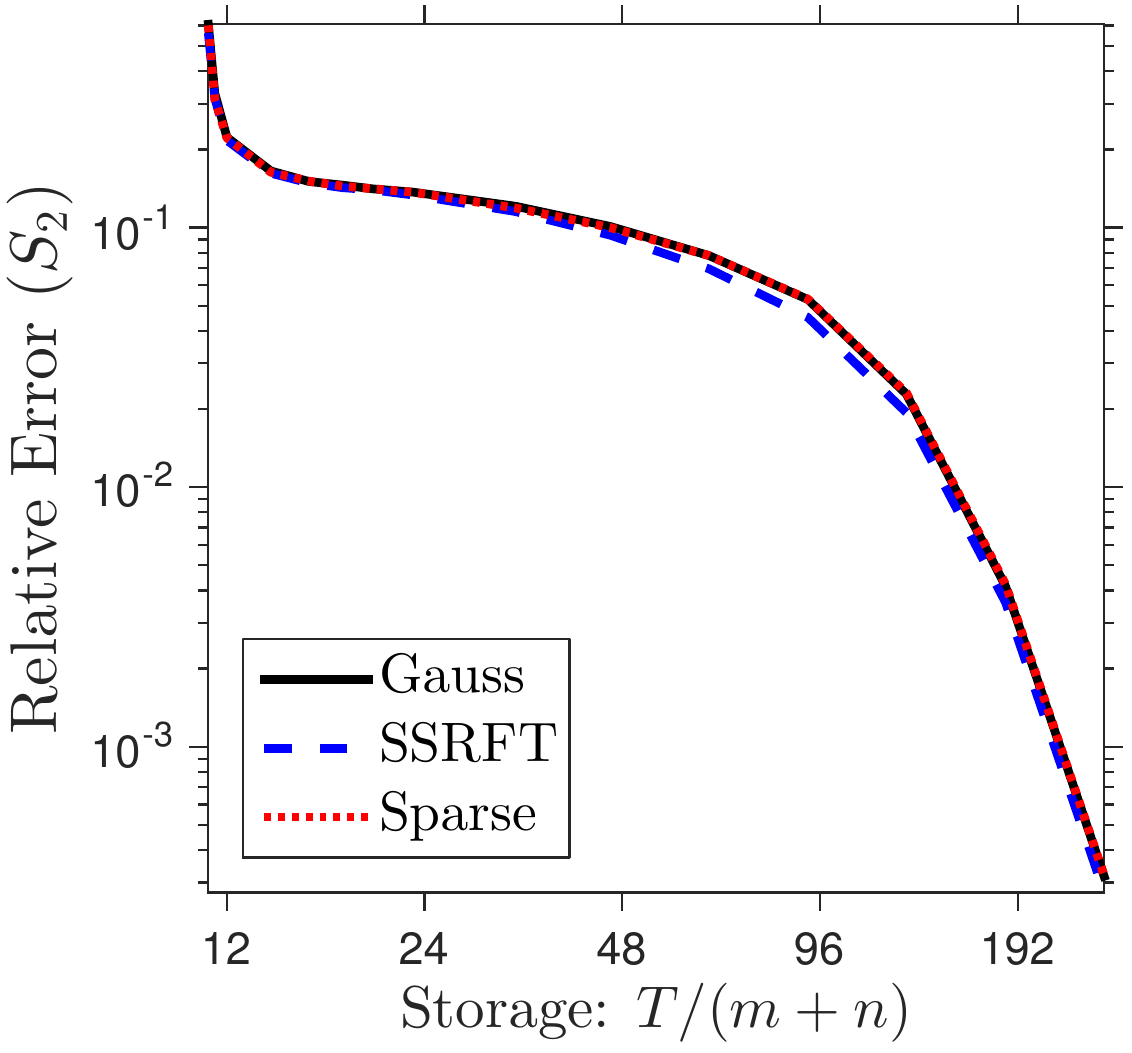}
\caption{\texttt{ExpDecaySlow}}
\end{center}
\end{subfigure}
\begin{subfigure}{.325\textwidth}
\begin{center}
\includegraphics[height=1.5in]{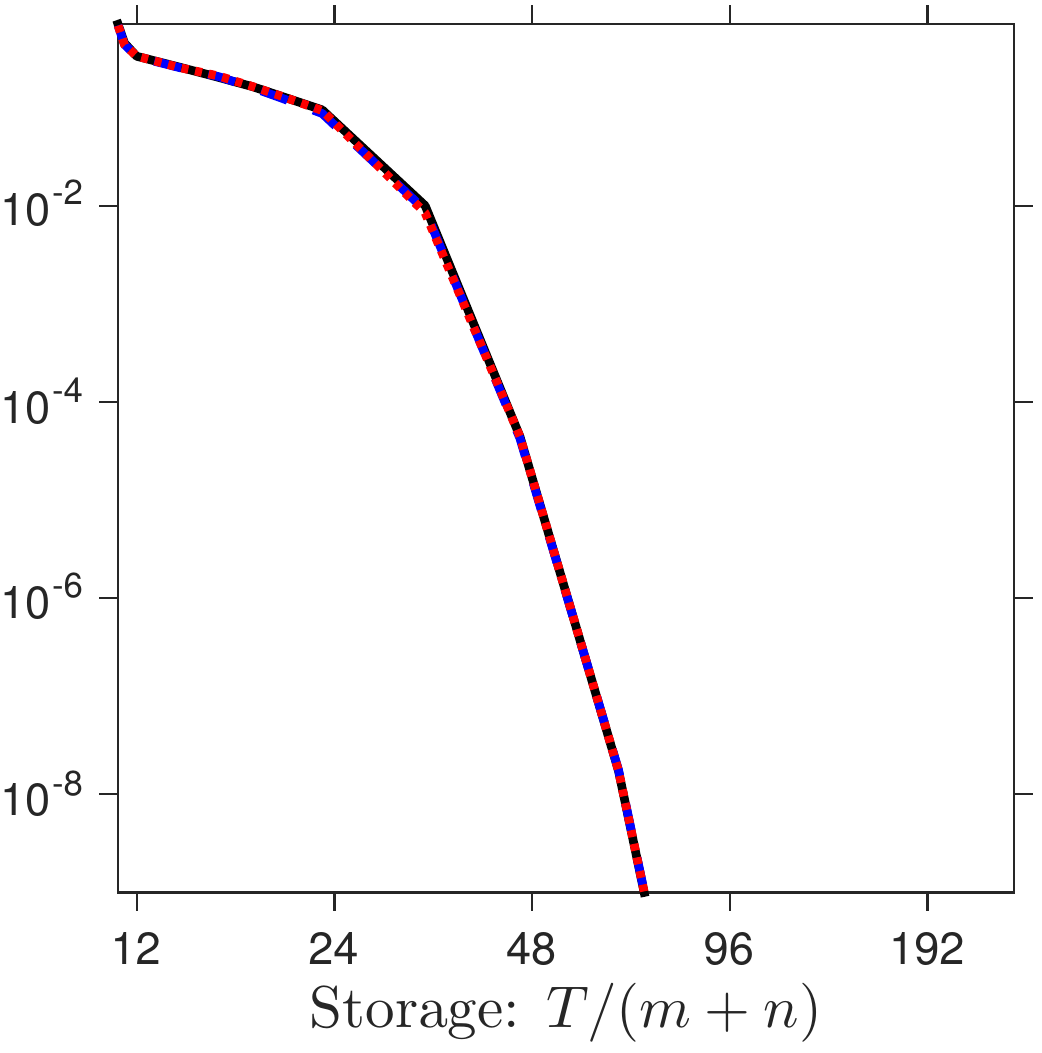}
\caption{\texttt{ExpDecayMed}}
\end{center}
\end{subfigure}
\begin{subfigure}{.325\textwidth}
\begin{center}
\includegraphics[height=1.5in]{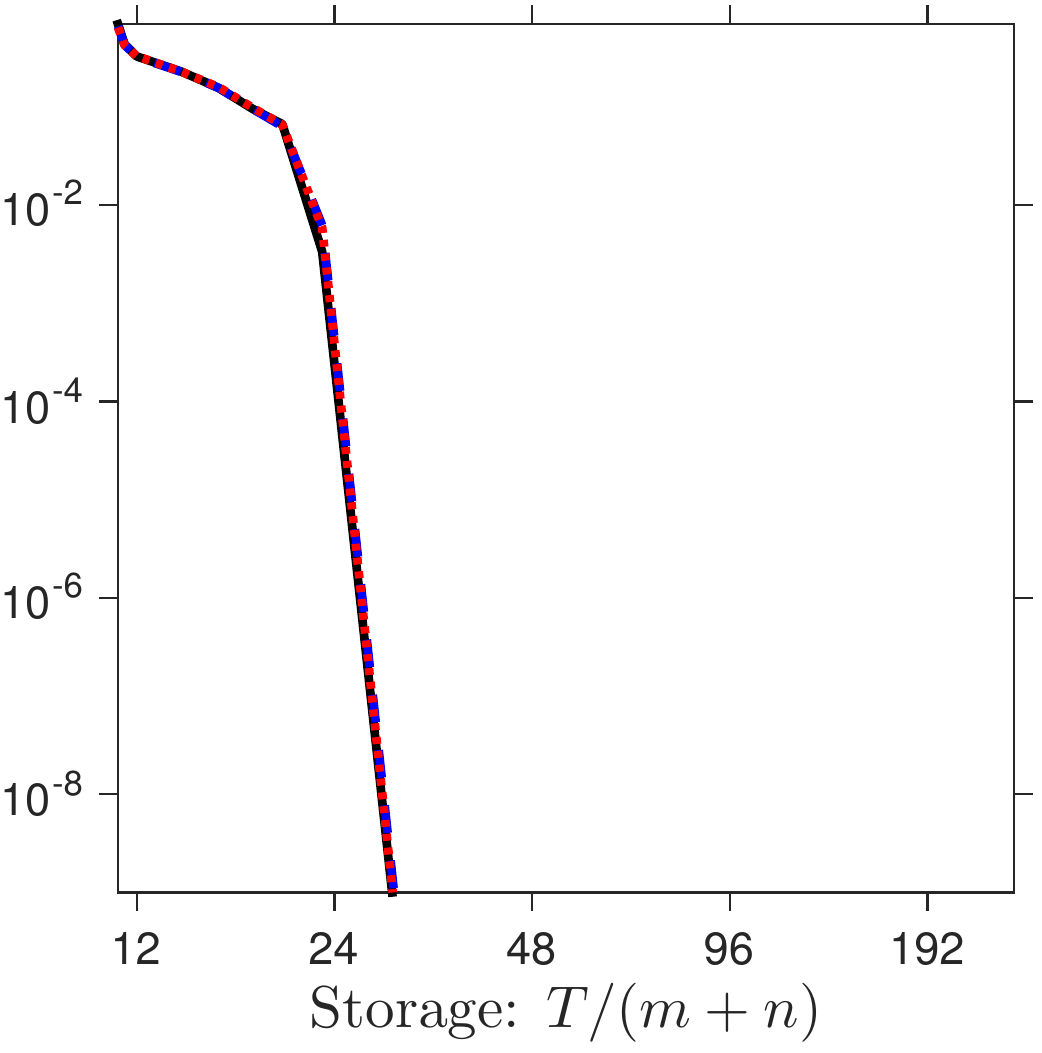}
\caption{\texttt{ExpDecayFast}}
\end{center}
\end{subfigure}
\end{center}

\vspace{0.5em}

\caption{\textbf{Insensitivity of proposed method to the dimension reduction map.}
(Effective rank $R = 20$, approximation rank $r = 10$, Schatten 2-norm.)
We compare the oracle performance of the proposed fixed-rank
approximation~\cref{eqn:Ahat-fixed} implemented with Gaussian, SSRFT, or sparse
dimension reduction maps.  See~\cref{app:universality}
for details.}
\label{fig:universality-R20-S2}
\end{figure}

\begin{figure}[htp!]
\begin{center}
\begin{subfigure}{.325\textwidth}
\begin{center}
\includegraphics[height=1.5in]{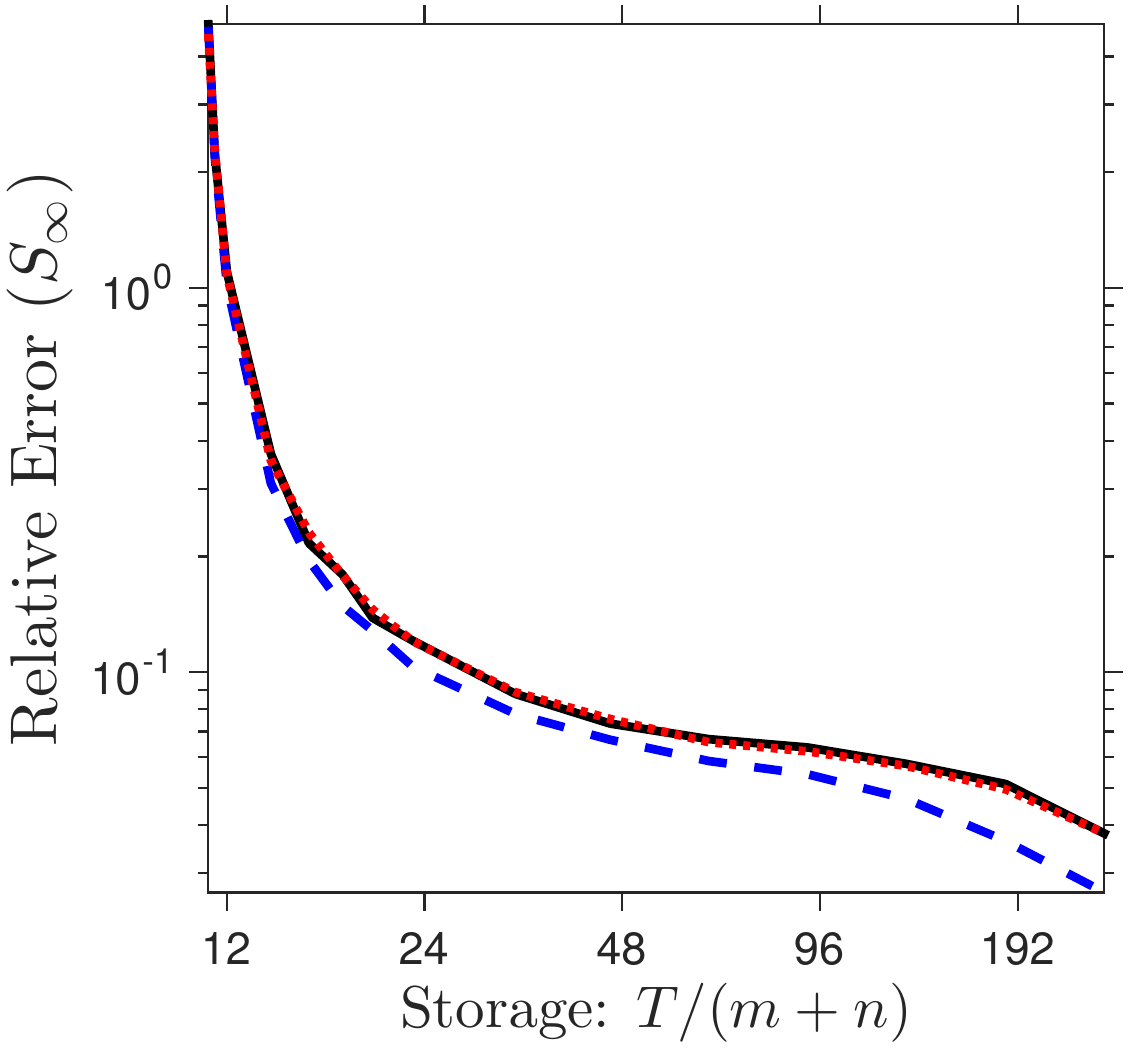}
\caption{\texttt{LowRankHiNoise}}
\end{center}
\end{subfigure}
\begin{subfigure}{.325\textwidth}
\begin{center}
\includegraphics[height=1.5in]{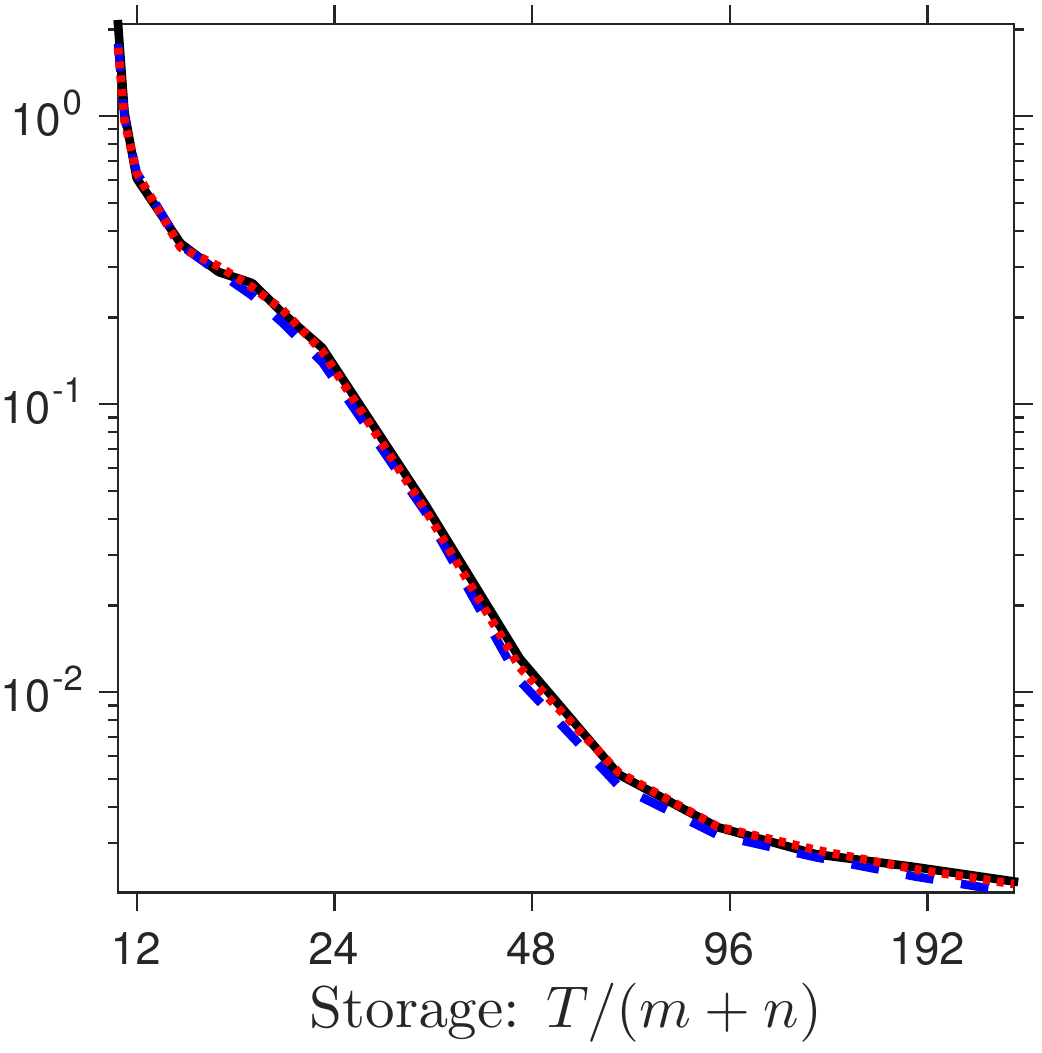}
\caption{\texttt{LowRankMedNoise}}
\end{center}
\end{subfigure}
\begin{subfigure}{.325\textwidth}
\begin{center}
\includegraphics[height=1.5in]{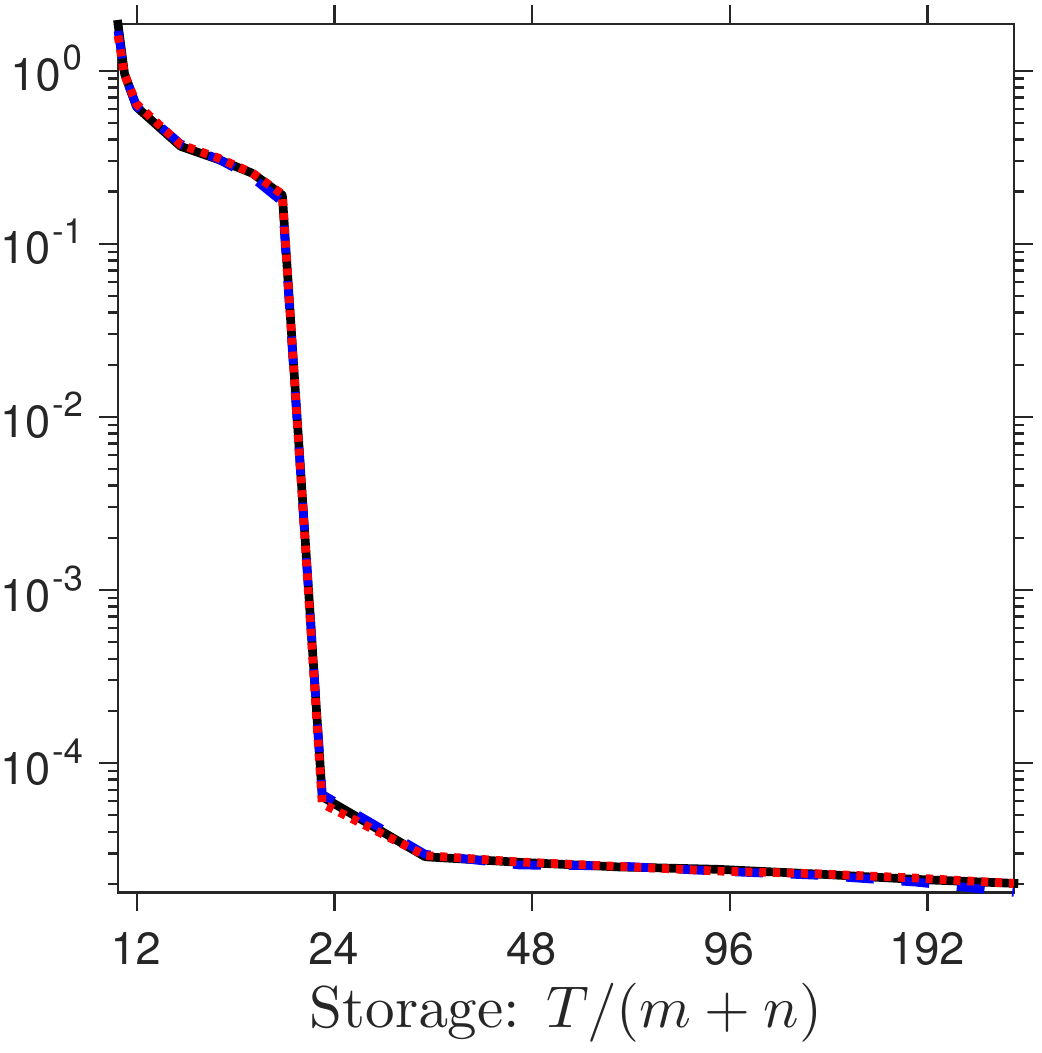}
\caption{\texttt{LowRankLowNoise}}
\end{center}
\end{subfigure}
\end{center}

\vspace{.5em}

\begin{center}
\begin{subfigure}{.325\textwidth}
\begin{center}
\includegraphics[height=1.5in]{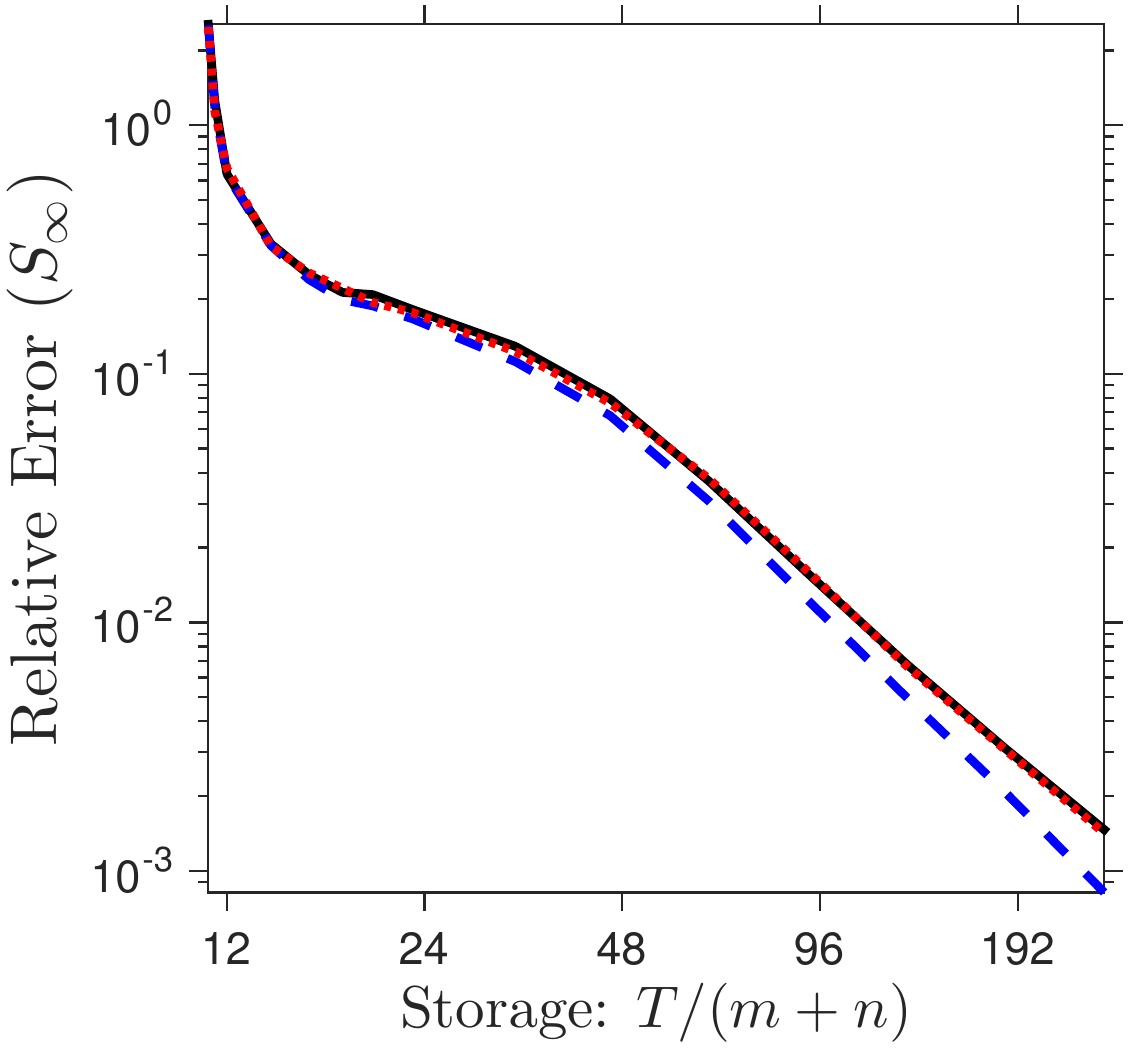}
\caption{\texttt{PolyDecaySlow}}
\end{center}
\end{subfigure}
\begin{subfigure}{.325\textwidth}
\begin{center}
\includegraphics[height=1.5in]{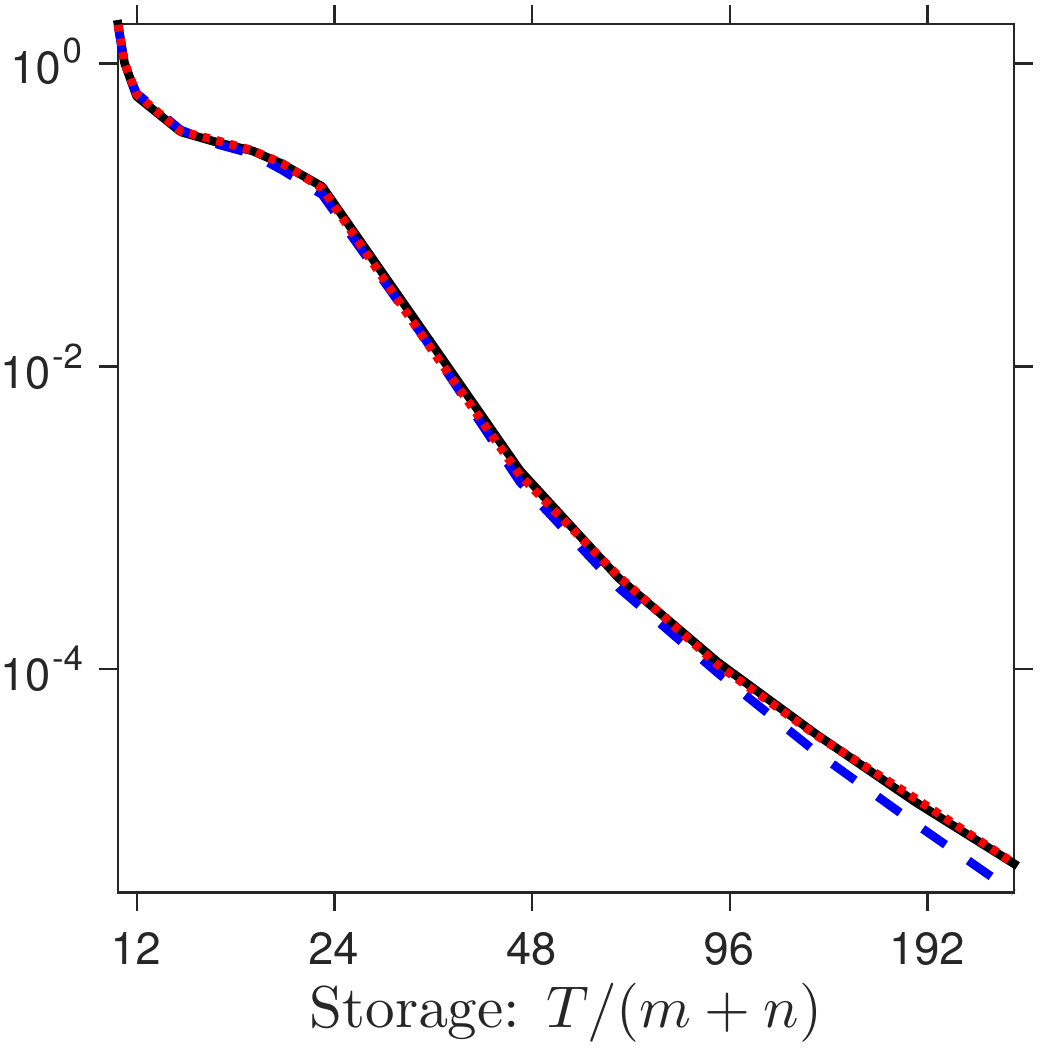}
\caption{\texttt{PolyDecayMed}}
\end{center}
\end{subfigure}
\begin{subfigure}{.325\textwidth}
\begin{center}
\includegraphics[height=1.5in]{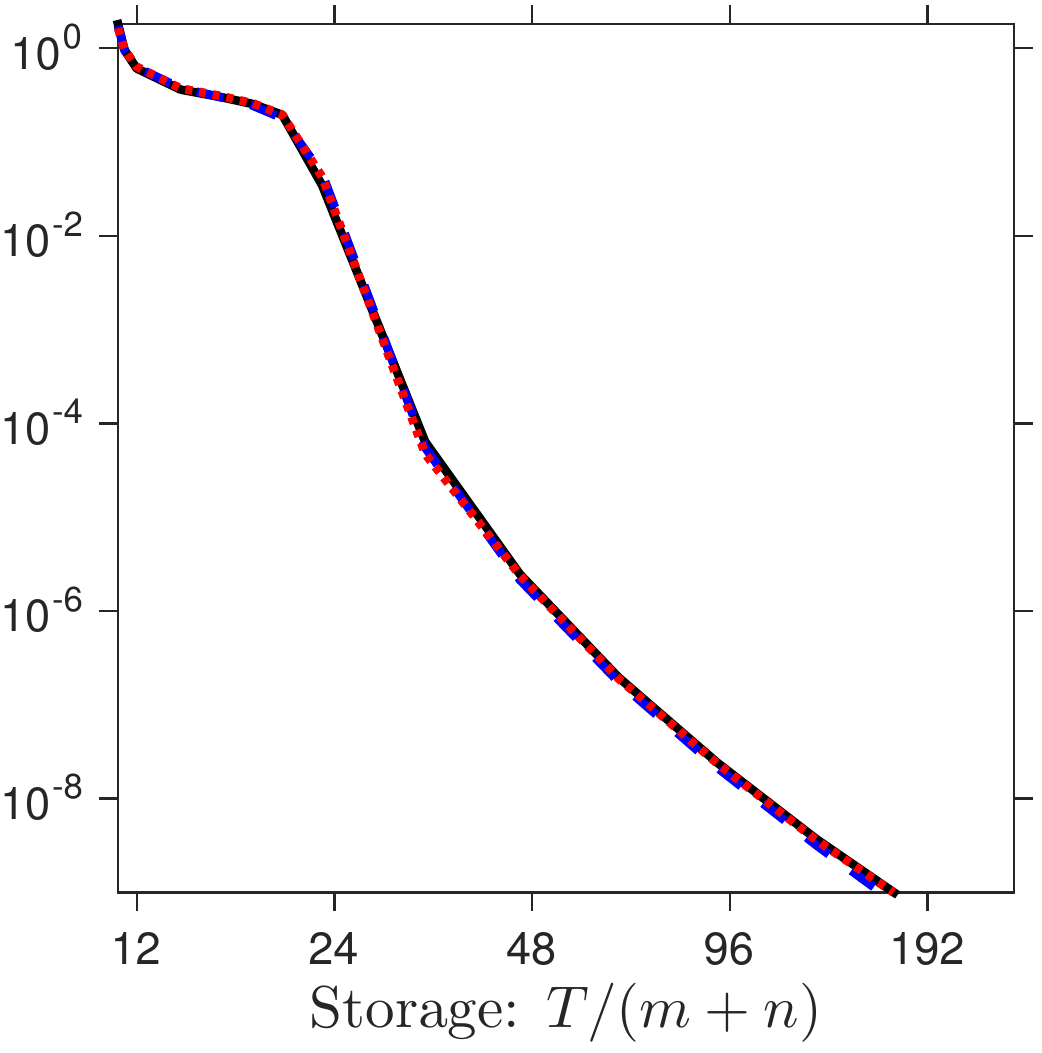}
\caption{\texttt{PolyDecayFast}}
\end{center}
\end{subfigure}
\end{center}

\vspace{0.5em}

\begin{center}
\begin{subfigure}{.325\textwidth}
\begin{center}
\includegraphics[height=1.5in]{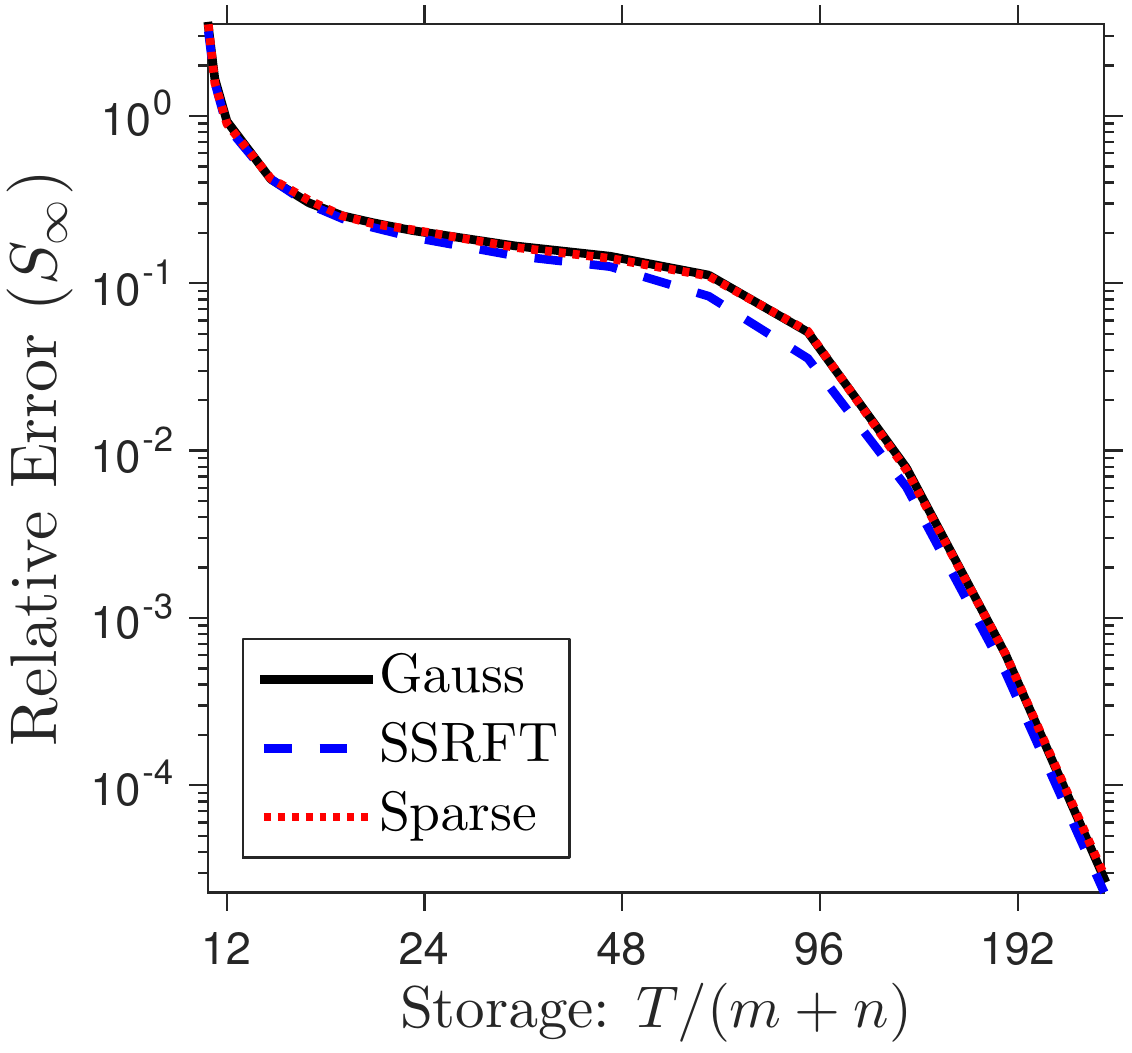}
\caption{\texttt{ExpDecaySlow}}
\end{center}
\end{subfigure}
\begin{subfigure}{.325\textwidth}
\begin{center}
\includegraphics[height=1.5in]{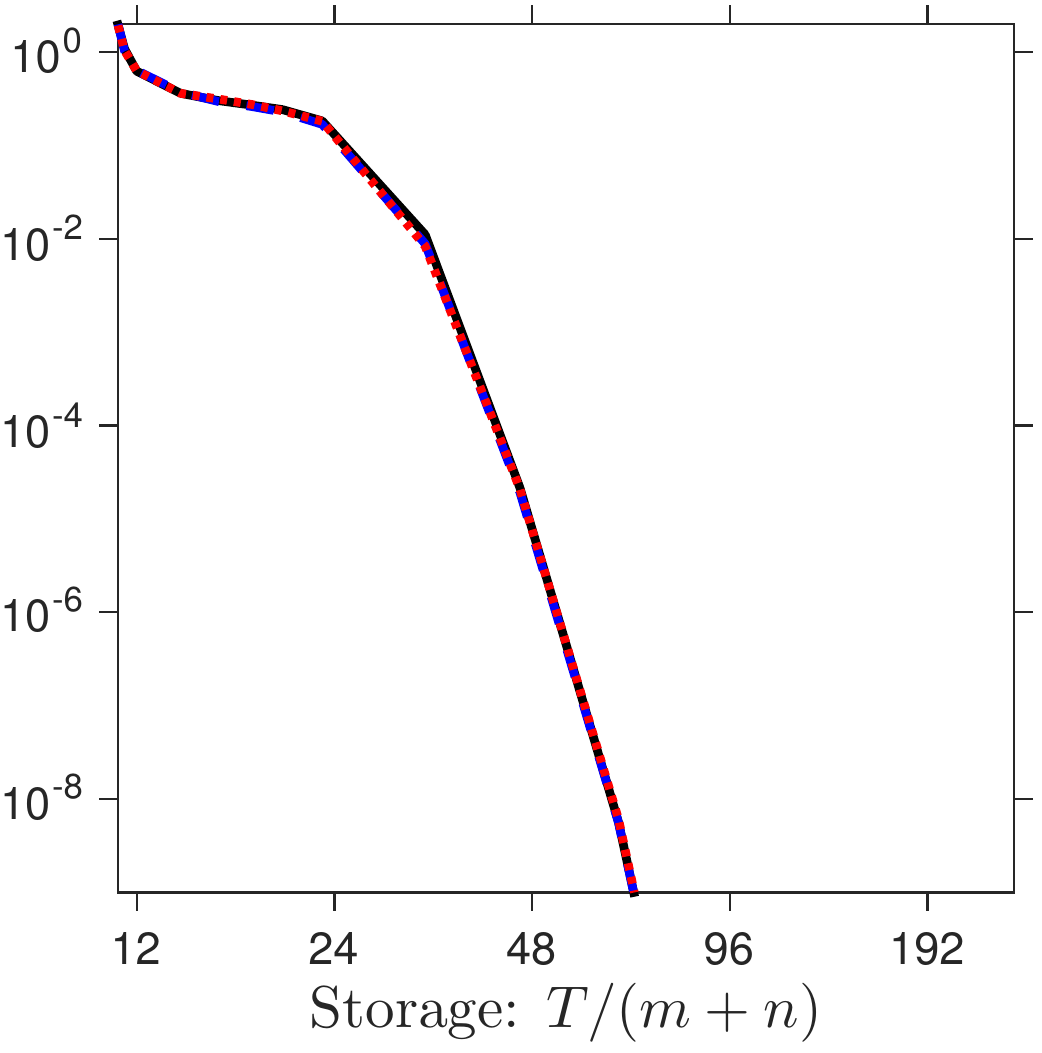}
\caption{\texttt{ExpDecayMed}}
\end{center}
\end{subfigure}
\begin{subfigure}{.325\textwidth}
\begin{center}
\includegraphics[height=1.5in]{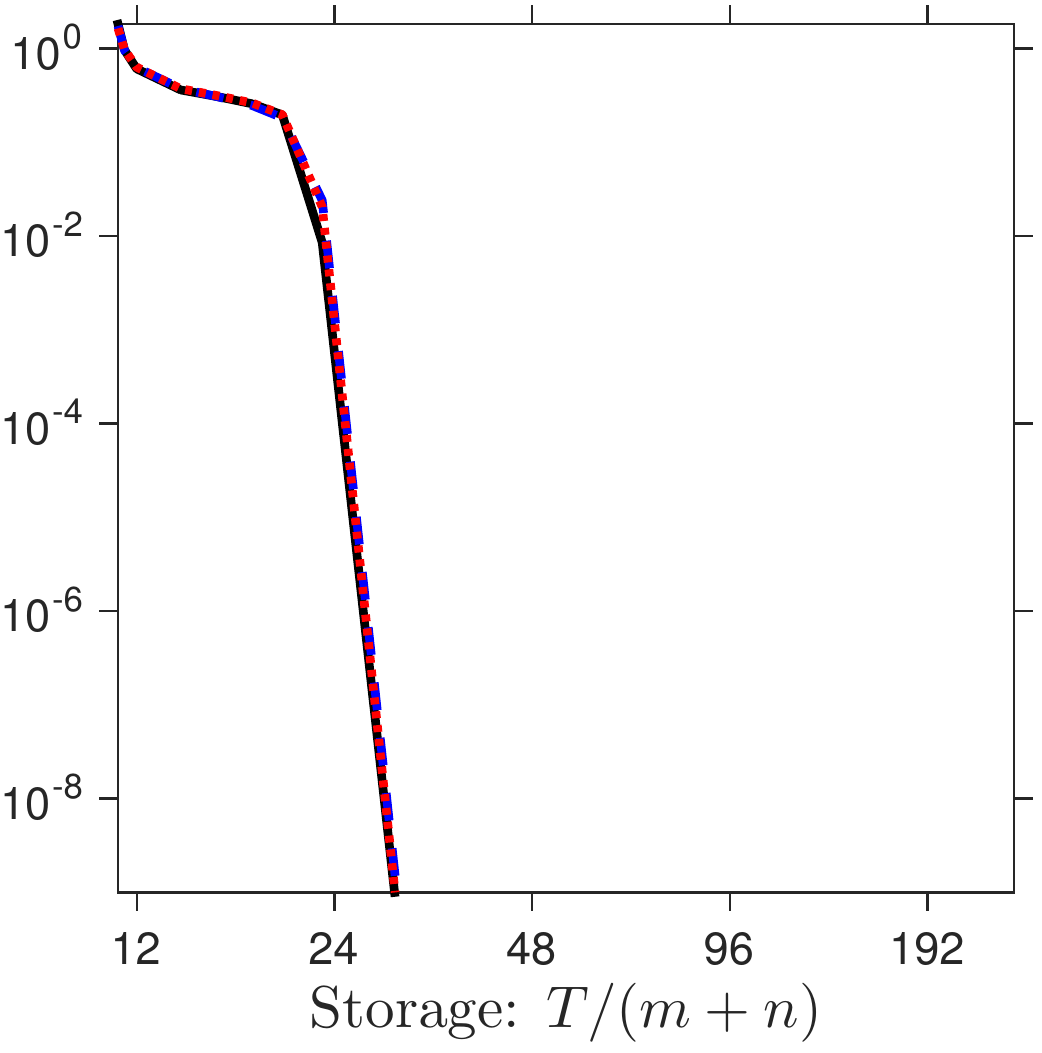}
\caption{\texttt{ExpDecayFast}}
\end{center}
\end{subfigure}
\end{center}

\vspace{0.5em}

\caption{\textbf{Insensitivity of proposed method to the dimension reduction map.}
(Effective rank $R = 20$, approximation rank $r = 10$, Schatten $\infty$-norm.)
We compare the oracle performance of the proposed fixed-rank
approximation~\cref{eqn:Ahat-fixed} implemented with Gaussian, SSRFT, or sparse
dimension reduction maps.  See~\cref{app:universality}
for details.}
\label{fig:universality-R20-Sinf}
\end{figure}

\begin{figure}[htp!]
\vspace{0.5in}
\begin{center}
\begin{subfigure}{.325\textwidth}
\begin{center}
\includegraphics[height=1.5in]{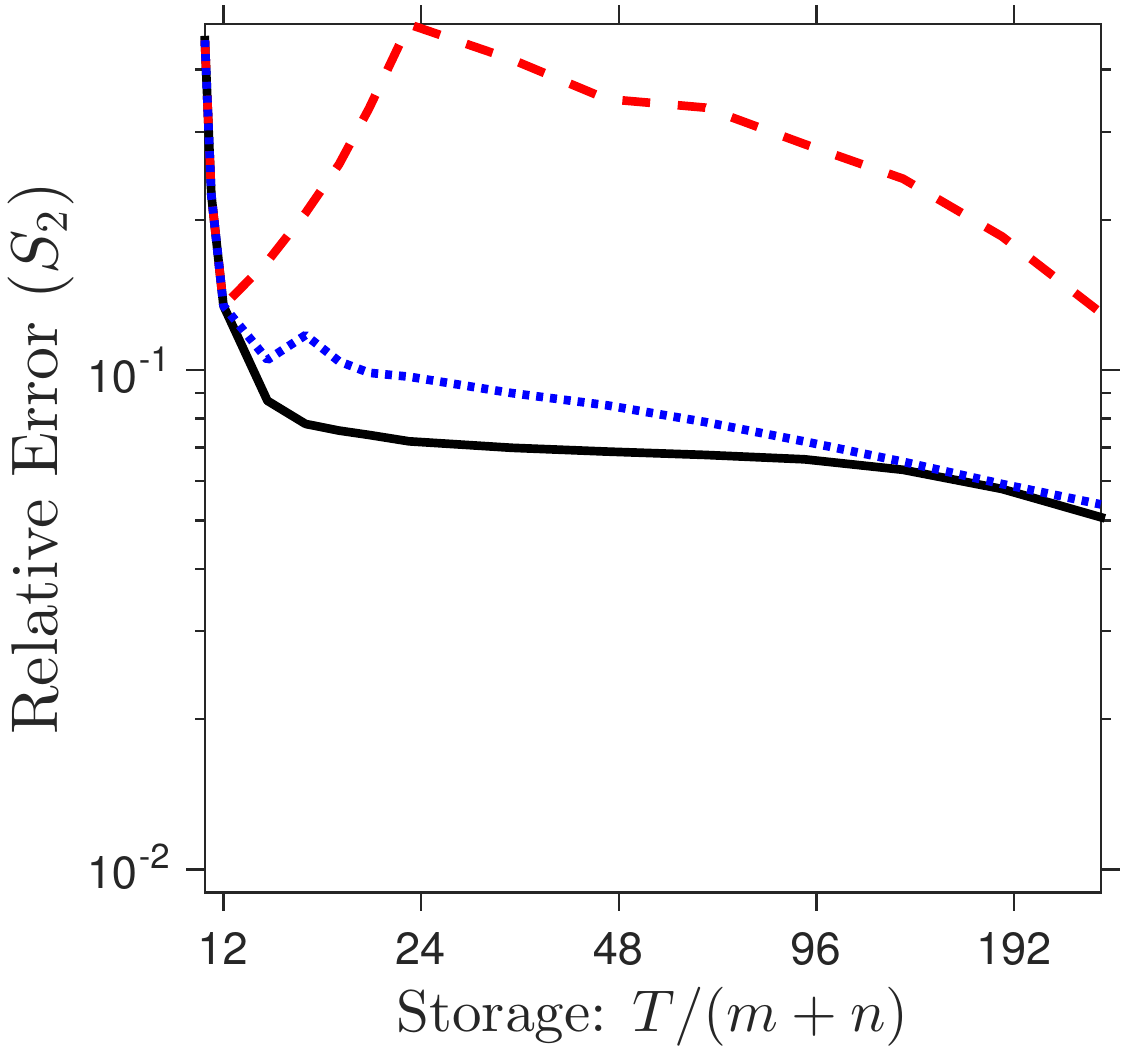}
\caption{\texttt{LowRankHiNoise}}
\end{center}
\end{subfigure}
\begin{subfigure}{.325\textwidth}
\begin{center}
\includegraphics[height=1.5in]{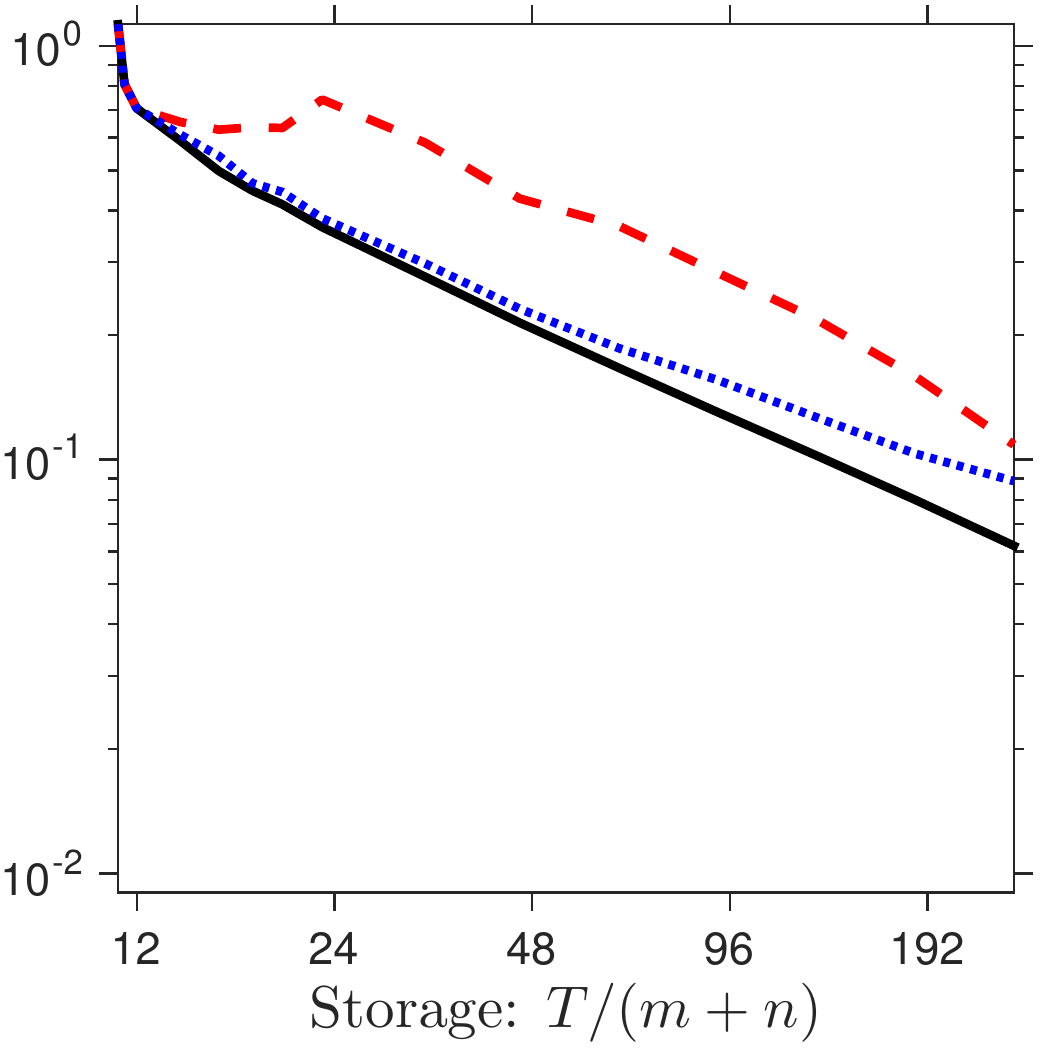}
\caption{\texttt{LowRankMedNoise}}
\end{center}
\end{subfigure}
\begin{subfigure}{.325\textwidth}
\begin{center}
\includegraphics[height=1.5in]{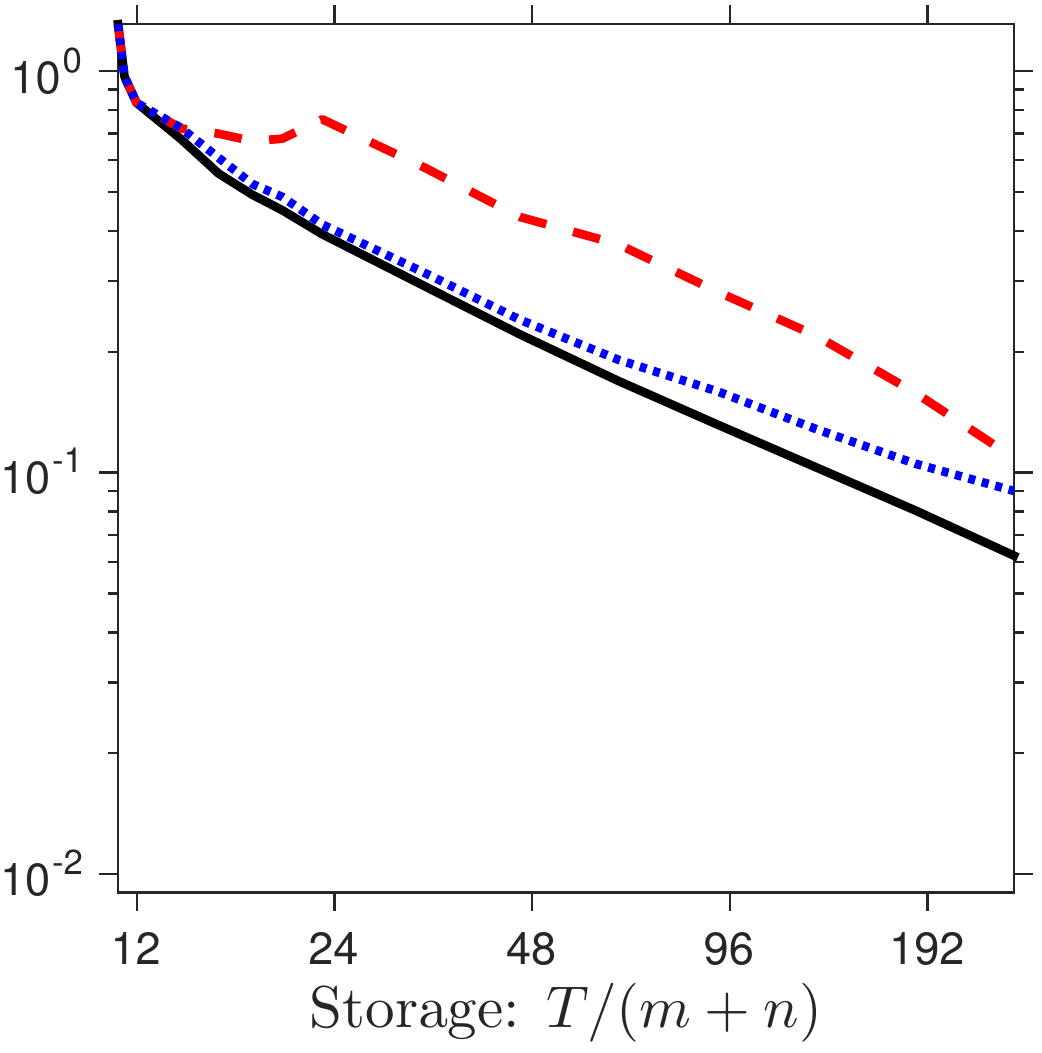}
\caption{\texttt{LowRankLowNoise}}
\end{center}
\end{subfigure}
\end{center}

\vspace{.5em}

\begin{center}
\begin{subfigure}{.325\textwidth}
\begin{center}
\includegraphics[height=1.5in]{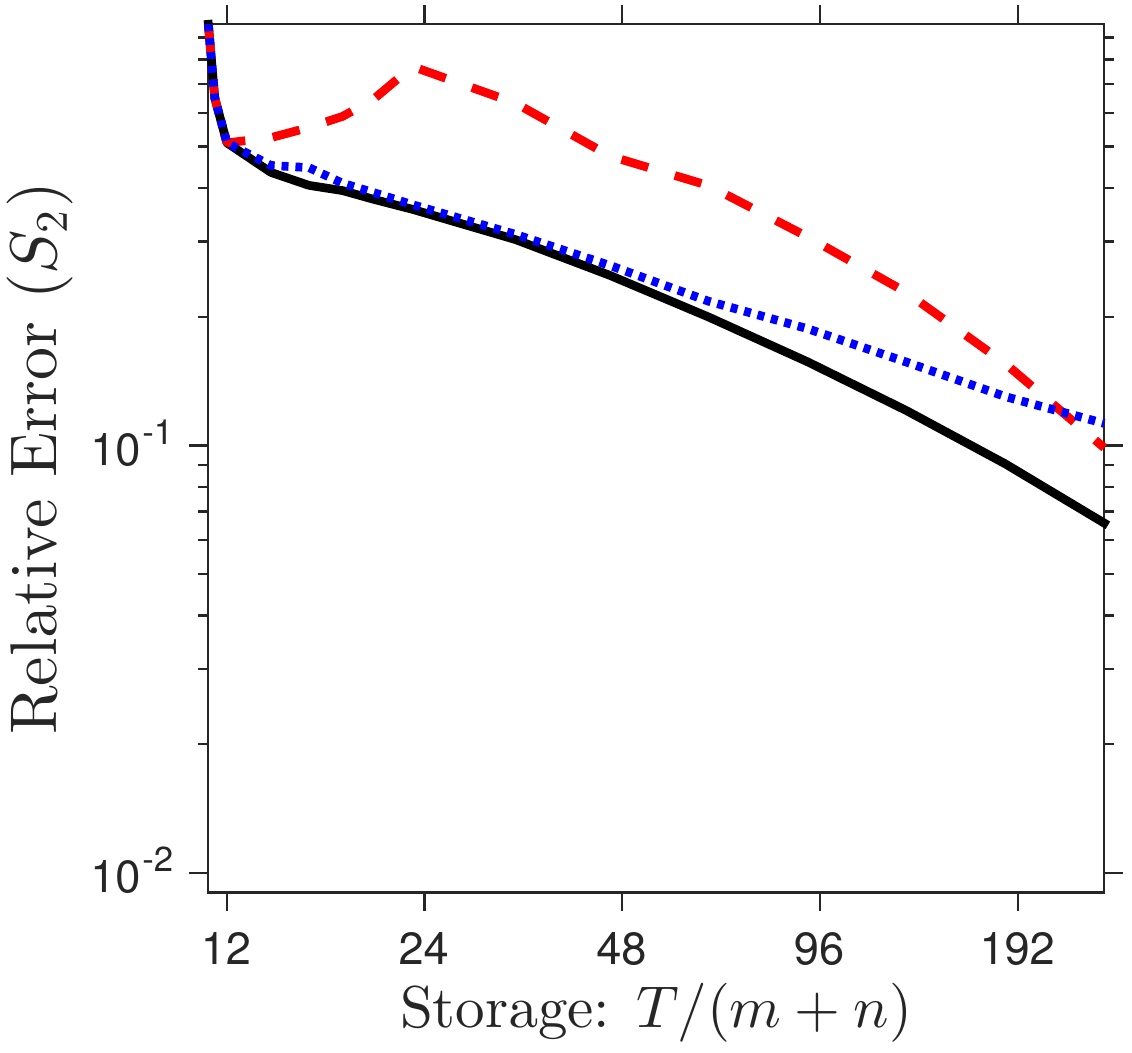}
\caption{\texttt{PolyDecaySlow}}
\end{center}
\end{subfigure}
\begin{subfigure}{.325\textwidth}
\begin{center}
\includegraphics[height=1.5in]{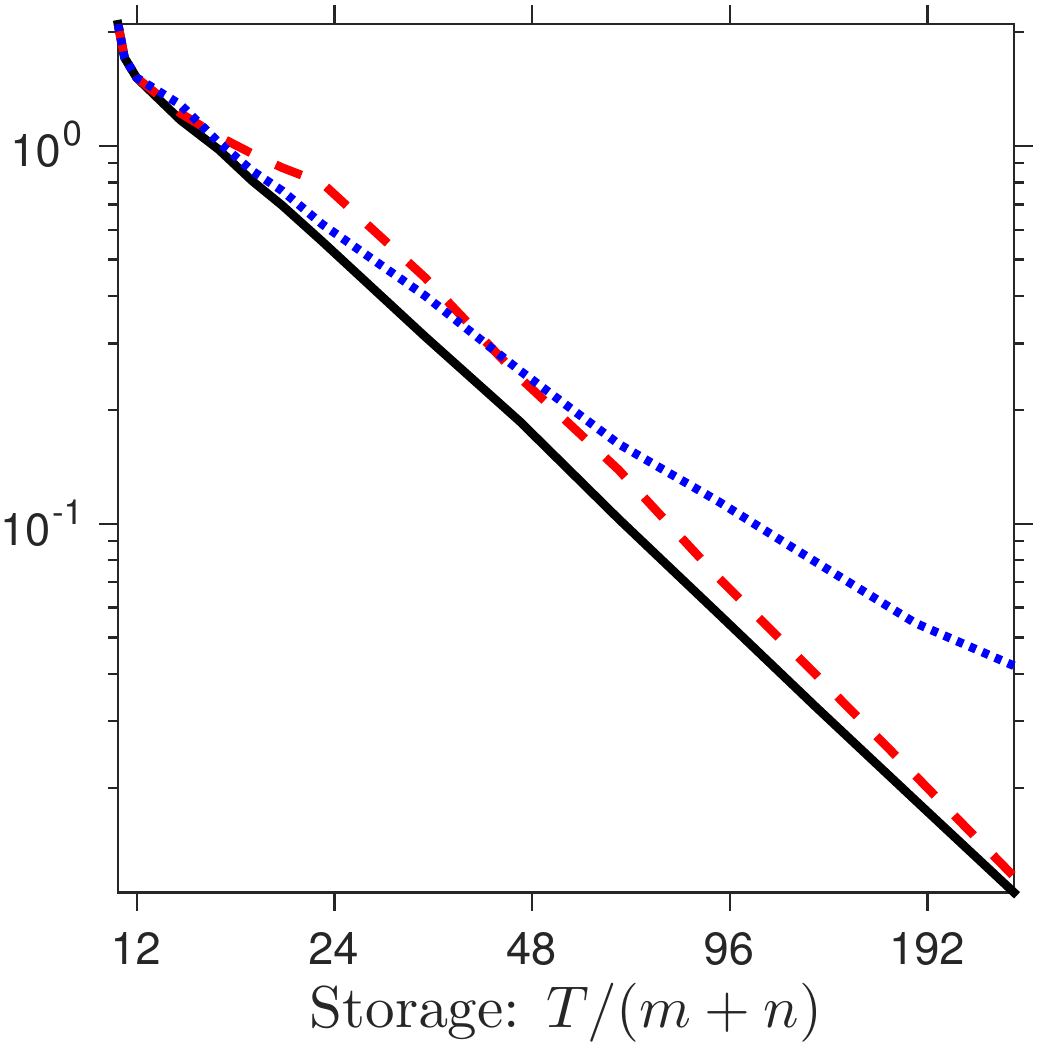}
\caption{\texttt{PolyDecayMed}}
\end{center}
\end{subfigure}
\begin{subfigure}{.325\textwidth}
\begin{center}
\includegraphics[height=1.5in]{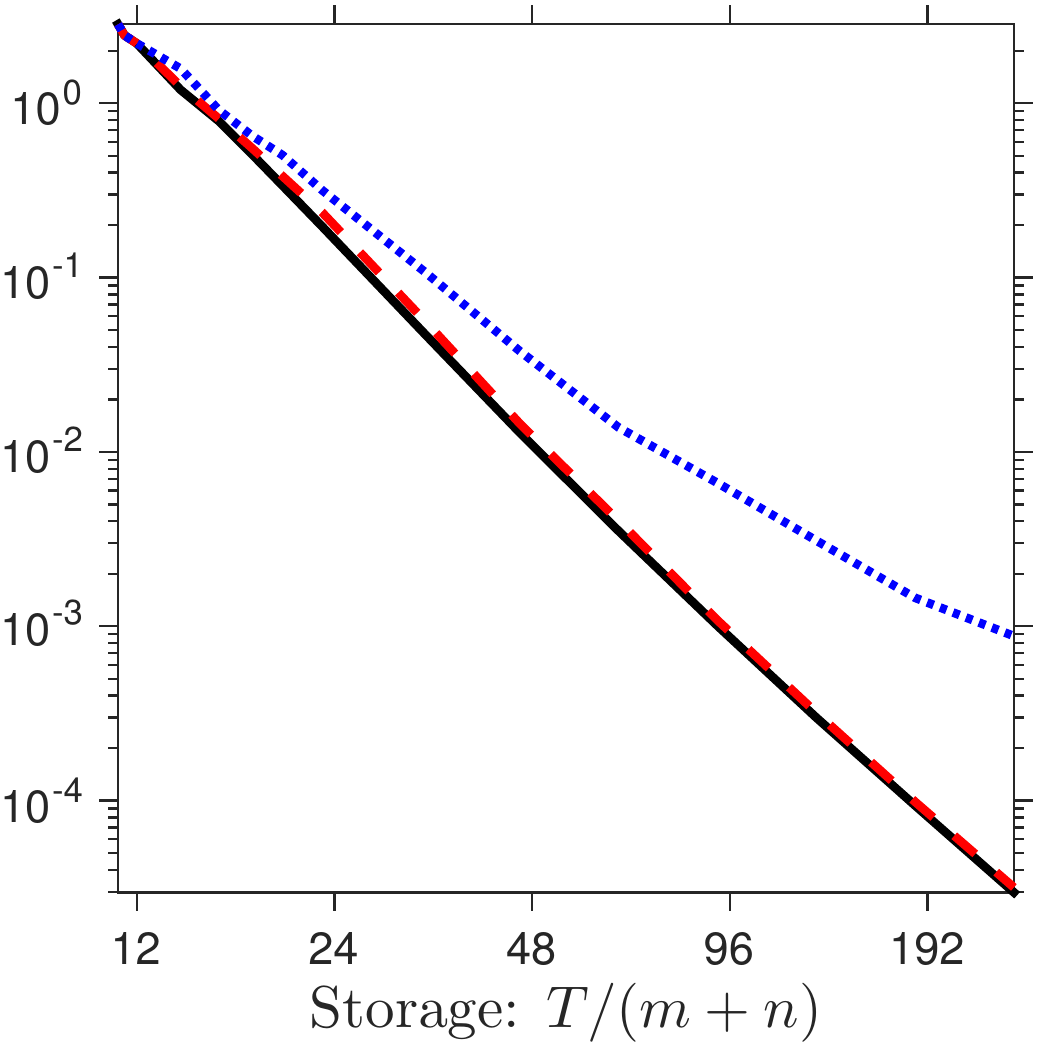}
\caption{\texttt{PolyDecayFast}}
\end{center}
\end{subfigure}
\end{center}

\vspace{0.5em}

\begin{center}
\begin{subfigure}{.325\textwidth}
\begin{center}
\includegraphics[height=1.5in]{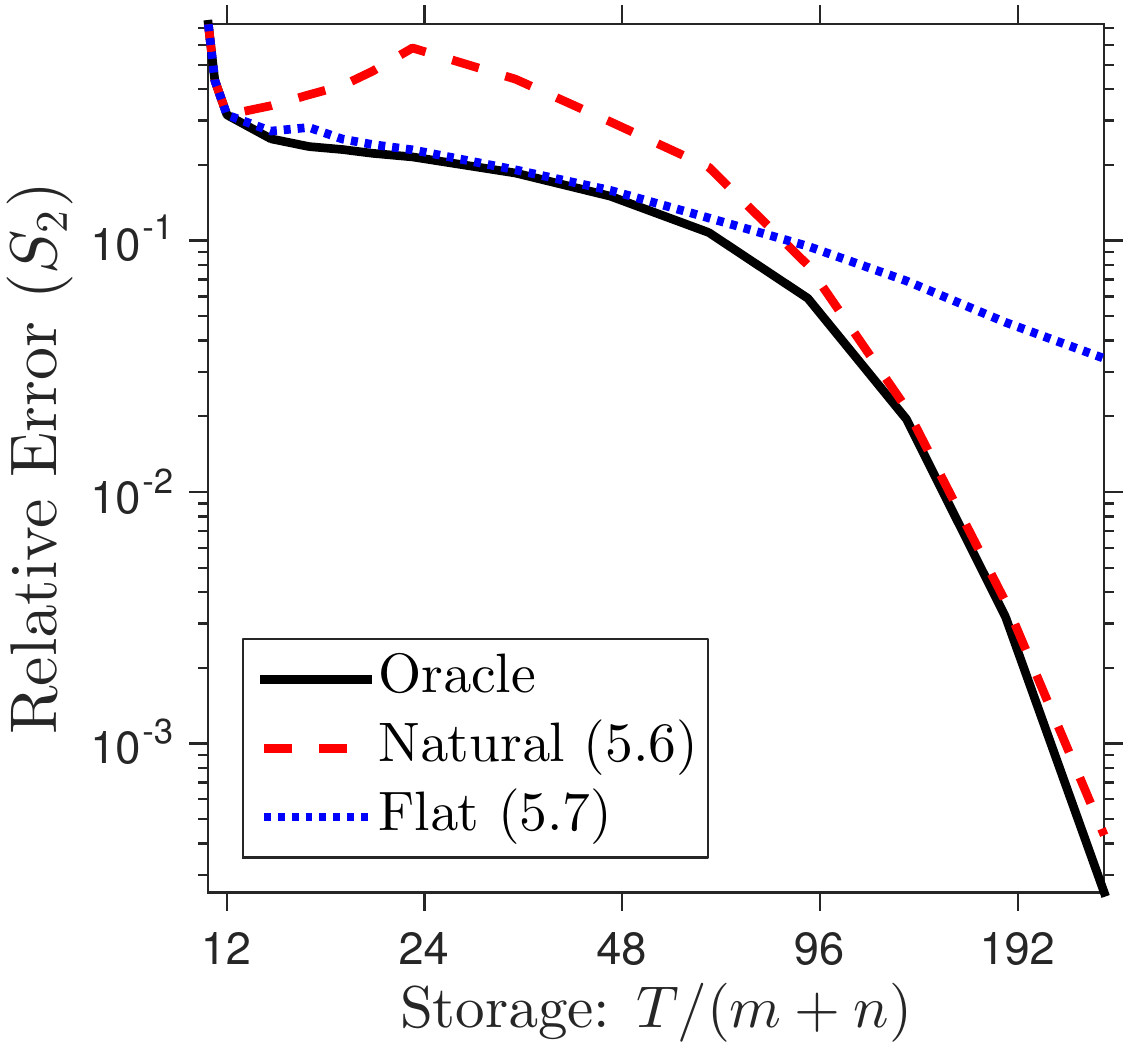}
\caption{\texttt{ExpDecaySlow}}
\end{center}
\end{subfigure}
\begin{subfigure}{.325\textwidth}
\begin{center}
\includegraphics[height=1.5in]{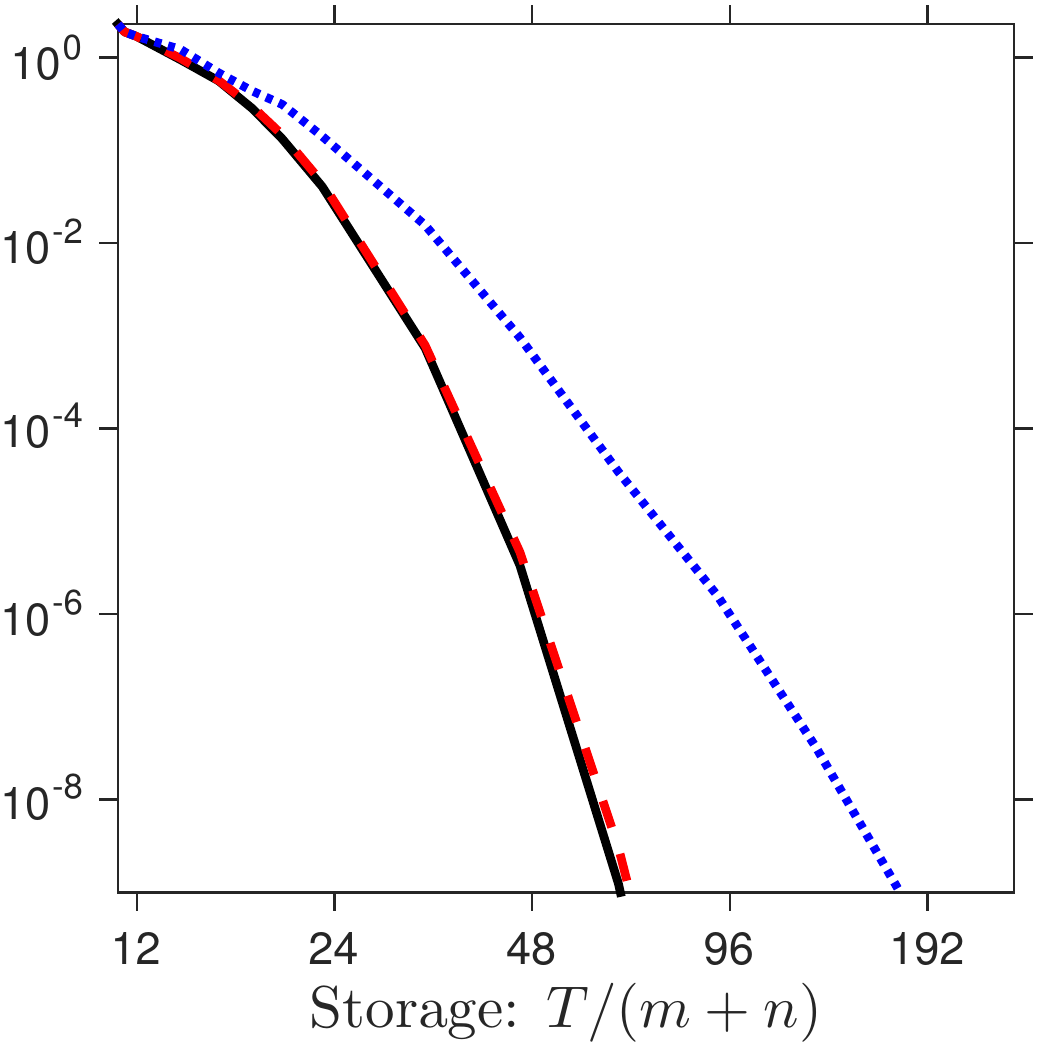}
\caption{\texttt{ExpDecayMed}}
\end{center}
\end{subfigure}
\begin{subfigure}{.325\textwidth}
\begin{center}
\includegraphics[height=1.5in]{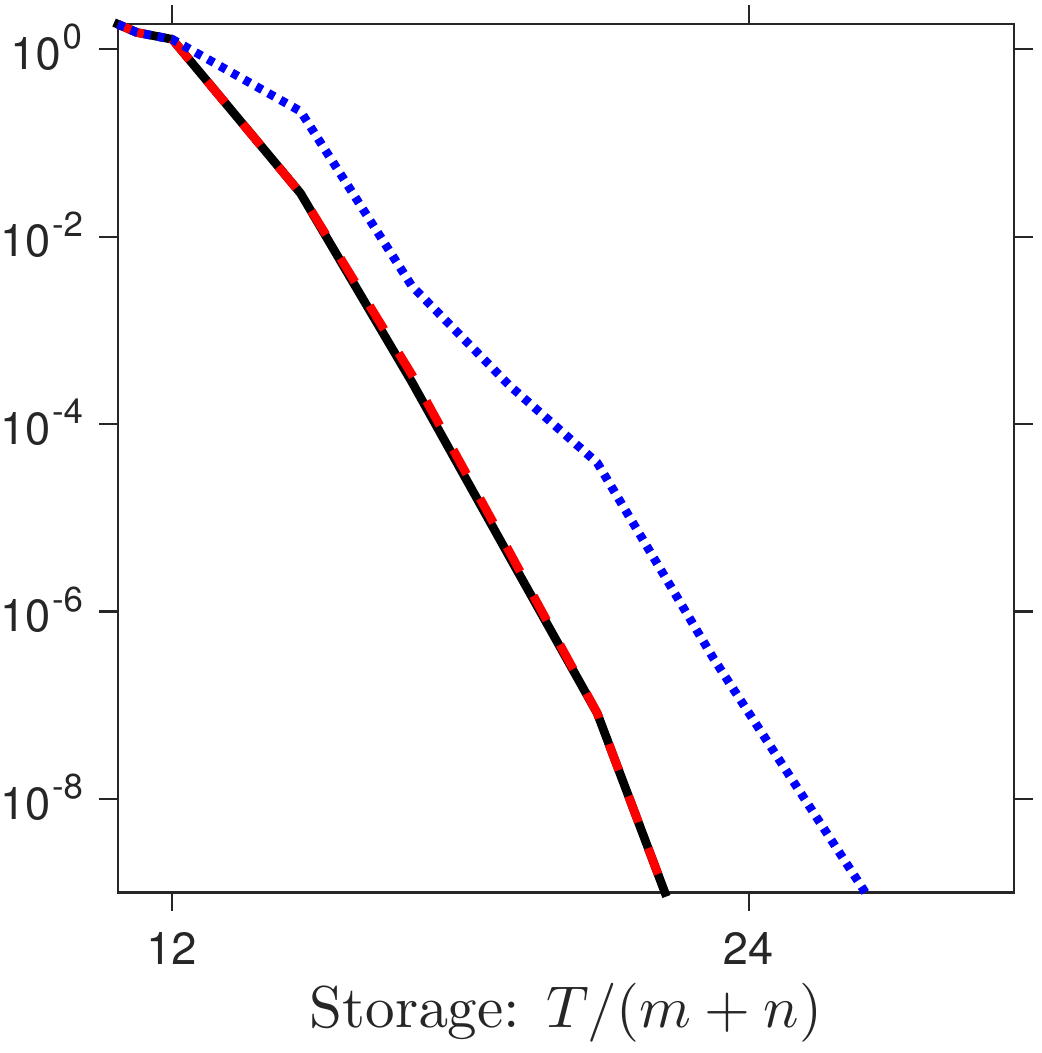}
\caption{\texttt{ExpDecayFast}}
\end{center}
\end{subfigure}
\end{center}

\vspace{0.5em}

\caption{\textbf{Relative error for proposed method with \emph{a priori} parameters.}
(Gaussian maps, effective rank $R = 5$, approximation rank $r = 10$,
Schatten $2$-norm.)
We compare the oracle performance of the proposed fixed-rank
approximation~\cref{eqn:Ahat-fixed} with its performance at theoretically justified
parameter values. See \cref{app:oracle-performance} for details.}
\label{fig:theory-params-R5-S2}
\end{figure}

\begin{figure}[htp!]
\vspace{0.5in}
\begin{center}
\begin{subfigure}{.325\textwidth}
\begin{center}
\includegraphics[height=1.5in]{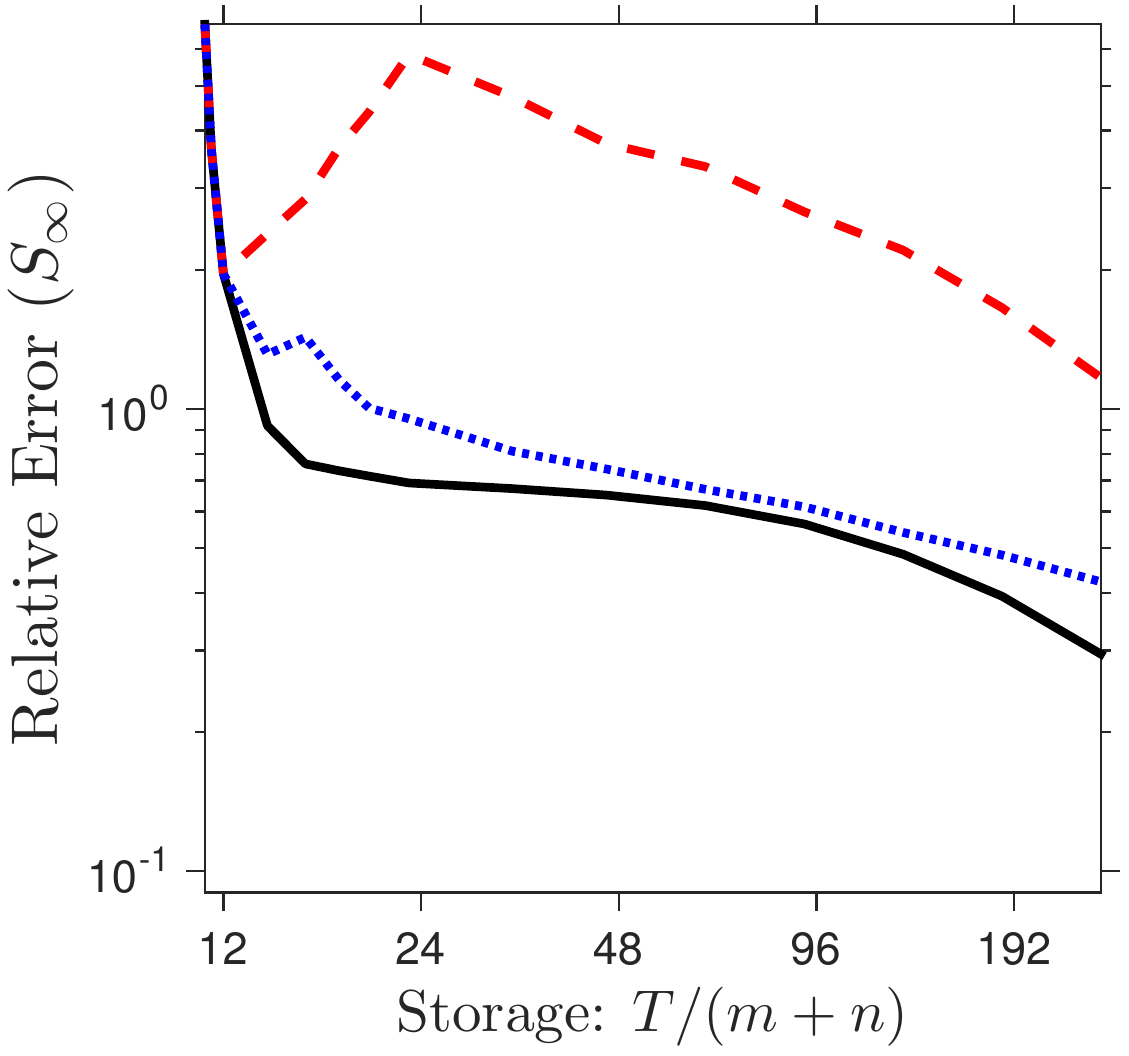}
\caption{\texttt{LowRankHiNoise}}
\end{center}
\end{subfigure}
\begin{subfigure}{.325\textwidth}
\begin{center}
\includegraphics[height=1.5in]{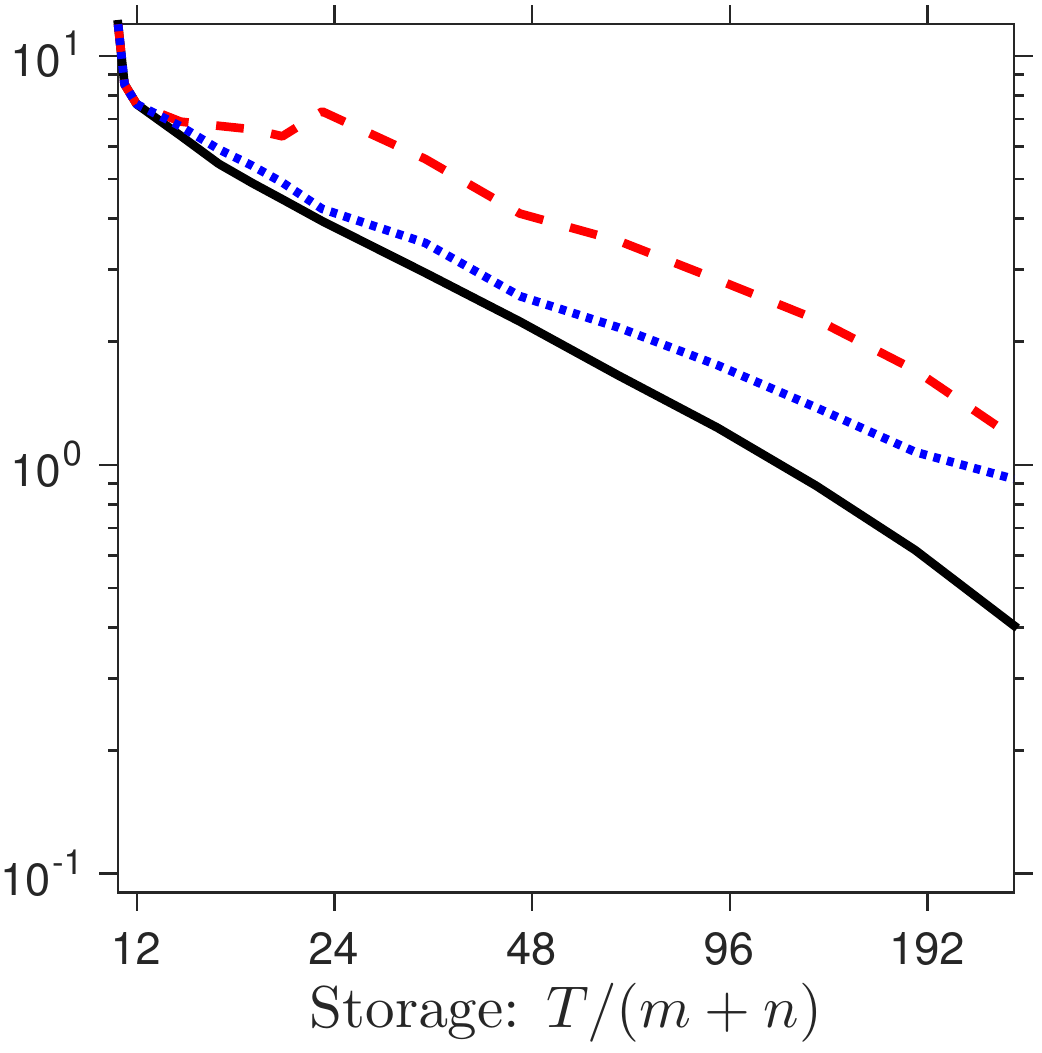}
\caption{\texttt{LowRankMedNoise}}
\end{center}
\end{subfigure}
\begin{subfigure}{.325\textwidth}
\begin{center}
\includegraphics[height=1.5in]{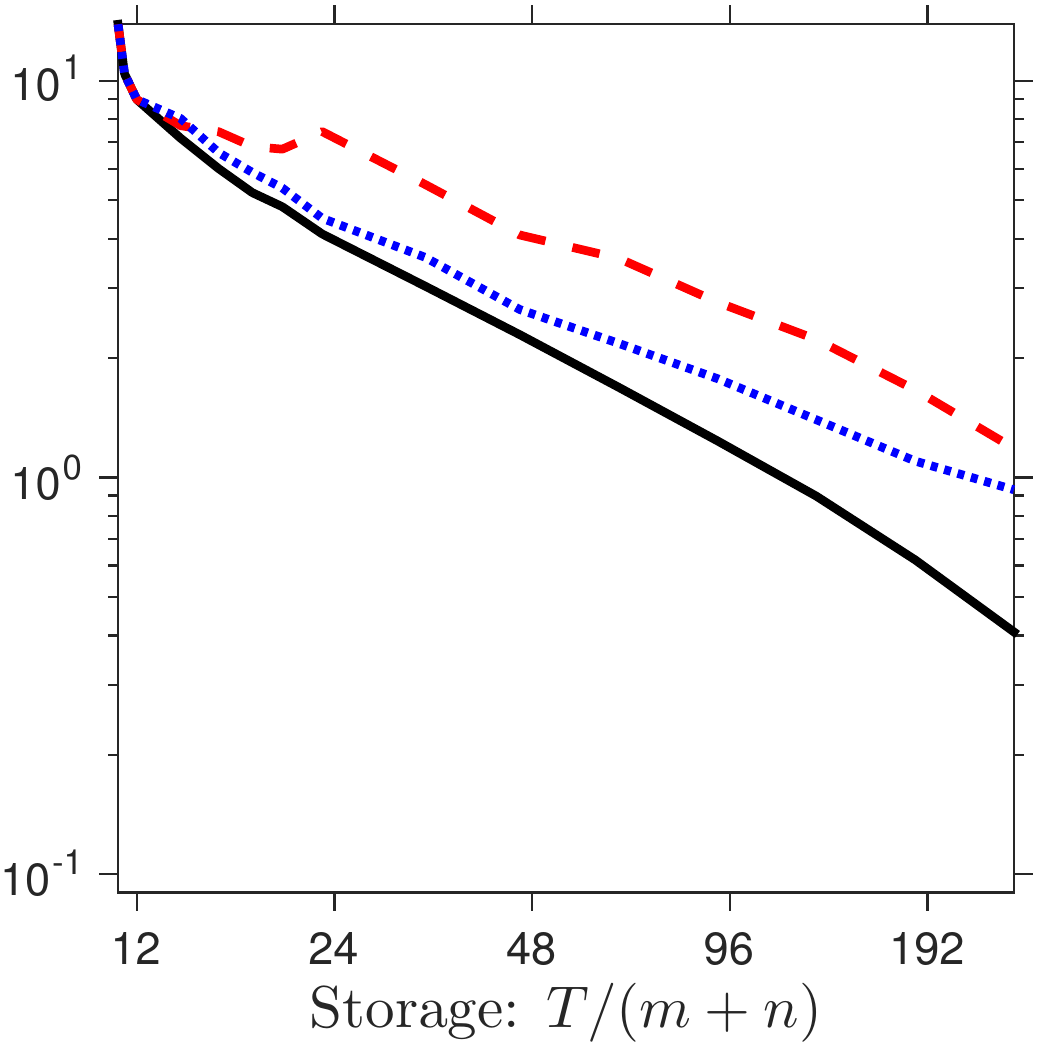}
\caption{\texttt{LowRankLowNoise}}
\end{center}
\end{subfigure}
\end{center}

\vspace{.5em}

\begin{center}
\begin{subfigure}{.325\textwidth}
\begin{center}
\includegraphics[height=1.5in]{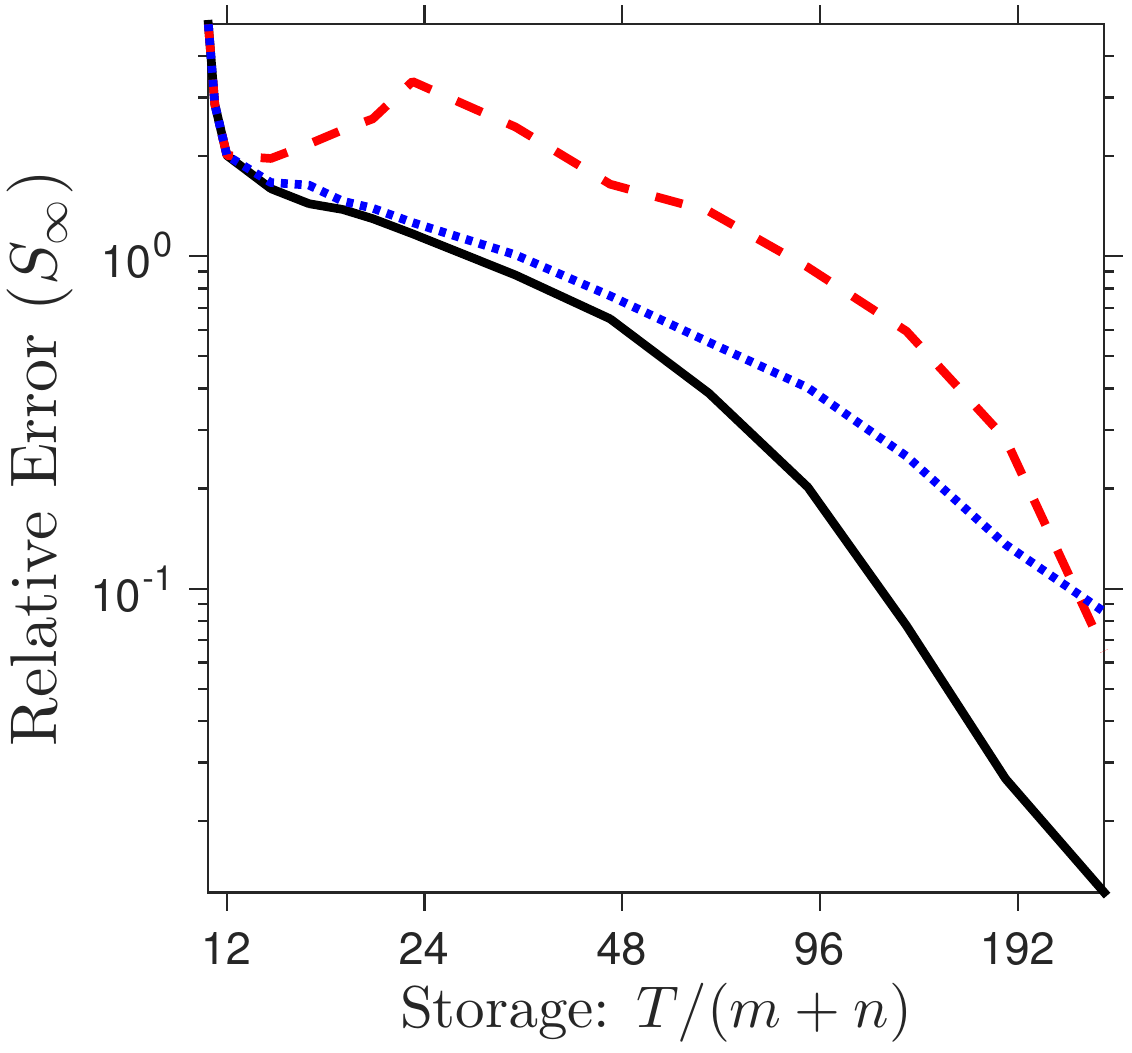}
\caption{\texttt{PolyDecaySlow}}
\end{center}
\end{subfigure}
\begin{subfigure}{.325\textwidth}
\begin{center}
\includegraphics[height=1.5in]{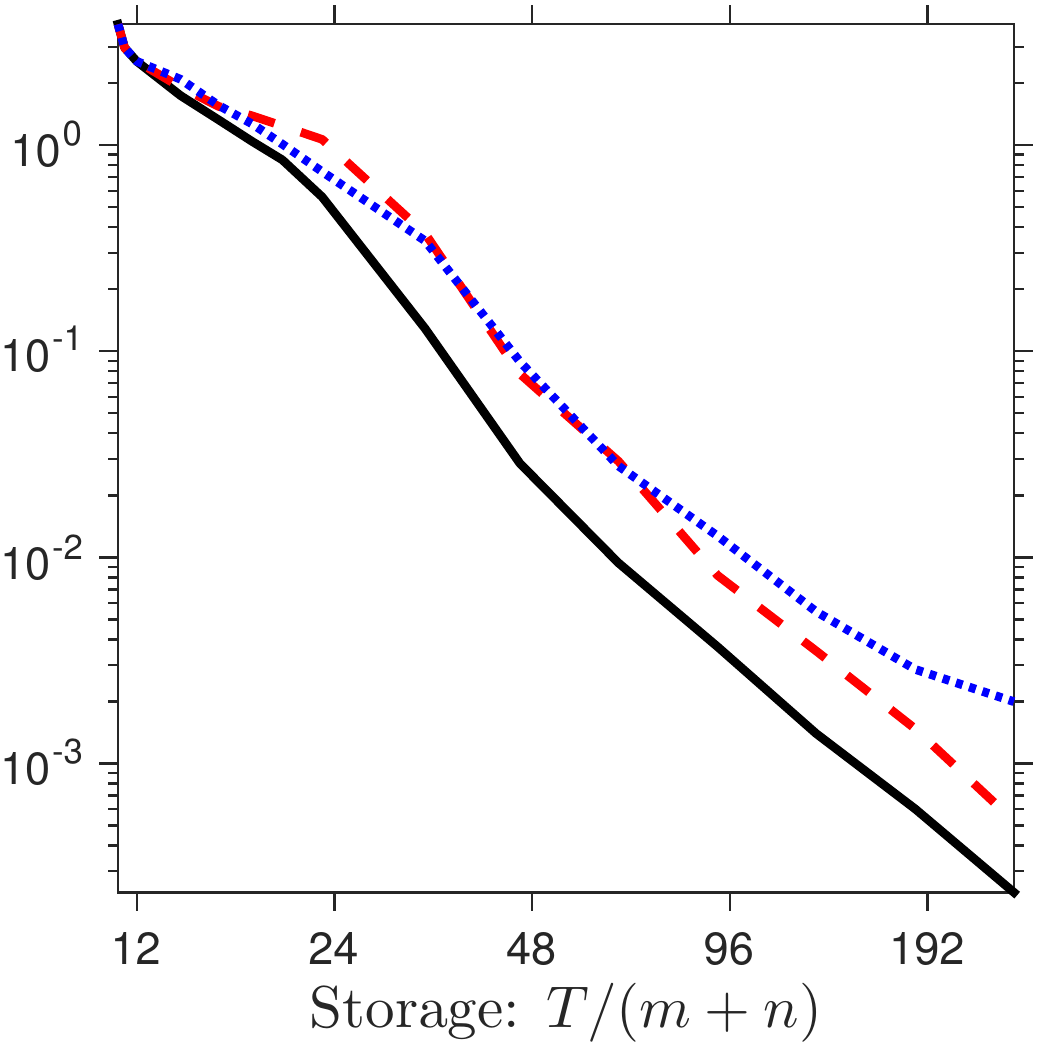}
\caption{\texttt{PolyDecayMed}}
\end{center}
\end{subfigure}
\begin{subfigure}{.325\textwidth}
\begin{center}
\includegraphics[height=1.5in]{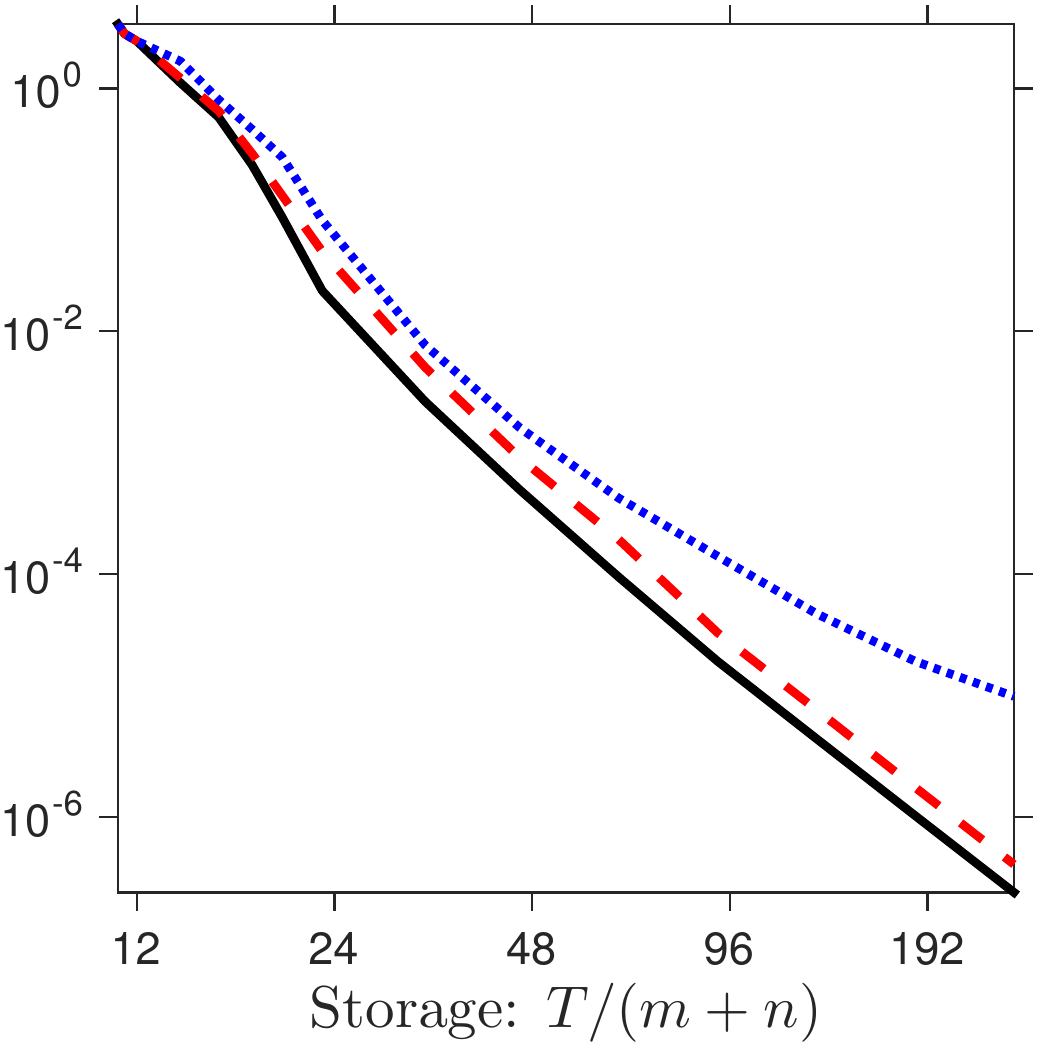}
\caption{\texttt{PolyDecayFast}}
\end{center}
\end{subfigure}
\end{center}

\vspace{0.5em}

\begin{center}
\begin{subfigure}{.325\textwidth}
\begin{center}
\includegraphics[height=1.5in]{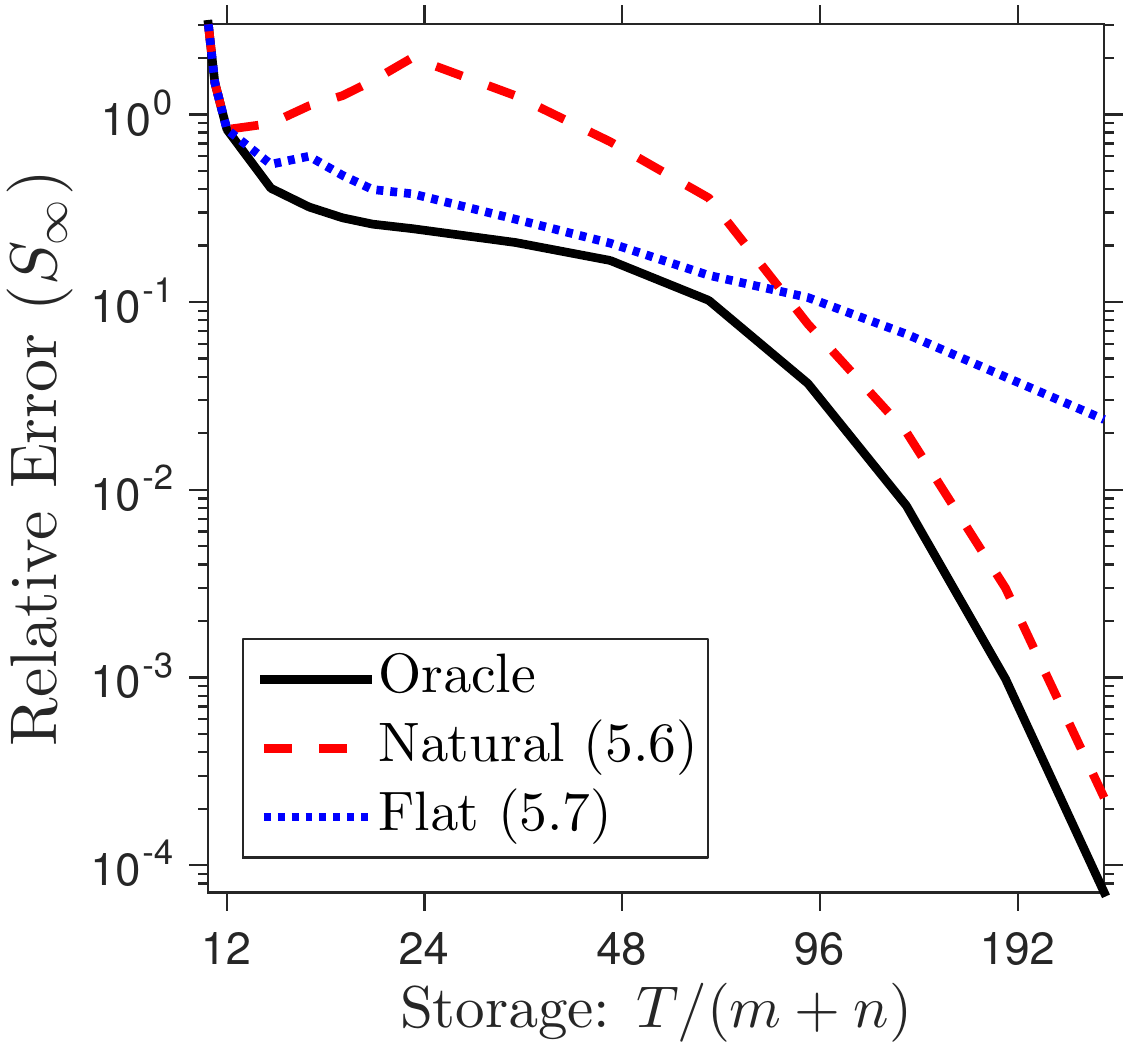}
\caption{\texttt{ExpDecaySlow}}
\end{center}
\end{subfigure}
\begin{subfigure}{.325\textwidth}
\begin{center}
\includegraphics[height=1.5in]{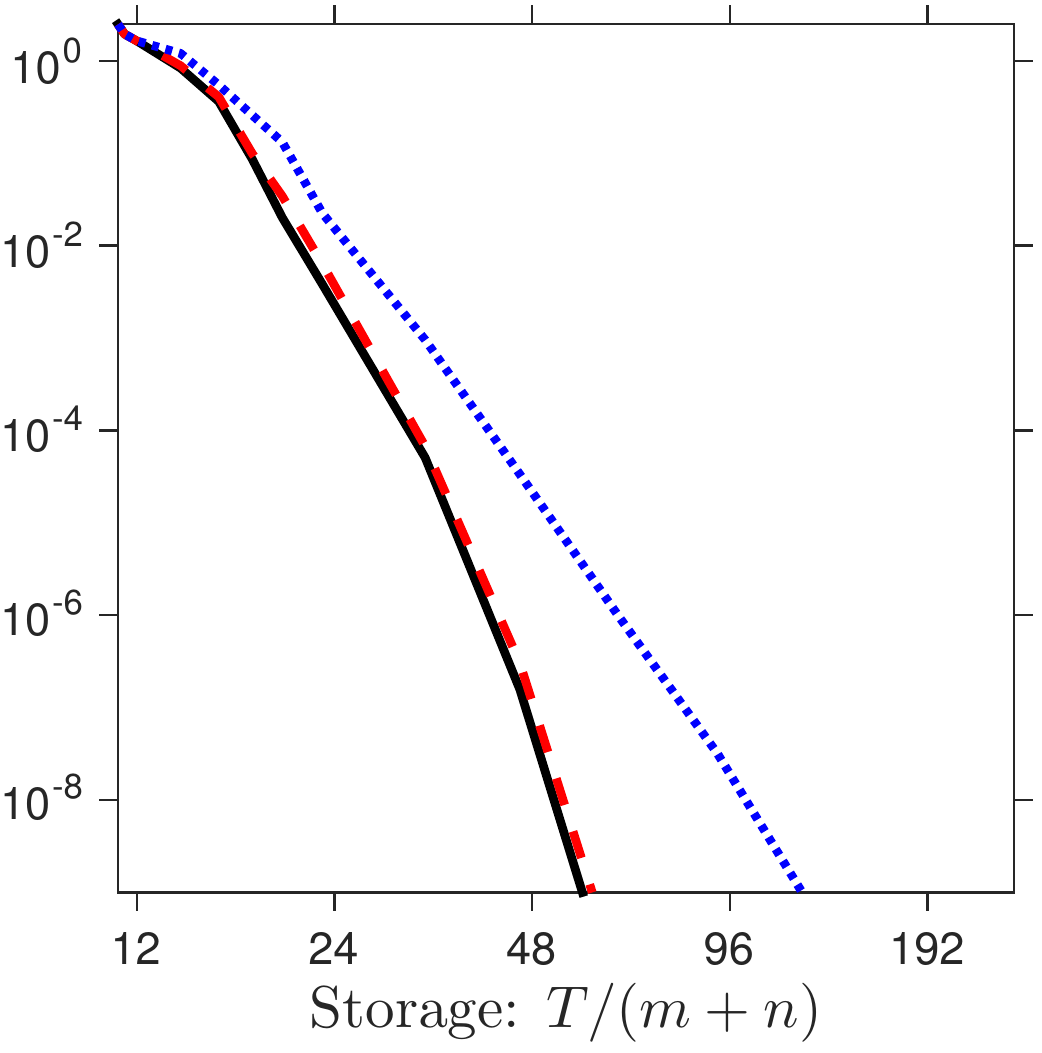}
\caption{\texttt{ExpDecayMed}}
\end{center}
\end{subfigure}
\begin{subfigure}{.325\textwidth}
\begin{center}
\includegraphics[height=1.5in]{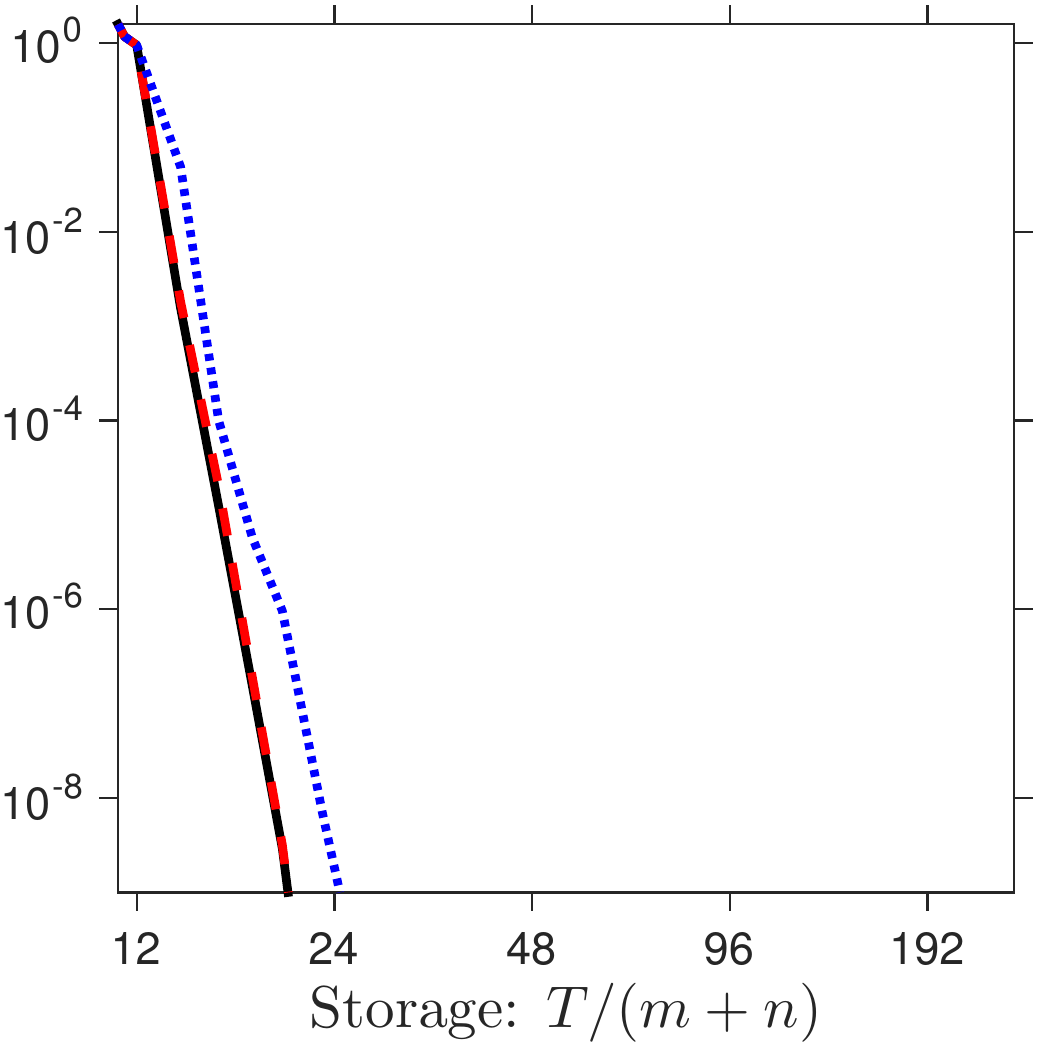}
\caption{\texttt{ExpDecayFast}}
\end{center}
\end{subfigure}
\end{center}

\vspace{0.5em}

\caption{\textbf{Relative error for proposed method with \emph{a priori} parameters.}
(Gaussian maps, effective rank $R = 5$, approximation rank $r = 10$,
Schatten $\infty$-norm.)
We compare the oracle performance of the proposed fixed-rank
approximation~\cref{eqn:Ahat-fixed} with its performance at theoretically justified
parameter values. See \cref{app:oracle-performance} for details.}
\label{fig:theory-params-R5-Sinf}
\end{figure}

\begin{figure}[htp!]
\vspace{0.5in}
\begin{center}
\begin{subfigure}{.325\textwidth}
\begin{center}
\includegraphics[height=1.5in]{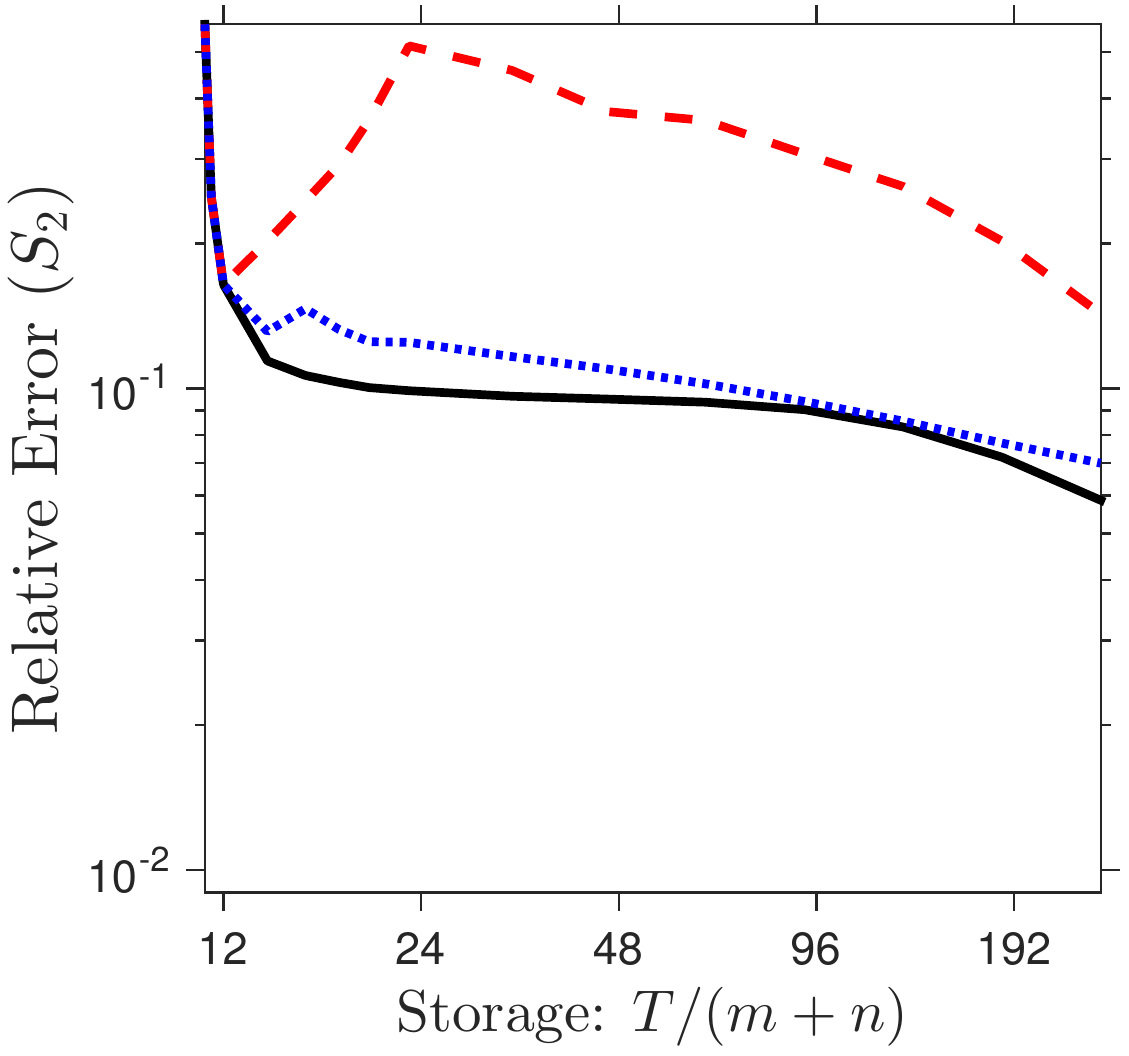}
\caption{\texttt{LowRankHiNoise}}
\end{center}
\end{subfigure}
\begin{subfigure}{.325\textwidth}
\begin{center}
\includegraphics[height=1.5in]{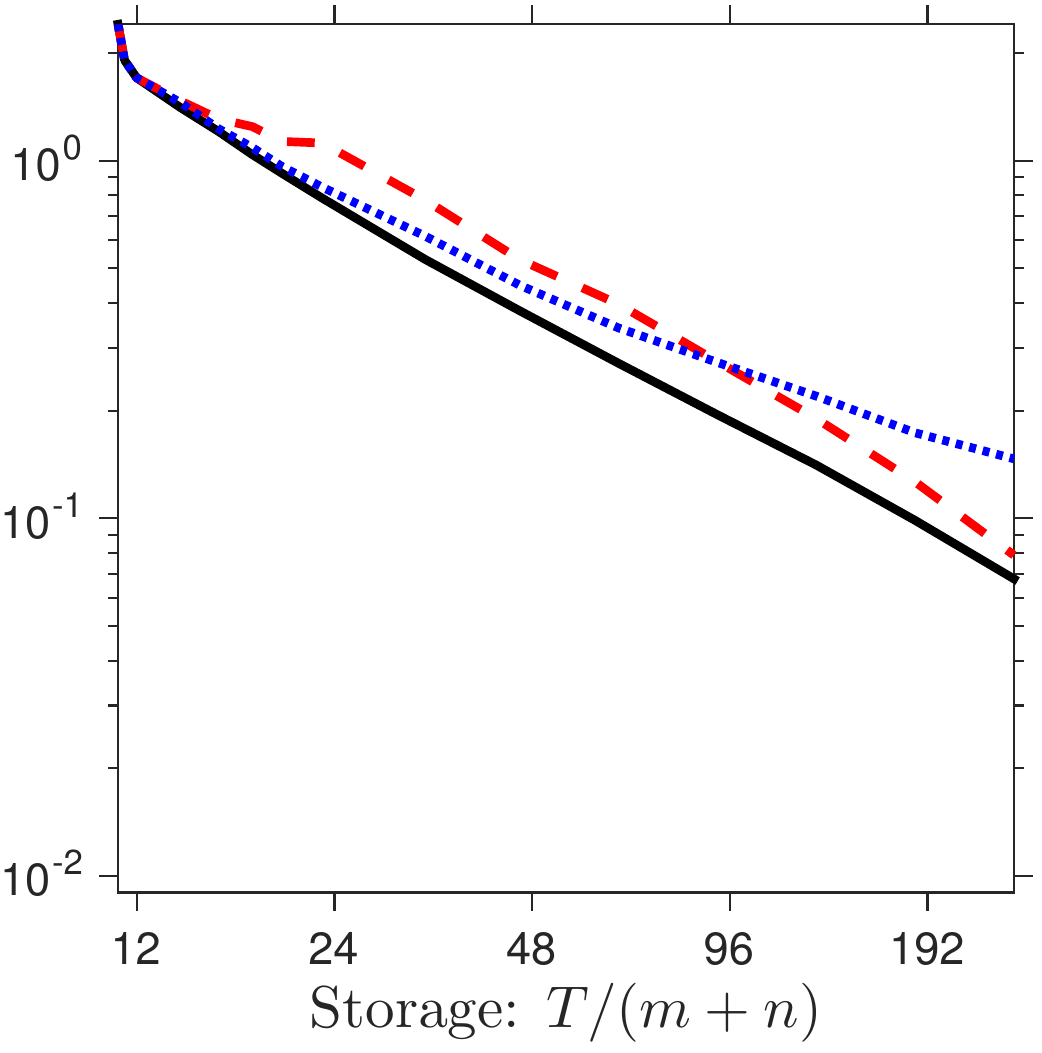}
\caption{\texttt{LowRankMedNoise}}
\end{center}
\end{subfigure}
\begin{subfigure}{.325\textwidth}
\begin{center}
\includegraphics[height=1.5in]{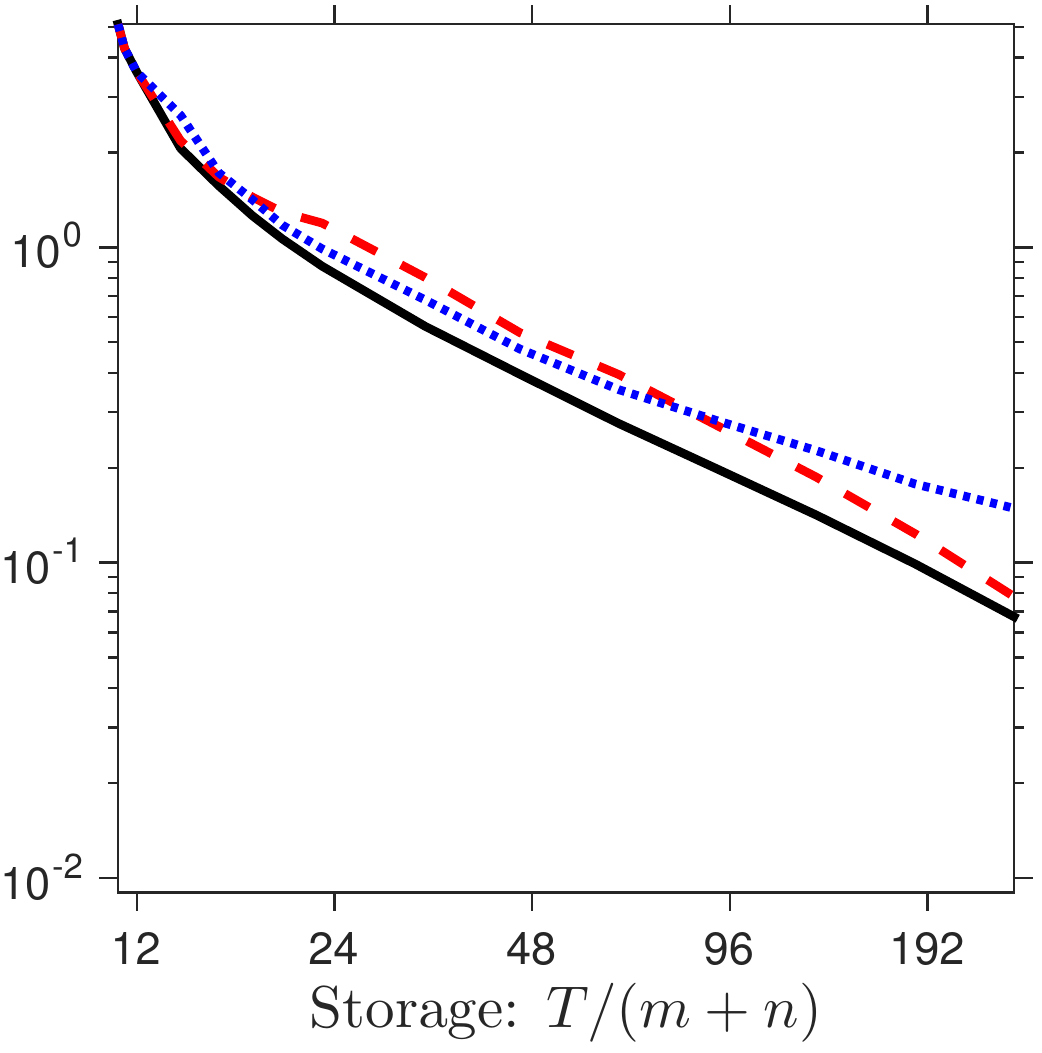}
\caption{\texttt{LowRankLowNoise}}
\end{center}
\end{subfigure}
\end{center}

\vspace{.5em}

\begin{center}
\begin{subfigure}{.325\textwidth}
\begin{center}
\includegraphics[height=1.5in]{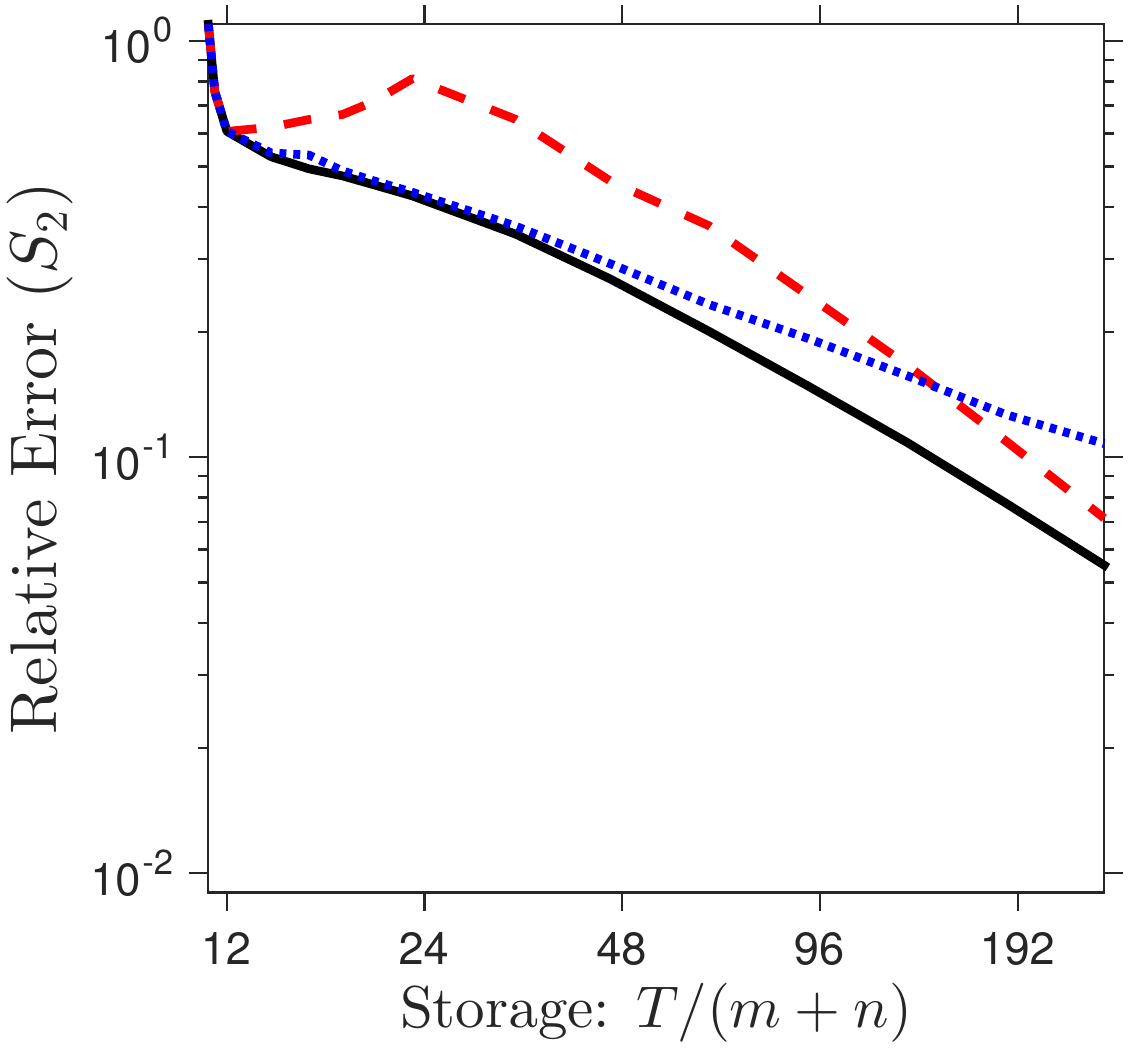}
\caption{\texttt{PolyDecaySlow}}
\end{center}
\end{subfigure}
\begin{subfigure}{.325\textwidth}
\begin{center}
\includegraphics[height=1.5in]{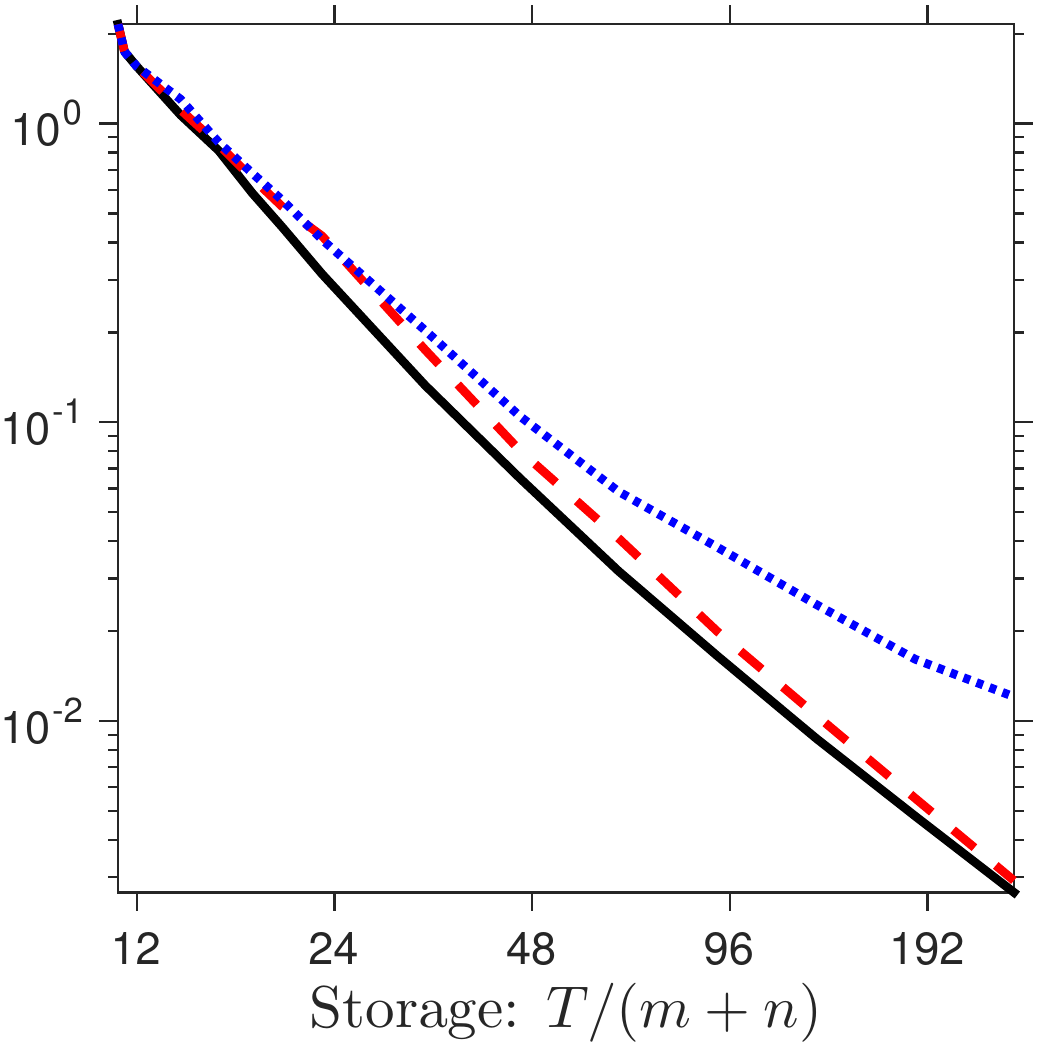}
\caption{\texttt{PolyDecayMed}}
\end{center}
\end{subfigure}
\begin{subfigure}{.325\textwidth}
\begin{center}
\includegraphics[height=1.5in]{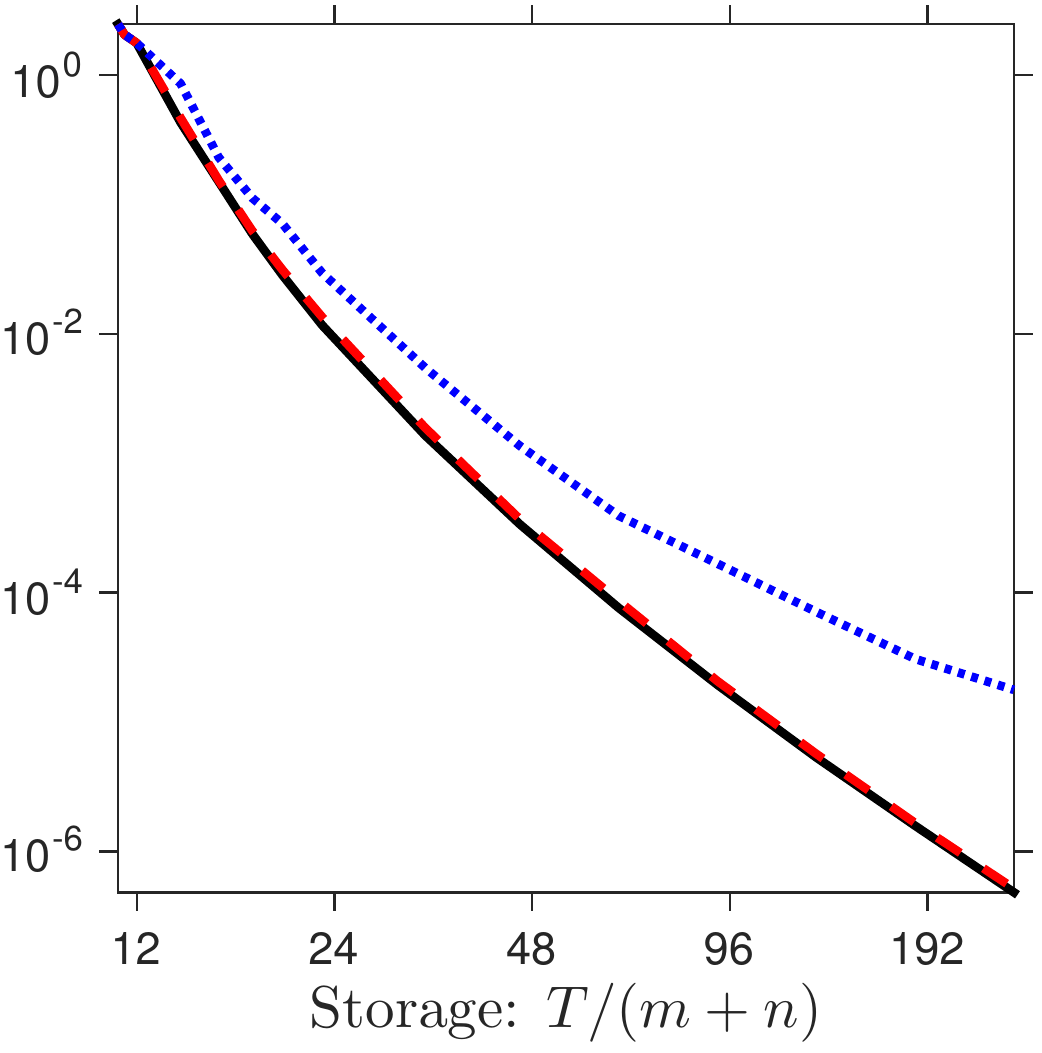}
\caption{\texttt{PolyDecayFast}}
\end{center}
\end{subfigure}
\end{center}

\vspace{0.5em}

\begin{center}
\begin{subfigure}{.325\textwidth}
\begin{center}
\includegraphics[height=1.5in]{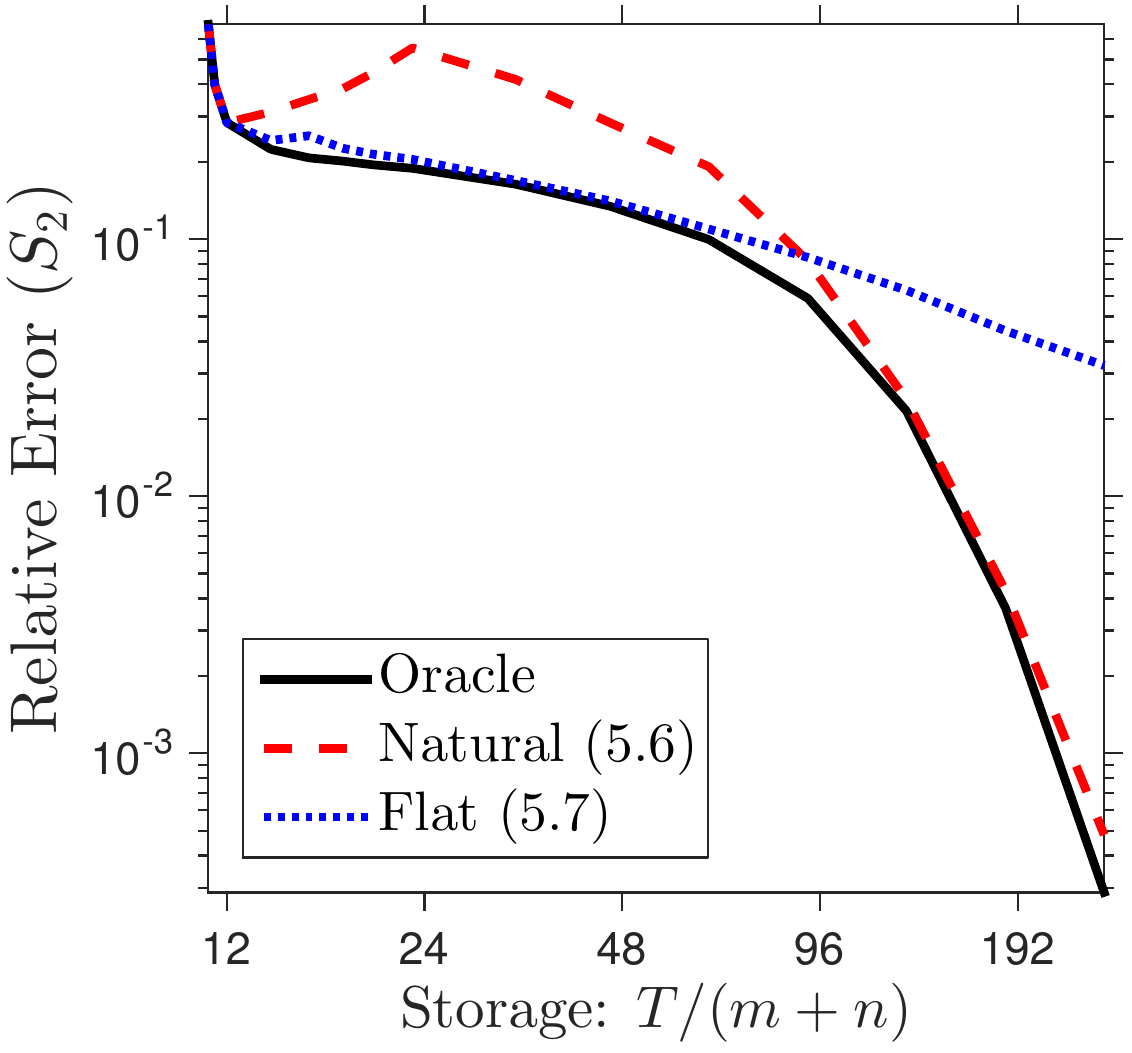}
\caption{\texttt{ExpDecaySlow}}
\end{center}
\end{subfigure}
\begin{subfigure}{.325\textwidth}
\begin{center}
\includegraphics[height=1.5in]{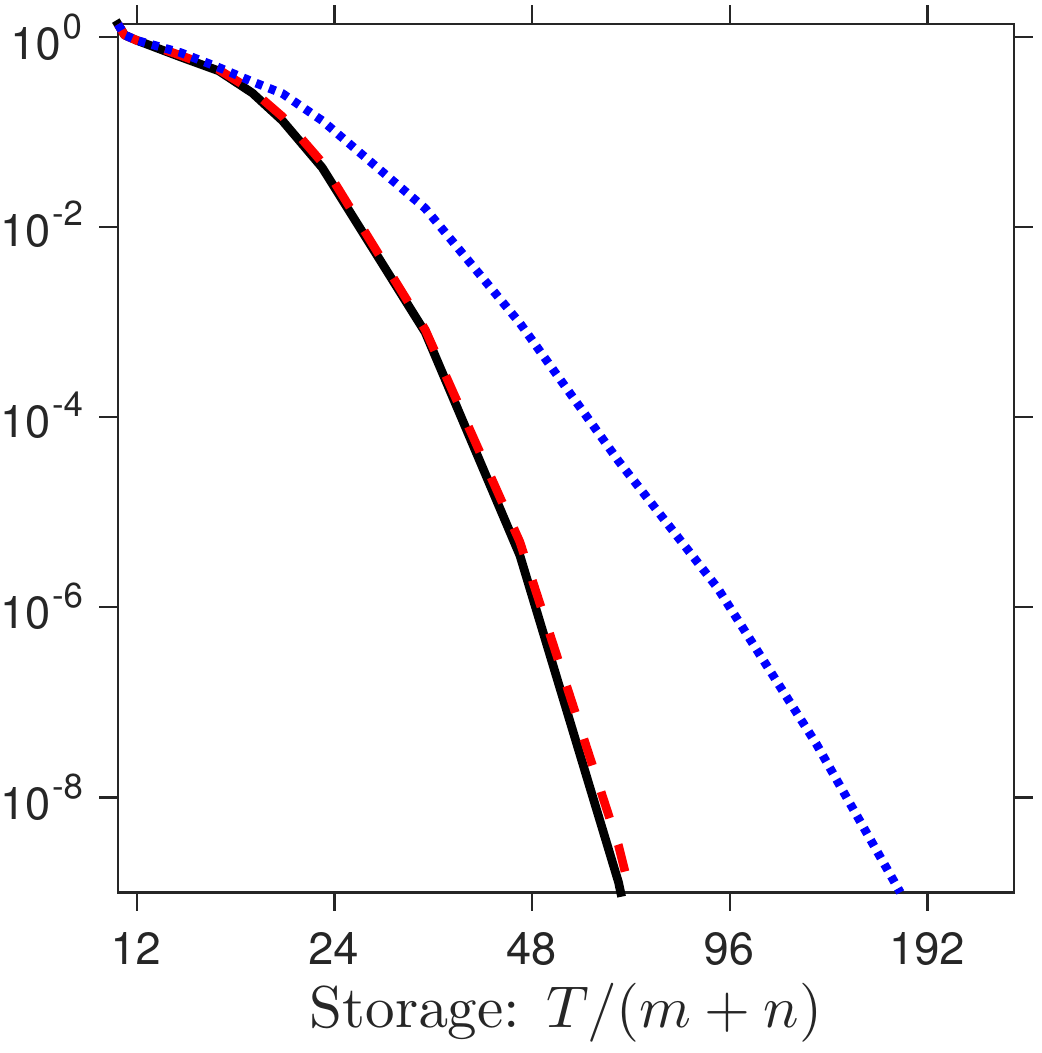}
\caption{\texttt{ExpDecayMed}}
\end{center}
\end{subfigure}
\begin{subfigure}{.325\textwidth}
\begin{center}
\includegraphics[height=1.5in]{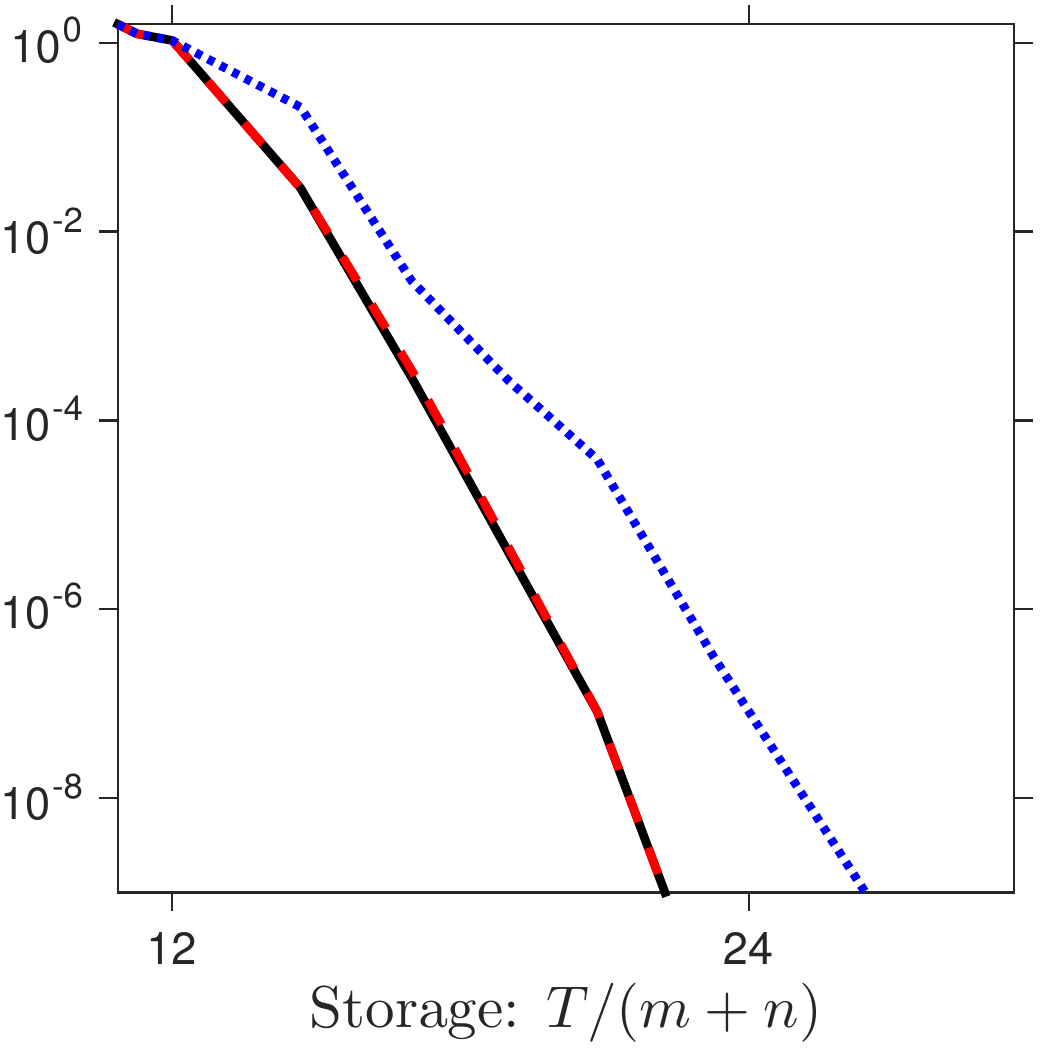}
\caption{\texttt{ExpDecayFast}}
\end{center}
\end{subfigure}
\end{center}

\vspace{0.5em}

\caption{\textbf{Relative error for proposed method with \emph{a priori} parameters.}
(Gaussian maps, effective rank $R = 10$, approximation rank $r = 10$,
Schatten $2$-norm.)
We compare the oracle performance of the proposed fixed-rank
approximation~\cref{eqn:Ahat-fixed} with its performance at theoretically justified
parameter values. See \cref{app:oracle-performance} for details.}
\label{fig:theory-params-R10-S2}
\end{figure}

\begin{figure}[htp!]
\vspace{0.5in}
\begin{center}
\begin{subfigure}{.325\textwidth}
\begin{center}
\includegraphics[height=1.5in]{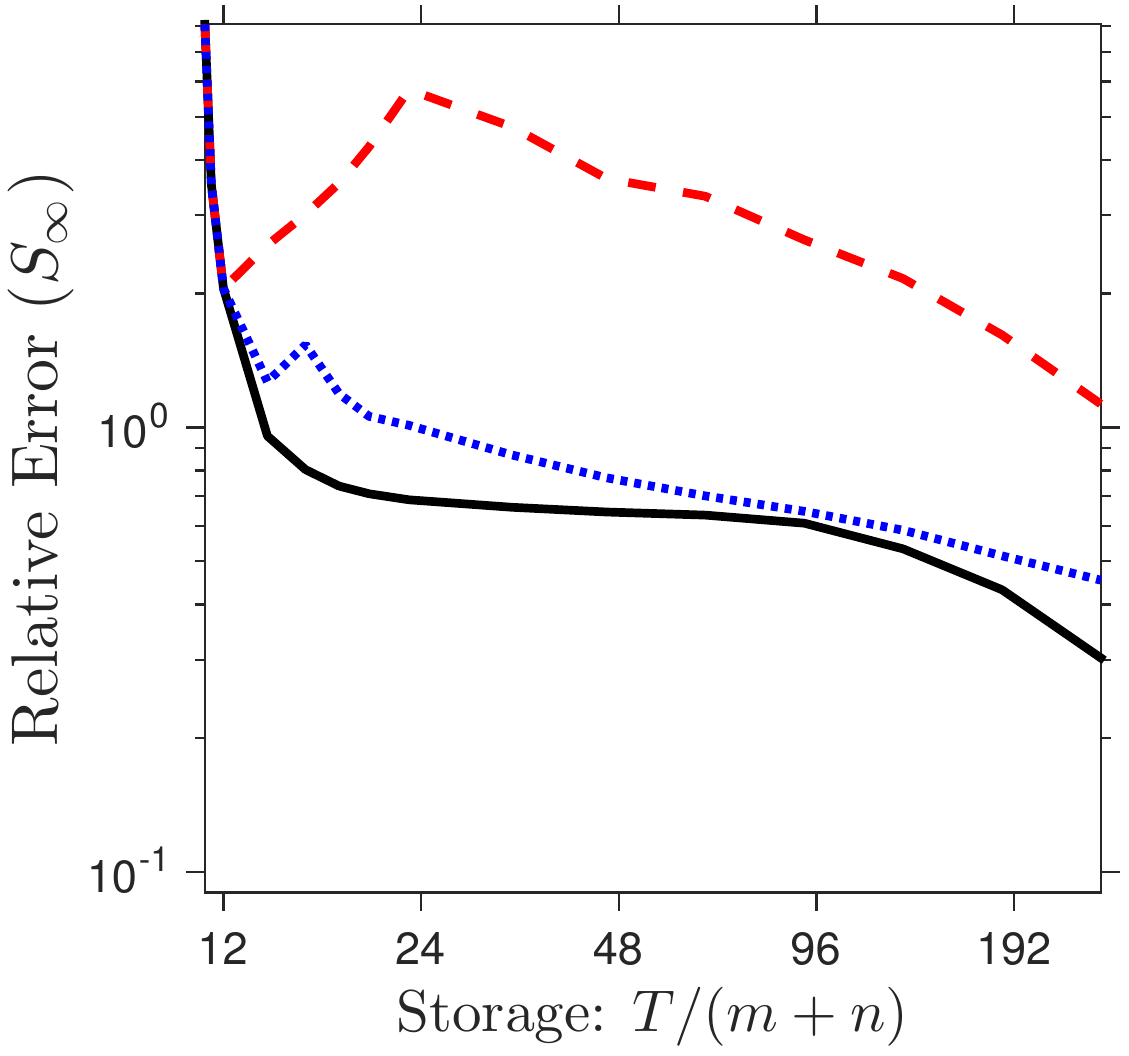}
\caption{\texttt{LowRankHiNoise}}
\end{center}
\end{subfigure}
\begin{subfigure}{.325\textwidth}
\begin{center}
\includegraphics[height=1.5in]{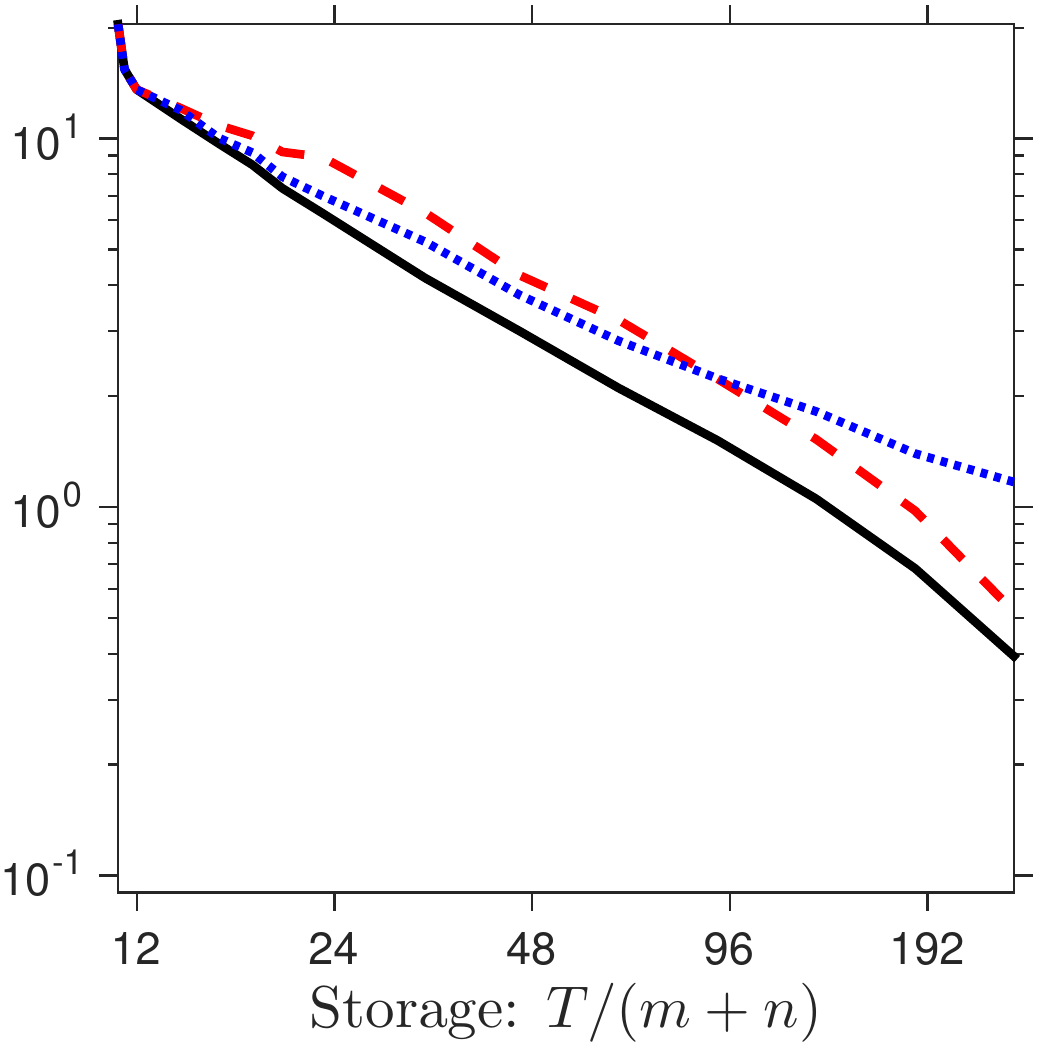}
\caption{\texttt{LowRankMedNoise}}
\end{center}
\end{subfigure}
\begin{subfigure}{.325\textwidth}
\begin{center}
\includegraphics[height=1.5in]{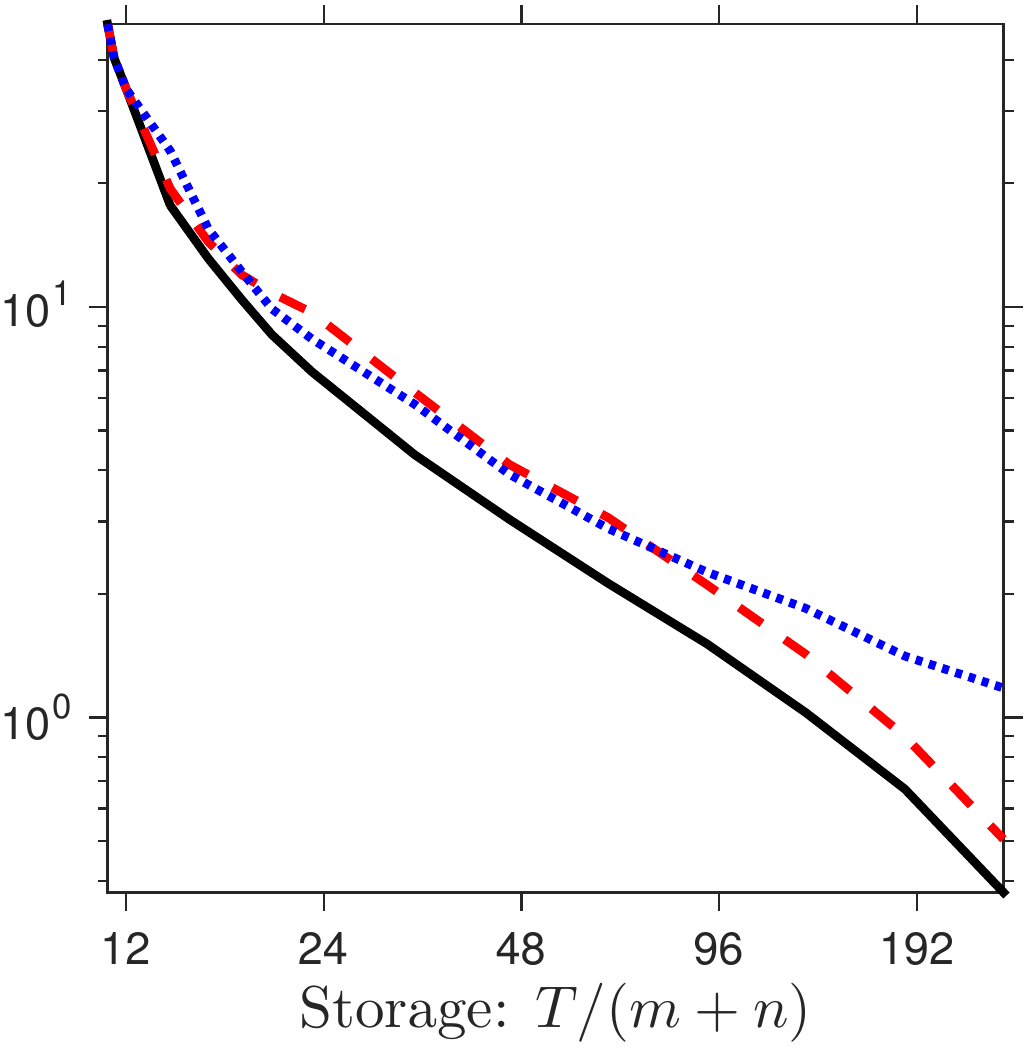}
\caption{\texttt{LowRankLowNoise}}
\end{center}
\end{subfigure}
\end{center}

\vspace{.5em}

\begin{center}
\begin{subfigure}{.325\textwidth}
\begin{center}
\includegraphics[height=1.5in]{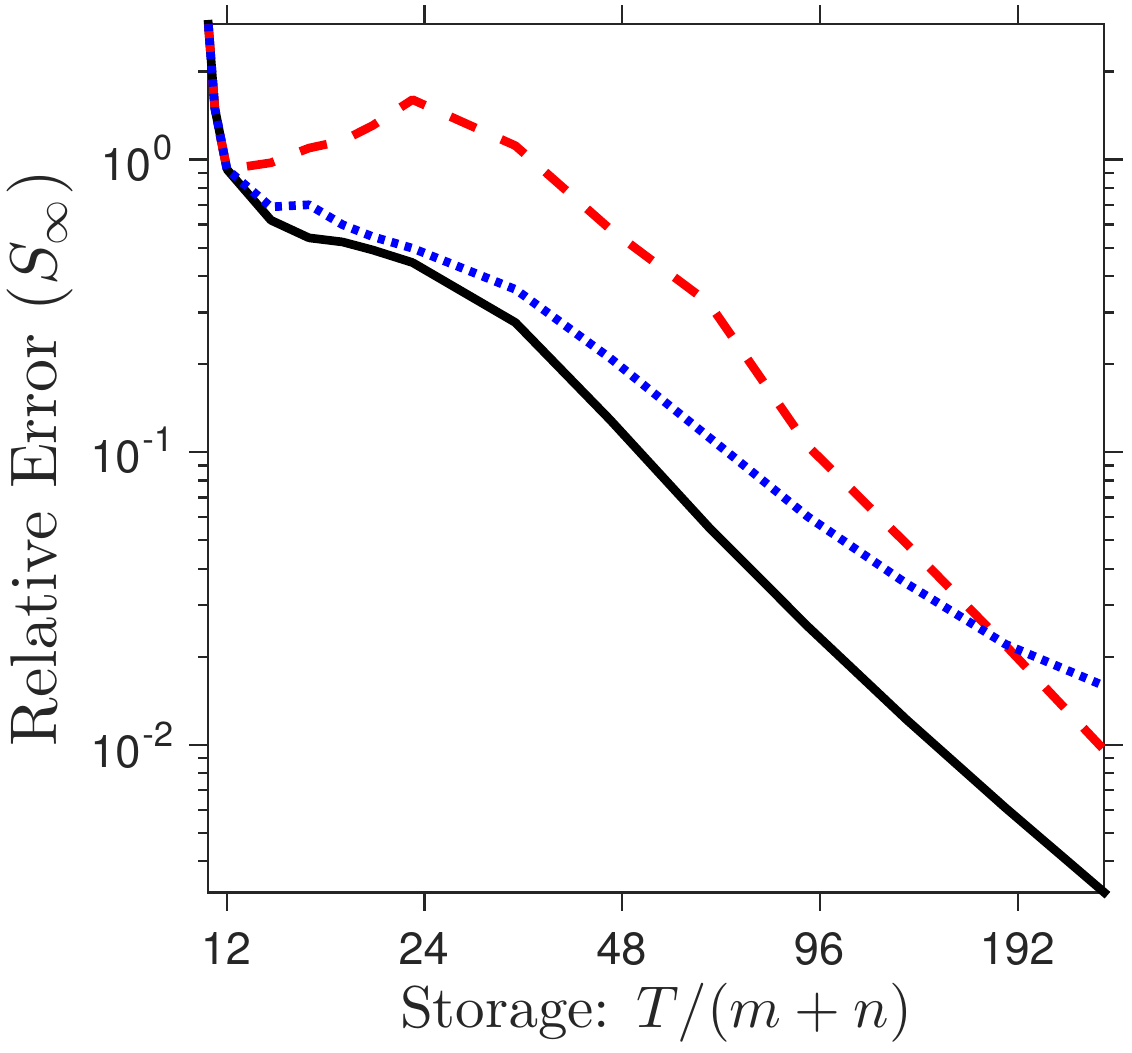}
\caption{\texttt{PolyDecaySlow}}
\end{center}
\end{subfigure}
\begin{subfigure}{.325\textwidth}
\begin{center}
\includegraphics[height=1.5in]{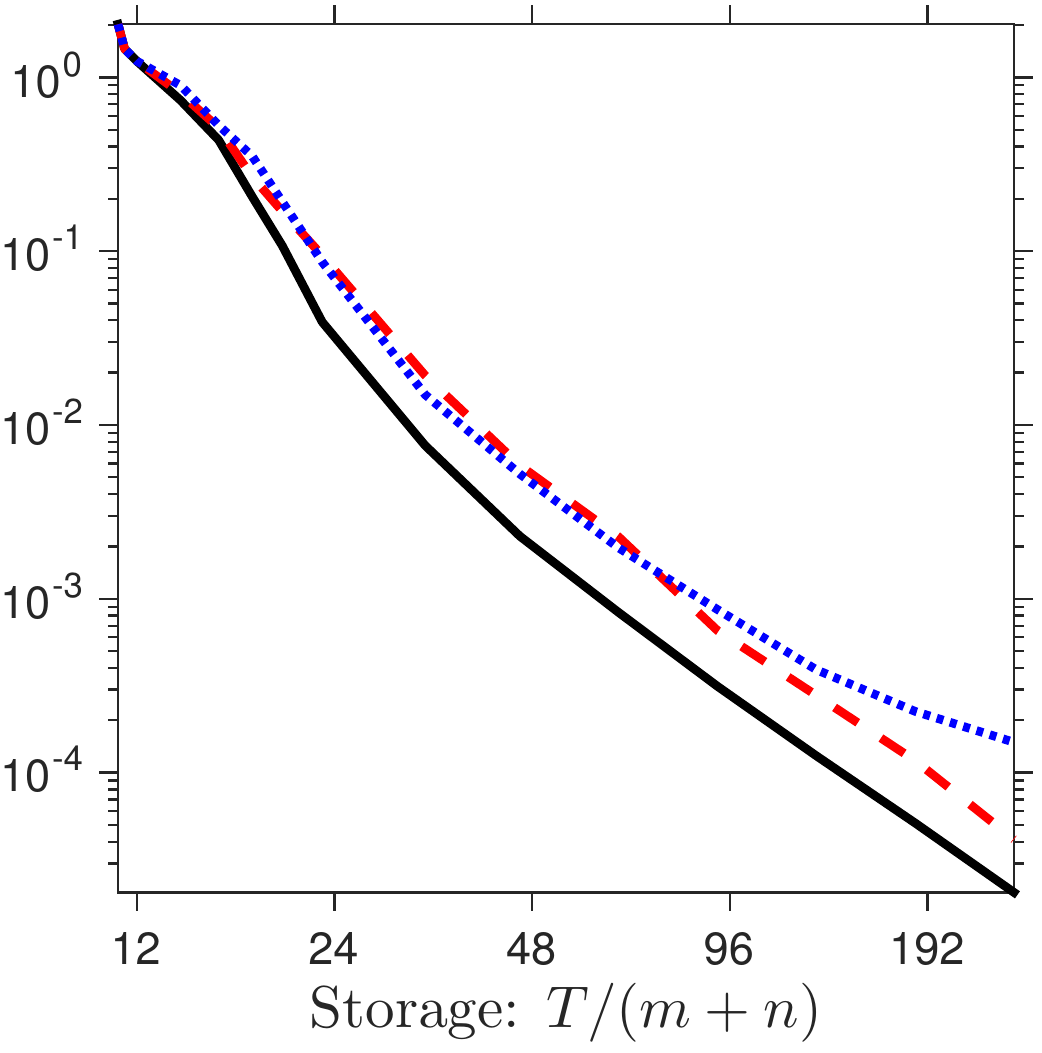}
\caption{\texttt{PolyDecayMed}}
\end{center}
\end{subfigure}
\begin{subfigure}{.325\textwidth}
\begin{center}
\includegraphics[height=1.5in]{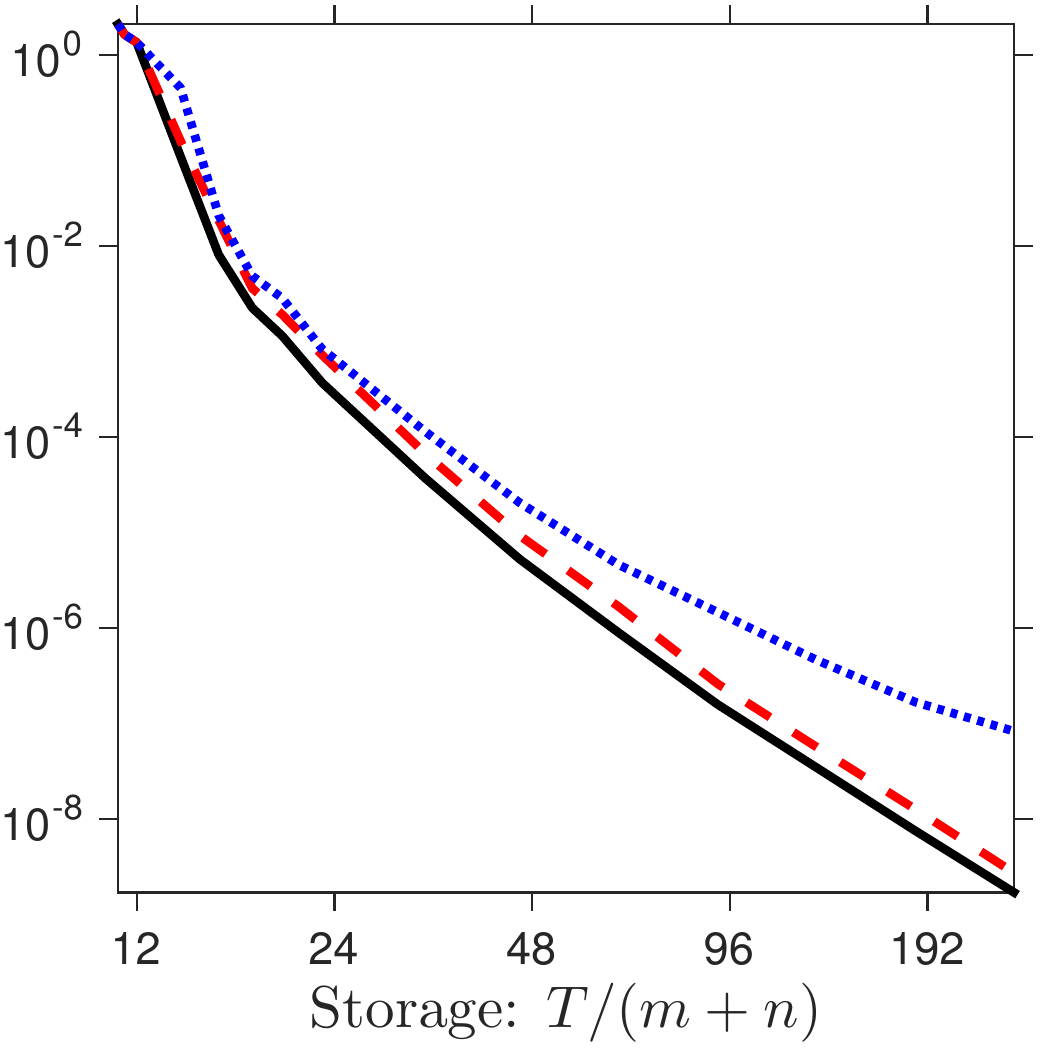}
\caption{\texttt{PolyDecayFast}}
\end{center}
\end{subfigure}
\end{center}

\vspace{0.5em}

\begin{center}
\begin{subfigure}{.325\textwidth}
\begin{center}
\includegraphics[height=1.5in]{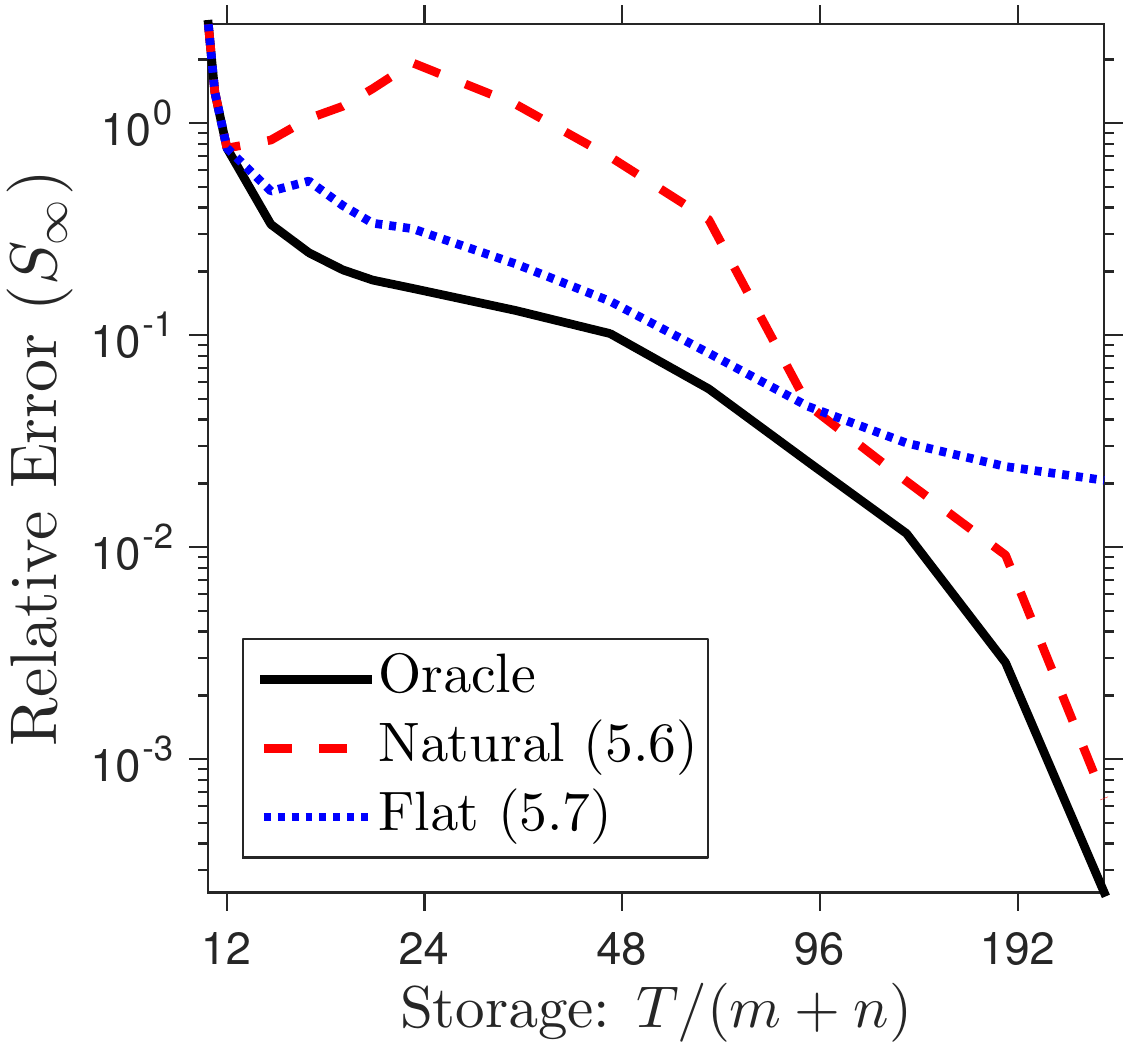}
\caption{\texttt{ExpDecaySlow}}
\end{center}
\end{subfigure}
\begin{subfigure}{.325\textwidth}
\begin{center}
\includegraphics[height=1.5in]{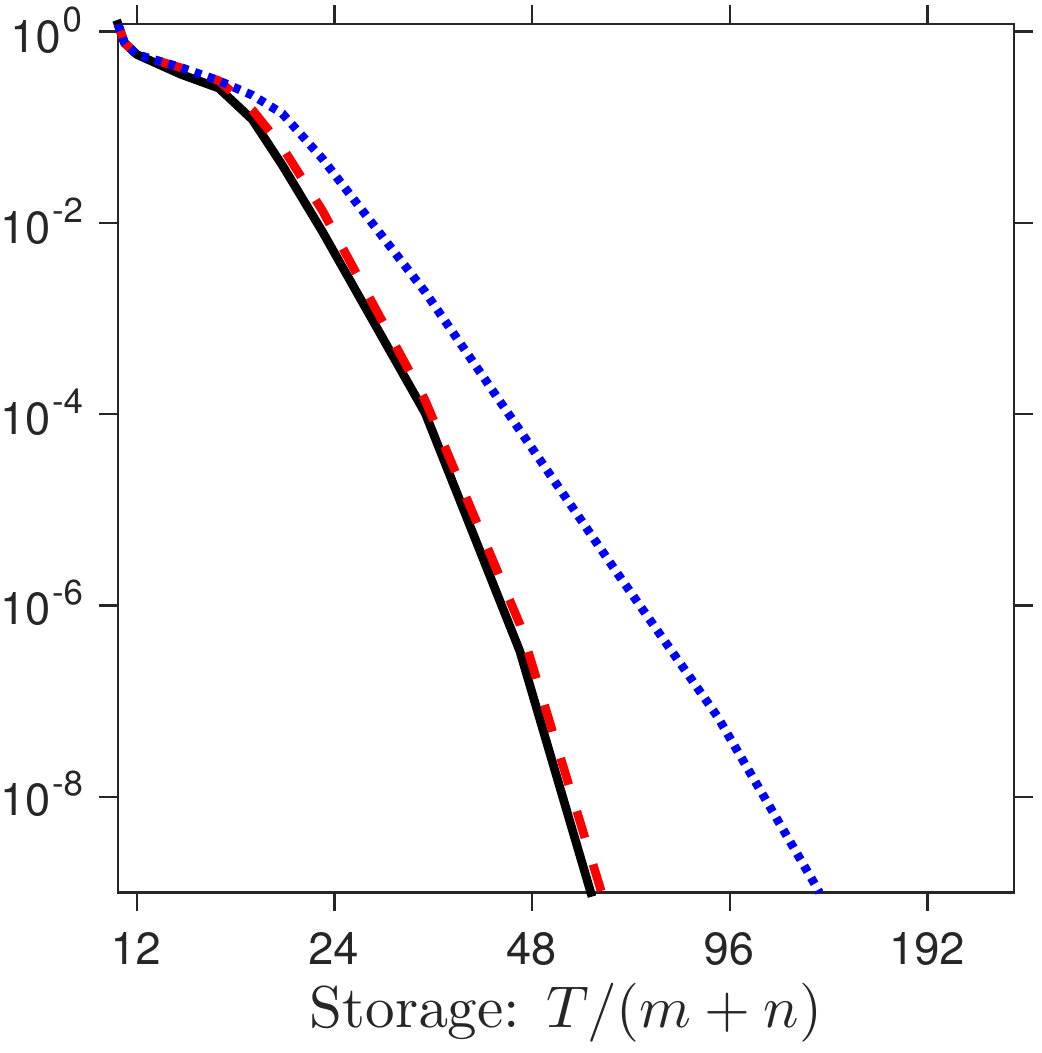}
\caption{\texttt{ExpDecayMed}}
\end{center}
\end{subfigure}
\begin{subfigure}{.325\textwidth}
\begin{center}
\includegraphics[height=1.5in]{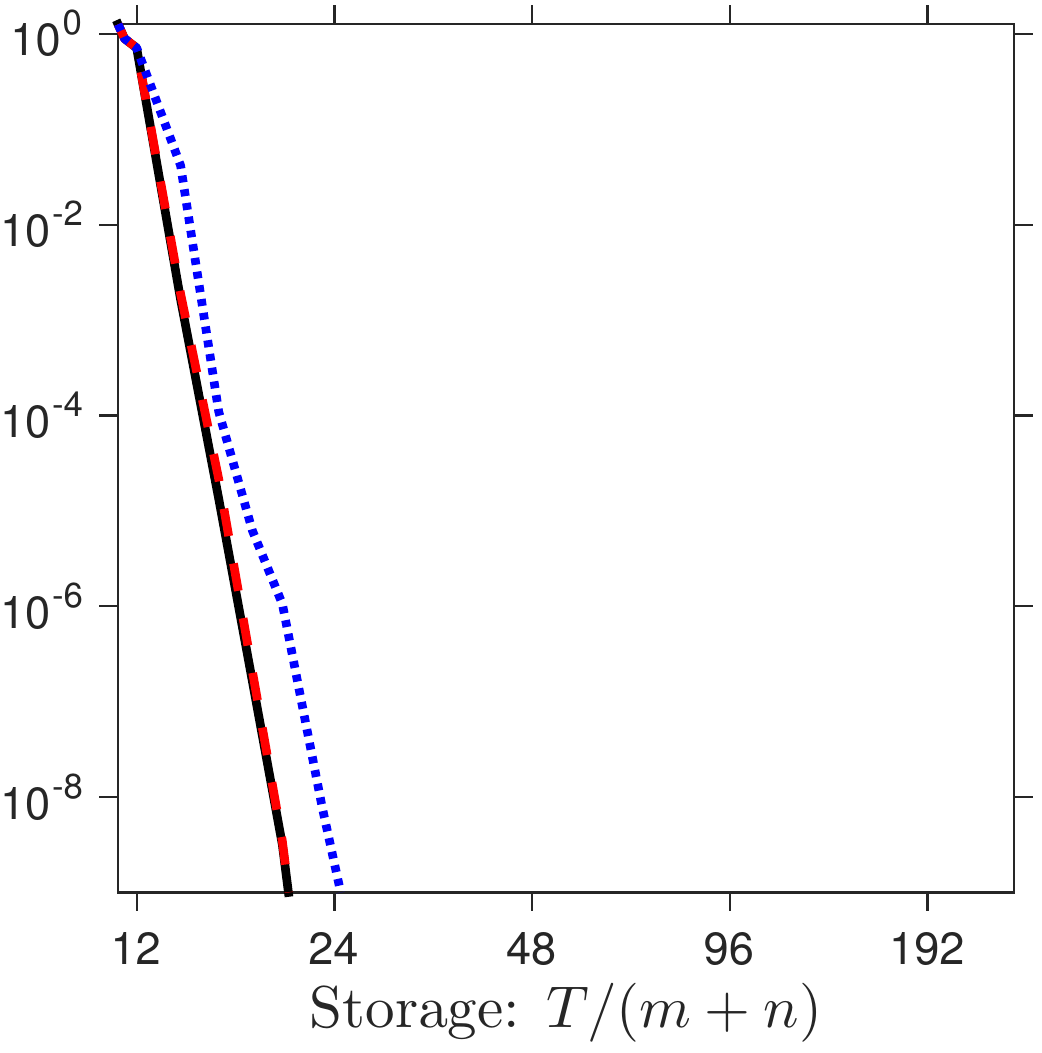}
\caption{\texttt{ExpDecayFast}}
\end{center}
\end{subfigure}
\end{center}

\vspace{0.5em}

\caption{\textbf{Relative error for proposed method with \emph{a priori} parameters.}
(Gaussian maps, effective rank $R = 10$, approximation rank $r = 10$,
Schatten $\infty$-norm.)
We compare the oracle performance of the proposed fixed-rank
approximation~\cref{eqn:Ahat-fixed} with its performance at theoretically justified
parameter values. See \cref{app:oracle-performance} for details.}
\label{fig:theory-params-R10-Sinf}
\end{figure}

\begin{figure}[htp!]
\vspace{0.5in}
\begin{center}
\begin{subfigure}{.325\textwidth}
\begin{center}
\includegraphics[height=1.5in]{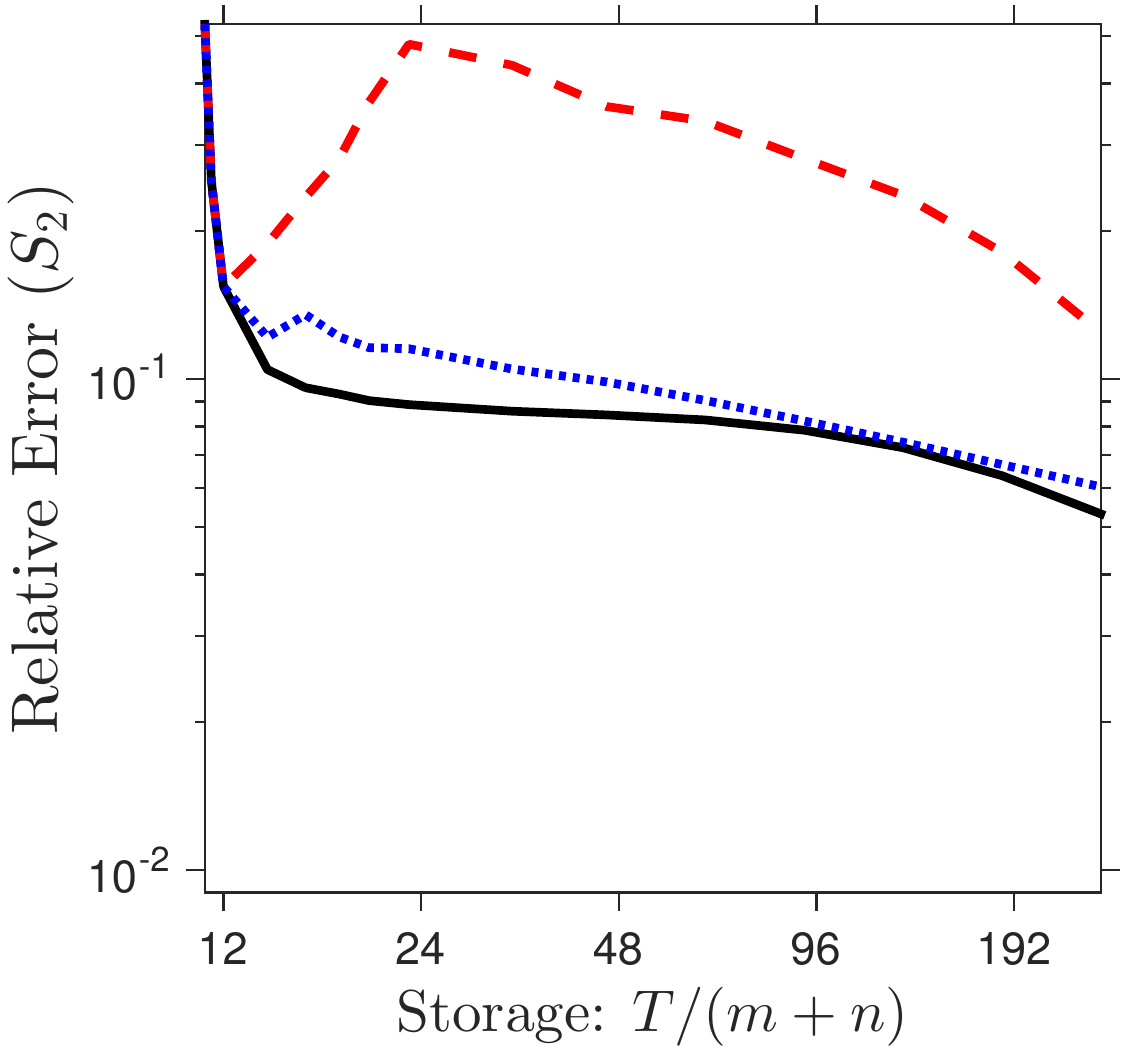}
\caption{\texttt{LowRankHiNoise}}
\end{center}
\end{subfigure}
\begin{subfigure}{.325\textwidth}
\begin{center}
\includegraphics[height=1.5in]{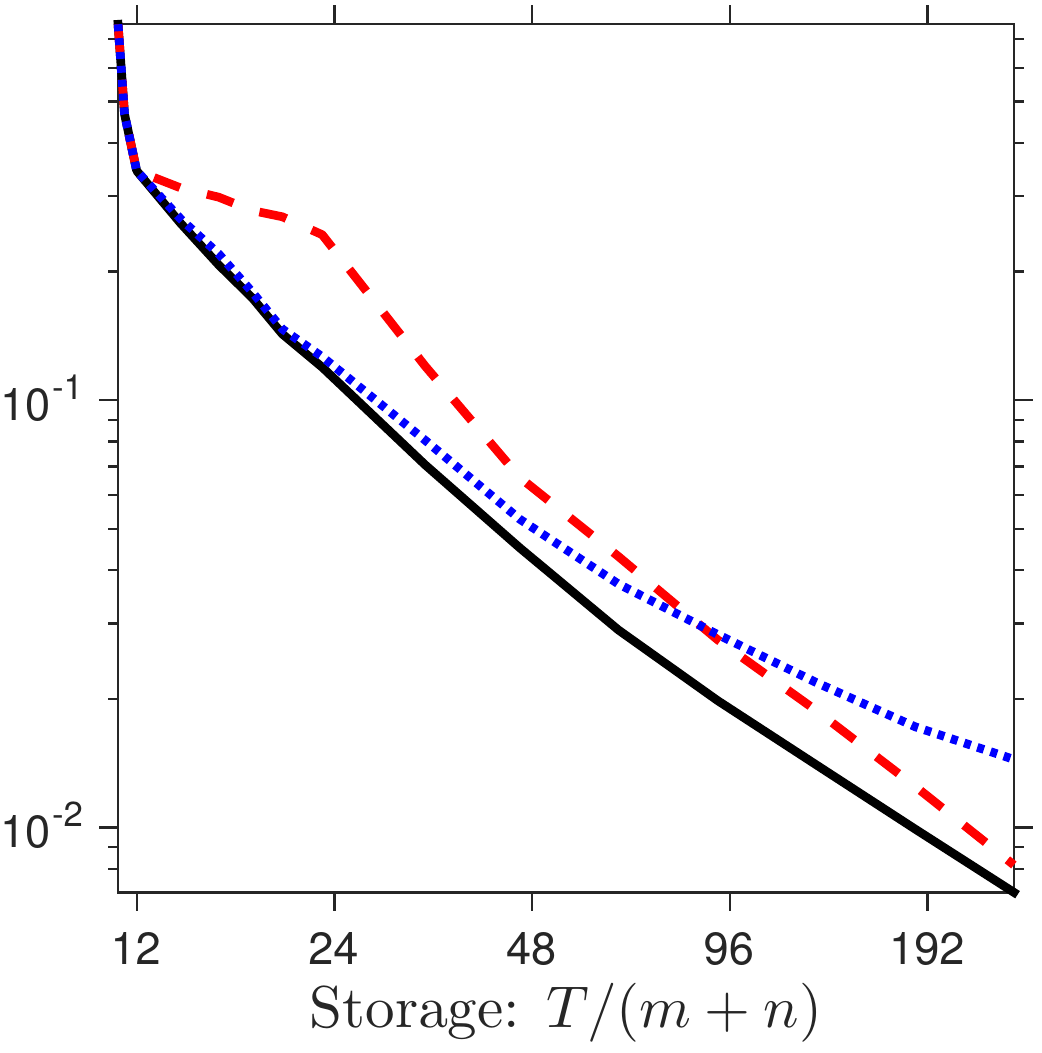}
\caption{\texttt{LowRankMedNoise}}
\end{center}
\end{subfigure}
\begin{subfigure}{.325\textwidth}
\begin{center}
\includegraphics[height=1.5in]{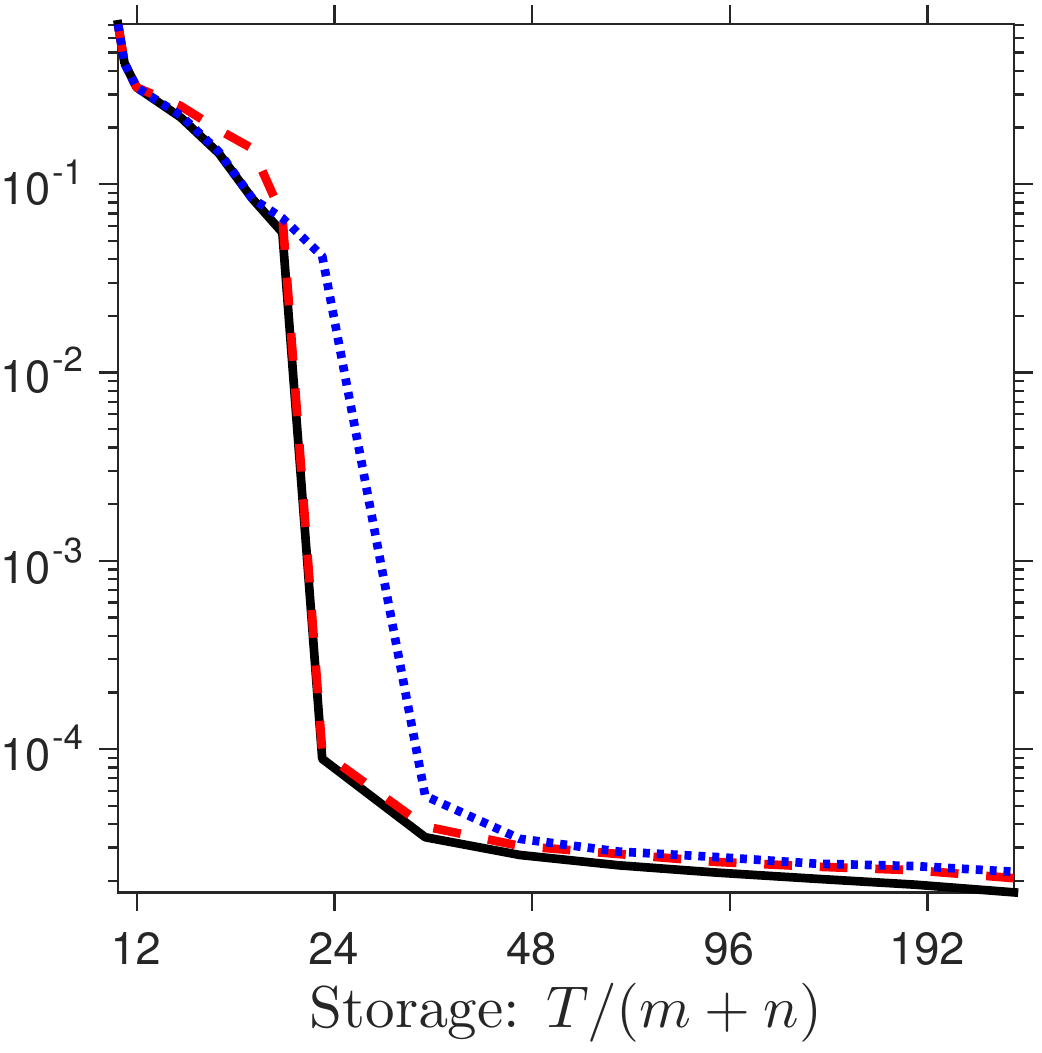}
\caption{\texttt{LowRankLowNoise}}
\end{center}
\end{subfigure}
\end{center}

\vspace{.5em}

\begin{center}
\begin{subfigure}{.325\textwidth}
\begin{center}
\includegraphics[height=1.5in]{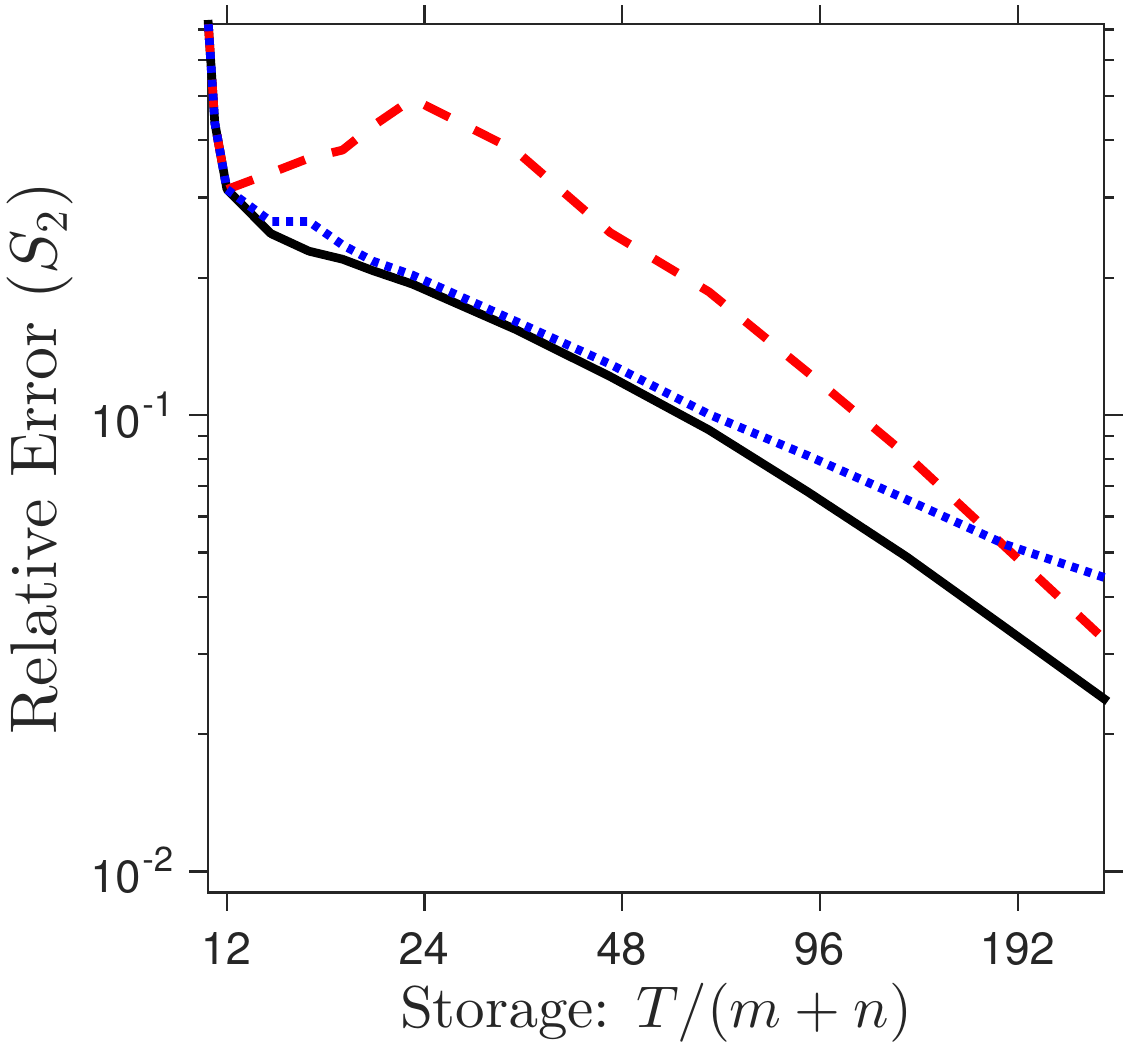}
\caption{\texttt{PolyDecaySlow}}
\end{center}
\end{subfigure}
\begin{subfigure}{.325\textwidth}
\begin{center}
\includegraphics[height=1.5in]{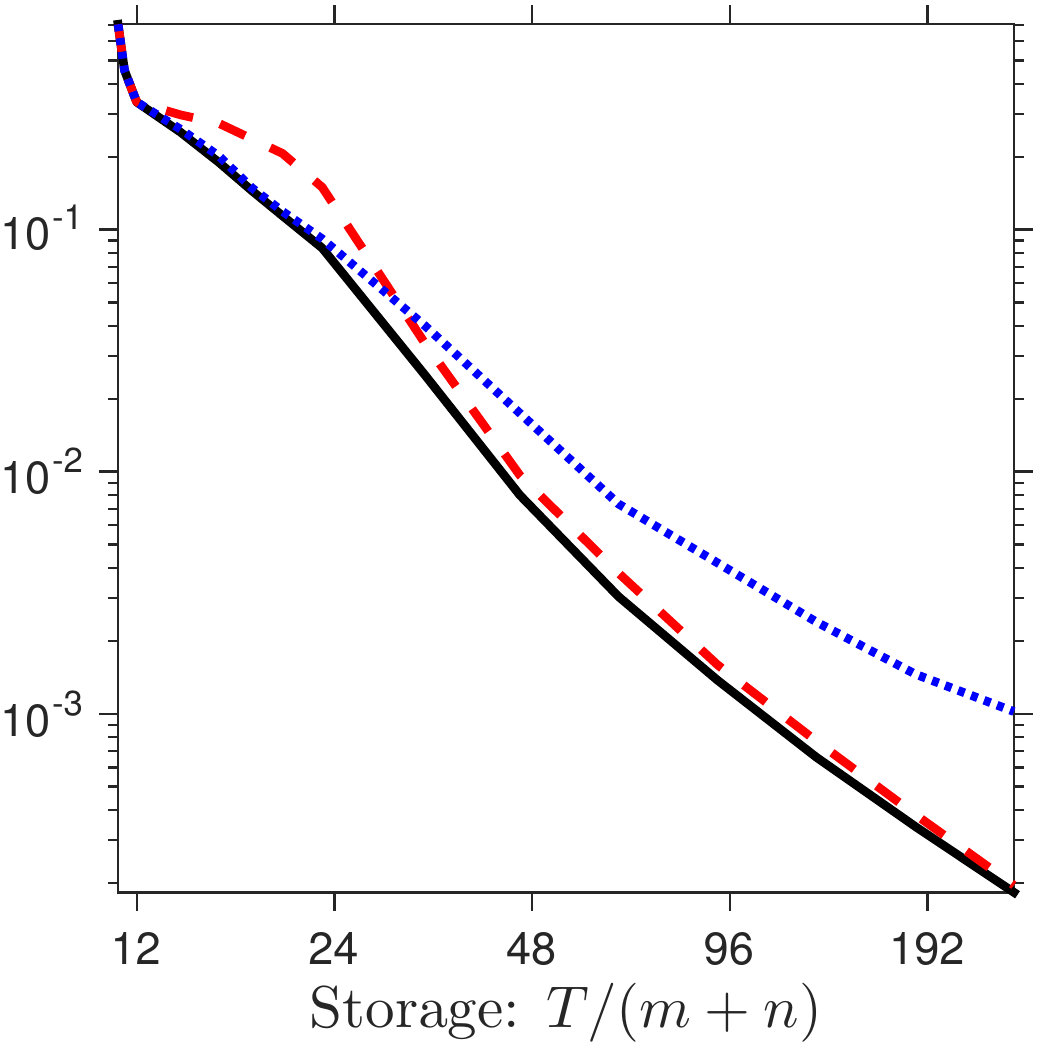}
\caption{\texttt{PolyDecayMed}}
\end{center}
\end{subfigure}
\begin{subfigure}{.325\textwidth}
\begin{center}
\includegraphics[height=1.5in]{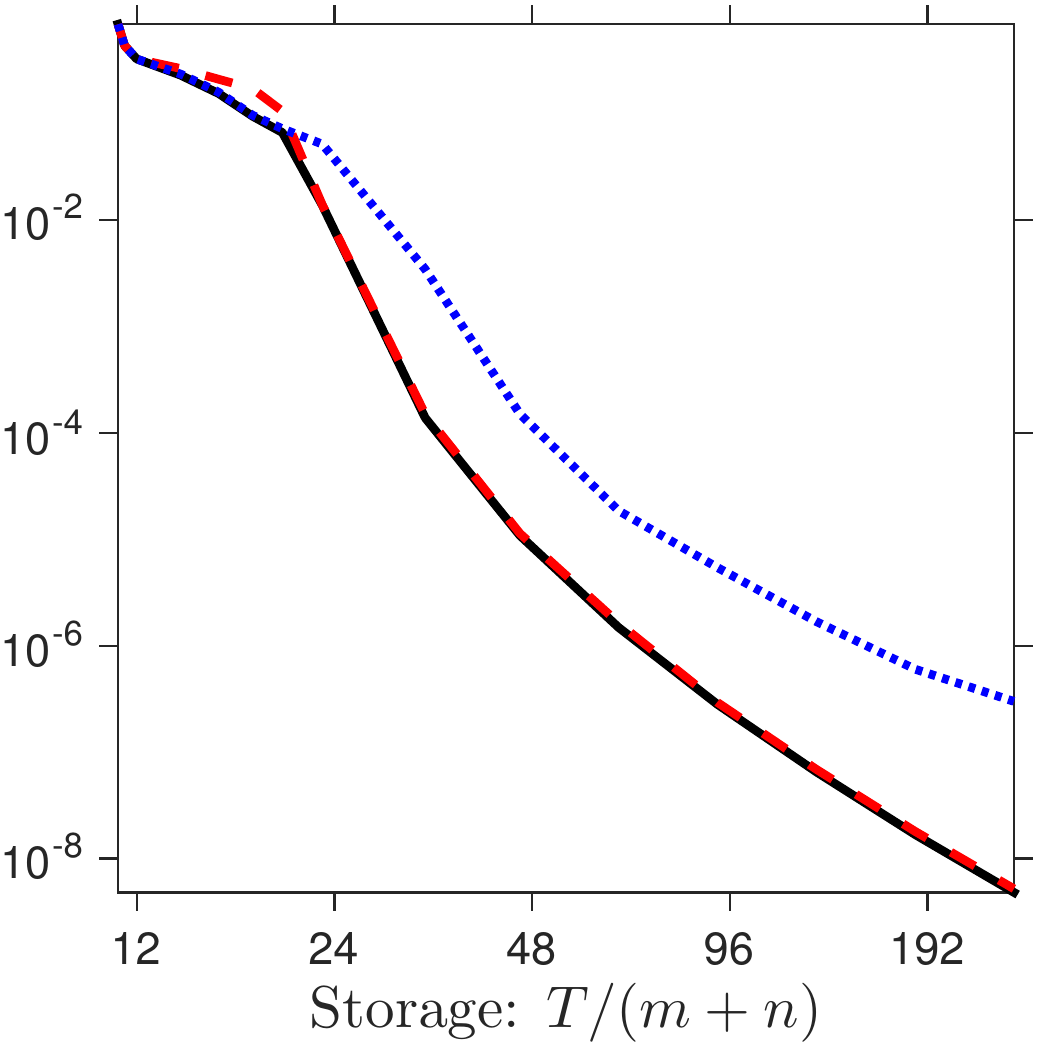}
\caption{\texttt{PolyDecayFast}}
\end{center}
\end{subfigure}
\end{center}

\vspace{0.5em}

\begin{center}
\begin{subfigure}{.325\textwidth}
\begin{center}
\includegraphics[height=1.5in]{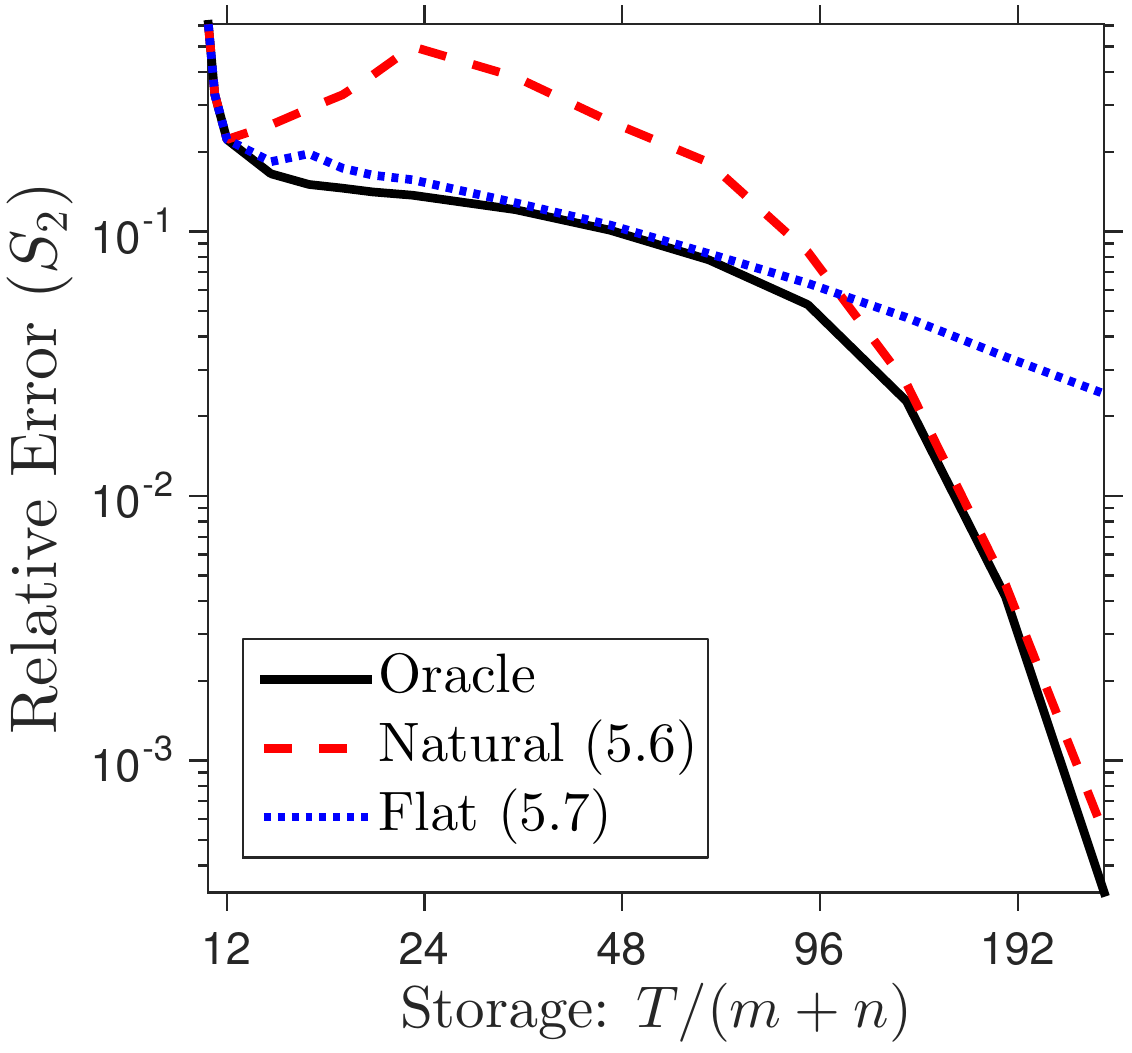}
\caption{\texttt{ExpDecaySlow}}
\end{center}
\end{subfigure}
\begin{subfigure}{.325\textwidth}
\begin{center}
\includegraphics[height=1.5in]{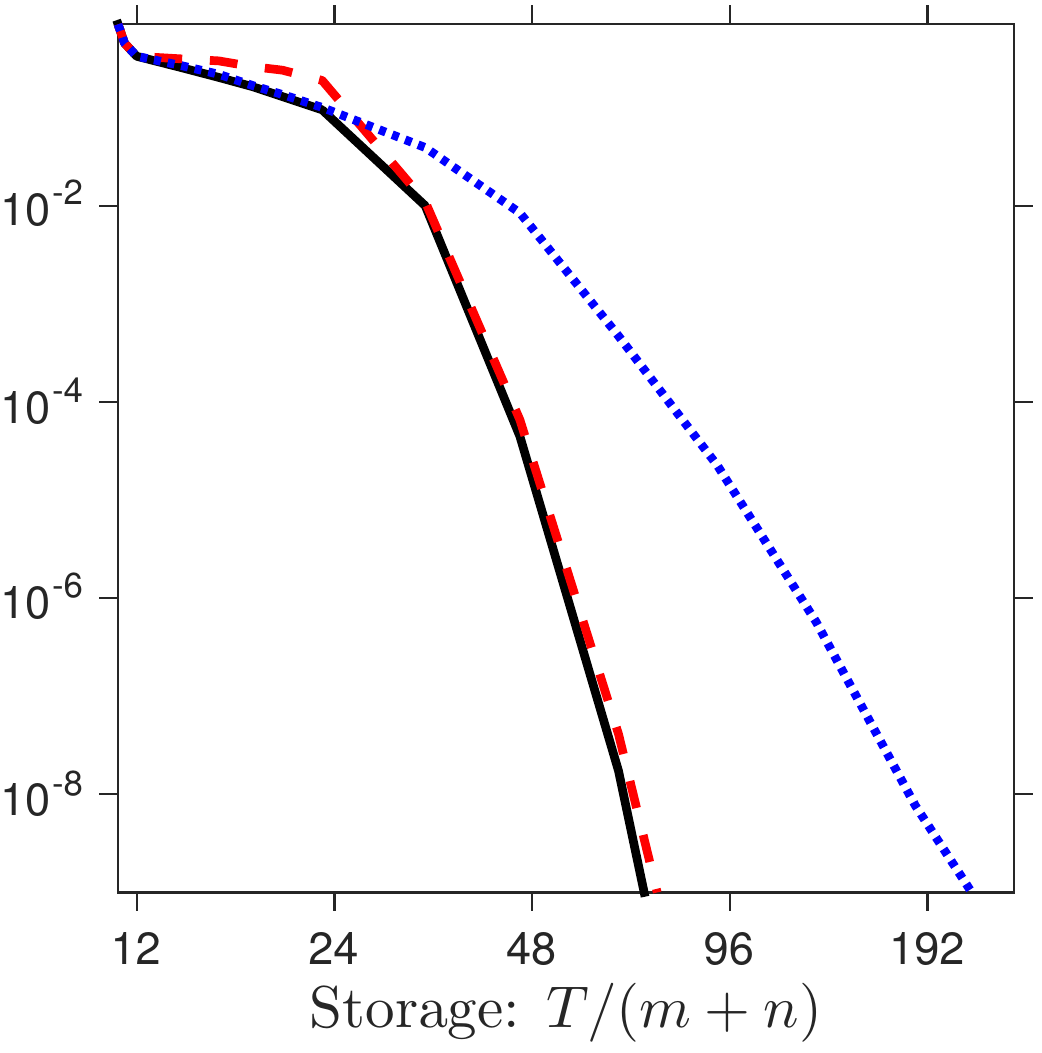}
\caption{\texttt{ExpDecayMed}}
\end{center}
\end{subfigure}
\begin{subfigure}{.325\textwidth}
\begin{center}
\includegraphics[height=1.5in]{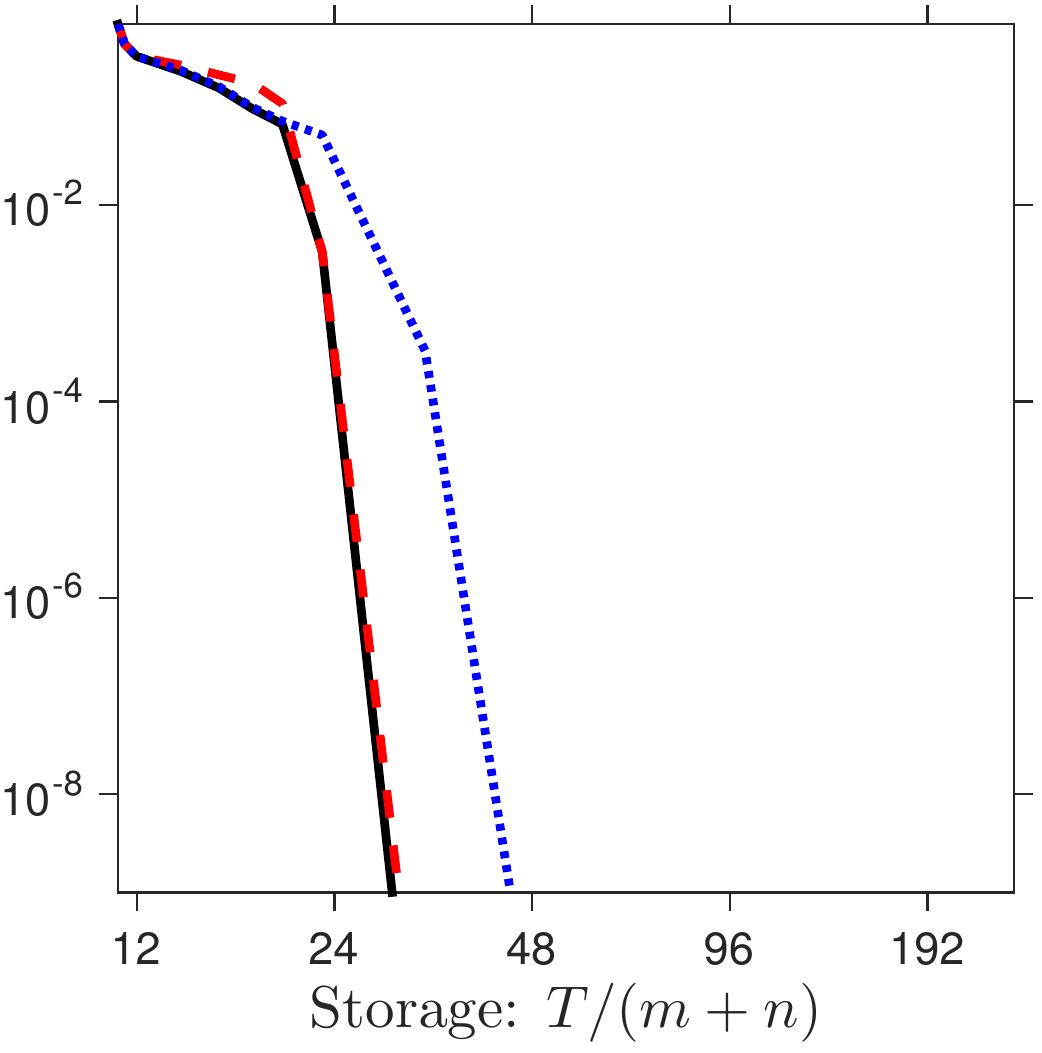}
\caption{\texttt{ExpDecayFast}}
\end{center}
\end{subfigure}
\end{center}

\vspace{0.5em}

\caption{\textbf{Relative error for proposed method with \emph{a priori} parameters.}
(Gaussian maps, effective rank $R = 20$, approximation rank $r = 10$,
Schatten $2$-norm.)
We compare the oracle performance of the proposed fixed-rank
approximation~\cref{eqn:Ahat-fixed} with its performance at theoretically justified
parameter values. See \cref{app:oracle-performance} for details.}
\label{fig:theory-params-R20-S2}
\end{figure}

\begin{figure}[htp!]
\vspace{0.5in}
\begin{center}
\begin{subfigure}{.325\textwidth}
\begin{center}
\includegraphics[height=1.5in]{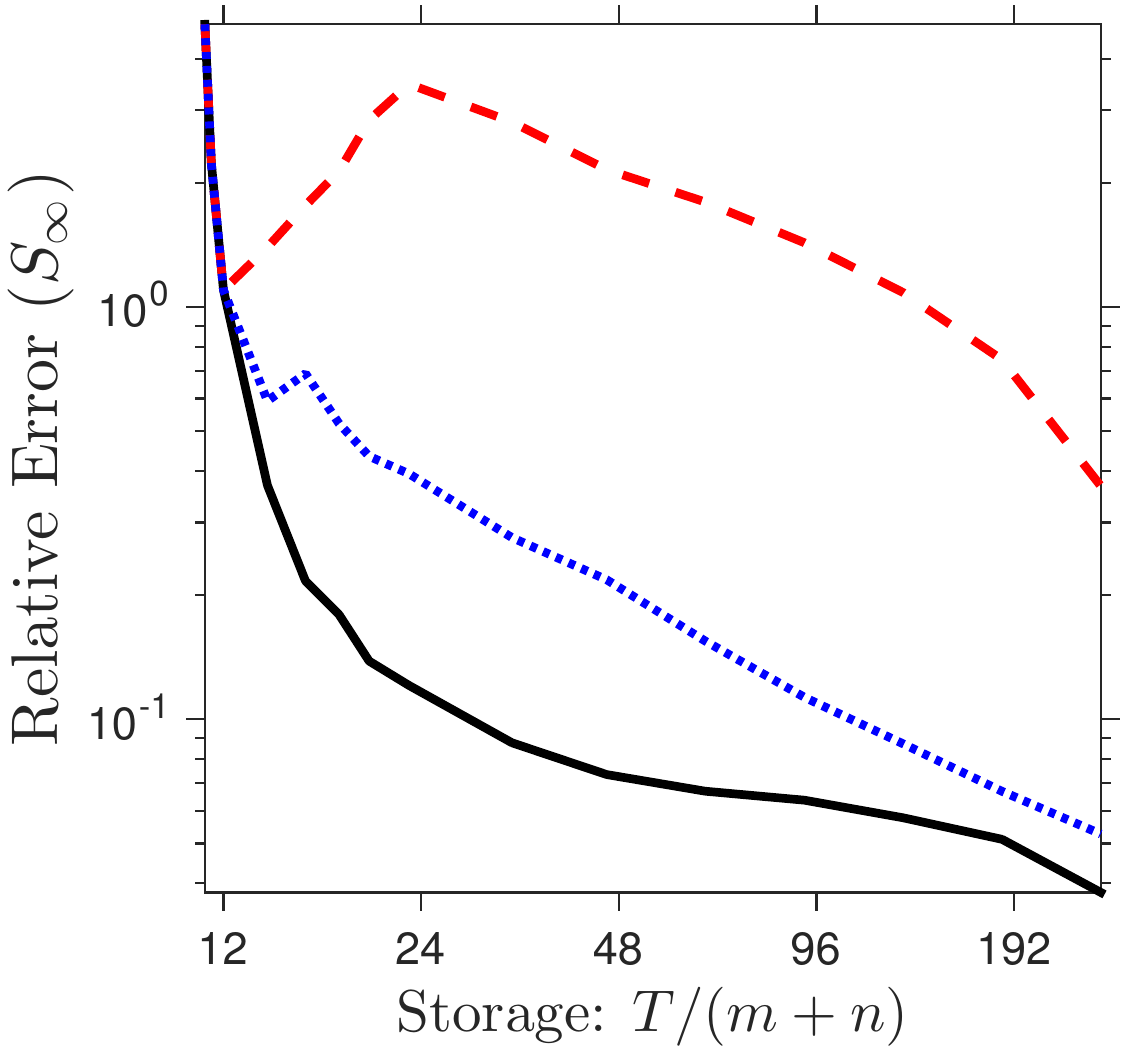}
\caption{\texttt{LowRankHiNoise}}
\end{center}
\end{subfigure}
\begin{subfigure}{.325\textwidth}
\begin{center}
\includegraphics[height=1.5in]{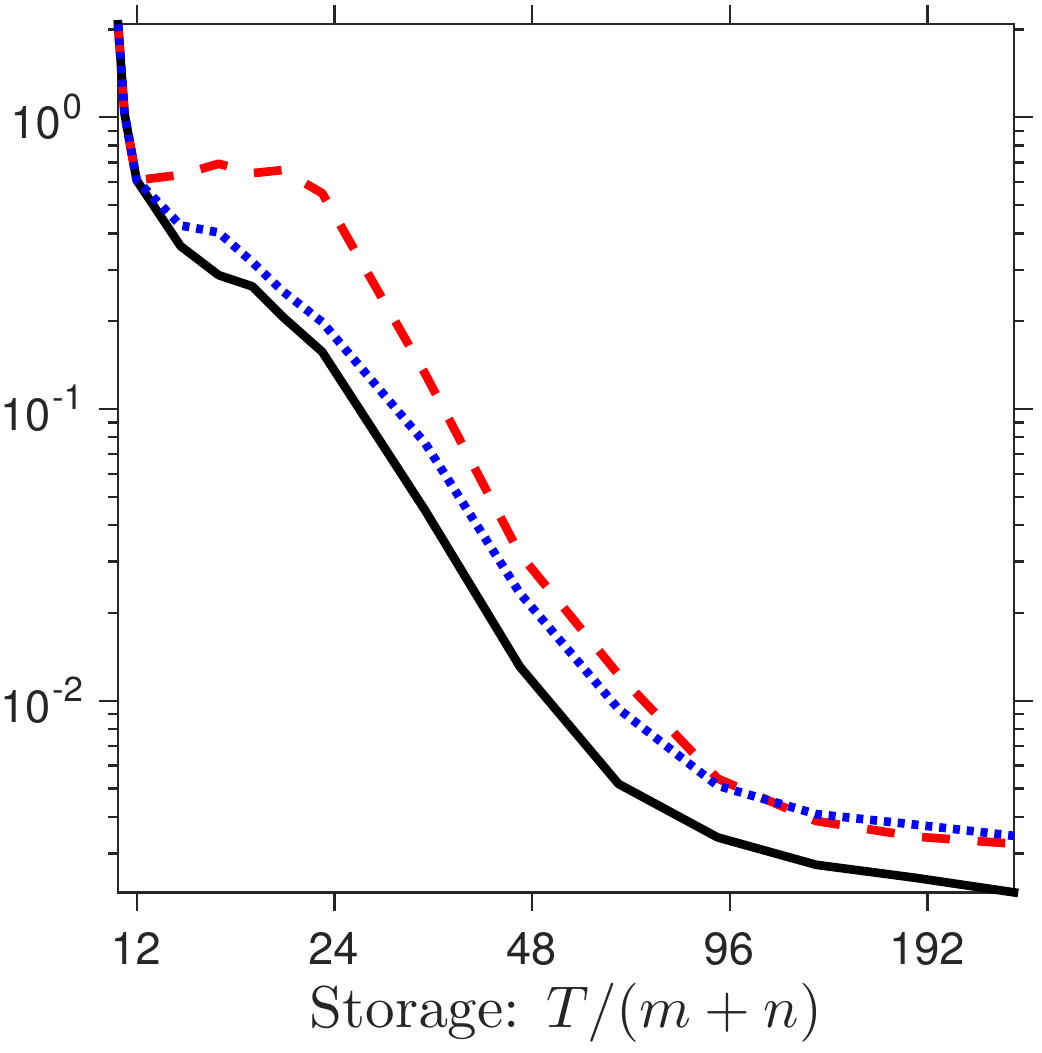}
\caption{\texttt{LowRankMedNoise}}
\end{center}
\end{subfigure}
\begin{subfigure}{.325\textwidth}
\begin{center}
\includegraphics[height=1.5in]{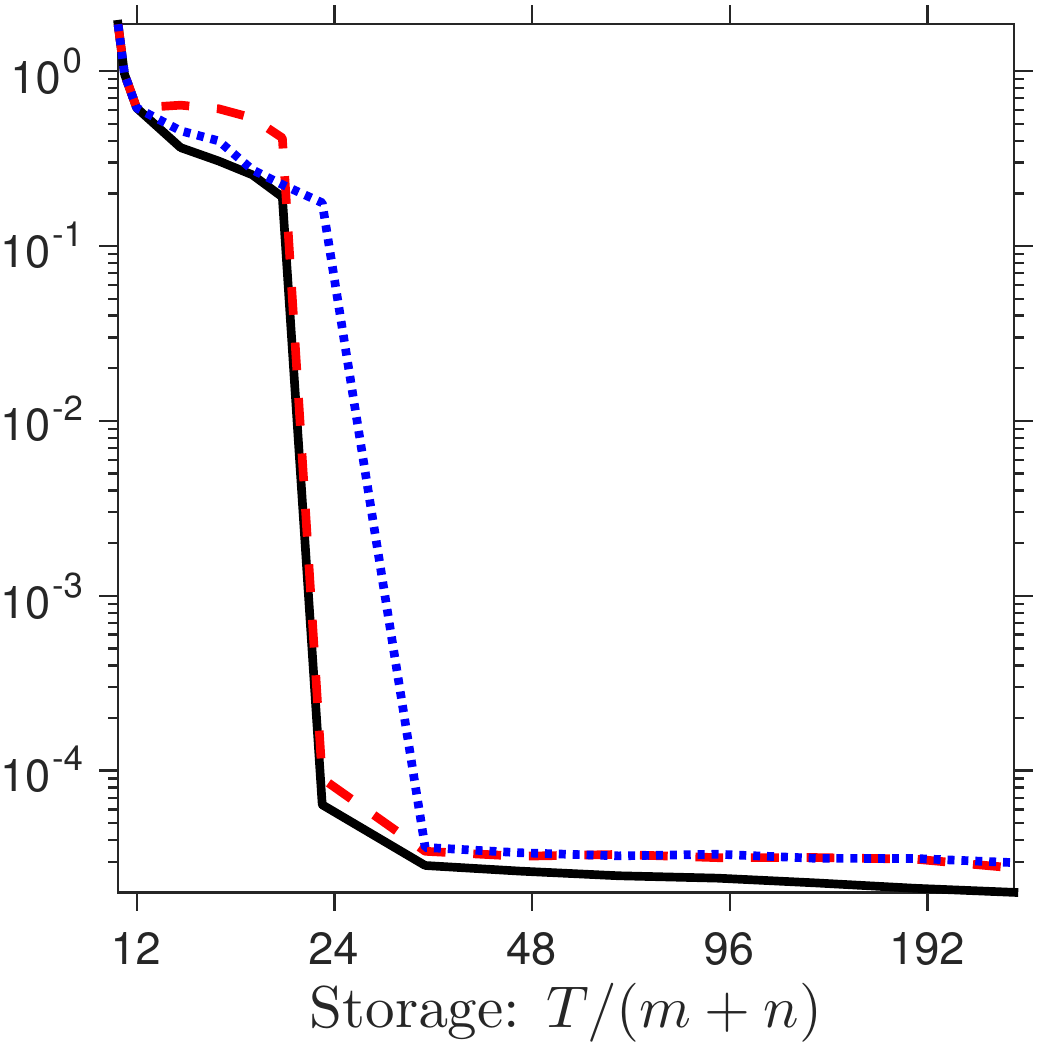}
\caption{\texttt{LowRankLowNoise}}
\end{center}
\end{subfigure}
\end{center}

\vspace{.5em}

\begin{center}
\begin{subfigure}{.325\textwidth}
\begin{center}
\includegraphics[height=1.5in]{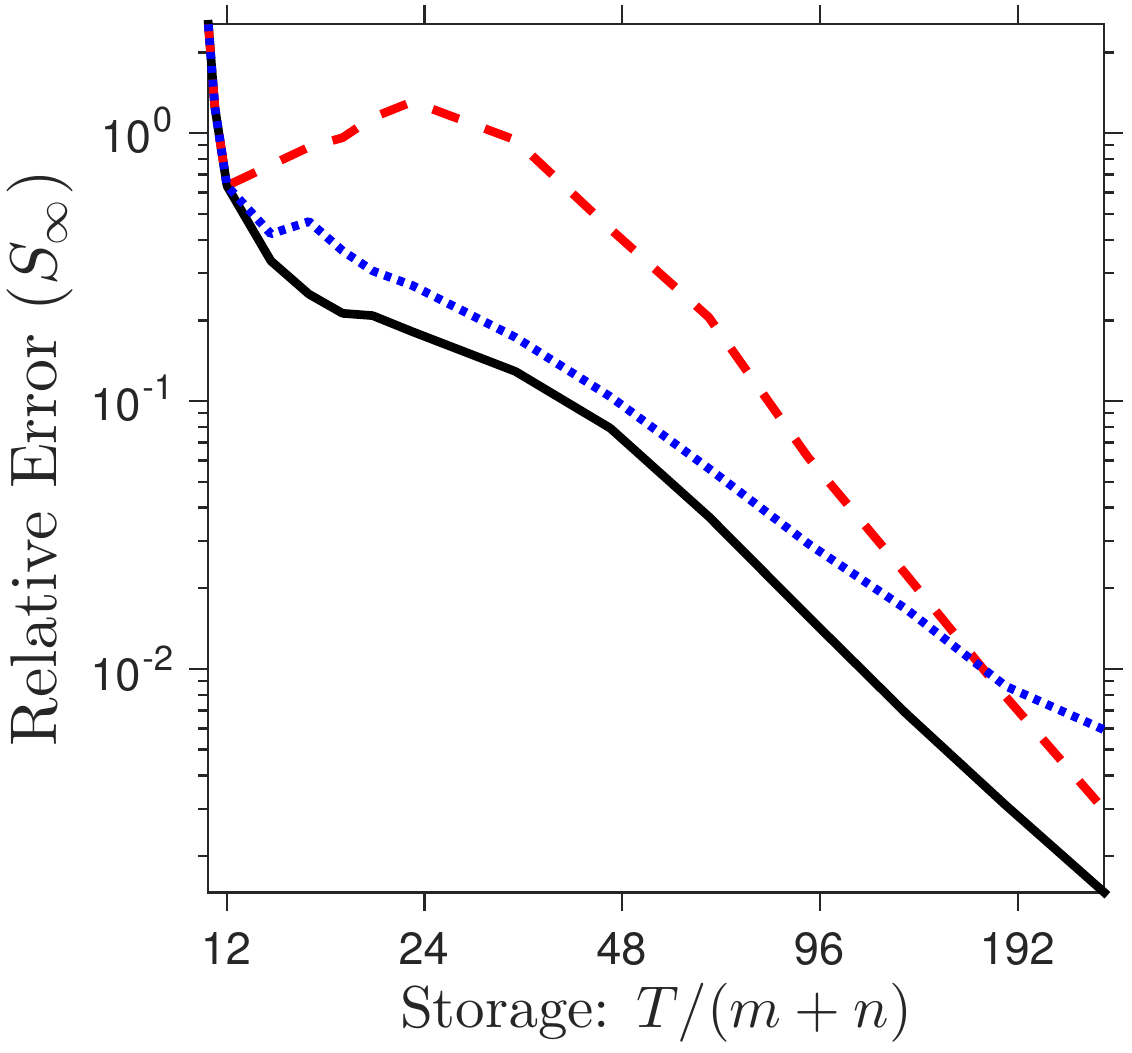}
\caption{\texttt{PolyDecaySlow}}
\end{center}
\end{subfigure}
\begin{subfigure}{.325\textwidth}
\begin{center}
\includegraphics[height=1.5in]{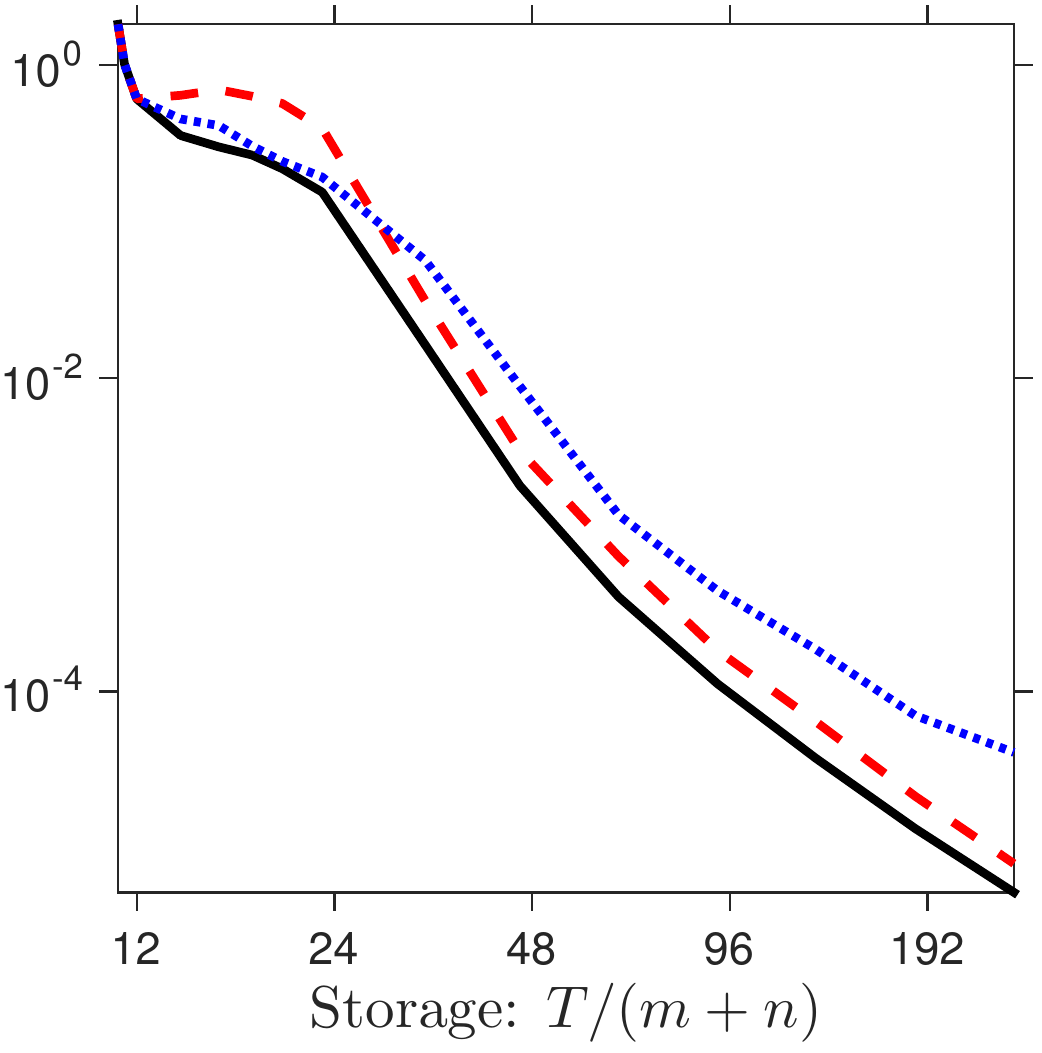}
\caption{\texttt{PolyDecayMed}}
\end{center}
\end{subfigure}
\begin{subfigure}{.325\textwidth}
\begin{center}
\includegraphics[height=1.5in]{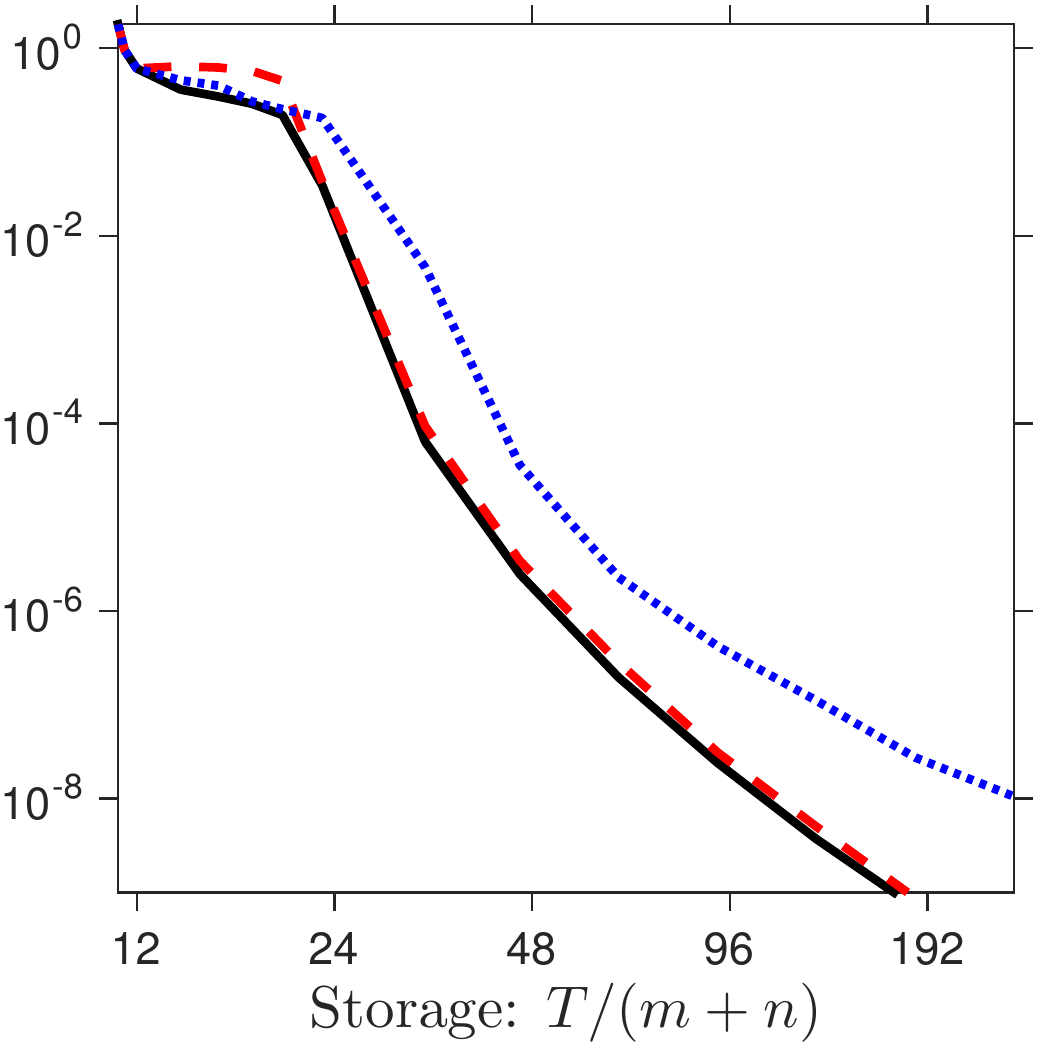}
\caption{\texttt{PolyDecayFast}}
\end{center}
\end{subfigure}
\end{center}

\vspace{0.5em}

\begin{center}
\begin{subfigure}{.325\textwidth}
\begin{center}
\includegraphics[height=1.5in]{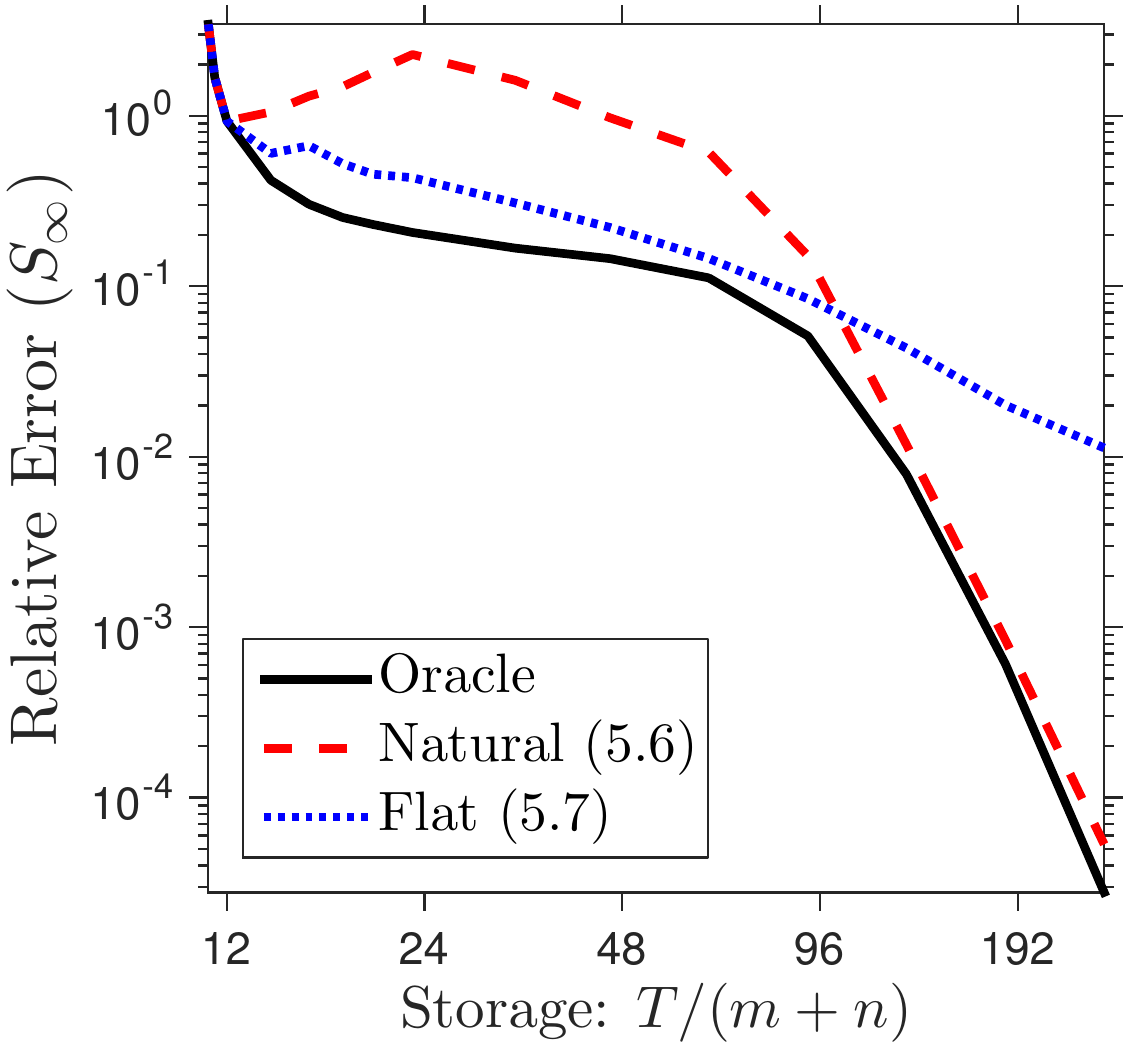}
\caption{\texttt{ExpDecaySlow}}
\end{center}
\end{subfigure}
\begin{subfigure}{.325\textwidth}
\begin{center}
\includegraphics[height=1.5in]{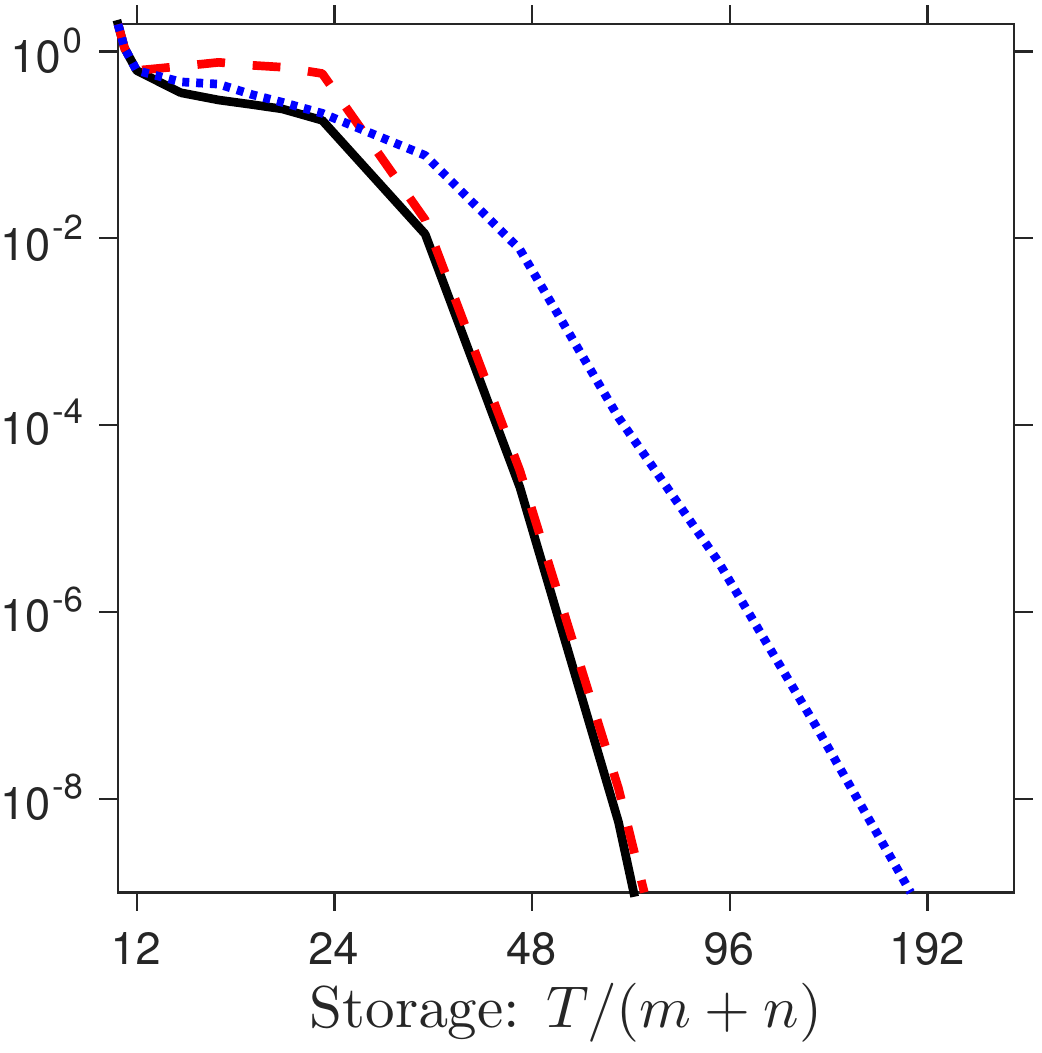}
\caption{\texttt{ExpDecayMed}}
\end{center}
\end{subfigure}
\begin{subfigure}{.325\textwidth}
\begin{center}
\includegraphics[height=1.5in]{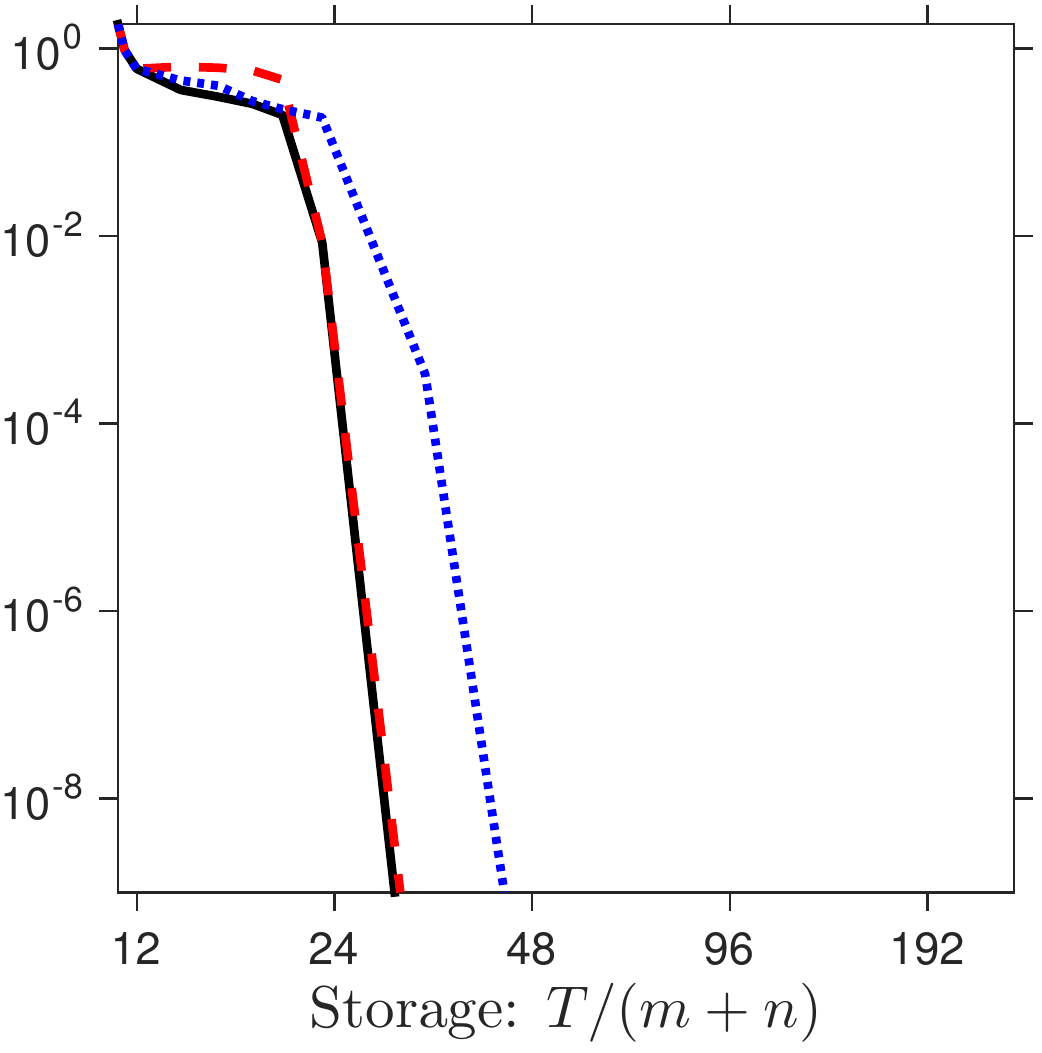}
\caption{\texttt{ExpDecayFast}}
\end{center}
\end{subfigure}
\end{center}

\vspace{0.5em}

\caption{\textbf{Relative error for proposed method with \emph{a priori} parameters.}
(Gaussian maps, effective rank $R = 20$, approximation rank $r = 10$,
Schatten $\infty$-norm.)
We compare the oracle performance of the proposed fixed-rank
approximation~\cref{eqn:Ahat-fixed} with its performance at theoretically justified
parameter values. See \cref{app:oracle-performance} for details.}
\label{fig:theory-params-R20-Sinf}
\end{figure}

\begin{figure}[htp!]
\begin{center}
\begin{subfigure}{.325\textwidth}
\begin{center}
\includegraphics[height=1.5in]{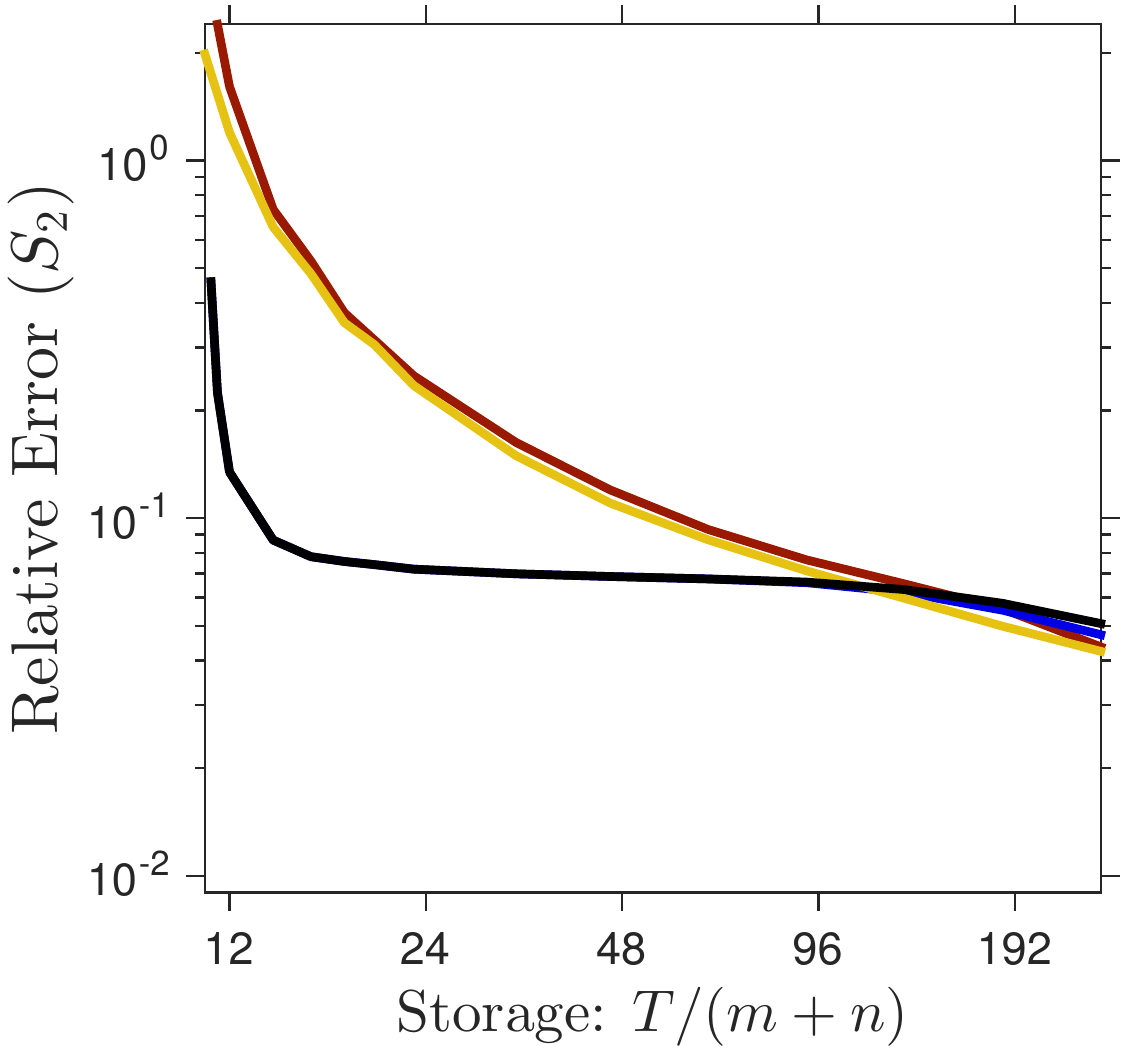}
\caption{\texttt{LowRankHiNoise}}
\end{center}
\end{subfigure}
\begin{subfigure}{.325\textwidth}
\begin{center}
\includegraphics[height=1.5in]{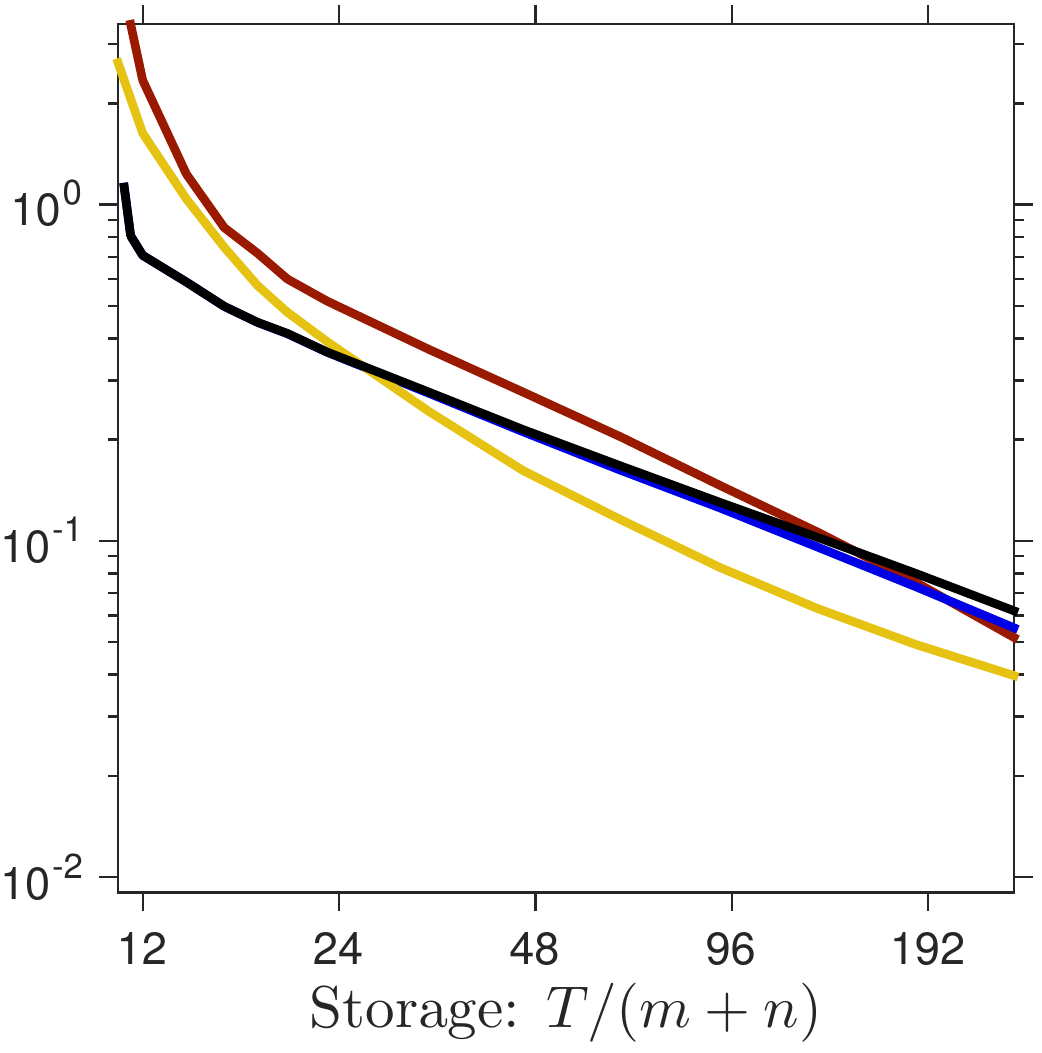}
\caption{\texttt{LowRankMedNoise}}
\end{center}
\end{subfigure}
\begin{subfigure}{.325\textwidth}
\begin{center}
\includegraphics[height=1.5in]{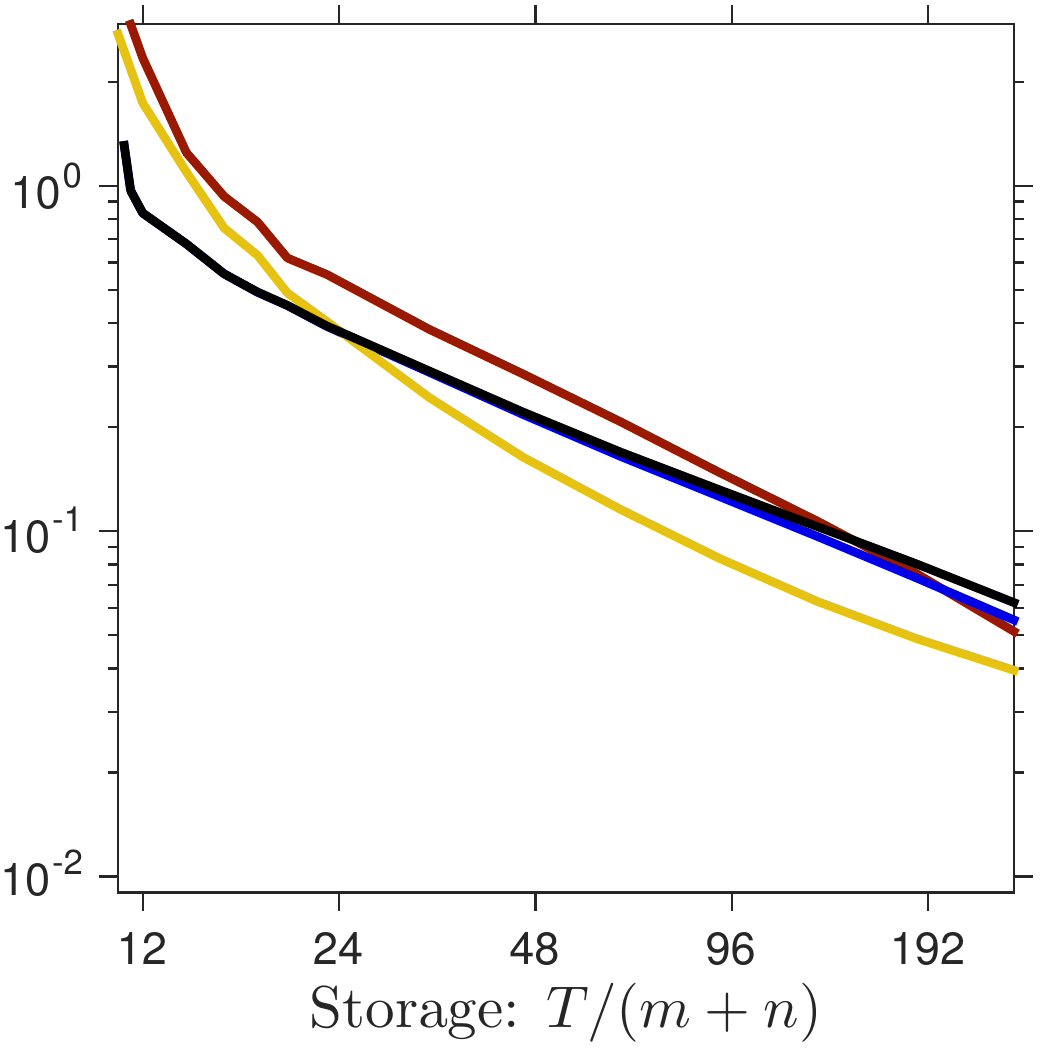}
\caption{\texttt{LowRankLowNoise}}
\end{center}
\end{subfigure}
\end{center}

\vspace{.5em}

\begin{center}
\begin{subfigure}{.325\textwidth}
\begin{center}
\includegraphics[height=1.5in]{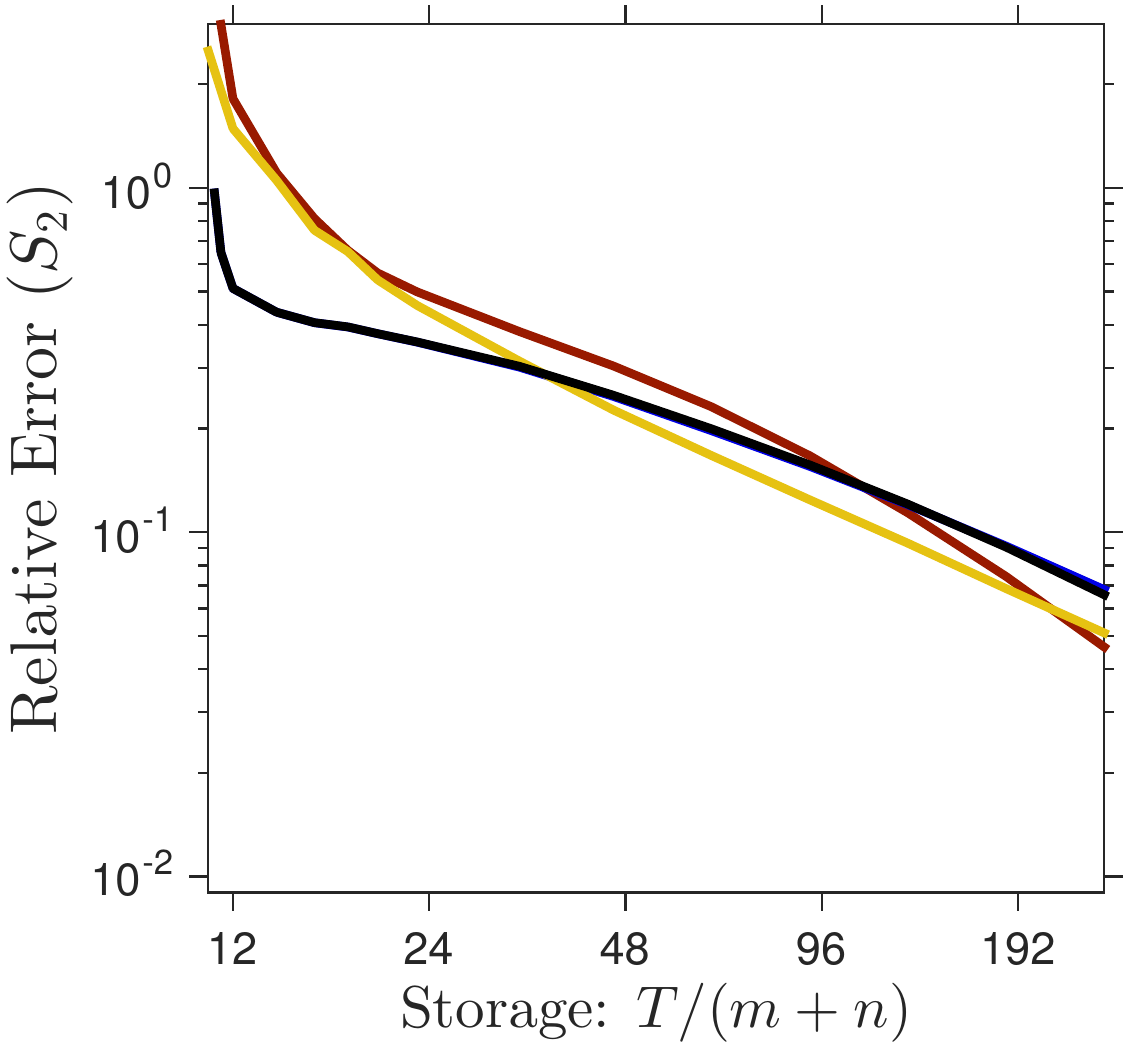}
\caption{\texttt{PolyDecaySlow}}
\end{center}
\end{subfigure}
\begin{subfigure}{.325\textwidth}
\begin{center}
\includegraphics[height=1.5in]{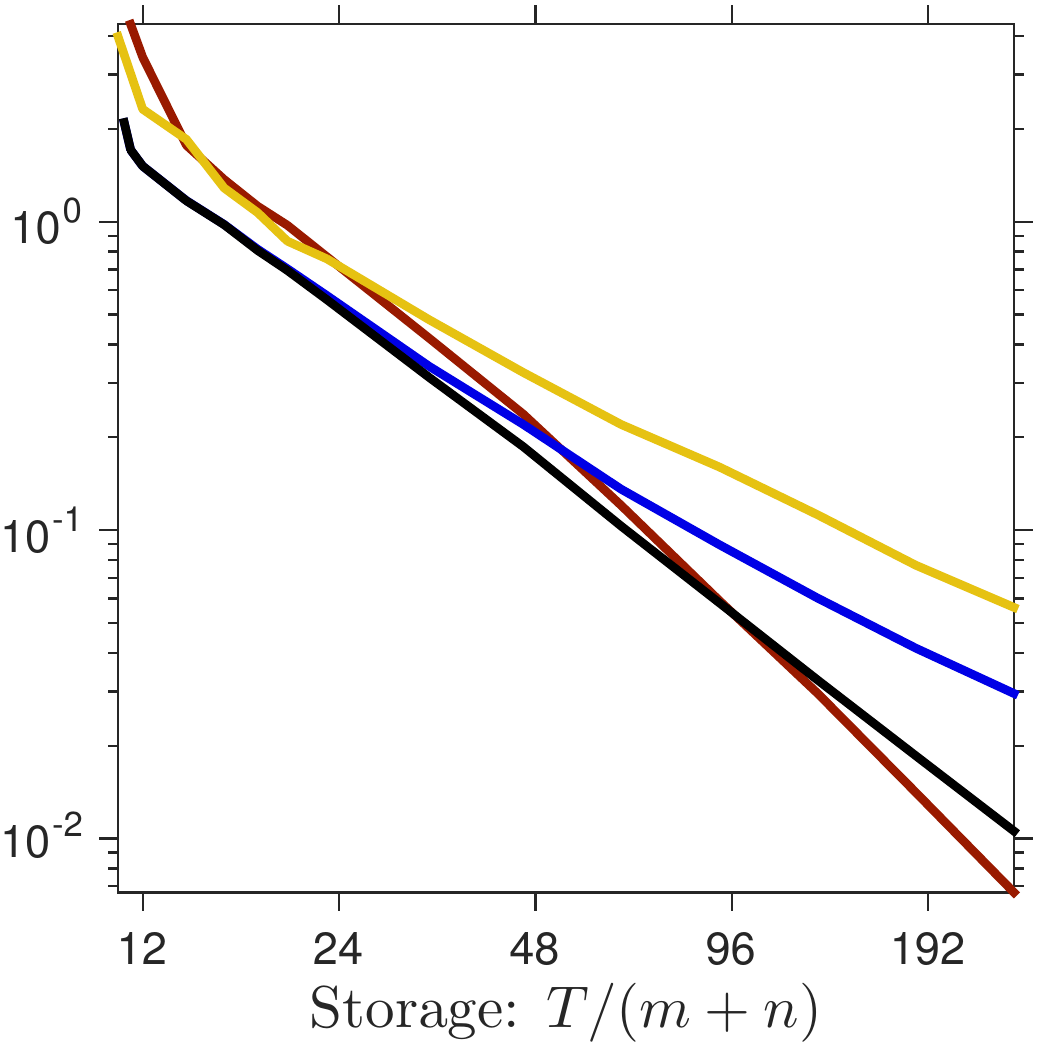}
\caption{\texttt{PolyDecayMed}}
\end{center}
\end{subfigure}
\begin{subfigure}{.325\textwidth}
\begin{center}
\includegraphics[height=1.5in]{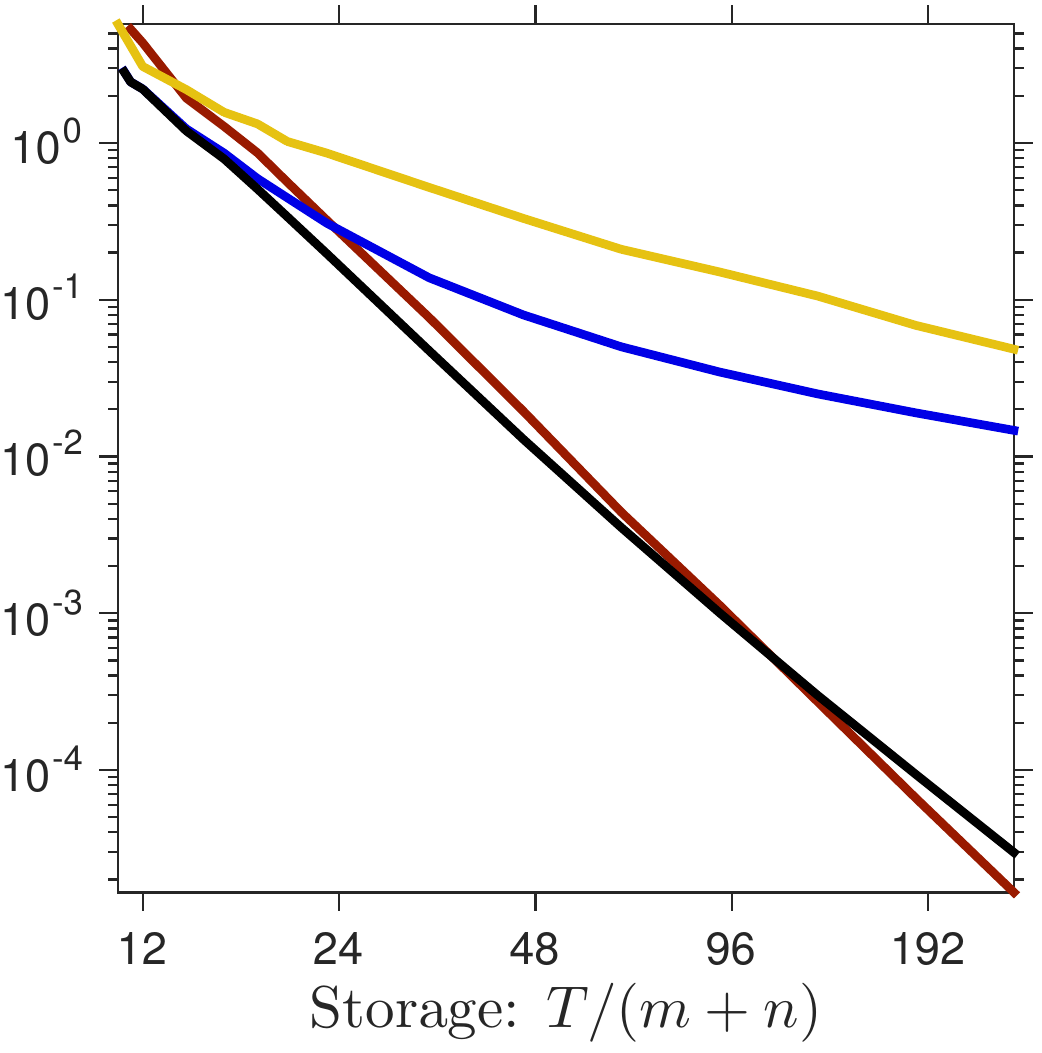}
\caption{\texttt{PolyDecayFast}}
\end{center}
\end{subfigure}
\end{center}

\vspace{0.5em}

\begin{center}
\begin{subfigure}{.325\textwidth}
\begin{center}
\includegraphics[height=1.5in]{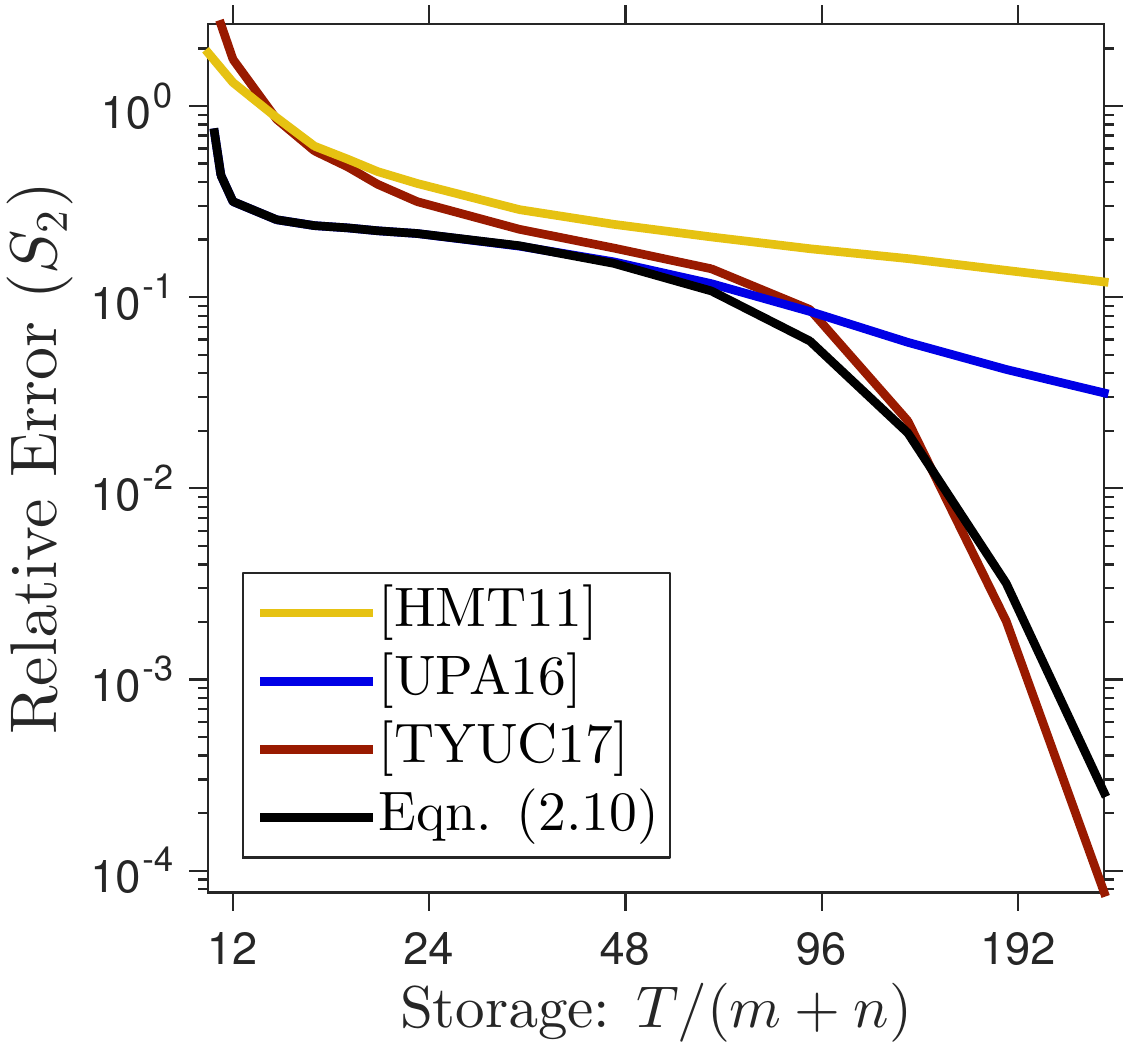}
\caption{\texttt{ExpDecaySlow}}
\end{center}
\end{subfigure}
\begin{subfigure}{.325\textwidth}
\begin{center}
\includegraphics[height=1.5in]{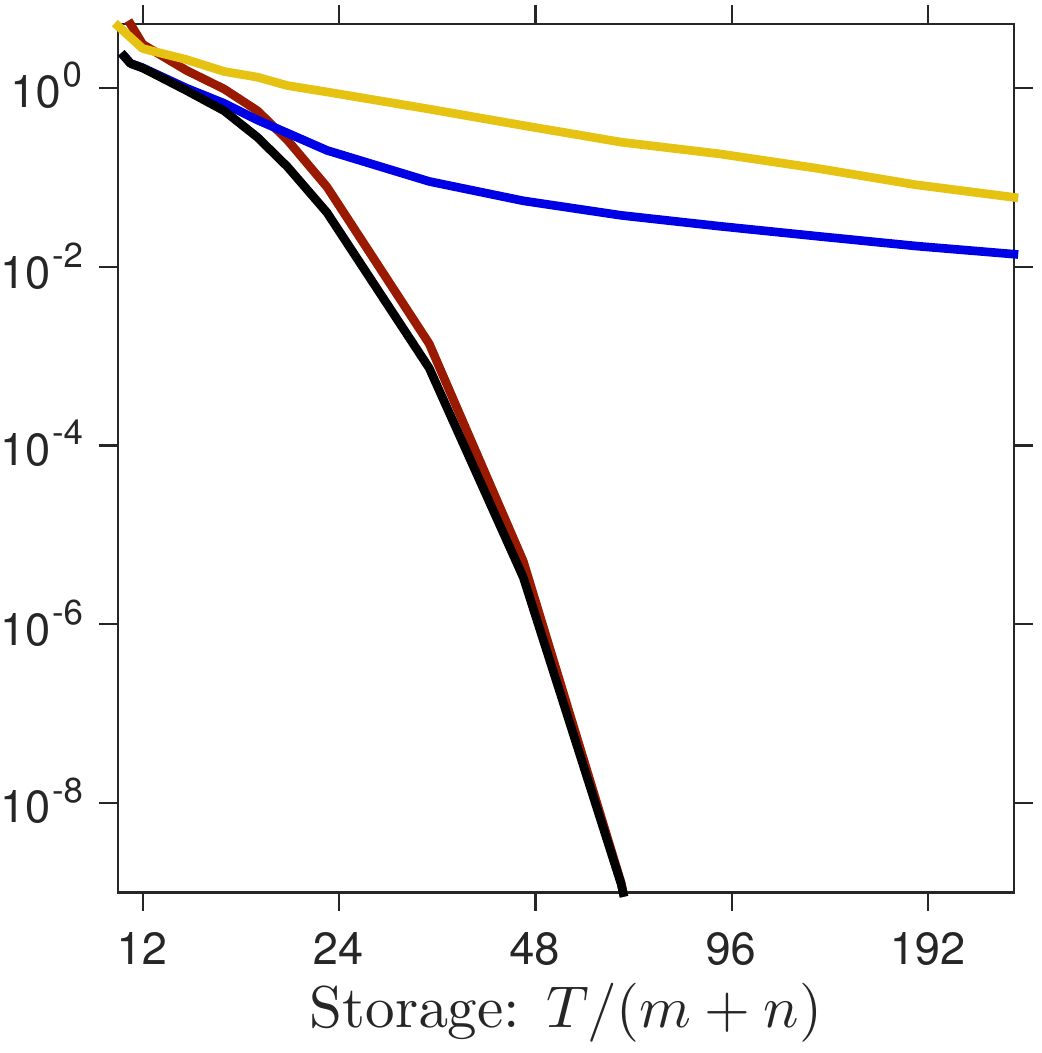}
\caption{\texttt{ExpDecayMed}}
\end{center}
\end{subfigure}
\begin{subfigure}{.325\textwidth}
\begin{center}
\includegraphics[height=1.5in]{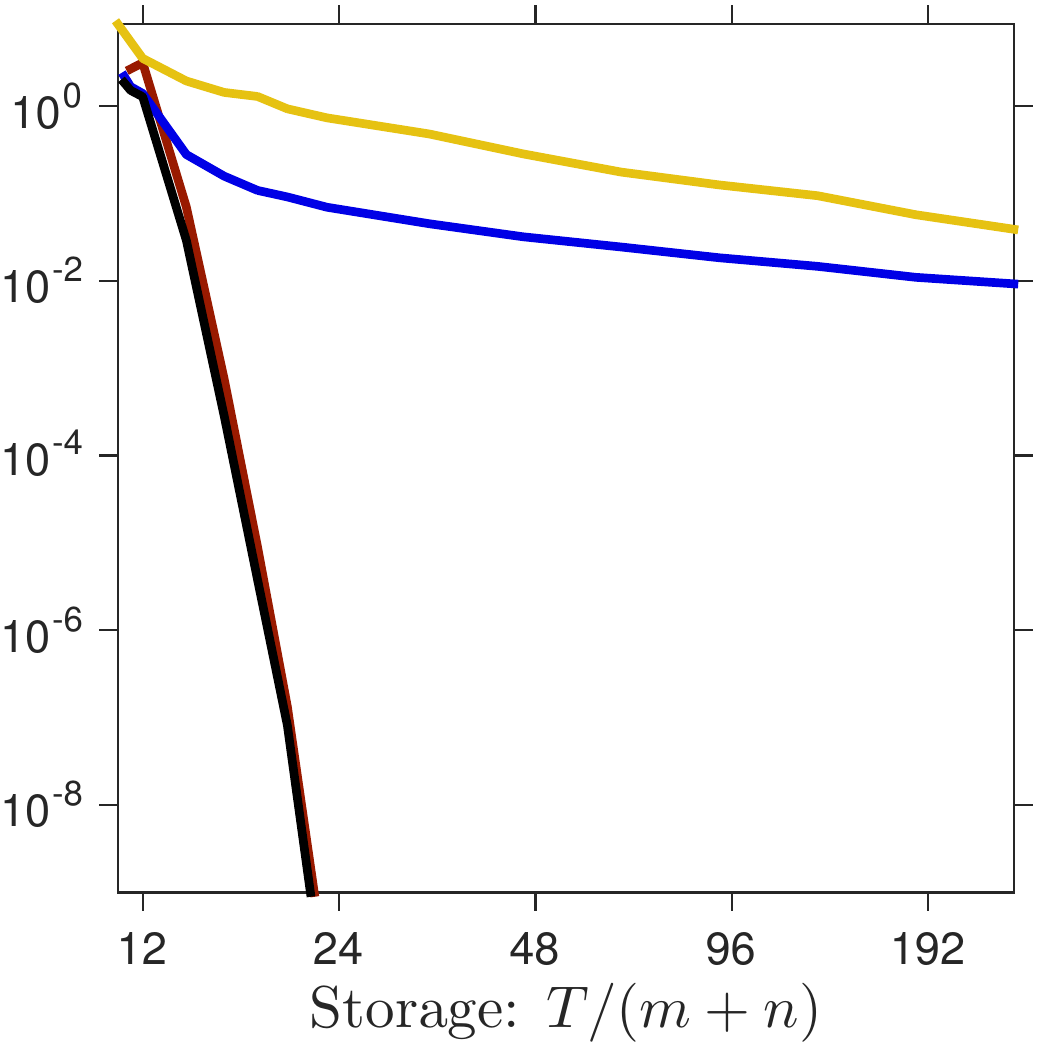}
\caption{\texttt{ExpDecayFast}}
\end{center}
\end{subfigure}
\end{center}

\vspace{0.5em}

\caption{\textbf{Comparison of reconstruction formulas: Synthetic examples.}
(Gaussian maps, effective rank $R = 5$, approximation rank $r = 10$, Schatten 2-norm.)
We compare the oracle error achieved by the proposed fixed-rank
approximation~\cref{eqn:Ahat-fixed} against methods~\cref{eqn:upa,eqn:tyuc2017} from the literature.
See \cref{sec:oracle-error} for details.}
\label{fig:oracle-comparison-R5-S2}
\end{figure}

\begin{figure}[htp!]
\begin{center}
\begin{subfigure}{.325\textwidth}
\begin{center}
\includegraphics[height=1.5in]{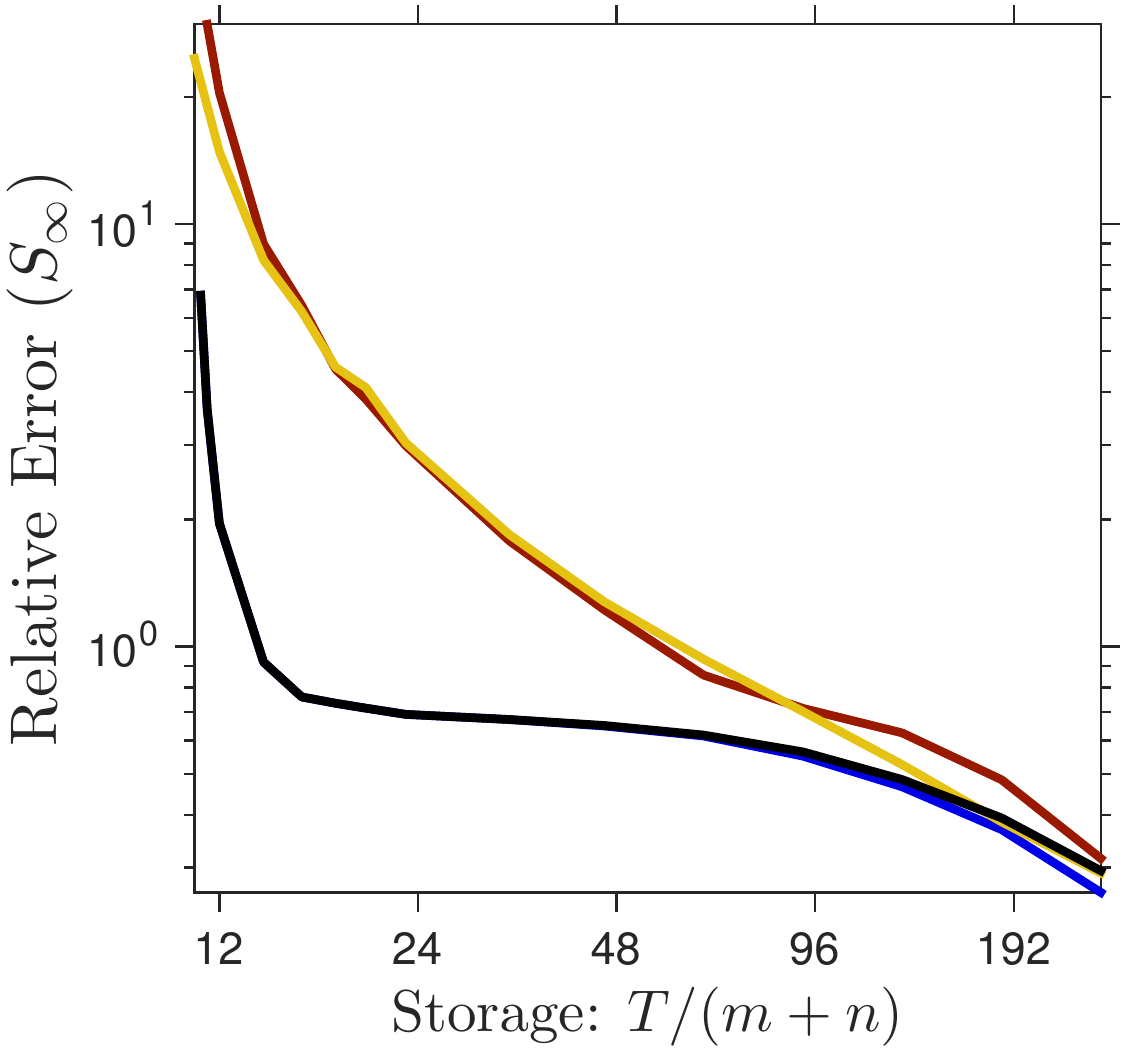}
\caption{\texttt{LowRankHiNoise}}
\end{center}
\end{subfigure}
\begin{subfigure}{.325\textwidth}
\begin{center}
\includegraphics[height=1.5in]{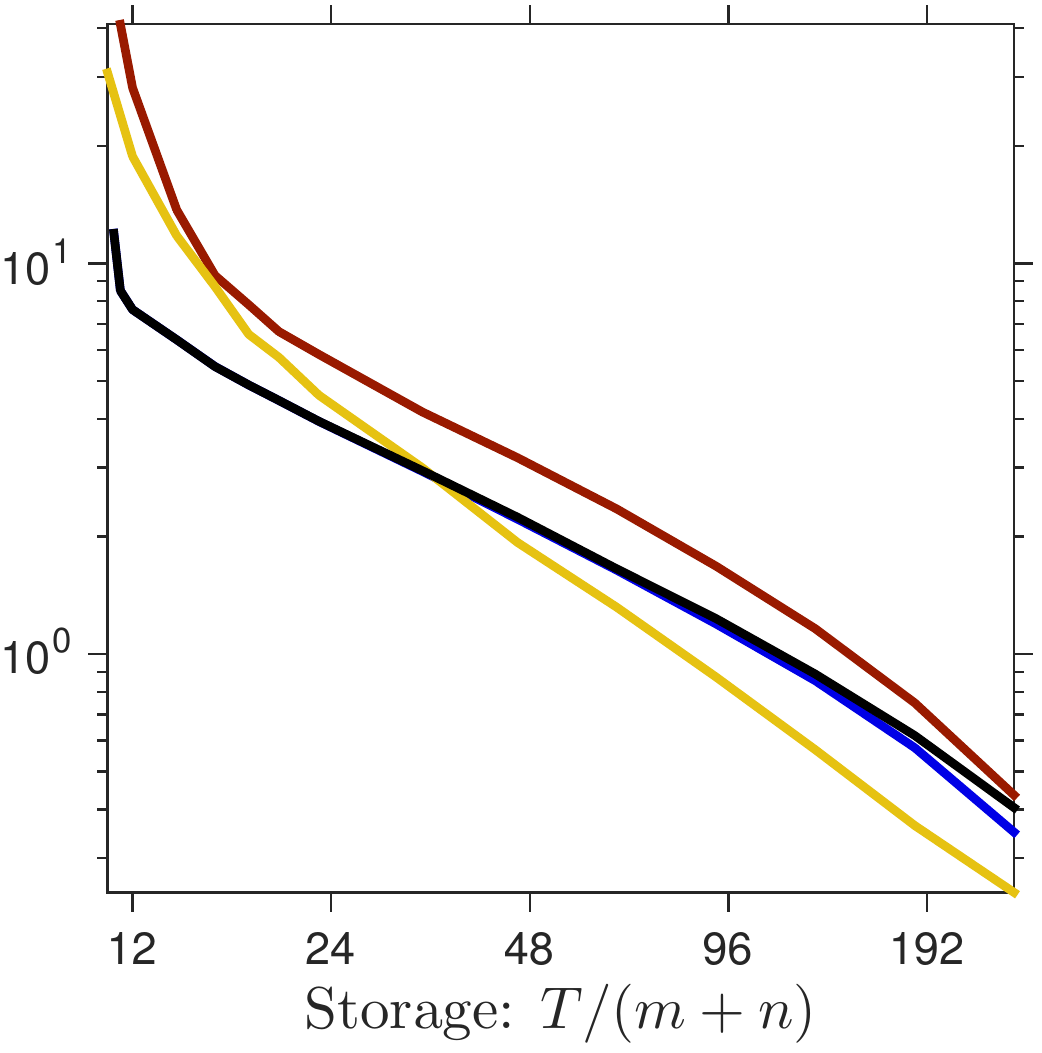}
\caption{\texttt{LowRankMedNoise}}
\end{center}
\end{subfigure}
\begin{subfigure}{.325\textwidth}
\begin{center}
\includegraphics[height=1.5in]{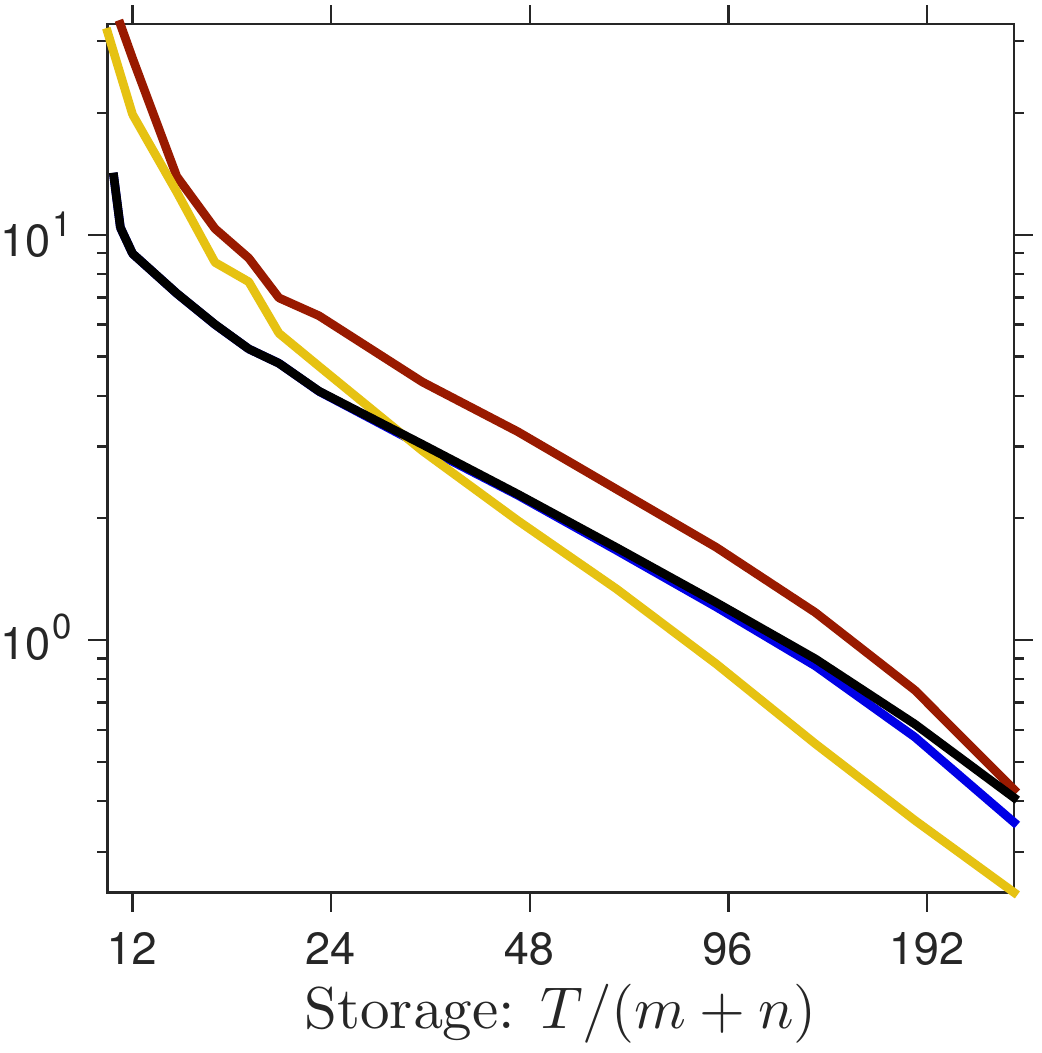}
\caption{\texttt{LowRankLowNoise}}
\end{center}
\end{subfigure}
\end{center}

\vspace{.5em}

\begin{center}
\begin{subfigure}{.325\textwidth}
\begin{center}
\includegraphics[height=1.5in]{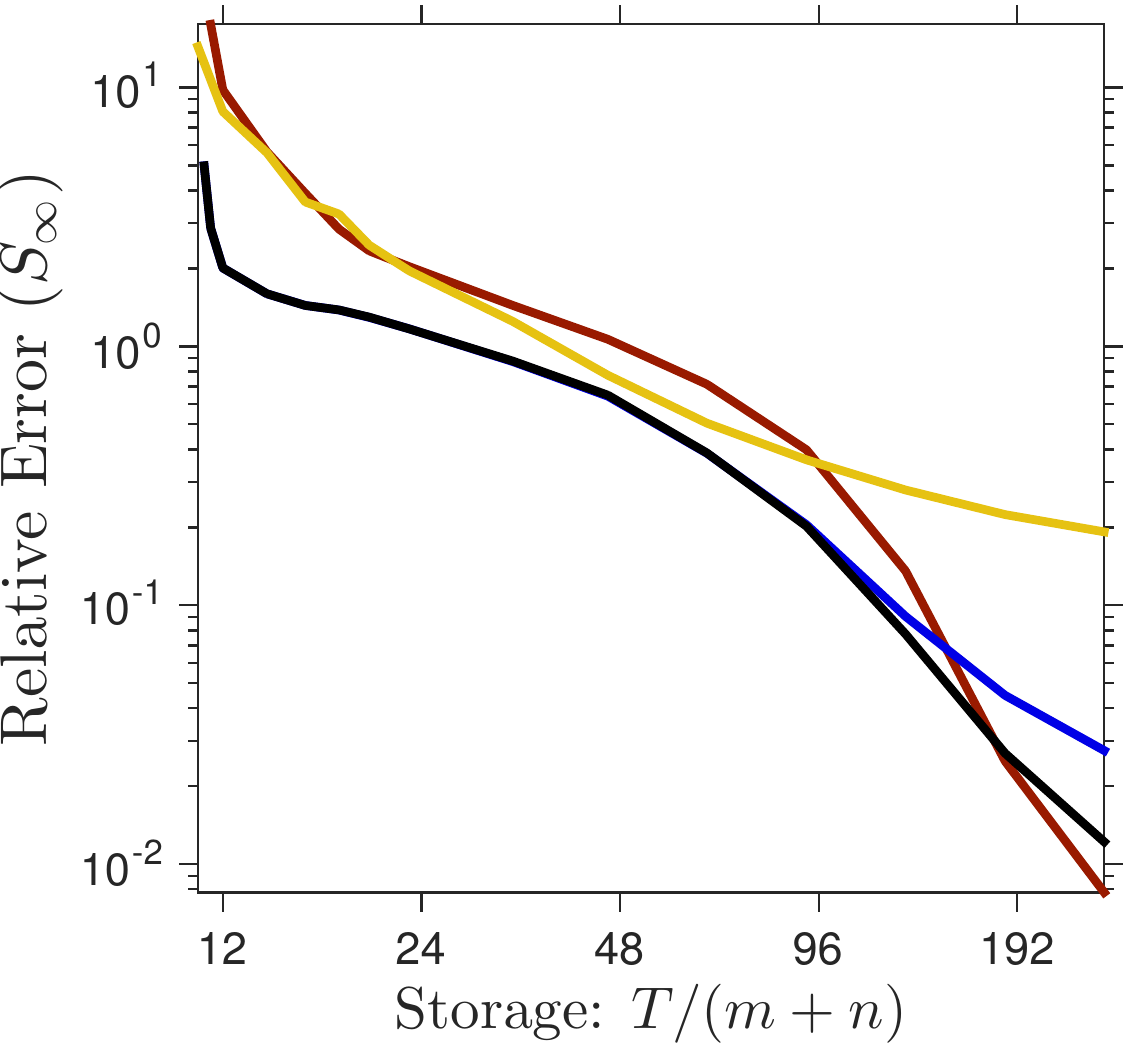}
\caption{\texttt{PolyDecaySlow}}
\end{center}
\end{subfigure}
\begin{subfigure}{.325\textwidth}
\begin{center}
\includegraphics[height=1.5in]{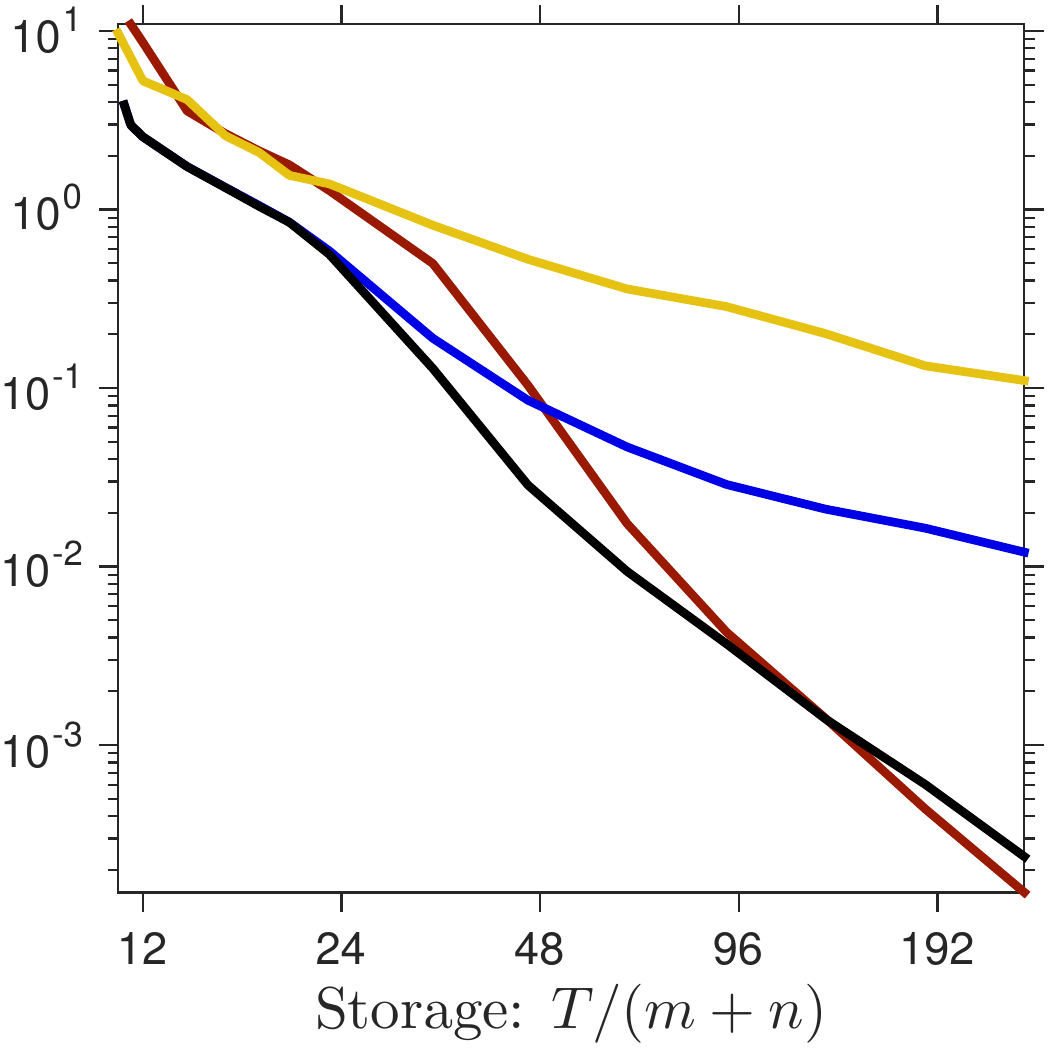}
\caption{\texttt{PolyDecayMed}}
\end{center}
\end{subfigure}
\begin{subfigure}{.325\textwidth}
\begin{center}
\includegraphics[height=1.5in]{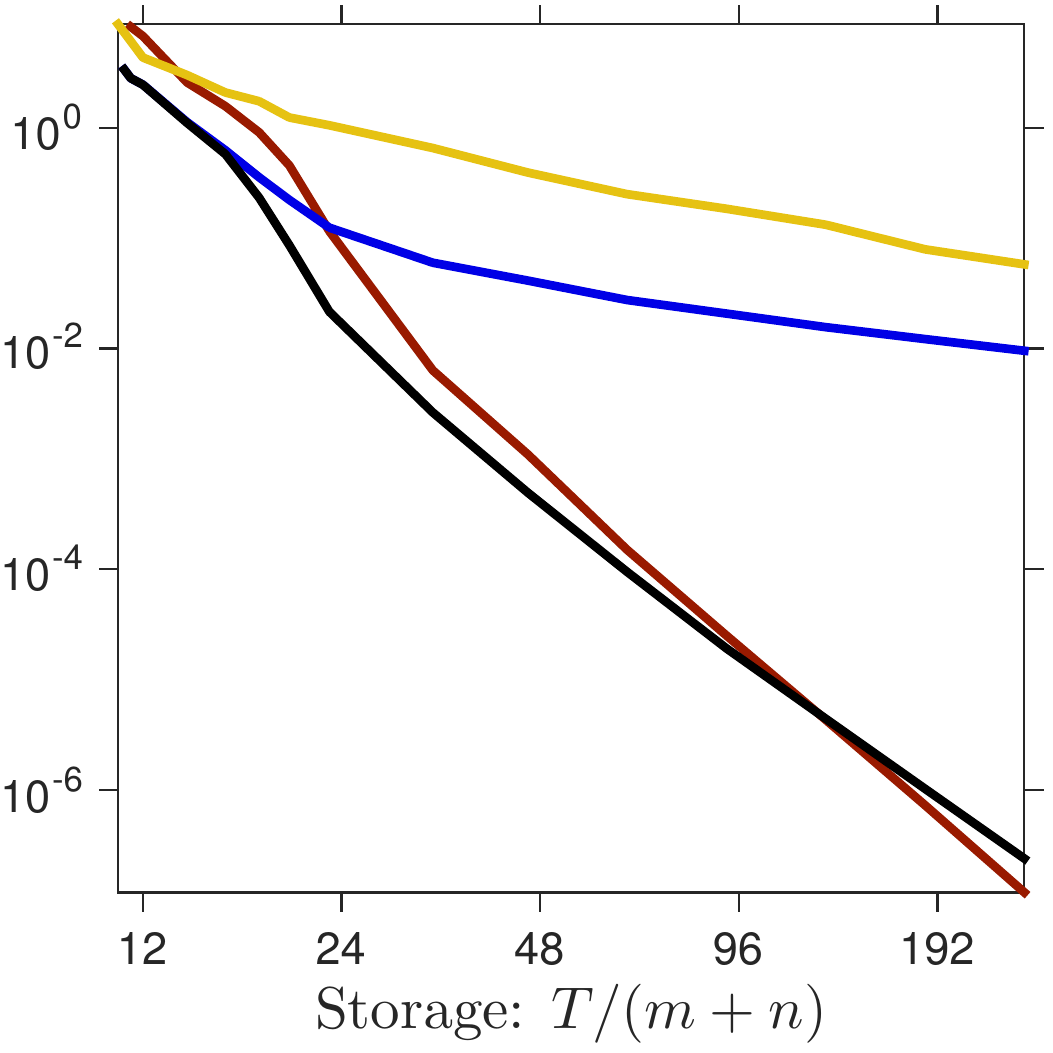}
\caption{\texttt{PolyDecayFast}}
\end{center}
\end{subfigure}
\end{center}

\vspace{0.5em}

\begin{center}
\begin{subfigure}{.325\textwidth}
\begin{center}
\includegraphics[height=1.5in]{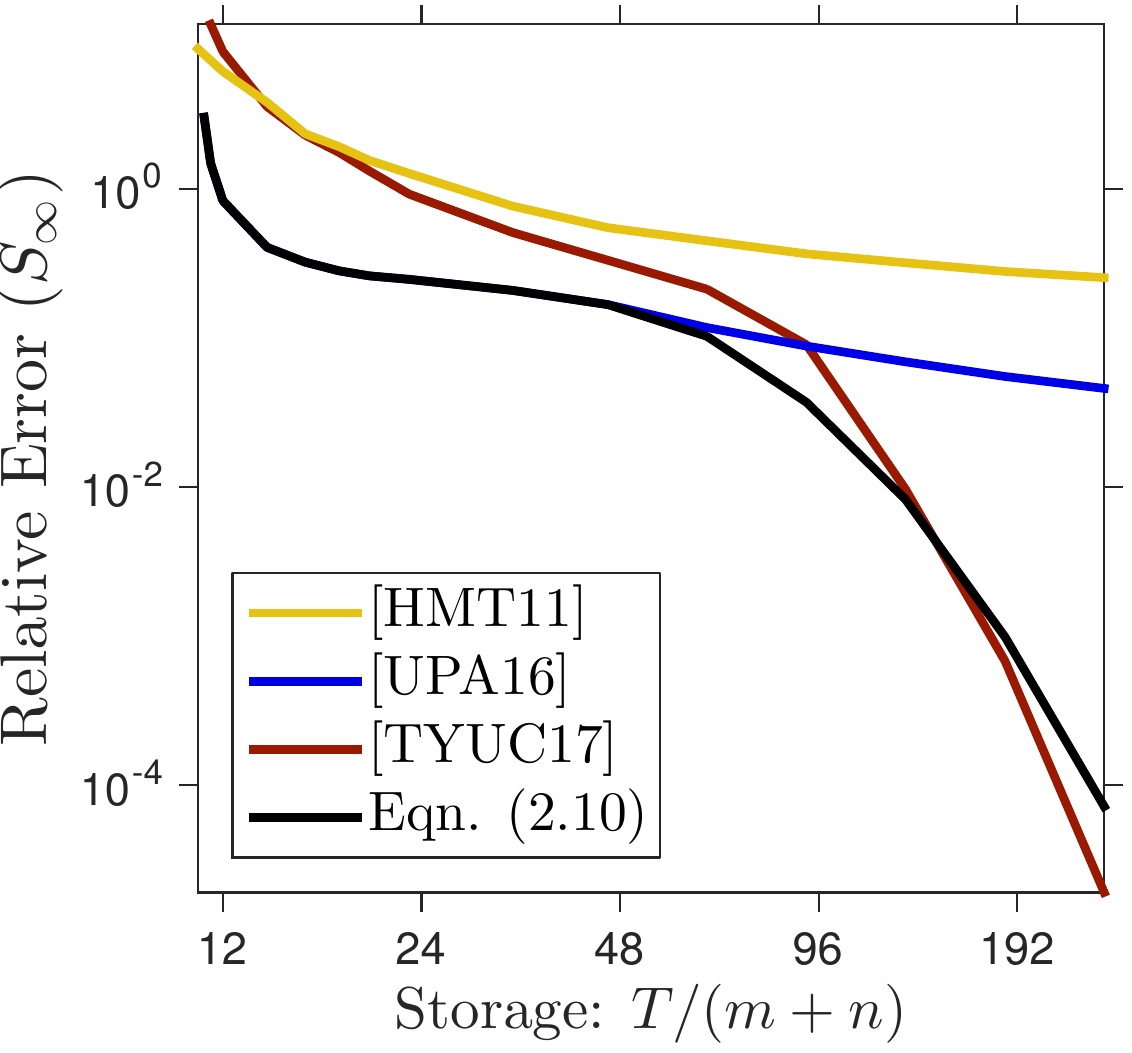}
\caption{\texttt{ExpDecaySlow}}
\end{center}
\end{subfigure}
\begin{subfigure}{.325\textwidth}
\begin{center}
\includegraphics[height=1.5in]{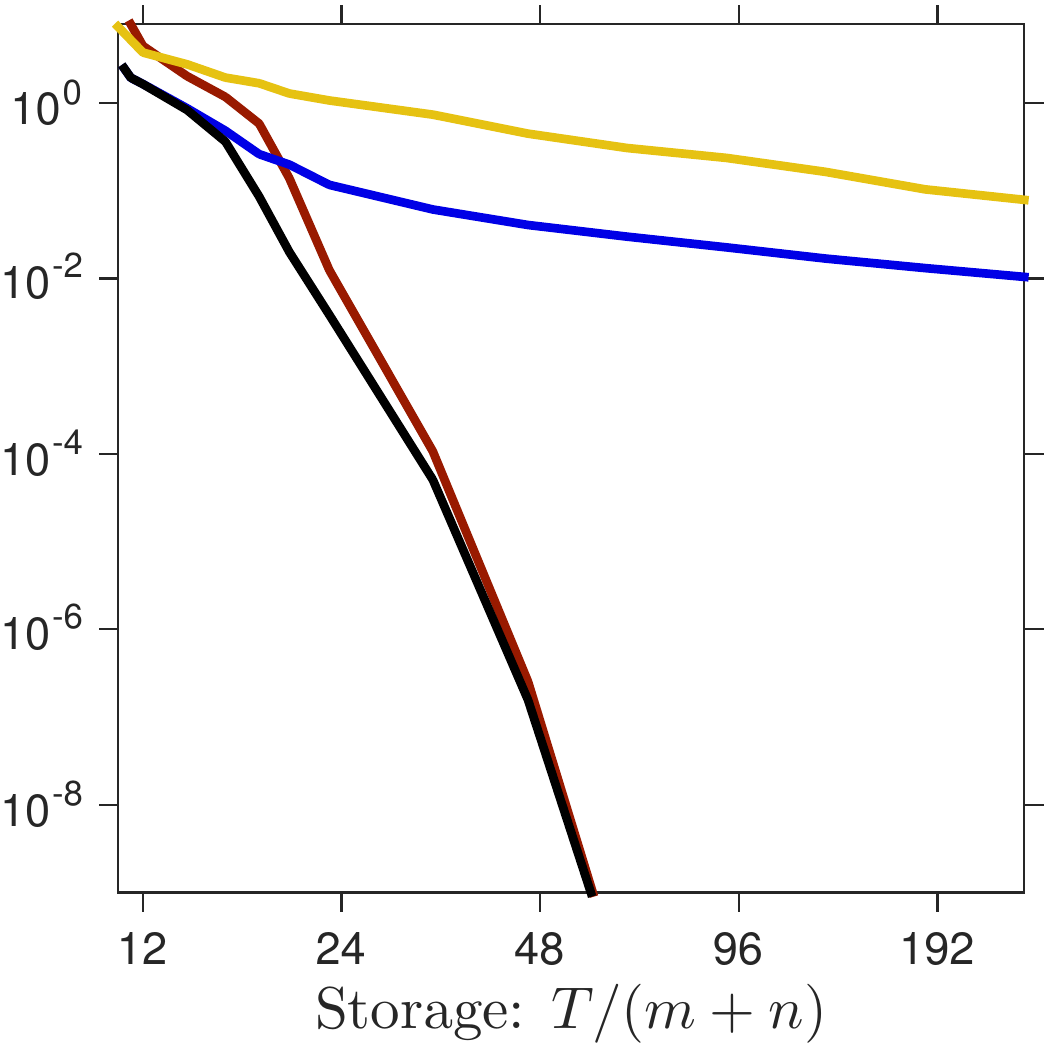}
\caption{\texttt{ExpDecayMed}}
\end{center}
\end{subfigure}
\begin{subfigure}{.325\textwidth}
\begin{center}
\includegraphics[height=1.5in]{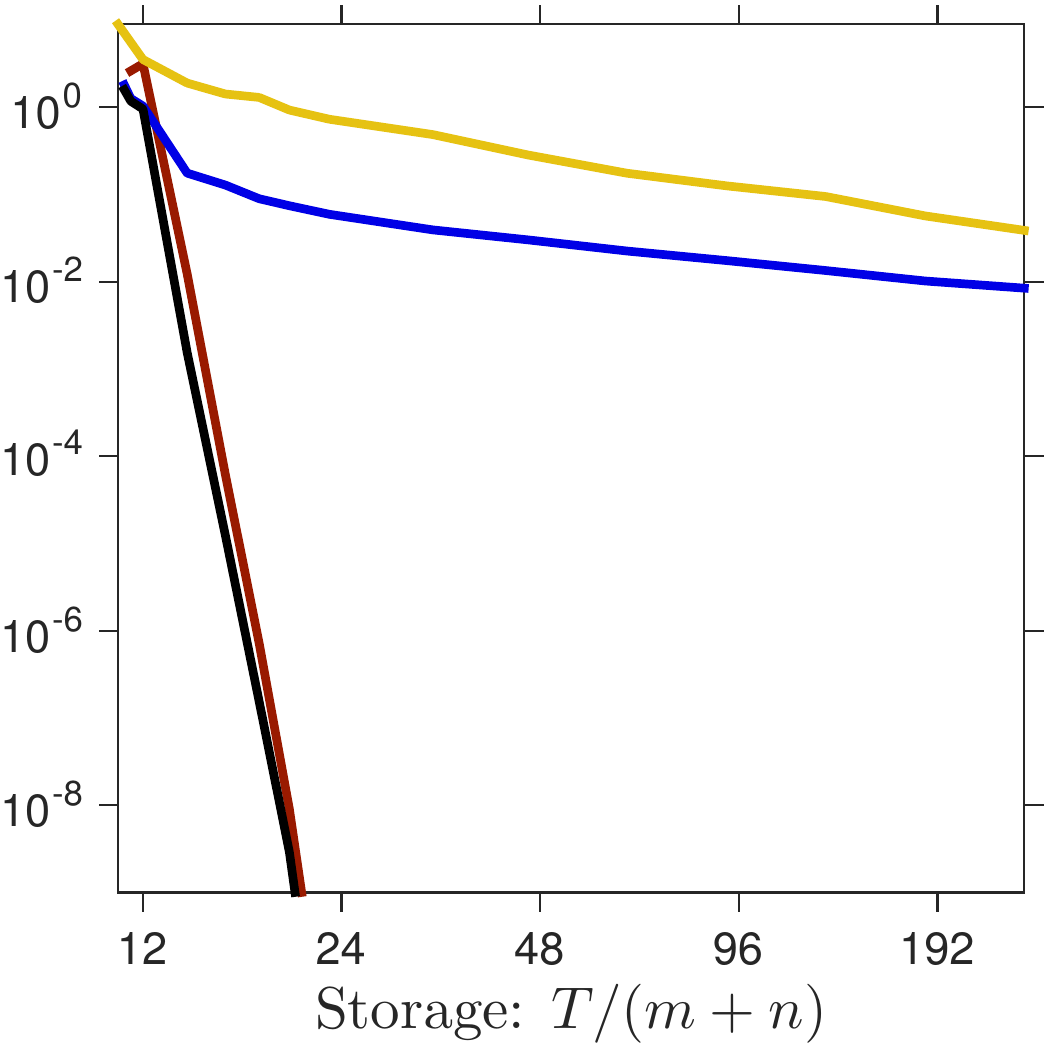}
\caption{\texttt{ExpDecayFast}}
\end{center}
\end{subfigure}
\end{center}

\vspace{0.5em}

\caption{\textbf{Comparison of reconstruction formulas: Synthetic examples.}
(Gaussian maps, effective rank $R = 5$, approximation rank $r = 10$, Schatten $\infty$-norm.)
We compare the oracle error achieved by the proposed fixed-rank
approximation~\cref{eqn:Ahat-fixed} against methods~\cref{eqn:upa,eqn:tyuc2017} from the literature.
See \cref{sec:oracle-error} for details.}
\label{fig:oracle-comparison-R5-Sinf}
\end{figure}

\begin{figure}[htp!]
\begin{center}
\begin{subfigure}{.325\textwidth}
\begin{center}
\includegraphics[height=1.5in]{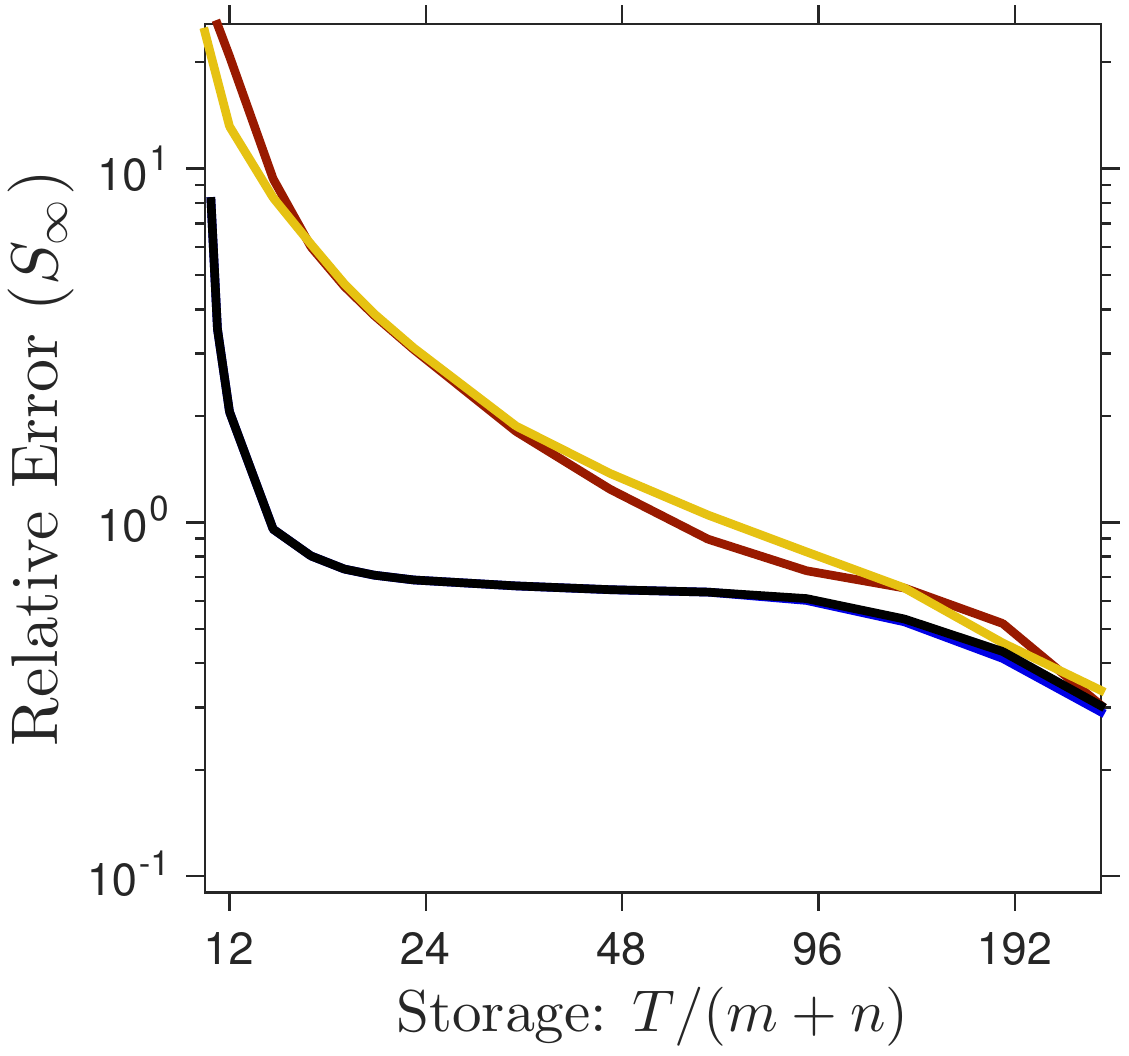}
\caption{\texttt{LowRankHiNoise}}
\end{center}
\end{subfigure}
\begin{subfigure}{.325\textwidth}
\begin{center}
\includegraphics[height=1.5in]{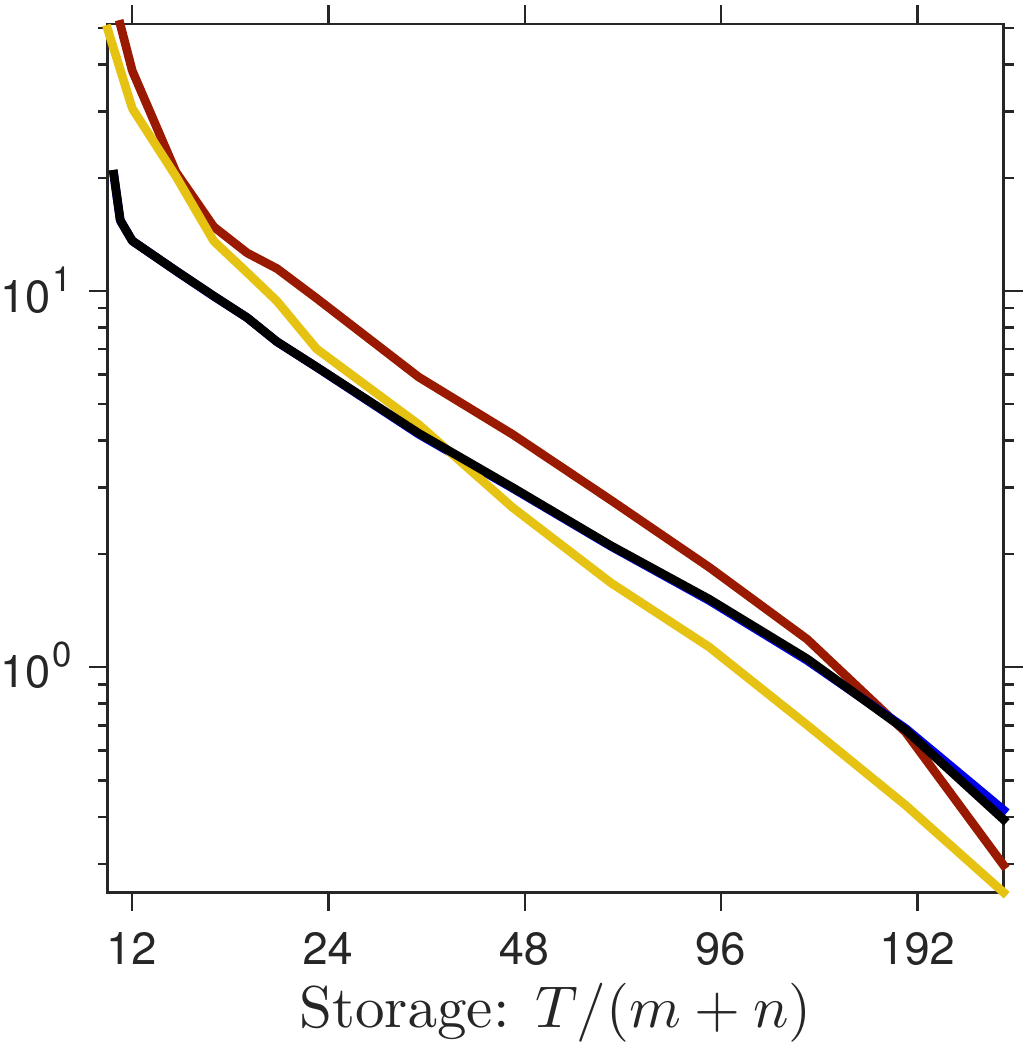}
\caption{\texttt{LowRankMedNoise}}
\end{center}
\end{subfigure}
\begin{subfigure}{.325\textwidth}
\begin{center}
\includegraphics[height=1.5in]{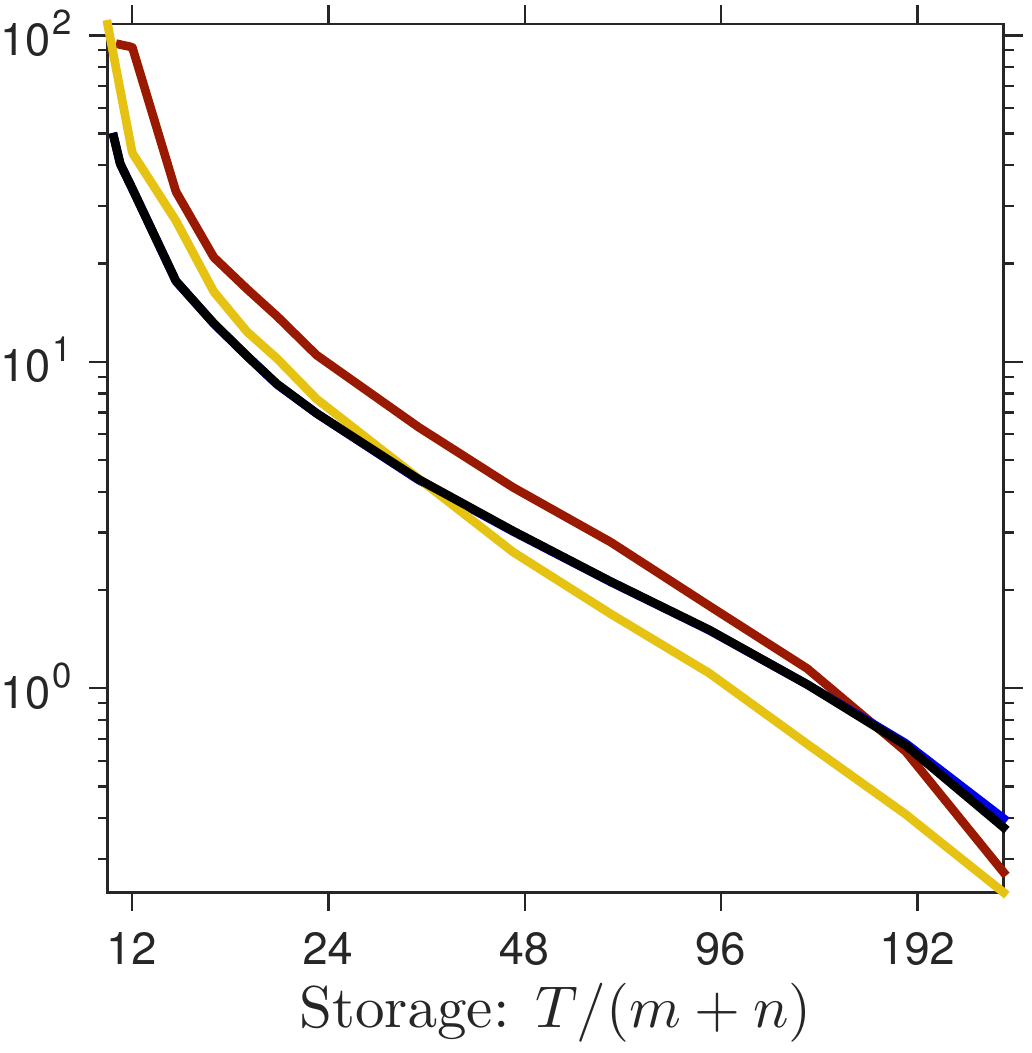}
\caption{\texttt{LowRankLowNoise}}
\end{center}
\end{subfigure}
\end{center}

\vspace{.5em}

\begin{center}
\begin{subfigure}{.325\textwidth}
\begin{center}
\includegraphics[height=1.5in]{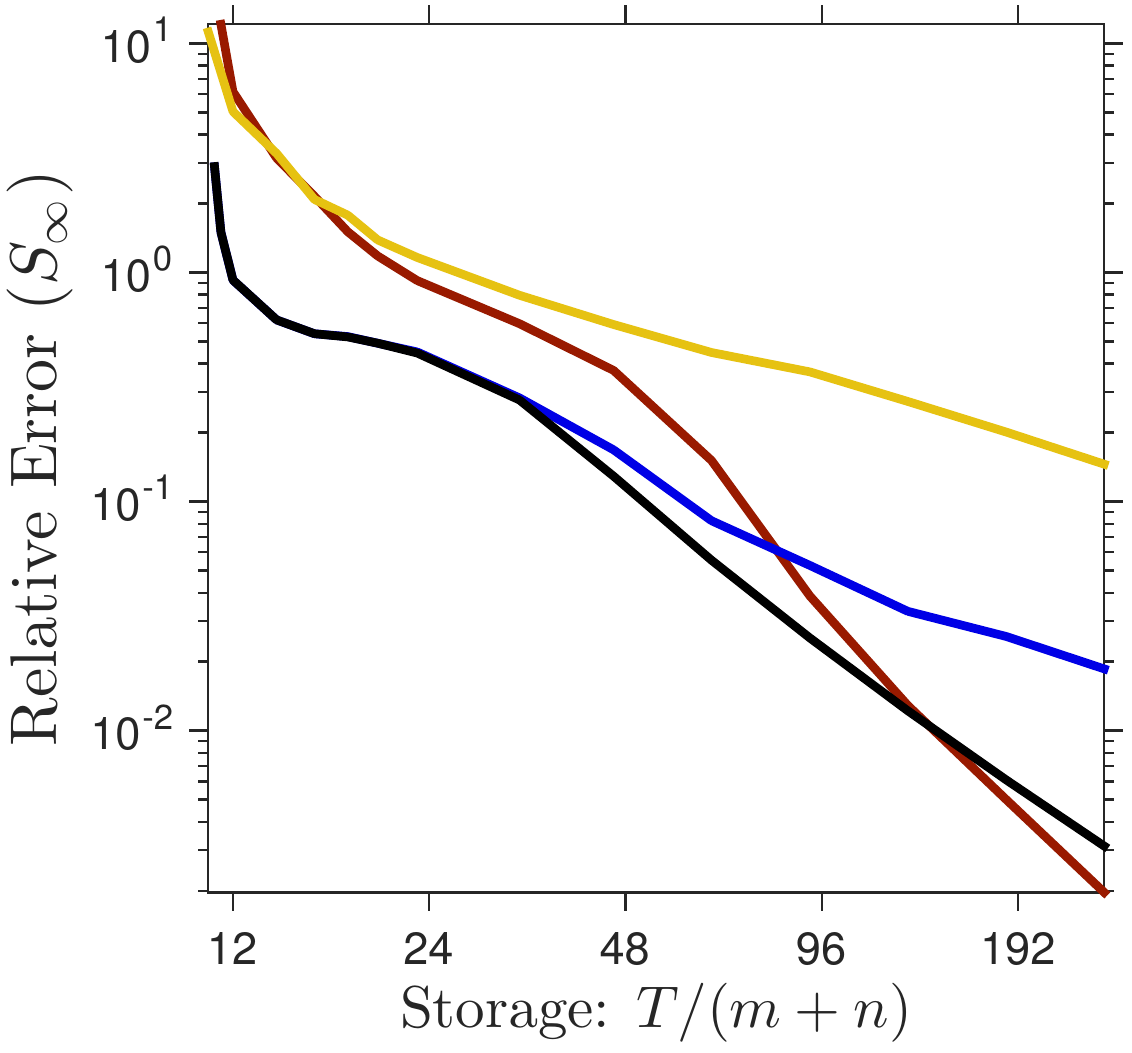}
\caption{\texttt{PolyDecaySlow}}
\end{center}
\end{subfigure}
\begin{subfigure}{.325\textwidth}
\begin{center}
\includegraphics[height=1.5in]{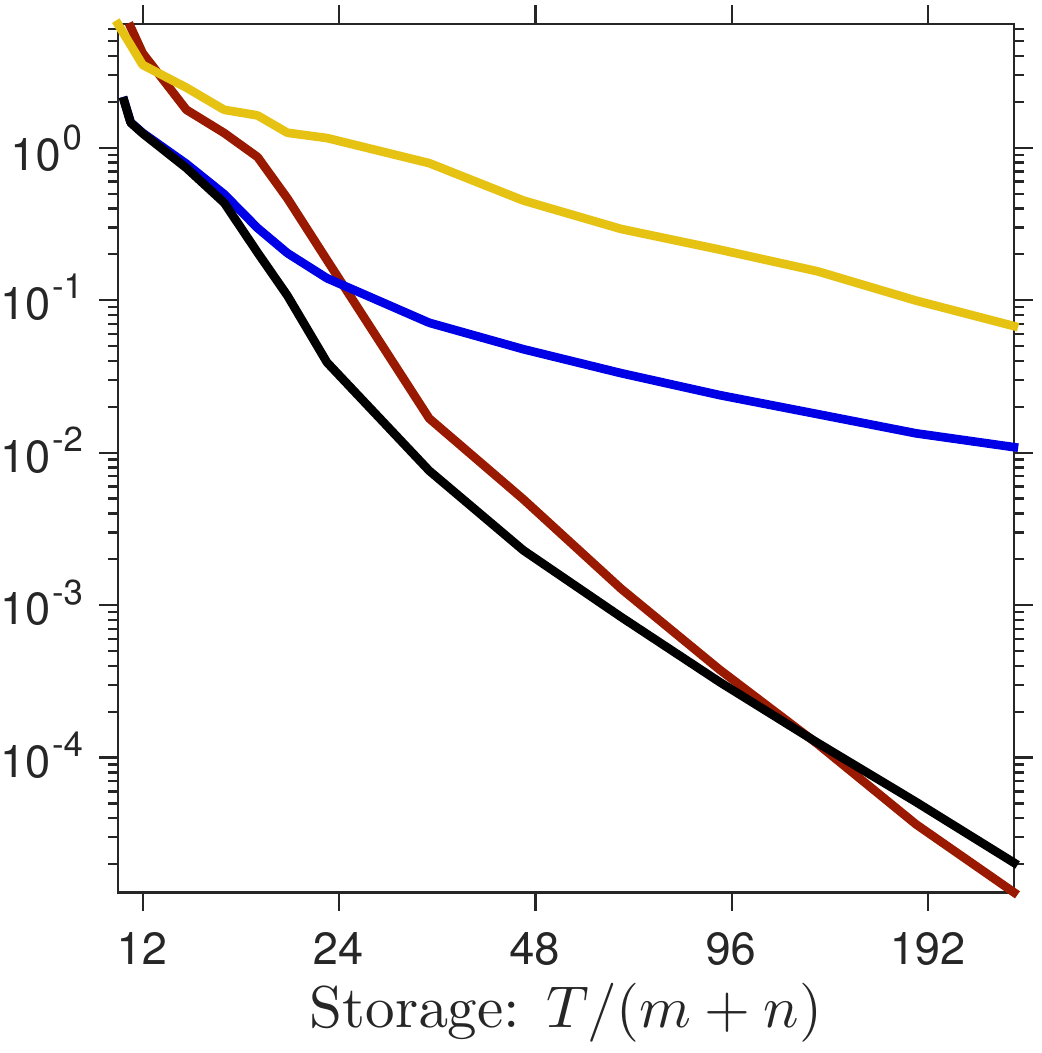}
\caption{\texttt{PolyDecayMed}}
\end{center}
\end{subfigure}
\begin{subfigure}{.325\textwidth}
\begin{center}
\includegraphics[height=1.5in]{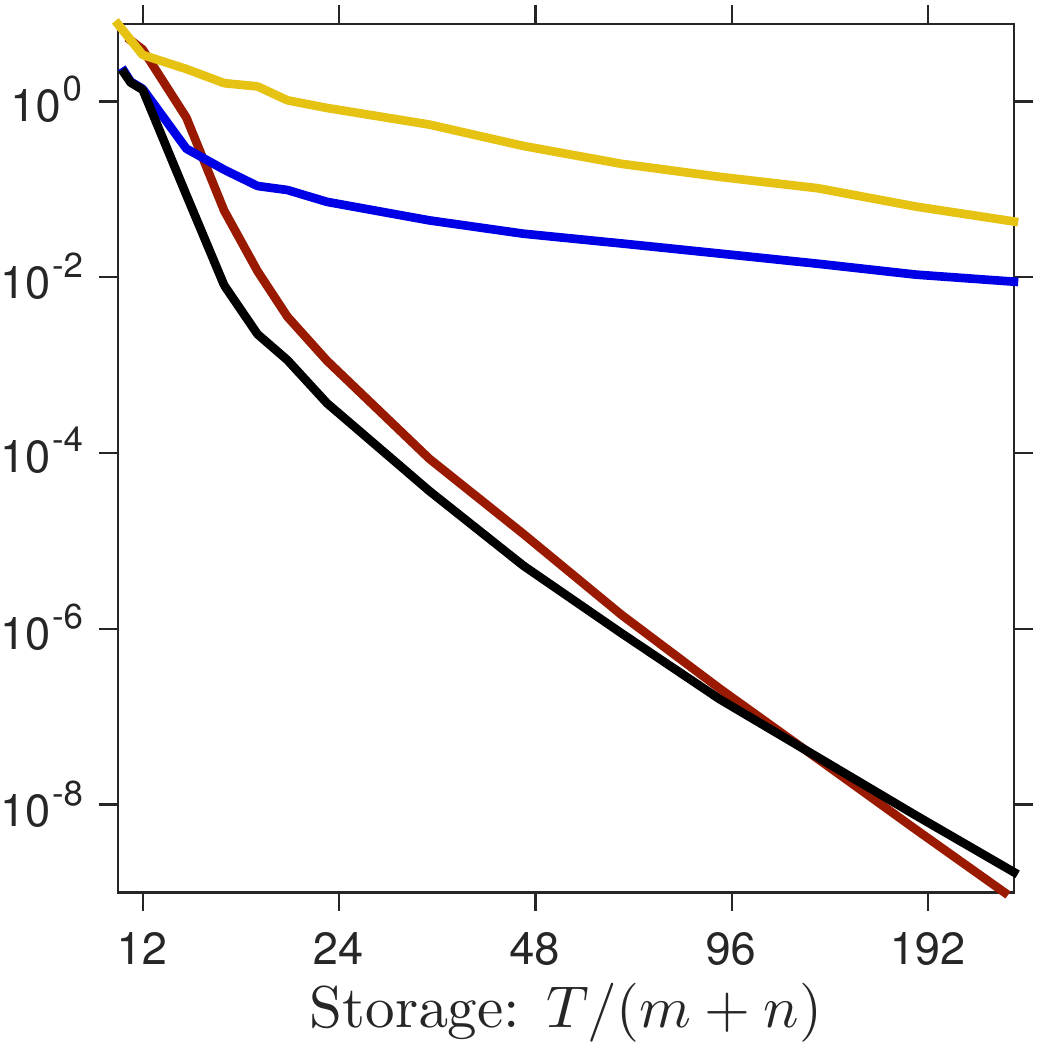}
\caption{\texttt{PolyDecayFast}}
\end{center}
\end{subfigure}
\end{center}

\vspace{0.5em}

\begin{center}
\begin{subfigure}{.325\textwidth}
\begin{center}
\includegraphics[height=1.5in]{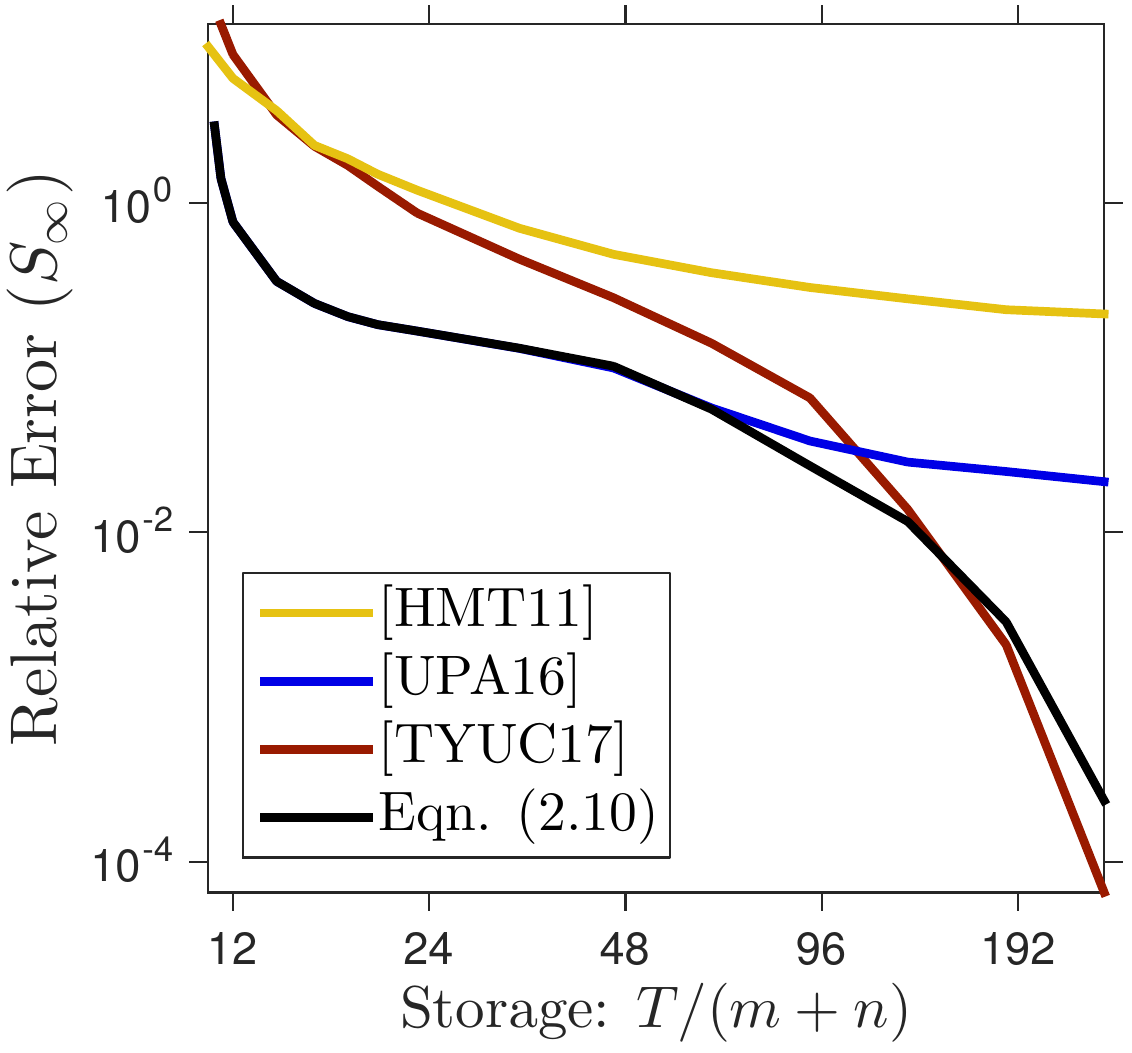}
\caption{\texttt{ExpDecaySlow}}
\end{center}
\end{subfigure}
\begin{subfigure}{.325\textwidth}
\begin{center}
\includegraphics[height=1.5in]{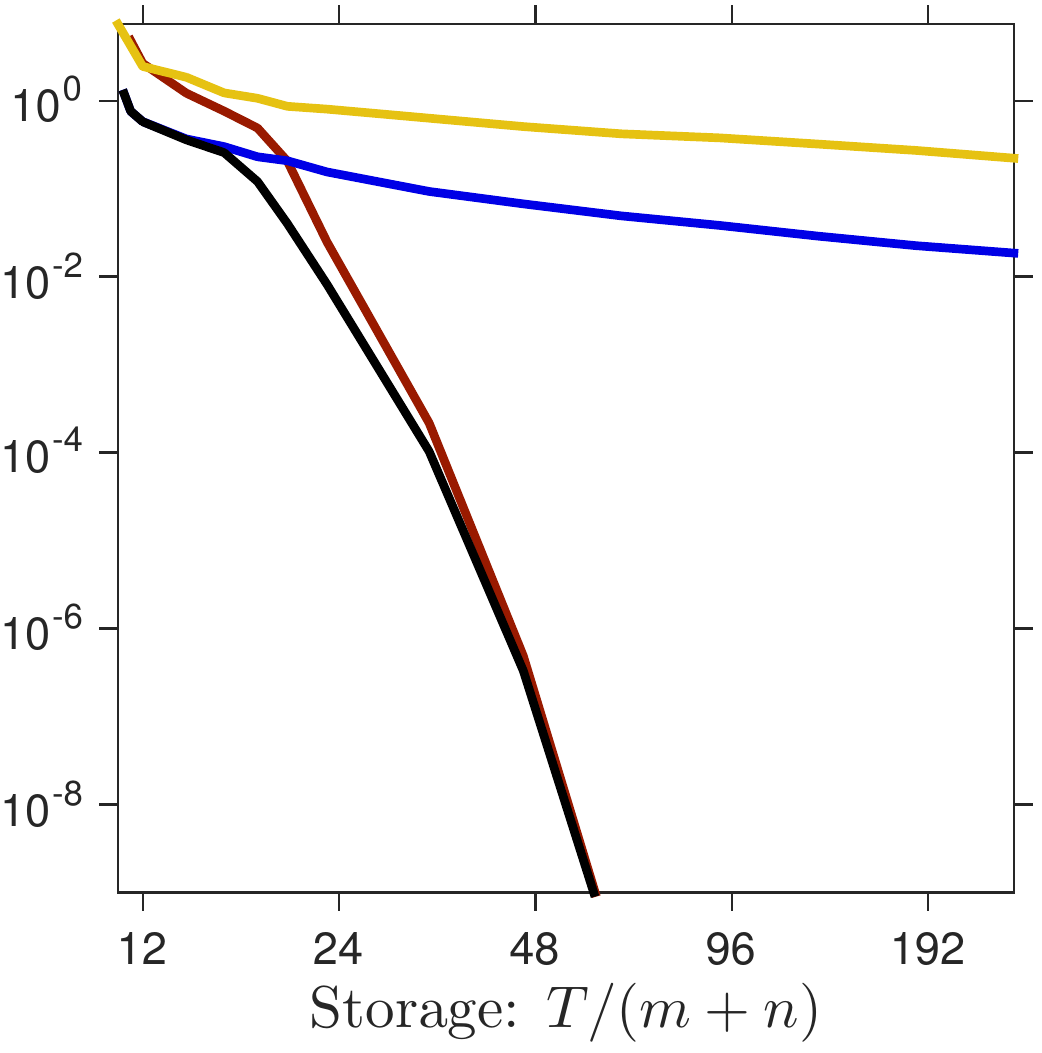}
\caption{\texttt{ExpDecayMed}}
\end{center}
\end{subfigure}
\begin{subfigure}{.325\textwidth}
\begin{center}
\includegraphics[height=1.5in]{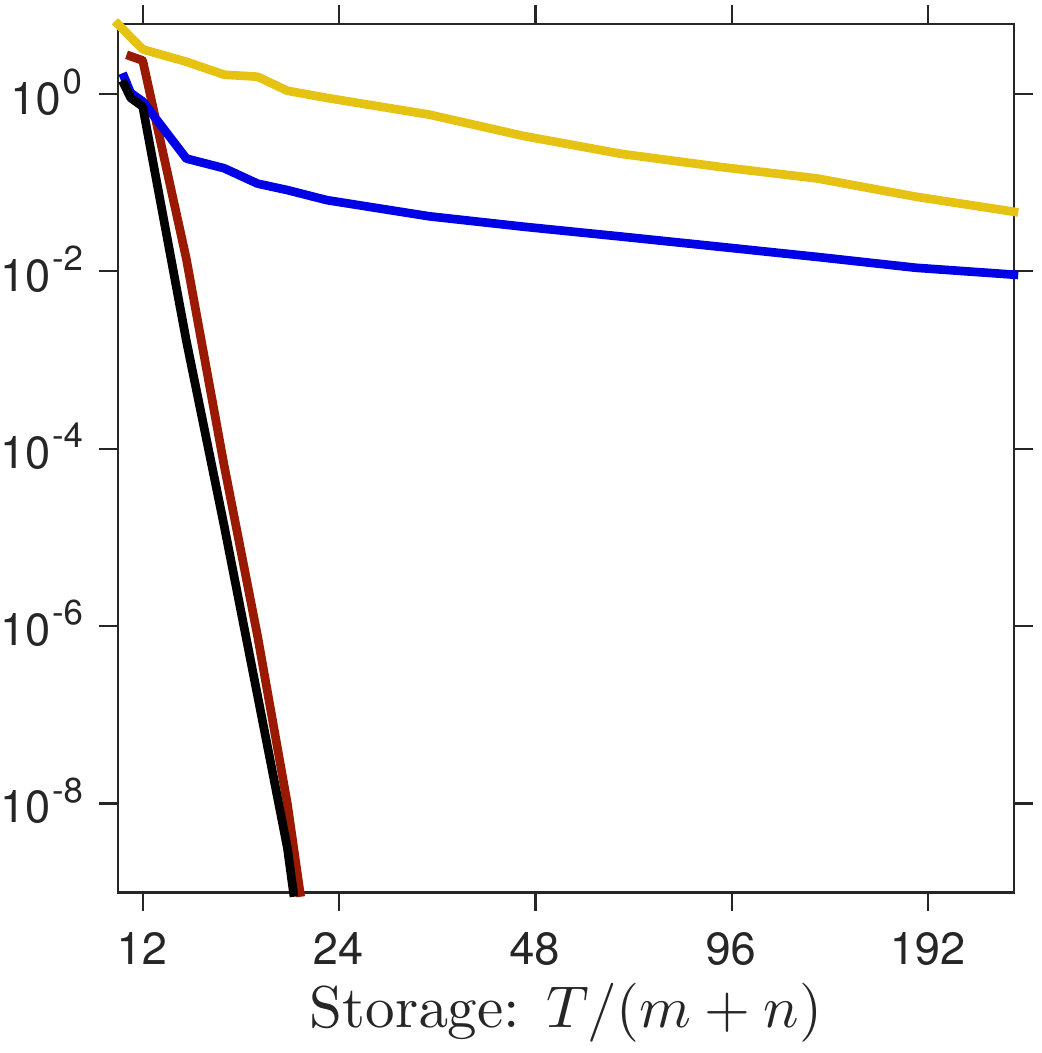}
\caption{\texttt{ExpDecayFast}}
\end{center}
\end{subfigure}
\end{center}

\vspace{0.5em}

\caption{\textbf{Comparison of reconstruction formulas: Synthetic examples.}
(Gaussian maps, effective rank $R = 10$, approximation rank $r = 10$, Schatten $\infty$-norm.)
We compare the oracle error achieved by the proposed fixed-rank
approximation~\cref{eqn:Ahat-fixed} against methods~\cref{eqn:upa,eqn:tyuc2017} from the literature.
See \cref{sec:oracle-error} for details.}
\label{fig:oracle-comparison-R10-Sinf}
\end{figure}

\begin{figure}[htp!]
\begin{center}
\begin{subfigure}{.325\textwidth}
\begin{center}
\includegraphics[height=1.5in]{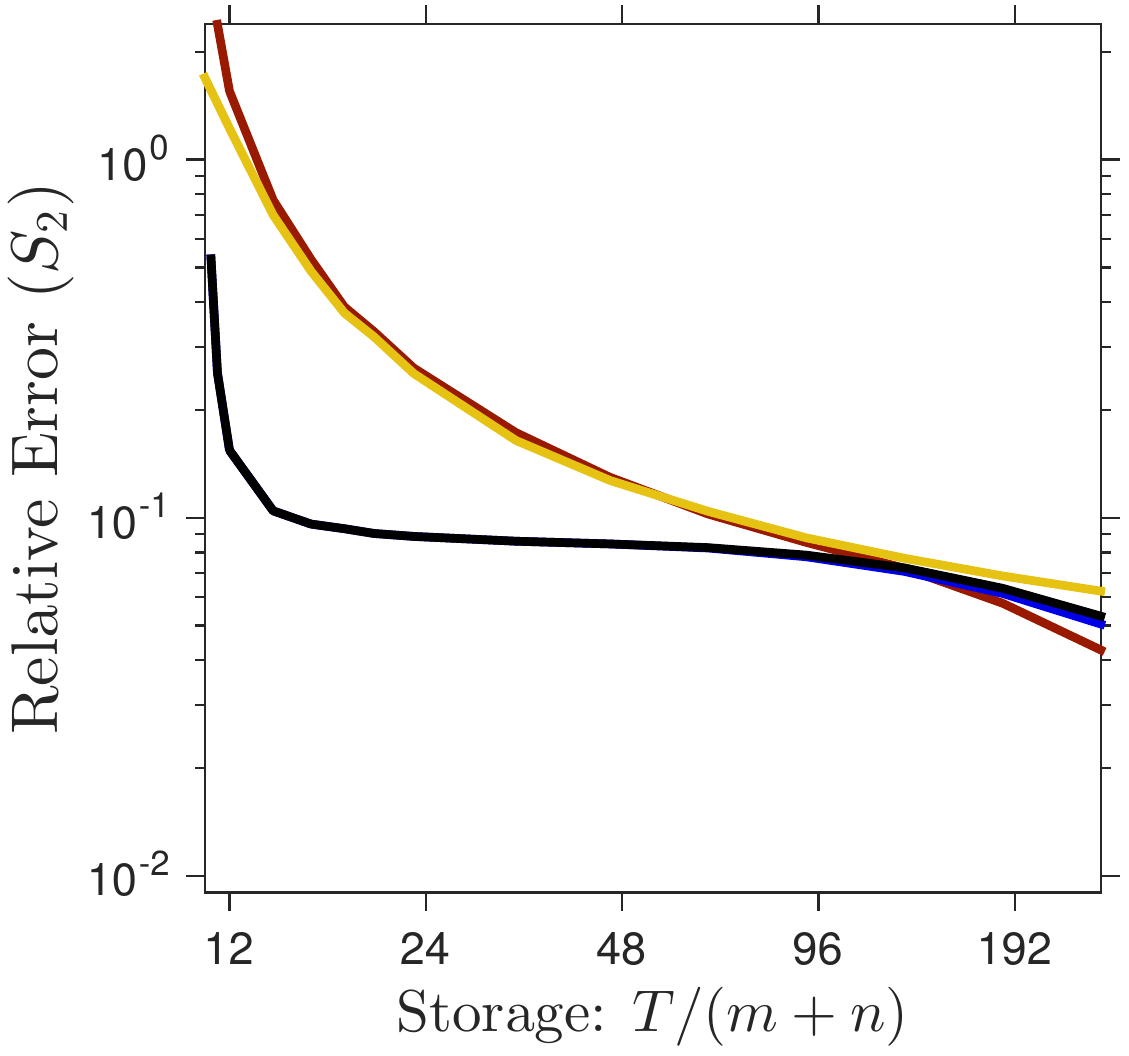}
\caption{\texttt{LowRankHiNoise}}
\end{center}
\end{subfigure}
\begin{subfigure}{.325\textwidth}
\begin{center}
\includegraphics[height=1.5in]{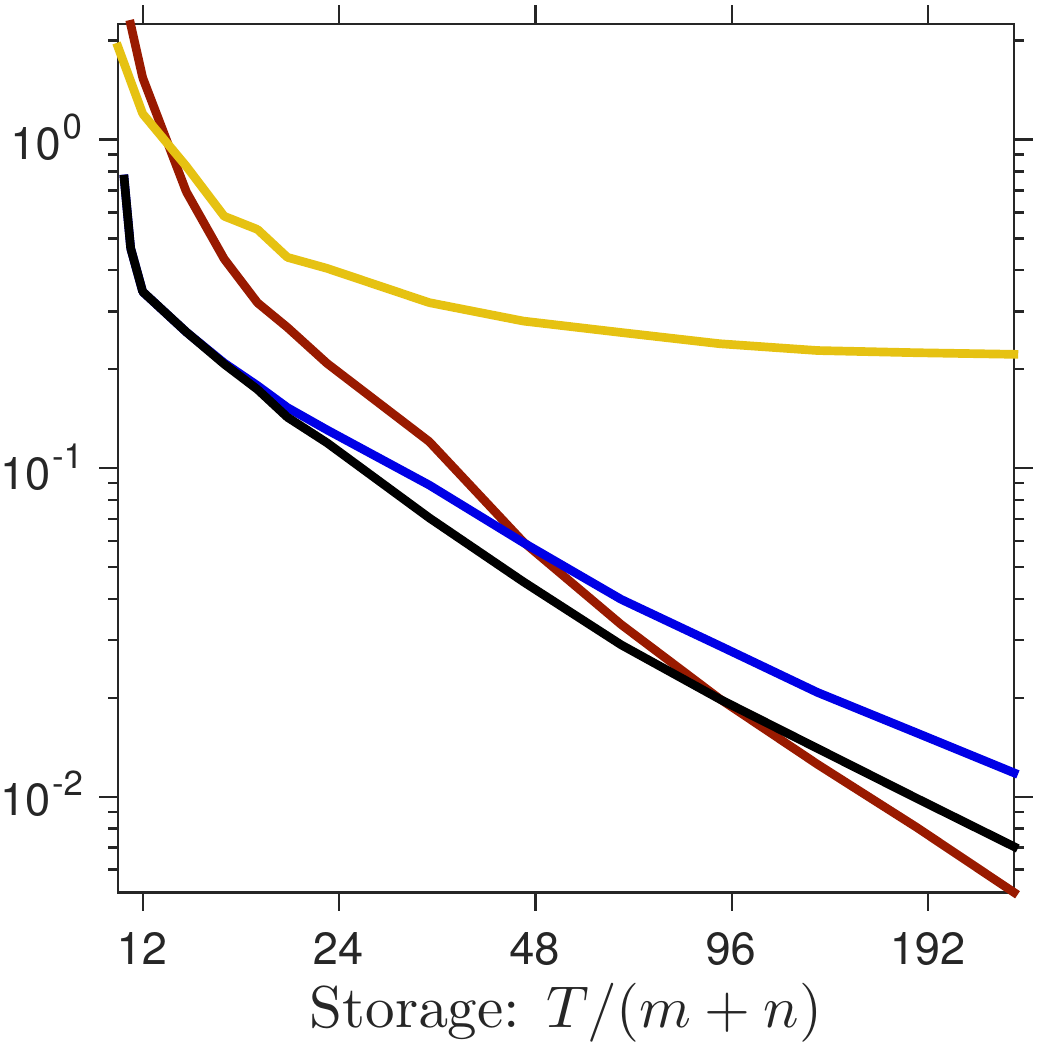}
\caption{\texttt{LowRankMedNoise}}
\end{center}
\end{subfigure}
\begin{subfigure}{.325\textwidth}
\begin{center}
\includegraphics[height=1.5in]{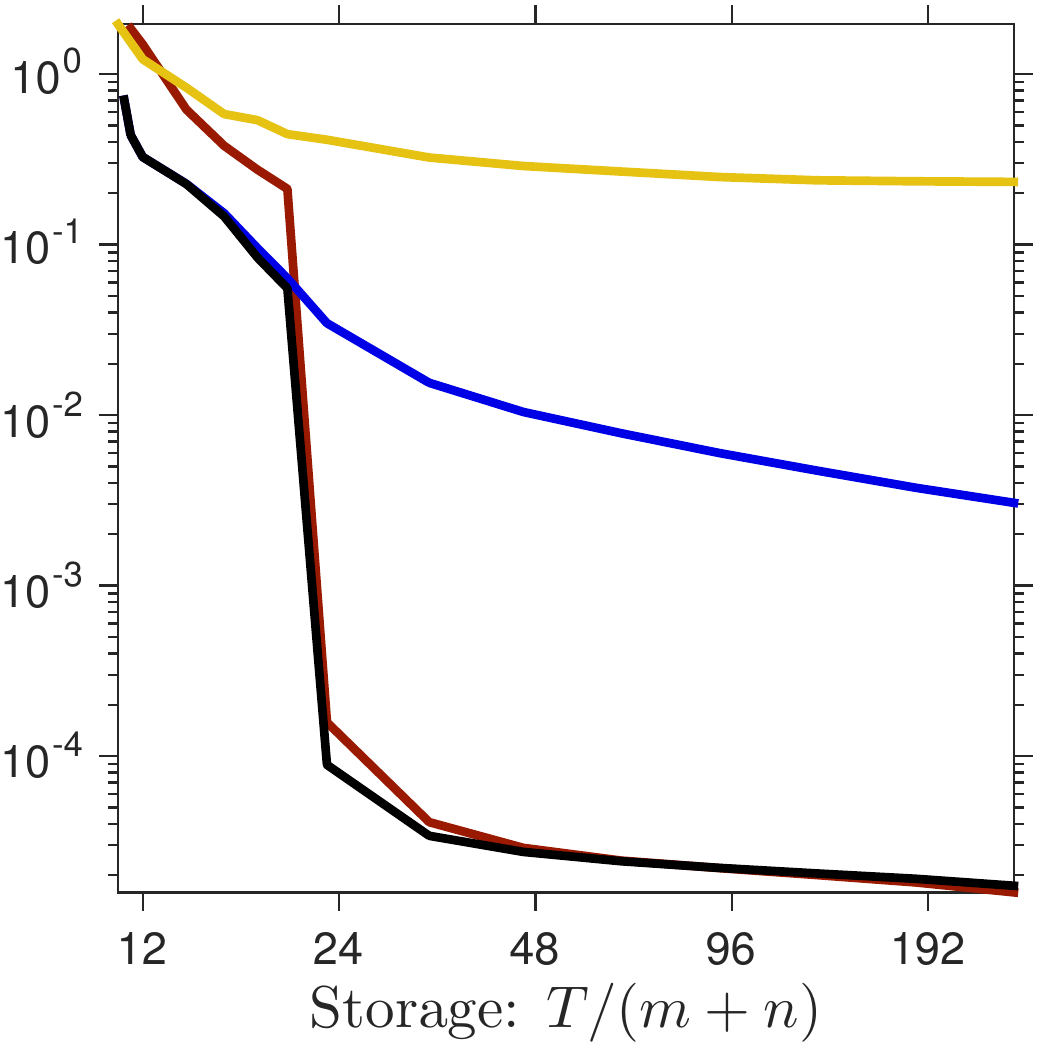}
\caption{\texttt{LowRankLowNoise}}
\end{center}
\end{subfigure}
\end{center}

\vspace{.5em}

\begin{center}
\begin{subfigure}{.325\textwidth}
\begin{center}
\includegraphics[height=1.5in]{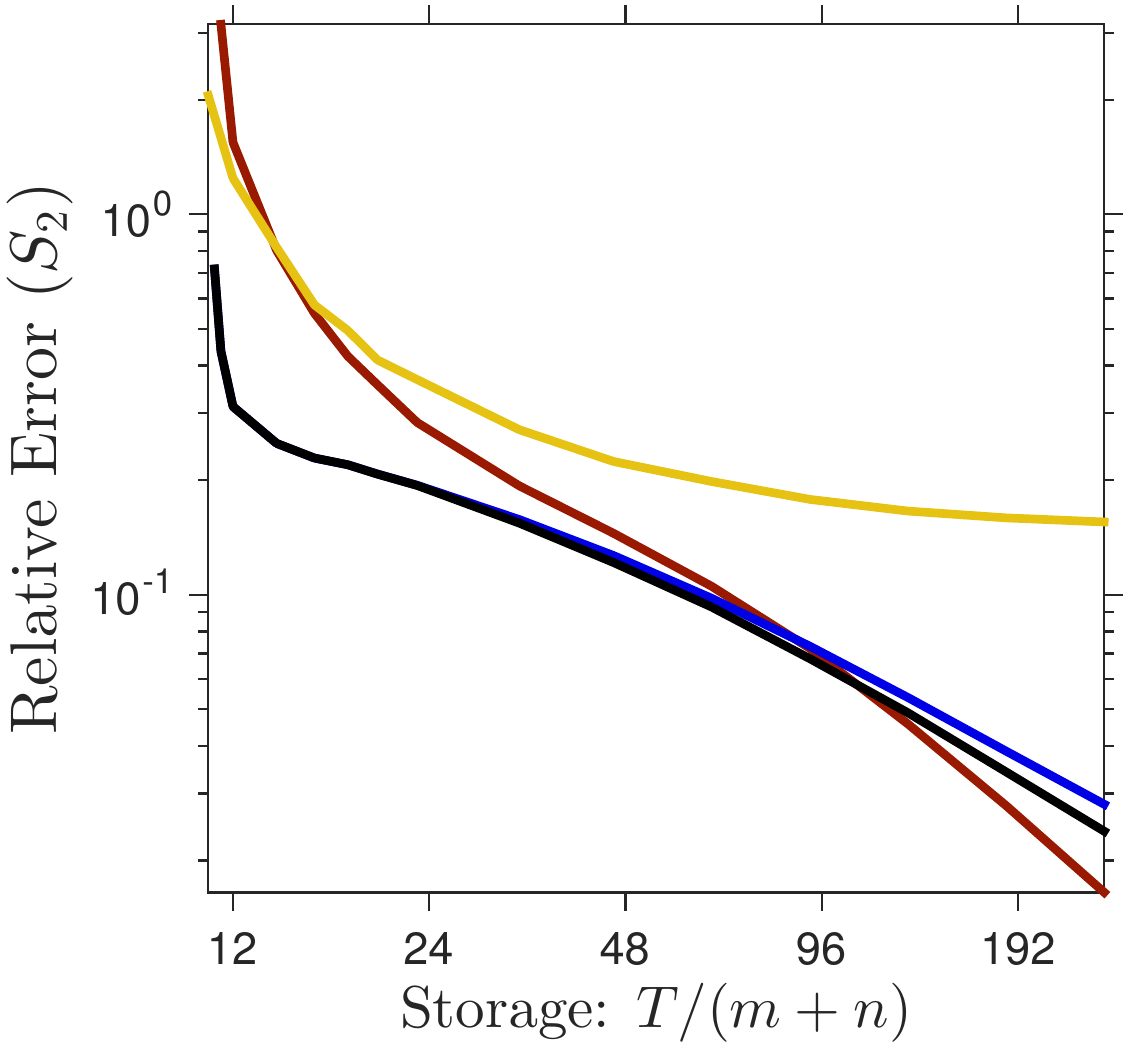}
\caption{\texttt{PolyDecaySlow}}
\end{center}
\end{subfigure}
\begin{subfigure}{.325\textwidth}
\begin{center}
\includegraphics[height=1.5in]{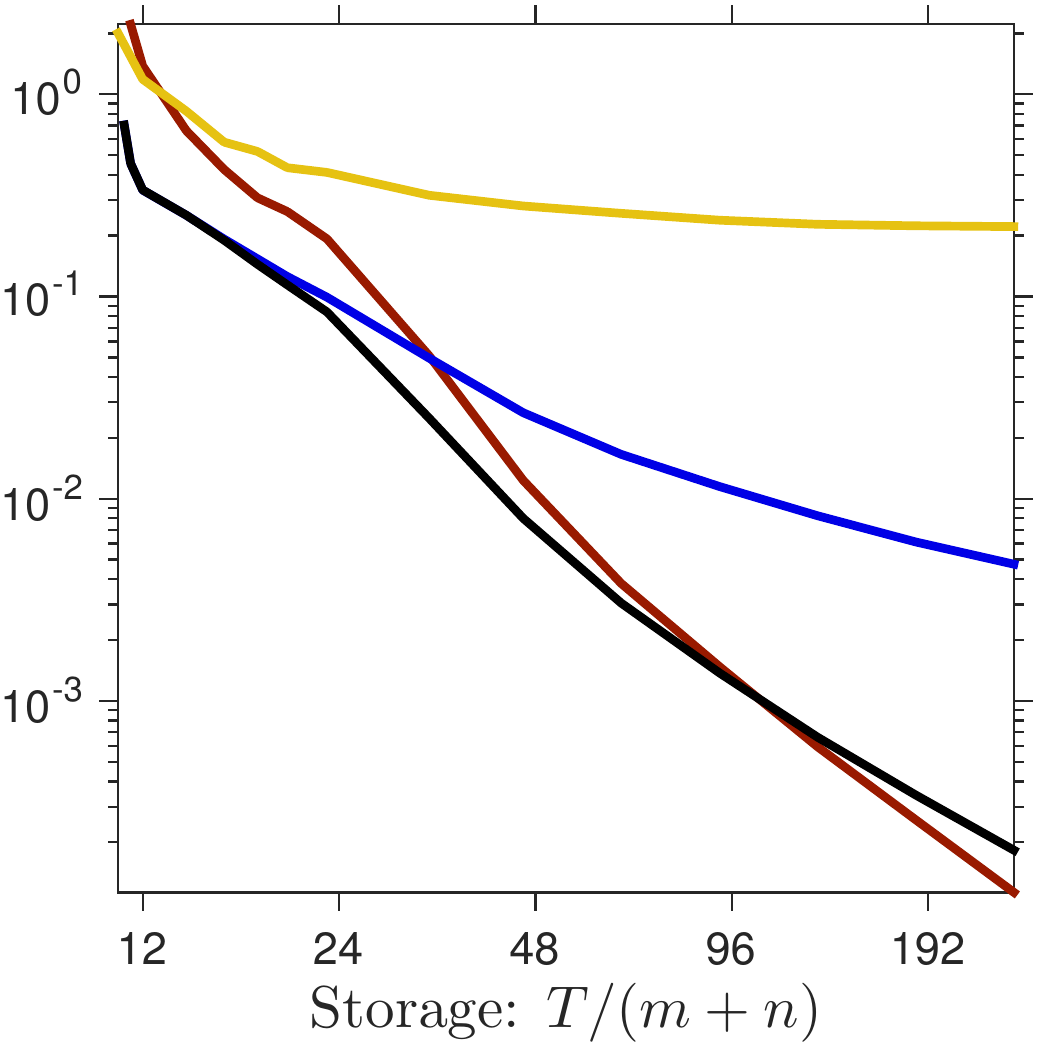}
\caption{\texttt{PolyDecayMed}}
\end{center}
\end{subfigure}
\begin{subfigure}{.325\textwidth}
\begin{center}
\includegraphics[height=1.5in]{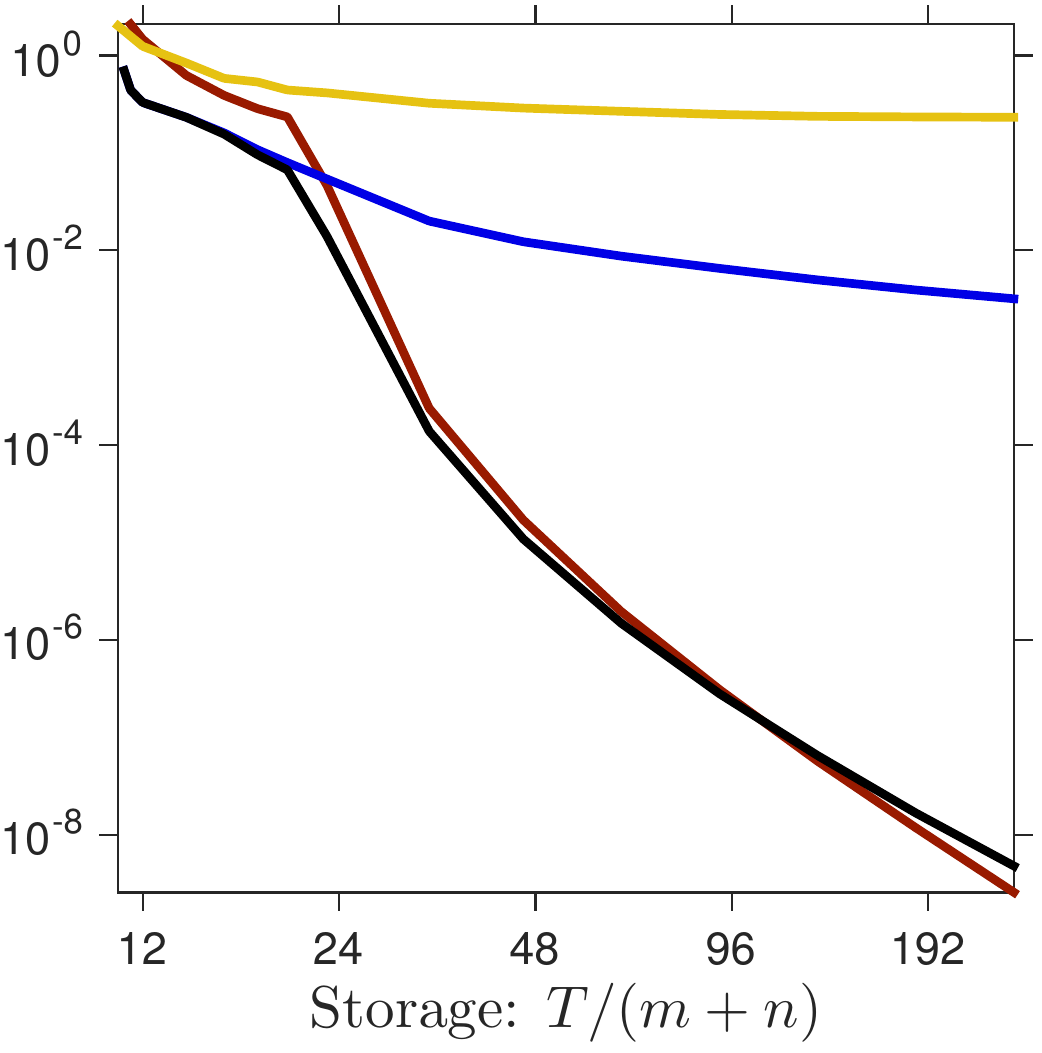}
\caption{\texttt{PolyDecayFast}}
\end{center}
\end{subfigure}
\end{center}

\vspace{0.5em}

\begin{center}
\begin{subfigure}{.325\textwidth}
\begin{center}
\includegraphics[height=1.5in]{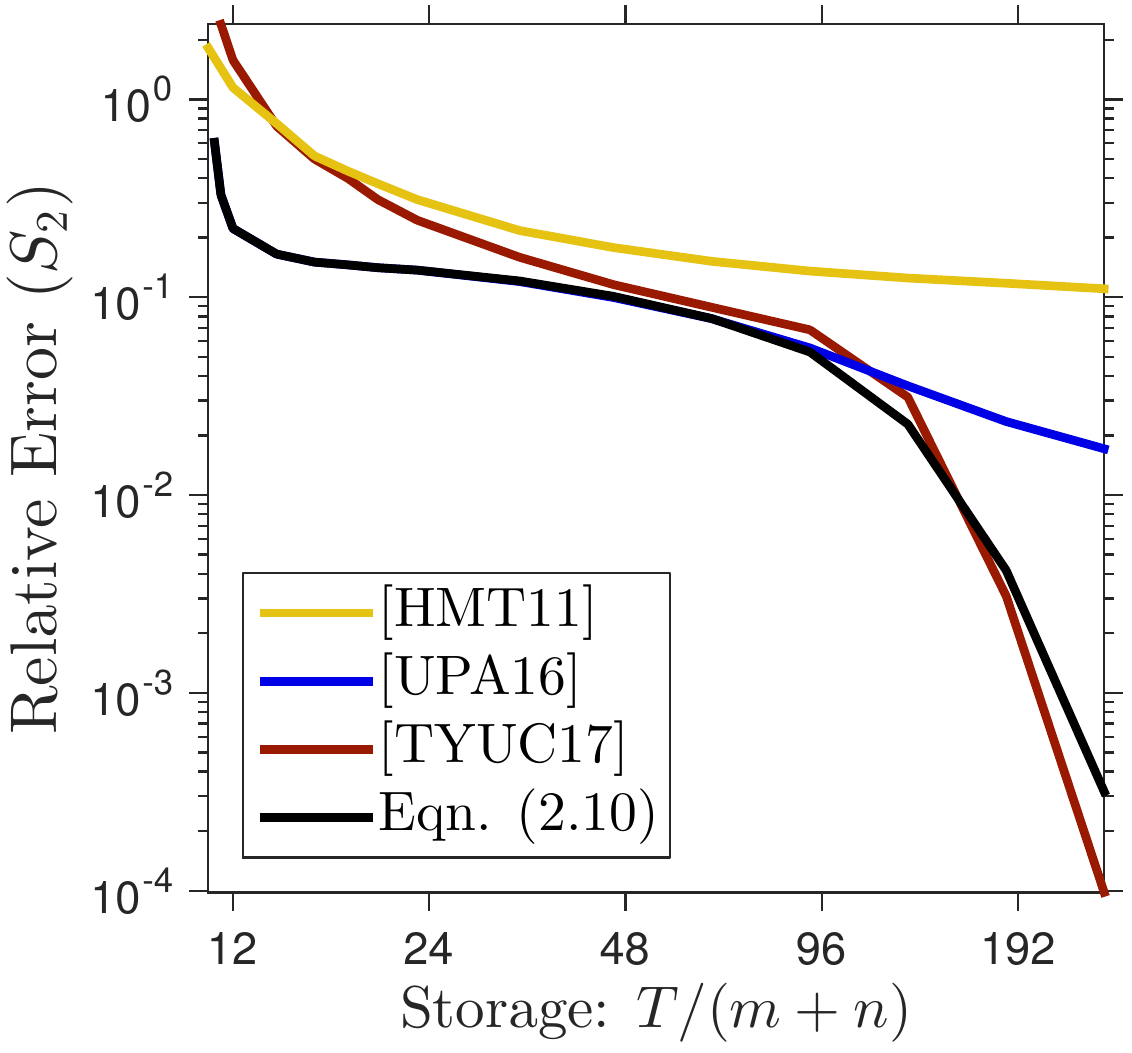}
\caption{\texttt{ExpDecaySlow}}
\end{center}
\end{subfigure}
\begin{subfigure}{.325\textwidth}
\begin{center}
\includegraphics[height=1.5in]{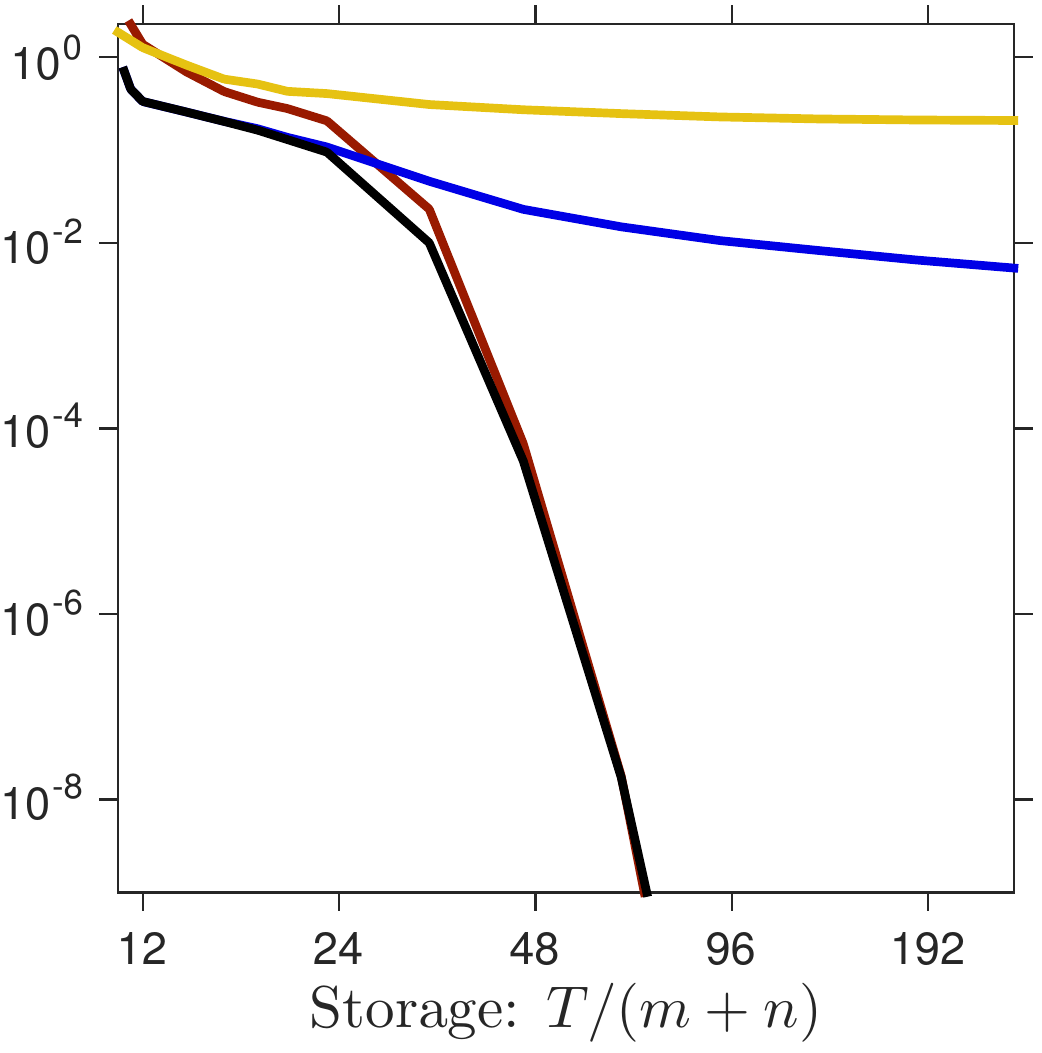}
\caption{\texttt{ExpDecayMed}}
\end{center}
\end{subfigure}
\begin{subfigure}{.325\textwidth}
\begin{center}
\includegraphics[height=1.5in]{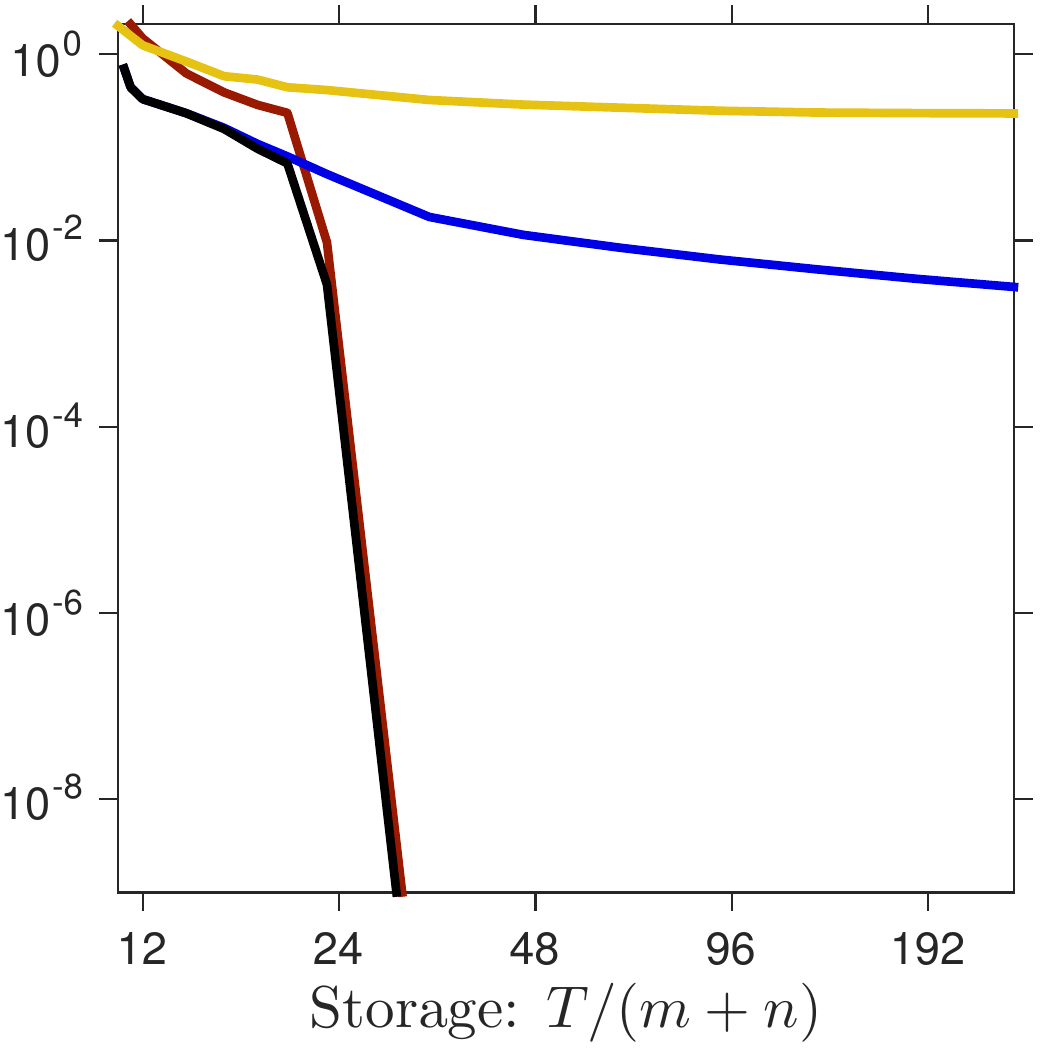}
\caption{\texttt{ExpDecayFast}}
\end{center}
\end{subfigure}
\end{center}

\vspace{0.5em}

\caption{\textbf{Comparison of reconstruction formulas: Synthetic examples.}
(Gaussian maps, effective rank $R = 20$, approximation rank $r = 10$, Schatten 2-norm.)
We compare the oracle error achieved by the proposed fixed-rank
approximation~\cref{eqn:Ahat-fixed} against methods~\cref{eqn:upa,eqn:tyuc2017} from the literature.
See \cref{sec:oracle-error} for details.}
\label{fig:oracle-comparison-R20-S2}
\end{figure}

\begin{figure}[htp!]
\begin{center}
\begin{subfigure}{.325\textwidth}
\begin{center}
\includegraphics[height=1.5in]{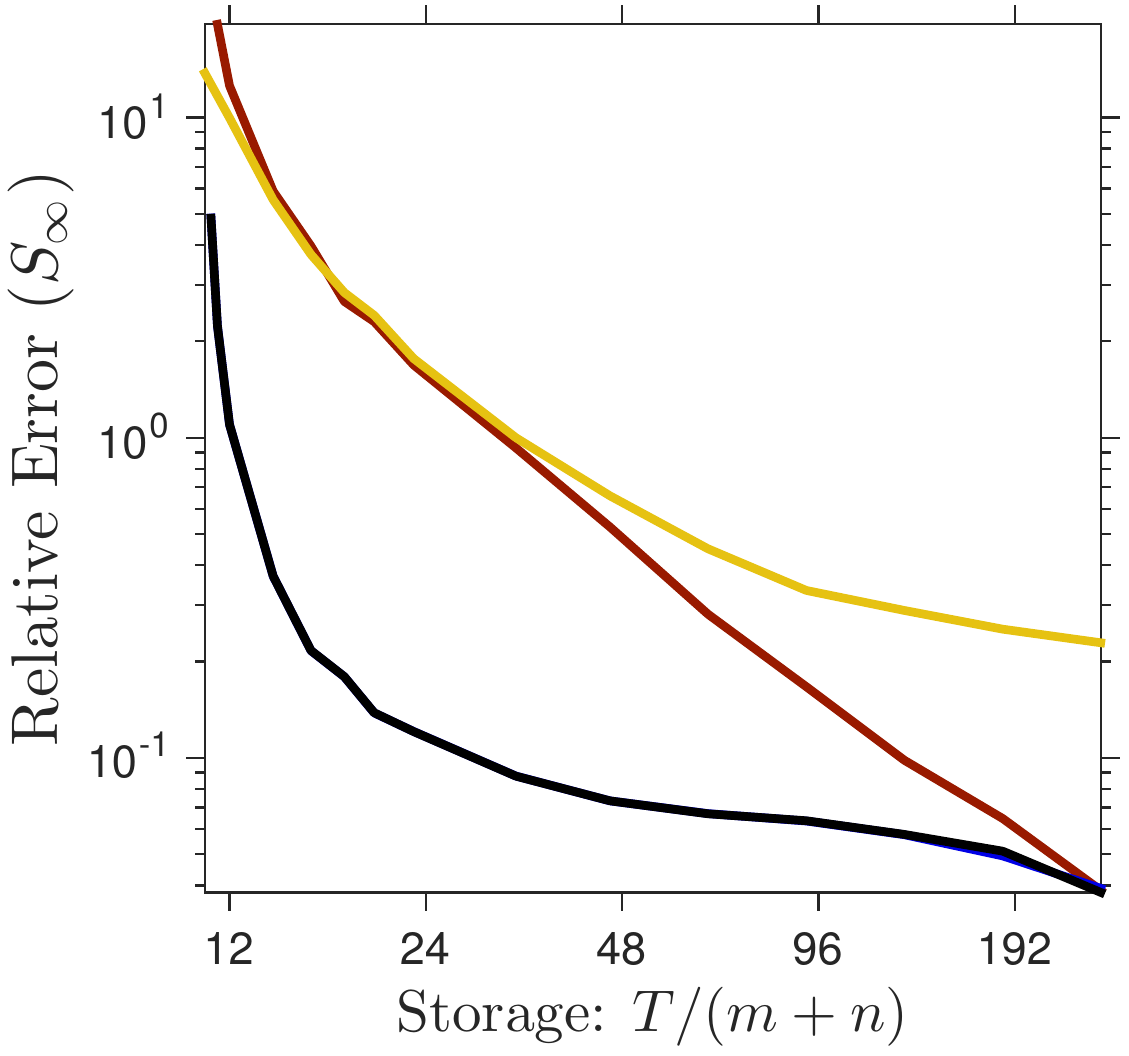}
\caption{\texttt{LowRankHiNoise}}
\end{center}
\end{subfigure}
\begin{subfigure}{.325\textwidth}
\begin{center}
\includegraphics[height=1.5in]{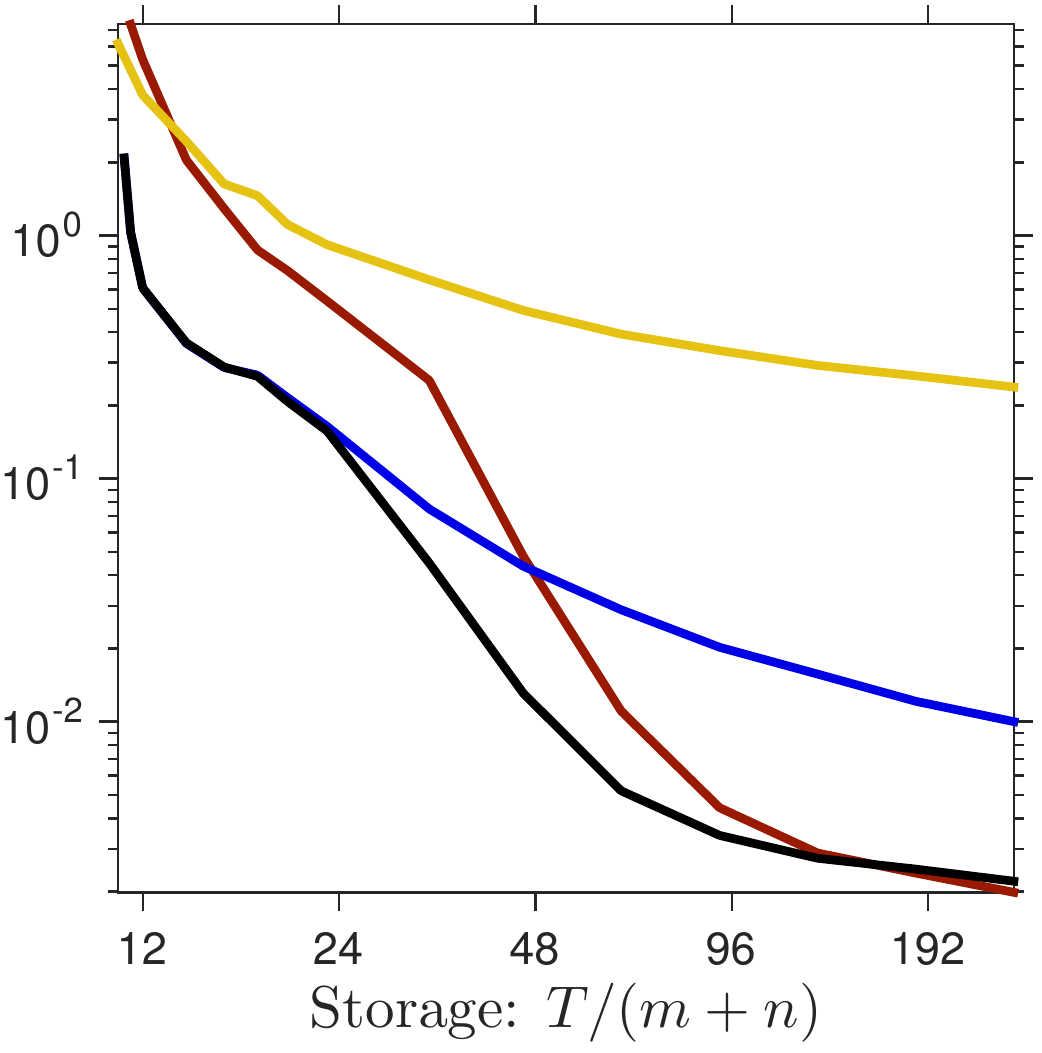}
\caption{\texttt{LowRankMedNoise}}
\end{center}
\end{subfigure}
\begin{subfigure}{.325\textwidth}
\begin{center}
\includegraphics[height=1.5in]{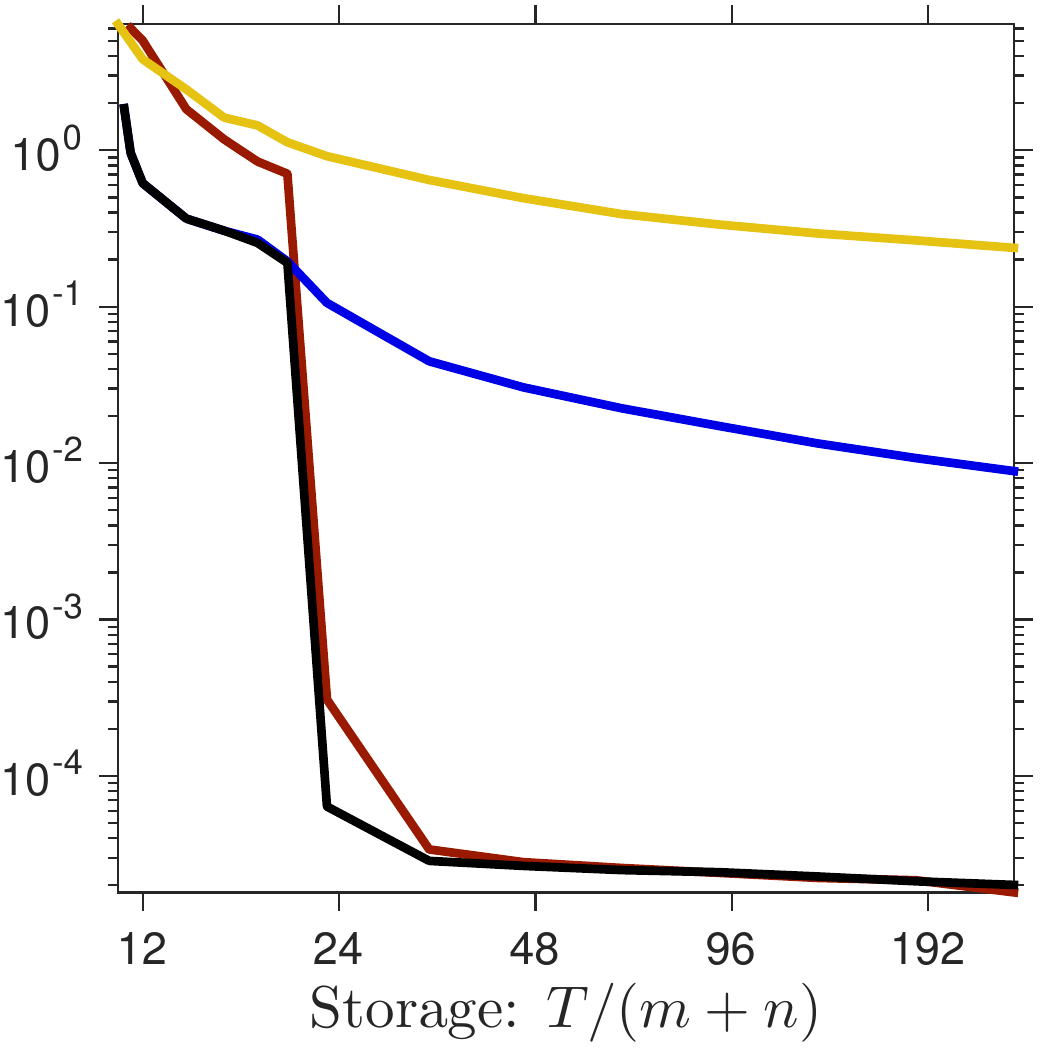}
\caption{\texttt{LowRankLowNoise}}
\end{center}
\end{subfigure}
\end{center}

\vspace{.5em}

\begin{center}
\begin{subfigure}{.325\textwidth}
\begin{center}
\includegraphics[height=1.5in]{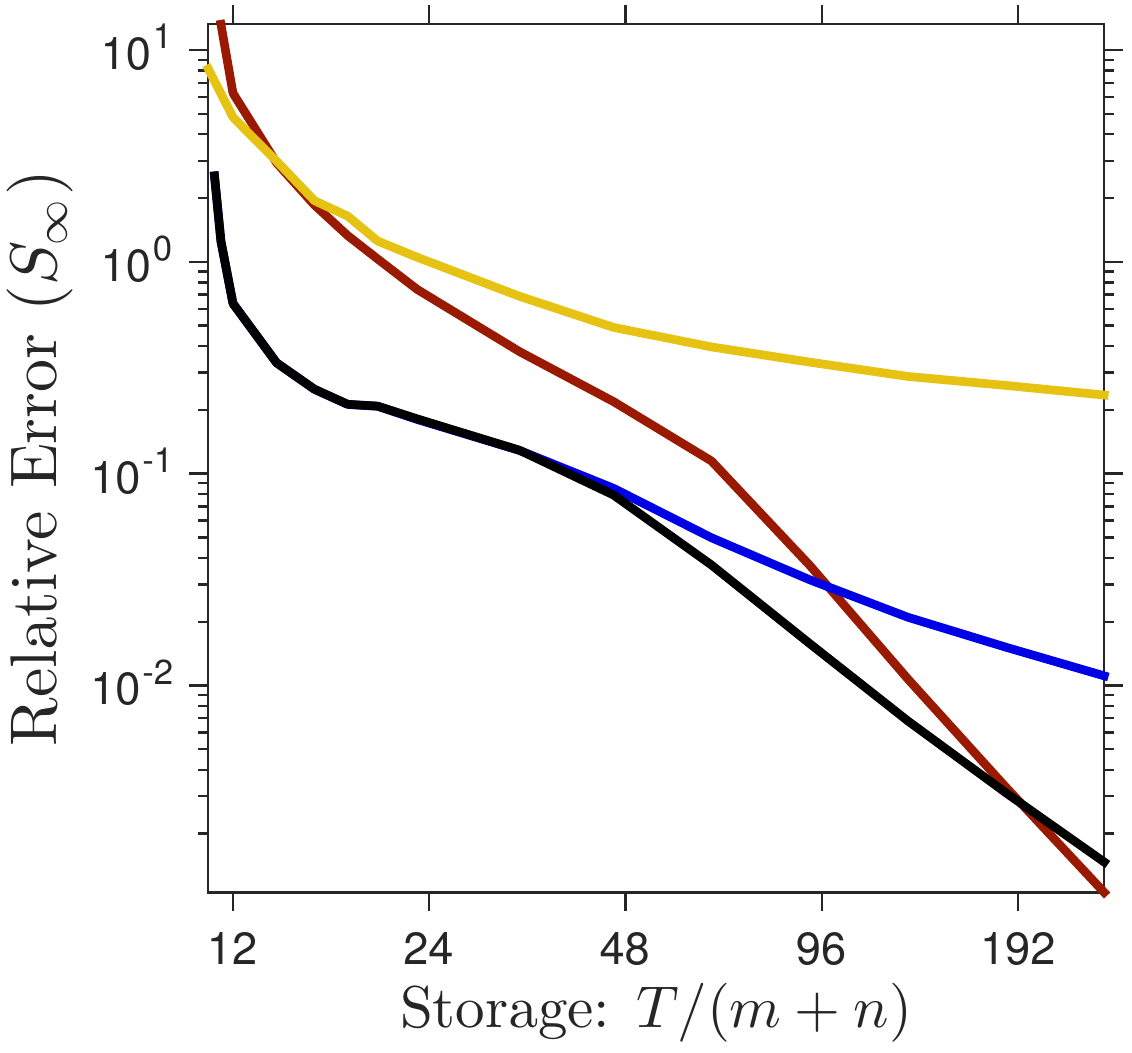}
\caption{\texttt{PolyDecaySlow}}
\end{center}
\end{subfigure}
\begin{subfigure}{.325\textwidth}
\begin{center}
\includegraphics[height=1.5in]{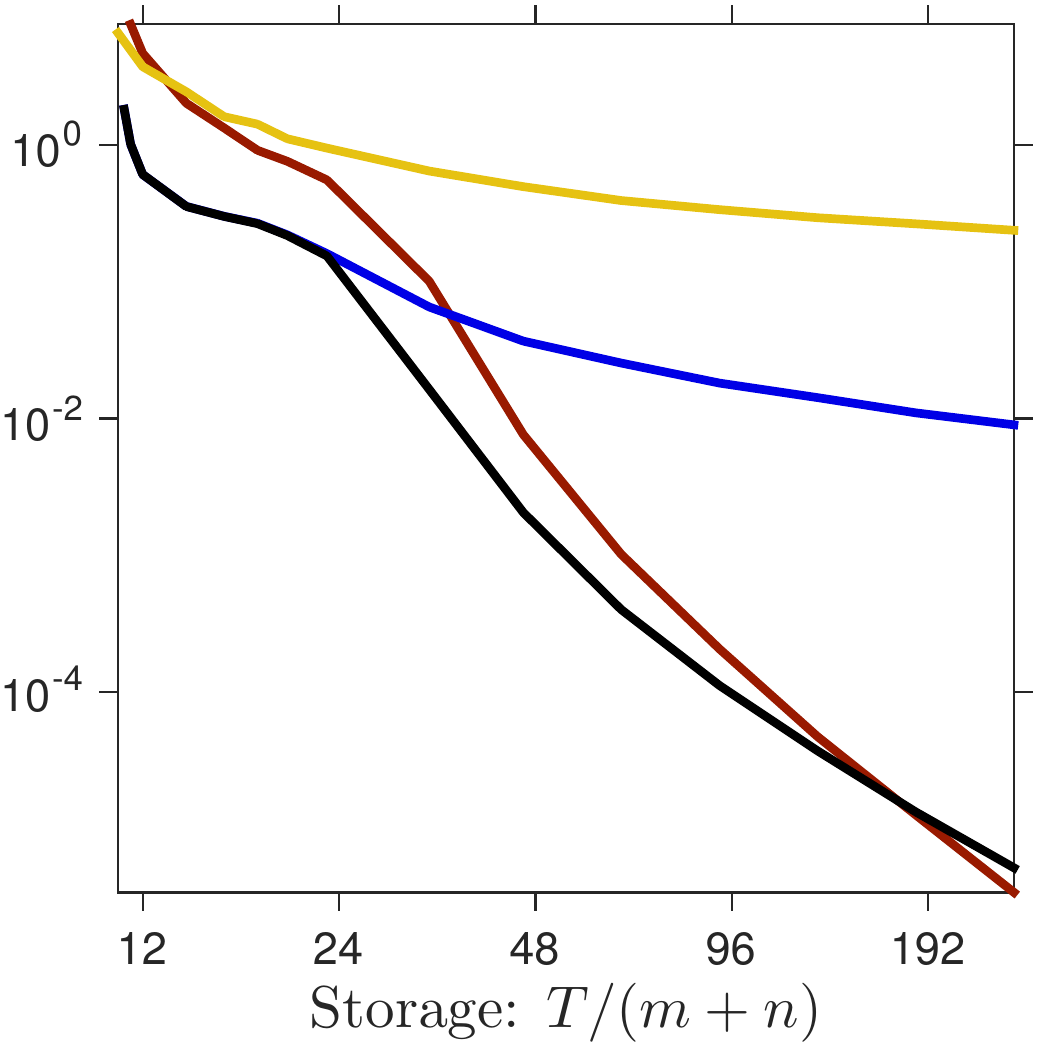}
\caption{\texttt{PolyDecayMed}}
\end{center}
\end{subfigure}
\begin{subfigure}{.325\textwidth}
\begin{center}
\includegraphics[height=1.5in]{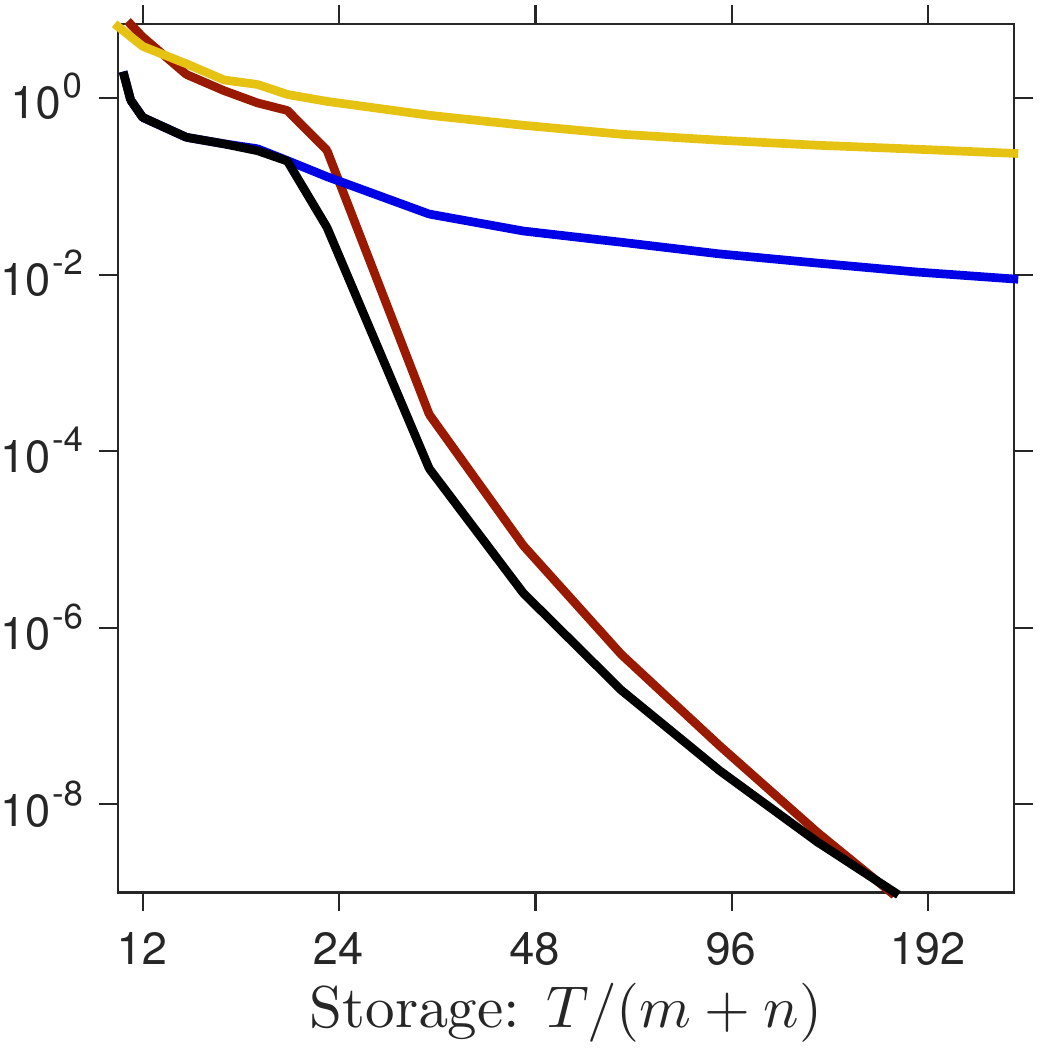}
\caption{\texttt{PolyDecayFast}}
\end{center}
\end{subfigure}
\end{center}

\vspace{0.5em}

\begin{center}
\begin{subfigure}{.325\textwidth}
\begin{center}
\includegraphics[height=1.5in]{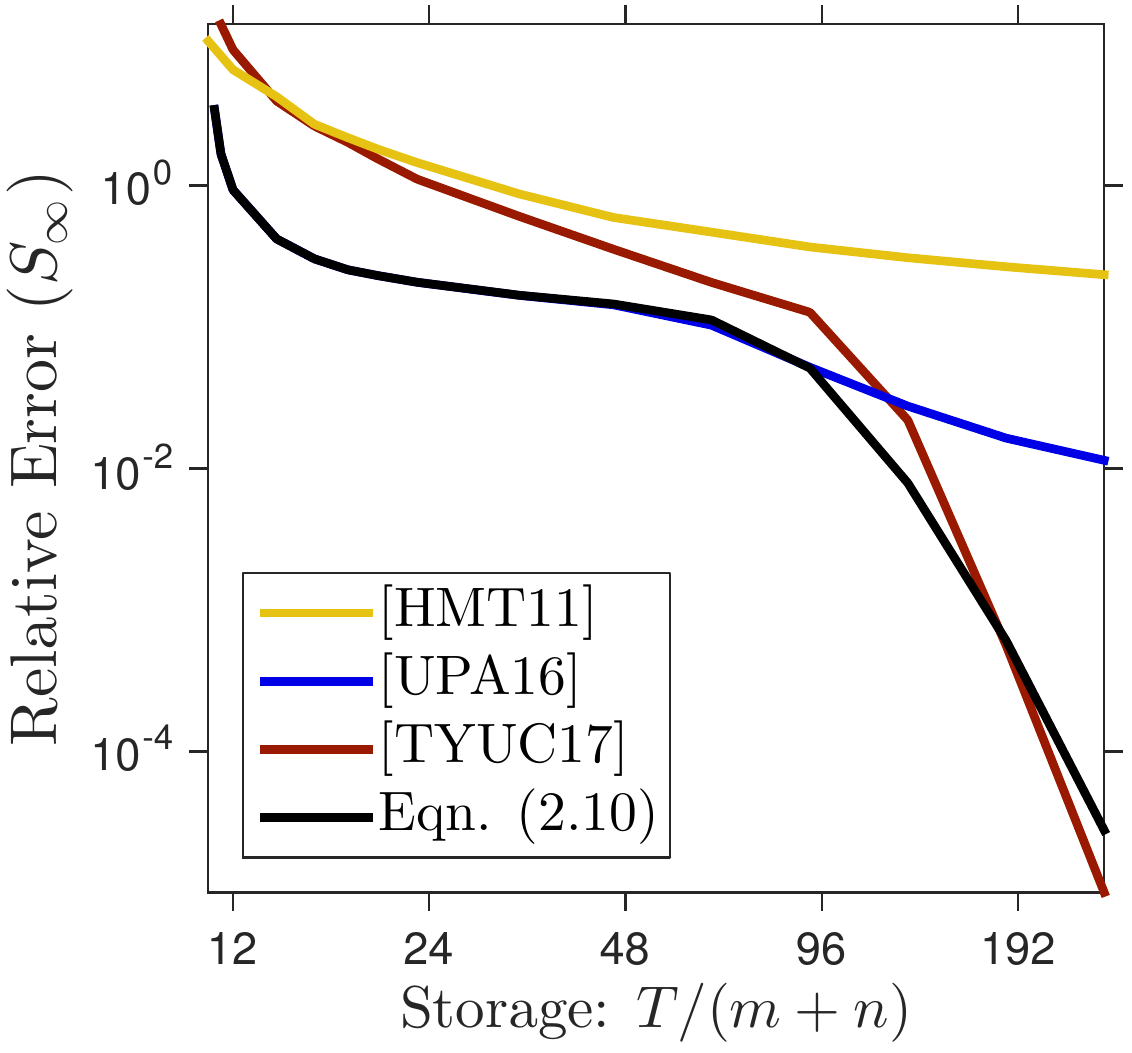}
\caption{\texttt{ExpDecaySlow}}
\end{center}
\end{subfigure}
\begin{subfigure}{.325\textwidth}
\begin{center}
\includegraphics[height=1.5in]{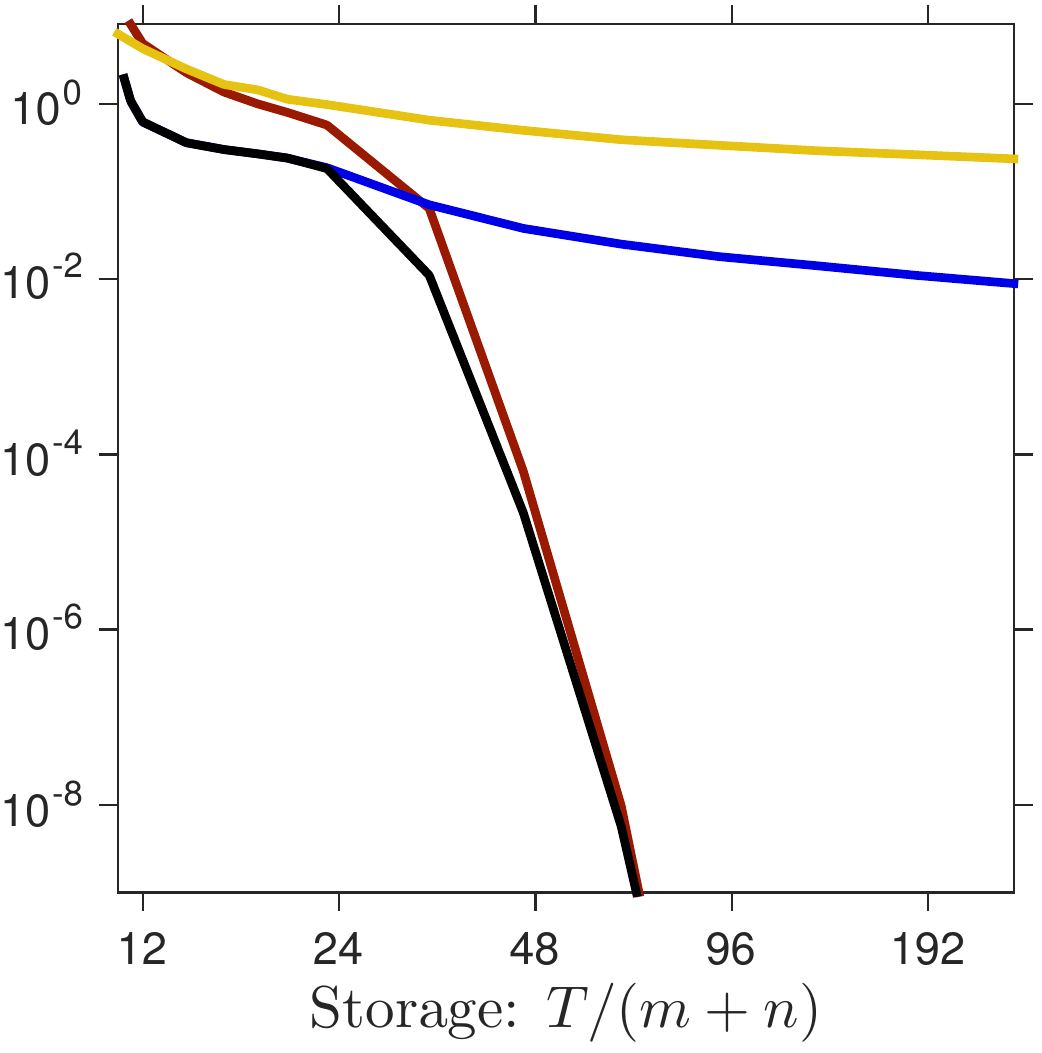}
\caption{\texttt{ExpDecayMed}}
\end{center}
\end{subfigure}
\begin{subfigure}{.325\textwidth}
\begin{center}
\includegraphics[height=1.5in]{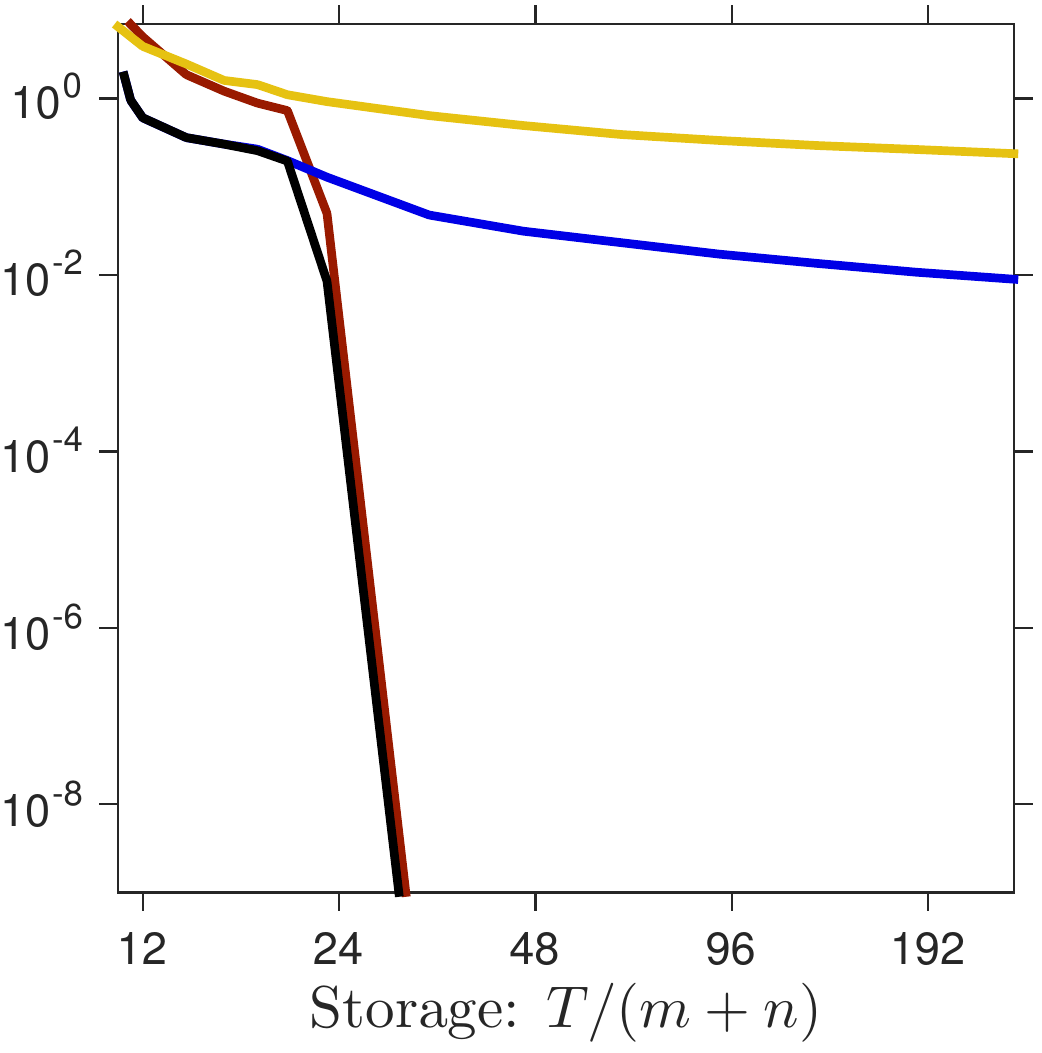}
\caption{\texttt{ExpDecayFast}}
\end{center}
\end{subfigure}
\end{center}

\vspace{0.5em}

\caption{\textbf{Comparison of reconstruction formulas: Synthetic examples.}
(Gaussian maps, effective rank $R = 20$, approximation rank $r = 10$, Schatten $\infty$-norm.)
We compare the oracle error achieved by the proposed fixed-rank
approximation~\cref{eqn:Ahat-fixed} against methods~\cref{eqn:upa,eqn:tyuc2017} from the literature.
See \cref{sec:oracle-error} for details.}
\label{fig:oracle-comparison-R20-Sinf}
\end{figure}

\begin{figure}[htp!]

\begin{center}
\begin{subfigure}{.45\textwidth}
\begin{center}
\includegraphics[height=2in]{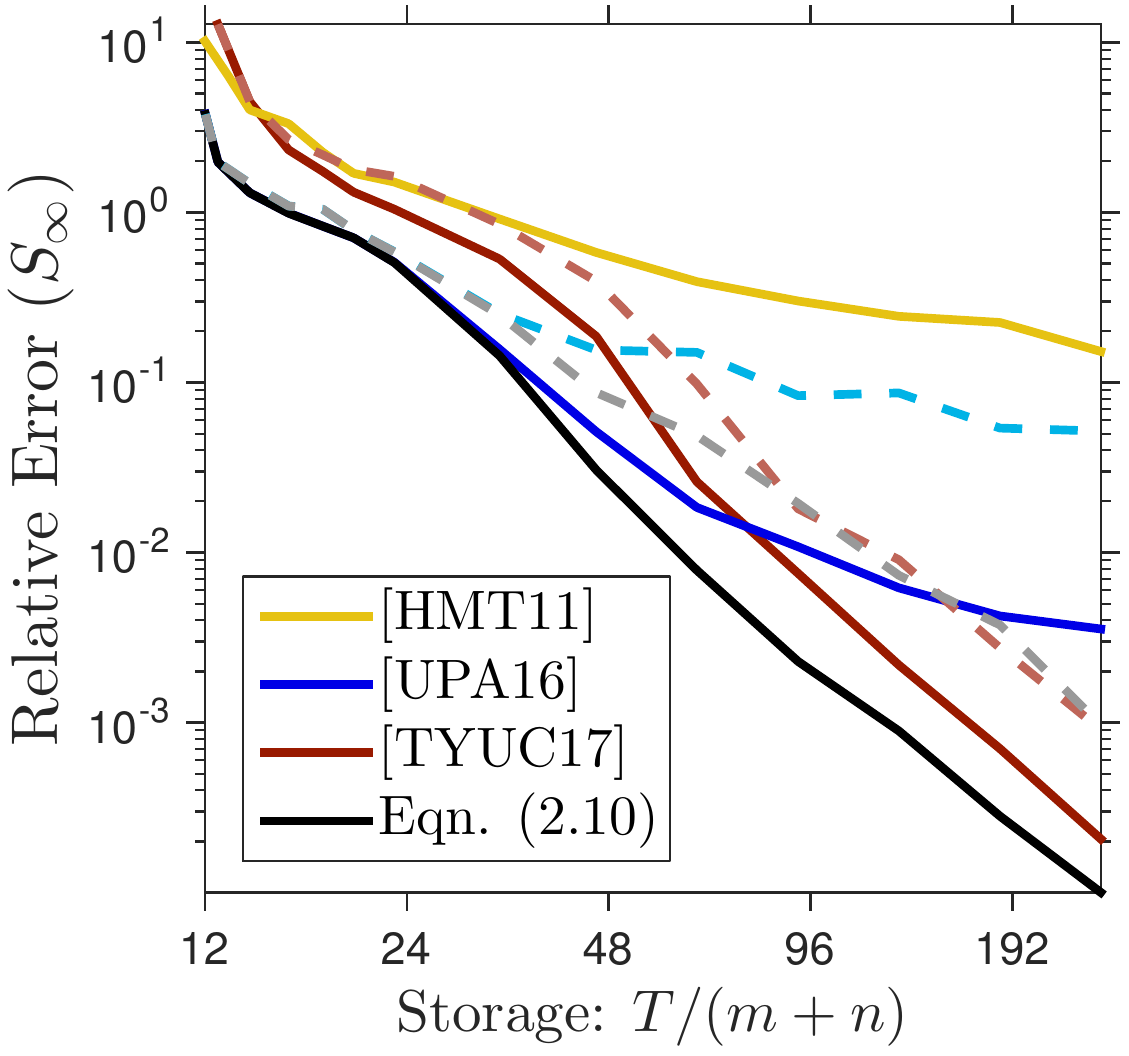}
\caption{\texttt{MinTemp} ($r = 10$)}
\end{center}
\end{subfigure}
\begin{subfigure}{.45\textwidth}
\begin{center}
\includegraphics[height=2in]{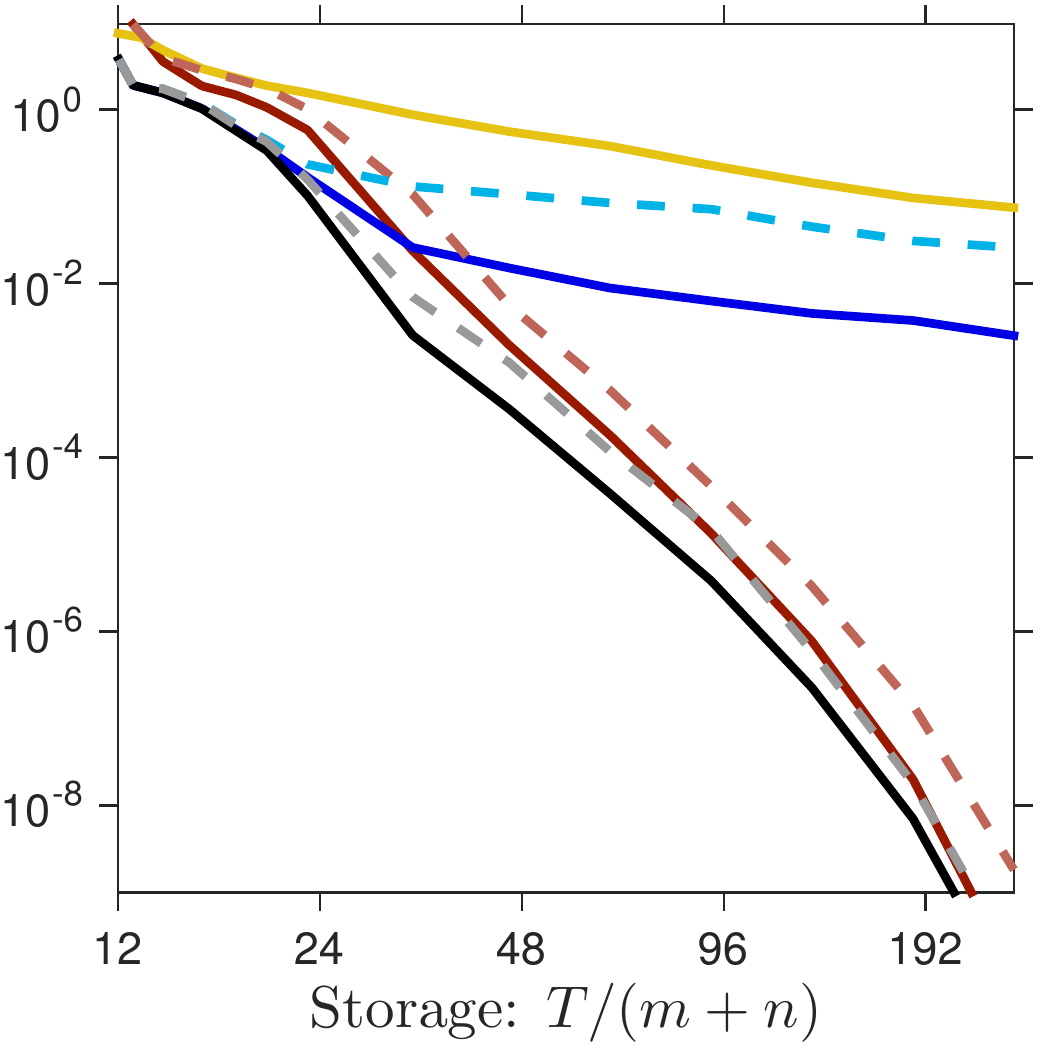}
\caption{\texttt{StreamVel} ($r = 10$)}
\end{center}
\end{subfigure}
\end{center}

\vspace{0.5em}

\begin{center}
\begin{subfigure}{.45\textwidth}
\begin{center}
\includegraphics[height=2in]{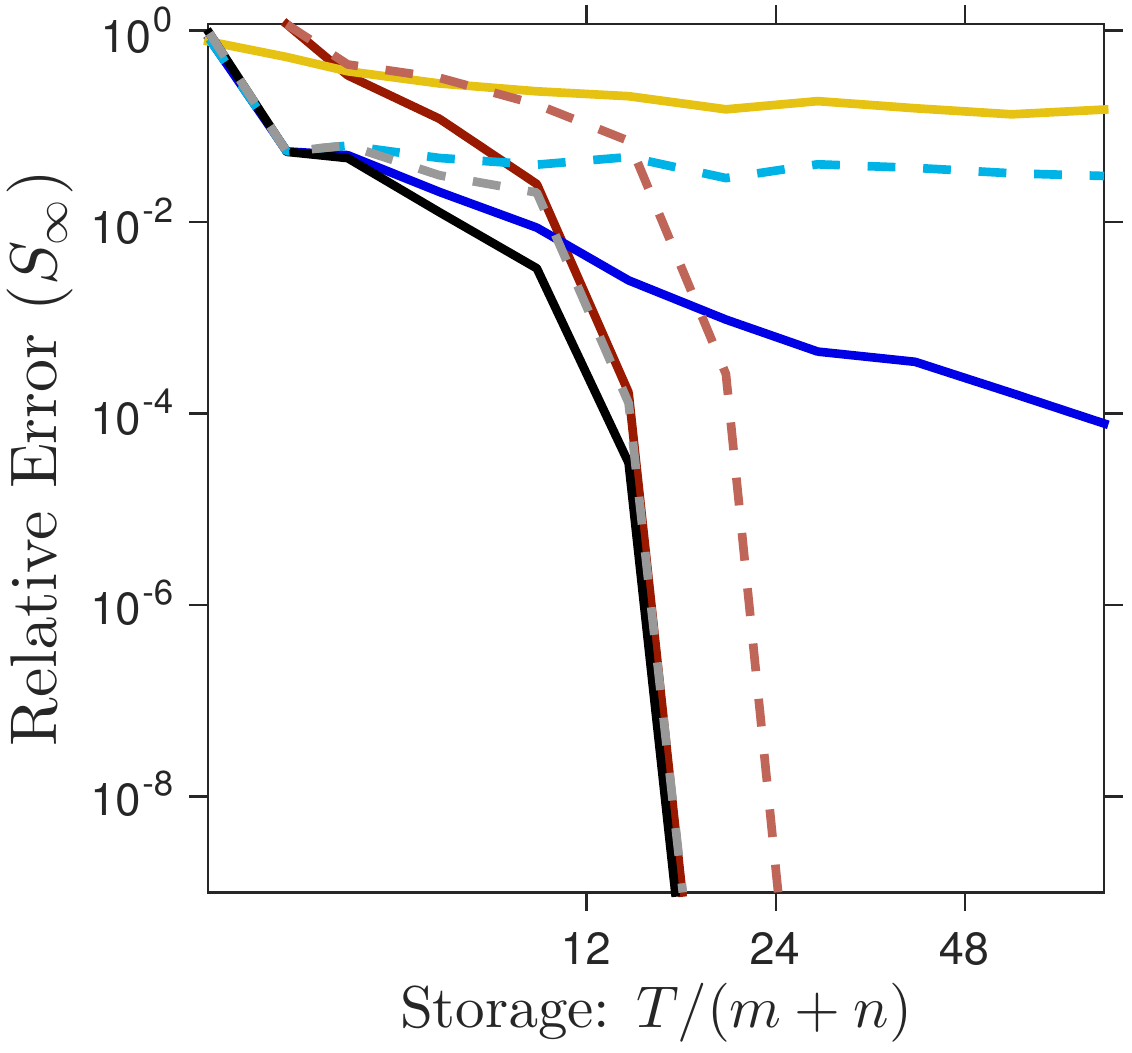}
\caption{\texttt{MaxCut} ($r = 1$)}
\end{center}
\end{subfigure}
\begin{subfigure}{.45\textwidth}
\begin{center}
\includegraphics[height=2in]{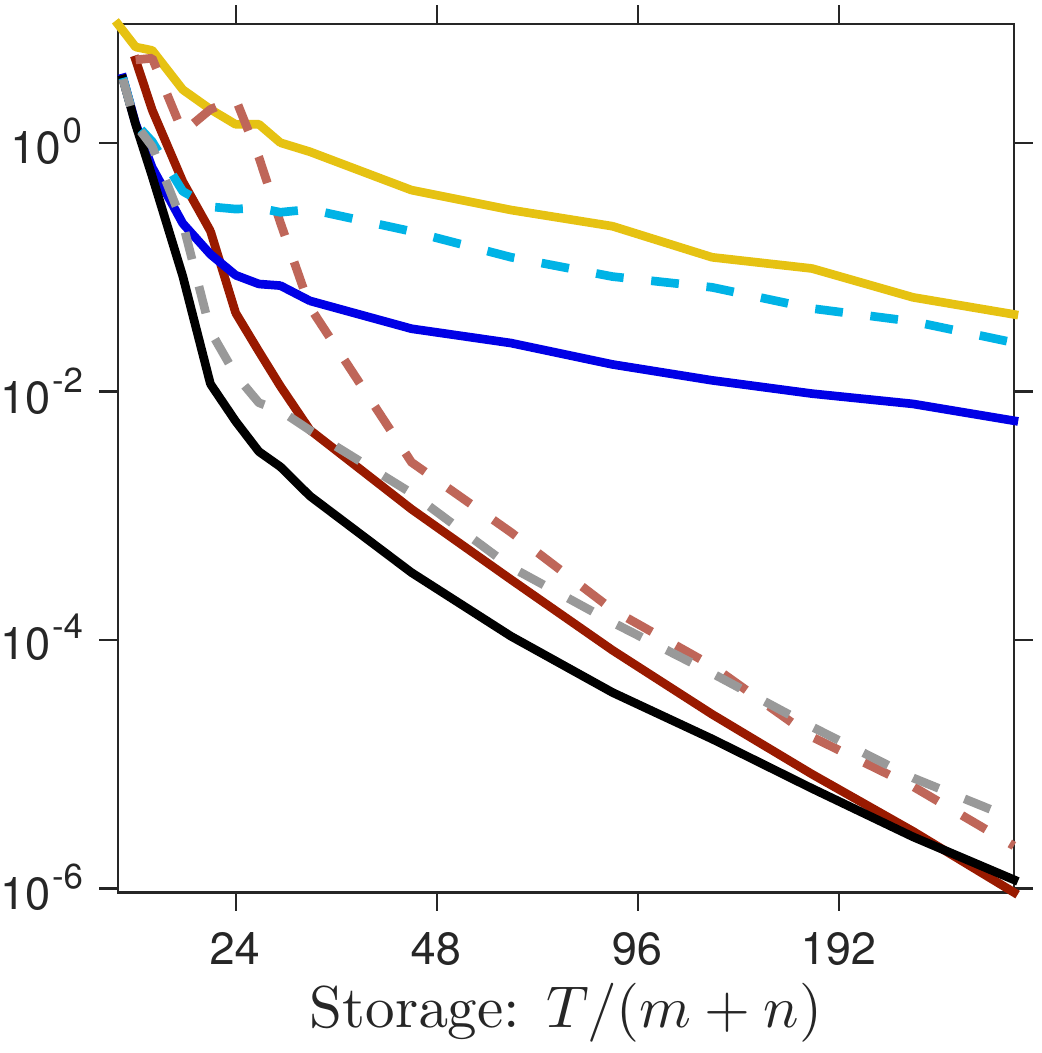}
\caption{\texttt{MaxCut} ($r = 14$)}
\end{center}
\end{subfigure}
\end{center}

\vspace{.5em}

\begin{center}
\begin{subfigure}{.45\textwidth}
\begin{center}
\includegraphics[height=2in]{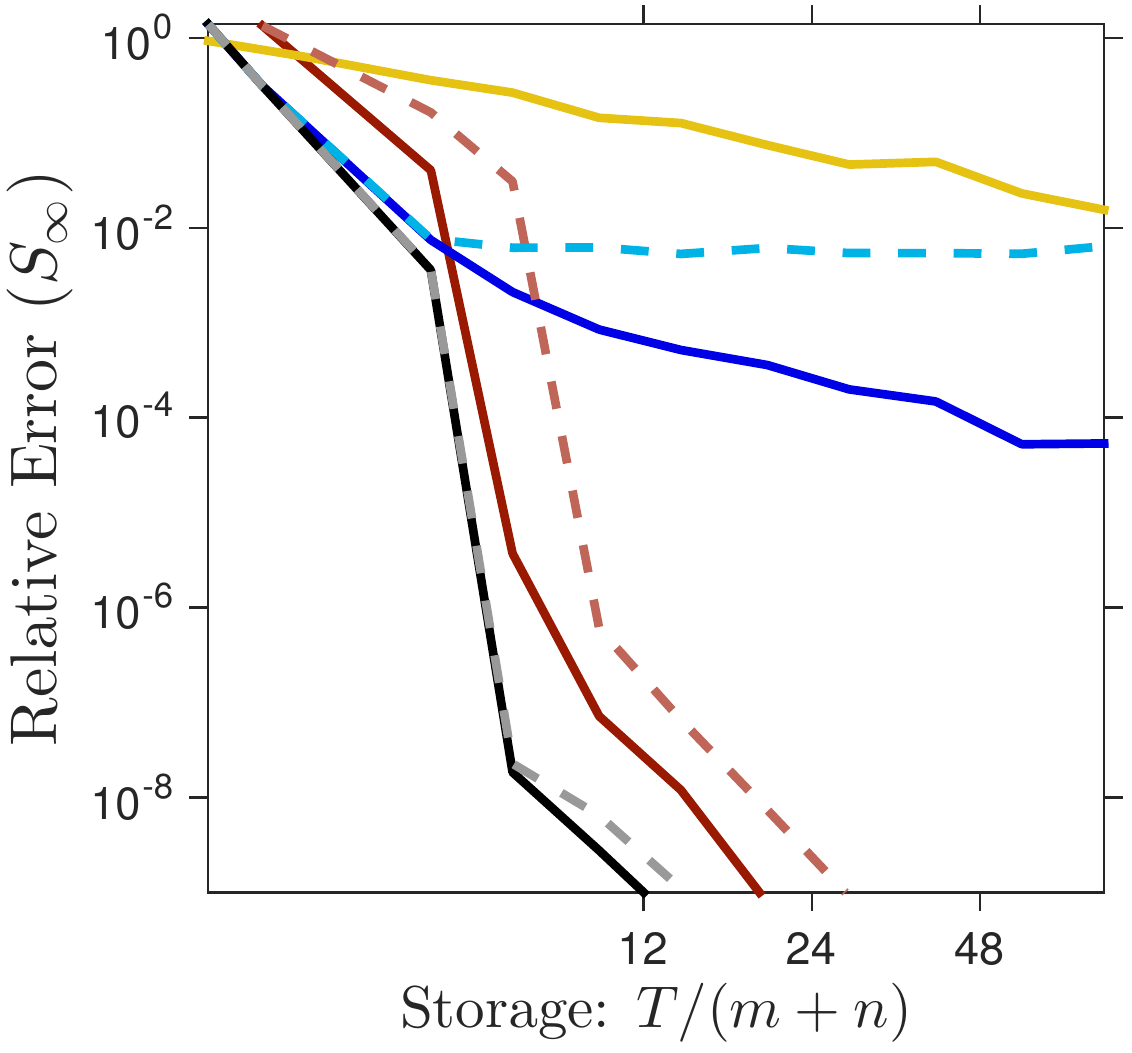}
\caption{\texttt{PhaseRetrieval} ($r = 1$)}
\end{center}
\end{subfigure}
\begin{subfigure}{.45\textwidth}
\begin{center}
\includegraphics[height=2in]{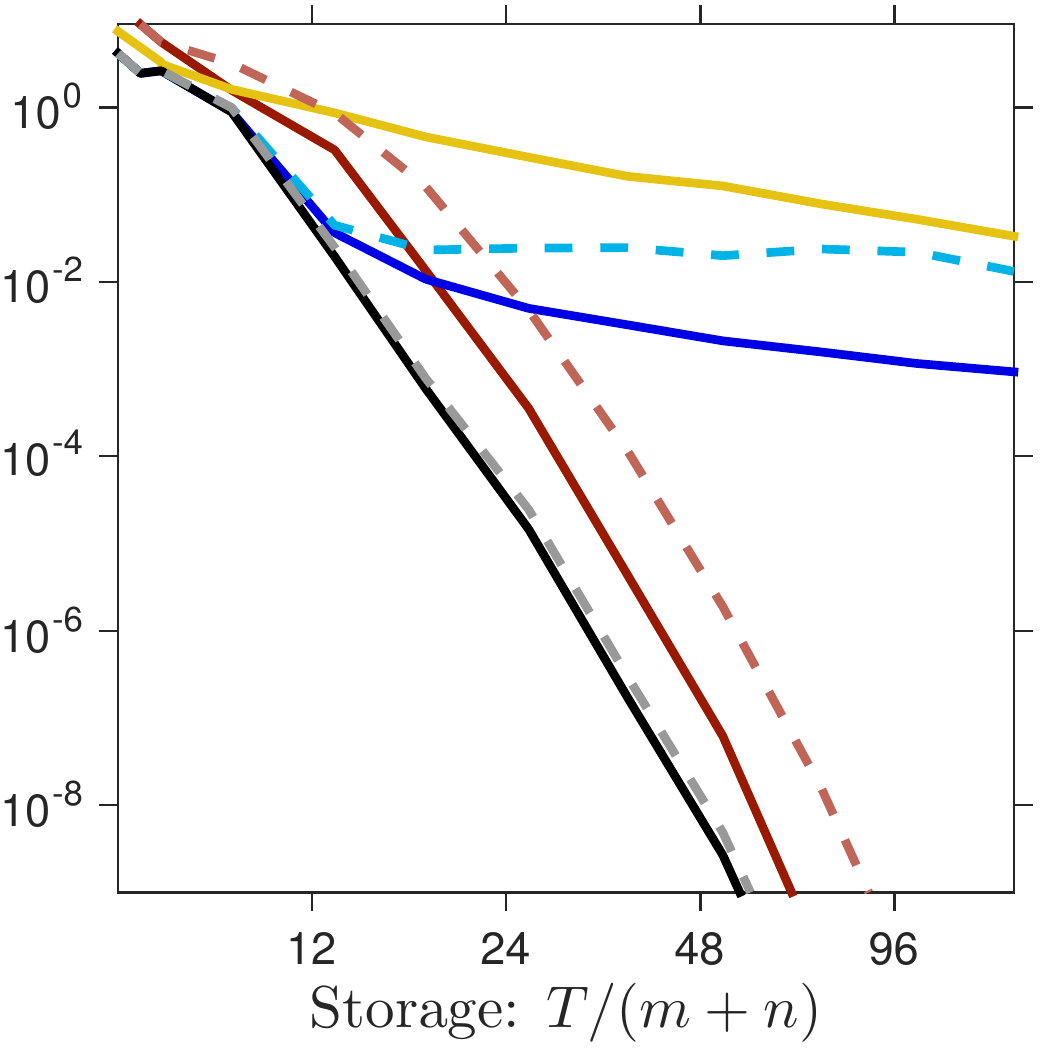}
\caption{\texttt{PhaseRetrieval} ($r = 5$)}
\end{center}
\end{subfigure}
\end{center}

\vspace{0.5em}

\caption{\textbf{Comparison of reconstruction formulas: Real data examples.}
(Sparse maps, Schatten $\infty$-norm.)
We compare the relative error achieved by the proposed fixed-rank
approximation~\cref{eqn:Ahat-fixed} against methods~\cref{eqn:upa,eqn:tyuc2017} from the literature.
\textbf{Solid lines} are oracle errors; \textbf{dashed lines} are errors with ``natural''
parameter choices.  See \cref{sec:real-data} for details.}
\label{fig:data-comparison-Sinf}
\end{figure}

\begin{figure}[htp!]

\begin{center}

\begin{subfigure}{.48\textwidth}
\begin{center}
\includegraphics[width=2.4in]{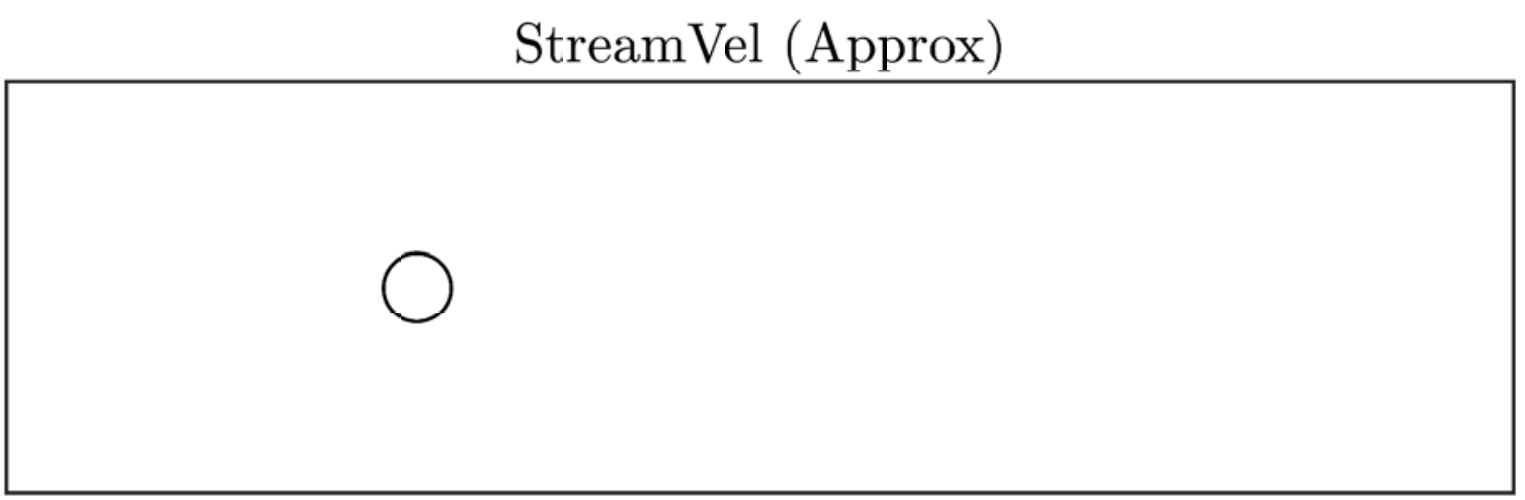}
\end{center}
\end{subfigure}
\begin{subfigure}{.48\textwidth}
\begin{center}
\includegraphics[width=2.4in]{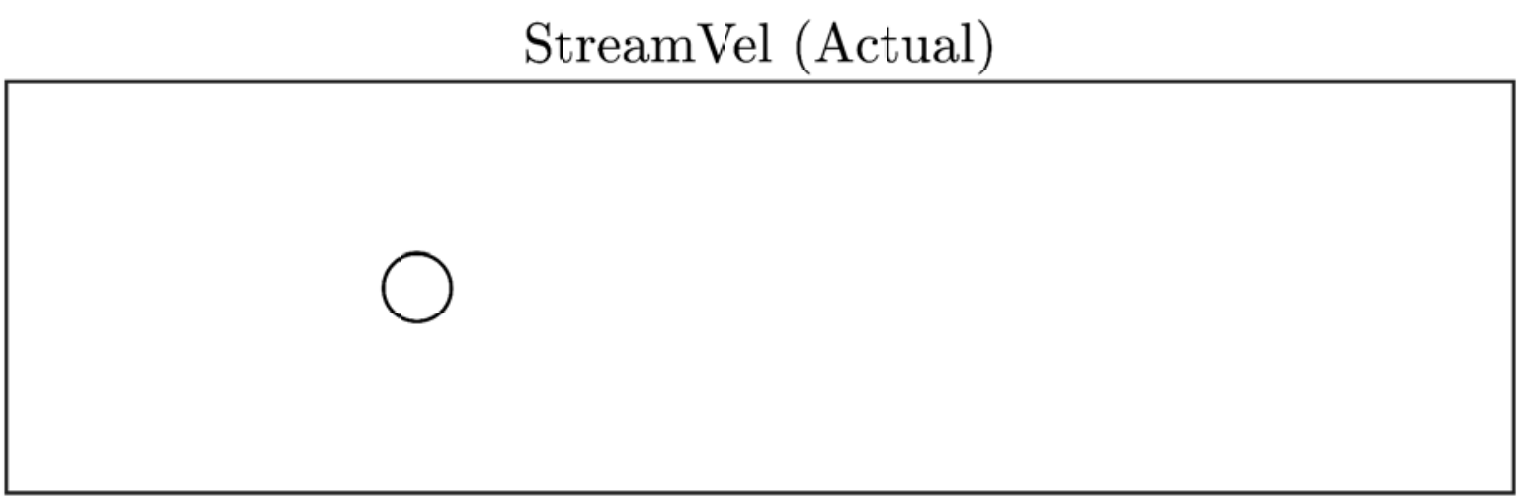}
\end{center}
\end{subfigure}

\vspace{1mm}

\begin{subfigure}{.48\textwidth}
\begin{center}
\includegraphics[width=2.4in]{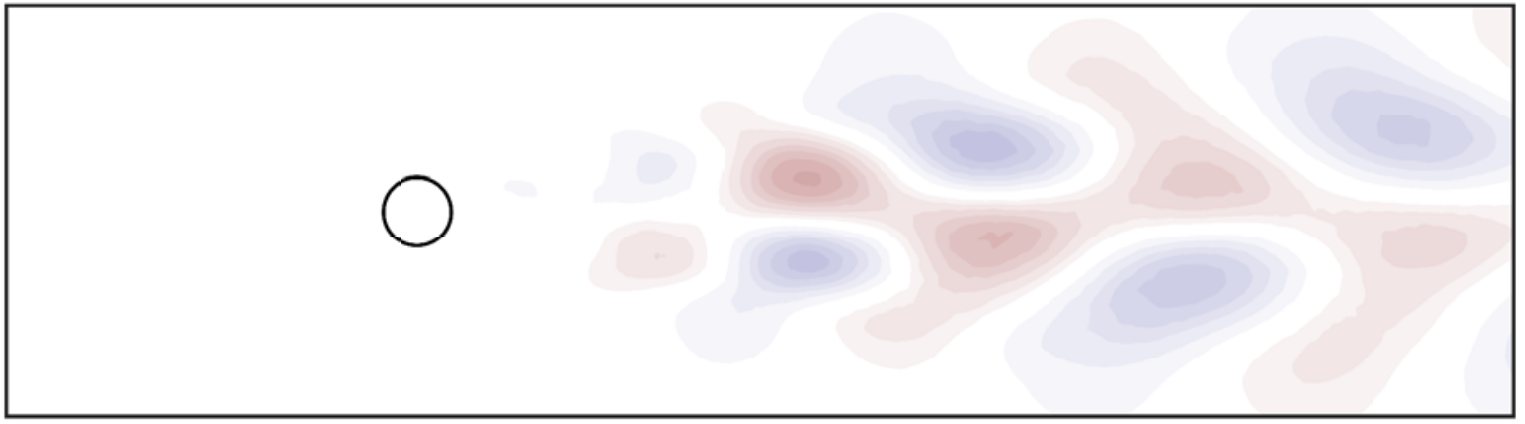}
\end{center}
\end{subfigure}
\begin{subfigure}{.48\textwidth}
\begin{center}
\includegraphics[width=2.4in]{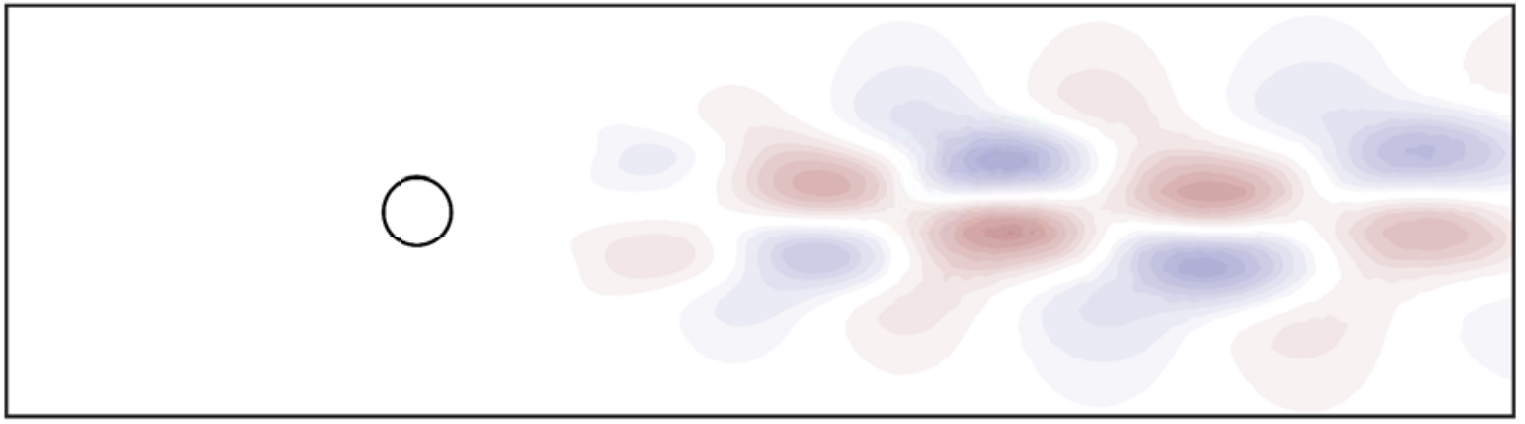}
\end{center}
\end{subfigure}

\vspace{1mm}

\begin{subfigure}{.48\textwidth}
\begin{center}
\includegraphics[width=2.4in]{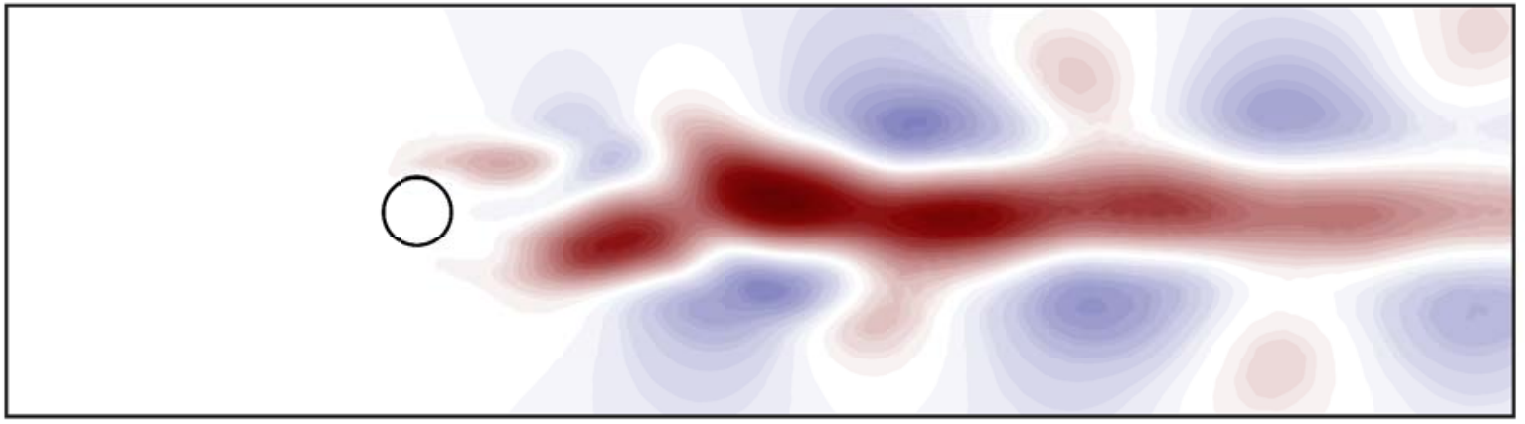}
\end{center}
\end{subfigure}
\begin{subfigure}{.48\textwidth}
\begin{center}
\includegraphics[width=2.4in]{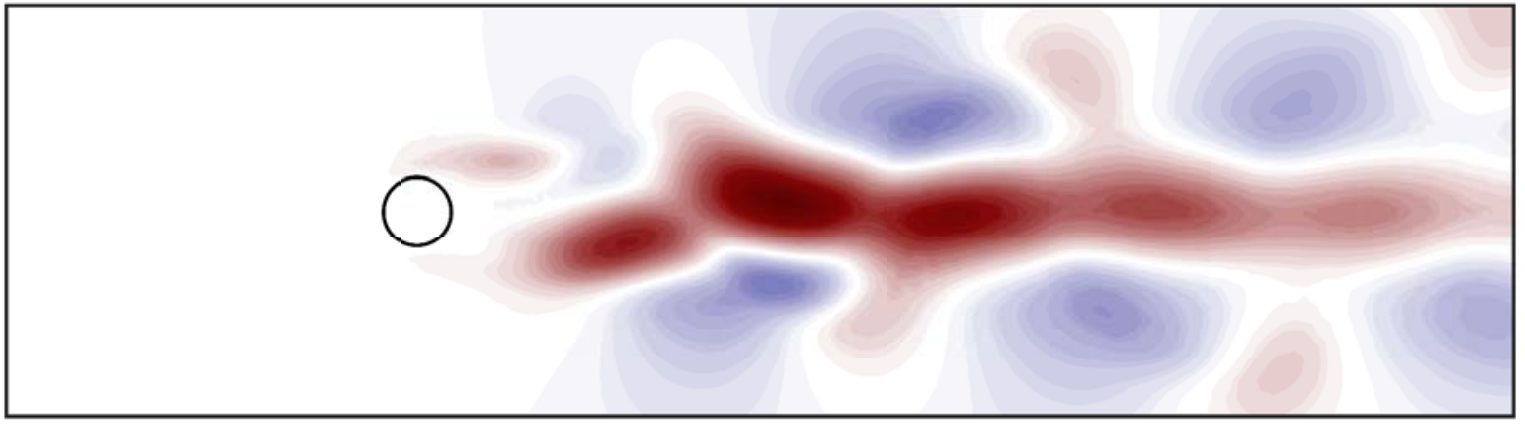}
\end{center}
\end{subfigure}

\vspace{1mm}

\begin{subfigure}{.48\textwidth}
\begin{center}
\includegraphics[width=2.4in]{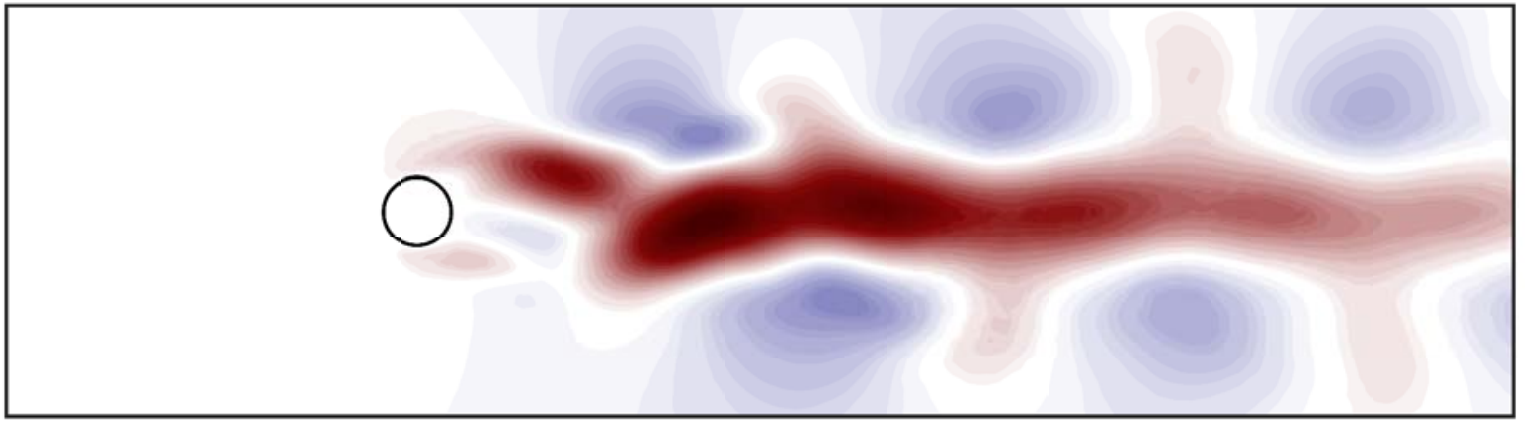}
\end{center}
\end{subfigure}
\begin{subfigure}{.48\textwidth}
\begin{center}
\includegraphics[width=2.4in]{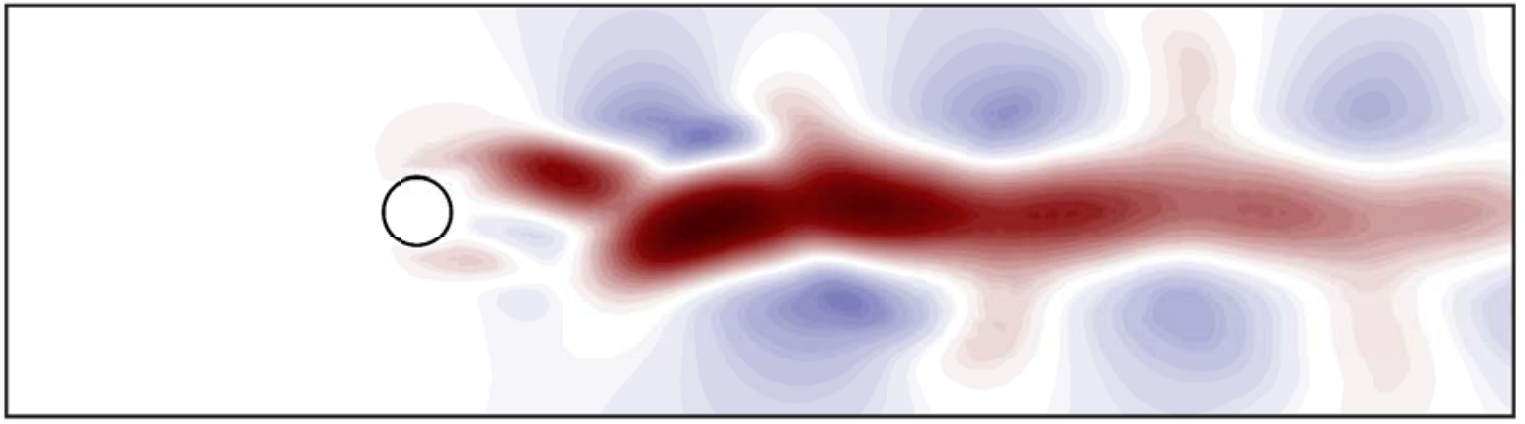}
\end{center}
\end{subfigure}

\vspace{1mm}

\begin{subfigure}{.48\textwidth}
\begin{center}
\includegraphics[width=2.4in]{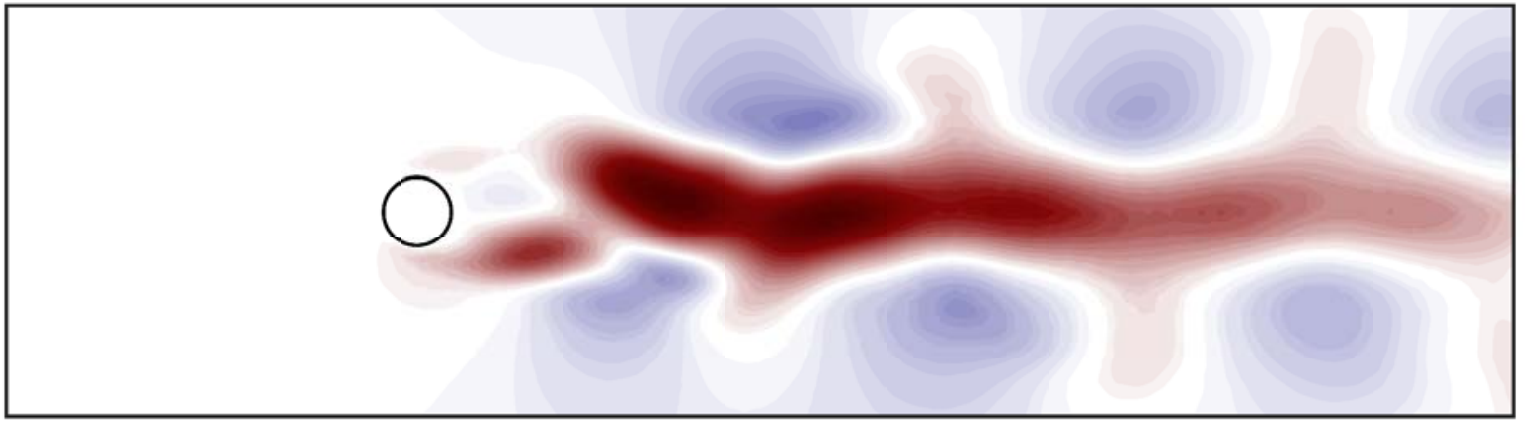}
\end{center}
\end{subfigure}
\begin{subfigure}{.48\textwidth}
\begin{center}
\includegraphics[width=2.4in]{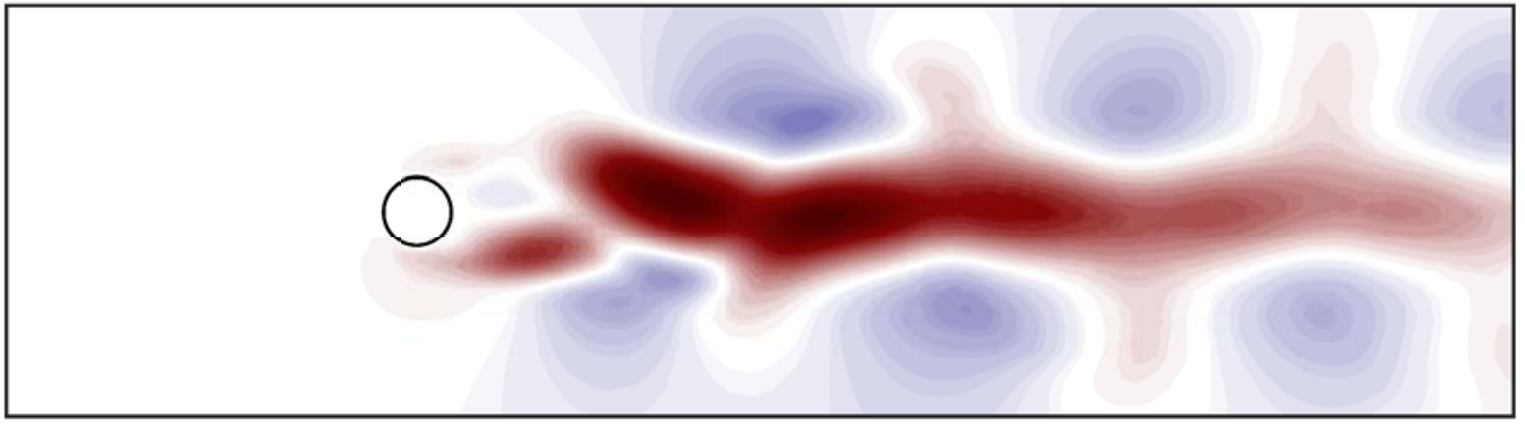}
\end{center}
\end{subfigure}

\vspace{1mm}

\begin{subfigure}{.48\textwidth}
\begin{center}
\includegraphics[width=2.4in]{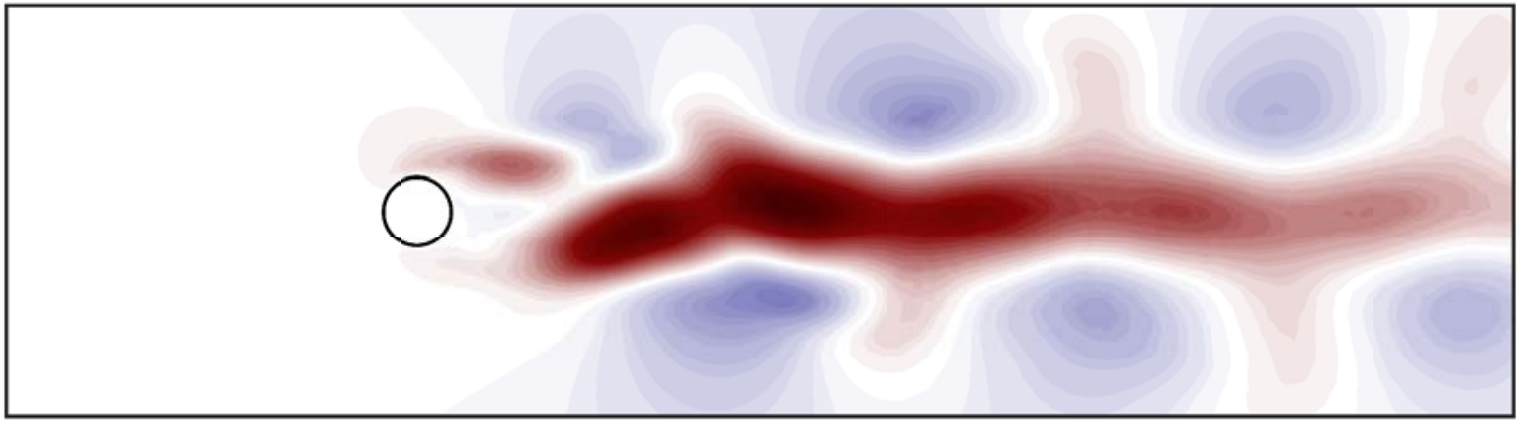}
\end{center}
\end{subfigure}
\begin{subfigure}{.48\textwidth}
\begin{center}
\includegraphics[width=2.4in]{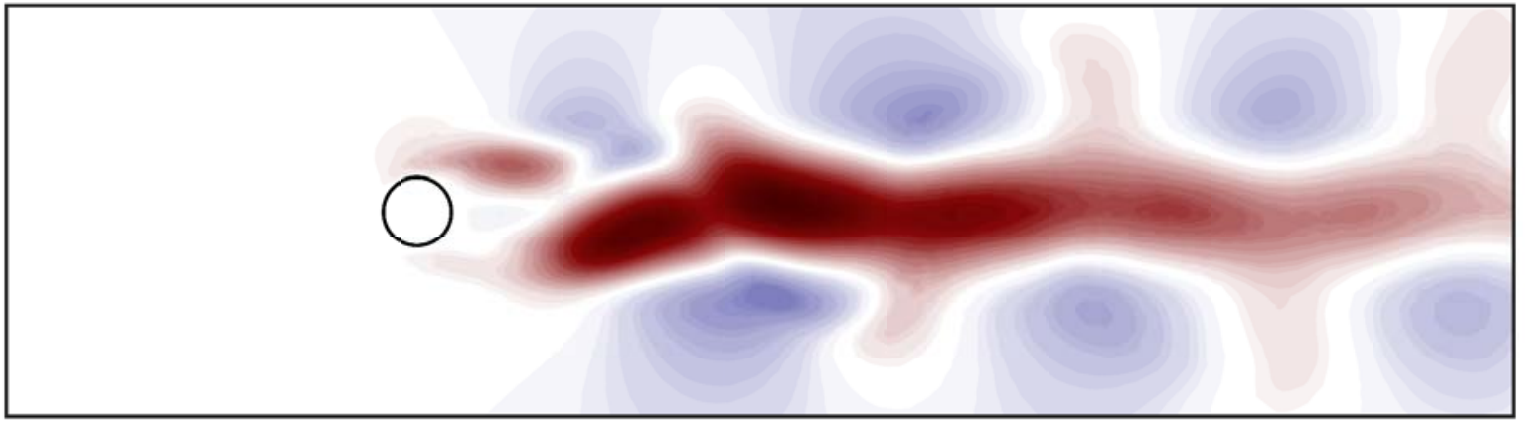}
\end{center}
\end{subfigure}

\vspace{1mm}

\begin{subfigure}{.48\textwidth}
\begin{center}
\includegraphics[width=2.4in]{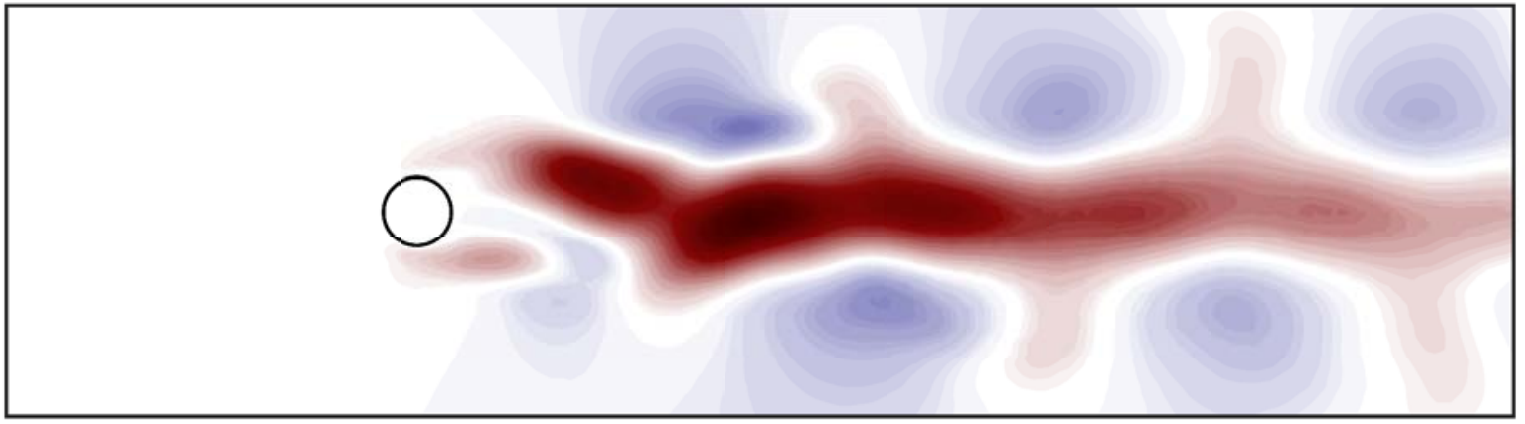}
\end{center}
\end{subfigure}
\begin{subfigure}{.48\textwidth}
\begin{center}
\includegraphics[width=2.4in]{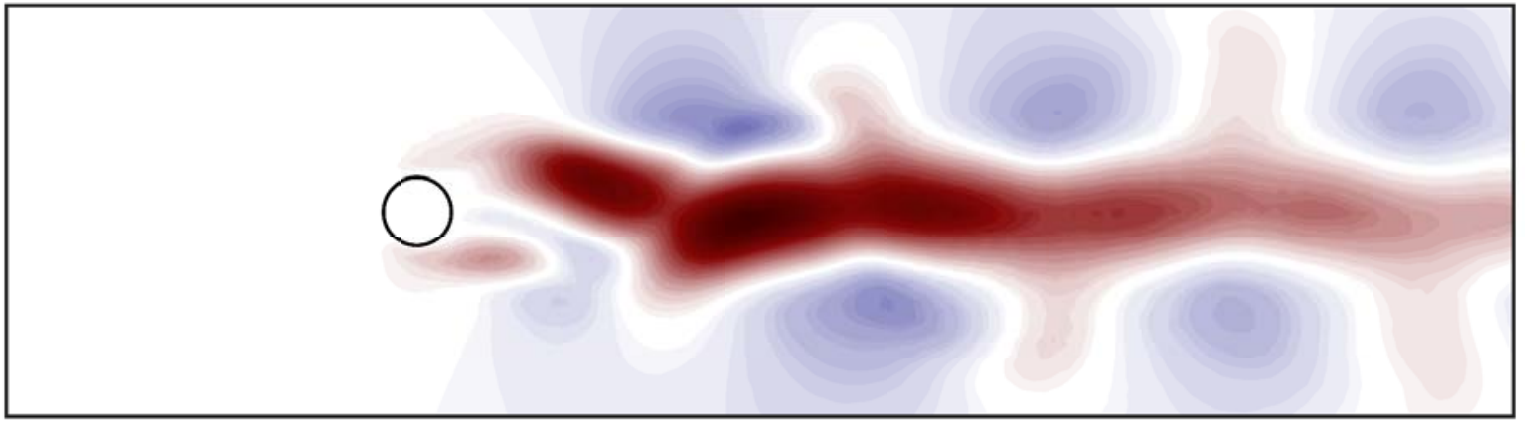}
\end{center}
\end{subfigure}

\vspace{1mm}

\begin{subfigure}{.48\textwidth}
\begin{center}
\includegraphics[width=2.4in]{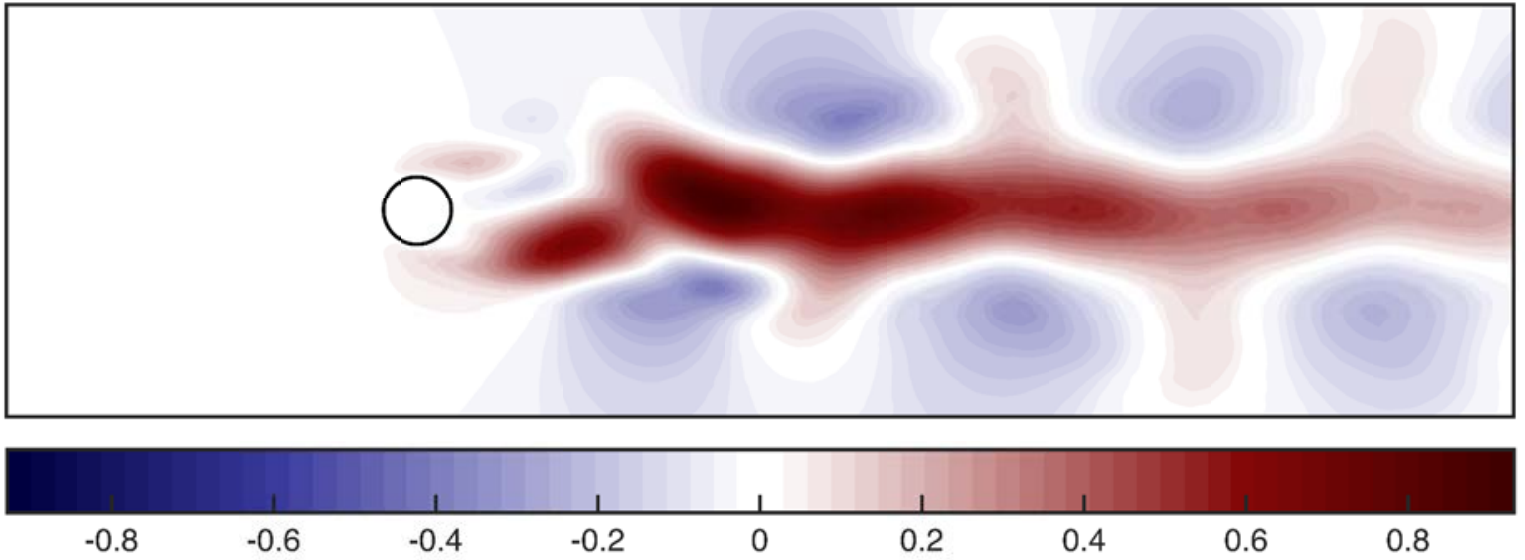}
\end{center}
\end{subfigure}
\begin{subfigure}{.48\textwidth}
\begin{center}
\includegraphics[width=2.4in]{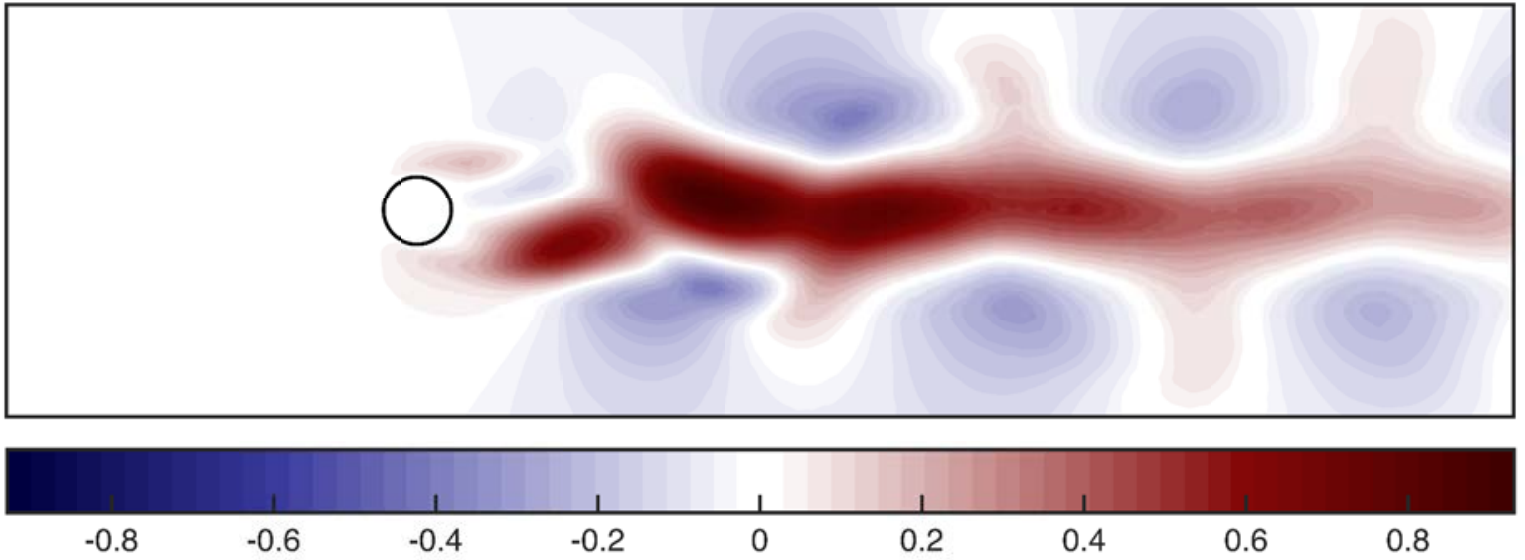}
\end{center}
\end{subfigure}

\vspace{1pc}

\caption{\textbf{Approximation of \texttt{StreamVel} via~\cref{eqn:Ahat-fixed}.}
(Sparse maps, approximation rank $r = 10$, storage budget $T = 48 (m+n)$.)
The columns of the matrix \texttt{StreamVel} describe the fluctuations
of the streamwise velocity field about its mean value as a function of time.
From top to bottom, the panels show columns $1, 1001, 1501, 2001, 2501, 3001, 3501, 4001$.
The \textbf{left-hand side} displays the approximation~\cref{eqn:Ahat-fixed} of the flow field,
and the \textbf{right-hand side} displays the exact flow field.
The heatmap indicates the magnitude of the fluctuation.
See \cref{sec:flow-field}.}
\label{fig:flow-field-time}
\end{center}
\end{figure}

\begin{figure}[htp!]

\begin{center}
\begin{subfigure}{.48\textwidth}
\begin{center}
\includegraphics[width=2.4in]{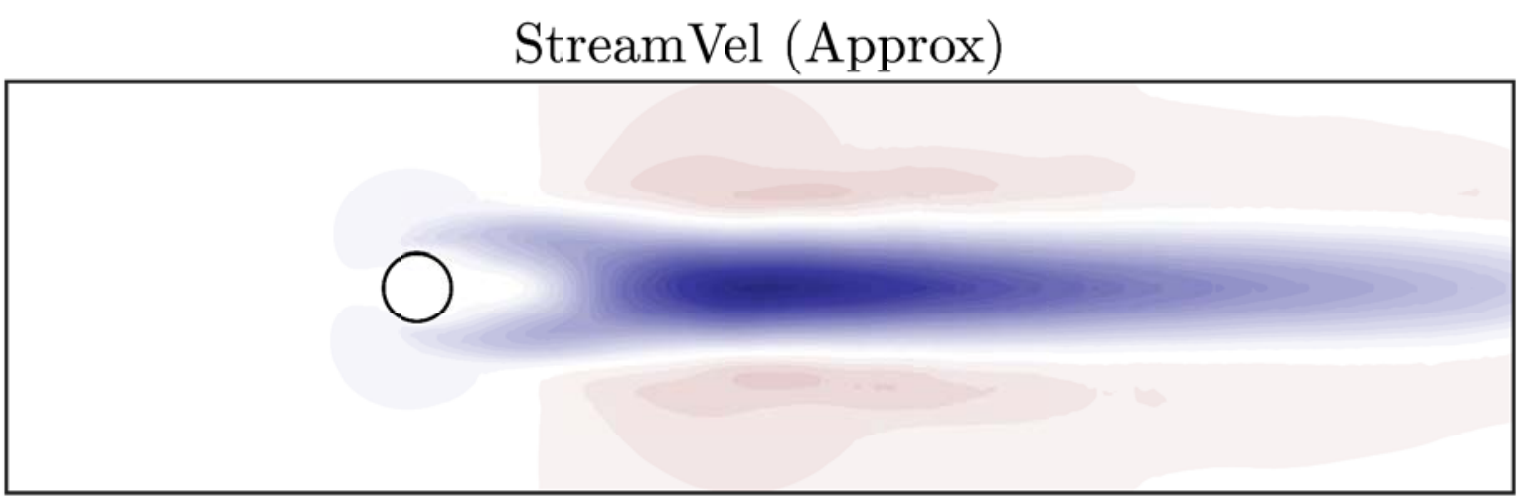}
\end{center}
\end{subfigure}
\begin{subfigure}{.48\textwidth}
\begin{center}
\includegraphics[width=2.4in]{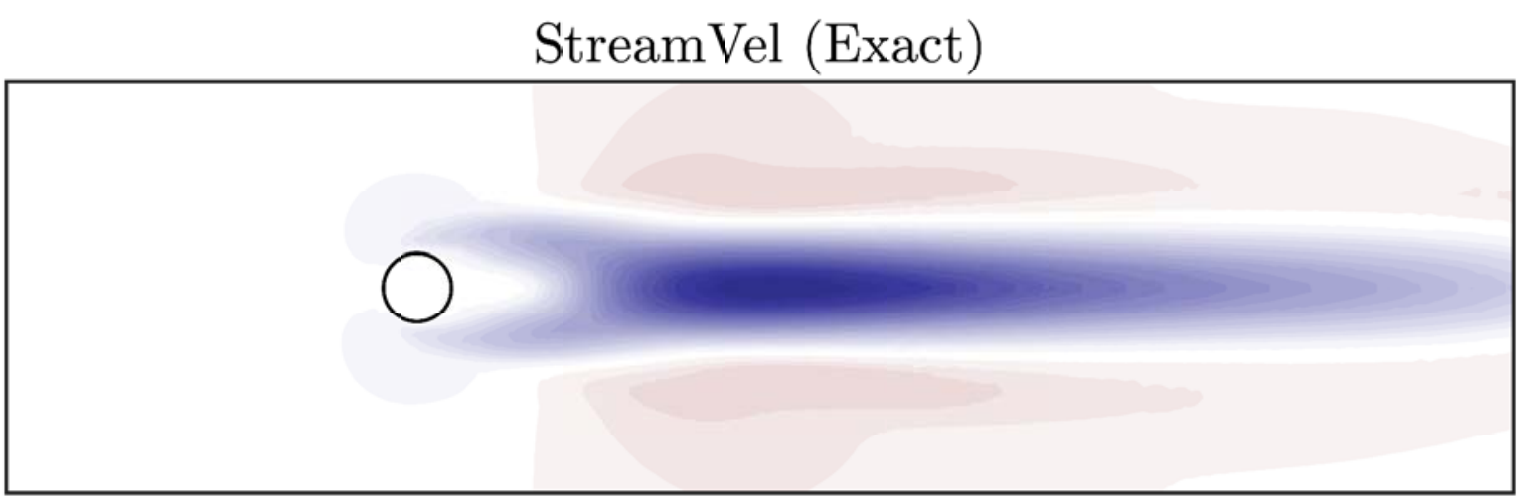}
\end{center}
\end{subfigure}

\vspace{1mm}

\begin{subfigure}{.48\textwidth}
\begin{center}
\includegraphics[width=2.4in]{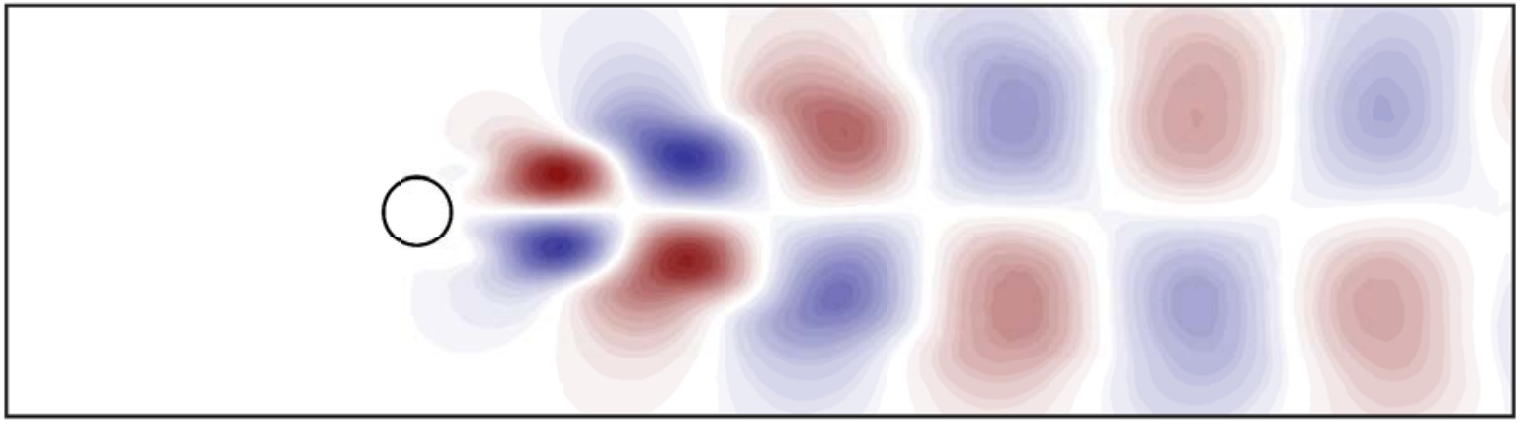}
\end{center}
\end{subfigure}
\begin{subfigure}{.48\textwidth}
\begin{center}
\includegraphics[width=2.4in]{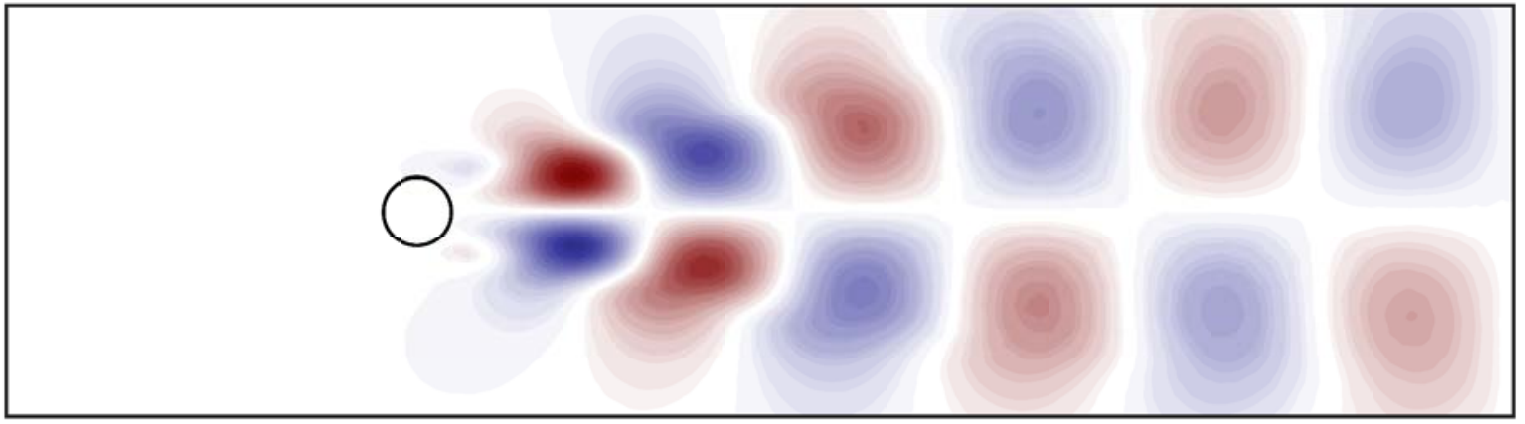}
\end{center}
\end{subfigure}

\vspace{1mm}

\begin{subfigure}{.48\textwidth}
\begin{center}
\includegraphics[width=2.4in]{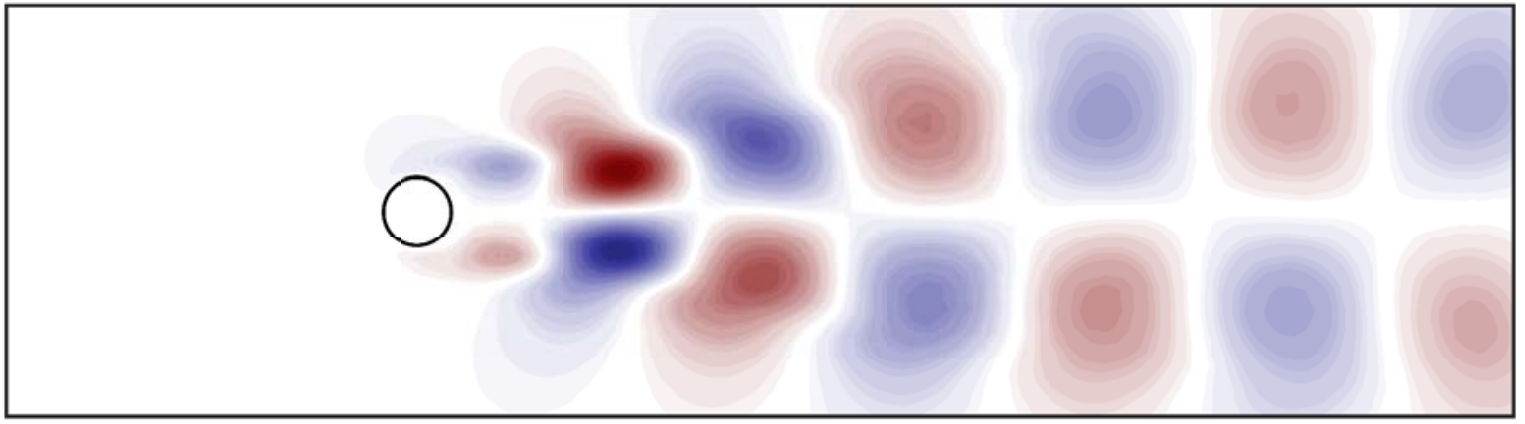}
\end{center}
\end{subfigure}
\begin{subfigure}{.48\textwidth}
\begin{center}
\includegraphics[width=2.4in]{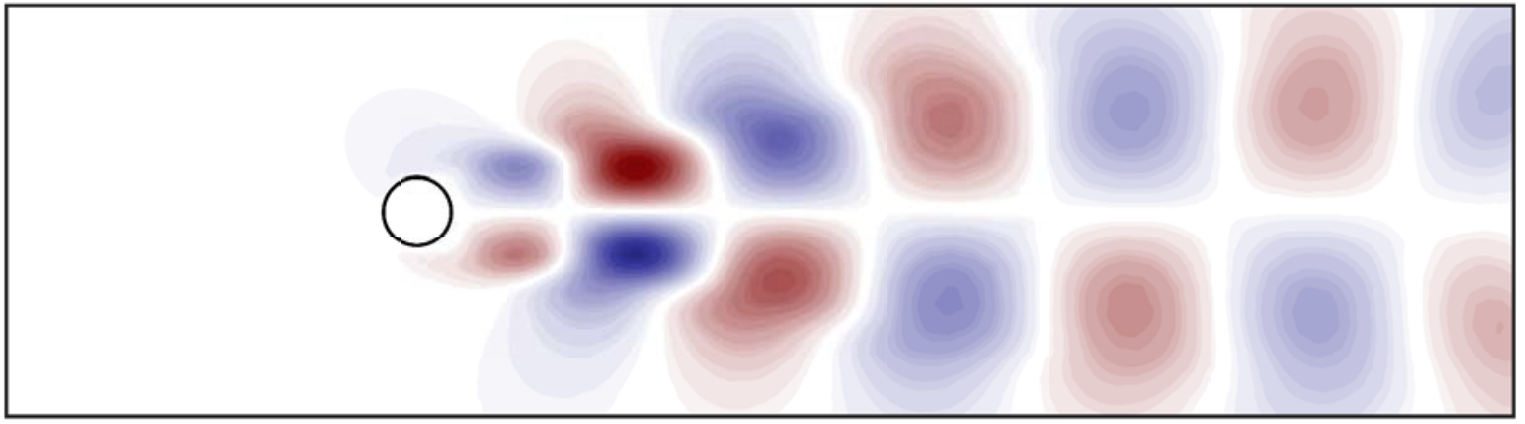}
\end{center}
\end{subfigure}

\vspace{1mm}

\begin{subfigure}{.48\textwidth}
\begin{center}
\includegraphics[width=2.4in]{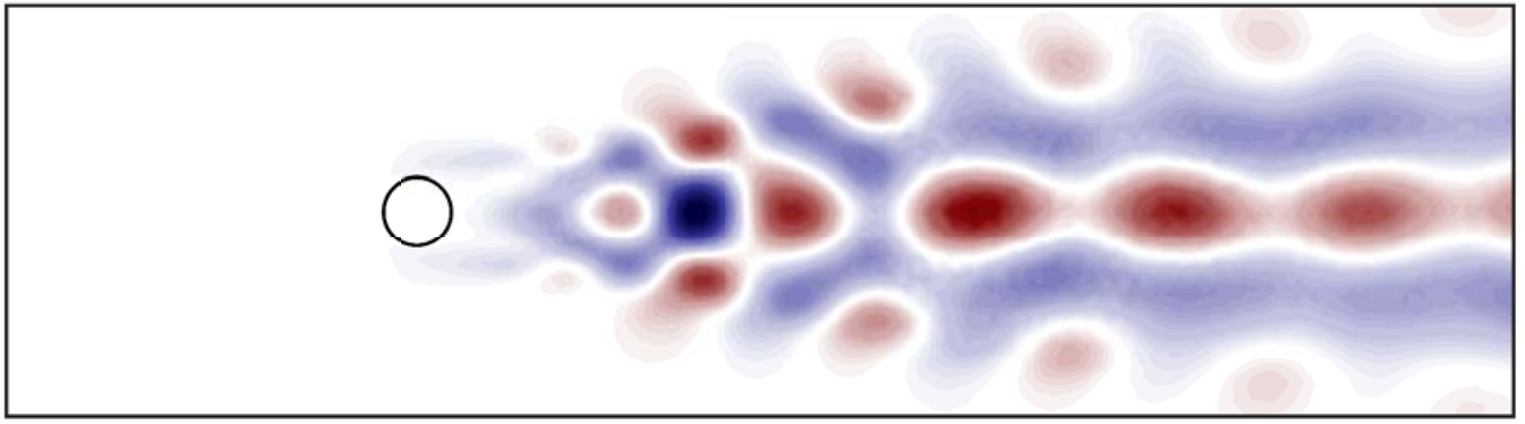}
\end{center}
\end{subfigure}
\begin{subfigure}{.48\textwidth}
\begin{center}
\includegraphics[width=2.4in]{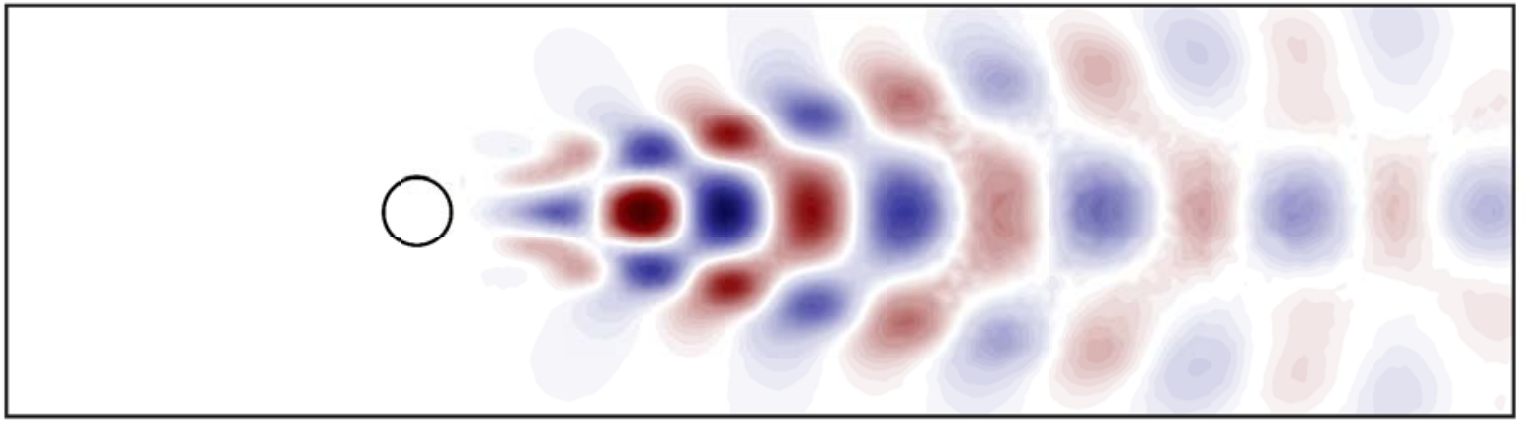}
\end{center}
\end{subfigure}

\vspace{1mm}

\begin{subfigure}{.48\textwidth}
\begin{center}
\includegraphics[width=2.4in]{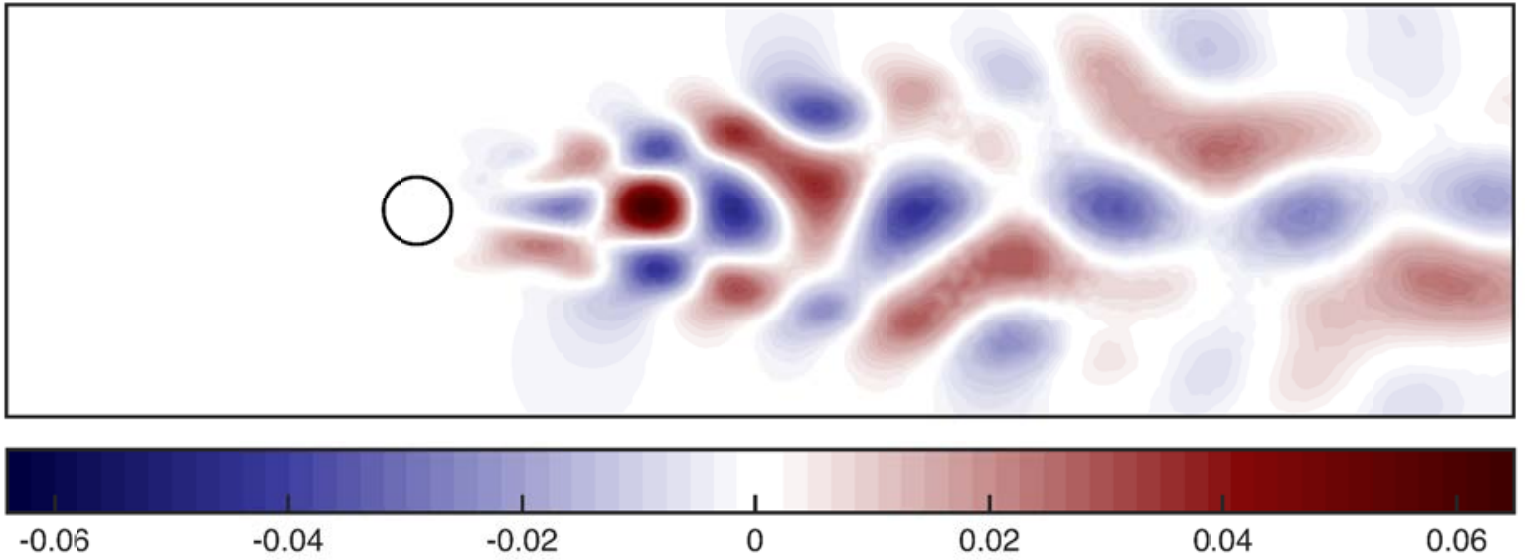}
\end{center}
\end{subfigure}
\begin{subfigure}{.48\textwidth}
\begin{center}
\includegraphics[width=2.4in]{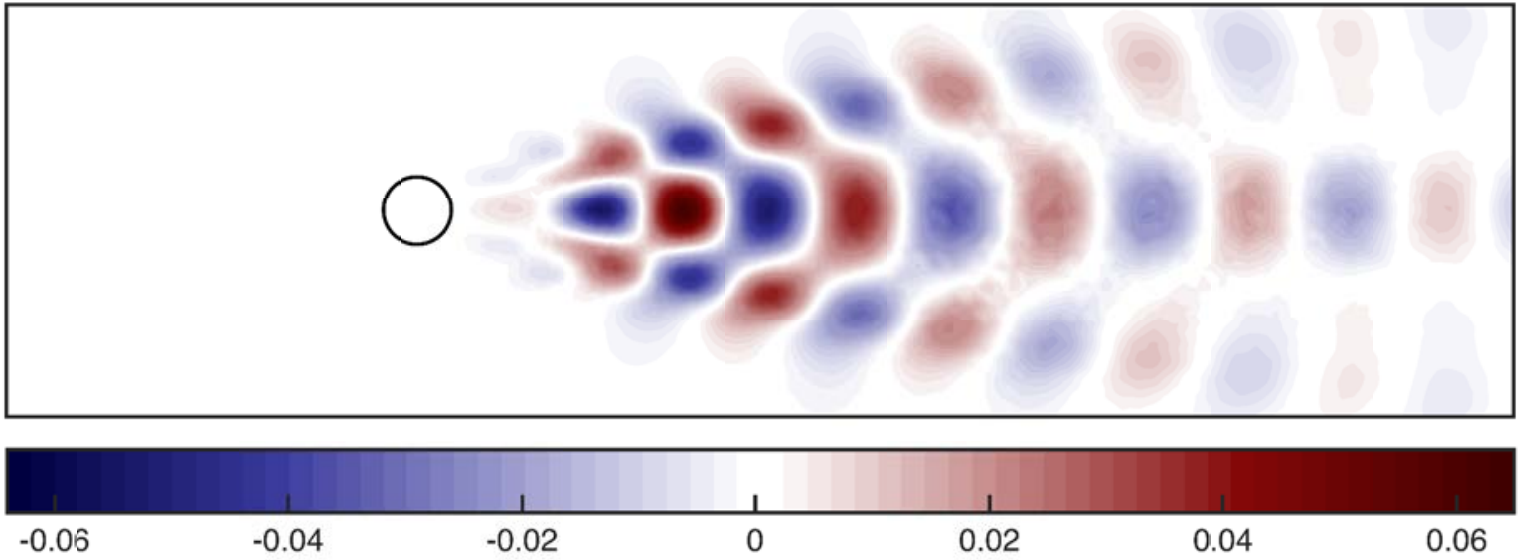}
\end{center}
\end{subfigure}

\vspace{1pc}

\caption{\textbf{Left singular vectors of \texttt{StreamVel} via [HMT11].}
(Sparse maps, \textbf{approximation rank $r = 5$}, storage budget $T = 48 (m+n)$.)
The columns of the matrix \texttt{StreamVel} describe the fluctuations
of the streamwise velocity field about its mean value as a function of time.
From top to bottom, the panels show the first nine computed left singular
vectors of the matrix.
The \textbf{left-hand side} is computed using [HMT11],
while the \textbf{right-hand side} is computed from the exact flow field.
The heatmap indicates the magnitude of the fluctuation.
See \cref{sec:flow-field}.}
\label{fig:flow-field-svec-hmt5}
\end{center}
\end{figure}

\begin{figure}[htp!]

\begin{center}
\begin{subfigure}{.48\textwidth}
\begin{center}
\includegraphics[width=2.4in]{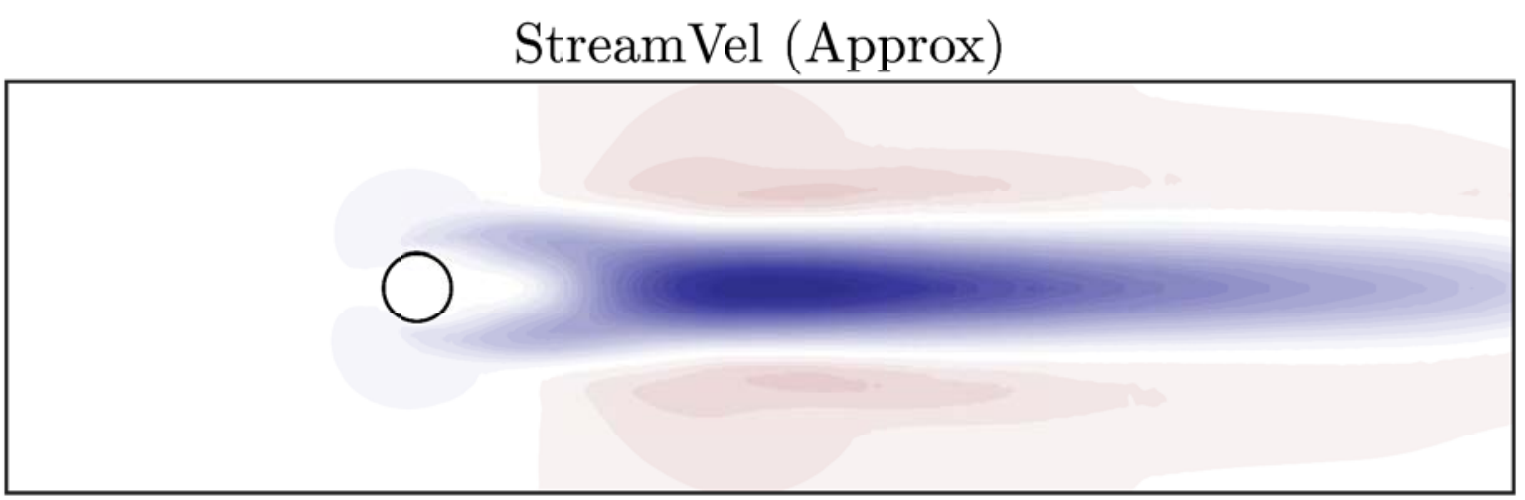}
\end{center}
\end{subfigure}
\begin{subfigure}{.48\textwidth}
\begin{center}
\includegraphics[width=2.4in]{figures/dns-svec/Fluctuation_exact_rank1.pdf}
\end{center}
\end{subfigure}

\vspace{1mm}

\begin{subfigure}{.48\textwidth}
\begin{center}
\includegraphics[width=2.4in]{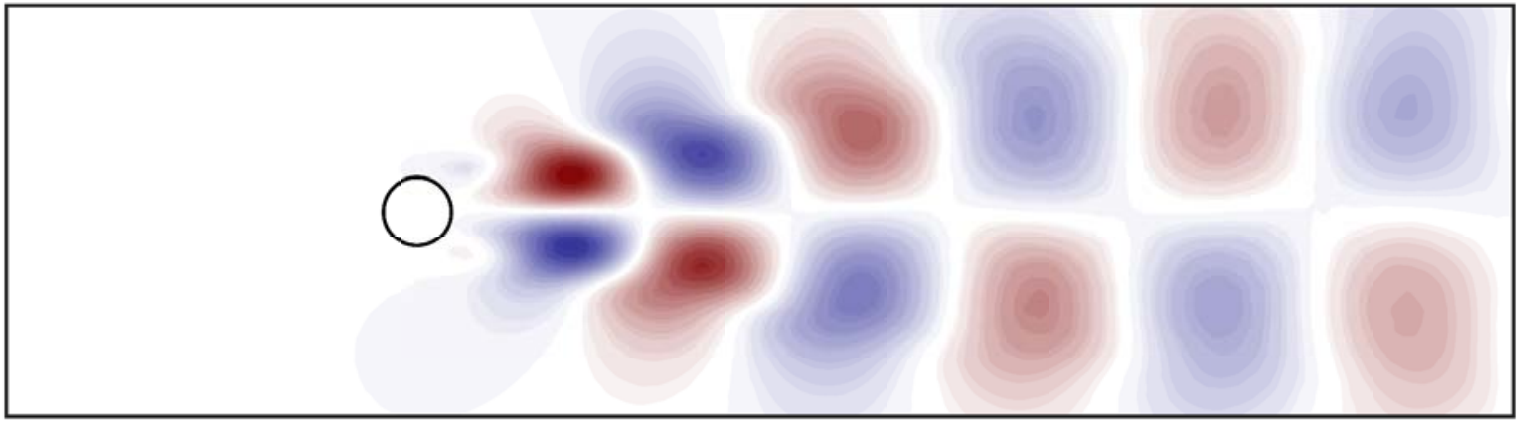}
\end{center}
\end{subfigure}
\begin{subfigure}{.48\textwidth}
\begin{center}
\includegraphics[width=2.4in]{figures/dns-svec/Fluctuation_exact_rank2.pdf}
\end{center}
\end{subfigure}

\vspace{1mm}

\begin{subfigure}{.48\textwidth}
\begin{center}
\includegraphics[width=2.4in]{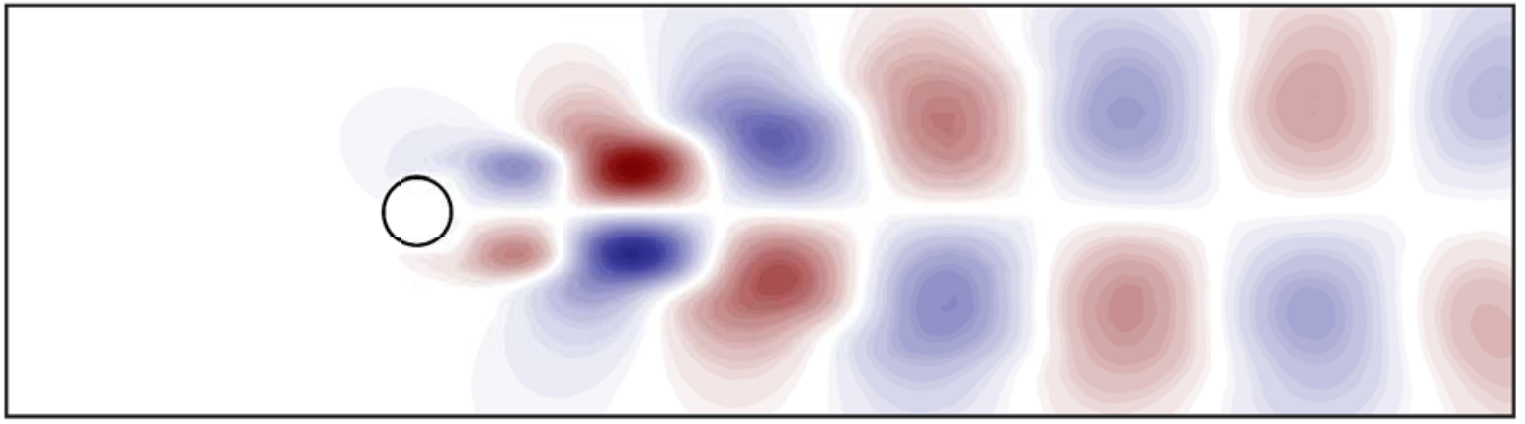}
\end{center}
\end{subfigure}
\begin{subfigure}{.48\textwidth}
\begin{center}
\includegraphics[width=2.4in]{figures/dns-svec/Fluctuation_exact_rank3.pdf}
\end{center}
\end{subfigure}

\vspace{1mm}

\begin{subfigure}{.48\textwidth}
\begin{center}
\includegraphics[width=2.4in]{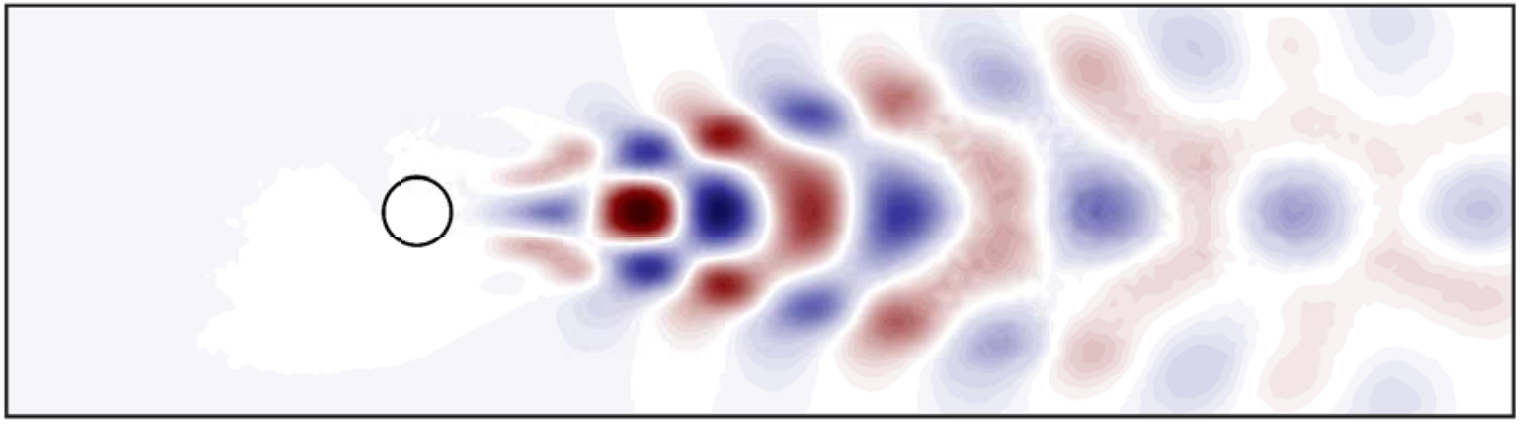}
\end{center}
\end{subfigure}
\begin{subfigure}{.48\textwidth}
\begin{center}
\includegraphics[width=2.4in]{figures/dns-svec/Fluctuation_exact_rank4.pdf}
\end{center}
\end{subfigure}

\vspace{1mm}

\begin{subfigure}{.48\textwidth}
\begin{center}
\includegraphics[width=2.4in]{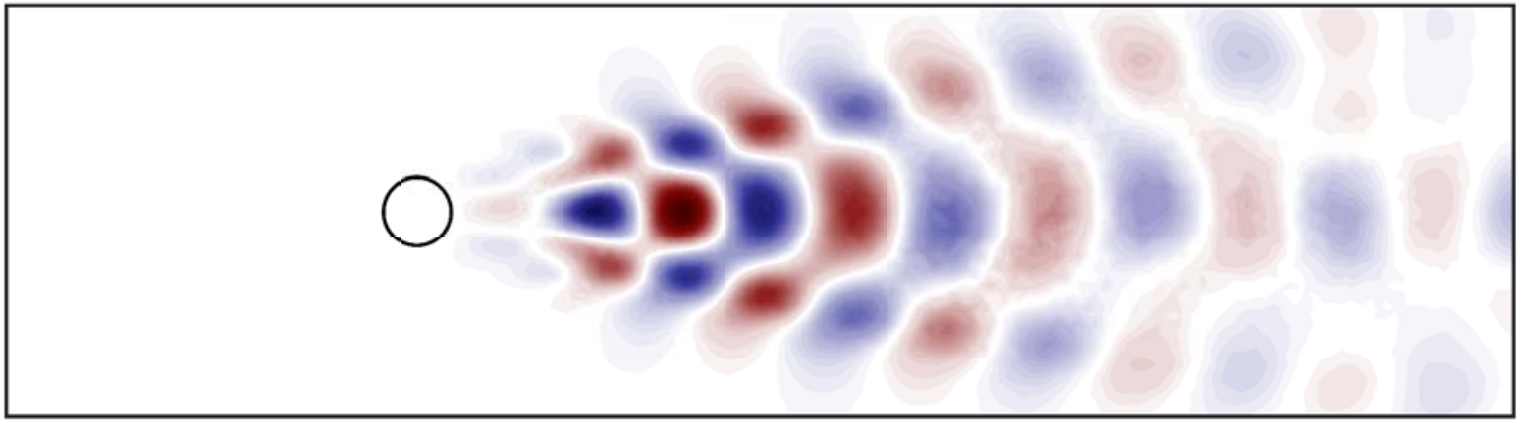}
\end{center}
\end{subfigure}
\begin{subfigure}{.48\textwidth}
\begin{center}
\includegraphics[width=2.4in]{figures/dns-svec/Fluctuation_exact_rank5.pdf}
\end{center}
\end{subfigure}

\vspace{1mm}

\begin{subfigure}{.48\textwidth}
\begin{center}
\includegraphics[width=2.4in]{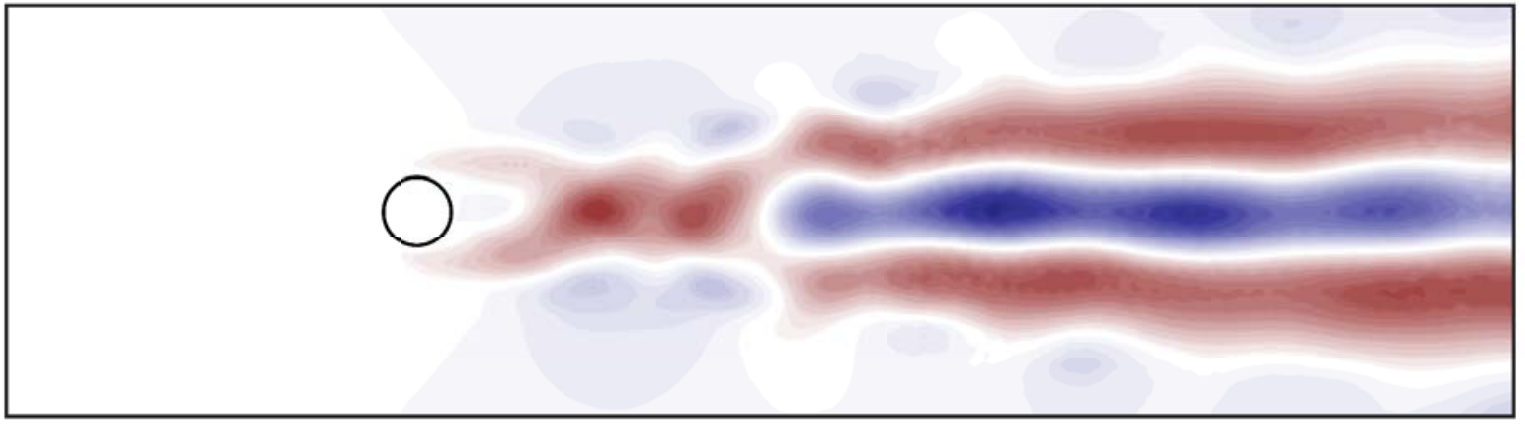}
\end{center}
\end{subfigure}
\begin{subfigure}{.48\textwidth}
\begin{center}
\includegraphics[width=2.4in]{figures/dns-svec/Fluctuation_exact_rank6.pdf}
\end{center}
\end{subfigure}

\vspace{1mm}

\begin{subfigure}{.48\textwidth}
\begin{center}
\includegraphics[width=2.4in]{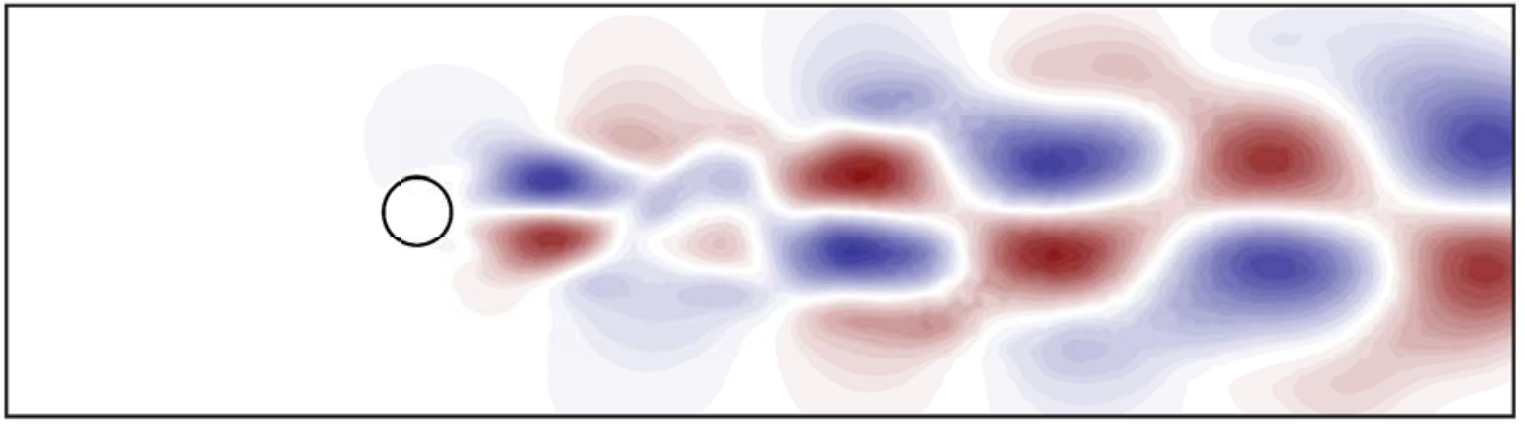}
\end{center}
\end{subfigure}
\begin{subfigure}{.48\textwidth}
\begin{center}
\includegraphics[width=2.4in]{figures/dns-svec/Fluctuation_exact_rank7.pdf}
\end{center}
\end{subfigure}

\vspace{1mm}

\begin{subfigure}{.48\textwidth}
\begin{center}
\includegraphics[width=2.4in]{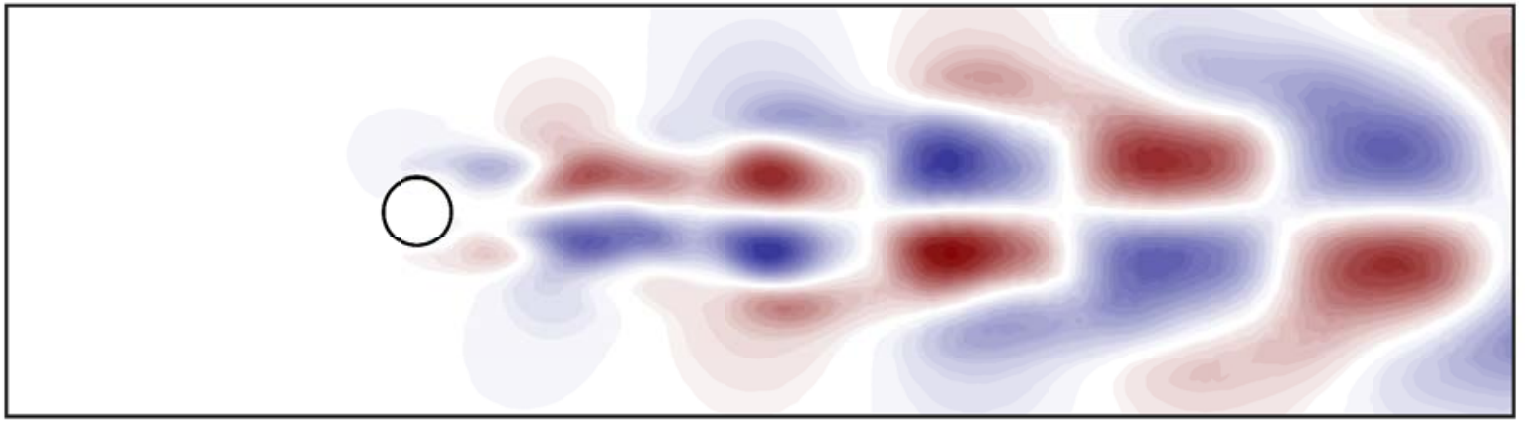}
\end{center}
\end{subfigure}
\begin{subfigure}{.48\textwidth}
\begin{center}
\includegraphics[width=2.4in]{figures/dns-svec/Fluctuation_exact_rank8.pdf}
\end{center}
\end{subfigure}

\vspace{1mm}

\begin{subfigure}{.48\textwidth}
\begin{center}
\includegraphics[width=2.4in]{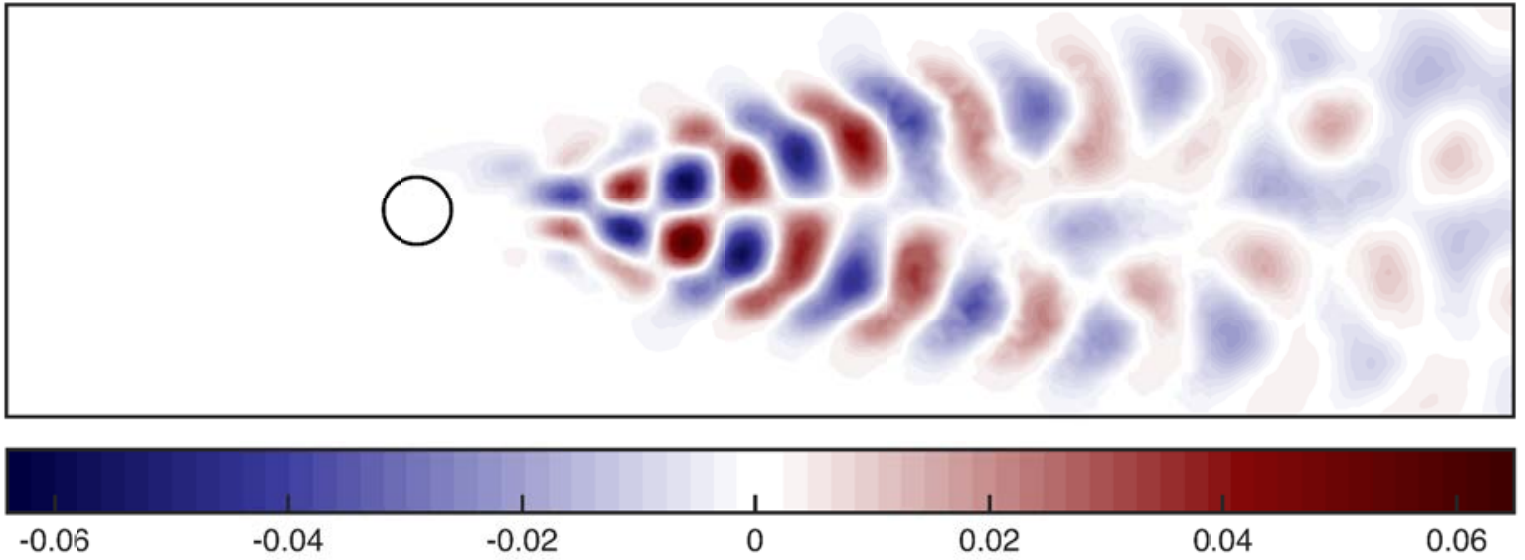}
\end{center}
\end{subfigure}
\begin{subfigure}{.48\textwidth}
\begin{center}
\includegraphics[width=2.4in]{figures/dns-svec/Fluctuation_exact_rank9.pdf}
\end{center}
\end{subfigure}

\vspace{1pc}

\caption{\textbf{Left singular vectors of \texttt{StreamVel} via [HMT11].}
(Sparse maps, \textbf{approximation rank $r = 10$}, storage budget $T = 48 (m+n)$.)
The columns of the matrix \texttt{StreamVel} describe the fluctuations
of the streamwise velocity field about its mean value as a function of time.
From top to bottom, the panels show the first nine computed left singular
vectors of the matrix.
The \textbf{left-hand side} is computed using [HMT11],
while the \textbf{right-hand side} is computed from the exact flow field.
The heatmap indicates the magnitude of the fluctuation.
See \cref{sec:flow-field}.}
\label{fig:flow-field-svec-hmt}
\end{center}
\end{figure}

\begin{figure}[htp!]

\begin{center}
\begin{subfigure}{.48\textwidth}
\begin{center}
\includegraphics[width=2.4in]{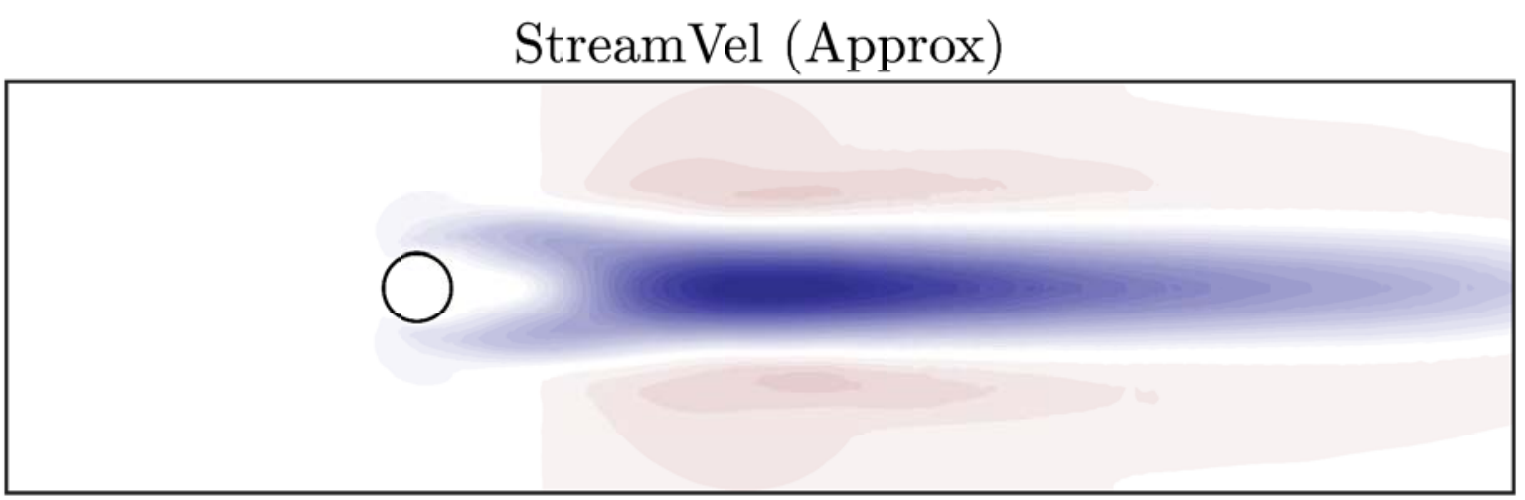}
\end{center}
\end{subfigure}
\begin{subfigure}{.48\textwidth}
\begin{center}
\includegraphics[width=2.4in]{figures/dns-svec-rank5/Fluctuation_exact_rank1.pdf}
\end{center}
\end{subfigure}

\vspace{1mm}

\begin{subfigure}{.48\textwidth}
\begin{center}
\includegraphics[width=2.4in]{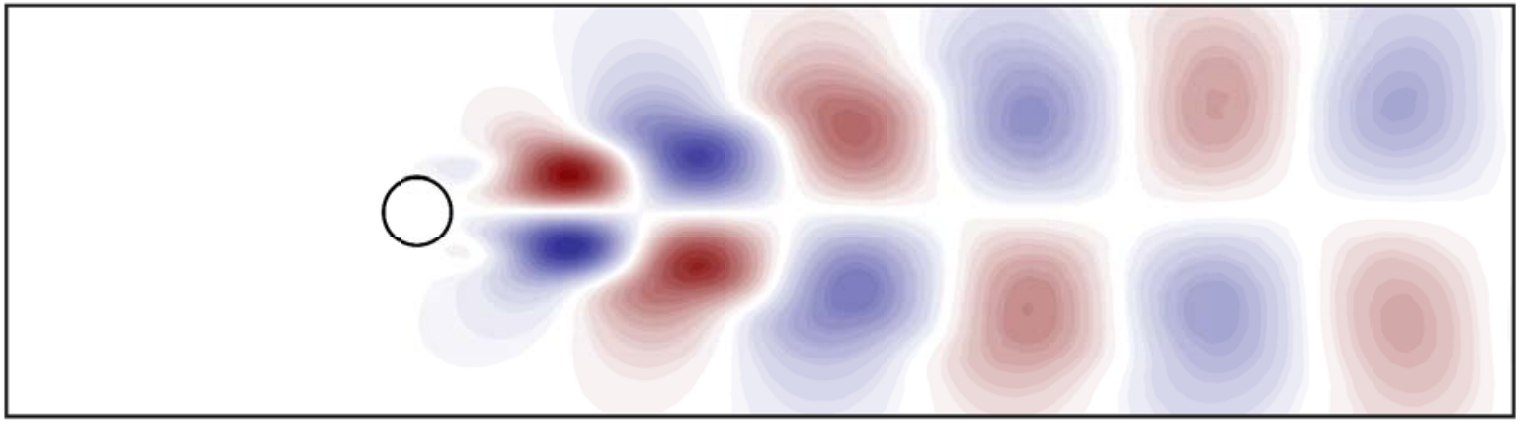}
\end{center}
\end{subfigure}
\begin{subfigure}{.48\textwidth}
\begin{center}
\includegraphics[width=2.4in]{figures/dns-svec-rank5/Fluctuation_exact_rank2.pdf}
\end{center}
\end{subfigure}

\vspace{1mm}

\begin{subfigure}{.48\textwidth}
\begin{center}
\includegraphics[width=2.4in]{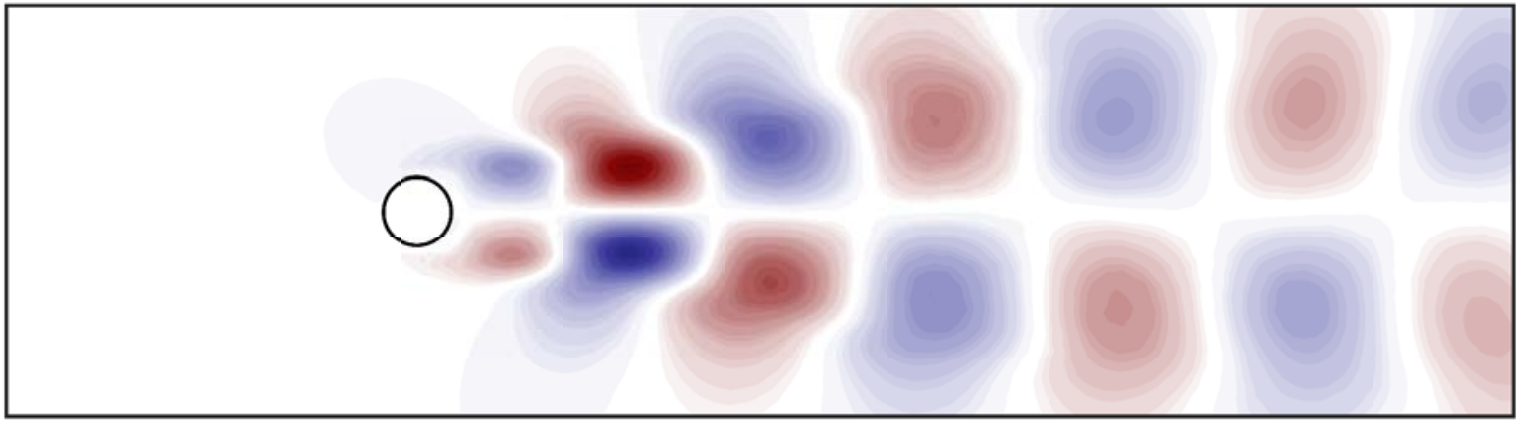}
\end{center}
\end{subfigure}
\begin{subfigure}{.48\textwidth}
\begin{center}
\includegraphics[width=2.4in]{figures/dns-svec-rank5/Fluctuation_exact_rank3.pdf}
\end{center}
\end{subfigure}

\vspace{1mm}

\begin{subfigure}{.48\textwidth}
\begin{center}
\includegraphics[width=2.4in]{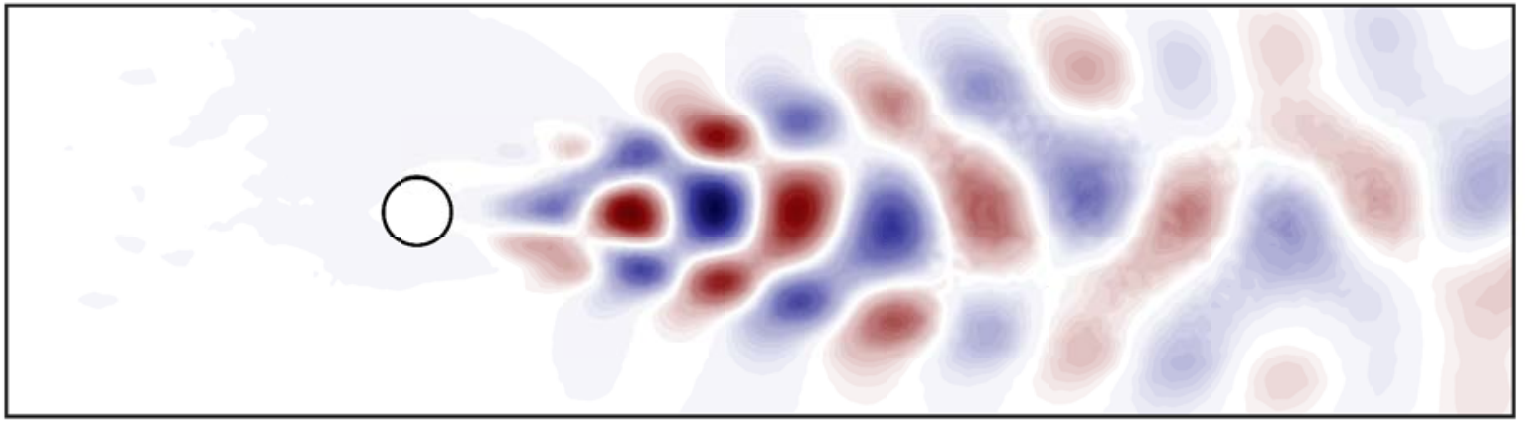}
\end{center}
\end{subfigure}
\begin{subfigure}{.48\textwidth}
\begin{center}
\includegraphics[width=2.4in]{figures/dns-svec-rank5/Fluctuation_exact_rank4.pdf}
\end{center}
\end{subfigure}

\vspace{1mm}

\begin{subfigure}{.48\textwidth}
\begin{center}
\includegraphics[width=2.4in]{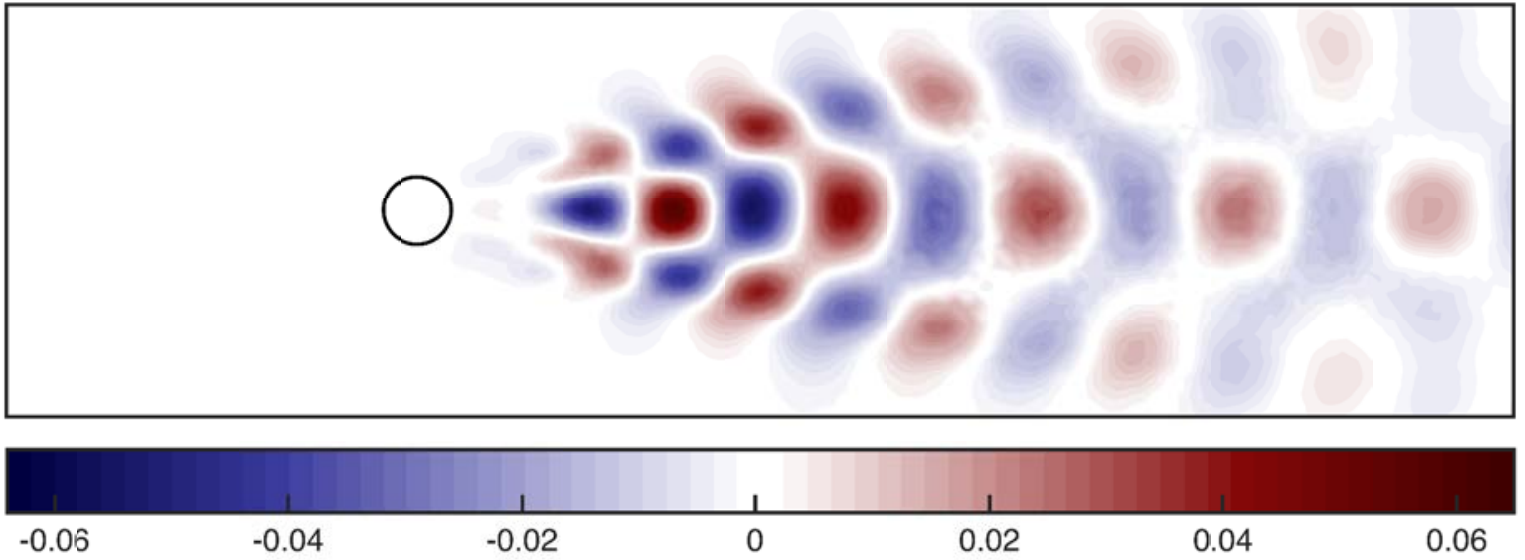}
\end{center}
\end{subfigure}
\begin{subfigure}{.48\textwidth}
\begin{center}
\includegraphics[width=2.4in]{figures/dns-svec-rank5/Fluctuation_exact_rank5.pdf}
\end{center}
\end{subfigure}

\vspace{1pc}

\caption{\textbf{Left singular vectors of \texttt{StreamVel} via [Upa16].}
(Sparse maps, \textbf{approximation rank $r = 5$}, storage budget $T = 48 (m+n)$.)
The columns of the matrix \texttt{StreamVel} describe the fluctuations
of the streamwise velocity field about its mean value as a function of time.
From top to bottom, the panels show the first nine computed left singular
vectors of the matrix.
The \textbf{left-hand side} is computed using [Upa16],
while the \textbf{right-hand side} is computed from the exact flow field.
The heatmap indicates the magnitude of the fluctuation.
See \cref{sec:flow-field}.}
\label{fig:flow-field-svec-upa5}
\end{center}
\end{figure}

\begin{figure}[htp!]

\begin{center}
\begin{subfigure}{.48\textwidth}
\begin{center}
\includegraphics[width=2.4in]{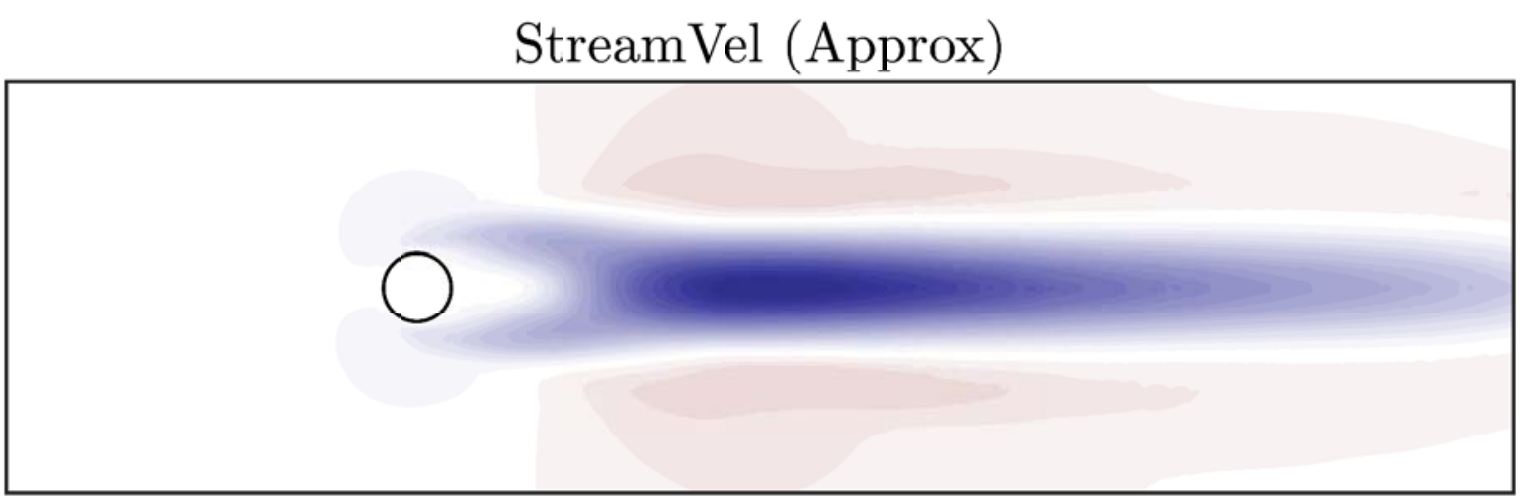}
\end{center}
\end{subfigure}
\begin{subfigure}{.48\textwidth}
\begin{center}
\includegraphics[width=2.4in]{figures/dns-svec/Fluctuation_exact_rank1.pdf}
\end{center}
\end{subfigure}

\vspace{1mm}

\begin{subfigure}{.48\textwidth}
\begin{center}
\includegraphics[width=2.4in]{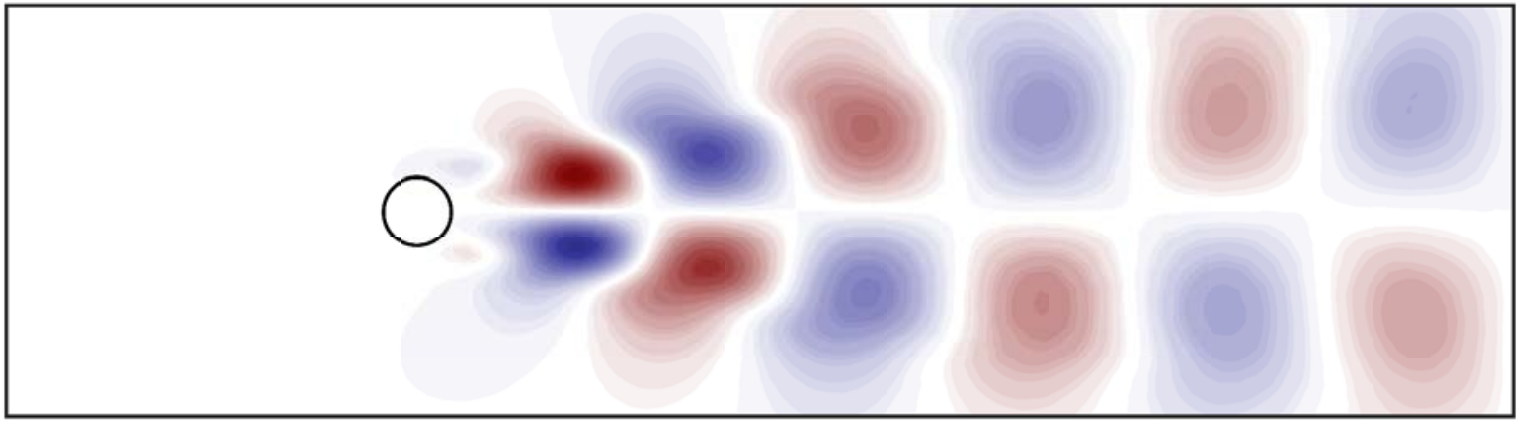}
\end{center}
\end{subfigure}
\begin{subfigure}{.48\textwidth}
\begin{center}
\includegraphics[width=2.4in]{figures/dns-svec/Fluctuation_exact_rank2.pdf}
\end{center}
\end{subfigure}

\vspace{1mm}

\begin{subfigure}{.48\textwidth}
\begin{center}
\includegraphics[width=2.4in]{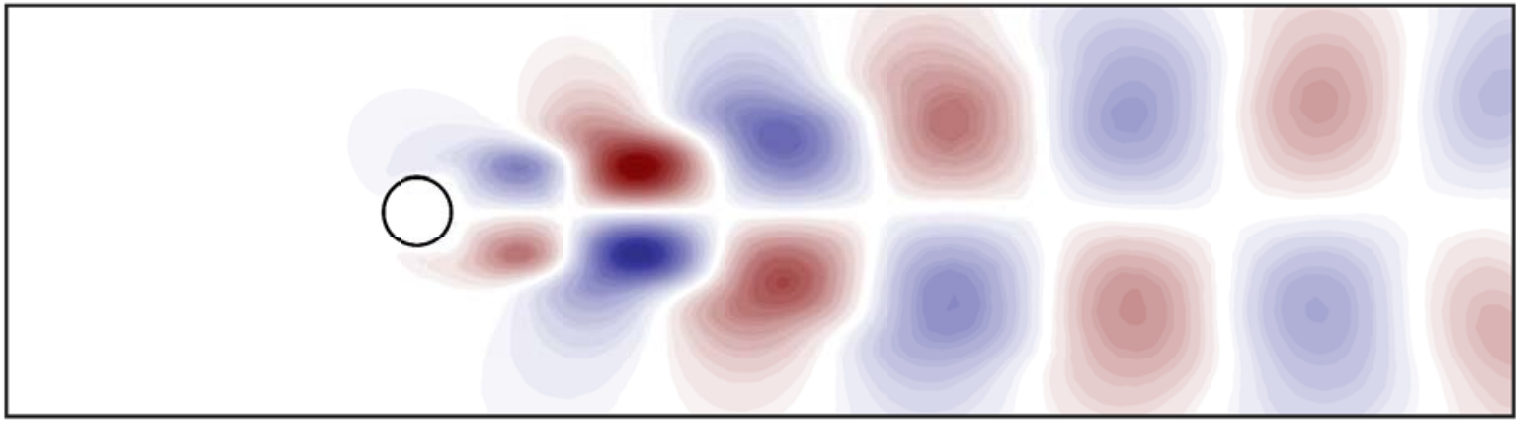}
\end{center}
\end{subfigure}
\begin{subfigure}{.48\textwidth}
\begin{center}
\includegraphics[width=2.4in]{figures/dns-svec/Fluctuation_exact_rank3.pdf}
\end{center}
\end{subfigure}

\vspace{1mm}

\begin{subfigure}{.48\textwidth}
\begin{center}
\includegraphics[width=2.4in]{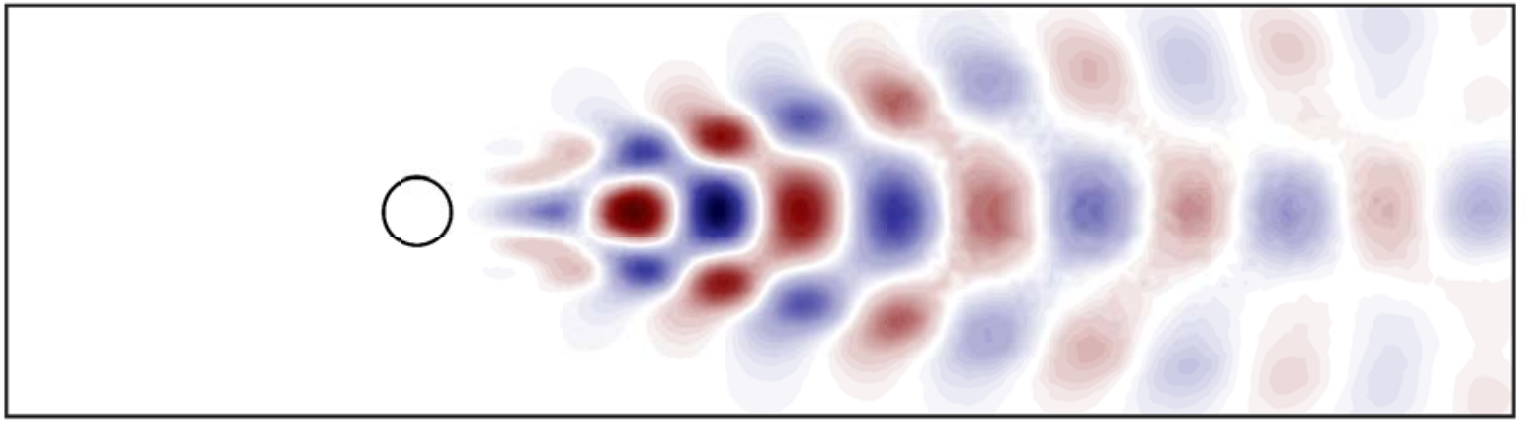}
\end{center}
\end{subfigure}
\begin{subfigure}{.48\textwidth}
\begin{center}
\includegraphics[width=2.4in]{figures/dns-svec/Fluctuation_exact_rank4.pdf}
\end{center}
\end{subfigure}

\vspace{1mm}

\begin{subfigure}{.48\textwidth}
\begin{center}
\includegraphics[width=2.4in]{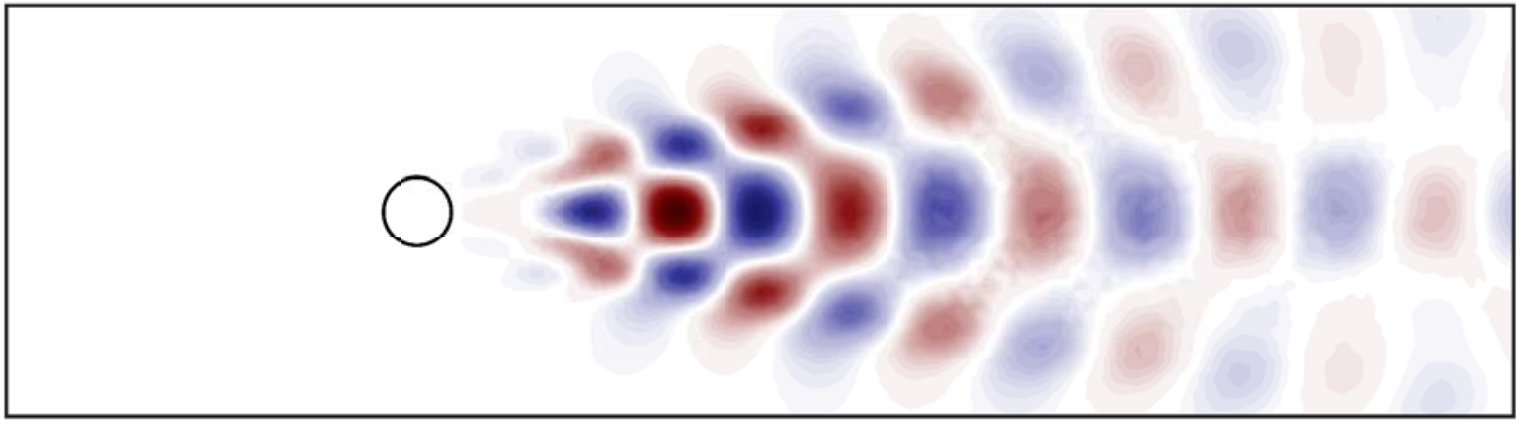}
\end{center}
\end{subfigure}
\begin{subfigure}{.48\textwidth}
\begin{center}
\includegraphics[width=2.4in]{figures/dns-svec/Fluctuation_exact_rank5.pdf}
\end{center}
\end{subfigure}

\vspace{1mm}

\begin{subfigure}{.48\textwidth}
\begin{center}
\includegraphics[width=2.4in]{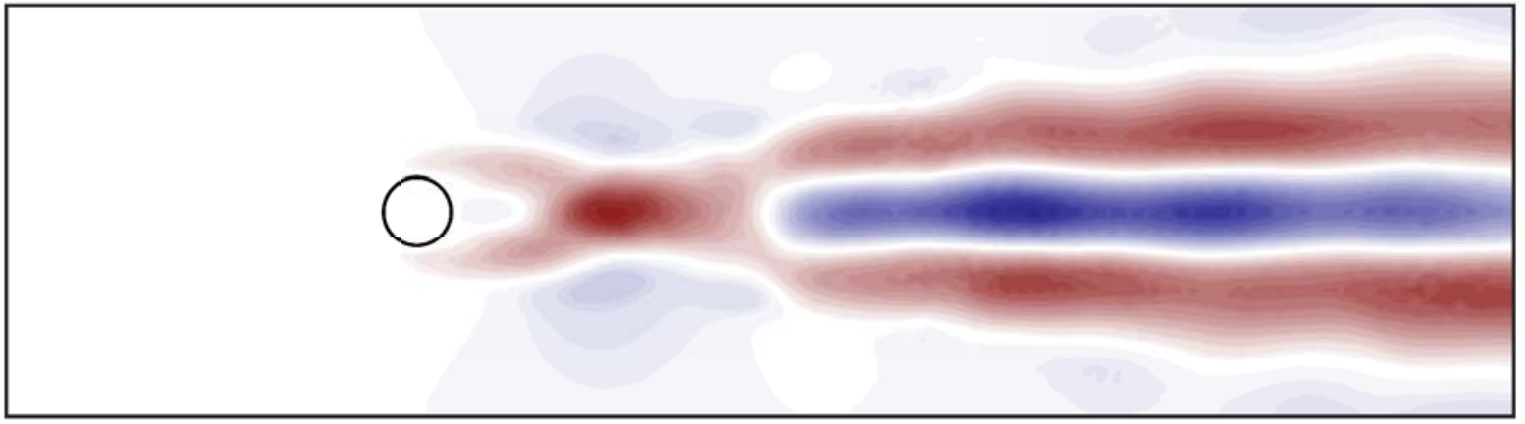}
\end{center}
\end{subfigure}
\begin{subfigure}{.48\textwidth}
\begin{center}
\includegraphics[width=2.4in]{figures/dns-svec/Fluctuation_exact_rank6.pdf}
\end{center}
\end{subfigure}

\vspace{1mm}

\begin{subfigure}{.48\textwidth}
\begin{center}
\includegraphics[width=2.4in]{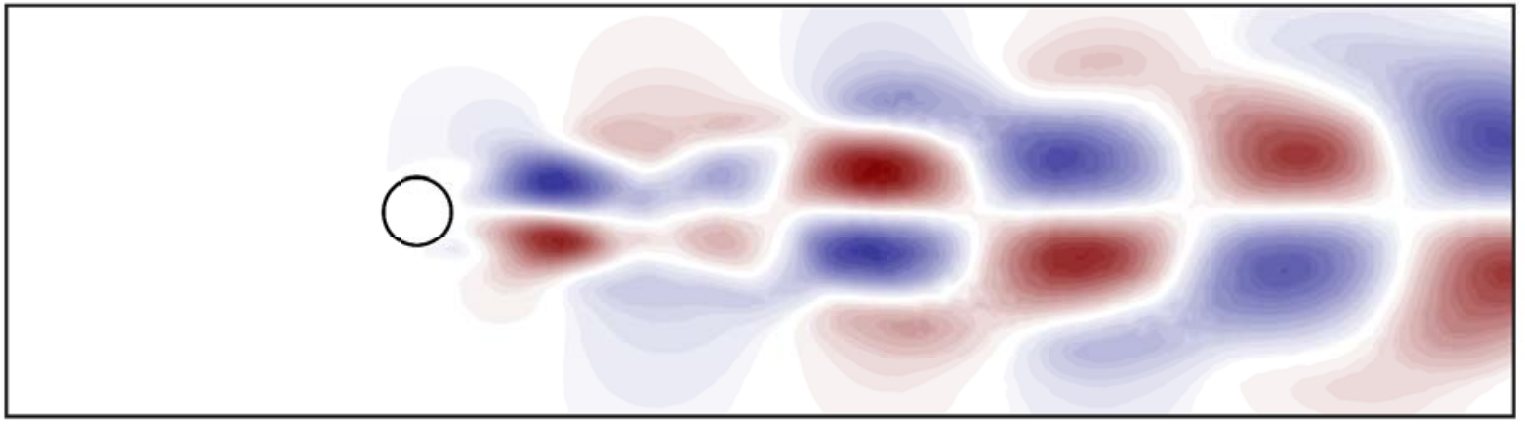}
\end{center}
\end{subfigure}
\begin{subfigure}{.48\textwidth}
\begin{center}
\includegraphics[width=2.4in]{figures/dns-svec/Fluctuation_exact_rank7.pdf}
\end{center}
\end{subfigure}

\vspace{1mm}

\begin{subfigure}{.48\textwidth}
\begin{center}
\includegraphics[width=2.4in]{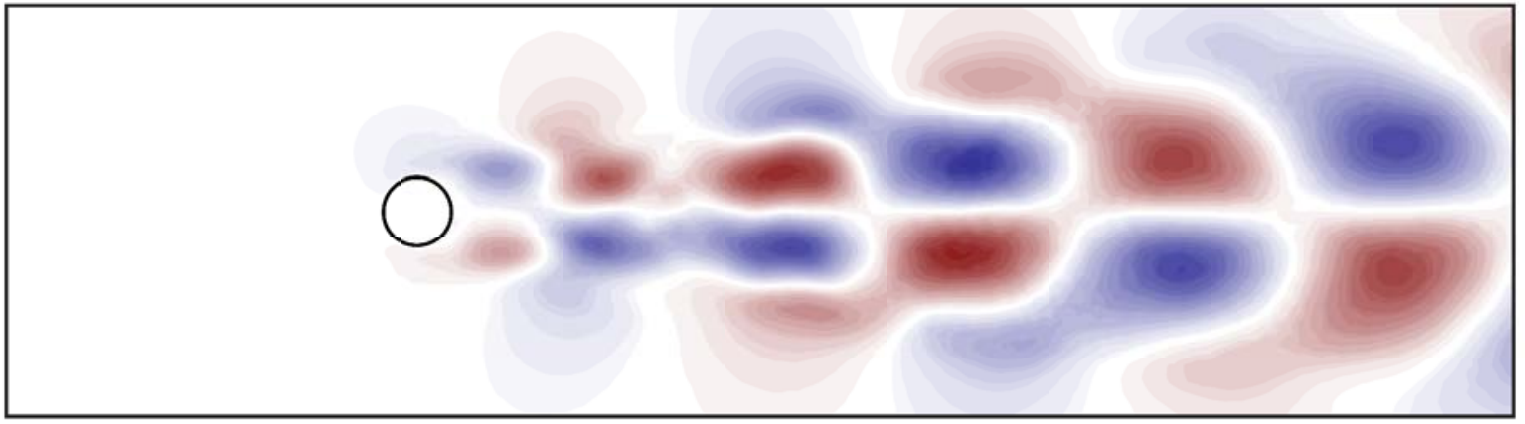}
\end{center}
\end{subfigure}
\begin{subfigure}{.48\textwidth}
\begin{center}
\includegraphics[width=2.4in]{figures/dns-svec/Fluctuation_exact_rank8.pdf}
\end{center}
\end{subfigure}

\vspace{1mm}

\begin{subfigure}{.48\textwidth}
\begin{center}
\includegraphics[width=2.4in]{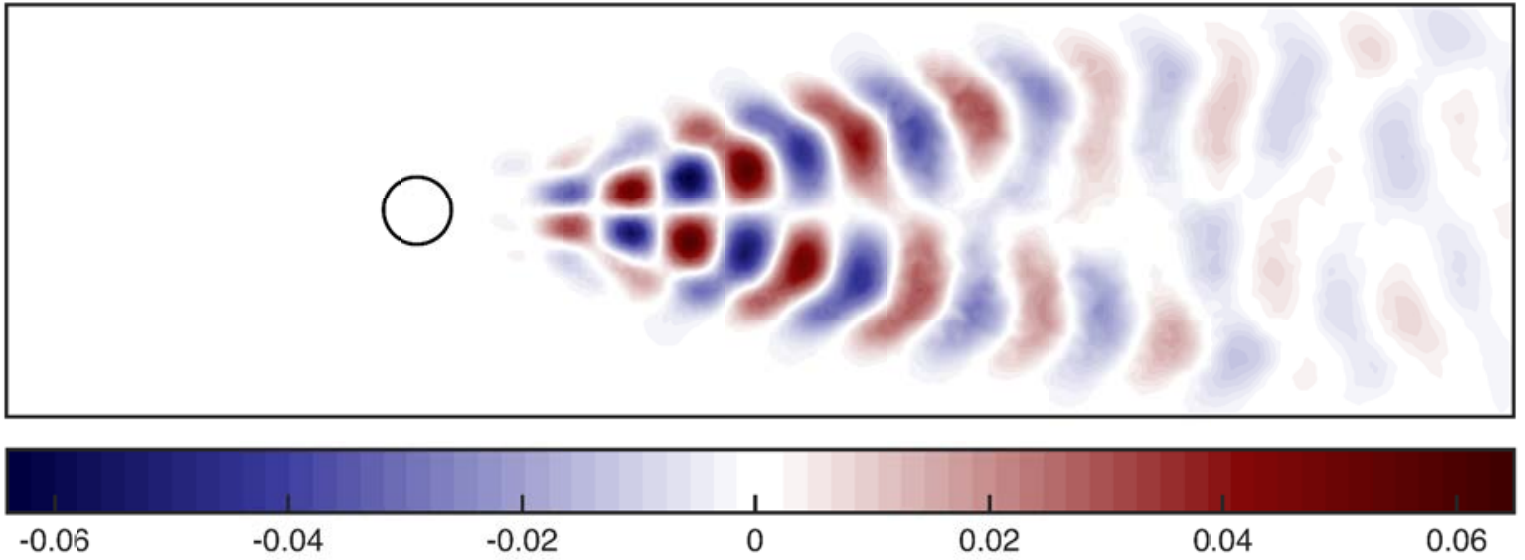}
\end{center}
\end{subfigure}
\begin{subfigure}{.48\textwidth}
\begin{center}
\includegraphics[width=2.4in]{figures/dns-svec/Fluctuation_exact_rank9.pdf}
\end{center}
\end{subfigure}

\vspace{1pc}

\caption{\textbf{Left singular vectors of \texttt{StreamVel} via [Upa16].}
(Sparse maps, \textbf{approximation rank $r = 10$}, storage budget $T = 48 (m+n)$.)
The columns of the matrix \texttt{StreamVel} describe the fluctuations
of the streamwise velocity field about its mean value as a function of time.
From top to bottom, the panels show the first nine computed left singular
vectors of the matrix.
The \textbf{left-hand side} is computed using [Upa16],
while the \textbf{right-hand side} is computed from the exact flow field.
The heatmap indicates the magnitude of the fluctuation.
See \cref{sec:flow-field}.}
\label{fig:flow-field-svec-upa}
\end{center}
\end{figure}

\begin{figure}[htp!]

\begin{center}
\begin{subfigure}{.48\textwidth}
\begin{center}
\includegraphics[width=2.4in]{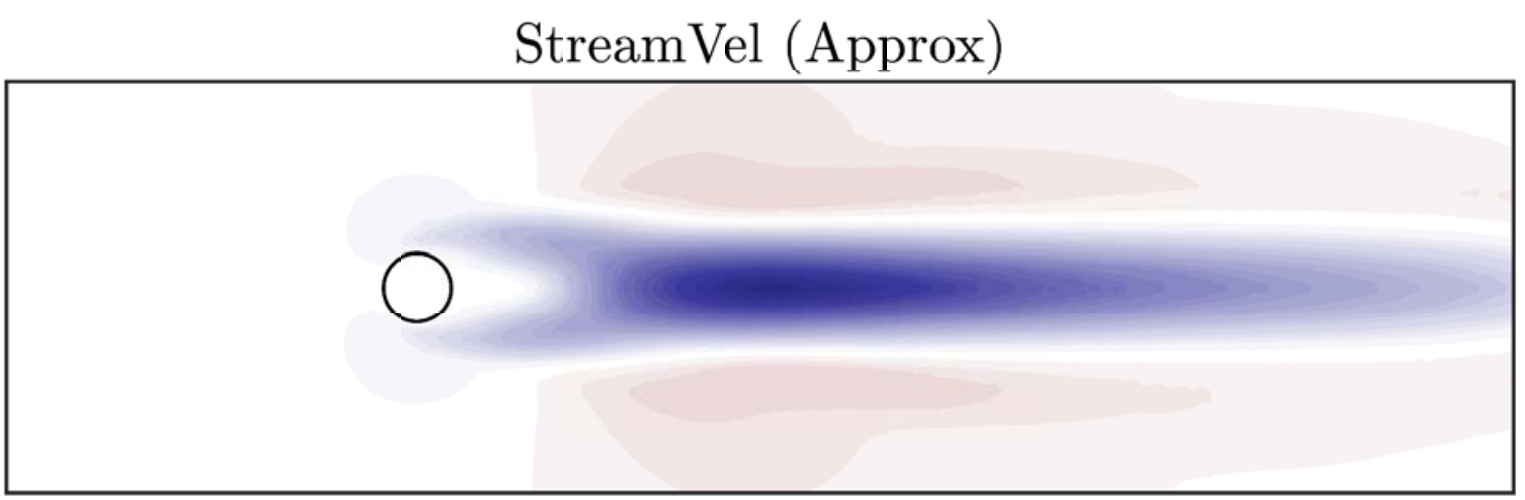}
\end{center}
\end{subfigure}
\begin{subfigure}{.48\textwidth}
\begin{center}
\includegraphics[width=2.4in]{figures/dns-svec/Fluctuation_exact_rank1.pdf}
\end{center}
\end{subfigure}

\vspace{1mm}

\begin{subfigure}{.48\textwidth}
\begin{center}
\includegraphics[width=2.4in]{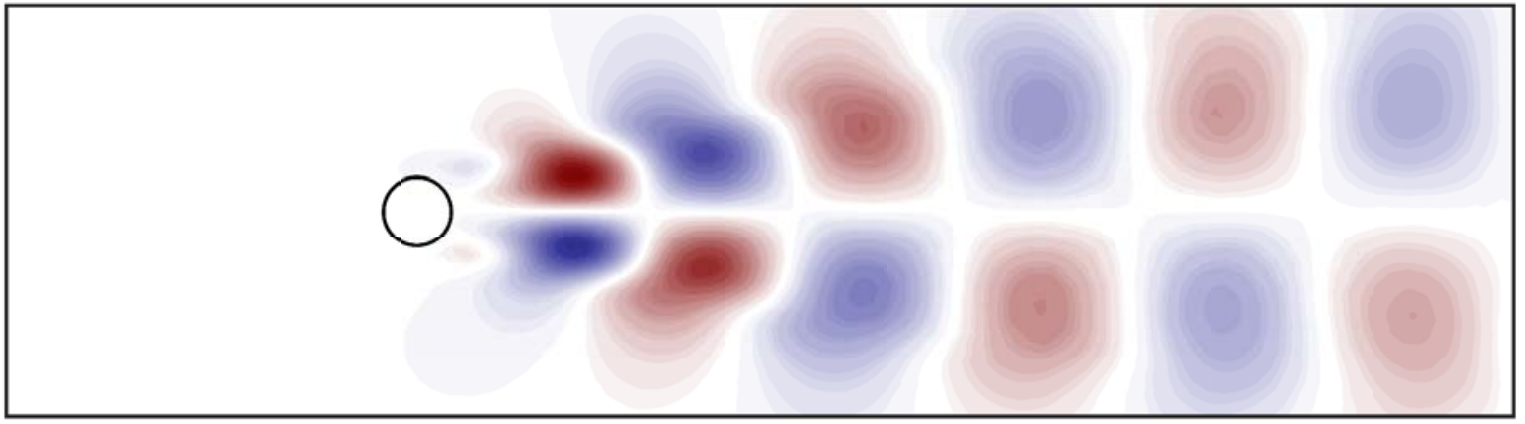}
\end{center}
\end{subfigure}
\begin{subfigure}{.48\textwidth}
\begin{center}
\includegraphics[width=2.4in]{figures/dns-svec/Fluctuation_exact_rank2.pdf}
\end{center}
\end{subfigure}

\vspace{1mm}

\begin{subfigure}{.48\textwidth}
\begin{center}
\includegraphics[width=2.4in]{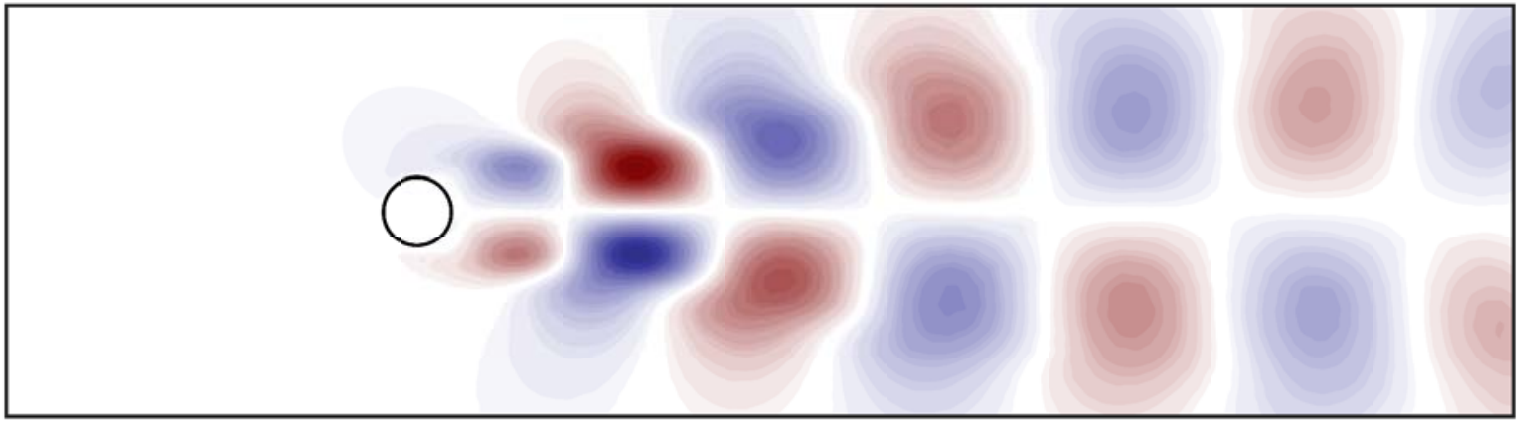}
\end{center}
\end{subfigure}
\begin{subfigure}{.48\textwidth}
\begin{center}
\includegraphics[width=2.4in]{figures/dns-svec/Fluctuation_exact_rank3.pdf}
\end{center}
\end{subfigure}

\vspace{1mm}

\begin{subfigure}{.48\textwidth}
\begin{center}
\includegraphics[width=2.4in]{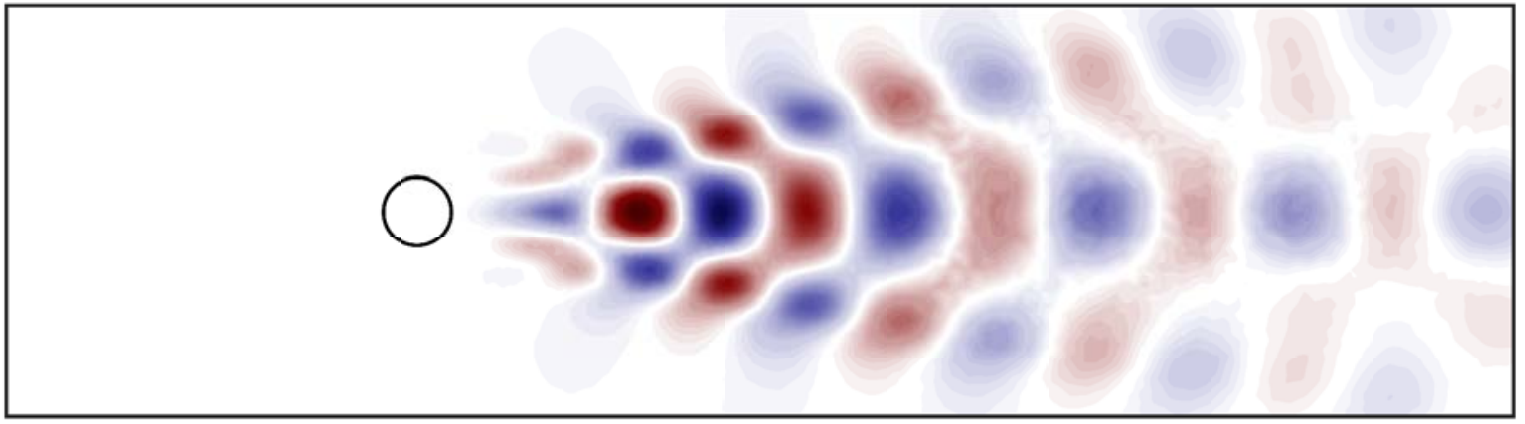}
\end{center}
\end{subfigure}
\begin{subfigure}{.48\textwidth}
\begin{center}
\includegraphics[width=2.4in]{figures/dns-svec/Fluctuation_exact_rank4.pdf}
\end{center}
\end{subfigure}

\vspace{1mm}

\begin{subfigure}{.48\textwidth}
\begin{center}
\includegraphics[width=2.4in]{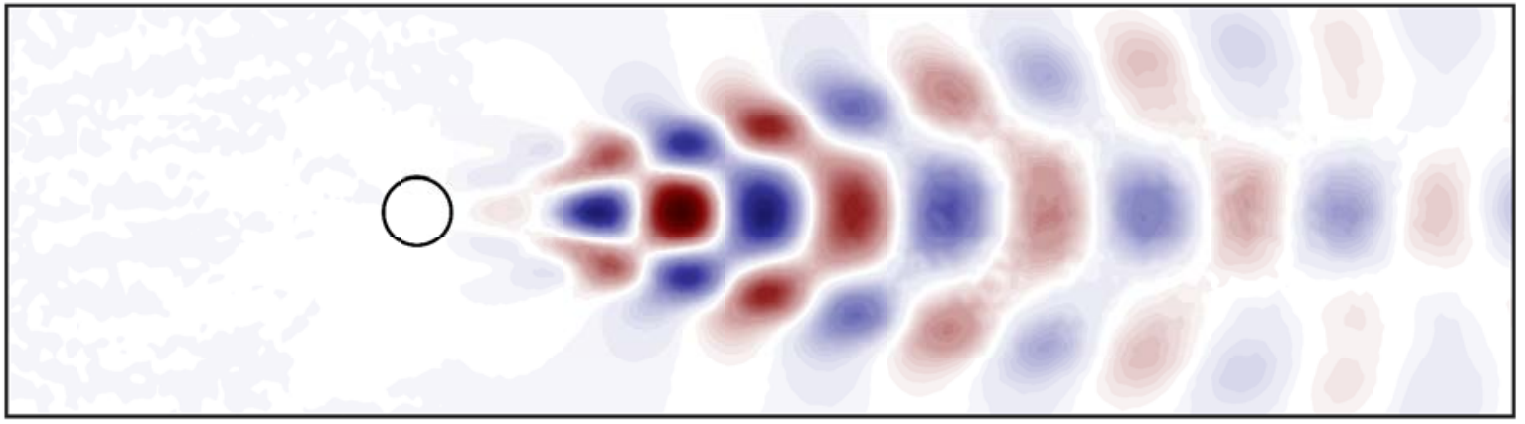}
\end{center}
\end{subfigure}
\begin{subfigure}{.48\textwidth}
\begin{center}
\includegraphics[width=2.4in]{figures/dns-svec/Fluctuation_exact_rank5.pdf}
\end{center}
\end{subfigure}

\vspace{1mm}

\begin{subfigure}{.48\textwidth}
\begin{center}
\includegraphics[width=2.4in]{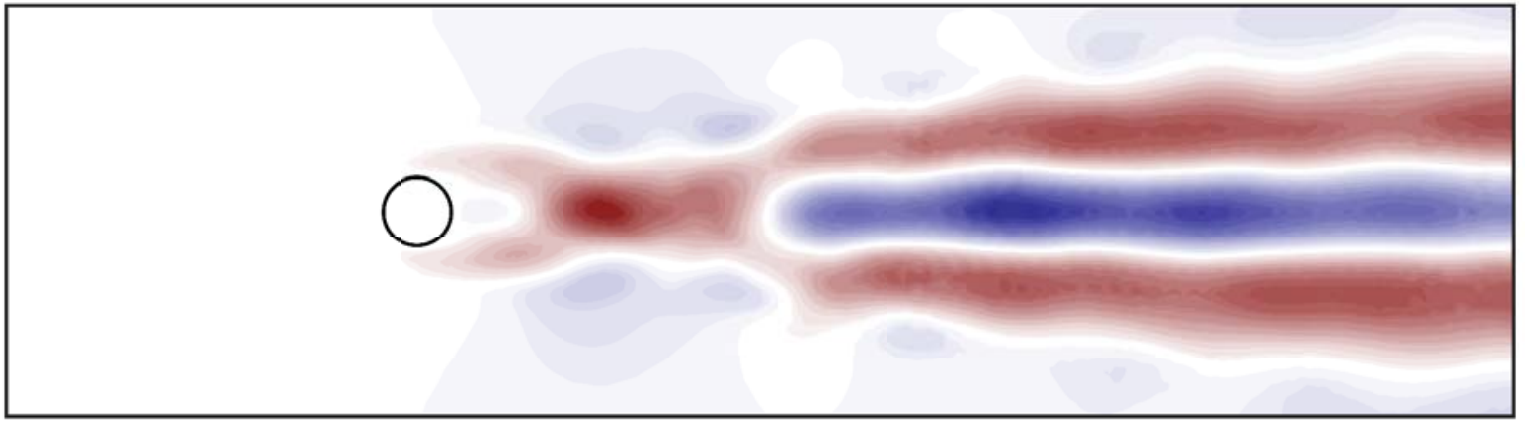}
\end{center}
\end{subfigure}
\begin{subfigure}{.48\textwidth}
\begin{center}
\includegraphics[width=2.4in]{figures/dns-svec/Fluctuation_exact_rank6.pdf}
\end{center}
\end{subfigure}

\vspace{1mm}

\begin{subfigure}{.48\textwidth}
\begin{center}
\includegraphics[width=2.4in]{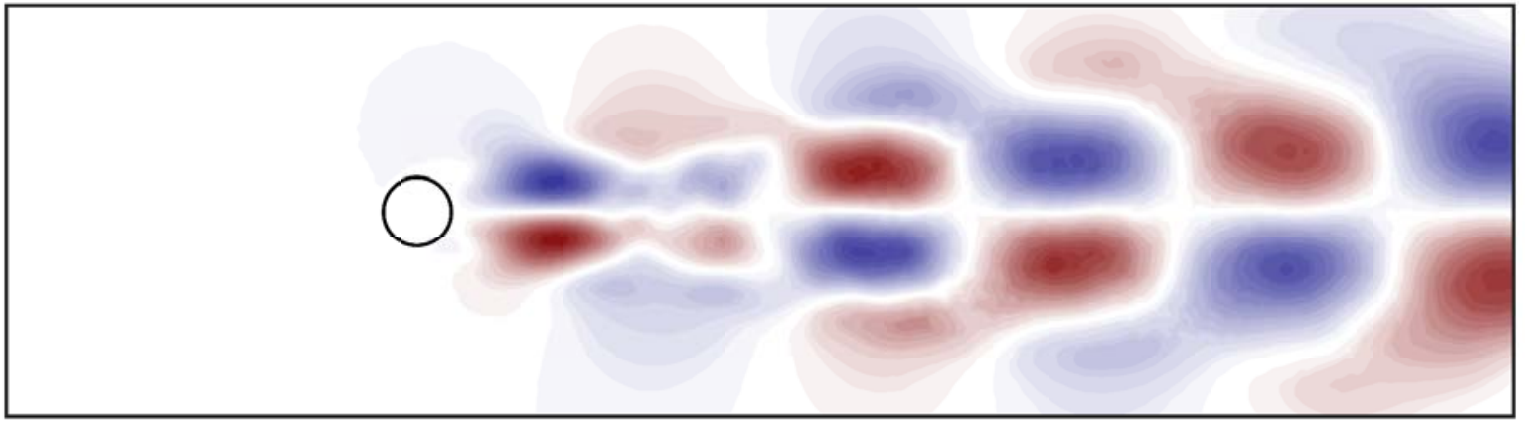}
\end{center}
\end{subfigure}
\begin{subfigure}{.48\textwidth}
\begin{center}
\includegraphics[width=2.4in]{figures/dns-svec/Fluctuation_exact_rank7.pdf}
\end{center}
\end{subfigure}

\vspace{1mm}

\begin{subfigure}{.48\textwidth}
\begin{center}
\includegraphics[width=2.4in]{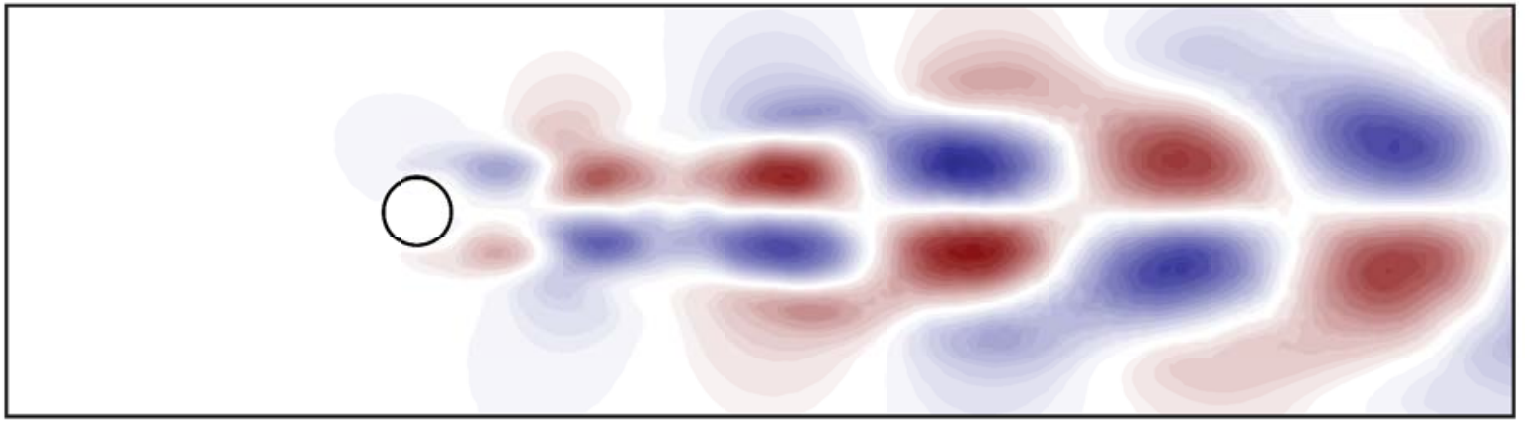}
\end{center}
\end{subfigure}
\begin{subfigure}{.48\textwidth}
\begin{center}
\includegraphics[width=2.4in]{figures/dns-svec/Fluctuation_exact_rank8.pdf}
\end{center}
\end{subfigure}

\vspace{1mm}

\begin{subfigure}{.48\textwidth}
\begin{center}
\includegraphics[width=2.4in]{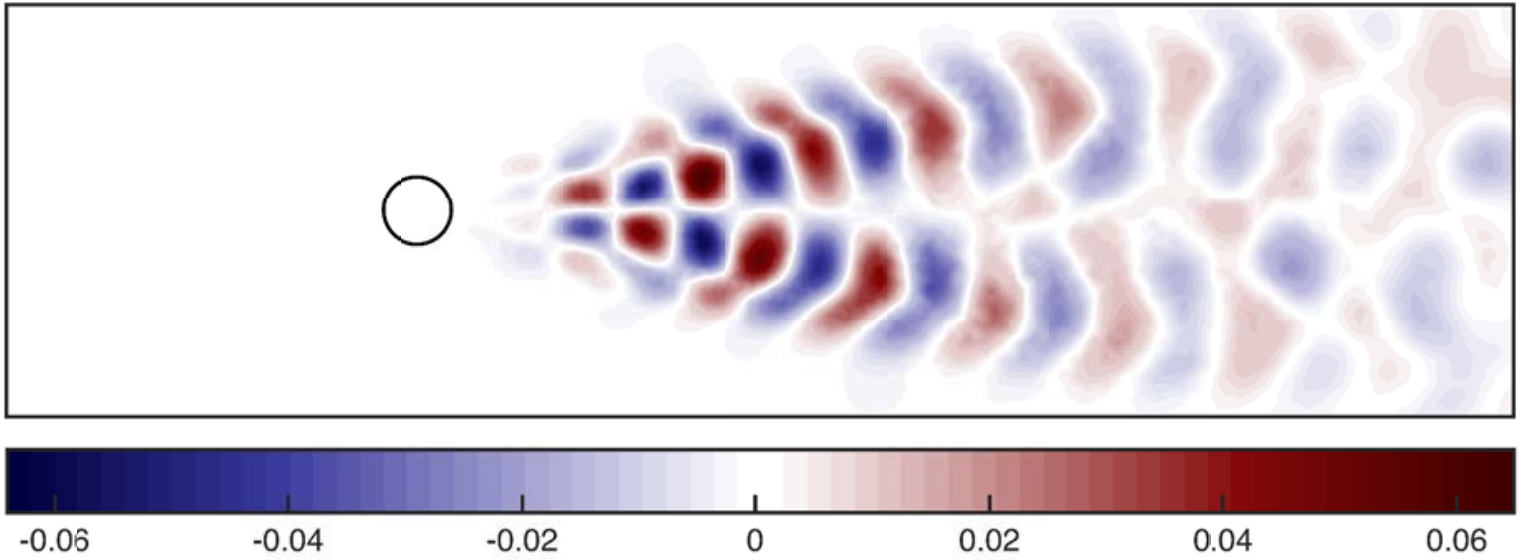}
\end{center}
\end{subfigure}
\begin{subfigure}{.48\textwidth}
\begin{center}
\includegraphics[width=2.4in]{figures/dns-svec/Fluctuation_exact_rank9.pdf}
\end{center}
\end{subfigure}

\vspace{1pc}

\caption{\textbf{Left singular vectors of \texttt{StreamVel} via [TYUC17].}
(Sparse maps, approximation rank $r = 10$, storage budget $T = 48 (m+n)$.)
The columns of the matrix \texttt{StreamVel} describe the fluctuations
of the streamwise velocity field about its mean value as a function of time.
From top to bottom, the panels show the first nine computed left singular
vectors of the matrix.
The \textbf{left-hand side} is computed using [TYUC17],
while the \textbf{right-hand side} is computed from the exact flow field.
The heatmap indicates the magnitude of the fluctuation.
See \cref{sec:flow-field}.}
\label{fig:flow-field-svec-tyuc}
\end{center}
\end{figure}

\end{document}